\numberwithin{equation}{section}
\newcommand{\N}{\mathbb{N}}
\newcommand{\R}{\mathbb{R}}
\newcommand{\Q}{\mathbb{Q}}
\newcommand{\Z}{\mathbb{Z}}
\newcommand{\EE}{\mathbf E}
\newcommand{\F}{\mathcal{F}}
\newcommand{\B}{\mathcal B}
\newcommand{\D}{\mathcal{D}}
\newcommand{\Cpin}{C_{\mathrm{pin}}}
\newcommand{\Ll}{\mathcal L}
\newcommand{\wt}{\widetilde}
\newcommand{\T}{\mathbb{T}}
\newcommand{\deq}{\overset{d}{=}}
\newcommand{\ve}{\varepsilon}
\newcommand{\f}{\frac}
\newcommand{\df}{\dfrac}
\newcommand{\mbf}{\mathbf}
\newcommand{\Pp}{\mathbb P}
\newcommand{\Ee}{\mathbb E}
\newcommand{\Var}{\mathrm{Var}}
\newcommand{\Ff}{\mathcal F}
\newcommand{\bfx}{\mbf{x}}
\newcommand{\ls}{[}
\newcommand{\rs}{]}
\newcommand{\lzb}{\llbracket}   
\newcommand{\rzb}{\rrbracket}   
\newcommand{\Pm}{\mathcal P}
\newcommand{\ICH}{\mathfrak g}
\newcommand{\la}{\langle}
\newcommand{\ra}{\rangle}
\newcommand{\cL}{\mathcal{L}}
\newcommand{\diff}{\nabla}
\newcommand{\Lapl}{\Delta}
\newcommand{\OCY}{Z^{N}}
\newcommand{\OCYp}{Z^{N, \text{per}}}
\newcommand{\scOCYp}{\wt{Z}^{N,\text{per}}_\beta}
\newcommand{\ind}{\mathbf 1}
\newcommand{\vecsum}{\mathfrak s}
\newcommand{\bT}{\mathbb{T}}
\newcommand{\cDm}{\Phi}
\newcommand{\FcDm}{\Psi}
\newcommand{\be}{\begin{equation}}
\newcommand{\ee}{\end{equation}}
\newtheorem{theorem}{Theorem}[section]
\newtheorem{proposition}[theorem]{Proposition}
\newtheorem{corollary}[theorem]{Corollary}
\newtheorem{lemma}[theorem]{Lemma}
\theoremstyle{definition}
\newtheorem{definition}[theorem]{Definition}
\newtheorem{conj}[theorem]{Conjecture}
\theoremstyle{remark}
\newtheorem{remark}[theorem]{Remark}
\definecolor{darkblue}{rgb}{0.0,0.0,0.7}
\definecolor{darkred}{rgb}{0.5,0.0,0.0}
\definecolor{darkgreen}{rgb}{0.0,0.5,0.0}
\definecolor{indigo}{rgb}{0.3,0,0.5}
\newcommand{\pathsp}{\mathbb{X}}
\def\tspb{\hspace{0.9pt}}
\def\viiva{\hspace{.7pt}\vert\hspace{.7pt}}
\title[Periodic Pitman transforms and jointly invariant measures]{Periodic Pitman transforms and jointly invariant measures}
\author{Ivan Corwin}
\address{Ivan Corwin,  Columbia University, Mathematics Department,
New York, NY 10027, USA.}
\email{ivan.corwin@gmail.com}
\author{Yu Gu}
\address{Yu Gu, University of Maryland, Mathematics Department,  College Park, MD 20742,
USA.}
\email{yugull05@gmail.com}
\author{Evan Sorensen}
\address{Evan Sorensen,  Columbia University, Mathematics Department,
New York, NY 10027, USA.}
\email{evan.sorensen@columbia.edu}
\begin{document}

\begin{abstract}
We construct explicit jointly invariant measures for the periodic KPZ equation (and therefore also the stochastic Burgers' and stochastic heat equations) for general slope parameters and prove their uniqueness via a one force--one solution principle. The measures are given by polymer-like transforms of independent Brownian bridges. We describe several properties and limits of these measures, including an extension to a continuous process in the slope parameter that we term the periodic KPZ horizon. As an application of our construction, we prove a Gaussian process limit theorem with an explicit covariance function for the long-time height function fluctuations of the periodic KPZ equation when started from varying slopes. In connection with this, we conjecture a formula for the fluctuations of cumulants of the endpoint distribution for the periodic continuum directed random polymer.

To prove joint invariance, we address the analogous problem for a semi-discrete system of SDEs related to the periodic O'Connell-Yor polymer model and then perform a scaling limit of the model and jointly invariant measures. For the semi-discrete system, we demonstrate a bijection that maps our systems of SDEs to another system with product invariant measure. Inverting the map on this product measure yields our invariant measures. This map relates to a periodic version of the discrete geometric Pitman transform that we introduce and probe. As a by-product of this, we show that the jointly invariant measures for a periodic version of the inverse-gamma polymer are the same as those for the O'Connell-Yor polymer.
\end{abstract}

\maketitle

\setcounter{tocdepth}{1}
\tableofcontents

\section{Introduction and main result}
\subsection{Periodic equations}
The Kardar-Parisi-Zhang (KPZ) equation \cite{KPZ} was introduced in 1986 as a model for stochastic interface growth. Its spatial derivative, the stochastic Burgers' equation, was already studied earlier in 1977 by Forster, Nelson and Stephen \cite{FNS77} as a model for randomly stirred fluid (and since then, it is understood as a continuum model and limit of interacting particle systems, see, e.g., \cite{BertiniGiacomin, Goncalves2014,Gubinelli2017}). The exponential (i.e., Cole-Hopf transform) of the KPZ equation solves the multiplicative noise stochastic heat equation which, via the Feynman-Kac formula (see, e.g. \cite{Bertini1995}) is the partition function for a continuum directed random polymer \cite{Alberts-Khanin-Quastel-2014a}.

We consider these equations in the presence of spatial periodic noise. The spatial integral of the stochastic Burgers' equation (i.e., continuum analogue of the total number of particles in an interacting particle system) is conserved over time and for each value it is known that there exists a unique invariant measure. The stochastic Burgers' equation naturally couples together the evolution of all initial data and hence yields a coupling of the invariant measures with different values of the conserved spatial integral which, itself, is invariant under the evolution of the stochastic Burgers' equation.

In this paper we explicitly describe this jointly invariant measure in terms of elementary probabilistic objects such as Brownian bridges and their exponential functionals (periodic versions of the geometric Pitman transform  we introduce herein). The stochastic Burgers' equation is the most natural context in which to consider such periodic jointly invariant measures. However, they can also be defined in the KPZ and stochastic heat equation context. The advantage of those equations is that the solutions are function-valued, as opposed to the distribution-valued solutions to the stochastic Burgers' equation. For this reason, we will state our results in terms of the KPZ equation.

The {\em periodic stochastic Burgers' equation} is formally written as
\be \label{eq:SBE}\tag{SBE$_\beta$}
\partial_t u_\beta(t,x) = \f{1}{2} \partial_{xx} u_\beta(t,x) + \f{1}{2} \partial_x (u_\beta^2)(t,x) + \beta \partial_x \xi_\T(t,x),\qquad t \ge 0, x \in \T,
\ee
where $\beta>0$ is an inverse-temperature parameter, $\T:=\R/\Z$ is the one-dimensional unit-length   torus, and $\xi_\T(t,x)$ is a space-time (really, in our notation, time-space) white noise on $\R_{>0} \times \T$. The proper definition of solution for this equation is via the Cole-Hopf transform, as follows. 

Let $Z_\beta$ solve the well-posed {\em periodic stochastic heat equation} (see Section \ref{sec.convergeinitial})
\be\label{eq:SHE}\tag{SHE$_\beta$}
\partial_t Z_\beta(t,x) = \f{1}{2} \partial_{xx}Z_\beta(t,x) + \beta Z_\beta(t,x)\xi(t,x),\qquad t\ge 0, x\in \R
\ee
where here and below $\xi$ denotes the periodic extension of the white noise $\xi_\T$ to all of $\R$, i.e., so that (in a weak sense) $\xi(t,x+1)=\xi(t,x)$ for all $x\in \R$. See Section \ref{sec:white_noise} for a precise definition. The initial data is determined by a random function $F\in C[0,1]$ as follows. First we define the multiplicative periodic continuous extension of $F$ to $F\in C(\R)$ by the relation
$\frac{F(x+1)}{F(x)}=\frac{F(1)}{F(0)}=:e^\theta$ for all $x\in \R,
$
where we call $\theta\in \R$ defined above the {\em slope parameter} for the initial data. For all $x\in \R$ we set $Z(0,x)=F(x)$, thus defining the initial data at time zero.
When started from such initial data $F$, with probability one the function $(t,x) \mapsto Z_\beta(t,x)$ is continuous and strictly positive for all $t > 0$ and $x\in \R$ (Proposition \ref{prop:solve_SHE}\ref{itm:SHE_soln}) and satisfies $\frac{Z_\beta(t,x+1)}{Z_\beta(t,x)}=e^\theta$ for all $t\ge 0$ and all $x\in \R$ (Proposition \ref{prop:solve_SHE}\ref{itm:per_pres}).

\begin{figure}
    \centering
    \includegraphics[height =3in]{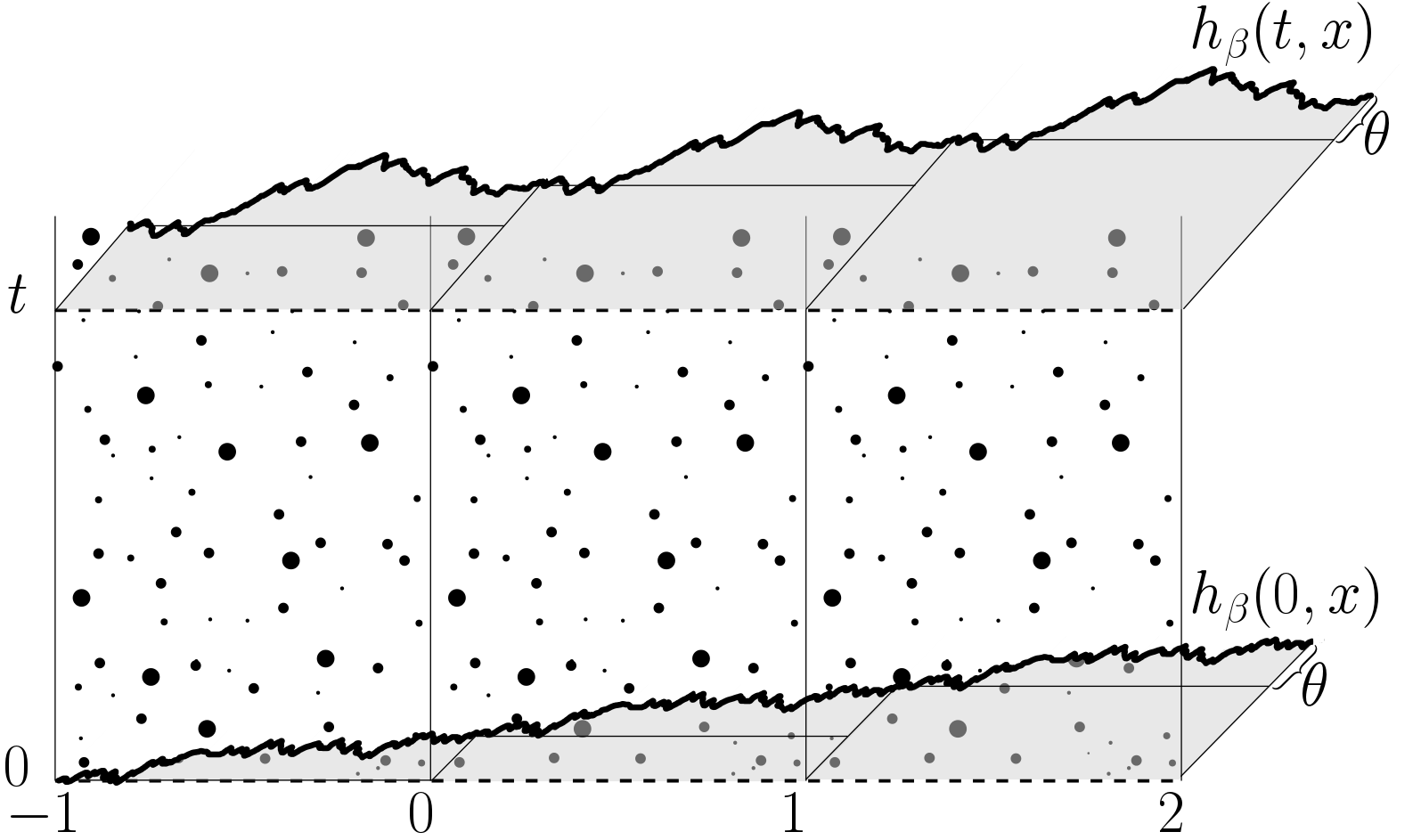}
    \caption{The evolution of the periodic KPZ equation (time and space in the vertical and horizontal directions, respectively). The periodic space-time white noise is depicted through random disks of varying radii and repeat in each horizontal width one strip. The initial data $h_{\beta}(0,x)$ is drawn out of the paper (sheared to illustrate this dimension) and repeats with the same periodicity up to a vertical shift by the slope $\theta$ (i.e., $h_{\beta}(0,x+1)-h_{\beta}(0,x)=\theta$ for all $x\in \R$). This evolves using the noise up to time $t$ to yield $h_{\beta}(t,x)$ likewise drawn. The slope $\theta$ is preserved by the evolution, as well as the periodicity.}
    \label{fig:inc_per_fun}
\end{figure}
From the stochastic heat equation solution $Z_\beta$, we define
$$
h_\beta(t,x):= \log Z_\beta(t,x)\qquad\textrm{and}\qquad u_\beta(t,x):=\partial_x h_\beta(t,x)
$$
(where taking the logarithm is justified by the almost-sure strict positivity, and the derivative in $x$ is distribution-valued) and call $h$ the Cole-Hopf solution to the {\em periodic KPZ equation}
\be \label{eq:KPZ}\tag{KPZ$_\beta$}
\partial_t h_\beta(t,x) = \f{1}{2} \partial_{xx} h_\beta(t,x) + \f{1}{2}\bigl(\partial_x h_\beta(t,x)\bigr)^2 +  \beta \xi(t,x),\qquad t\ge 0, x\in \R
\ee
with initial data $h(0,x)=\log Z(0,x)$
and $u_\beta$ the Cole-Hopf solution to the periodic stochastic Burgers' equation \eqref{eq:SBE} with initial data $u(0,x)=\partial_x h(0,x)$. As for the stochastic heat equation, it suffices to define the initial data for the KPZ and stochastic Burgers' equations when restricted to the domain $[0,1]$ and then to extend to $\R$ (in the KPZ case, by an additive periodic continuous extension, and in the stochastic Burgers' case, just by periodic extension).
Recalling the  slope parameter $\theta$ from above, it follows that
$$
h_\beta(t,x+1)-h_\beta(t,x)=\int_x^{x+1} u_\beta(t,x')dx'=\theta\quad \textrm{for all }t\ge 0, x\in \R.
$$ 
When $\theta=0$, the KPZ and stochastic heat equations can be defined restricted to $\T$ without the use of periodized space-time white noise. The reason for the periodization when $\theta\neq 0$ is the non-trivial slope around the torus; a similar periodization is necessary in the study of TASEP on a ring (see, e.g., \cite{BaikLiu}). 
See Figure \ref{fig:inc_per_fun} for an illustration of the periodic KPZ equation evolution described above.

The Cole-Hopf solution is the physically relevant notion of solution as it arises from various discrete or mollified equations (see, e.g., \cite{Bertini1995,HairerKPZ}), interacting particle systems and growth models  (see, e.g., \cite{BertiniGiacomin, Goncalves2014,Gubinelli2017}), and directed polymer models (see, e.g., \cite{Alberts-Khanin-Quastel-2014b}). We will focus below on the Cole-Hopf solution to the periodic KPZ equation and not the stochastic Burgers' equation, since the former is function valued while the later involves generalized functions and hence becomes more involved.

\subsection{Invariant measures}
We will be concerned with invariant measures for the Markov processes defined by these closely related equations. We say that a random function $f\in C[0,1]$ with $f(0)=0$ is an invariant initial condition for the KPZ equation (and that its law is an invariant probability measure) if for all $t\ge 0$
\be \label{eq:invar_def}
\bigl(h_\beta(t,x) - h_\beta(t,0): x \in [0,1] \bigr) \deq \bigl(f(x):x \in [0,1]\bigr),
\ee
where $h_\beta$ is the solution to the periodic KPZ equation started with initial data $f$ at time $0$. In terms of the stochastic Burgers' and stochastic heat equations, this translates into the equality in law for all $t>0$ of
$$
\bigl(u_\beta(t,x):x \in [0,1]\bigr) \deq \bigl(\partial_x f(x):x \in [0,1]\bigr),\quad\textrm{and} \quad \biggl(\frac{Z_\beta(t,x)}{Z_\beta(t,0)}:x \in [0,1]\biggr) \deq \biggl(e^{f(x)}:x \in [0,1]\biggr).
$$
It is important to note that for the KPZ and stochastic heat equations, it is necessary to center by $h_\beta(t,0)$ in order to have an invariant probability measure; for the stochastic Burgers' equation no centering is needed.

For each $\theta\in \R$ there is a unique invariant initial condition/measure for \eqref{eq:KPZ} with slope parameter $\theta$ (see, e.g., \cite{Funaki-Quastel-2015,Hairer-Mattingly-2018}). It is the additive periodic continuous extension of a slope $\theta$, variance $\beta^2$ Brownian bridge $\B_{\beta,\theta}$. Precisely, for $\beta > 0$ and $\theta \in \R$, let $x \mapsto \B_{\beta,\theta}(x)$ denote Gaussian process on $[0,1]$ with 
\begin{equation}\label{eq.bb}
\Ee[\B_{\beta,\theta}(x)]=\theta x \quad\textrm{for } x\in [0,1],\quad \textrm{and} \quad \mathrm{Cov}\bigl[\B_{\beta,\theta}(x),\B_{\beta,\theta}(y)\bigr] = \beta^2\big(\min(x,y) - xy\big)\quad \textrm{for }x,y\in [0,1].
\end{equation}
In other words, $\B_{\beta,\theta}(x) \deq \beta \B(x) + \theta x$, where $\B$ is a standard Brownian bridge with $\B(0) = \B(1) = 0$. For any initial data for the periodic KPZ equation with slope parameter $\theta$, in the long-time limit, $h_\beta(t,x)-h_\beta(t,0)$ converges as a process in $x$ to $\B_{\beta,\theta}(x)$ (see, e.g., Theorem \ref{thm:1f1s} with $k=1$). 

\subsection{Jointly invariant measures}
This paper answers the following question: What happens in the long-time limit to the periodic KPZ equation started from multiple initial conditions with different slope parameters, yet coupled to the same driving noise? Marginally, each solution will converge to a Brownian bridge with the corresponding slope. Yet, these solutions are coupled by a common noise, and it follows readily from the fact that the KPZ equation preserves ordered height functions that the joint distribution of these long-time limits should be suitably ordered and hence not independent.

The main result of this paper, Theorem \ref{thm:KPZ_invar_main}, is the explicit construction of these jointly invariant initial conditions / measures for the periodic KPZ equation. To state this result as well as a result that proves that these jointly invariant measures arise in the long-time limit as just outlined (see Theorem \ref{thm:1f1s}), we introduce the notation $h_\beta(t,y\viiva s,f)$ for the solution at time $t$ and location $y\in \R$ to the periodic KPZ equation \eqref{eq:KPZ} when started with the additive  periodic continuous extension of the initial condition $f\in C[0,1]$ at time $s<t$. For different choices of $f$, the solutions $h_\beta(t,y\viiva s,f)$ are coupled on the same probability space on which the common noise $\xi$ that drives them is defined (and on which the initial conditions are defined if they are random, in which case they are assumed to be independent of the noise after time $s$). Equivalently, $h_\beta(t,y\viiva s,f)$ can be defined from the four-parameter (random) Green's function for the stochastic heat equation (see Section \ref{sec.l2converge} and Proposition \ref{prop:solve_SHE})
$
\{Z_\beta(t,y \viiva s,x): t >s, x,y \in \R,\beta > 0\}
$
so that, for $f\in C[0,1]$,
\[
h_\beta(t,y \viiva s,f) := \log \int_\R Z_\beta(t,y \viiva s,x) e^{f(x)}\,dx,
\]
where on the right-hand side we extend $f$ to lie in $C(\R)$ by setting $f(x+1) - f(x) = \theta:=f(1)-f(0)$ for all $x \in\R$.
It is shown in Proposition \ref{prop:solve_SHE}\ref{itm:SHE_soln} that $h_\beta(t,y \viiva s,f)$ for $t >s$ and $y \in \R$ solves \eqref{eq:KPZ} with initial condition $f$ at time $s$. Notationally, if $s$ is not specified, we take $s = 0$.

\begin{definition}\label{def.jtinv}
Random $(f_1,\ldots,f_k) \in (C[0,1])^k$ satisfying $f_m(0) = 0$ for $1 \le m \le k$ are called \textit{jointly invariant initial conditions} for the periodic KPZ equation \eqref{eq:KPZ} if, for all $t > 0$, as processes on $(C[0,1])^k$,
$$
\Bigl(h_\beta(t,y \viiva f_m) - h_\beta(t,0 \viiva f_m):m\in \{1,\ldots, k\}, y \in [0,1] \Bigr)
\deq \Bigl(f_m(y):m\in \{1,\ldots, k\}, y \in [0,1]\Bigr).
$$
In that case, we say that the law of $\bigl(f_m(y):m\in \{1,\ldots k\}, y \in [0,1]\bigr)$ is a \textit{jointly invariant measure}. 
\end{definition}
To provide our  explicit description of the jointly invariant initial conditions, we need some further notation.
\[
\textrm{For a function }f:[0,1]\to \R \text{ and } x,y \in [0,1],  \textrm{ define } f(x,y) := f(y) - f(x) + \ind\{x > y\}
\bigl(f(1)-f(0)\bigr),
\]
where $\ind\{E\}$ is the indicator function for $E$, i.e., $1$ when the event $E$ occurs and 0 otherwise.
One can think of $f(x,y)$ as the increment of the extended function $f:\R \to \R$, from $x$ to $y$ (modulo $1$), when one is only allowed to travel from $x$ to $y$ in the positive direction. Let $\Cpin[0,1]$ be the space of functions $f \in C[0,1]$ satisfying $f(0) = 0$, endowed with the subspace topology inherited from the sup norm on $C[0,1]$ ($\Cpin[0,1]$ is a closed subset of $C[0,1]$ with the sup norm and is therefore a Polish space under the subspace topology).
Define $\cDm^2:\Cpin[0,1] \times \Cpin[0,1] \to \Cpin[0,1]$ by its action
\be \label{eq:Phi_def}
\begin{aligned}
\cDm^2(f_1,f_2)(x) &:= f_2(x) + Q^2(f_1,f_2)(x) - Q^2(f_1,f_2)(0),
\quad \textrm{with}\quad\\
Q^2(f_1,f_2)(x) &:= \log \int_0^1 e^{f_2(x,y) - f_1(x,y)}\,dy.
\end{aligned}
\ee
We iterate the map $\cDm^2$ to obtain a family of maps $\cDm^m: (\Cpin[0,1])^m \to \Cpin[0,1]$ by
\be \label{Phi_iter}
\cDm^1(f_1) := f_1,\quad \text{and for }m  > 1,\quad
\cDm^m(f_1,\ldots,f_k) := \cDm^2\bigl(f_1,\cDm^{m-1}(f_2,\ldots,f_k)\bigr).
\ee
It follows (see Lemma \ref{lem:Phi_pres_slope}) that $\cDm^m(f_1,\ldots,f_m)(0) = 0$ and $\cDm^m(f_1,\ldots,f_m)(1) = f_m(1)$.
By iterating \\(see, e.g., the proof of Lemma \ref{lem:Dnm_alt}), we obtain the  following polymer type description of the output of $\cDm^m$:
\begin{equation}
\begin{aligned}\label{eq:alternate_rep}
\cDm^m(f_1,\ldots,f_m)(x) &= f_m(x) + Q^m(f_1,\ldots,f_m)(x) - Q^m(f_1,\ldots,f_m)(0),\qquad \text{where}\\
Q^m(f_1,\ldots,f_m)(x) &= \log \int_{[0,1]^{m-1}, x_0 = x} \prod_{r = 1}^{m-1} \,dx_r\, e^{f_m(x_{r-1},x_r ) - f_r( x_{r-1}, x_r)}.
\end{aligned}
\end{equation}
Lastly, define the map $\FcDm^k: (\Cpin[0,1])^k \to (\Cpin[0,1])^k$ by its action
\be \label{Psi_map}
\FcDm^k(f_1,\ldots,f_k) := \bigl(\cDm^1(f_1),\cDm^2(f_1,f_2),\ldots, \cDm^k(f_1,\ldots,f_k)\bigr).
\ee

\begin{definition}
For any $\beta > 0$, $k \in \N$, and $(\theta_1,\ldots,\theta_k) \in \R^k$, let $f_m \deq \B_{\beta,\theta_m}$, $1\leq m\leq k$, be independent slope $\theta_m$, variance $\beta^2$ Brownian bridges, see \eqref{eq.bb}. Let $\Pm_\beta^{(\theta_1,\ldots,\theta_k)}$ denote the law on  $C([0,1])^k$ of 
\[
(g_1,\ldots,g_k) := \FcDm^k(f_1,\ldots,f_k).
\]
\end{definition}

We now state our main theorem, which is proved in Section \ref{sec:proof234}.
\begin{theorem}[Jointly invariant measure for periodic KPZ] \label{thm:KPZ_invar_main}
 For all $\beta > 0$, $k\in \N$ and $(\theta_1,\ldots,\theta_k) \in\R^k$, $\Pm_\beta^{(\theta_1,\ldots,\theta_k)}$ is a jointly invariant measure for \eqref{eq:KPZ}.
\end{theorem}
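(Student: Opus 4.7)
The plan is to prove the statement by working at the level of a semi-discrete model related to the periodic O'Connell--Yor polymer and then taking a scaling limit back to the continuum, following the strategy indicated in the abstract. A direct attack on the invariance identity in Definition \ref{def.jtinv} is obstructed by the non-local nature of $\cDm^2$ and its iterates in \eqref{eq:Phi_def}--\eqref{Phi_iter}: the KPZ dynamics acts pointwise on each $f_m$, but $\FcDm^k$ tangles the $f_m$ through log-integral transforms. In the semi-discrete setting these transforms decompose into elementary exponential swaps that form a bijection, and the candidate jointly invariant measure becomes the push-forward under this bijection of an explicit product measure; that is the structural reason to route through the discrete model rather than trying to verify invariance under \eqref{eq:KPZ} analytically.

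First I would set up an $N$-line semi-discrete SDE system of O'Connell--Yor type on the ring and identify the candidate jointly invariant measure at that level as the law of a discrete analog $\FcDm^k_N$ applied to $k$ independent drifted bridges of the appropriate slopes. I would then introduce the periodic geometric Pitman transform as a bijection that intertwines the coupled SDE dynamics on the original state space with decoupled dynamics on the transformed state space; under this bijection, the image of the candidate measure is a product measure whose invariance for the decoupled dynamics is essentially immediate. Pulling invariance back through the bijection gives the semi-discrete jointly invariant statement for $k=2$, and an induction matching the nesting in \eqref{Psi_map} upgrades this to general $k$, using that $\FcDm^k$ is built from $\cDm^2$ together with the independence preserved by the intermediate bridges. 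This is the step I expect to be the main obstacle: in the non-periodic setting the geometric Pitman transform is organized by an acyclic recursion akin to geometric RSK, whereas periodicity introduces a loop which must be resolved by a fixed-point identity encoding the ``slope around the ring'' constraint, and one must check that this fixed point is well-defined, measurable, and that the Jacobian of the bijection is exactly what is needed to match the product measure on the transformed side.

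Second, I would scale the semi-discrete model in the intermediate-disorder / weak-asymmetry regime that sends the periodic O'Connell--Yor partition function to $Z_\beta$ and sends the discrete drifted bridges to the Brownian bridges $\B_{\beta,\theta_m}$ of \eqref{eq.bb}. On the dynamical side, this yields convergence of the discrete height functions to $h_\beta(t,\cdot\viiva f_m)$ via the four-parameter Green's function of Proposition \ref{prop:solve_SHE}; on the static side, Donsker-type convergence of the random-walk inputs combined with continuity of $\FcDm^k_N$ and its convergence to $\FcDm^k$ identifies the limit of the semi-discrete jointly invariant laws as $\Pm_\beta^{(\theta_1,\ldots,\theta_k)}$. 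Passing the distributional identity of Definition \ref{def.jtinv} through the joint limit, using tightness on $(\Cpin[0,1])^k$ and the continuity just noted, produces the theorem. The two secondary obstacles here are establishing joint convergence (the same driving noise must couple the discrete and continuum systems consistently, so that both sides of the invariance identity pass to the correct limit simultaneously) and controlling the initial data: one needs the discrete initial condition, sampled from the discrete jointly invariant measure, to converge on $(\Cpin[0,1])^k$ to $\Pm_\beta^{(\theta_1,\ldots,\theta_k)}$ in a sense strong enough that the stochastic heat equation flow is continuous on the limiting data, which is where the slope-preserving periodic extension and the polymer representation \eqref{eq:alternate_rep} play a concrete role.
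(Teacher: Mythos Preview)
Your proposal is correct and follows essentially the same route as the paper: establish the jointly invariant measure for the periodic O'Connell--Yor semi-discrete system via a periodic Pitman-type bijection that conjugates the coupled dynamics to a system for which a product of conditioned log-inverse-gamma laws is invariant, then pass to the KPZ equation via intermediate-disorder scaling together with a KMT coupling of the input random-walk bridges to Brownian bridges.

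One small correction worth flagging: the dual system on the transformed side is not actually \emph{decoupled}. In the paper the image dynamics has a triangular structure, with $\mbf X_r$ driven by $\mbf X_1,\ldots,\mbf X_{r-1}$ through discrete Laplacian terms, and the invariance of the product measure $\nu_\beta^{N,(\theta_1,\ldots,\theta_k)}$ is checked by a Fokker--Planck computation (with some care needed because the drift is unbounded and the diffusion matrix degenerate). So ``invariance is essentially immediate'' overstates it slightly; the point is rather that the product form survives despite the triangular coupling, which one verifies directly.
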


Here and below we adapt the notation $X\sim \mu$ to denote that a random variable (or set of random functions) $X$ is distributed according to a distribution $\mu$. For example, in Theorem \ref{thm:KPZ_invar_main} $(g_1,\ldots,g_k) \sim \Pm_\beta^{(\theta_1,\ldots,\theta_k)}$.

Figure \ref{fig:sim1} shows a simulation using R \cite{R} of independent sloped Brownian bridges, and the output of the map $\Psi^k$, thus producing a simulation of the measure $\Pm_\beta^{(\theta_1,\ldots,\theta_k)}$. Each function in the picture on the right is marginally a sloped Brownian bridge, and the joint functions satisfy a monotonicity of increments. This manifests itself in Proposition \ref{prop:g_prop_intro}\ref{itm:g_consis}-\ref{itm:g_mont} below. 

\begin{figure}
    \centering
    \includegraphics[width = 0.4\linewidth,valign  = c]{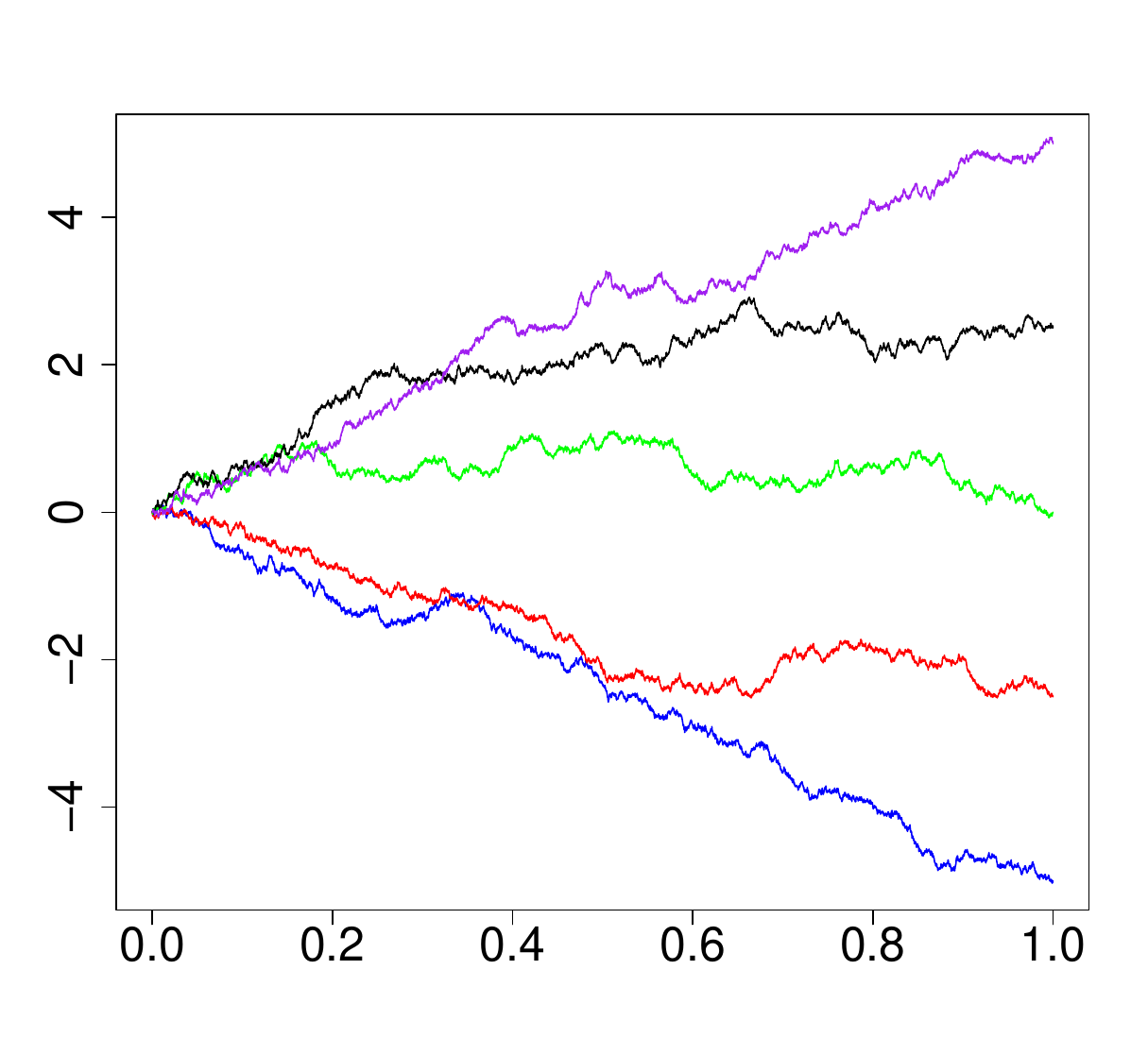}\;
    \scalebox{2}{$\overset{\Psi^5}{\relbar\joinrel\relbar\joinrel\longrightarrow}$} \;
    \includegraphics[width = 0.4\linewidth,valign = c]{
    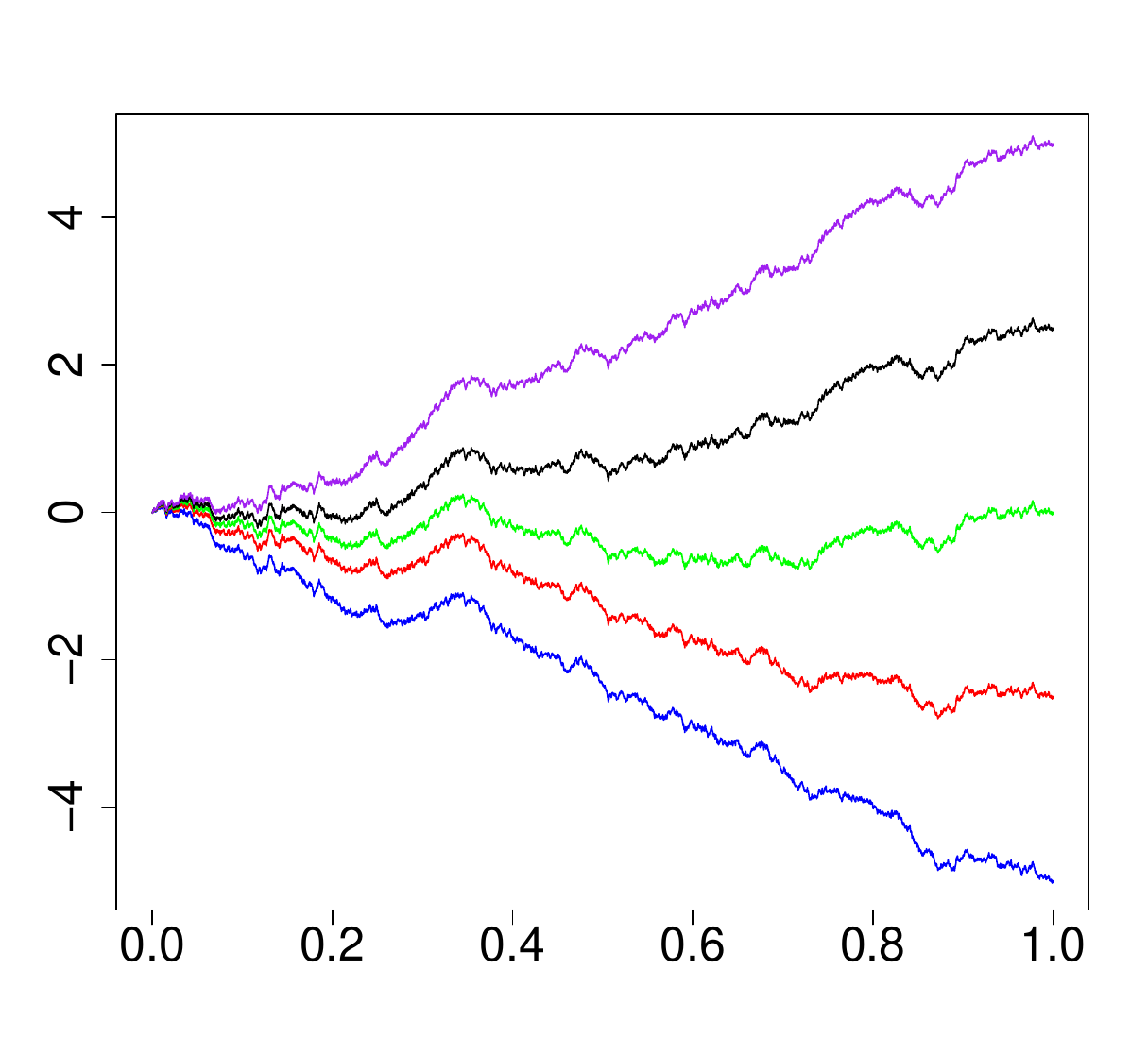}
    \caption{On the left are $k=5$ independent Brownian bridges with slopes $(\theta_1,\ldots,\theta_5) = (-5,-2.5,0,2.5,5)$. These are mapped via $\Psi^5$ to the output on the right, giving a sample of the invariant measure $\Pm_\beta^{(\theta_1,\ldots,\theta_5)}$ for $\beta = 1$. Each trajectory/color represents a different value of $\theta$.}
    \label{fig:sim1}
\end{figure}

The following result (proved in Section \ref{sec:proof234} as a corollary of results in \cite{GK21}) implies uniqueness of the jointly invariant measures and shows a stronger {\em one force--one solution (1F1S) principle}. In the context of the viscous or inviscid Burgers' equation with stochastic forcing, such a principle goes back to \cite{Sinai1991,Sinai1996,Iturriaga-Khanin-2003} and \cite{EKMS-1997,EKMS-2000,Iturriaga-Khanin-2003,GIKP-2005,TR22}, respectively (see also works involving non-compact domains, e.g., \cite{Bakhtin-Cator-Konstantin-2014,bakhtin2019thermodynamic,dunlap2021stationary,janjigian2022ergodicitysynchronizationkardarparisizhangequation}, and open boundary conditions \cite{parekh2023ergodicityresultsopenkpz}). For the stochastic Burgers' equation considered here, \cite{TR22} demonstrates such a principle in the special case of a single slope. 

\begin{theorem}[1F1S principle] \label{thm:1f1s}
For any $\theta_1,\ldots,\theta_m \in \R$ and random $(f_1,\ldots,f_k) \in (C[0,1])^k$ independent of  $\xi$  and satisfy $f_m(1) - f_m(0) = \theta_m$ for $1 \le m \le k$, the $(\Cpin[0,1])^k$ valued random variable 
\[
\Bigl(h_\beta(0,y \viiva -t, f_1) - h_\beta(0,0 \viiva -t,  f_1),\ldots,h_\beta(0,y \viiva  -t, f_k) - h_\beta(0,0 \viiva -t,f_k) \Bigr)
\]
converges in probability as $t \to \infty$, and the limit has the law given by $\mathcal{P}_\beta^{(\theta_1,\ldots,\theta_k)}$. As a result, $(g_1,\ldots,g_k) \sim \Pm_\beta^{(\theta_1,\ldots,\theta_k)}$, is the unique (in law) jointly invariant initial condition for \eqref{eq:KPZ} on $(\Cpin[0,1])^k$ satisfying  $g_m(0) = 0$ and $g_m(1) = \theta_m$ for $1\leq m\leq k$. 
\end{theorem}

The description of the jointly invariant measures is used in the following section, specifically Theorem \ref{t.conG} to give an explicit description of the covariance function of a limiting Gaussian process related to the associated polymer model.

\subsection{Overview of our approach and its relation to previous work}
In proving Theorem \ref{thm:KPZ_invar_main} there are two related challenges to surmount: (1) How to produce these measures; (2) How to check that they are jointly invariant. In principle once there is a proposed measure, one could hope to check the invariance directly. That would involving constructing the generator for the coupled KPZ evolution Markov process and then checking the associated Fokker-Planck equation. The work \cite{GubPerk20} provides a route to construct such generators but requires a priori knowledge of the invariant measure. So, while our results can be taken as inputs into \cite{GubPerk20}, it would be circular to try to check it using that method. Rather than trying to develop a (non-circular) route to verify our jointly invariant measures at the KPZ equation level, we follow Mark Kac's dictum to ``be wise---discretize'' (or semi-discretize). But how? In the following sections, we summarize some possible approaches before discussing the approach we took..

\subsubsection{Colored stochastic vertex models}
There are various finite-dimensional regularizations or discretizations of the periodic KPZ equation that (should) converge to it, and hence whose jointly invariant measures should converge to those we seek. Working with such a model makes the formulation of the Fokker-Planck equation clear. There are also several models, namely {\em colored stochastic vertex models} \cite{Corwin2016,BorodinWheeler} and various limits, for which there is significant progress towards understanding jointly invariant measures, both in the periodic setting and the full line. We  briefly describe the nature of those results and why it is challenging to take the KPZ limit from them, before going into the approach we use instead.

While uncolored stochastic vertex models are analogous to (and limit to in special cases, see, e.g., \cite{BertiniGiacomin,10.1214/16-AOP1101, Corwin2020,Lin2019}) the KPZ equation, their colored counterparts are analogous to (and limit to, see, e.g., \cite{PAREKH2023351}) the coupling of KPZ equations with different initial data yet the same noise considered herein.

The invariant measures for the colored stochastic vertex models are explicitly known. In the uncolored case they are certain product measures \cite{10.1215/00127094-2017-0029,10.1214/23-EJP1022}. In the colored case, they can be described explicitly in terms of partition functions for certain ``queue vertex models'' as shown directly from solutions to the colored Yang-Baxter equation in the recent result of \cite{Aggarwal-Nicoletti-Petrov-2023} (see also, e.g., \cite{Martin-2020,Ayyer-Mandelshtam-Martin-2023a,Ayyer-Mandelshtam-Martin-2023b,Bukh-Cox-2022} for examples of earlier work yielding similar descriptions of these measures in specific models, generally relying on the ``matrix product ansatz'' \cite{Prolhac-Evans-Mallick-2009}. See also the recent work \cite{Kuniba-Okado-Scrimshaw-24} for another description). This work applies to the periodic and full line setting. In systems with ``open'' boundaries (not considered here) the structure of invariant measures, even for just one color, becomes considerably more involved and is an active subject of research that we will not touch upon here. 

It is currently unclear how to extract the KPZ equation relevant limit of the queuing vertex model formulas for the colored (i.e., jointly) invariant measures of stochastic vertex models. To illustrate, for ASEP (a continuous time limit of the stochastic six vertex model which depends on a single {\em asymmetry} parameter $q\in [0,1]$)  \cite{Martin-2020} provides a description of the these invariant measures  (called multi-type therein) in terms of a randomized map of independent Bernoulli random walk bridges (one for each color) which can be seen as equivalent to the queue vertex model formulas of \cite{Aggarwal-Nicoletti-Petrov-2023}. In the special case of TASEP when $q=0$, the map becomes deterministic and goes back to \cite{Ferrari-Martin-2005}, see Section \ref{sec:TASEP} (although for $2$ colors, an alternate description of the invariant measures had previously been obtained \cite{Derrida-Janowsky-Lebowitz-1993,ANGEL-2006} using the matrix product ansatz and combinatorial methods). Taking the limit of that deterministic map and the underlying bridges, one readily sees convergence to a scaling limit (denoted here as the periodic stationary horizon) when the mesh-size of TASEP is scaled to zero. Convergence to a scaling limit in the $q = 0$ setting on the full line was previously shown in \cite{Busa-Sepp-Sore-22b}, noting that there are more difficulties in the full line case due to the non-compactness. The role of $q$ in the map has to do with the probability of accepting certain types of operations and for $q\in (0,1)$ fixed (as the mesh-size goes to zero), it is plausible that the effect of this randomness washes out and returns the same deterministic limiting map as for TASEP. This has not been proved directly, though follows in the full-line case from \cite[Corollary 2.14]{aggarwal2024scalinglimitcoloredasep}. The KPZ equation limit of ASEP \cite{BertiniGiacomin,PAREKH2023351} involves scaling $q\to 1$ as the mesh-size goes to zero, and it is unclear how the randomized ASEP maps produce the deterministic maps used to define the periodic KPZ jointly invariant measures. The same challenge seems to be present in the other stochastic vertex models that converge to the KPZ equation and makes for a compelling problem for future consideration.

\subsubsection{Polymer models}
Rather than directly taking the KPZ equation limit, one could first attempt a simpler $q\to 1$ limit of the colored stochastic vertex models and their associated invariant measures. In particular, the natural limits (see, e.g., \cite[Section 5.2]{Borodin-Corwin-2014} or \cite[Proposition 2.1]{Barraquand2017}) to consider are those that lead to the solvable directed polymer models. Indeed, for the discrete inverse-gamma polymer \cite{Seppalainen-2012} and semi-discrete O'Connell-Yor polymer \cite{brownian_queues,O'Connell-2012}  on the full line, the jointly invariant measures are known in \cite{Bates-Fan-Seppalainen} and \cite{GRASS-23}, respectively, and take the form of a deterministic map of random walks. The technique for finding these invariant measures goes back to the work of Ferrari and Martin (see also related results of \cite{Ferrari-Martin-2005,Ferrari-Martin-2007,Ferrari-Martin-2009,Fan-Seppalainen-20,Seppalainen-Sorensen-21b} in the particle system and last passage percolation settings, as well as \cite{Busani-2021,Busa-Sepp-Sore-22a,Busa-Sepp-Sore-22b,Busa-Sepp-Sore-23} for their scaling limits). This suggests that the queue vertex model formulas should converge to deterministic maps in those limits. Seeing that directly in either the full-line or periodic settings is currently an outstanding challenge.

\subsubsection{The periodic Pitman transform and our approach} \label{sec:approach}
While the above sketched approaches may eventually be realized, the approach that we develop here offers a new integrable structure---the periodic Pitman transform---that may be of independent interest. We  work with a semi-discretization of the KPZ equation that is related to the O'Connell-Yor polymer (In fact, we mainly work with a closely related system of SDEs that can be seen as a semi-discrete version of the stochastic Burgers' equation, see Equation \eqref{eq:OCY1_SDE} below).  
One immediate benefit of working with SDEs is that the Fokker-Planck equation is easily formulated, and we have tools of stochastic calculus available.

In the full-line (non-periodic) setting, the Pitman transform has arisen in describing jointly invariant measures, see, e.g., \cite[Section 6B]{Fan-Seppalainen-20}, \cite[Section 2.4]{Busani-2021}, \cite[Section 6]{Bates-Fan-Seppalainen}, \cite[Lemma 2.3.8]{Sorensen-thesis}, \cite[Theorem 1.2]{Dauvergne-Virag-2024}. That work relies on three key properties. The Pitman transform
\begin{enumerate}
    \item defines a {\em bijection} on suitable spaces,
    \item enjoys a {\em Burke property}, preserving certain product measures,
    \item and satisfies non-trivial {\em intertwining} relations.
\end{enumerate}
This structure in the full-line case served as a starting guide for our periodic investigation. However, as is often the case, it was unclear how to lift the structure so instead we began our investigation by considering the first non-trivial case of our problem: two joint invariant measures for the two-layer O'Connell-Yor polymer (i.e., $k=2$ and $N=2$ in the notation used later in Equation \eqref{eq:joint_OCY} below, where $k$ denotes the number of initial conditions, and $N$ denotes the spatial periodicity).

In this $k=2$, $N=2$ case, we were able to explicitly construct the jointly invariant measures by analysis of the system of SDEs in conjunction with an ansatz (informed by the full-line case) on the triangular structure of the deterministic map that we conjectured to exist. The main idea was to identify a transformation that maps the SDEs into another dual system of SDEs which has product form invariant measure. Certain natural structural properties of the transform and the dual  system of SDEs nailed down their description (see Section \ref{sec.simplecase} along with a discussion of the guiding structural ansatz).

Informed now by the result of the  $k=2$, $N=2$ calculation we were then able to formulate a periodic Pitman transform for general $N$ and verify the key properties need in order to use it to construct the jointly invariant measures (see the discussion below). Whereas this leap from $N=2$ to general $N$ took considerable effort, the leap from $k=2$ to general $k$ is straightforward and takes an iterative form as in earlier work (and also evident from the queuing vertex model structure in \cite{Aggarwal-Nicoletti-Petrov-2023}).
Theorem \ref{thm:OCY_joint} provides the output of this effort---the explicit formula for the jointly invariant measures for the periodic O'Connell-Yor polymer model.

Proving this formula proceeds in three steps: (1) We construct a bijection, see Proposition \ref{prop:transform}, $\D^{N,k}$ (a discrete analogue of $\cDm^k$) between suitable sets whose inverse map $\mathcal J^{N,k}$, see \eqref{Jimap}, (2) maps (see Proposition \ref{prop:NewSDE}) the system \eqref{eq:joint_OCY} of SDEs we care about into a new system \eqref{DxR_gen} of SDEs (3) which we show, see Proposition \ref{prop:prod_invar}, has a product form invariant measure dependent on the ordered set of slope parameters $(\theta_1,\ldots, \theta_k)$. This implies that the push-forward of the product measure under $\D^{N,k}$ yields the jointly invariant measures desired. Remarkably, if $\D^{N,k}$ is applied to the product measure with permuted slope parameters, the output is invariant in law, see Proposition \ref{prop:disc_consis}, or its KPZ analog, Proposition \ref{prop:g_prop_intro}.

To understand the origin of these results, we describe here how $\D^{N,k}$ is defined and in so doing introduce the periodic Pitman transform. The map $\D^{N,k}$ is built by a triangular iteration, see \eqref{eq:D_iter_intro} or Figure \ref{fig:DJ}, from a building block  map $D^{N,2}:\R^{\Z_N} \times \R^{\Z_N} \to \R^{\Z_N}$ (here $\R^{\Z_N}$ is the set of vectors $\mbf X = (X_0,\ldots,X_{N-1}) \in \R^N$, with $N$-periodic indices, e.g. $X_{i}=X_{i+N}$ for all $i$). 

We also make use of a related map $T^{N,2}: \R^{\Z_N} \times \R^{\Z_N} \to \R^{\Z_N}$. The maps $D^{N,2},T^{N,2}$ are defined by their coordinate outputs $D_i^{N,2},T_i^{N,2}$, for $i\in \Z_N$ as 
$$
D_i^{N,2}(\mbf X_1,\mbf X_2) = X_{2,i} + \log\Biggl(\f{\sum_{j \in \Z_N} e^{Y_{(i,j]}}}{\sum_{j \in \Z_N} e^{Y_{(i-1,j]}}}\Biggr),\qquad
T_i^{N,2}(\mbf X_1,\mbf X_2) = X_{1,i} + \log\Biggl(\f{\sum_{j \in \Z_N} e^{Y_{[i,j]}}  }{\sum_{j \in \Z_N} e^{Y_{[i+1,j]}}}\Biggr),
$$
where $\mbf Y = \mbf X_2 - \mbf X_1$ (as a vector) and the notation $Y_{(i,j]}$ means $\sum_{\ell = i+1}^j Y_\ell$, where the sum is taken in cyclic order, and $Y_{(i,i]} = 0$. The notation $Y_{[i,j]}$ is defined similarly, except that the sum starts at $\ell = i$. 

We define the {\em periodic discrete geometric Pitman transform} $W:\R^{\Z_N}\times \R^{\Z_N}\to \R^{\Z_N}\times \R^{\Z_N}$ by its action 
$$W(\mbf X_1,\mbf X_2) := \bigl(T^{N,2}(\mbf X_1,\mbf X_2),D^{N,2}(\mbf X_1,\mbf X_2)\bigr).$$
If we replaced $\Z_N$ by $\Z$, we would have the equality of intervals $(i-1,j]=[i,j]$ and $(i,j]=[i+1,j]$ (this fails on $\Z_N$ because $(i,i]=\varnothing$ while $[i+1,i]=\Z_N$) and thus $T^{N,2}$ and $D^{N,2}$ (and hence the transform) would exactly match the discrete geometric Pitman transform (see, e.g., \cite[Definition 2.3]{CorwinInvariance}), up to taking logarithms and translating between cumulative sums and increments. The full-line Pitman transform has proved to be a rich object, starting from its introduction in 1975 by Pitman  \cite{Pitman1975} in his celebrated $2M-X$ theorem, and developed considerably over the past three decades, e.g. in work such as \cite{brownian_queues,O'Connell-2003,rep_non_colliding,Biane-Bougerol-OConnell-2005,Biane09,O'Connell-2012,10.1215/00127094-2410289,O’Connell2014,CorwinInvariance,CroydonSasada21,Bougerol22, Dauvegne-Nica-Virag-2021}. The full-line Pitman transform is connected through some of these works to the RSK correspondence (especially its geometric lift from \cite{Noumi-Yamada-2004}). It would be interesting to see if the periodic Pitman transform introduced here enjoys all of the same structure as in the full-line case, and also whether it can be seen to arise from representation-theoretic origins such as crystal bases and Littelman paths (perhaps associated to other Lie algebras).

Here we just begin the investigation of the periodic Pitman transform by showing that it satisfies the same type of three key properties (needed to construct the jointly invariant measures) as in the full-line case. 

\begin{enumerate}
\item {\em Bijection}: $W$ is a bijection between $\R^{\Z_N} \times\R^{\Z_N}$ and itself, and further restrictions to a bijection between $\R^{\Z_N}_{\theta_1} \times\R^{\Z_N}_{\theta_2}$ and itself (where $\mbf X\in \R^{\Z_N}_{\theta}$ means that $\sum_{i\in \Z_N}X_i = \theta$). The inverse of map $M$ is given by $M = sWs$, where $s(\mbf X_1,\mbf X_2) = (\mbf X_2,\mbf X_1)$ permutes coordinates. These results are shown as Corollary \ref{cor:DR_bijection}. Observe the similarity with the form the inverse for the full-line Pitman transform in \cite[Proposition 2.2(iv)]{Biane-Bougerol-OConnell-2005} (see also \cite[Theorem 1.2]{Dauvegne-Nica-Virag-2021}, \cite[Lemma 2.3.8]{Sorensen-thesis}, and \cite[Theorem 1.2]{Dauvergne-Virag-2024}). We note that in those settings, there is also a spatial reflection involved. 
That is, the full analogue of these previous results is $M = sr W r s$, where $r$ is the spatial reflection map $r(\mbf X_1,\mbf X_2) = (\mbf X_1',\mbf X_2')$, where  $X_{m,i}' = -X_{m,N-i}$ (this has the effect of reversing time in the associated random walk). To reconcile with our result, it would be enough so show that $rWr = W$. This is actually not the case, but 
it is shown in Lemma \ref{lem:Pitman_reflect} that $rWr(\mbf X_1,\mbf X_2) = (\wt{\mbf X}_1,\wt{\mbf X}_2)$, where, for $i \in \Z_N$,
$
\wt X_{1,i} =  X_{2,i} - X_{2,i-1} + T_{i-1}^{N,2}(\mbf X_1,\mbf X_2)$ and $\wt X_{2,i} = X_{1,i} - X_{1,i+1} + D_{i+1}^{N,2}(\mbf X_1,\mbf X_2)$.
Hence, while we do not have $rWr = W$, the two sides differ by shifts and the addition of  telescoping terms. In a suitable scaling limit to a continuum periodic Pitman transform, the shifts become infinitesimal and the additional terms are negligible because of the telescoping.  

\item {\em Burke property}: If $(\mbf X_1,\mbf X_2)$ are independent, and each $\mbf X_m$ is a vector of $N$ i.i.d. log-inverse-gamma random variables, then $W(\mbf X_1,\mbf X_2) \deq (\mbf X_1,\mbf X_2)$ (see Proposition \ref{prop:Burke}). The preservation of this product measure is the direct analogue of the classical Burke theorem in \cite{Burke1956} (see also \cite[page 11]{Kelly-2011}) as well as several analogous Burke properties in various other settings, for example \cite{Hsu-Burke-1976} (discrete-time queue with Bernoulli service and arrivals), \cite[Lemma 8]{Aldous-Diaconis-1995} (Hammersley process), \cite[Theorem 3.1]{harrison1990} (Brownian queue), \cite[Theorem 5]{brownian_queues} (positive temperature Brownian queue), \cite[Theorem 3.3]{Seppalainen-2012} (inverse-gamma polymer).
\item {\em Intertwining}: We prove that the map $\mathcal D^{N,k}$ (built from $D^{N,2}$ iteratively) intertwines the generators of the systems \eqref{eq:joint_OCY} and \eqref{DxR_gen} of SDEs (see Proposition \ref{prop:NewSDE}). (Continuous-time intertwining of generators for particle systems was used to prove jointly invariant measures for multi-species particles systems, e.g. TASEP and the Hammersley process \cite{Ferrari-Martin-2007,Ferrari-Martin-2005,Ferrari-Martin-2009}, but, to our knowledge, the present paper is the first to deal with intertwining of diffusion processes in this setting.)

Besides the continuous-time intertwining, the map $\D^{N,k}$ also intertwines two discrete-time Markov chains (Proposition \ref{prop:DNk_disc_intertwine}), which allows us to show that the jointly invariant measures for \eqref{eq:joint_OCY} are the same as the jointly invariant measures for a discrete-time Markov chain that turns out to be a periodic version of the inverse-gamma polymer (see Section \ref{sec:disc_MC}). The key to this intertwining is a nontrivial identity involving the maps $D^{N,2}$ and $T^{N,2}$ given in Proposition \ref{prop:full_intertwine}. This identity is the periodic analog of a key identity found in several works on the full line (see \cite[Lemma 4.4]{Fan-Seppalainen-20}, \cite[Lemma A.5]{Busani-Seppalainen-2020}, \cite[Proposition 5.8]{Bates-Fan-Seppalainen}, \cite[Lemma 7.6]{Seppalainen-Sorensen-21b}, \cite[Lemma A.5]{Groathouse-Janjigian-Rassoul-21}, and \cite[Lemma 2.7]{GRASS-23}).
Our original motivation for Proposition \ref{prop:full_intertwine} was not to demonstrate jointly invariant measures for the periodic inverse-gamma polymer; instead, it was to prove the remarkable invariance noted above of the jointly invariant measures with respect to permuting the slope parameters $\theta_1,\ldots, \theta_k$. The periodic inverse-gamma polymer jointly invariant measure result simply came as a by-product.

Presumably, it should be possible to use the periodic inverse-gamma polymer jointly invariant measure result to recover those of the periodic O'Connell-Yor and KPZ equations through scaling limits, and the discrete intertwining should converge to the continuum intertwining. We do not attempt any of these limits here.

\end{enumerate}
We hope this work will prompt a fuller investigation of the structure of the periodic Pitman transform.

\subsubsection{Taking the KPZ equation limit}
This is done by showing that the polymer partition function converges to the solution to the periodic stochastic heat equation under {\em intermediate disorder} scaling. While such convergence results are known in the full-space \cite{Alberts-Khanin-Quastel-2014b,CSZ,Corwin2017, 10.1214/17-EJP32, Nica-2021,GRASS-23} and half-space \cite{Wu2020,10.1214/22-EJP775,10.1214/23-AOP1634} settings, this seems to be the first result for periodic polymers. As with these previous works, we rely upon convergence of the semi-discrete chaos series expansion for the O'Connell-Yor polymer partition function to the continuum chaos series expansion for the stochastic heat equation. We show $L^2$ convergence of the chaos series by embedding the semi-discrete noise in the continuum space-time white noise. On the semi-discrete side, the chaos series involves periodized Poissonian heat kernels (i.e., infinite sums of Poisson heat kernels), while on the continuum side, it involves periodized Gaussian heat kernels (i.e., infinite sums of Gaussian heat kernels). The convergence relies upon fine scale estimates on the convergence of these kernels and is complicated by the need to use different types of estimates depending on the time and length scales involved. The infinite sums of the Gaussian kernels and its semi-discrete analogue requires finer estimates than in the full-line case. As a specific example of this non-triviality, see the several cases handled after \eqref{eq:SumKj} in the proof Lemma \ref{lem:JkN_unif_bd}.  Unlike many previous works, we also address convergence of the polymer model with general boundary data (i.e. initial conditions).

Once the KPZ equation limit is established (Theorems \ref{L2_conv_main_theorem} and \ref{thm:SHE_convergence}), it remains to show that the limit of the semi-discrete jointly invariant measures are jointly invariant measures for the limiting KPZ equation. This requires verifying a strong form of convergence of the underlying log-inverse-gamma random walk bridges (that are present in the semi-discrete jointly invariant measures) to the Brownian bridges (in the KPZ limit). In particular, the convergence requires scaling the semi-discrete slope parameters and relies upon the KMT coupling for random walk bridges to Brownian bridges from \cite{Dmitrov-Wu-2021} (see Appendix \ref{sec:appBB}).

\subsubsection*{Outline}
In Section \ref{sec:proplim} we record several key properties of the jointly invariant measures (Proposition \ref{prop:g_prop_intro}) and use them to construct the periodic KPZ horizon (Corollary \ref{cor:ICH_process}). We also demonstrate scaling limits including to the periodic stationary horizon (Propositions \ref{prop:betalim}) and the full-line KPZ horizon (Proposition \ref{prop:width_lim}). Besides its inherent interest, we demonstrate (Theorem \ref{t.conG}) a Gaussian process limit  with an explicit covariance function for the long-time height function fluctuations of the periodic KPZ equation when started from varying slopes. This recovers and generalizes results of \cite{GK21,Dunlap-Gu-23}. This result provides evidence for new explicit formulas (Conjecture \ref{conj1}) for the fluctuations of cumulants of the endpoint distribution for the periodic continuum directed random polymer. Section \ref{sec.jt.intro} introduces our results for the periodic semi-discrete stochastic Burgers' equation, related to the O'Connell-Yor polymer as discussed above. The jointly invariant measures for that system are given (Theorem \ref{thm:OCY_joint}) along with a proof of that result in a special and informative case, and a discussion about the complexity and ideas in going to the general case. Section \ref{sec:disc_MC} discusses a discrete-time Markov chain that we relate to the periodic inverse-gamma polymer and gives its jointly invariant measures in Theorem \ref{thm:disc_MC}. 

The main technical body of the paper begins with Section \ref{sec:alg} which develops most of the key algebraic properties of the periodic  Pitman transform map through which our jointly invariant measures are defined and studied. Section \ref{sec:Burke_intertwine} contains the proof of an analogue of Proposition \ref{prop:g_prop_intro} (Proposition \ref{prop:disc_consis}). As a by-product of the investigation of the algebraic structure, we obtain the proof of Theorem \ref{thm:disc_MC} in Section \ref{sec:proof_disc_MC}.  Section \ref{sec:OCY} provides the full proof of Theorem \ref{thm:OCY_joint} by showing that the semi-discrete stochastic Burgers' equation transforms under our map into a system with a simple product form invariant measure (i.e., the continuous time intertwining). Section \ref{sec:SHE_conv_section} addresses convergence of the semi-discrete O'Connell-Yor polymer to the stochastic heat equation, which implies convergence of the semi-discrete stochastic Burgers' equation to its continuum analogue. This convergence is applied in Section \ref{sec:proofmaintthm} to prove our main KPZ result, Theorem \ref{thm:KPZ_invar_main}, the 1F1S principle (Theorem \ref{thm:1f1s}) and the other properties and limits discussed in Section \ref{sec:proplim}. The Gaussian limit process result, Theorem \ref{t.conG}, is proved in Section \ref{s.applicationSection}. Several appendices contain detailed technical computations and some tools, e.g. the KMT bridge coupling, used in the main body.

\subsection{Notation and conventions} \label{sec:not} We record here several pieces of notation and conventions we use. 
\begin{enumerate}
\item $\N:= \{1,2,\ldots\}$ is the natural numbers.
\item $\psi$ is the digamma function $\psi(x) := \f{d}{dx} \log (\Gamma(x))$, and $\psi_1$ is the trigamma function $\psi_1(x) := \psi'(x)$. 
\item $\rho$ is the full-space Gaussian heat kernel $\rho(t,x) := \f{1}{\sqrt{2\pi t}}e^{-\f{x^2}{2t}} \ind\{t > 0\}$.
\item For a real-valued random variable $W$ on a probability space $(\Omega,\Ff,\Pp)$, let $\|W\|_{L^2(\Pp)} = \sqrt{\Ee[W^2]}$
\item $\overset{p}{\longrightarrow}$ and $\Longrightarrow$ mean convergence in probability and distribution, resp., and $\deq$ equality in distribution.
\item $\ind\{E\}$ is the indicator function of the event $E$, taking value $1$ when the $E$ holds and $0$ otherwise.
\item $M_{n,m}(\R)$ denotes the set of real-valued $n \times m$ matrices and $A^T$ denotes the transpose of a matrix $A$.
\item $C[0,1]$ is the space of continuous functions $f:[0,1]\to \R$ and $\Cpin[0,1]$ is the subspace of $f\in C[0,1]$ satisfying $f(0) = 0$. More generally, $C(X,Y)$ denotes the space of continuous functions from $X\to Y$ endowed with the topology of uniform convergence on compact subsets.
\item For a function $f:[0,1]\to \R$ and $x,y \in [0,1]$,  define  $f(x,y) := f(y) - f(x) + \ind\{x > y\}
\bigl(f(1)-f(0)\bigr)$.
\item For $k \in \N$, $\mathcal S(k)$ denotes the set of permutations $\sigma$ on $\{1,\ldots,k\}$.  
\item $\Z_N$ is the cyclic group $\Z/N\Z$ of order $N$ with the operation of addition modulo $N$.
\item When integrating, $d \mbf s_{m:n}$ means $\prod_{i = m}^n ds_i$.
\item For $(s,m),(t,n) \in \R \times \Z$,  $(s,m) \le (t,n)$ if $s \le t$ and $m \le n$, and $(s,m) < (t,n)$ if $s < t$ and $m < n$.
\item For a vector $\mbf X = (X_i)_{i \in \Z_N} \in \R^{\Z_N}$, we define
$
\vecsum(\mbf X) := \sum_{i \in \Z_N} X_i
$.
\item
For two vectors $\mbf X,\mbf X' \in \R^{\Z_N}$, we say $\mbf X< \mbf X'$ if $X_i < X_i'$ for all $i \in \Z_N$.
\item For integers $m \le n$, $\lzb m,n \rzb = \{m,m+1,\ldots,n\}$.
\item For $i,j \in \Z_N$, $[i,j]=\{i,\ldots,j\}$ (in cyclic order), $[i,i]=\{i\}$, $(i,j]=\{i+1,\ldots,j\}$ (in cyclic order), and $(i,i]=\varnothing$, 
$[i,j)=\{i,\ldots,j-1\}$ (in cyclic order), and $[i,i)=\varnothing$.
\item For a sequence $(X_i)_{i \in \Z_N}$, define
\be \label{sum_notat}
X_{[i,j]} = \sum_{\ell \in [i,j]} X_\ell, \qquad X_{(i,j]} = \sum_{\ell \in (i,j]} X_\ell,\qquad\text{and}\qquad X_{[i,j)} = \sum_{\ell \in [i,j)} X_\ell.
\ee
Sometimes, we use this notation with doubly indexed sequences $(X_{m,i})_{1 \le m \le k,i \in \Z_N}$. There, 
\[
X_{m,(i,j]} = \sum_{\ell \in (i,j]} X_{m,\ell}.
\]
and analogously for $X_{m,[i,j]}$ and $X_{m,[i,j)}$. 
\item For a vector $\mbf X \in \R^{\Z_N}$, we define discrete derivative and Laplacian operators:
\[
\diff_i \mbf X = X_i - X_{i-1},\qquad \Lapl_i \mbf X = \diff_{i+1} \mbf X - \diff_{i}\mbf X =X_{i+1}- 2X_i + X_{i-1}.
\]
We will apply these to the inverse exponential $e^{-\mbf X}$. That is,  operators
\[
\diff_i e^{-\mbf X} = e^{-X_i}-  e^{-X_{i-1}},\quad\text{and}\quad \Lapl_i e^{-\mbf X} = e^{-X_{i+1}}  - 2e^{-X_i} + e^{-X_{i-1}}.
\]
For a vector of i.i.d. Brownian motions $\mbf B = (B_i)_{i \in \Z_N}$, we also apply $\diff_i$ to $d\mbf B(t)$, meaning
\[
\diff_i d\mbf B(t) = dB_i(t) - dB_{i-1}(t).
\]
\item For a set $\Lambda \subseteq \R$ and $x,y \in \R$, we write
\[
\Lambda_{x,y}^{k} := \{ 
y_i \in \Lambda, 0 \le i \le k+1,
\text{ with set values } y_0 = x,\; \text{and}\; y_{k+1} = y \} \quad\text{and} \quad \Lambda_y^k = \Lambda_{0,y}^k.
\]
\item For $t > s$, define
\[
\Delta^k(t \viiva s) := \{s= s_0 < s_1 < \cdots < s_k < s_{k + 1} = t: s_i \in \R\}.
\]
Set $\Delta^k := \Delta^k(\infty \viiva -\infty)$ and for $t > 0$, set
$\Delta^k(t) := \Delta^k(t \viiva 0)$.

\end{enumerate}

\subsection{Funding} I.C. was partially supported by the NSF through DMS:1811143, DMS:1937254, and \\ DMS:2246576, the Simons Foundation through a Simons Investigator Grant (Award ID 929852) and through the W.M. Keck Foundation through a Science and Engineering grant
on ``Extreme Diffusion''. Y. G. was partially supported by the NSF through DMS:2203014.  E.S. was partially supported by the Fernholz foundation. 

\subsection{Conflict of interest statement}
The authors have no conflicts of interest to declare.

\subsection{Data availability statement} There is no data associated to this manuscript. 

\subsection{Acknowledgments}
Y.G. acknowledges the hospitality of the Columbia University Math Department during the Fall 2023 semester when this work was initiated. E.S. wishes to thank Alex Dunlap for many insightful discussions, Yuri Bakhtin for pointers to the literature, and Eva Engel for pointing out several typos in an earlier version of the paper. We thank the two anonymous referees for their thoughtful reading and comments, which has greatly helped to improve the exposition of this paper.

\section{Properties, limits and applications of periodic joint invariant measures}\label{sec:proplim}

\subsection{Periodic KPZ horizon}
We record several key properties of $\Pm_\beta^{(\theta_1,\ldots,\theta_m)}$ from Theorem \ref{thm:KPZ_invar_main}.
\begin{proposition}
\label{prop:g_prop_intro}
Consider any $k\in \N$, $(\theta_1,\ldots,\theta_k) \in \R^k$, and $(g_1,\ldots,g_k) \sim \Pm_\beta^{(\theta_1,\ldots,\theta_k)}$. Then:
\begin{enumerate} [label=\textup{(\roman*)}]
\item \label{itm:g_perm_invar} \textup{(Consistency and symmetry)} For any permutation $\sigma \in \mathcal S(k)$ and any index $1 \le m \le k$, 
\be \label{12}
(g_{\sigma(1)},\ldots,g_{\sigma(m)}) \sim \Pm_\beta^{(\theta_{\sigma(1)},\ldots,\theta_{\sigma(m)})}.
\ee
In particular, the measures $\Pm_\beta^{(\theta_1,\ldots,\theta_k)}$ are a consistent family as $k\in \N$ and  $(\theta_1,\ldots,\theta_k) \in \R^k$ varies.
\item\label{itm:g_consis}\textup{(Distribution of a component function)} For $1 \le m \le k$, marginally $g_{m} \deq \B_{\beta,\theta_m}$, see \eqref{eq.bb}.
 \item \label{itm:g_mont}\textup{(Monotonicity)} If $\theta_r = \theta_m$ for some $1 \le r \le m \le k$, then $g_r = g_m$ almost surely. If $\theta_r > \theta_m$ for some  $r \neq m$, then  $g_r(x,y) > g_m(x,y)$ for all $x,y \in [0,1]$. In particular, 
 $x \mapsto g_r(x) - g_m(x)$ is strictly increasing. 
 \item \label{itm:Pm_cont}\textup{(Continuity)} If $(\theta_1^{(n)},\ldots,\theta_k^{(n)})_{n \ge 0}$ is a sequence converging to $(\theta_1,\ldots,\theta_k)$, and  $(g_1^{(n)},\ldots,g_k^{(n)}) \sim \Pm_\beta^{(\theta_1^{(n)},\ldots,\theta_k^{(n)})}$, then $(g_1^{(n)},\ldots,g_k^{(n)})$ converges in distribution to $(g_1,\ldots,g_k)$.
\end{enumerate}
\end{proposition}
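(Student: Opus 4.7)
The plan is to deduce all four items from three ingredients: the explicit formula \eqref{Psi_map} defining $\Psi^k$, the joint invariance of $\Pm_\beta^{(\theta_1,\ldots,\theta_k)}$ from Theorem \ref{thm:KPZ_invar_main}, and the 1F1S uniqueness from Proposition \ref{prop:1f1s}. Direct computation enters only at the level of $\Phi^2$.

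For (i), the truncation part (identity $\sigma$, arbitrary $m \le k$) is immediate from \eqref{Psi_map}: the first $m$ components of $\Psi^k(f_1,\ldots,f_k)$ depend only on $(f_1,\ldots,f_m)$ and coincide with $\Psi^m(f_1,\ldots,f_m)$. The symmetry under a general $\sigma \in \mathcal{S}(k)$ follows by combining Theorem \ref{thm:KPZ_invar_main} and Proposition \ref{prop:1f1s}: if $(g_1,\ldots,g_k) \sim \Pm_\beta^{(\theta_1,\ldots,\theta_k)}$ is jointly invariant in the sense of Definition \ref{def.jtinv}, then the permuted tuple $(g_{\sigma(1)},\ldots,g_{\sigma(k)})$ trivially satisfies the same definition with slopes $(\theta_{\sigma(1)},\ldots,\theta_{\sigma(k)})$, and the uniqueness clause of Proposition \ref{prop:1f1s} forces its law to be $\Pm_\beta^{(\theta_{\sigma(1)},\ldots,\theta_{\sigma(k)})}$. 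Truncating to the first $m$ coordinates then gives \eqref{12}.

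Item (ii) follows from (i) by taking any $\sigma$ with $\sigma(1) = m$ and truncating: $g_m \sim \Pm_\beta^{(\theta_m)}$, which by definition is the law of $\Psi^1(f_1) = f_1 \deq \B_{\beta,\theta_m}$. For the equality part of (iii), apply Proposition \ref{prop:1f1s} to initial data with $f_r = f_m$ (permissible since $\theta_r=\theta_m$); the $r$-th and $m$-th coordinates of the limiting vector then coincide almost surely, while the limit has law $\Pm_\beta^{(\theta_1,\ldots,\theta_k)}$. For the strict monotonicity when $\theta_r > \theta_m$, I reduce via (i) to the $k=2$ marginal with slopes $(\theta_r,\theta_m)$, since the property depends only on the joint marginal law $\Pm_\beta^{(\theta_r,\theta_m)}$. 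Splitting $\int_0^1 e^{f_2(x,y)-f_1(x,y)}\,dy$ at $y=x$ into the two regions where the indicator $\ind\{x > y\}$ is constant, and then differentiating directly in $x$, yields
\[
\partial_x \Phi^2(f_1,f_2)(x) = f_1'(x) + \frac{e^{\theta_2-\theta_1}-1}{\int_0^1 e^{f_2(x,y)-f_1(x,y)}\,dy}.
\]
When $\theta_1 > \theta_2$ this derivative is strictly less than $f_1'(x)$, so $f_1 - \Phi^2(f_1,f_2) = g_r - g_m$ is strictly increasing on $[0,1]$, which is the strict-increase conclusion of (iii) and is equivalent to $g_r(x,y) > g_m(x,y)$ at all $(x,y)$ other than the trivial corners where $g_r(x,y) = g_m(x,y) = 0$.

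For (iv), couple the Brownian bridges across $n$ by setting $f_m^{(n)}(x) := \beta \B_m(x) + \theta_m^{(n)} x$ for a single choice of independent standard Brownian bridges $\B_m$, so that $f_m^{(n)}(x) \to f_m(x) := \beta \B_m(x) + \theta_m x$ uniformly in $x \in [0,1]$ almost surely. The map $\Phi^2 : \Cpin[0,1]^2 \to \Cpin[0,1]$ is continuous in the uniform topology, since the integrals in \eqref{eq:Phi_def} vary continuously in $(f_1,f_2)$ by bounded convergence and remain uniformly bounded away from zero on sup-norm bounded sets of inputs; iterating via \eqref{Phi_iter} gives continuity of $\Psi^k$. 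The outputs therefore converge almost surely in the coupled realization, hence in distribution. The main technical subtlety in this plan is the derivative computation for (iii): naively differentiating under the log and integral misses the contribution from the jump of the integrand across $y=x$, and one must split the $y$-integral explicitly (or work with distributional derivatives in $x$) to obtain the exponential jump factor $e^{\theta_2-\theta_1}-1$ in the numerator rather than the naive $\theta_2-\theta_1$; once the formula is correct the remaining sign analysis is elementary.
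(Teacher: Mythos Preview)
Your proof is correct, but for items (i)--(ii) it takes a different route from the paper's own argument. The paper proves (i)--(ii) by first establishing the discrete analogue (Proposition~\ref{prop:disc_consis}, via the periodic Pitman transform machinery of Section~\ref{sec:alg}) and then passing to the limit through the KMT coupling of Lemma~\ref{lem:initial_data_conv}. You instead appeal directly to the joint invariance of Theorem~\ref{thm:KPZ_invar_main} together with the uniqueness clause of Proposition~\ref{prop:1f1s}. The paper explicitly acknowledges your route as a valid alternative (see the paragraph following Proposition~\ref{prop:g_prop_intro}); its own choice reflects a preference for proving the symmetry ``from the structure of the map $\Psi^k$'' rather than via uniqueness, which has the benefit of illuminating why the Burke and intertwining identities of Section~\ref{sec:Burke_intertwine} exist. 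Your route is shorter once Theorem~\ref{thm:KPZ_invar_main} and Proposition~\ref{prop:1f1s} are in hand.

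For (iii) and (iv) your arguments coincide with the paper's: the monotonicity is Lemma~\ref{lem:Phi_ord} (your derivative computation is equivalent to the second equality of Lemma~\ref{lem:Phiout}), and (iv) is exactly the paper's continuity-of-$\Psi^k$ argument. Two small presentational points: your displayed formula $\partial_x \Phi^2(f_1,f_2)(x) = f_1'(x) + \cdots$ is formally ill-posed since $f_1$ is a Brownian bridge path and not differentiable; what you actually need (and what your computation delivers) is the derivative of the difference $\Phi^2(f_1,f_2)(x) - f_1(x)$, which \emph{is} smooth. Also, the ``trivial corners'' where equality holds in (iii) are in fact the full diagonal $x=y$, not just corners, since $g(x,x)=0$ by definition for any $g$.
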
 
Proposition \ref{prop:g_prop_intro} is proved in Section \ref{sec:proof567}.

Items \ref{itm:g_perm_invar} and \ref{itm:g_consis} are direct consequences of the uniqueness in Theorem \ref{thm:1f1s}. However, we provide an alternative and direct proof using the structure of the map $\FcDm^k$. In fact, we prove an analogous result at the level of our semi-discretized system of SDEs and then pass them to the KPZ equation limit. The proof of these results are involved, and rely upon a Burke-type property (Proposition \ref{prop:Burke}) as well as an interlacing relation (Proposition \ref{prop:full_intertwine}) for the model. 

The symmetry of the measures $\Pm_\beta^{(\theta_1,\ldots,\theta_k)}$ under permutation of the slope parameters for the input Brownian bridges in item \ref{itm:g_perm_invar} is new compared to previous results on the full line \cite{Ferrari-Martin-2007,Ferrari-Martin-2005,Ferrari-Martin-2009,Martin-2020,Fan-Seppalainen-20,Seppalainen-Sorensen-21b,GRASS-23}. In those cases the analogous maps require the slope parameters for inputs to be ordered in order to ensure the output of the map is finite (i.e., the full-line maps generally involve infinite integrals that may not converge unless parameters are correctly ordered---see also the discussion after Proposition \ref{prop:width_lim}). Item \ref{itm:g_mont} is a strong form of monotonicity that is better understood at the level of the Burgers equation. Indeed, formally, we have $\partial_x g_r(x) > \partial_x g_m(x)$ if $\theta_r>\theta_m$. While these derivatives are distributions rather than bona-fide functions, the function $x \mapsto g_r(x) - g_m(x)$ is in fact, almost surely differentiable. This can be seen as follows: By item \ref{itm:g_perm_invar}, $(g_r,g_m)$ marginally has the law of $\bigl(g_r,\cDm^2(g_r,f_m)\bigr)$, where $f_m \deq \B_{\beta,\theta_m}$ is independent of $g_r$. It follows from definitions (see Lemma \ref{lem:Phiout}) that
\begin{align*}
g_m(x) =\cDm^2(g_r,f_m)(x) = g_r(x) + \log \f{\displaystyle{e^{\theta_m - \theta_r}\int_0^x e^{f_m(y) - g_r(y)}\,dy + \int_x^1 e^{f_m(y) - g_r(y)}\,dy}  }{\displaystyle{\int_0^1 e^{f_m(y) - g_r(y)}\,dy }},
\end{align*}
from which the differentiability of $g_r - g_m$ follows. This differentiability should hold more generally for the difference of any two solutions to the KPZ equation driven by the same noise, see e.g., \cite[Corollary 1.13]{HairerKPZ} for periodic KPZ when the slope parameter is $0$, or (57) in \cite{O’Connell2016} or \cite{GRASS-23} in the full-line KPZ case. This differentiability of the difference has some useful applications; it was recently used by Dunlap and the third author \cite{Dunlap-Sorensen-2024} in the setting of the KPZ equation on the full line (using the description in \cite{GRASS-23}) to show the existence of an invariant measure from the perspective of a shock.

A consequence of the consistency and monotonicity in Proposition \ref{prop:g_prop_intro} is the following.
\begin{corollary}[Periodic KPZ horizon] \label{cor:ICH_process}
For every $\beta > 0$, there exists a stochastic process $\ICH_\beta = (g_{\beta,\theta})_{\theta \in \R} \in C\bigl(\R,\Cpin[0,1]\bigr)$ (viewed as the function $\theta \mapsto g_{\beta,\theta})$ so that, for all $k\in \N$ and $(\theta_1,\ldots,\theta_k) \in \R^k$, marginally
\be \label{eq:marg}
(g_{\beta,\theta_1},\ldots,g_{\beta,\theta_k}) \sim \Pm_\beta^{(\theta_1,\ldots,\theta_k)}.
\ee
We call $\ICH_\beta$ the {\em periodic KPZ horizon} and denote its law by $\Pm_\beta$. 
\end{corollary}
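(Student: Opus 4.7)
\textbf{Proof proposal for Corollary \ref{cor:ICH_process}.} The plan is to build $\ICH_\beta$ in two stages: first construct it on a countable dense subset of $\R$ via the Kolmogorov extension theorem, and then extend to all of $\R$ by continuity. The main observation that makes this clean is that Proposition \ref{prop:g_prop_intro}\ref{itm:g_mont} gives a \emph{deterministic} Lipschitz bound in $\theta$, bypassing the need for any Kolmogorov--Chentsov moment estimate.

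First I would use Proposition \ref{prop:g_prop_intro}\ref{itm:g_perm_invar} to note that the family $\{\Pm_\beta^{(\theta_1,\ldots,\theta_k)}\}$, as $k\in \N$ and $(\theta_1,\ldots,\theta_k) \in \Q^k$ vary, is a consistent family of finite-dimensional distributions on the Polish space $\Cpin[0,1]$: consistency under coordinate projection corresponds to \eqref{12} with a permutation that drops a coordinate, and permutation invariance is \eqref{12} itself. The Kolmogorov extension theorem then yields a probability space carrying a $\Cpin[0,1]$-valued process $(g_{\beta,\theta})_{\theta \in \Q}$ with the required finite-dimensional laws.

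Next, I would exploit monotonicity to get uniform continuity in $\theta$. Fix $\theta,\theta' \in \Q$ with $\theta > \theta'$. Proposition \ref{prop:g_prop_intro}\ref{itm:g_mont} says $x \mapsto g_{\beta,\theta}(x) - g_{\beta,\theta'}(x)$ is almost surely non-decreasing, and by Proposition \ref{prop:g_prop_intro}\ref{itm:g_consis} and the $\Cpin$ convention we have $g_{\beta,\theta}(0) - g_{\beta,\theta'}(0) = 0$ and $g_{\beta,\theta}(1) - g_{\beta,\theta'}(1) = \theta - \theta'$. Hence almost surely
\[
0 \;\le\; g_{\beta,\theta}(x) - g_{\beta,\theta'}(x) \;\le\; \theta - \theta' \qquad \text{for all } x \in [0,1].
\]
Since $\Q$ is countable, this can be arranged to hold simultaneously on a single probability-one event $\Omega_0$, so that on $\Omega_0$,
\[
\|g_{\beta,\theta} - g_{\beta,\theta'}\|_\infty \;\le\; |\theta - \theta'| \qquad \text{for all } \theta,\theta' \in \Q.
\]

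On $\Omega_0$, the map $\theta \mapsto g_{\beta,\theta}$ is $1$-Lipschitz from $\Q$ into $\Cpin[0,1]$, so it extends uniquely by limits to a $1$-Lipschitz map $\R \to \Cpin[0,1]$. Define $g_{\beta,\theta}$ for $\theta \in \R \setminus \Q$ by this extension (and arbitrarily on the null complement of $\Omega_0$); the resulting process $\ICH_\beta = (g_{\beta,\theta})_{\theta \in \R}$ takes values in $C(\R,\Cpin[0,1])$ and is measurable as a pointwise limit of countably many measurable random elements. To verify \eqref{eq:marg} for general $(\theta_1,\ldots,\theta_k) \in \R^k$, I choose rational sequences $\theta_m^{(n)} \to \theta_m$; Lipschitz continuity gives $(g_{\beta,\theta_1^{(n)}},\ldots,g_{\beta,\theta_k^{(n)}}) \to (g_{\beta,\theta_1},\ldots,g_{\beta,\theta_k})$ almost surely, while Proposition \ref{prop:g_prop_intro}\ref{itm:Pm_cont} gives $\Pm_\beta^{(\theta_1^{(n)},\ldots,\theta_k^{(n)})} \Longrightarrow \Pm_\beta^{(\theta_1,\ldots,\theta_k)}$; matching the two limits yields \eqref{eq:marg}.

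The only potential obstacle is conceptual rather than technical: one might initially try to prove continuity in $\theta$ via a Kolmogorov--Chentsov moment bound on $\Ee[\|g_{\beta,\theta} - g_{\beta,\theta'}\|_\infty^p]$, which would require genuine computation. Recognizing that monotonicity combined with the fixed boundary values $g_{\beta,\theta}(0)=0$, $g_{\beta,\theta}(1)=\theta$ already forces a pathwise $1$-Lipschitz bound is the key simplification, and once this is seen the rest is routine.
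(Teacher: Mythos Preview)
Your proof is correct and follows essentially the same approach as the paper's own proof: Kolmogorov extension on $\Q$ using the consistency from Proposition~\ref{prop:g_prop_intro}\ref{itm:g_perm_invar}, then the pathwise $1$-Lipschitz bound in $\theta$ from monotonicity \ref{itm:g_mont} combined with the fixed endpoint values, extension to $\R$ by uniform continuity, and finally verification of \eqref{eq:marg} at real parameters via Proposition~\ref{prop:g_prop_intro}\ref{itm:Pm_cont}. Your write-up is somewhat more detailed (e.g.\ the remark on measurability and the explicit contrast with Kolmogorov--Chentsov), but the argument is the same.
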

\begin{proof}
By the consistency in Proposition \ref{prop:g_prop_intro}\ref{itm:g_perm_invar} and the Kolmogorov consistency theorem, there exists a process $(g_{\beta,\theta})_{\theta \in \Q} \in (\Cpin[0,1])^\Q$ satisfying \eqref{eq:marg} for all $k\in \N$ and $(\theta_1,\ldots,\theta_k) \in \Q^k$. By monotonicity (Proposition \ref{prop:g_prop_intro}\ref{itm:g_mont}), almost surely, for every rational pair $\theta_1 < \theta_2$, and $x \in [0,1]$,
\begin{equation}\label{eq.ctnty}
0 = g_{\beta,\theta_2}(0) - g_{\beta,\theta_1}(0) \le g_{\beta,\theta_2}(x) - g_{\beta, \theta_1}(x) \le g_{\beta,\theta_2}(1) - g_{\beta,\theta_1}(1) = \theta_2 - \theta_1.
\end{equation}
Hence, $\sup_{x \in [0,1]} |g_{\beta,\theta_2}(x) - g_{\beta,\theta_1}(x)| \le |\theta_2 - \theta_1|$ for all $(\theta_1,\theta_2) \in \Q^2$. This means that $(g_{\beta,\theta})_{\theta \in \Q}$ is a uniformly continuous function $\Q \to \Cpin[0,1]$, and hence has a unique extension to a process $(g_{\beta,\theta})_{\theta \in \R} \in C\bigl(\R,\Cpin[0,1]\bigr)$. The finite-dimensional projections satisfy \eqref{eq:marg} by continuity (Proposition \ref{prop:g_prop_intro}\ref{itm:Pm_cont}). 
\end{proof}

\begin{corollary} \label{cor:derivative}
As $\theta \to 0$, we have the following convergence in distribution on $\Cpin[0,1]$:
\[
\Biggl(\f{g_{\beta,\theta}(x) - g_{\beta,0}(x)}{\theta}:x \in [0,1] \Biggr) \Longrightarrow \Biggl(\f{\int_0^x e^{\beta (\B^{(1)}(y) + \B^{(2)}(y))}\,dy }{\int_0^1 e^{\beta (\B^{(1)}(y) + \B^{(2)}(y))}\,dy }: x \in [0,1]\Biggr),
\]
where $\B^{(1)},\B^{(2)}$ are independent standard Brownian bridges.
\end{corollary}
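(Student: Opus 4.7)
The argument combines the consistency/symmetry in Proposition~\ref{prop:g_prop_intro}\ref{itm:g_perm_invar} with the explicit integral formula for $\cDm^2$ displayed in the discussion immediately following that proposition. By Proposition~\ref{prop:g_prop_intro}\ref{itm:g_perm_invar}, $(g_{\beta,0},g_{\beta,\theta}) \sim \Pm_\beta^{(0,\theta)}$, so one may assume $(g_{\beta,0},g_{\beta,\theta}) = (f_1,\cDm^2(f_1,f_2))$ with $f_1 \deq \B_{\beta,0}$ and $f_2 \deq \B_{\beta,\theta}$ independent. Specializing the recorded formula to $\theta_r = 0$, $\theta_m = \theta$ and collecting terms yields
\[
g_{\beta,\theta}(x) - g_{\beta,0}(x) = \log\bigl(1 + (e^\theta - 1)\, R_\theta(x)\bigr),
\qquad
R_\theta(x) := \f{\int_0^x e^{f_2(y) - f_1(y)}\,dy}{\int_0^1 e^{f_2(y) - f_1(y)}\,dy} \in [0,1].
\]

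Next, realize the input bridges on a common probability space as $f_1 = \beta \B^{(1)}$ and $f_2(y) = \beta \wt\B^{(2)}(y) + \theta y$, where $\B^{(1)}, \wt\B^{(2)}$ are independent standard Brownian bridges. The functional $(g,h) \mapsto \bigl(x \mapsto \int_0^x e^{g-h}\,dy / \int_0^1 e^{g-h}\,dy\bigr)$ is continuous from $C[0,1]^2$ to $C[0,1]$ in the uniform topology, so almost surely $R_\theta \to R_0$ uniformly as $\theta \to 0$, where
\[
R_0(x) := \f{\int_0^x e^{\beta(\wt\B^{(2)}(y) - \B^{(1)}(y))}\,dy}{\int_0^1 e^{\beta(\wt\B^{(2)}(y) - \B^{(1)}(y))}\,dy}.
\]
Divide the first display by $\theta$. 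The elementary bounds $|\log(1+u) - u| \le u^2$ for $|u| \le 1/2$ and $|e^\theta - 1 - \theta| = O(\theta^2)$, combined with the deterministic bound $R_\theta(x) \in [0,1]$, give
\[
\sup_{x \in [0,1]} \Biggl|\f{\log\bigl(1 + (e^\theta - 1) R_\theta(x)\bigr)}{\theta} - R_\theta(x)\Biggr| = O(\theta) \longrightarrow 0
\]
deterministically. Combining this with the almost sure uniform convergence $R_\theta \to R_0$ and the continuous mapping theorem on $\Cpin[0,1]$ yields $\theta^{-1}(g_{\beta,\theta} - g_{\beta,0}) \Longrightarrow R_0$.

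Finally, identify $R_0$ with the right-hand side of the corollary. Since a standard Brownian bridge is symmetric in law about zero and $\B^{(1)}, \wt\B^{(2)}$ are independent, the process $\wt\B^{(2)} - \B^{(1)}$ has the same law as $\B^{(1)} + \B^{(2)}$ for two independent standard Brownian bridges, so applying the continuous ratio functional gives $R_0 \deq$ RHS of the corollary. There is no real obstacle: the structural content is entirely supplied by Proposition~\ref{prop:g_prop_intro}, and everything else is routine asymptotic analysis with uniform $x \in [0,1]$ control provided by the a.s.\ bound $R_\theta \in [0,1]$.
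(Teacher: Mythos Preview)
Your proof is correct and follows essentially the same approach as the paper's own proof: both use the explicit formula for $\cDm^2$ (equivalently, Lemma~\ref{lem:Phiout}) to write $g_{\beta,\theta}(x)-g_{\beta,0}(x)=\log\bigl(1+(e^\theta-1)R_\theta(x)\bigr)$, then Taylor-expand in $\theta$ and identify the limiting law via the distributional symmetry $-\B^{(1)}\deq\B^{(1)}$. Your version is slightly more explicit in that you build a common-probability-space coupling to upgrade the paper's distributional convergence of $f_{\beta,\theta}-f_{\beta,0}$ to almost sure uniform convergence, and you spell out the uniform-in-$x$ Taylor bounds, but these are expository refinements rather than a different argument.
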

\begin{proof}
By definition, for $\theta \neq 0$, $(g_{\beta,0},g_{\beta,\theta})$ has the law of $\bigl(f_{\beta,0},\cDm^2(f_{\beta,0},f_{\beta,\theta})\bigr)$, where $f_{\beta,0},f_{\beta,\theta}$ are independent and $f_{\beta,\theta} \deq \B_{\beta,\theta}$. Rearranging the map $\cDm^2$ (see Lemma \ref{lem:Phiout}), we have 
\begin{align}
\Biggl(\f{g_{\beta,\theta}(x) - g_{\beta,0}(x)}{\theta}: x \in [0,1]\Biggr) &\deq \Biggl(\f{1}{\theta}\log \biggl( \f{e^{\theta}\int_0^x e^{f_{\beta,\theta}(y) - f_{\beta,0}(y)}\,dy + \int_x^1e^{f_{\beta,\theta}(y) - f_{\beta,0}(y)}\,dy }{\int_0^1 e^{f_{\beta,\theta}(y) - f_{\beta,0}(y)}\,dy }\biggr): x \in [0,1]\Biggr) \nonumber  \\
&=\Biggl(\f{1}{\theta}\log \biggl(1 + \f{(e^{\theta} - 1)\int_0^x e^{f_{\beta,\theta}(y) - f_{\beta,0}(y)}\,dy}{\int_0^1 e^{f_{\beta,\theta}(y) - f_{\beta,0}(y)}\,dy }\biggr):x \in [0,1]\Biggr). \label{eq:pre_deriv}
\end{align}
As $\theta \searrow 0$, $f_{\beta,\theta} - f_{\beta,0}$ converges in law to $\beta(\B^{(1)} + \B^{(2)})$, with respect to the topology of uniform convergence. The result now follows from a Taylor expansion in \eqref{eq:pre_deriv}. 
\end{proof}

The periodic KPZ horizon is the periodic counterpart to the (full-line) {\em KPZ horizon} constructed in \cite{GRASS-23}. The existence of a $C\bigl(\R,\Cpin[0,1]\bigr)$ process whose finite-dimensional projections are jointly invariant for the periodic KPZ equation was shown previously in \cite{Dunlap-Gu-23}. Several properties of that process 
were established therein. In particular, Corollary \ref{cor:derivative} is an alternate derivation of the law of the derivative in \cite[Theorem 1.3]{Dunlap-Gu-23}, but our joint distribution of $(g_{\beta,0},g_{\beta,\theta})$ contains more information.  The contribution of Corollary \ref{cor:ICH_process} is both the explicit description of its finite dimensional distributions in terms of $\Pm_\beta^{(\theta_1,\ldots,\theta_k)}$ as well as a direct proof of the existence of the process in $C\bigl(\R,\Cpin[0,1]\bigr)$ based from that description (i.e., using the consistency that follows from our formulas for the jointly invariant measures).

The continuity of $\ICH_\beta$ should be contrasted with the results for models on the full line, namely the KPZ horizon \cite{GRASS-23} (as well as its long-time limit, the stationary horizon \cite{Busani-2021,Seppalainen-Sorensen-21b}, and some discrete variants such as for  exponential last-passage percolation and the inverse-gamma polymer \cite{Fan-Seppalainen-20,Bates-Fan-Seppalainen}). In the full-line setting, it has been shown that the analogous processes to $\ICH_\beta$ actually have a dense set of discontinuities in $\theta$. As a side note, continuity in the periodic setting is more general, as seen from the argument in \eqref{eq.ctnty}. For instance, recently \cite{Dunlap-2024} showed that for the inviscid Burgers equation with periodic Poisson forcing, while discontinuities exist in the jointly invariant measure process on the level of the Burgers' equation, they disappear when integrated in space, i.e., at the level of the the Hamilton-Jacobi equation that is analogous to the KPZ equation in that setting. It is easy to see how the continuity argument used in \eqref{eq.ctnty} breaks down in the limit where the periodic KPZ horizon converges to the full-line KPZ horizon. In particular, in Proposition \ref{prop:width_lim} below, we show that, as $L \to \infty$, $g_{\sqrt L \beta,\theta L}(\f{x}{L})$ (as a process in $\theta$ and $x$) converges in the sense of finite-dimensional projections to the jointly invariant measures for the KPZ equation on the full line, i.e., the KPZ horizon \cite{GRASS-23}. Under that scaling \eqref{eq.ctnty} becomes
\[
\sup_{x \in [0,L]} \Bigl|g_{\sqrt L \beta,\theta_2 L}\bigl(\f{x}{L}\bigl) - g_{\sqrt L \beta,\theta_1 L}\bigl(\f{x}{L}\bigr)\Bigr| \le L|\theta_2 - \theta_1|,
\]
and the right-hand side diverges as $L \to \infty$; hence the continuity argument cannot be applied.

\subsection{Periodic stationary horizon limit} 
We record two scaling limits of $\ICH_\beta$ as $\beta \searrow 0$ and as $\beta \to \infty$. The first, $\wt g_{0,\theta}$ below, is the relatively trivial jointly invariant measures of the periodic Edwards-Wilkinson equation (i.e., additive stochastic heat equation) while the second, $\wt g_{\infty,\theta}$ below, should be the jointly invariant initial conditions for the (conjectural) periodic KPZ fixed point. In the Edwards-Wilkinson case this is straightforward to check. On the other hand, the periodic KPZ fixed point has not yet been constructed as a Markov process and taking any sort of asymptotic (e.g. at the level of various marginal distribution as done in the TASEP case in \cite{BaikLiu}) of the periodic KPZ equation is an outstanding open problem.  In analogy with the full-line setting \cite{Busani-2021,Seppalainen-Sorensen-21b,Busa-Sepp-Sore-22a,Sorensen-thesis} we call $\wt g_{\infty,\theta}$ the {\em periodic stationary horizon}.

\begin{proposition} \label{prop:betalim}
For $\beta > 0$, let $\ICH_\beta \sim \Pm_\beta$, and define the rescaled process $\wt \ICH_\beta = (\wt g_{\beta,\theta})_{\theta \in \R}$ by 
\[
\wt g_{\beta,\theta}(x) := \f{1}{\beta} g_{\beta,\beta \theta}(x), \qquad \theta \in \R,x \in [0,1].
\]
Then, there exist scaling limits of $\wt \ICH_\beta$, with respect to the uniform topology on $C\bigl(\R,\Cpin[0,1]\bigr)$, as $\beta \searrow 0$ and $\beta \to \infty$. We call these $\wt \ICH_0 = (\wt g_{0,\theta})_{\theta \in \R}$ and $\wt \ICH_\infty = (\wt g_{\infty,\theta})_{\theta \in \R}$ respectively. Each marginal $\wt g_{0,\theta}$ and $\wt g_{\infty,\theta}$ has the law of $B_{1,\theta}$, and the joint laws are characterized as follows: 
\begin{enumerate} [label=\textup{(\roman*)}]
\item \label{itm:beta0lim} For $x \in [0,1]$ and $\theta \in \R$, $\wt g_{0,\theta}(x) = \wt g_{0,0}(x) + \theta x$, almost surely. 
\item \label{itm:betainflim}\textup{(Periodic stationary horizon)} For any $k\in \N$ and $(\theta_1,\ldots,\theta_k) \in \R^k$, 
let $f_m=\B_{1,\theta_m}$, for $1\leq m\leq k$, be independent slope $\theta_m$, variance $1$ Brownian bridges \eqref{eq.bb} and set, for $x\in [0,1]$,
\be \label{eq:grtilde}
\begin{aligned}
\wt g_m(x):= f_m(x) &+ \sup_{x_1,\ldots,x_{m-1} \in [0,1],x_0 = x} \Bigl\{\sum_{r = 1}^{m-1}\bigl(f_m(x_{r-1},x_r) - f_r(x_{r-1},x_r)\bigr)  \Bigr\} \\
&- \sup_{x_1,\ldots,x_{m-1} \in [0,1],x_0 = 0} \Bigl\{\sum_{r = 1}^{m-1}\bigl(f_m(x_{r-1},x_r) - f_r(x_{r-1},x_r)\bigr)  \Bigr\}.
\end{aligned}
\ee
Then, $(\wt g_{\infty,\theta_1},\ldots,\wt g_{\infty,\theta_k}) \deq (\wt g_1,\ldots,\wt g_k)$.
\end{enumerate}
\end{proposition}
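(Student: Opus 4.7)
The plan has two components: tightness of $\{\wt \ICH_\beta\}_{\beta>0}$ in $C(\R, \Cpin[0,1])$, and identification of finite-dimensional limits via the polymer representation~\eqref{eq:alternate_rep} combined with Laplace-type asymptotics. Together these identify the scaling limits.

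\textbf{Tightness.} The rescaling, Proposition~\ref{prop:g_prop_intro}\ref{itm:g_consis}, and Brownian scaling give $\wt g_{\beta,\theta} \deq \B_{1,\theta}$ independently of $\beta$. The monotonicity bound~\eqref{eq.ctnty} (a consequence of Proposition~\ref{prop:g_prop_intro}\ref{itm:g_mont}) yields the $\beta$-uniform Lipschitz estimate
\[
\sup_{x \in [0,1]} |\wt g_{\beta,\theta_2}(x) - \wt g_{\beta,\theta_1}(x)| \le |\theta_2 - \theta_1|.
\]
Combined, these two facts produce tightness in $C(\R, \Cpin[0,1])$ with its uniform-on-compacts topology via a standard Arzel\`a-Ascoli argument.

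\textbf{Finite-dimensional convergence.} Fix $(\theta_1, \ldots, \theta_k) \in \R^k$ and let $f_m \deq \B_{\beta,\beta\theta_m}$ be independent, so that $\FcDm^k(f_1, \ldots, f_k) \deq (g_{\beta,\beta\theta_1}, \ldots, g_{\beta,\beta\theta_k})$. Writing $\hat f_m := \beta^{-1} f_m \deq \B_{1,\theta_m}$ (independent, $\beta$-free), the polymer representation~\eqref{eq:alternate_rep} rescales to
\[
\wt g_{\beta,\theta_m}(x) = \hat f_m(x) + R_\beta^m(x) - R_\beta^m(0),
\]
where
\[
R_\beta^m(x) = \frac{1}{\beta} \log \int_{[0,1]^{m-1},\, x_0 = x} \exp\Bigl(\beta \sum_{r=1}^{m-1} \bigl(\hat f_m(x_{r-1}, x_r) - \hat f_r(x_{r-1}, x_r)\bigr)\Bigr) \prod_{r=1}^{m-1} dx_r.
\]
For $\beta \to \infty$, Laplace's method on the compact cube $[0,1]^{m-1}$ (valid since the integrand is almost surely continuous) yields $R_\beta^m(x) \to \sup_{x_1, \ldots, x_{m-1} \in [0,1]} \sum_{r=1}^{m-1} (\hat f_m - \hat f_r)(x_{r-1}, x_r)$ uniformly in $x \in [0,1]$, matching~\eqref{eq:grtilde} and proving part~\ref{itm:betainflim}.

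For $\beta \searrow 0$, first-order Taylor expansion of $e^{\beta \cdot}$ inside the integral and of $\log(1 + \beta \cdot)$ outside gives
\[
R_\beta^m(x) \longrightarrow \int_{[0,1]^{m-1},\, x_0 = x} \sum_{r=1}^{m-1} \bigl(\hat f_m(x_{r-1}, x_r) - \hat f_r(x_{r-1}, x_r)\bigr) \prod_{r=1}^{m-1} dx_r,
\]
again uniformly in $x \in [0,1]$. Using $\hat f_r(x,y) = \hat f_r(y) - \hat f_r(x) + \ind\{x>y\} \theta_r$ and Fubini, each summand with $r \ge 2$ integrates (over $x_{r-1}, x_r \in [0,1]$) to the $x$-independent constant $(\theta_m - \theta_r)/2$, so these cancel upon forming $R_\beta^m(x) - R_\beta^m(0)$. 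The remaining $r = 1$ contribution simplifies to
\[
\wt g_{0,\theta_m}(x) = \hat f_1(x) + (\theta_m - \theta_1)x,
\]
a deterministic linear shift of $\hat f_1$. Since $\wt g_{0,\theta}(x) - \wt g_{0,\theta'}(x) = (\theta - \theta')x$ for any two slopes, part~\ref{itm:beta0lim} follows. Combining uniqueness of the finite-dimensional limits with the tightness gives convergence in $C(\R, \Cpin[0,1])$.

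\textbf{Main obstacle.} The chief technical point is upgrading the pointwise Laplace and Taylor asymptotics to convergence uniform in $x \in [0,1]$ almost surely, so that the random limits land in $\Cpin[0,1]$; this follows from standard continuity estimates for suprema and integrals of the continuous random fields $\hat f_m - \hat f_r$ over the compact domain $[0,1]^{m-1}$, together with the fact that the integrand is uniformly continuous in $x$ on compact sets. Everything else is either a direct computation or a standard tightness argument.
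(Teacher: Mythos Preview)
Your proposal is correct and follows the paper's overall strategy (tightness plus identification of finite-dimensional limits), and your $\beta\to\infty$ argument via Laplace asymptotics on the polymer representation~\eqref{eq:alternate_rep} is exactly what the paper does.

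The one genuine difference is in the $\beta\searrow 0$ case. The paper does not Taylor-expand the full $m$-fold polymer integral; instead it reduces to pairs via the explicit $k=2$ formula from Lemma~\ref{lem:Phiout},
\[
\cDm^2(f_1,f_2)(x) = f_1(x) + \log\Bigl(1 + \tfrac{(e^{f_2(1)-f_1(1)}-1)\int_0^x e^{f_2-f_1}}{\int_0^1 e^{f_2-f_1}}\Bigr),
\]
and shows directly that $\wt g_{\beta,\theta_2}(x)-\wt g_{\beta,\theta_1}(x)\to(\theta_2-\theta_1)x$ in probability for each pair; tightness plus the known marginal laws then pins down the joint limit. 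Your route---Taylor-expanding the $(m-1)$-fold integral, then using Fubini to see that the $r\ge 2$ summands contribute only $x$-independent constants $(\theta_m-\theta_r)/2$ while the $r=1$ term yields $\hat f_1(x)+(\theta_m-\theta_1)x$---is a legitimate alternative. It has the aesthetic advantage of using the same representation~\eqref{eq:alternate_rep} for both limits, and it gives almost sure convergence in the coupling rather than convergence in probability of differences. The paper's route is slightly lighter in that it never needs to track the full $m$-fold integral or do the Fubini bookkeeping.
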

Proposition \ref{prop:betalim} is proved in Section \ref{sec:proof8910}. Note, \eqref{eq:grtilde} can also be written in a similar iterated way as \eqref{Phi_iter}. 
Figure \ref{fig:KPZH_lim} shows a simulation in R of $\wt g_{\beta,\theta}(x)$ for small and large values of $\beta$ and $\theta  \in \{-5,-2.5,0,2.5,5\}$. For small $\beta$, the functions becomes effectively linear shift of each other, while for large $\beta$, the functions tend to stick very close to each other near the origin. In the limit, one can see from \eqref{eq:grtilde} that the functions do in fact stick together in a random interval near $x = 0$.
\begin{figure}
    \centering
    \includegraphics[width=0.4\linewidth]{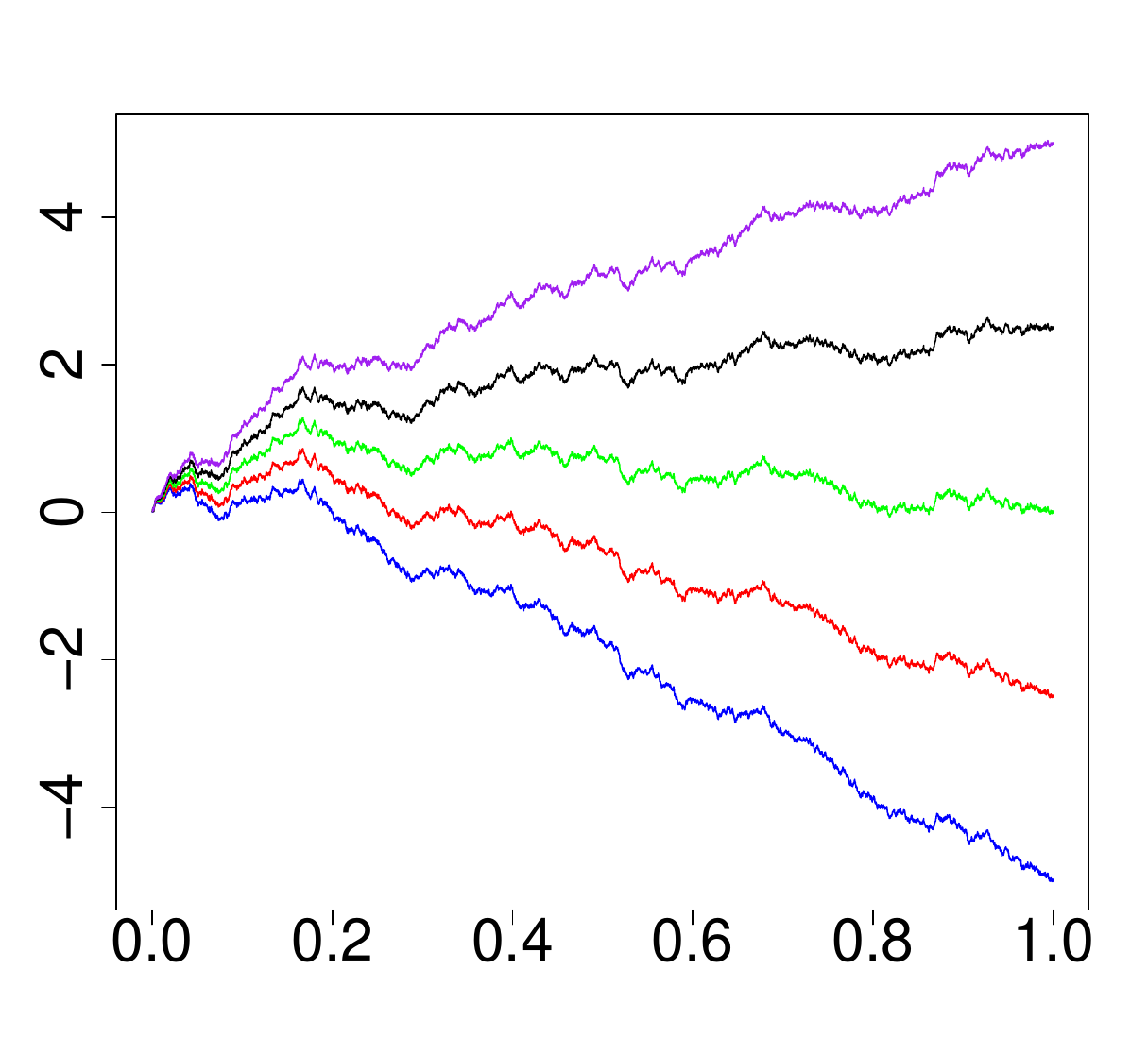}
     \includegraphics[width=0.4\linewidth]{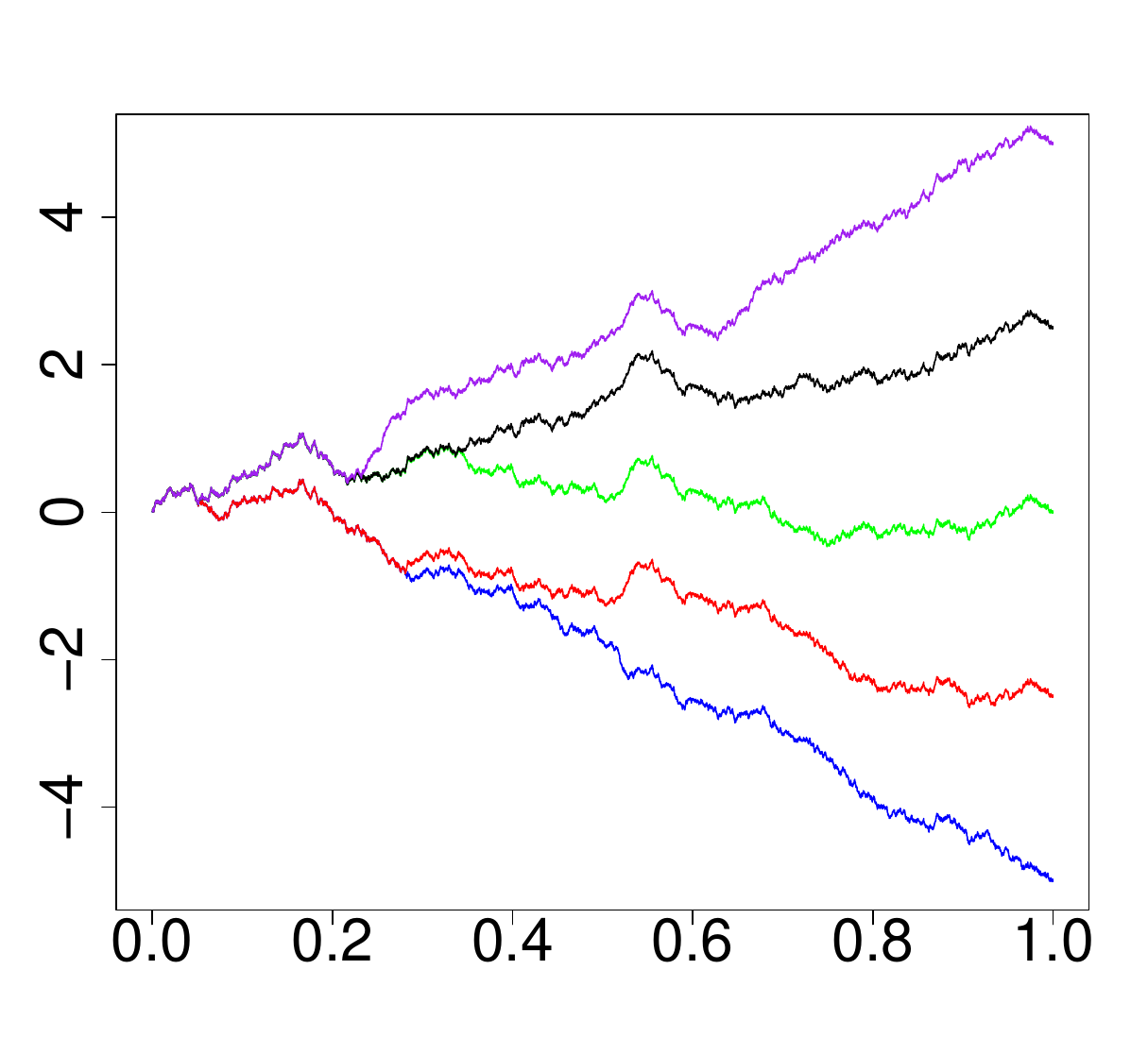}
    \caption{Simulation of $\wt g_{\beta,\theta}$ from Proposition \ref{prop:betalim} for $\beta  = 0.01$ (left) and $\beta = 30$ (right), and $\theta \in \{-5,-2.5,0,2.5,5\}$. The pictures are produced by applying $\Psi^5$ to the same set of $5$ independent Brownian bridges (as in Theorem \ref{thm:KPZ_invar_main}), with the scaling factor $\beta$ changed. Consequently the bottom (blue) curve is the same in each picture.}
    \label{fig:KPZH_lim}
\end{figure}

\subsection{Full-line KPZ horizon limit}

For $\beta,L>0$ define the scaled solution to \eqref{eq:KPZ}
\[
h_{L,\beta}(t,x) := h_{\sqrt L \beta}\Bigl(\f{t}{L^2},\f{x}{L}\Bigr), \qquad \textrm{where }x\in [0,L].
\]
Noting the distributional equality 
$\xi\bigl(\f{t}{L^2},\f{x}{L}\bigr) \deq L^{3/2} \xi_L(t,x)$, 
where we let $\wt\xi(t,x)$ denote the periodic extension of the space-time white noise on $\R\times L\T$ (for the size $L$ torus $L\T=\R/L\Z$), it follows that $h_{L,\beta}$ solves \eqref{eq:KPZ} with the noise $\xi$ (which has spatial period $1$) replaced by $\xi_L$ (which has spatial period $L$). As $L\to \infty$ solutions to the $L$-periodic KPZ equation should converge to those of the full-line KPZ equation (where the noise has no periodicity in space). Likewise, the jointly invariant measures for the $L$-periodic equation should converge to those of the full-line equation, i.e., the periodic KPZ horizon should converge to a (full-line) KPZ horizon defined recently in \cite{GRASS-23}. We prove the latter claim here.

To describe these limits, we introduce some additional notations. Consider the space 
 \[
 \mathcal Y = \Bigl\{f \in C(\R): f(0) = 0, \lim_{|x| \to \infty} \f{f(x)}{x} \text{ exists in }\R \Bigr\}.
 \]
 For each $f \in \mathcal Y$, define $\mathfrak c(f) = \lim_{|x| \to \infty} \f{f(x)}{x}$, and for $\theta > 0$, let 
 $
 \mathcal Y_\theta = \{f \in \mathcal Y: \mathfrak c(f) = \theta\}.
 $ 
 Next, consider the map
$\cDm^2_\infty:\mathcal Y \times \mathcal Y \to \mathcal Y$ defined as follows (compare to \eqref{eq:Phi_def}):
\be \label{eq:Phi_inf_def}
\begin{aligned}
\cDm^2_\infty(f_1,f_2)(x) &:=  \begin{cases}
f_1(x), & \mathfrak c(f_1) = \mathfrak c(f_2), \\
f_1(x) + \log \f{\int_{-\infty}^x e^{f_2(y) - f_1(y)}\,dy}{\int_{-\infty}^0 e^{f_2(y) - f_1(y)}\,dy} , &\mathfrak c(f_1) < \mathfrak c(f_2) ,\\
 f_1(x) + \log \f{\int_x^\infty e^{f_2(y) - f_1(y)}\,dy}{\int_0^\infty e^{f_2(y) - f_1(y)}\,dy} , &\mathfrak c(f_1) > \mathfrak c(f_2).
\end{cases}
\end{aligned}
\ee
Note the conditions on $f_1$ and $f_2$ ensure that $\cDm_\infty^2$ is well-defined. That $\cDm^2_\infty$ is indeed a map $\mathcal Y \times \mathcal Y \to \mathcal Y$ follows from \cite[Lemma 2.2]{GRASS-23}. In fact, $\cDm^2_\infty$ is a map from $\mathcal Y_{\theta_1} \times \mathcal Y_{\theta_2}$ to $ \mathcal Y_{\theta_2}$ for any $\theta_1,\theta_2 \in \R$. We iterate $\cDm^2_\infty$ to create maps $\cDm_\infty^m: \mathcal Y^m \to \mathcal Y$ by 
\begin{equation}\label{Phi_infty_iter}
\cDm_\infty^1(f_1) := f_1,\quad \textrm{and for }m  > 1, \quad 
\cDm_\infty^m(f_1,\ldots,f_k) := \cDm_\infty^2(f_1,\cDm_\infty^{m-1}(f_2,\ldots,f_k)),
\end{equation}
and define the map $\FcDm_\infty^k:\mathcal Y^k \to \mathcal Y$ by 
\[
\FcDm_\infty^k(f_1,\ldots,f_k) = \bigl(\cDm_\infty^1(f_1),\cDm_\infty^2(f_1,f_2),\ldots, \cDm_\infty^k(f_1,\ldots,f_k)\bigr).
\]

Analogous to our notation for $\B_{\beta,\theta}$, let $B_{\beta,\theta}$ denote the function $x \mapsto \beta B(x) + \theta x$ when $B$ has the law of a standard two-sided Brownian motion. Note that $B_{\beta,\theta}\in \mathcal{Y}_{\theta}$.
If $(f_1,\ldots,f_k) \in C(\R)^k$ has independent components with $f_m \deq B_{\beta,\theta_m}$ for $1 \le m \le k$, let $\Pm_{\infty,\beta}^{(\theta_1,\ldots,\theta_k)}$ denote the law of 
\[
(\wt g_1,\ldots,\wt g_k) := \FcDm_\infty^k(f_1,\ldots,f_k).
\]

\begin{proposition} \label{prop:width_lim}
For $\beta > 0$, let $\ICH_\beta = (g_{\beta,\theta})_{\theta \in \R} \sim \Pm_\beta$, and extend each $g_{\beta,\theta}$ to functions on $\R$ by letting $g_{\beta,\theta}(x+1)-g_{\beta,\theta}(x)=\theta$ for all $x\in \R$. For $x,\theta \in \R$ and $\beta > 0$, define  
\[
\wt g_{L,\beta,\theta}(x) := g_{\sqrt L \beta,L \theta}(x/L).
\]
Then, for any $(\theta_1,\ldots,\theta_k) \in \R^k$, as $L \to \infty$,
\[
\bigl(\wt g_{L,\beta,\theta_1},\ldots,\wt g_{L,\beta,\theta_k}\bigr) \Longrightarrow (\wt g_1,\ldots,\wt g_k)
\]
weakly, with respect to the uniform-on-compact topology on $C(\R)^k$, where $(\wt g_1,\ldots,\wt g_k) \sim \Pm_{\infty,\beta}^{(\theta_1,\ldots,\theta_k)}$.
\end{proposition}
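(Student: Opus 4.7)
By Corollary~\ref{cor:ICH_process} and the definition of $\Pm_\beta^{(\theta_1,\ldots,\theta_k)}$, write $(g_{\sqrt L\beta, L\theta_1},\ldots,g_{\sqrt L\beta, L\theta_k}) = \FcDm^k(f_1^L,\ldots,f_k^L)$ with independent $f_m^L \deq \B_{\sqrt L\beta, L\theta_m}$ extended additively-periodically to $\R$. Setting $\wt f_m^L(x) := f_m^L(x/L)$, the proposition reduces to showing that, jointly over $m$, the rescaled output $\cDm^m(f_1^L,\ldots,f_m^L)(\cdot/L)$ converges uniformly on compacts to $\cDm_\infty^m(B_{\beta,\theta_1},\ldots,B_{\beta,\theta_m})$. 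By Skorokhod embedding, it suffices to produce a coupling on which $\wt f_m^L \to B_{\beta,\theta_m}$ almost surely uniformly on compact sets, with $B_{\beta,\theta_m}$ independent two-sided Brownian motions of variance $\beta^2$ and drift $\theta_m$, and then argue the outputs converge almost surely.

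\textbf{Step 1: Input convergence.} Writing $f_m^L(y) = \sqrt L\beta B_m^{\mathrm{br}}(y \bmod 1) + L\theta_m y$ for a standard bridge $B_m^{\mathrm{br}}$ on $[0,1]$, we have $\wt f_m^L(x) = \sqrt L\beta B_m^{\mathrm{br}}(x/L \bmod 1) + \theta_m x$. For fixed $x \ge 0$, $x/L \bmod 1 = x/L$ is near $0$ and $\bigl(\sqrt L B_m^{\mathrm{br}}(t/L):t \ge 0\bigr)$ converges in the uniform-on-compact topology to a Brownian motion. For $x < 0$, $x/L \bmod 1 = 1 + x/L$ is near $1$, and the analogous scaling near the right endpoint converges to an independent Brownian motion; the two ends are asymptotically independent since $\mathrm{Cov}(B_m^{\mathrm{br}}(s/L), B_m^{\mathrm{br}}(1-s'/L)) = ss'/L^2$ gives cross-correlations of order $L^{-1}$ after normalization. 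Concatenating yields $B_{\beta,\theta_m}$, and independence across $m$ is preserved.

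\textbf{Step 2: Key identity and $k=2$ limit.} Set $\alpha := \theta_2 - \theta_1$ and $F_L(z) := \wt f_2^L(z) - \wt f_1^L(z)$, which satisfies $F_L(z+L) = F_L(z) + L\alpha$. Changing variables $y = z/L$ in \eqref{eq:Phi_def} and then applying $F_L(z+L) = F_L(z) + L\alpha$ to the resulting integrals on $[0,L]$ yields the slope-independent identity
\[
\cDm^2(f_1^L,f_2^L)(x/L) \,=\, \wt f_1^L(x) + \log \frac{\int_{x-L}^{x} e^{F_L(z)}\,dz}{\int_{-L}^{0} e^{F_L(z)}\,dz}.
\]
For $\alpha > 0$: by Step 1, $F_L \to F := B_{\beta,\theta_2} - B_{\beta,\theta_1}$ uniformly on compacts, and $e^{F}$ decays at $-\infty$ almost surely, so the ratio converges uniformly on compacts in $x$ to $\int_{-\infty}^x e^{F}/\int_{-\infty}^0 e^{F}$, matching $\cDm_\infty^2(B_{\beta,\theta_1},B_{\beta,\theta_2})$ per \eqref{eq:Phi_inf_def}. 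For $\alpha < 0$: applying $F_L(z+L) = F_L(z) + L\alpha$ once more rewrites the ratio as $[\int_x^L e^{F_L} + e^{L\alpha}\int_0^x e^{F_L}]/\int_0^L e^{F_L}$, which converges to $\int_x^\infty e^{F}/\int_0^\infty e^{F}$ since $e^{L\alpha} \to 0$, again matching \eqref{eq:Phi_inf_def}. The case $\alpha = 0$ is covered by Proposition~\ref{prop:g_prop_intro}\ref{itm:g_mont}, which yields $\cDm^2(f_1^L,f_2^L) = f_1^L$ almost surely, agreeing with the full-line convention $\cDm_\infty^2(f,f)=f$.

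\textbf{Step 3: Iteration and main obstacle.} For $k > 2$, induct on $m$ using $\cDm^m(f_1,\ldots,f_m) = \cDm^2(f_1,\cDm^{m-1}(f_2,\ldots,f_m))$. By the inductive hypothesis applied to $f_2^L,\ldots,f_m^L$, the rescaled output $\wt h^L(x) := \cDm^{m-1}(f_2^L,\ldots,f_m^L)(x/L)$ converges to $H := \cDm_\infty^{m-1}(B_{\beta,\theta_2},\ldots,B_{\beta,\theta_m})$ uniformly on compacts. Since $\cDm^{m-1}(f_2^L,\ldots,f_m^L)$ has slope $L\theta_m$ on $[0,1]$ by Lemma~\ref{lem:Phi_pres_slope}, the identity of Step 2 applies to this pair with $\alpha$ replaced by $\theta_m - \theta_1$, and the same analysis yields convergence to $\cDm_\infty^2(B_{\beta,\theta_1},H) = \cDm_\infty^m(B_{\beta,\theta_1},\ldots,B_{\beta,\theta_m})$ by \eqref{Phi_infty_iter}. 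The main technical obstacle is justifying the dominated convergence in the ratio of integrals at each inductive step: one needs tail estimates on $F_L^{(m)}(z) := \wt h^L(z) - \wt f_1^L(z)$ on the ``wrong'' half-line (growing as $L$ does), uniform in $L$, to control contributions from the region where the integrand would blow up absent the slope-periodicity. These bounds follow from the slope-periodicity structure $F_L^{(m)}(z+L) = F_L^{(m)}(z) + L(\theta_m - \theta_1)$ combined with Gaussian tail estimates on the inputs obtained from Step 1, which ensure that the random fluctuations of $F_L^{(m)}$ over windows of width $O(L)$ cannot overcome the dominant drift $\theta_m - \theta_1$.
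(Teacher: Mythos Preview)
Your Steps 1--2 are essentially the paper's approach. Your unified identity
\[
\cDm^2(f_1^L,f_2^L)(x/L) \,=\, \wt f_1^L(x) + \log \frac{\int_{x-L}^{x} e^{F_L}}{\int_{-L}^{0} e^{F_L}}
\]
is correct and in fact tidier than the paper's two-case formulation in Lemma~\ref{lem:Phi2_ScaleL}. The $k=2$ limit analysis matches the paper's Lemma~\ref{lem:Phi_close}, which leans on exactly the tail control you allude to (the paper isolates this as Lemma~\ref{lem:BB_tail}, proved using the explicit Brownian-bridge decomposition of $F_L$).

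Your Step 3, however, has a gap at the ``main technical obstacle.'' You propose to control the tails of $F_L^{(m)} = \wt h^L - \wt f_1^L$ via ``Gaussian tail estimates on the inputs,'' but $\wt h^L = \cDm^{m-1}(f_2^L,\ldots,f_m^L)(\cdot/L)$ is an iterated nonlinear functional of the inputs, and deriving the needed uniform-in-$L$ tail bounds on $F_L^{(m)}$ directly from input tails is not straightforward. In particular, the crude sup bound from \eqref{gm_bd} only gives $\sup_{[0,L]}|F_L^{(m)}| = O(L)$, which is not enough: after using periodicity to move the tail integral to $[0,L]$, the factor $e^{-L(\theta_m-\theta_1)}$ must kill $\int_0^L e^{F_L^{(m)}}$, and the latter can be as large as $e^{L(\theta_m-\theta_1)+O(\sqrt L)}$.

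The paper's resolution is a one-line observation you are missing: by the marginality property (Proposition~\ref{prop:g_prop_intro}\ref{itm:g_consis}), the output $\cDm^{m-1}(f_2^L,\ldots,f_m^L)$ itself has law $\B_{\sqrt L\beta, L\theta_m}$, and it is independent of $f_1^L$ because it is a function of $(f_2^L,\ldots,f_m^L)$ alone. Hence the pair $(f_1^L,\cDm^{m-1}(f_2^L,\ldots,f_m^L))$ satisfies the exact distributional hypotheses of the $k=2$ tail lemma (your Step~2 / the paper's Lemmas~\ref{lem:Phi_close} and \ref{lem:BB_tail}), and no new tail estimates are required for the induction. Invoke this and your proof is complete.
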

Proposition \ref{prop:width_lim} is proved in Section \ref{sec:proof111213}. It is natural to expect that the limit of the jointly invariant measures will be jointly invariant for the limiting equation, i.e., the full-line KPZ equation  \eqref{eq:KPZ} with $\xi$ space-time white noise on $\R_{>0}\times\R$. Indeed, it is already known from \cite[Theorem 1.1]{GRASS-23} that when $\theta_1 < \cdots < \theta_k$, $\Pm_{\infty,\beta}^{(\theta_1,\ldots,\theta_k)}$ is a jointly invariant measure for the full-line KPZ equation \eqref{eq:KPZ} (note, earlier \cite{Janjigian-Rassoul-2020b} showed uniqueness of jointly invariant measures given the set of drifts $\{\theta_1,\ldots, \theta_k\}$, without giving a description for them). \cite{GRASS-23} also extended these measures to a process in $\theta$, named the {\em KPZ horizon}. 

Our measures $\Pm_{\infty,\beta}^{(\theta_1,\ldots,\theta_k)}$ are well-defined for any order of $\theta$ variables and it follows from taking the limit of the symmetry in Proposition \ref{prop:g_prop_intro}\ref{itm:g_perm_invar} that $\Pm_{\infty,\beta}^{(\theta_1,\ldots,\theta_k)}$ is invariant under permuting the order of the $\theta$ parameters. Thus,  Proposition \ref{prop:width_lim} provides $k!$ equivalent (not straightforwardly) descriptions for the jointly invariant measures for the  full-line KPZ equation; matching that of \cite{GRASS-23} when $\theta_1 < \cdots < \theta_k$.

\subsection{Gaussian process from height fluctuations} \label{sec:appplication_intro}

We now present an application of Theorem~\ref{thm:KPZ_invar_main} to the study of height fluctuations/directed polymers in a periodic environment. Define

\[
Z_\beta^{(\theta)}(t):=\int_{\R} e^{\theta y} Z_\beta(t,y \viiva 0,0)dy
\]  
where $Z_\beta(t,y\viiva s,x)$ is the Green's function of stochastic heat equation with noise $\xi$ defined in Section \ref{sec.l2converge}. 
Formally, by the Feynman-Kac formula we can write
\[
Z_\beta^{(\theta)}(t)=\Ee_B\left[e^{\theta B(t)} :\!\exp\!:\Bigl(\beta \int_0^t \xi\bigl(s,B(s)\bigr)ds\Bigr)\right],
\]
where $:\!\exp\!:$ is the Wick exponential, $B$ is a standard Brownian motion starting at the origin, see \cite[Section 2.2]{Bertini1995}, and the expectation $\Ee_B$ is with respect to $B$ only. This shows that
\[
\frac{
Z_\beta^{(\theta)}(t)}{Z_\beta^{(0)}(t)}=\frac{\int_{\R} e^{\theta x} Z_\beta(t,x \viiva 0,0)dx}{\int_{\R}   Z_\beta(t,x \viiva 0,0)dx}
\]
should be interpreted as the quenched moment generating function of the point-to-line polymer endpoint for a periodic environment continuum directed random polymer (in the spirit of \cite{Alberts-Khanin-Quastel-2014a}). In other words, if we define the {\em point-to-line polymer endpoint density} as  
\begin{equation}\label{eq.endptdist}
\rho_\beta(t,x\viiva 0,0)= \frac{Z_\beta(t,x\viiva 0,0)}{\int_{\R} Z_\beta(t,x'\viiva 0,0)dx'}\qquad \textrm{then}\qquad
\frac{Z_\beta^{(\theta)}(t)}{Z_\beta^{(0)}(t)}=\int_{\R} e^{\theta x} \rho_\beta(t,x\viiva 0,0)dx.
\end{equation}
In terms of surface growth, one can interpret $\log Z_\beta^{(\theta)}(t)$ as the height at $(t,0)$ when started with slope $\theta$. Up to a time reversal, $\log Z_\beta^{(\theta)}(t)$ is also the solution to the KPZ equation at $(t,0)$, with initial data $\theta x$.

By \cite[Theorem 1.1]{GK21}, we know that, with $\theta=0$, $\log Z_\beta^{(0)}(t)$ satisfies a central limit theorem, and by the shear invariance of the noise $\xi$, that result can be generalized to any fixed $\theta\in\R$, yielding, as $t\to \infty$,
\begin{equation}\label{e.defGamma}
\mathcal{X}_\beta^{(\theta)}(t):=\frac{\log Z_\beta^{(\theta)}(t)-\gamma_{\beta}^{(\theta)}t}{\sqrt{t}}\Rightarrow N(0,\sigma^2_\beta),    
\qquad\textrm{where the drift is } \quad
\gamma_{\beta}^{(\theta)}=\frac12\theta^2-\frac{1}{2}\beta^2-\frac{1}{24}\beta^4.
\end{equation}
The calculation of the drift is from \cite[Proposition 4.1]{GK211}, and the variance from  \cite[Corollary 2.3]{ADYGTK22}:
\begin{equation}\label{e.defsigma}
\sigma_\beta^2=   \beta^2 \Ee\bigg[  \frac{ \int_0^1 e^{\beta \mathcal{B}^{(1)}(y)  + \beta  \mathcal{B}^{(2)}(y) +2\beta\mathcal{B}^{(3)}}(y)\,dy}{\int_0^1 e^{\beta \mathcal{B}^{(1)}(y) + \beta\mathcal{B}^{(3)}(y)}dy \int_0^1 e^{\beta \mathcal{B}^{(2)}(y) +\beta\mathcal{B}^{(3)}(y)}dy}\bigg],
\end{equation}
where $\mathcal{B}^{(i)},i=1,2,3$ are independent copies of standard Brownian bridges.

We prove convergence in law of $\{\mathcal{X}_\beta^{(\theta)}(t)\}_{\theta\in\R}$   to a Gaussian process whose covariance function can, remarkably, be expressed in terms of the jointly invariant measure obtained in Theorem~\ref{thm:KPZ_invar_main}. 
\begin{theorem}\label{t.conG}
As $t\to\infty$, we have convergence in distribution in $C(\R)$ of
\[
\left\{\mathcal{X}_\beta^{(\theta)}(t)\right\}_{\theta\in\R}\Longrightarrow\{\mathcal{A}_\beta^{(\theta)}\}_{\theta\in\R},
\]
where $\mathcal{A}_\beta^{(\theta)}$ is a centered stationary Gaussian process with the covariance function given by
\begin{equation}\label{e.defA}
R_\beta^{(\theta)}:=\Ee[\mathcal{A}_\beta^{(0)}\mathcal{A}_\beta^{(\theta)}]=\beta^2 \Ee\left[  \frac{\displaystyle{\int_0^1 e^{\beta \mathcal{B}^{(1)}(y) + g_{\beta,0}(y) + \beta \mathcal{B}^{(2)}(y) + g_{\beta,-\theta}(y) + \theta y}\,dy}}{\displaystyle{\int_0^1 e^{\beta \mathcal{B}^{(1)}(y) + g_{\beta,0}(y)}dy \int_0^1 e^{\beta \mathcal{B}^{(2)}(y) + g_{\beta,-\theta}(y)+\theta y}dy}}\right]
\end{equation} 
Here $\mathcal{B}^{(i)},i=1,2,$ are independent standard Brownian bridges, and $\{g_{\beta,\theta}(\cdot)\}_{\theta\in\R}$ is the periodic KPZ horizon obtained in Corollary~\ref{cor:ICH_process}, independent of $\mathcal{B}^{(i)},i=1,2$.
The expectation $\Ee$ is on all source of randomnesses.
\end{theorem}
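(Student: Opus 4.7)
The plan combines three ingredients: (i) a semigroup decomposition of $\log Z_\beta^{(\theta)}(t)$ that separates a boundary profile at an intermediate time $s$ from the subsequent Cole--Hopf evolution; (ii) the one force--one solution principle (Proposition~\ref{prop:1f1s}), applied jointly in the slope parameters, to replace that boundary profile by a sample from the periodic KPZ horizon (Corollary~\ref{cor:ICH_process}); and (iii) a joint It\^{o}/martingale central limit theorem for the Cole--Hopf solutions initialized from the jointly invariant measures of Theorem~\ref{thm:KPZ_invar_main}.

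First, I would establish finite-dimensional joint convergence at slopes $\theta_1,\ldots,\theta_m$. Pick $s=s(t)\to\infty$ with $s/t\to 0$ and use the Chapman--Kolmogorov identity for the SHE Green's function to write
\begin{equation*}
\log Z_\beta^{(\theta)}(t) = \log Z_\beta^{(\theta)}(s,0) + \log\int_\R Z_\beta(t,0\viiva s,y)\,e^{\phi_\theta^{(s)}(y)}\,dy,
\end{equation*}
where $Z_\beta^{(\theta)}(s,y):=\int_\R Z_\beta(s,y\viiva 0,x)e^{\theta x}\,dx$ and $\phi_\theta^{(s)}(y):=\log\bigl(Z_\beta^{(\theta)}(s,y)/Z_\beta^{(\theta)}(s,0)\bigr)$. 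The profile $\phi_\theta^{(s)}$ is measurable with respect to the noise on $[0,s]$ and satisfies $\phi_\theta^{(s)}(y+1)-\phi_\theta^{(s)}(y)=\theta$ (by periodicity of $\xi$), while $Z_\beta(t,0\viiva s,\cdot)$ is measurable with respect to the noise on $[s,t]$, so the two factors are independent. Proposition~\ref{prop:1f1s} then gives, jointly in $(\theta_1,\ldots,\theta_m)$, the convergence $(\phi_{\theta_i}^{(s)})_i \Longrightarrow (g_{\beta,\theta_i})_i\sim\Pm_\beta^{(\theta_1,\ldots,\theta_m)}$ as $s\to\infty$; the marginal CLT \eqref{e.defGamma} forces the boundary summand to contribute $O(\sqrt{s/t})$ to $\mathcal X_\beta^{(\theta)}(t)$ and to be asymptotically negligible. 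The problem reduces to a joint CLT for $\log\int Z_\beta(t,0\viiva s,y)e^{g_{\beta,\theta_i}(y)}dy-\gamma_\beta^{(\theta_i)}(t-s)$, with $(g_{\beta,\theta_i})_i$ independent of the driving noise on $[s,t]$.

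For this reduced problem, apply It\^{o}'s formula (in the spirit of \cite{GK21,ADYGTK22}) to $\log\int Z_\beta(u,0\viiva s,y)e^{g_{\beta,\theta_i}(y)}dy$ for $u\in [s,t]$, giving a continuous martingale plus the drift $\gamma_\beta^{(\theta_i)}(u-s)$ plus an $L^2$-bounded remainder. Theorem~\ref{thm:KPZ_invar_main} makes the integrand of the cross quadratic variation between the $i$-th and $j$-th martingales \emph{stationary} in $u$; ergodicity drives its time-average almost surely to its expectation, which a direct calculation identifies with $R_\beta^{(\theta_i-\theta_j)}$ in \eqref{e.defA}. In that calculation the Brownian bridges $\mathcal B^{(1)},\mathcal B^{(2)}$ parametrize the two replica endpoint densities driven by the common noise on $[s,t]$, while $(g_{\beta,0},g_{\beta,-\theta})\sim\Pm_\beta^{(0,-\theta)}$ captures the shared time-reversed stationary profile inherited from $[0,s]$; the sign $-\theta$ is dictated by tracking the tilt under time-reversal. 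The multivariate martingale CLT then yields the joint Gaussian limit. Stationarity in $\theta$ is inherited from the Galilean symmetry of \eqref{eq:KPZ} under $\theta\mapsto\theta+\theta_0$, which affects only the deterministic centering. Tightness in $C(\R)$ follows from a Kolmogorov criterion using the monotonicity of Cole--Hopf solutions with ordered initial data (which yields a Lipschitz-in-$\theta$ bound on the invariant profile as in \eqref{eq.ctnty} and Proposition~\ref{prop:g_prop_intro}\ref{itm:g_mont}) together with Gaussian hypercontractivity estimates for $\log Z_\beta^{(\theta)}(t)$. The main obstacle will be the rigorous joint It\^{o} decomposition and the precise identification of the covariance in the exact form \eqref{e.defA}---in particular, the delicate time-reversal tracking that accounts for the appearance of $g_{\beta,-\theta}$ rather than $g_{\beta,\theta}$---for which the explicit description of $\Pm_\beta^{(0,-\theta)}$ provided by Theorem~\ref{thm:KPZ_invar_main} is essential.
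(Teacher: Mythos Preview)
Your strategy shares the core idea with the paper---reduce to a martingale with stationary increments and apply a martingale CLT---but the implementation differs and has gaps. The paper does not split the time interval. Instead it applies the Clark--Ocone formula directly to $\log Z_\beta^{(\theta)}(t)$, yielding $\beta\int_0^t\int_0^1 \Ee[f^{(\theta)}(t,s,y)\mid\F_s]\,\xi(ds,dy)$ where $f^{(\theta)}$ is the quenched midpoint density of the point-to-line polymer. The slope $-\theta$ in the limiting covariance is \emph{not} a time-reversal artifact: it drops out of the Malliavin derivative computation, which (via the periodicity $Z_\beta(s,y+n\viiva 0,0)=Z_\beta(s,y\viiva 0,-n)$) produces the forward factor $e^{\theta y+h^{(-\theta)}(s,y)}$ with $h^{(-\theta)}(s,y)=\log\sum_n e^{-\theta n}Z_\beta(s,y\viiva 0,n)$. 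The paper then replaces this by the jointly stationary $H^{(-\theta)}$ (sampled from the periodic KPZ horizon) and the backward polymer endpoint by an auxiliary Brownian bridge; the uniform-in-$t$ $L^2$ bound on the resulting approximation error (Proposition~\ref{p.maapp}) relies on the \emph{exponential} mixing of endpoint densities from \cite{GK21}, not on qualitative 1F1S convergence. Your 1F1S step only gives convergence in probability of $\phi_\theta^{(s)}$, and you do not explain how that propagates through the subsequent It\^o/CLT step at the $\sqrt t$ scale.

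Your tightness argument also has a gap. The Lipschitz-in-$\theta$ bound \eqref{eq.ctnty} controls the invariant \emph{profile} $g_{\beta,\theta}$, not the height fluctuation $\mathcal X_\beta^{(\theta)}(t)$ itself, and hypercontractivity alone does not furnish a moment bound uniform in $t$. The paper instead notes that $\partial_\theta\log Z_\beta^{(\theta)}(t)\big|_{\theta=0}$ is the quenched mean of the polymer endpoint and invokes the diffusivity estimate of \cite{YGTK22} to obtain $\|\partial_\theta\mathcal X_\beta^{(\theta)}(t)\|_2\le C$ uniformly in $t\ge 1$; tightness then follows from Kolmogorov's criterion. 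Finally, your semigroup identity as written computes $\log\int_\R e^{\theta x}Z_\beta(t,0\viiva 0,x)\,dx$, not $\log Z_\beta^{(\theta)}(t)=\log\int_\R e^{\theta y}Z_\beta(t,y\viiva 0,0)\,dy$; these agree in joint law over all $\theta$ by a single time-reversal of the noise, but that step must be stated explicitly.
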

The proof of Theorem~\ref{t.conG} is in Section~\ref{s.applicationSection}, and can be read independently from the other sections of the paper. The formula for the covariance $R_\beta^{(\theta)}$ actually only depends on the joint law $\Pm_\beta^{(0,-\theta)}$ of $g_{\beta,0}$ and $g_{\beta,-\theta}$. If we take the special case of $\theta=0$ in \eqref{e.defA}, since $g_{\beta,0} \deq \beta\mathcal{B}^{(3)}$, we recover the expression for $\sigma_\beta^2$ in \eqref{e.defsigma} and hence the result of \cite[Theorem 1.1]{GK21} and \cite[Corollary 2.3]{ADYGTK22}.

We end this section with a conjecture and a formal calculation based on Theorem \ref{t.conG} that points to its validity. Recall \eqref{eq.endptdist} whose right-hand side involves  $\rho(t,x;0,0)$, the quenched endpoint density of the point-to-line directed polymer. Using $\kappa_n(t)$ to denote the quenched $n$-th cumulant of this endpoint, we have 
\[
\kappa_n(t)=\partial_\theta^n \log Z_\beta^{(\theta)}(t)\Big|_{\theta=0}, \quad\quad n\geq0.
\]
For the polymer on a cylinder, it is well-accepted by physicists 
(see, e.g., \cite{Brunet03})
that the free energy and all its cumulants 
are extensive functions, meaning they scale with the order of $t$. By shear invariance of $\xi$ (see Lemma \ref{l.reductiontheta0}), $\log Z_\beta^{(\theta)}(t)\deq\log Z_\beta^{(0)}(t)+\frac12\theta^2t$, 
from which it follows that $\Ee[\kappa_n(t)]=0$ for all $n\in \N\setminus\{2\}$ and $\Ee[\kappa_2(t)]=t$.

 It is natural to study the next order fluctuations, namely, whether the central limit theorem (CLT) holds for $\frac{\kappa_n(t)-\Ee[\kappa_n(t)]}{\sqrt{t}}$. We expect the CLT holds, and even more interesting that the asymptotic variance should be expressed in terms of the covariance obtained in Theorem~\ref{t.conG}.
\begin{conj}\label{conj1}
 Recalling $R_\beta^{(\theta)}$ from \eqref{e.defA}, $\lim_{t\to\infty}
 \frac{1}{t}\Var[\kappa_n(t)]=(-1)^n \partial_\theta^{2n}R_\beta^{(\theta)}\Big|_{\theta=0}$.
\end{conj}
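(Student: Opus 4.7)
The plan is to deduce Conjecture~\ref{conj1} from Theorem~\ref{t.conG} by upgrading the weak convergence in $C(\R)$ to a topology capturing $\theta$-derivatives at $\theta = 0$, and then invoking a standard mean-square differentiation identity for centered stationary Gaussian processes. The algebraic first step: since $\gamma_\beta^{(\theta)} = \tfrac12\theta^2 - \tfrac12\beta^2 - \tfrac{1}{24}\beta^4$ depends on $\theta$ only through $\tfrac12\theta^2$, the shear invariance of $\xi$ invoked in \eqref{e.defGamma} implies that $\theta \mapsto \Ee[\log Z_\beta^{(\theta)}(t)] - \gamma_\beta^{(\theta)}t$ is constant in $\theta$, so $\Ee[\partial_\theta^n \mathcal X_\beta^{(\theta)}(t)\big|_{\theta=0}] = 0$ for $n \geq 1$. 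Differentiating $\log Z_\beta^{(\theta)}(t) = \sqrt t\,\mathcal X_\beta^{(\theta)}(t) + \gamma_\beta^{(\theta)}t$ at $\theta = 0$ then yields
\[
\kappa_n(t) - \Ee[\kappa_n(t)] = \sqrt t \, \partial_\theta^n \mathcal X_\beta^{(\theta)}(t)\Big|_{\theta=0}, \qquad n \geq 1,
\]
reducing the conjecture to the statement that $\Var[\partial_\theta^n \mathcal X_\beta^{(\theta)}(t)|_{\theta=0}]$ converges to $(-1)^n \partial_\theta^{2n} R_\beta^{(\theta)}\big|_{\theta=0}$ as $t \to \infty$.

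Next, for each $(t,\omega)$ the map $\theta \mapsto Z_\beta^{(\theta)}(t) = \int_\R e^{\theta y} Z_\beta(t, y \viiva 0, 0)\,dy$ extends to an entire function of $\theta \in \mathbb{C}$, thanks to the Gaussian-type spatial decay of $Z_\beta(t, \cdot \viiva 0, 0)$ (see, e.g., the tail bounds used in \cite{GK21}). Consequently $\mathcal X_\beta^{(\theta)}(t)$ admits an analytic extension and Cauchy's formula gives $\partial_\theta^n \mathcal X_\beta^{(\theta)}(t)\big|_{\theta=0} = \tfrac{n!}{2\pi i} \oint_{|z| = r} z^{-n-1} \mathcal X_\beta^{(z)}(t)\,dz$. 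The plan is to establish uniform moment bounds of the form $\sup_{t \geq 1} \sup_{|z| \leq r} \Ee[|\mathcal X_\beta^{(z)}(t)|^p] < \infty$ for every $r > 0$ and $p \geq 1$. Combined with Theorem~\ref{t.conG}, such bounds produce tightness of $\mathcal X_\beta^{(\theta)}(t)$ in a space of analytic functions on a complex strip around $\R$ (via an equicontinuity argument on a circle of radius $r$), which promotes the convergence in $C(\R)$ to joint convergence of all finite collections of $\theta$-derivatives at $\theta = 0$. The same bounds supply $L^2$ uniform integrability for those derivatives, hence convergence of variances.

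To conclude, a standard computation for a centered stationary Gaussian process identifies the limit variance: if the covariance $R_\beta^{(\theta - \theta')}$ is $C^{2n}$ at $0$---which follows from smoothness of \eqref{e.defA} in $\theta$ via the $e^{\theta y}$ factor together with the continuous (in fact Lipschitz, by \eqref{eq.ctnty}) dependence of the periodic KPZ horizon $g_{\beta,-\theta}$ on $\theta$ from Corollary~\ref{cor:ICH_process}---then the mean-square derivative $\partial_\theta^n \mathcal A_\beta^{(\theta)}\big|_{\theta=0}$ exists and
\[
\Var\!\Bigl[\partial_\theta^n \mathcal A_\beta^{(\theta)}\big|_{\theta=0}\Bigr] = \partial_\theta^n \partial_{\theta'}^n R_\beta^{(\theta - \theta')}\Big|_{\theta = \theta' = 0} = (-1)^n \partial_\theta^{2n} R_\beta^{(\theta)}\Big|_{\theta = 0},
\]
the sign arising because $\partial_{\theta'} R_\beta^{(\theta - \theta')} = - (R_\beta^{(\cdot)})'(\theta - \theta')$. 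The main obstacle is the uniform-in-$t$ moment control of $\mathcal X_\beta^{(z)}(t)$ in a complex neighborhood of the origin. For real $\theta$ such $L^p$ bounds are furnished by the Malliavin / spectral-gap arguments underlying Theorem~\ref{t.conG}; extending them to a complex strip will require either revisiting those estimates while preserving the analytic structure of the chaos expansion, or leveraging the semi-discrete O'Connell--Yor approximation developed in this paper, whose moment formulas are explicit and analytic in the slope parameter and should transfer to the continuum under the intermediate disorder scaling. Once such bounds are in place, the remaining ingredients---Cauchy's formula, uniform integrability, and the Gaussian derivative identity---are routine, and the conjecture follows. Regularity of $R_\beta^{(\theta)}$ as $\theta \to 0$ should also be independently verifiable from \eqref{e.defA} by using Corollary~\ref{cor:derivative} (and its higher-order analogues obtainable by the same Taylor expansion argument) to compute the derivatives of $g_{\beta,-\theta}$ in $\theta$.
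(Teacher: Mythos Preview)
The paper does not prove this statement---it is stated as a conjecture, accompanied only by a heuristic argument. That heuristic (in the paragraph following Conjecture~\ref{conj1}) uses the $\theta$-stationarity of $\mathcal{X}_\beta^{(\theta)}(t)$ at each fixed $t$ to derive the exact identity
\[
\frac{1}{t}\Var[\kappa_n(t)] = (-1)^n \partial_\theta^{2n}\,\frac{\Ee\bigl[\mathcal{X}_\beta^{(0)}(t)\mathcal{X}_\beta^{(\theta)}(t)\bigr]}{t}\Big|_{\theta=0},
\]
and then observes that the covariance on the right converges pointwise to $R_\beta^{(\theta)}$; the conjecture would follow \emph{if one could pass the limit under the $2n$-fold derivative}, which the paper explicitly leaves open.

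Your proposal is an attempt to supply exactly that missing justification, and the algebraic reduction in your first paragraph matches the paper's heuristic. Your route via analytic continuation in $\theta$ and Cauchy's formula is a reasonable strategy, but---as you yourself flag---the entire argument hinges on the uniform-in-$t$ moment bound $\sup_{t\ge 1}\sup_{|z|\le r}\Ee|\mathcal{X}_\beta^{(z)}(t)|^p<\infty$ for \emph{complex} $z$, which you do not establish. This is not a minor technicality: for real $\theta$ the known bounds come from shear invariance (reducing to $\theta=0$) and spectral-gap/Malliavin estimates, and neither mechanism survives complexification in any obvious way. The shear transform sends $\theta\mapsto 0$ only for real $\theta$, and the Clark--Ocone/It\^o-isometry machinery behind Theorem~\ref{t.conG} controls $L^2$ norms of real-valued integrands. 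So the proposal remains a plan rather than a proof, with the hard step still open---which is precisely the status the paper assigns to the conjecture. The paper also notes (Remark~\ref{r.proofconjecture}) that for $n=1$ a different argument from \cite{GK23} succeeds, and suggests combining that approach with Theorem~\ref{t.conG} for general $n$; this may be a more tractable avenue than the complex-analytic one.
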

 
Here is a heuristic argument for this conjecture. For each $t>0$, the process $\mathcal{X}_\beta^{(\theta)}(t)=\log Z_\beta^{(\theta)}(t)-\gamma_{\beta}^{(\theta)} t$ is stationary in the $\theta$ variable, and $\gamma_{\beta}^{(\theta)}=\frac12\theta^2-\frac{\beta^2}{2}-\frac{\beta^4}{24}$. Note that for a smooth stationary process $V(x)$ with sufficiently high moments for $V(0),V'(0)$, we have the integration by parts formula $\Ee[V(0)V'(x)] = - \Ee[V'(0)V(x)]$, which follows by taking limits from  
\[
\Ee\Bigl[V(0) \f{V(x+\delta) - V(x)}{\delta}\Bigr] = \Ee\Bigl[V(x) \f{V(-\delta) - V(0)}{\delta}\Bigr],
\]
the equality following from $\Ee[V(0)V(x+\delta)] = \Ee[V(-\delta)V(x)]$.
Then, for $n\neq 2$,
 \[
 \begin{aligned}
 \Ee\big[\kappa_n(t)^2\big]&=\Ee\big[|\partial_\theta^n \log Z_\beta^{(\theta)}(t)|^2\big] \Big|_{\theta=0}=\Ee\big[ |\partial_\theta^n \mathcal{X}_\beta^{(\theta)}(t)+\partial_\theta^n (\gamma_{\beta}^{(\theta)} t)|^2\big]\Big|_{\theta=0}=\Ee\big[ |\partial_\theta^n \mathcal{X}_\beta^{(\theta)}(t)|^2 \big]\Big|_{\theta=0}\\
 &=(-1)^n\Ee \big[\mathcal{X}_\beta^{(0)}(t)\partial_\theta^{2n}\mathcal{X}_\beta^{(\theta)}(t)\big]\Big|_{\theta=0}=(-1)^n \partial_\theta^{2n} \Ee \big[\mathcal{X}_\beta^{(0)}(t)\mathcal{X}_\beta^{(\theta)}(t)\big]\Big|_{\theta=0}.
  \end{aligned}
 \]
A similar argument treats the $n=2$ case, and in the end, we obtain that  for any $n\geq1$, 
\begin{equation}\label{e.varkappa}
\frac{1}{t}\Var[\kappa_n(t)]=(-1)^n \partial_\theta^{2n} \frac{ \Ee \big[\mathcal{X}_\beta^{(0)}(t)\mathcal{X}_\beta^{(\theta)}(t)\big]}{t}\bigg|_{\theta=0}.
\end{equation}
Our proof of Theorem~\ref{t.conG} also shows that
\[
\frac{ \Ee \big[\mathcal{X}_\beta^{(0)}(t)\mathcal{X}_\beta^{(\theta)}(t)\big]}{t}\to R_\beta^{(\theta)},
\]
so, {\em supposing that we can pass the limit under the derivative in \eqref{e.varkappa}},  this would imply Conjecture \ref{conj1}.

For $n=1$, the above heuristics have been justified rigorously in \cite{GK23}, using a different approach. The main result obtained in this paper could greatly simplify the argument in \cite{GK23}; in particular, Section 4 of \cite{GK23} could be removed. For arbitrary $n\geq2$, we expect a similar approach as in \cite{GK23}, combined with Theorem~\ref{t.conG}, should lead to Conjecture \ref{conj1}, see more discussion in Remark~\ref{r.proofconjecture} below. 

\section{Jointly invariant measures for a periodic semi-discrete stochastic Burgers' equation}\label{sec.jt.intro} 

\subsection{Periodic semi-discrete stochastic Burgers' equation}
The proof of Theorem \ref{thm:KPZ_invar_main} is achieved by first finding the jointly invariant measures for an integrable semi-discrete version of the KPZ equation (or rather, the stochastic Burgers' equation), then passing to a limit. We consider the system of  SDEs
\be \label{eq:OCY1_SDE}\tag{SBE$^N_{\beta}$}
dU_{i}(t) = \diff_i e^{-\mbf U(t)} \,dt + \beta \diff_i d\mbf B(t), \qquad i \in \Z_N.
\ee 
Here $\beta>0$, $N \in \N$ denotes the dimension of the system, $\Z_N$ denotes the cyclic group $\Z/N\Z$ of order $N$ under addition modulo $N$, $\mbf U(t) := (U_i(t))_{i\in \Z_N}$, and $\mbf{B}:=(B_i)_{i \in \Z_N}$ are i.i.d. standard Brownian motions. For $\mbf X \in \R^{\Z_N}$ we define the discrete derivative and (for later use) Laplacian  
\begin{equation}\label{eq.discretederiv}
\diff_i\mbf X := X_i - X_{i-1},\qquad \Lapl_i \mbf X := \diff_{i+1} \mbf X - \diff_i \mbf X = X_{i+1} - 2 X_{i} + X_{i-1}
\end{equation}
keeping in mind the cyclic nature of $\Z_N$. Thus, with some abuse of notations, $\diff_i e^{-\mbf U(t)} = e^{-U_i(t)}-e^{-U_{i-1}(t)}$.

As explained in Section \ref{sec:OCY_def}, \eqref{eq:OCY1_SDE} is satisfied by the logarithm of ratios of the periodic O'Connell-Yor polymer partition functions (i.e., free energy increments) and is therefore a semi-discrete version of the stochastic Burgers' equation. We show in Lemma \ref{lem:sd_SDE}\ref{itm:SDE} that, for each initial condition $\mbf U(0) \in (\R^{\Z_N})$, there is a pathwise-unique strong solution to this system. 

For $\mbf X = (X_i)_{i \in \Z_N} \in \R^{\Z_N}$, let $\vecsum(\mbf X) := \sum_{i \in \Z_N} X_i$ as defined in Section \ref{sec:not}. Since \eqref{eq:OCY1_SDE} is conservative and periodic, $d\vecsum(\mbf U(t)) =  0$ for all $t$, which implies the preservation of the slope parameter $\theta:=\vecsum(\mbf U(0))$.

\subsection{Invariant measures} The invariant measures for the continuum stochastic Burgers' equation are the derivatives of sloped Brownian bridges. In the semi-discrete case, this is replaced by independent log-inverse-gamma random variables conditioned on their total sum.
\begin{definition} \label{def:p_meas}
A random variable $X$ is said to be {\em log-inverse-gamma distributed} with shape $\gamma > 0$ and scale $\beta^{-2} > 0$ if $X$ has a probability density on $\R$  equal to
\[
\f{\beta^{-2\gamma}}{\Gamma(\gamma)}e^{-\gamma x} e^{-\beta^{-2} e^{-x}}.
\]
For $\theta \in \R$ and $\beta > 0$, let $\nu_\beta^{N,(\theta)}$ be the probability measure on $\R^{\Z_N}$ under which $\mbf X = (X_0,\ldots,X_{N-1})$ are i.i.d. log-inverse-gamma random variables with shape $\gamma$ and scale $\beta^{-2}$, conditioned on $\vecsum(\mbf X) = \theta$. This means that $(X_0,\ldots,X_{N-1})$ has density, on the hyperplane in $\R^{\Z_N}$ whose coordinates sum to $\theta$, proportional to
\be \label{p_dens1}
\prod_{i \in \Z_N} e^{-\beta^{-2}e^{-x_{i}}}.
\ee
We use here and elsewhere the convention that if $X_i$ denotes a random variable, then $x_i$ denotes the variable for its density. Note that this conditioned law does not depend on $\gamma$. Let $p_\beta^{N,(\theta)}(x_1,\ldots,x_{N-1})$ be the probability density of $(X_1,\ldots,X_{N-1})$ on $\R^{N-1}$, when $\mbf X = (X_0,\ldots, X_{N-1}) \sim \nu_\beta^{N,(\theta)}$. Note that $p_\beta^{N,(\theta)}(x_1,\ldots,x_{N-1})$ is a density on $N-1$ dimensional space, as we set $X_{r,0} = X_{r,N} = \theta_r - \sum_{i = 1}^{N-1} X_{r,i}$.

For  $(\theta_1,\ldots,\theta_k) \in \R^k$, let $\nu_\beta^{N,(\theta_1,\ldots,\theta_k)}$ be the product measure $\nu_\beta^{N,(\theta_1)} \otimes \cdots \otimes \nu_\beta^{N,(\theta_k)}$ on  $\R^{\Z_N} \times \cdots \times \R^{\Z_N}$. Let $p_\beta^{N,(\theta_1,\ldots,\theta_k)}$ be the probability density of the vector $\bigl((X_{1,i})_{1 \le i \le N-1},\ldots,(X_{k,i}\bigr)_{1 \le i \le N-1})$ when $(\mbf X_1,\ldots,\mbf X_k) \sim \nu_\beta^{N,(\theta_1,\ldots,\theta_k)}$ and $\mbf X_r = (X_{r,i}\bigr)_{1 \le i \le N-1}$.
\end{definition}

It is straightforward to check (and essentially known since \cite{brownian_queues}) from the Fokker-Planck equation for  \eqref{eq:OCY1_SDE} that $\nu_\beta^{N,(\theta)}$ is an invariant measure for each $\theta$, i.e., if $\mbf U(0) \sim \nu_\beta^{N,(\theta)}$, $\mbf U(t) \deq \mbf U(0)$ for all $t \ge 0$. 

\subsection{Jointly invariant measures}
We seek to identify the jointly invariant measures for \eqref{eq:OCY1_SDE}, i.e., the joint law of $k$ initial conditions which, when evolved via \eqref{eq:OCY1_SDE} subject to the same set of driving Brownian motions, remain distributionally unchanged in time. Specifically, we consider the process $\bigl(\mbf U_1(t),\ldots, \mbf U_k(t) \bigr)_{t \ge 0} \in C((0,\infty),\R^{\Z_N})$, where, for $1 \le r \le k$,  $\mbf U_r(t) = \bigl(U_{r,0}(t),\ldots,U_{r,N-1}(t)\bigr)_{t \ge 0} \in \R^{\Z_N}$ satisfies the coupled (i.e., the same Brownian motions $\mbf B$ drive the evolution for each $r \in \{1,\ldots,k\}$) SDEs
\be \label{eq:joint_OCY}\tag{SBE$^N_{\beta,k}$}
dU_{r,i}(t) = \diff_ie^{-\mbf U_r(t)}\,dt + \beta \diff_i d\mbf{B}(t),\quad  i \in \Z_N,\, 1 \le r \le k.
\ee
\begin{definition}
A random vector $\bigl(\mbf U_1(0),\ldots, \mbf U_k(0) \bigr)$ and its law is a {\em jointly invariant initial condition and measure} for \eqref{eq:OCY1_SDE} if it is an invariant initial condition and measure for \eqref{eq:joint_OCY}, i.e., if for all $t>0$
\[
\bigl(\mbf U_1(t),\ldots,\mbf U_k(t)\bigr) \deq \bigl(\mbf U_1(0),\ldots,\mbf U_k(0)\bigr).
\]
\end{definition}

The jointly invariant measures cannot be the product measure $\nu_\beta^{N,(\theta_1,\ldots, \theta_k)}$. Indeed, if we define a partial order on $\R^{\Z_N}$ by stating that $\mbf X < \mbf X'$ if and only if $X_i < X_i'$ for $i \in \Z_N$, then \eqref{eq:joint_OCY} is strongly attractive in the sense that, if $\mbf U_m(0) < \mbf U_r(0)$ for some $m,r \in \{1,\ldots,k\}$, then $\mbf U_m(t) < \mbf U_r(t)$  for all subsequent $t > 0$ (Lemma \ref{lem:SDE_strong_comp}). This suggests that the extremal invariant measures for \eqref{eq:joint_OCY} should be supported on vectors $(\mbf U_1,\ldots,\mbf U_k)$ such that, for each choice of $r,m \in \{1,\ldots,k\}$, either $\mbf U_r < \mbf U_m$ or $\mbf U_r > \mbf U_m$, and $\vecsum(\mbf U_m) = \theta_m$ for some distinct values $\theta_1,\ldots,\theta_k$.

We proceed to define a map (a discrete version of the construction that goes into Theorem \ref{thm:KPZ_invar_main}) that pushes-forward the product measure to the jointly invariant measure. We start by defining $D^{N,2}:\R^{\Z_N} \times \R^{\Z_N} \to \R^{\Z_N}$ as follows. For $(\mbf X_1,\mbf X_2) \in \R^{\Z_N} \times \R^{\Z_N}$, let $D^{N,2}(\mbf X_1,\mbf X_2) = \bigl(D_i^{N,2}(\mbf X_1,\mbf X_2)\bigr)_{i \in \Z_N}$ with 
\be \label{eq:D_intro}
\begin{aligned}
D_i^{N,2}(\mbf X_1,\mbf X_2) &= X_{2,i} + Q_i^{N,2}(\mbf X_1,\mbf X_2) - Q_{i-1}^{N,2}(\mbf X_1,\mbf X_2),  \quad \text{where}\\
Q_i^{N,2}(\mbf X_1,\mbf X_2) &= \log\sum_{j \in \Z_N} e^{X_{2,(i,j]}-X_{1,(i,j]}},\quad \textrm{and}\quad X_{m,(i,j]}=\sum_{\ell = i+1}^j X_{m,\ell},
\end{aligned}
\ee
with the sum taken in cyclic order over $\Z_N$ and the convention $X_{m,(i,i]} = 0$. In the limit to the KPZ equation, the output of the map $D^{N,2}$ can be thought of as producing the infinitesimal increments of the function-valued output of the map $\cDm^2$ from \eqref{eq:Phi_def}, while $Q^{N,2}$ is the semi-discrete analogue of $Q^2$ in \eqref{eq:Phi_def}. 
We iterate this map to define $D^{N,k}: (\R^{\Z_N})^k \to \R^{\Z_N}$ by setting
\be \label{eq:D_iter_intro}
D^{N,1}(\mbf X_1) = \mbf X_1, \textrm{ and for }m > 1,\quad 
D^{N,m}(\mbf X_1,\ldots,\mbf X_m) = D^{N,2}\bigl(\mbf X_1, D^{N,m-1}(\mbf X_2,\ldots,\mbf X_m)\bigr).
\ee
Equivalently, as shown in Lemma \ref{lem:Dnm_alt},
\begin{align*}
D^{N,m}_i(\mbf X_1,\ldots,\mbf X_m) &= X_{m,i} + Q_i^{N,m}(\mbf X_1,\ldots,\mbf X_m) - Q_{i-1}^{N,m}(\mbf X_1,\ldots,\mbf X_m) \quad \text{where}\\
Q_i^{N,m}(\mbf X_1,\ldots,\mbf X_m) &= \log \sum_{\substack{j_1,\ldots,j_{m-1} \in \Z_N\\ j_0 = i}} \prod_{r = 1}^{m-1} e^{X_{m,(j_{r - 1},j_r]}-X_{r,(j_{r - 1},j_r]}}. 
\end{align*}
Finally, define the map $\D^{N,k}:(\R^{\Z_N})^k \to (\R^{\Z_N})^k$ by
\be \label{DNk_intro}
\D^{N,k}(\mbf X_1,\ldots,\mbf X_k) = \bigl(\mbf X_1,D^{N,2}(\mbf X_1,\mbf X_2),\ldots, D^{N,k}(\mbf X_1,\ldots,\mbf X_k)\bigr).
\ee

\begin{definition} \label{def:mu_meas}
    For $k\in \N$ and $\theta_1,\ldots,\theta_k \in \R$, let $\mu_\beta^{N,(\theta_1,\ldots,\theta_k)}$ be the probability measure of the vector $(\mbf U_1,\ldots,\mbf U_k) = \D^{N,k}(\mbf X_1,\ldots,\mbf X_k)$ when $(\mbf X_1,\ldots,\mbf X_k) \sim \nu_\beta^{N,(\theta_1,\ldots,\theta_k)}$.
\end{definition}

The following is the semi-discrete analogue of
Proposition \ref{prop:g_prop_intro} (and implies that proposition after a suitable limit). We prove these properties  directly from the structure of the maps $\D^{N,k}$.
\begin{proposition} \label{prop:disc_consis}
Let $N,k \ge 1$, and let $(\theta_1,\ldots,\theta_k) \in \R^k$.  Let $(\mbf U_1,\ldots,\mbf U_k) \sim \mu_\beta^{N,(\theta_1,\ldots,\theta_k)}$. Then, 
\begin{enumerate} [label=\textup{(\roman*)}]
\item \label{itm:mu_perm} \textup{(Consistency and symmetry)} For $\sigma \in \mathcal S(k)$ and $1 \le m \le k$,
$
(\mbf U_{\sigma(1)},\ldots,\mbf U_{\sigma(m)}) \sim \mu_\beta^{N,(\theta_{\sigma(1)},\ldots,\theta_{\sigma(m)})}.
$
\item \label{itm:mu_consis} \textup{(Distribution of a component vector)} For $1 \le m \le k$, $\mbf U_m \sim \nu_\beta^{N,(\theta_m)}$.
\item \label{itm:mu_order} \textup{(Monotonicity)} If $\theta_r = \theta_m$ for some $r \neq m$, then $\mbf U_r = \mbf U_m$ almost surely. Furthermore, if $\theta_r < \theta_m$, then $\mbf U_r < \mbf U_m$ almost surely.
\item \label{itm:mu_cont} \textup{(Continuity)} If $(\theta_1^{(n)},\ldots,\theta_k^{(n)})_{n \ge 0}$ is a sequence converging to $(\theta_1,\ldots,\theta_k)$, and  $(\mbf U_1^{(n)},\ldots,\mbf U_k^{(n)}) \sim \mu_\beta^{N,(\theta_1^{(n)},\ldots,\theta_k^{(n)})}$, then  $(\mbf U_1^{(n)},\ldots,\mbf U_k^{(n)})$ converges in distribution to $(\mbf U_1,\ldots,\mbf U_k)$. 
\end{enumerate}
\end{proposition}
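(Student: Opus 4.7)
The plan is to derive the four items from two deep structural facts about the periodic Pitman transform $W = (T^{N,2}, D^{N,2})$ developed earlier in the paper: the Burke property (Proposition \ref{prop:Burke}), which asserts that $W$ preserves a suitable product of log-inverse-gamma measures, and the braid-type identity of Proposition \ref{prop:full_intertwine}. I would first dispatch the marginality (ii). Since the telescoping form of $Q_i^{N,m} - Q_{i-1}^{N,m}$ already gives $\vecsum(\mathbf{U}_m) = \theta_m$ deterministically, only the law need be identified. Induction on $m$ using the iterated identity $D^{N,m}(\mathbf{X}_1, \ldots, \mathbf{X}_m) = D^{N,2}(\mathbf{X}_1, D^{N,m-1}(\mathbf{X}_2, \ldots, \mathbf{X}_m))$ reduces the claim to the $m = 2$ case, which is precisely the conditioned version of Burke applied to $\mathbf{X}_1 \sim \nu_\beta^{N, (\theta_1)}$ and the inductively obtained $D^{N,m-1}(\mathbf{X}_2, \ldots, \mathbf{X}_m) \sim \nu_\beta^{N, (\theta_m)}$.

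For (i), the consistency part (restricting to the first $m$ components) is immediate from the triangular structure of $\D^{N,k}$ in \eqref{eq:D_iter_intro}, since $\mathbf{U}_r$ depends only on $(\mathbf{X}_1, \ldots, \mathbf{X}_r)$. The symmetry part reduces to adjacent transpositions $\sigma = (r, r+1)$; I will describe the argument for $r = 1$, the case $r > 1$ being analogous once one conditions on $(\mathbf{X}_1, \ldots, \mathbf{X}_{r-1})$. Set $\mathbf{Y}_1 := D^{N,2}(\mathbf{X}_1, \mathbf{X}_2)$ and $\mathbf{Y}_2 := T^{N,2}(\mathbf{X}_1, \mathbf{X}_2)$; the Burke property then makes $(\mathbf{Y}_1, \mathbf{Y}_2, \mathbf{X}_3, \ldots, \mathbf{X}_k)$ a product vector with law $\nu_\beta^{N, (\theta_2)} \otimes \nu_\beta^{N, (\theta_1)} \otimes \nu_\beta^{N, (\theta_3)} \otimes \cdots \otimes \nu_\beta^{N, (\theta_k)}$. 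Applying $W^{-1} = sWs$ from Corollary \ref{cor:DR_bijection} recovers $\mathbf{X}_1 = D^{N,2}(\mathbf{Y}_1, \mathbf{Y}_2)$, and Proposition \ref{prop:full_intertwine} supplies the key braid identity
\[
D^{N,2}(\mathbf{X}_1, D^{N,2}(\mathbf{X}_2, \mathbf{C})) = D^{N,2}(\mathbf{Y}_1, D^{N,2}(\mathbf{Y}_2, \mathbf{C}))
\]
valid for any $\mathbf{C} \in \R^{\Z_N}$. Iterating this with $\mathbf{C}$ taking the form $D^{N,j}(\mathbf{X}_3, \ldots, \mathbf{X}_j)$ for successive $j$ then shows that $(\mathbf{U}_2, \mathbf{U}_1, \mathbf{U}_3, \ldots, \mathbf{U}_k)$ coincides with $\D^{N,k}(\mathbf{Y}_1, \mathbf{Y}_2, \mathbf{X}_3, \ldots, \mathbf{X}_k)$, which by definition has law $\mu_\beta^{N, (\theta_2, \theta_1, \theta_3, \ldots, \theta_k)}$.

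For monotonicity (iii), the just-established symmetry reduces matters to the case $r = 1$, $m = 2$. Isolating the cyclic wrap-around term $j = i - 1$ in the sum $S_i := \sum_{j \in \Z_N} e^{Y_{(i, j]}}$ (with $\mathbf{Y} := \mathbf{X}_2 - \mathbf{X}_1$) gives $S_{i-1} = 1 + e^{Y_i} S_i - e^{\vecsum(\mathbf{Y})}$, from which a direct computation yields
\[
e^{D_i^{N,2}(\mathbf{X}_1, \mathbf{X}_2) - X_{1,i}} = \frac{e^{Y_i} S_i}{1 + e^{Y_i} S_i - e^{\vecsum(\mathbf{Y})}}.
\]
Comparing numerator and denominator, this ratio is $>1$, $=1$, or $<1$ precisely when $\vecsum(\mathbf{Y}) = \theta_2 - \theta_1$ is positive, zero, or negative, yielding the coordinatewise strict ordering (and exact coincidence when $\theta_1 = \theta_2$) claimed in (iii). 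For continuity (iv), parameterizing the hyperplane $\{\vecsum = \theta\}$ by its first $N - 1$ coordinates shows via the density \eqref{p_dens1} that $\theta \mapsto \nu_\beta^{N, (\theta)}$ is weakly continuous, hence so is the product measure; joint continuity of $\D^{N,k}$ then delivers weak convergence of the pushforwards by the continuous mapping theorem. I expect the main obstacle to be the symmetry step in (i): it is precisely where Proposition \ref{prop:full_intertwine} must carry the full weight, and that braid identity---the periodic analogue of a classical relation for the geometric Pitman transform---is substantially more delicate than its full-line analogue because of the cyclic correction term $-e^{\vecsum(\mathbf{Y})}$ that arises when indices wrap around $\Z_N$.
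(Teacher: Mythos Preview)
Your proposal is correct and essentially matches the paper's proof: item (i) via Burke plus the braid identity of Proposition~\ref{prop:full_intertwine} is exactly the paper's route (packaged there as Corollary~\ref{cor:D_perm}), and (iv) is handled identically. The minor organizational differences---your direct inductive proof of (ii) via Burke alone rather than deducing it from (i) (the paper simply notes $\mathbf{U}_1=\mathbf{X}_1$ and then permutes), and your reduction of (iii) to the $k=2$ case via symmetry rather than the paper's deterministic Lemma~\ref{lem:sum_v_order}---are both valid alternatives; your ``conditioning on $(\mathbf{X}_1,\ldots,\mathbf{X}_{r-1})$'' shortcut for the general transposition in (i) is made precise in the paper by Lemma~\ref{lem:Diter}, which factors $D^{N,\ell}$ through $D^{N,r}$ applied to an inner block so that your $r=1$ argument transfers verbatim.
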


Proposition \ref{prop:disc_consis} is proved at the very end of Section \ref{sec:Burke_intertwine}. We now state the theorem for the jointly invariant measures in this semi-discrete model. We do not include a proof of the, undoubtedly true, 1F1S principle in this semi-discrete setting.
\begin{theorem}[Jointly invariant measure for \eqref{eq:joint_OCY}] \label{thm:OCY_joint}
For all $\beta > 0$, $k\in \N$ and $(\theta_1,\ldots,\theta_k) \in \R^k$, $\mu_\beta^{N,(\theta_1,\ldots,\theta_k)}$ is a jointly invariant measure for \eqref{eq:joint_OCY}.
\end{theorem}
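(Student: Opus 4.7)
The plan is to establish Theorem \ref{thm:OCY_joint} by a generator intertwining argument. By Definition \ref{def:mu_meas}, $\mu_\beta^{N,(\theta_1,\ldots,\theta_k)}$ is the pushforward of the product measure $\nu_\beta^{N,(\theta_1,\ldots,\theta_k)}$ under the deterministic map $\D^{N,k}$ of \eqref{DNk_intro}. To prove invariance of this pushforward, it suffices to exhibit an auxiliary Markov dynamics on $(\R^{\Z_N})^k$, with generator $\wt{\mathcal{L}}$, such that (i) $\nu_\beta^{N,(\theta_1,\ldots,\theta_k)}$ is invariant for $\wt{\mathcal{L}}$, and (ii) $\wt{\mathcal{L}}$ intertwines with the generator $\mathcal{L}$ of \eqref{eq:joint_OCY} via $\mathcal{L}(f\circ \D^{N,k}) = (\wt{\mathcal{L}}f)\circ \D^{N,k}$ on a sufficiently rich class of test functions $f$. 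Combining (i) and (ii) yields
\[
\int \mathcal{L}(f\circ \D^{N,k})\,d\nu_\beta^{N,(\theta_1,\ldots,\theta_k)} = \int \wt{\mathcal{L}}f\,d\nu_\beta^{N,(\theta_1,\ldots,\theta_k)} = 0,
\]
which is precisely invariance of $\mu_\beta^{N,(\theta_1,\ldots,\theta_k)}$ under \eqref{eq:joint_OCY}.

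Concretely, three steps are required. \textbf{First}, verify that $\D^{N,k}$ restricts to a bijection on the slope-stratified product $\R^{\Z_N}_{\theta_1}\times\cdots\times\R^{\Z_N}_{\theta_k}$, where $\R^{\Z_N}_{\theta}:=\{\mbf X:\vecsum(\mbf X)=\theta\}$. By the iterative structure \eqref{eq:D_iter_intro}, this reduces to showing that $(\mbf X_1,\mbf X_2)\mapsto(\mbf X_1,D^{N,2}(\mbf X_1,\mbf X_2))$ is a bijection on $\R^{\Z_N}_{\theta_1}\times\R^{\Z_N}_{\theta_2}$, equivalently that the periodic Pitman transform $W$ of Section \ref{sec:approach} is invertible on this product of hyperplanes; the inverse $\mathcal{J}^{N,k}$ of $\D^{N,k}$ is then a well-defined smooth map. \textbf{Second}, derive the auxiliary SDE system by applying It\^o's formula to $\mbf X(t):=\mathcal{J}^{N,k}(\mbf U(t))$, where $\mbf U(t)$ solves \eqref{eq:joint_OCY}. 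The drift of each $\mbf X_r(t)$ picks up both a Laplacian-like contribution of the form $\diff_i e^{-\mbf X_r}$ and It\^o corrections from the logarithmic partition functions $Q^{N,m}_i$ appearing in \eqref{eq:D_intro}, while the diffusion coefficients are transformed linear combinations of the noise increments $\diff_i d\mbf B(t)$ built from the periodic Pitman transform. \textbf{Third}, check via the Fokker--Planck equation that the product log-inverse-gamma density \eqref{p_dens1} (across the $k$ layers) is stationary for this auxiliary system: because each $\nu_\beta^{N,(\theta_r)}$ is already invariant for the single-layer dynamics \eqref{eq:OCY1_SDE}, the intra-layer terms vanish automatically, and the substantive content is the cancellation of inter-layer cross terms.

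The main obstacle will be step two. The iterated polymer-like structure of $\D^{N,k}$ makes the first- and second-order partials of $D^{N,m}_i$ with respect to each layer combinatorially intricate, and the cyclic index set $\Z_N$ destroys the clean telescoping available on the full line --- this is exactly the place where lifting the full-line Pitman transform theory to the periodic setting requires new algebraic input. A reasonable tactic is to treat the $k=2$ case in full detail first, where the auxiliary system takes the transparent form of a pair driven by transformed Brownian increments associated with the periodic Pitman transform building blocks $(T^{N,2},D^{N,2})$, and then bootstrap to general $k$ using the upper-triangular nature of the iteration \eqref{eq:D_iter_intro}: conditionally on $(\mbf X_1,\ldots,\mbf X_{m-1})$, the dynamics of $\mbf X_m$ mirrors the $k=2$ analysis, up to terms absorbed by a Burke-type identity for the periodic Pitman transform. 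Once invariance is established, the structural properties in Proposition \ref{prop:disc_consis} --- in particular the monotonicity in \ref{itm:mu_order} together with the strong attractiveness of \eqref{eq:joint_OCY} --- serve as internal consistency checks confirming that $\mu_\beta^{N,(\theta_1,\ldots,\theta_k)}$ is supported on the ordered configurations expected of extremal jointly invariant measures.
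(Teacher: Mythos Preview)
Your overall strategy matches the paper's: construct a dual SDE system for which the product measure $\nu_\beta^{N,(\theta_1,\ldots,\theta_k)}$ is invariant, and intertwine it with \eqref{eq:joint_OCY} via the bijection $\D^{N,k}$ (this is exactly Propositions \ref{prop:transform}, \ref{prop:NewSDE}, \ref{prop:prod_invar}). However, several of the details you anticipate are wrong in ways that would obstruct the execution.

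Most importantly, the diffusion term is \emph{not} transformed. The crucial structural fact is that $J^{N,2}_i(\mbf U_1,\mbf U_2) - U_{2,i}$ depends only on the differences $(U_{2,\ell}-U_{1,\ell})_\ell$, yielding $\partial_{U_{1,j}} J_i^{N,2} + \partial_{U_{2,j}} J_i^{N,2} = \ind\{i=j\}$; this simultaneously preserves the diffusion coefficient as exactly $\beta\,\diff_i d\mbf B(t)$ and annihilates the Hessian contribution in It\^o's formula (see \eqref{FGder}, \eqref{eq:H_transport}). Without this, the second-order corrections you expect would not close. The resulting dual drift is $\diff_i e^{-\mbf X_r} - \sum_{m<r}\Lapl_i e^{-\mbf X_m}$ (system \eqref{DxR_gen}), so your Fokker--Planck heuristic is also off: for $r\ge 2$ the $r$th layer is not marginally \eqref{eq:OCY1_SDE}, and it is precisely these extra Laplacian terms that balance the second-order cross-derivatives coming from the shared noise to make the product density stationary. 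Two smaller points: the bijection you need is $\D^{N,k}$ with inverse $\mathcal J^{N,k}$, not the full Pitman transform $W=(T^{N,2},D^{N,2})$, and it holds only on $\R^{N,k}_{\neq}$ --- so the argument runs directly only for distinct $\theta_m$, with repeated slopes handled afterward via Proposition \ref{prop:disc_consis}\ref{itm:mu_order}; and the Burke property plays no role in the invariance proof --- it is used only for the symmetry in Proposition \ref{prop:disc_consis}\ref{itm:mu_perm}.
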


This result is the analogue of Theorem \ref{thm:KPZ_invar_main} and used to prove it in Section \ref{sec:proofmaintthm}. Theorem \ref{thm:OCY_joint} is proved in Section \ref{sec:OCY_full_proof}. However, the main idea behind its proof and the origin of the structure of the jointly invariant measures can be already seen by considering the simple case of $N=k=2$, as we do now.

\subsection{An informative special case of the proof of Theorem \ref{thm:OCY_joint}}\label{sec.simplecase}
We give here a proof of Theorem \ref{thm:OCY_joint} in the case $N = k = 2$ and for $\theta_1 \neq \theta_2$. We also set $\beta = 1$ for ease of notation. In this simple case, we can derive our description of the jointly invariant measures from first principles, after making an ansatz based on prior work on joint stationary measures. This case also informs our understanding of how to formulate and check the jointly invariant measures for general $k$ and $N$.

Consider $\bigl(\mbf U_1(t),\mbf U_2(t)\bigr)_{t\ge 0} = \Bigl(\bigl(U_{1,0}(t),U_{1,1}(t)\bigr),\bigl(U_{2,0}(t),U_{2,1}(t)\bigr)\Bigr)_{t \ge 0}$ evolving via the equation \eqref{eq:joint_OCY} with $k=N=2$. Because $\vecsum\bigl(\mbf U_1(t)\bigr)$ and $\vecsum\bigl(\mbf U_2(t)\bigr)$ are conserved quantities, knowing the evolution of $U_1(t):=U_{1,0}(t)$ and $U_2(t):=U_{2,0}(t)$, together with the knowledge of $\theta_1:=\vecsum\bigl(\mbf U_1(0)\bigr)$ and $\theta_2:=\vecsum\bigl(\mbf U_2(0)\bigr)$ (which we assume are deterministic), is sufficient to determine the evolution of the full system. In particular,we find that by letting  $B(t) := \f{1}{\sqrt 2}\bigl(B_0(t) - B_1(t)\bigr)$ we can rewrite \eqref{eq:joint_OCY} as (with $\beta=1$)
\be \label{eq:U1U2eq}
\begin{aligned}
dU_1(t) &= \bigl(e^{-U_1(t)} - e^{-(\theta_1 - U_1(t))}\bigr)\,dt + \sqrt 2 dB(t) \\
dU_2(t) &= \bigl(e^{-U_2(t)} - e^{-(\theta_2 - U_2(t))}\bigr)\,dt + \sqrt 2 dB(t).
\end{aligned}
\ee
Let us first consider the marginally invariant measures. When restricted to $U_1$, the generator of \eqref{eq:U1U2eq} is
\[
\mathcal Lf(x) = \bigl(e^{-x} - e^{-(\theta_1 -x)}\bigr) f'(x) + f''(x),
\]
so to check that $p$ is invariant, we verify that 
$p$ satisfies the stationary Fokker-Planck equation
\be \label{eq:simple_FPE}
\mathcal L^\star p := -\df{d}{dx}\Bigl[(e^{-x} - e^{-(\theta_1 - x)}) p(x)\Bigr] + p''(x) = 0,
\ee
where $\mathcal L^\star$ is the formal adjoint of $\Ll$. To prove that proving \eqref{eq:simple_FPE} is sufficient to show invariance requires a nontrivial amount of explanation, due to the fact that the coefficients of the equation are unbounded. This is handled for the general $N,k$ case in Appendix \ref{appx:invariance}. 
The unique probability density satisfying \eqref{eq:simple_FPE} is $p_{\theta_1}(x)$ where
$$p_{\theta}(x) := C\exp\Bigl(-e^{-x} - e^{-(\theta - x)}\Bigr)$$ 
and $C>0$ is for normalization. This is the density at $U_{1,0} = x$ when $U_{1,0}\sim \nu^{N,(\theta_1)}_1$ (Definition \ref{def:p_meas}). 

Inspired by previous results on jointly invariant measures (see, e.g., \cite{Ferrari-Martin-2005,Ferrari-Martin-2007,Ferrari-Martin-2009,Martin-2020,Fan-Seppalainen-20,Seppalainen-Sorensen-21b,Bates-Fan-Seppalainen,GRASS-23} and Appendix \ref{sec:comparison}), we make the ansatz that the invariant initial conditions for \eqref{eq:U1U2eq} have the form 
\begin{align*}
\bigl(U_1(0),&U_2(0)\bigr) = \bigl(X_1(0),F(X_1(0),X_2(0))\bigr) 
\quad \textrm{for some } F:\R^2 \to \R, 
\\&\textrm{where }X_1(0)\sim p_{\theta_1}\textrm{ and }X_2(0)\sim p_{\theta_2}\textrm{ are independent}.
\end{align*}
To show this is the case, it suffices to:
\begin{enumerate} [label=\textup{(\roman*)}]
\item \label{itm:X1proc} Show that there exists a diffusion process $\bigl(X_1(t),X_2(t)\bigr)_{t \ge 0}$ with invariant  density $p_{\theta_1}(x_1)p_{\theta_2}(x_2)$.  
\item \label{itm:F_map_inv} Using It\^o's lemma, find a function $F:\R^2 \to \R$ so that the image of the diffusion from \ref{itm:X1proc} under the map $(X_1,X_2) \mapsto \bigl(X_1,F(X_1,X_2)\bigr)$ satisfies \eqref{eq:U1U2eq}. 
\end{enumerate} 
By triangularity of the map in \ref{itm:F_map_inv}, we see that the SDE for $X_1(t)$ must be the same as that for $U_1(t)$. Simplifying things further, we postulate that the diffusion equation for $\bigl(X_1(t),X_2(t)\bigr)$ is of the form
\be \label{eq:X_sys}
\begin{aligned}
    dX_1(t) &= \bigl(e^{-X_1(t)} - e^{-(\theta_1 - X_1(t))}\bigr)\,dt + \sqrt 2 dB(t) \\
    dX_2(t) &=  A\bigl(X_1(t),X_2(t)\bigr)\,dt + \sqrt 2 dB(t)
\end{aligned}
\ee
for some function $A:\R^2 \to \R$ (i.e., both components have the diffusion coefficient $\sqrt 2$ as in the original system of equations). This system of SDEs' generator is
\[
(e^{-x_1} - e^{-(\theta_1 - x_1)})\partial_{x_1} + A(x_1,x_2)\partial_{x_2} + \sum_{m,r = 1}^2 \partial_{x_m}\partial_{x_r}
\]
and thus for \ref{itm:X1proc} to hold, $A$ must be such that the following 2-dimensional Fokker-Planck equation is satisfied:
\[
\df{\partial}{\partial x_1}\Bigl[(e^{-x_1} - e^{-(\theta_1 - x_1)})p_{\theta_1}(x_1)\Bigr]p_{\theta_2}(x_2) + \df{\partial}{\partial x_2}\Bigl[A(x_1,x_2)p_{\theta_2}(x_2)\Bigr]p_{\theta_1}(x_1) = \sum_{r,m = 1}^2\f{\partial^2}{\partial_{x_m}\partial_{x_r}} p_{\theta_1}(x_1) p_{\theta_2}(x_2). 
\]
Observe that, $\df{\partial}{\partial x} p_{\theta}(x) = \bigl(e^{-x} - e^{-(\theta - x)}\bigr) p_{\theta}(x)$. Hence, the right-hand side above is
\[
\df{\partial}{\partial x_1}\Bigl[(e^{-x_1} - e^{-(\theta_1 - x_1)})p_{\theta_1}(x_1) \Bigr]p_{\theta_2}(x_2) + \df{\partial}{\partial x_2} \Bigl[ \Bigl(e^{-x_2} - e^{-(\theta_2 - x_2)} + 2e^{-x_1} - 2e^{-(\theta_1 - x_1)}\Bigr)p_{\theta_2}(x_2)  \Bigr] p_{\theta_1}(x_1). 
\]
Comparing this to the left-hand side, we see that $A$ is unique (up to  the possible addition of $\f{f(x_1)}{p_{\theta_2}(x_2)}$ for some function $f:\R \to \R$) and given by 
\be \label{eq:Adef}
A(x_1,x_2) = e^{-x_2} - e^{-(\theta_2 - x_2)} + 2(e^{-x_1} - e^{-(\theta_1 - x_1)}).
\ee
What remains is to find a function $F$ (as described in \ref{itm:F_map_inv}) so that  
$(U_1(t),U_2(t)) := \Bigl(X_1(t),F\bigl(X_1(t),X_2(t)\bigr)\Bigr)$ solves \eqref{eq:U1U2eq}. The nontrivial part is to check the equation for $U_2(t)$ since $U_1(t)$ trivially satisfies the desired SDE. By It\^o's lemma (see, e.g., \cite[Theorem 5.10]{LeGall-book}),
\be \label{U2_Ito}
\begin{aligned}
dU_2(t) &=\Biggl( \Bigl(e^{-X_1(t)} - e^{-(\theta_1 - X_1(t))}  \Bigr) \df{\partial F}{\partial x_1}\bigl(X_1(t),X_2(t)\bigr) + A\bigl(X_1(t),X_2(t)\bigr) \df{\partial F}{\partial x_2}\bigl(X_1(t),X_2(t)\bigr)  \\
&+ \f{1}{2}\text{tr}\Bigl(\sigma^T H\bigl(X_1(t),X_2(t)\bigr) \sigma\Bigr) \Biggr)dt+\biggl(\df{\partial F}{\partial x_1}\bigl(X_1(t),X_2(t)\bigr)   + \df{\partial F}{\partial x_2}\bigl(X_1(t),X_2(t)\bigr) \biggr)\sqrt 2 dB(t),
\end{aligned}
\ee
where $H(x_1,x_2)$ is the Hessian matrix of the function $F(x_1,x_2)$, and 
$\sigma = \begin{pmatrix} \sqrt 2 &0 \\
\sqrt 2 &0
\end{pmatrix}$ 
is the diffusion matrix of the system (as there is only one Brownian motion, $\sqrt 2 dB$ present). To match the diffusion term with that in \eqref{eq:U1U2eq}, $F$ must satisfy the transport equation
\be \label{transport}
\df{\partial F}{\partial x_1}(x_1,x_2) + \df{\partial F}{\partial x_2}(x_1,x_2) = 1,
\ee
whose general solution is 
$
F(x_1,x_2) = x_1 + \phi(x_1 - x_2) 
$
for any function $\phi: \R\to \R$. From $\eqref{transport}$, we see that
\[
\df{\partial^2 F}{\partial x_1^2} = \df{\partial^2 F}{\partial x_2^2} = -\df{\partial^2 F}{\partial x_1 \partial x_2},
\]
and thus $\sigma^T H(x_1,x_2) \sigma$ is the zero matrix for all $(x_1,x_2) \in \R^2$. Hence, the drift term in \eqref{U2_Ito} simplifies to
 \begin{align*} 
&\quad \, \Bigl(e^{-X_1(t)} - e^{-(\theta_1 - X_1(t))}\Bigr)\Bigl(1 + \phi'\bigl(X_1(t) - X_2(t)\bigr) \Bigr) - \phi'\bigl(X_1(t) - X_2(t)\bigr) A\bigl(X_1(t),X_2(t)\bigr)
 \\
 &= \Bigl(e^{-X_1(t)} - e^{-(\theta_1 - X_1(t))}\Bigr) - \phi'\bigl(X_1(t) - X_2(t)\bigr)\Bigl(e^{-X_1(t)} - e^{-(\theta_1 - X_1(t))} + e^{-X_2(t)} - e^{-(\theta_2 - X_2(t))}\Bigr).
 \end{align*}
 To recover \eqref{eq:U1U2eq}, $\phi$ must be chosen so the right-hand side above equals
\[
e^{-X_1(t) - \phi(X_1(t)-X_2(t))} - e^{-\bigl(\theta_2 - X_1(t) - \phi(X_1(t) - X_2(t))\bigr)}.
\]
For this equality to hold, it suffices to break this up into two equations and prove that, for $x_1,x_2 \in \R$,
\be \label{eq:splitansatz}
\begin{aligned}
e^{-x_1 - \phi(x_1-x_2)} &= e^{-x_1} - \phi'(x_1 - x_2)(e^{-x_1} + e^{-x_2}),\qquad \text{and} \\
e^{-(\theta_2 - x_1 - \phi(x_1 - x_2))} &= e^{-(\theta_1 - x_1)} - \phi'(x_1 - x_2)(e^{-(\theta_1 - x_1)} + e^{-(\theta_2 - x_2)} ).
\end{aligned}
\ee
Observe that if we set $\chi = x_1 - x_2$ then these equations are equivalent to two ODEs in the single variable $\chi$:
$$
e^{-\phi(\chi)} = 1 - \phi'(\chi)(1 + e^{\chi}), \qquad\textrm{and}\qquad
    e^{\phi(\chi)} = e^{\theta_2 - \theta_1} - \phi'(\chi)(e^{\theta_2 - \theta_1} + e^{-\chi}).
$$
The unique solution that satisfies both equations is
\[
\phi(\chi) = \log\Biggl(\f{1 + e^{\theta_2 - \theta_1 + \chi}}{1 + e^\chi}\Biggr).
\]
Thus we have found a solution to our ansatz. Let us compare the outcome to the definition of the map  $D^{2,2}$ \eqref{eq:D_intro}. For $\mbf X_1 = (X_{1,0},X_{1,1}) := (x_1,\theta_1 - x_1)$ and $\mbf X_2 := (x_2,\theta_2 - x_2)$, we have
\begin{align*}
 D_0^{2,2}(\mbf X_1,\mbf X_2) &= x_2 + \log\Biggl(\f{1 + e^{\theta_2 - x_2 - (\theta_1 - x_1)}}{1 + e^{x_2 - x_1}}\Biggr) = x_1 + (x_2 - x_1) + \log\Biggl(\f{1 + e^{\theta_2 - x_2 - (\theta_1 - x_1)}}{1 + e^{x_2 - x_1}}\Biggr) \\
&= x_1 + \log\Biggl(\f{1 + e^{\theta_2 - \theta_1 + x_1 - x_2 }}{1 + e^{x_1 - x_2}}\Biggr)  = x_1 + \phi(x_1 - x_2) = F(x_1,x_2),
\end{align*}
and so, we recover the $N = k = 2$ case of Theorem \ref{thm:OCY_joint}. 

\subsection{Challenges in moving beyond the special case}\label{sec:challenges}
Working out this special case was our starting point in the course of writing this paper.  The major difficulty in finding the jointly invariant measures in the case of general $N,k \ge 2$ lies in proving the theorem for $k = 2$ and general $N \ge 2$. Once that case is covered, the previous works \cite{Ferrari-Martin-2005,Ferrari-Martin-2007,Ferrari-Martin-2009,Martin-2020,Fan-Seppalainen-20,Seppalainen-Sorensen-21b,Bates-Fan-Seppalainen,GRASS-23} suggest how one should iterate the transformations as in \eqref{eq:D_iter_intro}.  For $k=2$ and general $N \ge 2$, we follow the same two steps outlined in Items \ref{itm:X1proc}-\ref{itm:F_map_inv}, which is the general same technique introduced originally in \cite{Ferrari-Martin-2007}. 

Generating a natural candidate for the dual system of SDEs needed in Item \ref{itm:X1proc} was not terribly difficult. By making a similar generator calculation as in $N=2$, one sees that
  \be \label{eq:XR_intro}
  d X_{r,i}(t) = \Bigl(\diff_i e^{-\mbf X_r(t)} - \sum_{m = 1}^{r-1}\Lapl_i e^{-\mbf X_m(t)} \Bigr)\,dt 
 + \beta\diff_i d\mbf B(t),\qquad 1 \le r \le 2, i \in \Z_N,
  \ee
has $\nu_\beta^{N,(\theta_1,\theta_2)}$ as its invariant measure (recall $\diff_i$ and $\Lapl_i$ notation from \eqref{eq.discretederiv}).
It is worth noting a difference here to previous works. In each of the previous works mentioned above, there is a natural Burke property embedded into the model which implies that the evolution of the Markov process produces ``dual" driving weights; and the analogue of the equation for $\mbf X_2$ is the same as for the original process, but driven by those dual weights. We do not see such an interpretation in this setting. We also note that because the drift coefficients in \eqref{eq:XR_intro} are unbounded and the diffusion matrix is degenerate (we only use $N$ Brownian motions for a $2N$-dimensional system), there are nontrivial technical details to rigorously prove that the product measure is invariant for \eqref{eq:XR_intro}. These are worked out in Appendix \ref{appx:invariance}.  
 
Finding the general $N$ mapping needed for Item \ref{itm:F_map_inv} was more difficult and is achieved in Proposition \ref{prop:NewSDE}. Previous works suggested triangularity of the transformation, but did not pin it down beyond that. An initial guess was to take the same transformation as used for the inverse-gamma polymer in the full-line setting of \cite{Bates-Fan-Seppalainen} but with periodic inputs. This failed, so we sought a different means to produce the transform.

In the $N=2$ case of Item \ref{itm:F_map_inv} worked out in Section \ref{sec.simplecase}, there were two key simplifications that allowed the proof to work. The first was to postulate that the drift term in the dual system \eqref{eq:X_sys} is the same as the drift term in the original equation \eqref{eq:U1U2eq}. This led to the transport equation in \eqref{eq:Adef}, which implied that the second-order terms in It\^o's lemma vanish. An analogous simplification holds for general $N$. See in particular, \eqref{FGder} and \eqref{eq:H_transport} in the proof of Proposition \ref{prop:NewSDE}, where it is shown that this form of the transform produces the same diffusion terms and a zero second-order term in the multi-dimensional It\^o's lemma.  The second simplification came by postulating that $\phi$ solved the two equations in \eqref{eq:splitansatz}. This simplified the problem as follows: Before, we needed to find a function $\phi$ that satisfied an equation involving two variables. Equation \eqref{eq:splitansatz} broke this up into two equations, each of which could be interpreted as an ODE in a single variable. We do not know of an analogue of this simplification of \eqref{eq:splitansatz} for general $N$. However, because a multivariate transport equation does arise in the general case, we knew that the output of the map $D_i^{N,2}(\mbf X_1,\mbf X_2)$ should be equal to $X_{2,i}$ plus some function of the sequence of differences $(X_{2,\ell} - X_{1,\ell})_{\ell \in \Z_N}$. Motivated by the known structure for the discrete geometric Pitman transform on the real line (see \cite[Definition 2.3]{CorwinInvariance}), this led to a natural conjecture for extending the $N = 2$ case, which we could then verify directly. 

After determining $D_i^{N,2}$ we were able to recognized this as a suitable shifted version of the periodized transformation from \cite{Bates-Fan-Seppalainen}; see Appendix \ref{sec:comparison} for details. We do not have an explanation for why this must be the form of the transformation, though it makes for a nice periodic generalization of the full line picture.

\section{Jointly invariant measures for a periodic discrete-time Markov chain} \label{sec:disc_MC} The algebraic structure we uncover in Proposition \ref{prop:disc_consis} results in the following by-product: the measures $\mu_\beta^{N,(\theta_1,\ldots,\theta_k)}$ are also jointly invariant for a discrete-time Markov chain described now. The remainder of the paper may be read independently of this section. 

Let $(\mbf U_1^{(m)},\ldots,\mbf U_k^{(m)}) \in (\R^{\Z_N})^k$ be the state of the chain at time $m$, and let $\mbf W\in \R^{\Z_N}$ be independent of the chain up to time $m$. Then, the time $m + 1$ state is defined as 
\be \label{eq:coupled_disc_MC}
(\mbf U_1^{(m+1)},\ldots,\mbf U_k^{(m+1)}) = \bigl(D^{N,2}(\mbf W,\mbf U_1^{(m)}),\ldots,D^{N,2}(\mbf W,\mbf U_k^{(m)}) \bigr).
\ee 
The same driving sequence $\mbf W$ is applied to each component hence this Markov chain is the coupled evolution of the $k = 1$ case of the chain, using the same driving weights. We will consider two cases of the distribution of the driving weights $\mbf W \in \R^{\Z_N}$, both leading to the same jointly invariant measures.

\begin{theorem} \label{thm:disc_MC}
Let $\alpha \in \R,\gamma,\beta > 0$. Consider the coupled Markov chain \eqref{eq:coupled_disc_MC} when the driving weights $\mbf W \in \R^{\Z_N}$ are either 
(i) a sequence of $N$ i.i.d. log-inverse-gamma random variables with a fixed shape $\gamma > 0$ and scale $\beta^{-2}>0$ and (ii) that sequence conditioned on their sum to equal $\alpha$ for some $\alpha\in \R$, i.e. having law $\nu_\beta^{N,(\alpha)}$. Then, for any $(\theta_1,\ldots,\theta_k) \in \R^k$, $\mu_\beta^{N,(\theta_1,\ldots,\theta_k)}$ is invariant for this Markov chain. 
\end{theorem}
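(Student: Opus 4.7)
The strategy is to exploit the bijection $\D^{N,k}$ (Proposition \ref{prop:transform}) to lift the coupled chain \eqref{eq:coupled_disc_MC} on the $\mathbf{U}$-side to a companion Markov chain on the $\mathbf{X}$-side, where the product measure $\nu_\beta^{N,(\theta_1,\ldots,\theta_k)}$ lives. Since $\mu_\beta^{N,(\theta_1,\ldots,\theta_k)}$ is by definition the pushforward of $\nu_\beta^{N,(\theta_1,\ldots,\theta_k)}$ under $\D^{N,k}$, it suffices to exhibit a Markov chain on $(\R^{\Z_N})^k$ which $\D^{N,k}$ intertwines with \eqref{eq:coupled_disc_MC} and which preserves $\nu_\beta^{N,(\theta_1,\ldots,\theta_k)}$.

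The lifted chain is built recursively. The first coordinate is unchanged, $\mathbf{X}_1^{(m+1)} = D^{N,2}(\mathbf{W}, \mathbf{X}_1^{(m)})$; the $r$-th coordinate uses as its driving weight a transformed version of $\mathbf{W}$, obtained by successive applications of $T^{N,2}$ against $\mathbf{X}_1^{(m)}, \ldots, \mathbf{X}_{r-1}^{(m)}$, and then applies $D^{N,2}$ to the result and $\mathbf{X}_r^{(m)}$. The verification that $\D^{N,k}$ carries this chain to \eqref{eq:coupled_disc_MC}, which is the discrete intertwining of Proposition \ref{prop:DNk_disc_intertwine}, follows from the nontrivial algebraic identity of Proposition \ref{prop:full_intertwine} relating $D^{N,2}$ and $T^{N,2}$, combined with the iterative definition \eqref{eq:D_iter_intro} of $\D^{N,k}$ and unfolded one layer at a time.

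For the invariance, the key input is the Burke property (Proposition \ref{prop:Burke}): if $\mathbf{W}$ and each $\mathbf{X}_r$ are independent and i.i.d.\ log-inverse-gamma with a common shape $\gamma$ and scale $\beta^{-2}$, then $W(\mathbf{W},\mathbf{X}_1)$ has the same joint law as $(\mathbf{W},\mathbf{X}_1)$. In particular its $T^{N,2}$-coordinate has the original marginal law of $\mathbf{W}$ and is independent of $\mathbf{X}_2,\ldots,\mathbf{X}_k$. Iterating Burke along the recursive construction, together with the independence of the $\mathbf{X}_r$'s from the successive transforms of $\mathbf{W}$, shows that $(\mathbf{X}_1^{(m+1)},\ldots,\mathbf{X}_k^{(m+1)})$ is jointly distributed as the original unconditioned i.i.d.\ log-inverse-gamma vectors. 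To transfer invariance to the conditioned measure, note that $D^{N,2}$ and $T^{N,2}$ individually preserve $\vecsum$, so conditioning on the vector of coordinate sums $(\vecsum(\mathbf{X}_1),\ldots,\vecsum(\mathbf{X}_k))=(\theta_1,\ldots,\theta_k)$ and, in case (ii), also on $\vecsum(\mathbf{W})=\alpha$, is compatible with the dynamics; the iterated Burke argument preserves these conditioned product laws. Case (i) then follows from case (ii) by averaging over $\alpha$ under the unconditioned law of $\vecsum(\mathbf{W})$, since $\mu_\beta^{N,(\theta_1,\ldots,\theta_k)}$ does not depend on $\alpha$ or $\gamma$.

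The main obstacle is establishing the algebraic identity of Proposition \ref{prop:full_intertwine} and its iterated form Proposition \ref{prop:DNk_disc_intertwine}. The periodic setting requires delicate bookkeeping for the cyclic half-open intervals $(i,j]$ versus $[i,j]$ appearing in $D^{N,2}$ and $T^{N,2}$, since on $\Z_N$ one no longer has $(i,j]=[i+1,j]$ in general. Once this algebraic backbone is in place, both the intertwining and the iterated Burke computation are essentially mechanical.
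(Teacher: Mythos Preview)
Your proposal is correct and follows essentially the same route as the paper: construct the dual (``multiline'') chain \eqref{eq:multline}, invoke the intertwining $\mathcal G_{\mbf W}^{N,k}\circ \D^{N,k}=\D^{N,k}\circ \mathcal H_{\mbf W}^{N,k}$ (Proposition \ref{prop:DNk_disc_intertwine}, ultimately resting on Proposition \ref{prop:full_intertwine}), and then apply the Burke property (Proposition \ref{prop:Burke}) iteratively along the levels of the dual chain to see that $\nu_\beta^{N,(\theta_1,\ldots,\theta_k)}$ is invariant for $\mathcal H_{\mbf W}^{N,k}$. One small remark: your conditioning step already handles both cases (i) and (ii) directly (condition on the $\vecsum(\mbf X_r)$'s, and in case (ii) additionally on $\vecsum(\mbf W)=\alpha$), so the final sentence deriving (i) from (ii) by averaging is redundant, and the paper simply applies Burke in both cases without this detour.
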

The proof of Theorem \ref{thm:disc_MC} is in Section \ref{sec:proof_disc_MC}. The key to it is the relation in Proposition \ref{prop:full_intertwine}, which we use to show that this Markov chain intertwines with a dual Markov chain (see Proposition \ref{prop:DNk_disc_intertwine}) whose invariant measure is $\nu_\beta^{N,(\theta_1,\ldots,\theta_k)}$. Once Proposition \ref{prop:full_intertwine} is shown, the proof follows the same procedure as in \cite{Fan-Seppalainen-20,Seppalainen-Sorensen-21b,Bates-Fan-Seppalainen,GRASS-23}.

\subsection{Polymer interpretation}\label{sec:polymerinterpretation}
The discrete-time Markov chain in \eqref{eq:coupled_disc_MC} can be related to a directed polymer in a periodic environment as follows. Let $\bigl(W_{r,i}\bigr)_{r \in \Z, i \in \Z_N}$ be real-valued and extend these to a periodic environment $\bigl(W_{r,i}\bigr)_{r \in \Z,i \in \Z}$ by the condition that $W_{r,i} = W_{r,j}$ whenever $i \equiv j \mod N$. For $r \in \Z$, let $\mbf W_r:=(W_{r,i})_{i \in \Z_N}$. For integers $m \le r$ and $i + (r-m) \le j$, define the allowable paths from $(m,i)$ to $(r,j)$
\[
\Pi_{(m,i),(r,j)} = \Bigl\{(\mbf x_\ell)_{0 \le \ell \le n}: \mbf x_0 = (m,i),\mbf x_n = (r,j), \mbf x_\ell - \mbf x_{\ell - 1} \in \{(0,1),(1,1)\} \Bigr\},
\]
noting that they take steps up or up/right (see the left side of Figure \ref{fig:disc_per_environment}). Define
\be \label{eq:Zdis}
Z^N(r,j \viiva m,i) := \sum_{\pi \in \Pi_{(m,i),(r,j)}} \prod_{(n,\ell) \in \pi}  {e^{W_{n,\ell}}}
\ee
to be the associated point to point polymer partition function.

\begin{figure}
    \centering
    \includegraphics[height = 8cm]{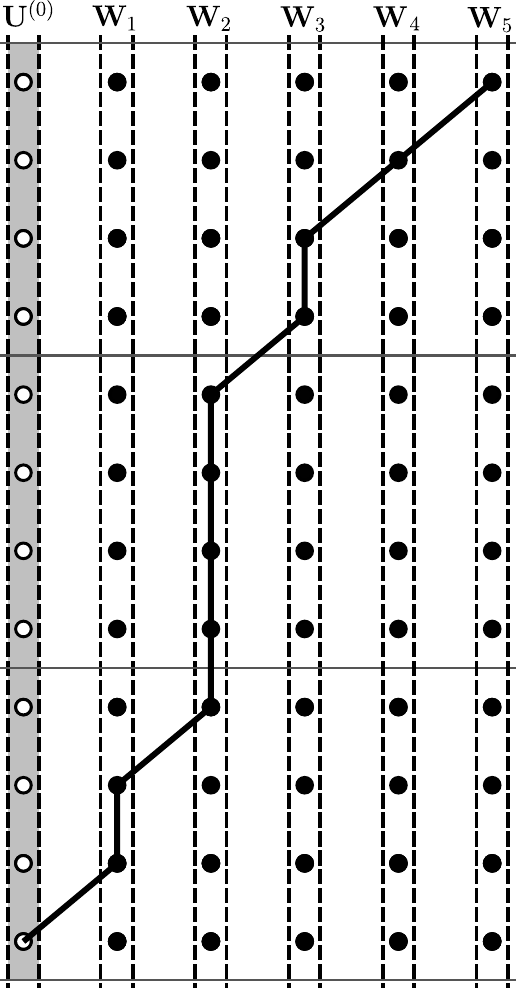}\hspace{2cm}
    \includegraphics[height = 8cm]{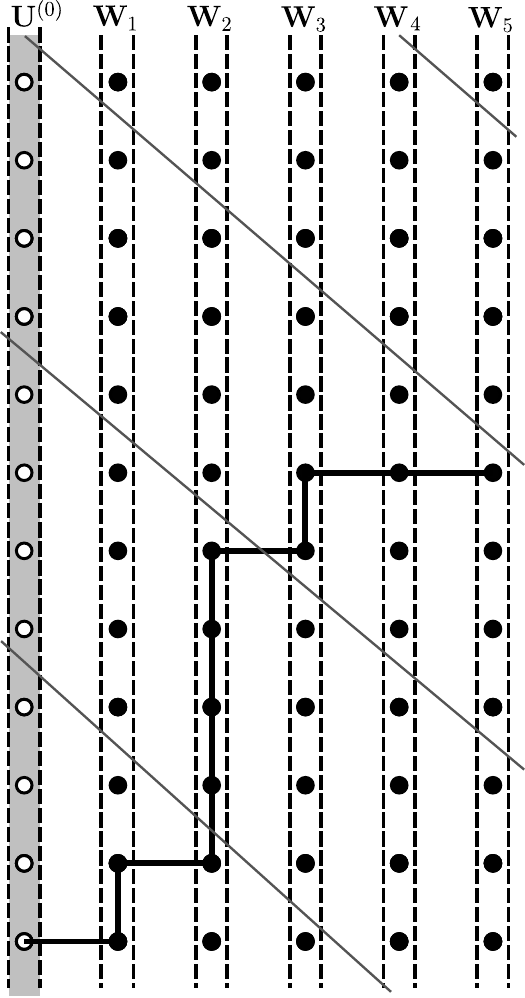}
    \caption{On the left is an illustration of the polymer defined in \eqref{eq:Zdis}. The bulk weights are given by the vectors $\mbf W_1,\ldots, \mbf W_5\in \R^{\Z_N}$ with $N=4$. The horizontal gray lines signify where this environment periodically repeats. The first column (shaded in gray) contains the initial condition $\mbf U^{(0)}$, also periodically repeated. The allowable paths (thick black) considered in defining the partition function for this model involve up or up/right steps and the weight assigned to a path is the exponential of the sum of all bulk weights along the path and the cumulative sum (measured from height zero) of the initial condition. The figure on the right is the result of shearing this model so that the allowable paths are those that go up or right. This is the periodic inverse-gamma polymer.}
    \label{fig:disc_per_environment}
\end{figure}

We also define the model with an initial condition $\mbf U^{(0)}= (U_0,\ldots, U_{N-1}) \in \R^{\Z_N}$ as follows (see also Figure \ref{fig:disc_per_environment}). Extend $\mbf U^{(0)}$ to a sequence in $\R^\Z$ by the condition $U^{(0)}_i = U^{(0)}_j$ whenever $i \equiv j \mod N$ and let $F:\Z \to \R_{>0}$ be the function satisfying $F(0) = 1$ and $\f{F(j)}{F(j-1)} = e^{U^{(0)}_j}$ for $j \in \Z$. For $j \in \Z$ and $r > 0$, define 
\be \label{eq:disc_poly_init}
Z^N(r,j \viiva F) := \sum_{i \le j-r} F(i) Z^N(r,j \viiva 1,i+1).
\ee
This sum is convergent if and only if $\vecsum(\mbf U^{(0)}) > \max_{1 \le r \le m} \vecsum(\mbf W_r)$ (see Lemma \ref{lem:inv_gamma}\ref{itm:finsumcond}). If this holds, then for $m \ge 1$ and $i \in \Z$, set
\be \label{eq:poly_part}
U^{(m)}_i := \log \f{Z^N(m,i \viiva F)}{Z^N(m,i-1 \viiva F)},
\ee

It is shown in Lemma \ref{lem:inv_gamma}\ref{itm:Xper} that, if $U^{(m)}_i$ is well-defined, then $U^{(m)}_i = U^{(m)}_j$ whenever $i \equiv j \mod N$. Thus, under this condition we may consider the vector $\mbf U^{(m)}$ as an element of $\R^{\Z_N}$.  Furthermore, Lemma \ref{lem:inv_gamma}\ref{itm:DN2map} shows that
\[
\mbf U^{(m)} = D^{N,2}\bigl(\mbf W_m,\mbf U^{(m-1)}\bigr).
\]
Thus, the $k=1$ Markov chain \eqref{eq:coupled_disc_MC} describes the evolution of the ratios of the discrete polymer with initial condition $\mbf U^{(0)}$, and weights $\mbf W_1,\ldots,\mbf W_m$ used at each time step of the chain. The general $k$ Markov chain comes from considering this evolution for a common environment but with respect to different initial conditions $\mbf U^{(0)}_1,\ldots, \mbf U^{(0)}_k$. 
It is important to note that the Markov chain \eqref{eq:coupled_disc_MC} defined from the map $D^{N,2}$ does not involve an infinite sum and is therefore well-defined for any choice of driving weights $\mbf W_1,\ldots,\mbf W_m$ and any initial condition $\mbf U^{(0)}$. On the other hand, if we let $(\mbf W_r)_{r \ge 1}$ be independent i.i.d. sequences of log-inverse-gamma random variables, then with probability one, there exists some $m \ge 1$ so that $\vecsum(\mbf U^{(0)}) < \vecsum(\mbf W_m)$. In particular, $Z(r,j \viiva F)$ blows up in finite time, and the \eqref{eq:poly_part} is only well-defined on an event of probability strictly less than one. Perhaps one can make sense of the log ratio of these partition functions even when their values of infinite. In essence, that is achieved by \eqref{eq:coupled_disc_MC}.

If, instead, we let the driving weights $\mbf W_r$ have distribution $\nu_\beta^{N,(\alpha)}$, then \eqref{eq:poly_part} is well-defined for all $m$ as long as $\vecsum(\mbf U^{(0)}) > \alpha$, and from Theorem \ref{thm:disc_MC}, $\mu_\beta^{N,(\theta_1,\ldots,\theta_k)}$ is a jointly invariant measure for the coupled ratios of the polymer partition function as long as $\theta_\ell  > \alpha$ for $1 \le \ell \le k$. Of course, using the definition of the Markov chain in \eqref{eq:coupled_disc_MC}, there is no condition on the $\theta_\ell$.

In light of the above polymer interpretation for the Markov chain \eqref{eq:coupled_disc_MC}, it is quite natural to conjecture that under a suitable scaling limit this chain convergences to \eqref{eq:joint_OCY}, which itself is related to log ratios of partition functions for the semi-discrete O'Connell-Yor polymer, see \eqref{Uudef} in Section \ref{sec:OCY_def}. In seeing this, it helps to perform a shear transformation as illustrated between the left and right of Figure \ref{fig:disc_per_environment}.
For $m\in \Z$, define the operator $\tau_m: \R^{\Z_N} \to \R^{\Z_N}$ by 
$(\tau_m \mbf U)_m = X_{i+m}$, and then define a new Markov chain $\tilde{\mbf U}^{(m)} := \tau_{-m} \mbf U^{(m)}$. It follows from Corollary \ref{cor:shift} that the jointly invariant measures for this chain are the same as those given in Theorem \ref{thm:disc_MC} for $\mbf U^{(m)}$. The shearing involved in defining $\tilde{\mbf U}^{(m)}$ leads to a polymer description in which the admissible paths are up or right (i.e., increments $(0,1)$ or $(1,0)$); otherwise the environment and initial condition are defined exactly the same (this uses the fact that the environment is invariant in law under the $\tau$ shearing). This just defined polymer model is a periodic version of the inverse-gamma polymer. See Section \ref{sec:comparison} for further discussion on the inverse-gamma polymer.

In light of the fact that the full-line inverse-gamma polymer has a scaling limit to the corresponding full-line O'Connell-Yor polymer (see Section \ref{sec:OCY}), it is natural to conjecture the same is true of the periodic models. Proving this is complicated by the fact (as discussed above and shown in Lemma \ref{lem:inv_gamma}\ref{itm:finsumcond}) that the quantity \eqref{eq:disc_poly_init} will explode almost surely in finite time. Presumably under the appropriate scaling, the time scale in which this occurs outpaces the time scaling in which the O'Connell-Yor polymer arises. 

\begin{conj} \label{conj:disctoSDE}
For fixed $N$, there should be a suitable scaling of time and tuning of the parameters $\gamma$ under which the coupled Markov chain \eqref{eq:coupled_disc_MC} converges to the system of SDEs \eqref{eq:joint_OCY} and under this scaling  the dual Markov chain \eqref{eq:multline} converges to the dual system of SDEs \eqref{DxR_gen}. Similarly, there should be a scaling of time and $N$ to infinity along with a tuning of $\gamma$ under which the  coupled Markov chain  \eqref{eq:coupled_disc_MC} converges directly to coupled solutions to the KPZ equation \eqref{eq:KPZ} with the same white noise $\xi$.
\end{conj}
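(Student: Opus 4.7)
The plan is to identify an intermediate-disorder type scaling under which the single-step map $D^{N,2}$ linearizes into the infinitesimal generator of \eqref{eq:joint_OCY}. Concretely, for fixed $N$, take the shape parameter $\gamma$ large, set the single-step time increment $\delta = 1/\gamma$, and choose the mean $\mu_\gamma$ of the log-inverse-gamma weights so that $e^{-\mu_\gamma} \sim 1/\delta$, which pins down a particular tuning of $\beta$. Since $\mathrm{Var}(W_{m,i}) = \psi_1(\gamma) \sim \delta$, the centered and rescaled weights $\wt W_{m,i} := (W_{m,i} - \mu_\gamma)/\sqrt\delta$ converge to i.i.d.\ standard Gaussians (a direct CLT reading off the asymptotics of $\psi$ and $\psi_1$ near infinity), and they asymptotically play the role of the Brownian increments $\diff_i d\mbf B$ in \eqref{eq:OCY1_SDE}.

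The heart of the argument is a Taylor expansion of $D^{N,2}$ under this scaling. Writing $\mbf Y = \mbf U - \mbf W$, the cyclic sum $\sum_{j\in\Z_N} e^{Y_{(i,j]}}$ is dominated by the $j=i$ term (contributing $1$) and the $j=i+1$ term (of order $\delta \cdot e^{U_{i+1}}$ on average), with longer-range cyclic excursions carrying an additional factor of $\delta$ per step. Expanding this sum and its shifted analogue at $i-1$, taking logarithms, and keeping all terms up to orders $\delta$ and $\sqrt\delta$ should yield
\[
D_i^{N,2}(\mbf W_m,\mbf U^{(m)}) - U_i^{(m)} = \bigl(e^{-U_i^{(m)}} - e^{-U_{i-1}^{(m)}}\bigr)\delta + \beta\,\diff_i \wt{\mbf W}_m \,\sqrt\delta + o(\delta),
\]
which matches the infinitesimal form of \eqref{eq:OCY1_SDE}. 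Convergence of the rescaled chain to the SDE then follows from a standard martingale problem / generator convergence argument, with moment bounds on log-inverse-gamma increments providing tightness. The coupled case ($k \ge 2$) is automatic because all $k$ copies are driven by the same weights $\mbf W_m$. The dual chain \eqref{eq:multline} is treated either by a parallel Taylor expansion of the companion map $T^{N,2}$, or by transferring convergence through the bijection and intertwining relations of Propositions \ref{prop:NewSDE} and \ref{prop:DNk_disc_intertwine}.

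For the direct KPZ limit, the plan is to combine the preceding discrete-to-SDE analysis with Theorem \ref{L2_conv_main_theorem} and Theorem \ref{thm:SHE_convergence}, which already establish convergence of the semi-discrete O'Connell-Yor polymer to the stochastic heat equation; a joint scaling $\gamma,N\to\infty$ with time rescaled by the appropriate power of $N$ should then yield convergence directly to coupled KPZ solutions driven by the same white noise $\xi$. The main anticipated obstacles are threefold. First, uniform control of the cyclic Taylor tail terms when $N$ grows: the cyclic sum carries $N$ terms, so the $\delta^n$ decay must be shown to beat combinatorial growth in $N$, which requires concentration estimates for the maximum cyclic partial sum of the noise. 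Second, isolating the correct $O(\delta)$ second-order corrections, since an inadvertent $O(\delta)$ shift arising from the quadratic variation of the noise inside the logarithm can modify the drift of the limiting equation or introduce a spurious It\^o correction. Third, the observation in Section \ref{sec:polymerinterpretation} that the polymer-partition-function interpretation of the chain explodes in finite time under unconditioned log-inverse-gamma weights, so the entire convergence analysis must be formulated directly on the map $D^{N,2}$ and its fluctuation expansion rather than on the polymer representation.
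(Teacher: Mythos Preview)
This statement is labeled a \emph{Conjecture} in the paper and is explicitly left open: the paper provides no proof, only remarks afterward that resolving it would give alternative routes to Theorems~\ref{thm:OCY_joint} and~\ref{thm:KPZ_invar_main}. There is therefore no paper proof to compare against.

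As an attack on the open problem, your expansion has a concrete gap. You set $e^{-\mu_\gamma}\sim 1/\delta$, which (since $\Ee[e^{-W}]=\gamma\beta^2$) is the natural choice with $\delta=1/\gamma$ and $\beta$ fixed. But this forces $W_\ell\sim\log\delta$ large negative, hence $Y_\ell=U_\ell-W_\ell$ large positive, and then the cyclic sum $\sum_{j\in\Z_N} e^{Y_{(i,j]}}$ is dominated not by the $j=i$ term (contributing $1$) but by the longest-excursion term $j=i-1$, contributing $e^{S-Y_i}$ with $S=\sum_\ell Y_\ell$. Expanding around that term instead gives
\[
D_i^{N,2}(\mbf W,\mbf U)\;\approx\;U_{i-1}+(W_i-W_{i-1})+\tfrac{\delta}{\beta^2}\bigl(e^{-U_{i-1}}-e^{-U_{i-2}}\bigr)+o(\delta),
\]
so to leading order the one-step map is the \emph{shift} $\tau_{-1}\mbf U$, not a small perturbation of $\mbf U$. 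Your displayed increment formula, with drift $(e^{-U_i}-e^{-U_{i-1}})\delta$ and no shift, cannot arise from a direct Taylor expansion of $D^{N,2}$ under the stated scaling. The missing ingredient is precisely the shear $\tilde{\mbf U}^{(m)}=\tau_{\pm m}\mbf U^{(m)}$ that the paper introduces at the end of Section~\ref{sec:polymerinterpretation}: after unwinding the deterministic shift, the increment $\tilde U^{(m+1)}_i-\tilde U^{(m)}_i$ does take the form $\tfrac{\delta}{\beta^2}\,\diff_i e^{-\tilde{\mbf U}}$ plus $O(\sqrt\delta)$ noise, matching \eqref{eq:OCY1_SDE} with $dt=\delta/\beta^2$. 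Without building the shear into the scheme, the generator-convergence argument you sketch cannot get started.
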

Note that while we have a clear sense of what the fixed $N$ scaling limit should be of the dual Markov chain, it is a compelling question to determine what becomes of the dual Markov chain in the KPZ equation limit. Likewise, this question can be posed in terms of understanding the scaling limit of the dual system of SDEs \eqref{DxR_gen} under the KPZ equation scaling. It should be noted, though, that knowledge of this dual scaling limit is unnecessary from the perspective of proving the jointly invariant measures.

Resolution of the first statement of Conjecture \ref{conj:disctoSDE} would yield an alternative method of proving Theorem \ref{thm:OCY_joint}. Resolution of the second statement combined with the type of arguments in Section \ref{sec:proofmaintthm} (e.g. Lemma \ref{lem:initial_data_conv}) would provide an alternative proof to Theorem \ref{thm:KPZ_invar_main}.

\section{Algebraic structure of maps, and proof of Proposition \ref{prop:disc_consis} and Theorem \ref{thm:disc_MC}}\label{sec:alg}

As stated in Theorem \ref{thm:OCY_joint}, the semi-discrete jointly invariant initial conditions (i.e., invariant measures for \eqref{eq:joint_OCY}) are given by the push-forward under a map called  $\mathcal D^{N,k}$ of independent conditioned sequences of i.i.d. log-inverse-gamma random variables. In this section, we investigate the algebraic structure of this and related transformations that drive the proof of Theorem \ref{thm:OCY_joint} (given in Section \ref{sec:OCY}) and the key consistency and symmetry property of Proposition \ref{prop:disc_consis} (given at the end of this section).

\subsection{The $\mathcal D^{N,k}$ bijection}\label{sec.sst}
We will consider vectors of the form $\mbf U = (U_i)_{i \in \Z_N} \in \R^{\Z_N}$, meaning that $U_i = U_j$ whenever $i \equiv j \mod N$.
For vectors $\mbf U = (U_i)_{i \in \Z_N},\mbf U' = (U_i')_{i \in \Z_N} \in \R^{\Z_N}$, we say that $\mbf U < \mbf U'$ if $U_i < U_i'$ for all $i \in \Z_N$.
The following sets will be important in describing the domains and ranges of our transforms: 
\be \label{RA2}
\begin{aligned}
\mathcal R^{N,1}&:=  \big\{\mbf U\in \R^{\Z_N}\big\},\\
\mathcal R^{N,2} &:= \big\{(\mbf U_1,\mbf U_2)\in (\R^{\Z_N})^2: \mbf U_1 < \mbf U_2 \text{ or }\mbf U_1 > \mbf U_2\big\},\\
\mathcal R^{N,2}_{\theta_1,\theta_2} &:= \big\{(\mbf U_1,\mbf U_2) \in \mathcal R^{N,2}: \vecsum(\mbf U_m) = \theta_m,\;\; m \in \{1,2\}\big\}\quad \text{for $\theta_1 \neq \theta_2$}\\
\R^{\Z_N}_\theta &:= \big\{\mbf X  \in \R^{\Z_N}: \vecsum(\mbf X) = \theta \big\}\\
\R^{N,k}_{\neq} &:= \big\{(\mbf X_1,\ldots,\mbf X_k) \in (\R^{\Z_N})^k: \vecsum(\mbf X_m) \neq \vecsum(\mbf X_r) \text{ for } r \neq m \big\}.
\end{aligned}
\ee
All of these sets are $G_\delta$ subsets of their respective product spaces, and hence they are Polish spaces under the subspace topology by Alexandrov's Theorem (see, e.g., \cite[Theorem 2.2.1]{Srivastava-1998}).

Recall the definitions of the maps $D^{N,m}: (\R^{\Z_N})^m \to \R^{\Z_N}$, $1\leq m\leq k$ and  $\D^{N,k}:(\R^{\Z_N})^k  \to (\R^{\Z_N})^k$ from \eqref{eq:D_intro}, \eqref{eq:D_iter_intro}, and \eqref{DNk_intro}.
We will show in Proposition \ref{prop:transform} that $\mathcal D^{N,k}$ is injective if we restrict to the domain $\R^{N,k}_{\neq}$ and we will also describe its inverse map $\mathcal J^{N,k}$. Recall from \eqref{eq:D_iter_intro}, and \eqref{DNk_intro} that $\mathcal D^{N,k}$ is built from iterative application of the basic building block $D^{N,2}$. This can be represented graphically, as in the left half of Figure \ref{fig:DJ} (see also the caption). Similarly, the inverse map $\mathcal J^{N,k}$ is built from iterative application of a basic building block $J^{N,2}$ (see the right half of the Figure). The proof that these maps are inverses  can also be represented graphically as in Figure \ref{fig:bijection}, and relies on showing that $D^{N,2}$ and $J^{N,2}$ are mutual inverses, when restricted to the right spaces.

\begin{figure}
    \centering
    \includegraphics[width=.9\linewidth]{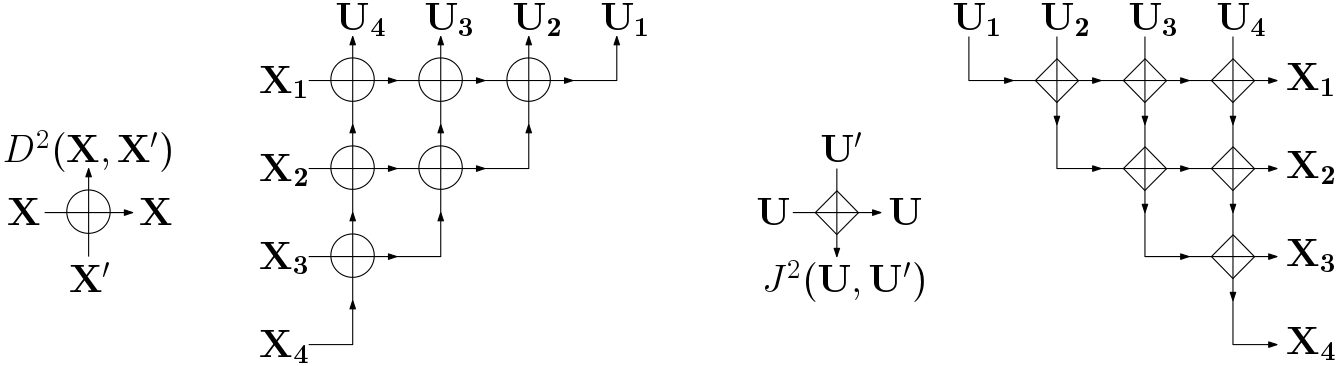}
    \caption{The first figure on the left graphically defines via a circle the transformation from the pair $(\mbf X,\mbf X')$ to $(D^{N,2}(\mbf X,\mbf X'),\mbf X)$ (we drop the $N$ superscript here an in all of the graphical map figures). The direction of the arrows go from input to output, and at times below we will use rotated or reflected versions of this and the $J$ building blocks to mean the same map. We will, however, always maintain the convention that the blocks are oriented so that the horizontal input passes through unchanged to become the horizontal output. The map  $\mathcal D^{N,k}$ with $k=4$ is depicted graphically in the second figure on the left as a composition of these building block maps. In particular $(\mbf U_1,\mbf U_2,\mbf U_3,\mbf U_4):=\mathcal D^{N,k}(\mbf X_1,\mbf X_2,\mbf X_3,\mbf X_4)$ is the output, given input $(\mbf X_1,\mbf X_2,\mbf X_3,\mbf X_4)$. The output of each circle maps into the inputs of the next map. The first figure on the right depicts via a diamond the transformation of a pair $(\mbf U,\mbf U')$ to $J^{N,2}(\mbf U,\mbf U')$ with the input down on the left and top and outputs on the right and bottom. The map $\mathcal J^{N,k}$ for $k=4$  is depicted in the rightmost figure  as a composition of these building block maps. In particular $(\mbf X_1,\mbf X_2,\mbf X_3,\mbf X_4):=\mathcal J^{N,k}(\mbf U_1,\mbf U_2,\mbf U_3,\mbf U_4)$ is the output, given input $(\mbf U_1,\mbf U_2,\mbf U_3,\mbf U_4)$.}
    \label{fig:DJ}
\end{figure}

Define $J^{N,2}:\mathcal R^{N,2} \to  \R^{\Z_N}$ so that for $(\mbf U_1,\mbf U_2) \in \mathcal R^{N,2}$, $J^{N,2}(\mbf U_1,\mbf U_2) = (J^{N,2}_i(\mbf U_1,\mbf U_2))_{i \in \Z_N}$, where
\be \label{Jimap}
J_i^{N,2}(\mbf U_1, \mbf U_2) := U_{2,i}+ \log\Bigl(\f{e^{ U_{2,i+1} - U_{1,i+1}} - 1}{e^{U_{2,i} - U_{1,i}} - 1}\Bigr).
\ee
It is essential that the domain of $J^{N,2}$ is $\mathcal R^{N,2}$ since this implies that we will only consider pairs $(\mbf U_1,\mbf U_2)$ satisfying $\mbf U_1 < \mbf U_2$ or $\mbf U_1 > \mbf U_2$, which guarantees that the ratio inside the logarithm is strictly positive.

The restriction on the domain of the building block $J^{N,2}$ to $\mathcal R^{N,2}$ results in a restriction on the domain of $\mathcal J^{N,k}$. We proceed inductively with the base cases $k=1$ and $2$. 
Recalling $\mathcal R^{N,1}$ and $\mathcal R^{N,2}$ from \eqref{RA2}, define 
$$
\mathcal J^{N,1}:\mathcal R^{N,1} \to \R^{\Z_N} \textrm{ by }\mathcal J^{N,1}(\mbf U_1) := \mbf U_1,\quad \mathcal J^{N,2}:\mathcal R^{N,2} \to (\R^{\Z_N})^2 \textrm{ by }
\mathcal J^{N,2}(\mbf U_1,\mbf U_2) = (\mbf U_1,J^{N,2}(\mbf U_1,\mbf U_2)).
$$
Inductively define the map $\mathcal J^{N,k}:\mathcal R^{N,k} \to (\R^{\Z_N})^k$ by setting (we define its domain $\mathcal R^{N,k}$ below)
$$\mathcal J^{N,k+1}(\mbf U_1,\ldots,\mbf U_{k+1}):= (\mbf X_1,\ldots,\mbf X_{k+1}),
$$
where $(\mbf X_1,\ldots,\mbf X_{k})$ is given by setting
\begin{equation}\label{eq.XJ1}
(\mbf X_1,\ldots,\mbf X_{k-1},\mbf X_{k}) = \mathcal J^{N,k}(\mbf U_1,\ldots,\mbf U_{k-1},\mbf U_{k})
\end{equation}
and then specifying $\mbf X_{k+1}$ as
\begin{equation}\label{eq.XJ2}
\mbf X_{k+1} = J^{N,2}(\mbf X_{k},\wt{\mbf X}_{k+1}),\quad\text{where} \quad
(\mbf X_1,\ldots,\mbf X_{k-1},\wt{\mbf X}_{k+1}) = \mathcal J^{N,k}(\mbf U_1,\ldots,\mbf U_{k-1},\mbf U_{k+1}).
\end{equation}
The above definition relies on the consistency relation
$$
(\mbf X_1,\ldots,\mbf X_{k-1}, \mbf X_{k}) = \mathcal J^{N,k}(\mbf U_1,\ldots,\mbf U_{k}) \Longrightarrow (\mbf X_1,\ldots,\mbf X_{k-1}) = \mathcal J^{N,k-1}(\mbf U_1,\ldots,\mbf U_{k-1}),
$$
which clearly holds true for $k=2$. By the inductive definition, this relation continues to hold true for all $k\geq 3$. This inductive definition  matches the graphical construction in Figure \ref{fig:DJ}. The domain $\mathcal R^{N,k+1}$ is also inductively defined to be the set of all $(\mbf U_1,\ldots,\mbf U_{k+1}) \in (\R^{\Z_N})^{k+1}$ such that the arguments in the maps in \eqref{eq.XJ1} and \eqref{eq.XJ2} are in the inductively defined domains. In other words, $(\mbf U_1,\ldots,\mbf U_{k-1},\mbf U_{k}) \in \mathcal R^{N,k}$, 
$(\mbf U_1,\ldots,\mbf U_{k-1},\mbf U_{k+1}) \in \mathcal R^{N,k}$ and 
$(\mbf X_{k},\wt{\mbf X}_{k+1}) \in \mathcal R^{N,2}$
where $\mbf X_{k}$ and $\wt{\mbf X}_{k+1}$ are defined in \eqref{eq.XJ1} and \eqref{eq.XJ2}.

For real numbers $\theta_1,\ldots,\theta_k$, we can further refine $\mathcal R^{N,k}$ by specifying the slope of each $\mbf U_m$:
\[
\mathcal R^{N,k}_{\theta_1,\ldots,\theta_k} = \{(\mbf U_1,\ldots,\mbf U_k) \in \mathcal R^{N,k}: \vecsum(\mbf U_m) = \theta_m, 1 \le m \le k\}.
\]
It follows from Proposition \ref{prop:transform} and Corollary \ref{cor:RNKcontained} that $\mathcal R^{N,k}_{\theta_1,\ldots,\theta_k}\neq \varnothing $ if and only if the $\theta_1,\ldots,\theta_k$ are distinct.

The main result of this section is the following. We delay its proof to state a few useful lemmas.
\begin{proposition} \label{prop:transform}
For $k,N \in \N$, $\D^{N,k}:\R^{N,k}_{\neq} \to \mathcal R^{N,k}$ is a bijection with inverse $\mathcal J^{N,k}$.  Additionally, for distinct  $\theta_1,\ldots,\theta_k\in \R$, $\mathcal D^{N,k}$ restricts to a bijection $\R^{\Z_N}_{\theta_1} \times \cdots \times \R^{\Z_N}_{\theta_k} \to \mathcal R^{N,k}_{\theta_1,\ldots,\theta_k}$.  
\end{proposition}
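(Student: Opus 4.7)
The plan is to prove Proposition \ref{prop:transform} by induction on $k$, with the essential content concentrated in the base case $k=2$ (analyzing the building blocks $D^{N,2}$ and $J^{N,2}$), and the inductive step reducing to reading off the commutative diagram encoded in Figure \ref{fig:DJ}. The three things I would need to verify at the base level are: (a) slope preservation by each building block; (b) the key ordering property that guarantees the output of $\mathcal{D}^{N,2}$ lands in $\mathcal{R}^{N,2}$; (c) the identity $J^{N,2}(\mbf X_1,\cdot)\circ D^{N,2}(\mbf X_1,\cdot)=\mathrm{id}$ (with the symmetric composition also being the identity on the appropriate target set).

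For (a), summing the telescoping term $Q^{N,2}_i - Q^{N,2}_{i-1}$ over $i\in\Z_N$ vanishes, giving $\vecsum(D^{N,2}(\mbf X_1,\mbf X_2))=\vecsum(\mbf X_2)$, and an analogous telescoping of $U_{2,i+1}-U_{1,i+1}$ (in the cyclic logarithmic difference defining $J^{N,2}$) gives $\vecsum(J^{N,2}(\mbf U_1,\mbf U_2))=\vecsum(\mbf U_2)$. For (b), setting $Y_i:=X_{2,i}-X_{1,i}$ and $A_i:=\sum_{j\neq i-1}e^{Y_{(i,j]}}>0$, a direct manipulation (splitting the sum defining $Q^{N,2}_{i-1}$ according to whether the upper endpoint is $i-1$ and using $Y_{(i-1,j]}=Y_i+Y_{(i,j]}$ for $j\neq i-1$) yields
\[
D^{N,2}_i(\mbf X_1,\mbf X_2) - X_{1,i}=\log\frac{e^{Y_i}A_i + e^{\vecsum(\mbf Y)}}{e^{Y_i}A_i + 1}.
\]
Whenever $\vecsum(\mbf X_1)\neq \vecsum(\mbf X_2)$, the sign of the right-hand side is independent of $i$ and equals $\mathrm{sign}(\vecsum(\mbf Y))$, so either $D^{N,2}(\mbf X_1,\mbf X_2)>\mbf X_1$ or $D^{N,2}(\mbf X_1,\mbf X_2)<\mbf X_1$, placing the output in $\mathcal{R}^{N,2}$. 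For (c), the formula above gives $e^{U_{2,i}-X_{1,i}}-1=(e^{\vecsum(\mbf Y)}-1)/(e^{Y_i}A_i+1)$; substituting into the definition of $J^{N,2}$ reduces the desired identity $J^{N,2}_i(\mbf X_1,\mbf U_2)=X_{2,i}$ to the single algebraic relation
\[
A_i + e^{\vecsum(\mbf Y) - Y_i} = e^{Y_{i+1}}A_{i+1} + 1,
\]
which I would verify by reindexing the sum: $A_i = 1 + \sum_{j\neq i-1,i}e^{Y_{(i,j]}}$, and $e^{Y_{i+1}}A_{i+1}=\sum_{j\neq i}e^{Y_{(i,j]}}$ (after pulling out $e^{Y_{i+1}}$), and the difference between these two sums sorts out via $Y_{(i+1,i-1]}=\vecsum(\mbf Y)-Y_i-Y_{i+1}$. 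A symmetric argument (or a counting/surjectivity argument combined with the left inverse) gives the other composition.

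For the inductive step, assume the result for $k-1$ and consider $(\mbf X_1,\ldots,\mbf X_k)\in \R^{N,k}_{\neq}$. By the iterative definition \eqref{eq:D_iter_intro}, one has $\mbf U_1=\mbf X_1$ and, for $2\leq m\leq k$, $\mbf U_m = D^{N,2}\bigl(\mbf X_1, D^{N,m-1}(\mbf X_2,\ldots,\mbf X_m)\bigr)$. Slope preservation from (a) gives $\vecsum(\mbf U_m)=\vecsum(\mbf X_m)$, so the distinctness of slopes is preserved. Applying the base case on each layer (reading Figure \ref{fig:DJ} from bottom to top), each $D^{N,m-1}(\mbf X_2,\ldots,\mbf X_m)$ is uniquely recovered as $J^{N,2}(\mbf X_1,\mbf U_m)$, and slope preservation of $J^{N,2}$ ensures these outputs still lie in $\R^{N,k-1}_{\neq}$. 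The inductive hypothesis applied to $\mathcal D^{N,k-1}(\mbf X_2,\ldots,\mbf X_k)$ then uniquely recovers $(\mbf X_2,\ldots,\mbf X_k)$. A direct comparison with \eqref{eq.XJ1}--\eqref{eq.XJ2} shows that this inversion is exactly $\mathcal J^{N,k}$, and that its domain $\mathcal R^{N,k}$ is precisely $\mathcal D^{N,k}(\R^{N,k}_{\neq})$. The slope-fixed restriction in the last sentence of the proposition is immediate from (a) iterated.

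The main obstacle I anticipate is the verification of the key algebraic identity in (c), which requires careful bookkeeping with the cyclic index conventions: the case $j=i-1$ in $Y_{(i,j]}$ behaves differently from generic $j$ because it wraps around the torus, and the whole identity is a delicate interplay between summation over $\Z_N$ and the non-associative behaviour of the cyclic interval notation. All other steps (slope preservation, ordering, and the inductive bookkeeping against Figure \ref{fig:DJ}) are essentially formal once that identity is in hand.
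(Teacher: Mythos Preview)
Your proposal is correct and follows essentially the same approach as the paper: induction on $k$, with the $k=2$ base case handled by direct algebraic verification that $J^{N,2}(\mbf X_1,\cdot)$ and $D^{N,2}(\mbf X_1,\cdot)$ are mutual inverses (the paper does this in coordinates $Y_i,W_i$ via the same telescoping you identify), combined with the slope preservation and ordering properties you list as (a)--(b) (which are the paper's Lemmas \ref{lem:Dsum_pres} and \ref{lem:sum_v_order}). The only organizational difference is that your inductive step peels off the \emph{first} coordinate $\mbf X_1$ (applying $J^{N,2}(\mbf U_1,\cdot)$ to each $\mbf U_m$ and then invoking the hypothesis on $(\mbf X_2,\ldots,\mbf X_k)$), whereas the paper peels off the \emph{last} coordinate $\mbf X_{k+1}$ to match the inductive definitions \eqref{eq.XJ1}--\eqref{eq.XJ2} of $\mathcal J^{N,k}$ and $\mathcal R^{N,k}$ directly; your ``direct comparison'' therefore hides a small but genuine side induction (that $(\mbf U_1,\ldots,\mbf U_k)\in\mathcal R^{N,k}$ iff each $(\mbf U_1,\mbf U_m)\in\mathcal R^{N,2}$ and $(J^{N,2}(\mbf U_1,\mbf U_2),\ldots,J^{N,2}(\mbf U_1,\mbf U_k))\in\mathcal R^{N,k-1}$), which unwinds easily from the definitions but should be stated.
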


We note that the function $\mathcal D^{N,k}$ is not injective on the larger domain $(\R^{\Z_N})^k$. Indeed, if $\vecsum(\mbf X_1) = \vecsum(\mbf X_2) = \vecsum(\mbf X_2')$ for $\mbf X_1,\mbf X_2,\mbf X_2' \in \R^{\Z_N}$, then Lemma \ref{lem:sum_v_order} below implies that $\D^{N,2}(\mbf X_1,\mbf X_2) = \D^{N,2}(\mbf X_1,\mbf X_2') = (\mbf X_1,\mbf X_1)$.

The proof of Proposition \ref{prop:transform} is illustrated graphically in Figure \ref{fig:bijection}. Because the domains of the mappings $\D^{N,k}$ and $\mathcal J^{N,k}$ are complicated, we need to prove some intermediate lemmas.  Before getting to these lemmas, we give a formal algebraic sketch of the proof. Starting from $k = 2$, we recall that $\mathcal D^{N,2}(\mbf X_1,\mbf X_2) = (\mbf X_1,D^{N,2}(\mbf X_1,\mbf X_2))$ and $\mathcal J^{N,2} (\mbf U_1,\mbf U_2) = (\mbf U_1,J^{N,2}(\mbf U_1,\mbf U_2))$. Hence, to show these maps are inverses of each other, it suffices to show that for each fixed vector $\mbf X_1 \in \R^{\Z_N}$, we have $\mbf U_2 = D^{N,2}(\mbf X_1,\mbf X_2)$ for some vector $\mbf X_2 \in \R^{\Z_N}$ satisfying $\vecsum(\mbf X_1) \neq \vecsum(\mbf X_2)$, if and only if $\mbf X_2 = J^{N,2}(\mbf X_1,\mbf U_2)$. This is fairly straightforward to verify from the definitions; it is done below Equations \eqref{ZtoY}-\eqref{YtoZ}.

In general, we wish to show that
\be \label{eq:DJ_casek}
(\mbf U_1,\ldots,\mbf U_k) = \D^{N,k}(\mbf X_1,\ldots,\mbf X_k) \iff (\mbf X_1,\ldots,\mbf X_k) = \mathcal J^{N,k}(\mbf U_1,\ldots,\mbf U_k).
\ee
We assume by induction that this equivalence holds for some $k \ge 2$. By definition of the map $\D^{N,k}$ \eqref{DNk_intro}, we have $(\mbf U_1,\ldots,\mbf U_{k+1}) = \D^{N,k+1}(\mbf X_1,\ldots,\mbf X_{k+1})$ if and only if $(\mbf U_1,\ldots,\mbf U_k) = \D^{N,k}(\mbf X_1,\ldots,\mbf X_k)$ and $\mbf U_{k+1} = D^{N,k+1}(\mbf X_1,\ldots,\mbf X_k,\mbf X_{k+1})$. It is straightforward to see from the definition of the map $D^{N,k+1}$ (see Lemma \ref{lem:Diter} below) that $D^{N,k+1}(\mbf X_1,\ldots,\mbf X_k,\mbf X_{k+1}) = D^{N,k}(\mbf X_1,\ldots,\mbf X_{k-1}, D^{N,2}(\mbf X_{k},\mbf X_{k+1}))$. Putting this all together, we have $(\mbf U_1,\ldots,\mbf U_{k+1}) = \D^{N,k+1}(\mbf X_1,\ldots,\mbf X_{k+1})$ if and only if 
\begin{align*}
 &(\mbf U_1,\ldots,\mbf U_{k-1},\mbf U_k) = \D^{N,k}(\mbf X_1,\ldots,\mbf X_{k-1},\mbf X_k),\quad\text{and} \\
 &(\mbf U_1,\ldots,\mbf U_{k-1}, \mbf U_{k+1}) = \D^{N,k}(\mbf X_1,\ldots, \mbf X_{k-1},\wt{\mbf X}_{k+1}),\quad\text{where}\quad \wt{\mbf X}_{k+1} = D^{N,2}(\mbf X_k,\mbf X_{k+1}).
\end{align*}
By applying \eqref{eq:DJ_casek} twice along with an application of the $k = 2$ case,  this is equivalent to
\begin{align*}
&(\mbf X_1,\ldots,\mbf X_{k-1},\mbf X_k) = \mathcal J^{N,k}(\mbf U_1,\ldots,\mbf U_{k-1},\mbf U_k), \quad \text{and}\quad \mbf X_{k+1} = J^{N,2}(\mbf X_k,\wt{\mbf X}_{k+1}),\quad \text{where} \\
&(\mbf X_1,\ldots,\mbf X_{k-1},\wt{\mbf X}_{k+1}) = \mathcal J^{N,k}(\mbf U_1,\ldots,\mbf U_{k-1},\mbf U_{k+1}).
\end{align*}
By inspection of the definition in \eqref{eq.XJ1}-\eqref{eq.XJ2}, this is exactly the definition of $\mathcal J^{N,k+1}(\mbf U_1,\ldots,\mbf U_{k+1}) = (\mbf X_1,\ldots,\mbf X_{k+1})$, completing the sketch of the proof. We turn to making this argument precise by taking care of the appropriate domains. We need the following lemmas.
\begin{lemma} \label{lem:Diter}
For $N,m,r\in \N$ with $m > r$, and $(\mbf X_1,\ldots,\mbf X_r) \in (\R^{\Z_N})^r$, 
\be \label{eq:Diter}
D^{N,m}(\mbf X_1,\ldots,\mbf X_m) = D^{N,r+1}\bigl(\mbf X_1,\ldots,\mbf X_r,D^{N,m-r}(\mbf X_{r+1},\ldots,\mbf X_m)\bigr).
\ee
\end{lemma}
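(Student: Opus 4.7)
The plan is to prove \eqref{eq:Diter} by induction on $r \ge 1$, with $m > r$ arbitrary. The identity is essentially the associativity of the nested composition of the $D^{N,2}$ building blocks, matching the graphical picture in Figure \ref{fig:DJ}, so I expect no real obstacle—only a careful bookkeeping of the recursion.

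For the base case $r = 1$, the claim
\[
D^{N,m}(\mbf X_1,\ldots,\mbf X_m) \;=\; D^{N,2}\bigl(\mbf X_1, D^{N,m-1}(\mbf X_2,\ldots,\mbf X_m)\bigr)
\]
is exactly the recursive definition \eqref{eq:D_iter_intro}.

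For the inductive step, I assume the identity holds for some $r \ge 1$ and every $m > r$, and I aim to establish it for $r+1$ with any $m > r+1$. Writing $\mbf Y := D^{N,m-r-1}(\mbf X_{r+2},\ldots,\mbf X_m)$, I apply \eqref{eq:D_iter_intro} once to the right-hand side of \eqref{eq:Diter} (with $r$ replaced by $r+1$) to obtain
\[
D^{N,r+2}(\mbf X_1,\ldots,\mbf X_{r+1},\mbf Y) \;=\; D^{N,2}\bigl(\mbf X_1,\, D^{N,r+1}(\mbf X_2,\ldots,\mbf X_{r+1},\mbf Y)\bigr).
\]
Now I apply the inductive hypothesis to the shifted sequence $\mbf X_2,\ldots,\mbf X_m$ with the splitting index $r$: this collapses the inner expression $D^{N,r+1}(\mbf X_2,\ldots,\mbf X_{r+1},\mbf Y)$ into $D^{N,m-1}(\mbf X_2,\ldots,\mbf X_m)$. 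A final application of the recursive definition \eqref{eq:D_iter_intro} then yields
\[
D^{N,2}\bigl(\mbf X_1,\, D^{N,m-1}(\mbf X_2,\ldots,\mbf X_m)\bigr) \;=\; D^{N,m}(\mbf X_1,\ldots,\mbf X_m),
\]
which completes the induction and establishes \eqref{eq:Diter} for all $1 \le r < m$.
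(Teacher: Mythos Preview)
Your induction on $r$ is correct and is precisely the formal version of the paper's graphical argument: the paper simply observes that cutting the column of $D^{N,2}$ blocks after level $r$ factors the composition, which is exactly what your inductive step verifies symbolically via \eqref{eq:D_iter_intro}. Nothing is missing.
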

\begin{proof}
The proof is easy to see from Figure \ref{fig:DJ}. The left-hand side of \eqref{eq:Diter} is the output of the first column of the $D$ transform (shown on the left side of the figure for $m=4$). Observe, if one cuts the first column between $\mbf X_{r}$ and $\mbf X_{r+1}$, then the output on the top of the column (i.e. $D^{N,m}(\mbf X_1,\ldots,\mbf X_m)$) decomposes as the output under the map $= D^{N,r+1}$ of the the top $r$ inputs $\mbf X_1,\ldots,\mbf X_r$  combined with the output $D^{N,m-r}(\mbf X_{r+1},\ldots,\mbf X_m)$ of the bottom $m-r$ part of the column, precisely as claimed in the lemma.  
\end{proof}
To establish that $D^{N,m}$ maps between the desired spaces, we utilize the below monotonicity result.

\begin{lemma} \label{lem:Hmont}
For $m \ge 2$, if $\mbf X_1,\ldots,\mbf X_{m-1},\mbf X_m,\mbf X_m' \in \R^{\Z_N}$ and $\mbf X_m < \mbf X_m'$, then 
\[
D^{N,m}(\mbf X_1,\ldots,\mbf X_{m-1},\mbf X_m) < D^{N,m}(\mbf X_1,\ldots,\mbf X_{m-1},\mbf X_m').
\]
\end{lemma}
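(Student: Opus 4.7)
My plan is to prove the lemma by induction on $m$, reducing the general case to $m=2$ and establishing the base case by a direct derivative calculation.

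For the inductive step ($m \ge 3$), I would invoke Lemma \ref{lem:Diter} with $r=1$ to write
\[
D^{N,m}(\mbf X_1,\ldots,\mbf X_m) = D^{N,2}\bigl(\mbf X_1,\, D^{N,m-1}(\mbf X_2,\ldots,\mbf X_m)\bigr).
\]
The inductive hypothesis applied to $D^{N,m-1}$ (with the last argument changing from $\mbf X_m$ to $\mbf X_m'$) immediately yields $D^{N,m-1}(\mbf X_2,\ldots,\mbf X_{m-1},\mbf X_m) < D^{N,m-1}(\mbf X_2,\ldots,\mbf X_{m-1},\mbf X_m')$, and then invoking the $m=2$ case for the outer $D^{N,2}(\mbf X_1,\cdot)$ completes the step. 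Hence everything hinges on the base case (and, for $N=1$, is trivial since $D^{N,2}_0(\mbf X_1,\mbf X_2) = X_{2,0}$).

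For the base case $m=2$ (with $N\ge 2$), my approach is to interpolate and differentiate. Set $\mbf X_2(t) := (1-t)\mbf X_2 + t\mbf X_2'$, let $\alpha_j := X_{2,j}'-X_{2,j} > 0$, and write $Y_k(t) := X_{2,k}(t)-X_{1,k}$. It suffices to show that $\frac{d}{dt}D^{N,2}_i(\mbf X_1,\mbf X_2(t)) > 0$ for every $i\in\Z_N$ and $t\in[0,1]$, since integration then gives the strict inequality on each coordinate. Differentiating the closed-form expression
\[
D^{N,2}_i(\mbf X_1,\mbf X_2(t)) = X_{2,i}(t) + \log\frac{\sum_j e^{Y_{(i,j]}(t)}}{\sum_j e^{Y_{(i-1,j]}(t)}}
\]
in $t$ produces three terms: $\alpha_i$, plus a ratio from the $(i,j]$ sum, minus one from the $(i-1,j]$ sum. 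To simplify, I would reindex by $k = j-i \pmod N$, writing $A_k := e^{Y_{(i,i+k]}(t)}$ (so $A_0=1$) and $\widetilde\alpha_k := \alpha_{(i,i+k]}$ (so $\widetilde\alpha_0 = 0$ and $\widetilde\alpha_k > 0$ for $k\ge 1$), and set $S := \sum_{k=0}^{N-1} A_k$, $S' := S - A_{N-1}$. The identity $Y_{(i-1,j]} = Y_i + Y_{(i,j]}$ for $j\neq i-1$, combined with the exceptional $Y_{(i-1,i-1]}=0$, rewrites the second denominator as $T := 1 + e^{Y_i}S'$. Clearing the common denominator $S\cdot T$, the cross-terms $\pm\alpha_i S e^{Y_i}S'$ cancel, and after collecting $(S' - S) = -A_{N-1}$ the numerator collapses to
\[
\alpha_i S + \sum_{k=0}^{N-1}\widetilde\alpha_k A_k + e^{Y_i}A_{N-1}\sum_{k=0}^{N-2}(\widetilde\alpha_{N-1}-\widetilde\alpha_k)A_k.
\]
Each of these three summands is strictly positive: the first because $\alpha_i, S > 0$; the second because $\widetilde\alpha_1 = \alpha_{i+1} > 0$ and $A_k > 0$; the third because $\widetilde\alpha_{N-1}-\widetilde\alpha_k = \sum_{\ell=k+1}^{N-1}\alpha_{i+\ell} > 0$ for $k\le N-2$. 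This gives $\frac{d}{dt}D^{N,2}_i > 0$ and finishes the base case.

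I expect the main obstacle to be the bookkeeping for cyclic intervals, and specifically the need to treat $(i-1,j]$ for $j=i-1$ (the empty set, contributing $1$ to the sum) separately from $j\ne i-1$ (where $(i-1,j] = \{i\}\cup(i,j]$ decomposes cleanly). Reindexing relative to $i$ and isolating the $j=i-1$ contribution is what unlocks the telescoping cancellation of the $\alpha_i$ cross-terms and yields the clean positivity decomposition above.
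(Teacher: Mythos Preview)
Your proof is correct. The inductive step is identical to the paper's, so the only point of comparison is the base case $m=2$.

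The paper takes a different route there: it rewrites
\[
D_i^{N,2}(\mbf X_1,\mbf X_2) = X_{1,i} - \log\Biggl(1 + \frac{1 - e^{\sum_{\ell\in\Z_N} Y_\ell}}{e^{Y_i} + e^{Y_i+Y_{i+1}} + \cdots + e^{Y_i+\cdots+Y_{i-1}}}\Biggr),
\]
and then replaces $Y_\ell$ by $Y_\ell'$ one coordinate at a time. With all other coordinates frozen, the fraction becomes $\frac{1-Ae^x}{B+Ce^x}$ with $A,C>0$ and $B\ge 0$, whose derivative $-\frac{(AB+C)e^x}{(B+Ce^x)^2}$ is strictly negative. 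This reduces the multivariate monotonicity to $N$ one-variable checks with essentially no algebra.

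Your interpolation-and-differentiate approach is more direct but requires the explicit decomposition of the numerator into three manifestly positive pieces. It works cleanly and your bookkeeping of the cyclic intervals (in particular isolating the $j=i-1$ term in the $(i-1,j]$ sum) is exactly right. The paper's trick of pulling out $X_{1,i}$ rather than $X_{2,i}$ and recognizing the rational-function structure is slicker, but your argument has the virtue of not relying on any clever rewriting.
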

\begin{proof}
We prove this by induction, starting with the $m = 2$ case.
Set $Y_\ell = X_{2,\ell} - X_{1,\ell}$, and $Y_\ell' = X_{2,\ell}' - X_{1,\ell}$, noting that $X_{2,\ell} < X_{2,\ell}'$ iff $Y_\ell < Y_\ell'$. Then, by the definition \eqref{eq:D_intro}, (recall the notation of $Y_{(i,j]}$ from \eqref{sum_notat})
\begin{align*}
D_i^{N,2}(\mbf X_1,\mbf X_2) &= X_{1,i} + \log\Biggl(\f{e^{Y_i}\sum_{j \in \Z_N} e^{Y_{(i,j]}}     }{\sum_{j \in \Z_N} e^{Y_{(i-1,j]}  }}\Biggr) \\
&= X_{1,i} + \log\Biggl(\f{e^{Y_i} + e^{Y_i + Y_{i+1}} + \cdots + e^{Y_i + Y_{i+1} + \cdots + Y_{i-1}}}{1 + e^{Y_i} + e^{Y_i + Y_{i+1}} + \cdots + e^{Y_i + Y_{i+1} \cdots + Y_{i-2} }}\Biggr) \\
&= X_{1,i} - \log\Biggl(1 + \f{1 - e^{Y_i + Y_{i+1}\cdots + Y_{i-1}}}{e^{Y_i} + e^{Y_i + Y_{i+1}} + \cdots + e^{Y_i + Y_{i+1} \cdots + Y_{i-1}}}   \Biggr).
\end{align*}
It suffices to show, if we replace $Y_\ell$ with $Y_\ell'$, one-by-one, for each $\ell \in \Z_N$, then the term
\be \label{eq:Yimont}
\f{1 - e^{Y_i + Y_{i+1}\cdots + Y_{i-1}}}{e^{Y_i} + e^{Y_i + Y_{i+1}} + \cdots + e^{Y_i + Y_{i+1} \cdots + Y_{i-1}}}
\ee
strictly decreases. For each choice of $\ell$, if we fix $Y_j$ for $j \neq \ell$, then \eqref{eq:Yimont}, as a function of $Y_\ell$, takes the form
\[
\f{1 - A e^{Y_\ell}}{B + Ce^{Y_\ell}},
\]
where $A,C>0$ and $B \ge 0$. The derivative of $x \mapsto \f{1 - A e^{x}}{B + Ce^{x}}$ is $-\f{(AB + C)e^x}{(B + Ce^x)^2}<0$  for all $x \in \R$. 

Next, assume the statement of the lemma holds for some $m \ge 2$. Then, if $\mbf X_{m+1} < \mbf X_{m+1}'$, we have 
\[
D^{N,m}(\mbf X_2,\ldots,\mbf X_m,\mbf X_{m+1}) < D^{N,m}(\mbf X_2,\ldots,\mbf X_m,\mbf X_{m+1}'),
\]
and then, by the $m = 2$ case,
\begin{align*}
D^{N,m+1}(\mbf X_1,\ldots,\mbf X_m,\mbf X_{m+1}) &= D^{N,2}\bigl(\mbf X_1,D^{N,m}(\mbf X_2,\ldots,\mbf X_m,\mbf X_{m+1})\bigr) \\
&< D^{N,2}\bigl(\mbf X_1,D^{N,m}(\mbf X_2,\ldots,\mbf X_m,\mbf X_{m+1}')\bigr) = D^{N,m+1}(\mbf X_1,\ldots,\mbf X_m,\mbf X_{m+1}').\quad \qedhere
\end{align*}
\end{proof}

The next lemma states that the maps $\D^{N,k}$ and $\mathcal J^{N,k}$ preserve the sums of each coordinate.
\begin{lemma} \label{lem:Dsum_pres}
For $N,k\in \N$, if $(\mbf X_1,\ldots,\mbf X_k) \in (\R^{\Z_N})^k$, and $(\mbf U_1,\ldots,\mbf U_k)  = \D^{N,k}(\mbf X_1,\ldots,\mbf X_k)$, then $\vecsum(\mbf U_m) = \vecsum(\mbf X_m)$ for $1 \le m \le k$; and if $(\mbf U_1,\ldots,\mbf U_k) \in \mathcal R^{N,k}$ and $(\mbf X_1,\ldots,\mbf X_k) = \mathcal J^{N,k}(\mbf U_1,\ldots,\mbf U_k)$, then $\vecsum(\mbf X_m) = \vecsum(\mbf U_m)$ for $1\le m \le k$.
\end{lemma}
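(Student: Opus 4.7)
The plan is to prove both halves by straightforward cyclic telescoping plus induction on $k$, treating $\mathcal D^{N,k}$ and $\mathcal J^{N,k}$ separately but in parallel fashion.

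For $\mathcal D^{N,k}$, I would first handle the building block $D^{N,2}$. Summing the definition \eqref{eq:D_intro} gives
\[
\vecsum\bigl(D^{N,2}(\mbf X_1,\mbf X_2)\bigr) = \sum_{i \in \Z_N} X_{2,i} + \sum_{i \in \Z_N}\bigl(Q_i^{N,2}(\mbf X_1,\mbf X_2) - Q_{i-1}^{N,2}(\mbf X_1,\mbf X_2)\bigr).
\]
The second sum telescopes to zero because $i$ ranges over the cyclic group $\Z_N$, giving $\vecsum(D^{N,2}(\mbf X_1,\mbf X_2)) = \vecsum(\mbf X_2)$. Induction on $m$, using the recursion $D^{N,m}(\mbf X_1,\ldots,\mbf X_m) = D^{N,2}\bigl(\mbf X_1, D^{N,m-1}(\mbf X_2,\ldots,\mbf X_m)\bigr)$ from \eqref{eq:D_iter_intro}, then yields $\vecsum\bigl(D^{N,m}(\mbf X_1,\ldots,\mbf X_m)\bigr) = \vecsum\bigl(D^{N,m-1}(\mbf X_2,\ldots,\mbf X_m)\bigr) = \vecsum(\mbf X_m)$. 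Since the $m$-th coordinate of $\mathcal D^{N,k}(\mbf X_1,\ldots,\mbf X_k)$ is $D^{N,m}(\mbf X_1,\ldots,\mbf X_m)$ (with the first being $\mbf X_1$ itself), this establishes the first claim.

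For $\mathcal J^{N,k}$, the same strategy applies to $J^{N,2}$: summing \eqref{Jimap} over $i \in \Z_N$,
\[
\vecsum\bigl(J^{N,2}(\mbf U_1,\mbf U_2)\bigr) = \sum_{i \in \Z_N} U_{2,i} + \sum_{i \in \Z_N} \log\Bigl(\tfrac{e^{U_{2,i+1}-U_{1,i+1}}-1}{e^{U_{2,i}-U_{1,i}}-1}\Bigr),
\]
and the logarithmic sum again telescopes to zero by the cyclic indexing (this is where we use $(\mbf U_1,\mbf U_2) \in \mathcal R^{N,2}$, which keeps each ratio strictly positive so the logarithm is real), giving $\vecsum(J^{N,2}(\mbf U_1,\mbf U_2)) = \vecsum(\mbf U_2)$. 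I would then induct on $k$ using the recursion \eqref{eq.XJ1}--\eqref{eq.XJ2}. For the inductive step, the first $k$ coordinates of $\mathcal J^{N,k+1}(\mbf U_1,\ldots,\mbf U_{k+1})$ coincide with $\mathcal J^{N,k}(\mbf U_1,\ldots,\mbf U_k)$, so by induction $\vecsum(\mbf X_m) = \vecsum(\mbf U_m)$ for $1 \le m \le k$. For the last coordinate, apply the inductive hypothesis to $\mathcal J^{N,k}(\mbf U_1,\ldots,\mbf U_{k-1},\mbf U_{k+1})$ to obtain $\vecsum(\wt{\mbf X}_{k+1}) = \vecsum(\mbf U_{k+1})$, and then the $J^{N,2}$ computation above gives $\vecsum(\mbf X_{k+1}) = \vecsum(J^{N,2}(\mbf X_k, \wt{\mbf X}_{k+1})) = \vecsum(\wt{\mbf X}_{k+1}) = \vecsum(\mbf U_{k+1})$.

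There is no real obstacle here: the lemma reduces to the observation that any cyclic discrete derivative $\sum_{i \in \Z_N}(f_i - f_{i-1})$ vanishes, combined with the inductive definitions of $\mathcal D^{N,k}$ and $\mathcal J^{N,k}$. The only small point requiring care is that the telescoping argument for $J^{N,2}$ uses the restriction to $\mathcal R^{N,2}$ (so that every log argument is positive), which is already built into the definition of $\mathcal J^{N,k}$'s domain $\mathcal R^{N,k}$.
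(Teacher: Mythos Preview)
Your proposal is correct and matches the paper's proof essentially line for line: both halves are handled by cyclic telescoping for the $m=2$ building block followed by induction via the recursions \eqref{eq:D_iter_intro} and \eqref{eq.XJ1}--\eqref{eq.XJ2}, and the paper likewise notes (implicitly) that the $\mathcal R^{N,2}$ restriction keeps the logarithms well-defined.
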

\begin{proof}
We start by proving the statement for $\D^{N,k}$. By definition of $\D^{N,k}$ \eqref{DNk_intro}, it suffices to prove that
\be \label{dNM_as}
\text{for all } (\mbf X_1,\ldots,\mbf X_m) \in (\R^{\Z_N})^m,\quad \vecsum(D^{N,m}(\mbf X_1,\ldots,\mbf X_m)) = \vecsum(\mbf X_m).
\ee
We prove this by induction on $m$, starting from $m = 2$ (the $m = 1$ case is trivial since $D^{N,1}(\mbf X_1) = \mbf X_1$). For $\mbf X_1,\mbf X_2 \in \R^{\Z_N}$, if we define $Y_\ell = X_{2,\ell} - X_{1,\ell}$, then
\[
\sum_{i \in \Z_N} D_i^{N,2}(\mbf X_1,\mbf X_2) = \sum_{i \in \Z_N}\Biggl(X_{2,i}  + \log \Bigl(\f{\sum_{j \in \Z_N} e^{Y_{(i,j]}}  }{\sum_{j \in \Z_N} e^{Y_{(i-1,j]}}}\Bigr)\Biggr) = \sum_{i \in \Z_N} X_{2,i}.
\] 
Next, assume that \eqref{dNM_as} holds for some $m \ge 2$. Then, for $\mbf X_1,\ldots,\mbf X_{m+1} \in \R^{\Z_N}$, $\vecsum\bigl(D^{N,m}(\mbf X_2,\ldots,\mbf X_{m+1})\bigr) = \vecsum(\mbf X_{m+1})$. Then, using the $m = 2$ case,
\[
\vecsum\bigl(D^{N,m+1}(\mbf X_1,\ldots,\mbf X_{m+1})\bigr) = \vecsum\bigl(D^{N,2}(\mbf X_1,D^{N,m}(\mbf X_2,\ldots,\mbf X_{m+1}))\bigr) = \vecsum\bigl(D^{N,m}(\mbf X_2,\ldots,\mbf X_{m+1})\bigr) = \vecsum(\mbf X_{m+1}).
\]

Turning to the statement for $\mathcal J^{N,k}$, the $k=1$ case is trivial since  $\mathcal J^{N,k}(\mbf U_1) = \mbf U_1$. In the $k = 2$ case, since 
$\mathcal J^{N,2}(\mbf U_1,\mbf U_2) = \bigl(\mbf U_1,J^{N,2}(\mbf U_1,\mbf U_2)\bigr)$ the statement follows from the fact that 
\[
\sum_{i \in \Z_N}J_i^{N,2}(\mbf U_1,\mbf U_2) = \sum_{i \in \Z_N} \Biggl( U_{2,i} +\log\Bigl(\f{e^{ U_{2,i+1} - U_{1,i+1}} - 1}{e^{U_{2,i} - U_{1,i}} - 1}\Bigr)  \Biggr) = \sum_{i \in \Z_N} U_{2,i}.
\]
Now, assume the statement in the lemma holds for some $k \ge 2$, and for $(\mbf U_1,\ldots,\mbf U_{k+1}) \in \mathcal R^{N,k}$, let $(\mbf X_1,\ldots,\mbf X_{k+1}) = \mathcal J^{N,k+1}(\mbf U_1,\ldots,\mbf U_{k+1})$. By the definition of $\mathcal J^{N,k+1}$, namely \eqref{eq.XJ1} and \eqref{eq.XJ2}, 
\[
\begin{aligned}
(\mbf X_1,\ldots,\mbf X_{k}) &:= \mathcal J^{N,k}(\mbf U_1,\ldots,\mbf U_{k}), \quad\text{and}\quad \mbf X_{k+1} := J^{N,2}(\mbf X_{k},\wt{\mbf X}_{k+1}),\quad\text{where} \\
(\mbf X_1,\ldots,\mbf X_{k-1},\wt{\mbf X}_{k+1}) &:= \mathcal J^{N,k}(\mbf U_1,\ldots,\mbf U_{k-1},\mbf U_{k+1}).
\end{aligned}
\]
By induction $\vecsum(\mbf X_m) = \vecsum(\mbf U_m)$ for $1 \le m \le k$, and $\vecsum(\wt{\mbf X}_{k+1}) = \vecsum(\mbf U_{k+1})$. Finally, by the $k = 2$ case, 
\[
\vecsum(\mbf X_{k+1}) = \vecsum\bigl(J^{N,2}(\mbf X_k,\wt{\mbf X}_{k+1})\bigr) = \vecsum(\wt{\mbf X}_{k+1}) = \vecsum(\mbf U_{k+1}). \qedhere
\]
\end{proof}

The next lemma shows that the output of $\D^{N,k}$ is sorted pointwise depending on the input $\vecsum$-ordering.
\begin{lemma} \label{lem:sum_v_order}
For $N,k \in \N$ and $(\mbf X_1,\ldots,\mbf X_{k}) \in (\R^{\Z_N})^{k}$, let
$(\mbf U_1,\ldots,\mbf U_k) = \D^{N,k}(\mbf X_1,\ldots,\mbf X_k)$. Then, 
\begin{enumerate} [label=\textup{(\roman*)}]
\item If $\vecsum(\mbf X_{m}) = \vecsum(\mbf X_r)$, then $\mbf U_r = \mbf U_m$.
\item If $\vecsum(\mbf X_{m}) < \vecsum(\mbf X_r)$, then $\mbf U_m < \mbf U_r$.
\item If $\vecsum(\mbf X_{m}) > \vecsum(\mbf X_r)$, then $\mbf U_m > \mbf U_r$.
\end{enumerate}
\end{lemma}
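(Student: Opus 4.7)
The plan is to reduce the general case to the base case $k=2$, $(r,m)=(1,2)$, which I would handle by a direct computation, and then bootstrap to general $r<m$ via the iteration identity in Lemma \ref{lem:Diter}, the pointwise monotonicity in Lemma \ref{lem:Hmont}, and the $\vecsum$-preservation in Lemma \ref{lem:Dsum_pres}. The case $r=m$ is vacuous, and swapping the roles of $r$ and $m$ gives the opposite direction.

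For the base case, I would fix $\mbf X_1,\mbf X_2 \in \R^{\Z_N}$, set $Y_\ell := X_{2,\ell} - X_{1,\ell}$, and write $Q_i := \log \sum_{j\in \Z_N} e^{Y_{(i,j]}}$, so that $D_i^{N,2}(\mbf X_1,\mbf X_2) - X_{1,i} = Y_i + Q_i - Q_{i-1}$. Splitting the sum defining $e^{Q_{i-1}}$ into the wrap-around term $j=i-1$ (which contributes $1$) and the remaining terms (for which $Y_{(i-1,j]} = Y_i + Y_{(i,j]}$), and using $Y_{(i,i-1]} = \vecsum(\mbf Y) - Y_i$, yields the clean identity
\[
e^{Q_{i-1}} \;=\; e^{Y_i + Q_i} \;+\; 1 - e^{\vecsum(\mbf Y)}.
\]
Consequently $Y_i + Q_i - Q_{i-1}$ has the same sign as $\vecsum(\mbf Y) = \vecsum(\mbf X_2) - \vecsum(\mbf X_1)$, uniformly for every $i \in \Z_N$, and vanishes precisely when $\vecsum(\mbf Y)=0$. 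This establishes the three claimed cases when $(r,m)=(1,2)$ and $k=2$.

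For the general case, assume without loss of generality $r<m$. By Lemma \ref{lem:Diter} applied so that the first $r-1$ arguments are held aside,
\[
\mbf U_m \;=\; D^{N,r}\!\bigl(\mbf X_1,\ldots,\mbf X_{r-1},\,\wt{\mbf X}\bigr),\qquad \wt{\mbf X} \;:=\; D^{N,m-r+1}(\mbf X_r,\mbf X_{r+1},\ldots,\mbf X_m) \;=\; D^{N,2}\bigl(\mbf X_r,\mbf Y\bigr),
\]
where $\mbf Y := D^{N,m-r}(\mbf X_{r+1},\ldots,\mbf X_m)$, while $\mbf U_r = D^{N,r}(\mbf X_1,\ldots,\mbf X_{r-1},\mbf X_r)$. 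By the strict monotonicity in the last argument from Lemma \ref{lem:Hmont}, the pointwise comparison of $\mbf U_r$ and $\mbf U_m$ reduces to pointwise comparison of $\mbf X_r$ and $\wt{\mbf X}$. The latter is the base case applied to $(\mbf X_r,\mbf Y)$, and since Lemma \ref{lem:Dsum_pres} gives $\vecsum(\mbf Y)=\vecsum(\mbf X_m)$, the three cases of the lemma follow according as $\vecsum(\mbf X_m)-\vecsum(\mbf X_r)$ is zero, positive, or negative.

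The principal obstacle is the base case identity: correctly accounting for the cyclic wrap-around term $j=i-1$ in $e^{Q_{i-1}}$. On $\Z$ (without periodicity), the ratio $e^{Q_i}/e^{Q_{i-1}}$ would telescope to $e^{-Y_i}$ and one would obtain $\mbf U_2 = \mbf X_1$ regardless of $\vecsum(\mbf Y)$; on $\Z_N$ the wrap-around produces the extra $1-e^{\vecsum(\mbf Y)}$ term, which is exactly what controls the pointwise sign of $D^{N,2}(\mbf X_1,\mbf X_2)-\mbf X_1$. Once this identity is in hand, the inductive step via Lemmas \ref{lem:Diter}, \ref{lem:Hmont}, and \ref{lem:Dsum_pres} is essentially automatic.
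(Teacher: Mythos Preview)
Your proof is correct and follows essentially the same approach as the paper: both establish the $k=2$ base case by showing that $D_i^{N,2}(\mbf X_1,\mbf X_2)-X_{1,i}$ has the same sign as $\vecsum(\mbf X_2)-\vecsum(\mbf X_1)$ (your identity $e^{Q_{i-1}}=e^{Y_i+Q_i}+1-e^{\vecsum(\mbf Y)}$ is exactly the paper's observation that numerator minus denominator equals $e^{\vecsum(\mbf Y)}-1$), and then bootstrap via Lemmas~\ref{lem:Diter}, \ref{lem:Hmont}, and \ref{lem:Dsum_pres}. The only organizational difference is that the paper passes through an intermediate $r=m-1$ step before handling general $r<m$, whereas you go directly to general $r<m$ in one shot; both are equally valid.
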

\begin{proof}
We first prove the statement for $r = m-1$, starting from the $m = 2$ case. Let $(\mbf X_1,\mbf X_2) \in (\R^{\Z_N})^2$, and define $Y_\ell = X_{2,\ell} - X_{1,\ell}$ for $\ell \in \Z_N$. Then,
\begin{align} \label{eq:D-X1}
U_{2,i} - U_{1,i} = D_i^{N,2}(X_1,X_2) - X_{1,i} = \log\Biggl(\f{e^{Y_i}\sum_{j \in \Z_N} e^{Y_{(i,j]}}  }{\sum_{j \in \Z_N} e^{Y_{(i-1,j]}} }  \Biggr) = \log\Biggl(\f{e^{Y_i} + e^{Y_{i} + Y_{i+1}} + \cdots + e^{Y_i + \cdots + Y_{i-1}}}{1 + e^{Y_i} + \cdots + e^{Y_i + \cdots + Y_{i-2}}   }\Biggr).
\end{align}
Notice the numerator and denominator are both positive, and the difference between numerator and denominator is $e^{\sum_{\ell \in \Z_N} Y_\ell} - 1 = e^{\vecsum(X_2) - \vecsum(X_1)} - 1$. Hence, $U_{2,i} - U_{1,i}$ has the same sign as $\vecsum(X_2) - \vecsum(X_1)$.

Now, let $m > 2$. By definition of $\D^{N,k}$ and Lemma \ref{lem:Diter}, 
\[
\mbf U_m = D^{N,m}(\mbf X_1,\ldots,\mbf X_m) = D^{N,m-1}\bigl(\mbf X_1,\ldots, \mbf X_{m-2},D^{N,2}(\mbf X_{m-1},\mbf X_m)\bigr),
\]
and by the $m = 2$ case, $D^{N,2}(\mbf X_{m-1},\mbf X_m) <  \mbf X_{m-1}$ if $\vecsum(\mbf X_m) > \vecsum(\mbf X_{m-1})$, $D^{N,2}(\mbf X_{m-1},\mbf X_m) > \mbf X_{m-1}$ if $\vecsum(\mbf X_m) < \vecsum(\mbf X_{m-1})$, and $D^{N,2}(\mbf X_{m-1},\mbf X_m) = \mbf X_{m-1}$ if $\vecsum(\mbf X_m) = \vecsum(\mbf X_{m-1})$. Combined with the monotonicity of Lemma \ref{lem:Hmont}, this proves the statement of the Lemma when $r = m-1$. For general $r < m$, Lemma \ref{lem:Diter}  shows that
$\mbf U_m =D^{N,r+1}\bigl(\mbf X_1,\ldots,\mbf X_r,D^{N,m-r}(\mbf X_{r+1},\ldots,\mbf X_m)\bigr)$
and by Lemma \ref{lem:Dsum_pres}, $\vecsum\bigl(D^{N,m-r}(\mbf X_{r+1},\ldots,\mbf X_m)\bigr) = \vecsum(\mbf X_m)$. Hence, the conclusion of the Lemma follows from the $r = m-1$ case. 
\end{proof}

We are now ready to prove Proposition \ref{prop:transform}.
\begin{proof}[Proof of Proposition \ref{prop:transform}]
Recall that, by definition, for $\theta_1,\ldots,\theta_k$ all distinct, $\R^{\Z_N}_{\theta_1} \times \cdots \times \R^{\Z_N}_{\theta_k}$ and $\mathcal R_{\theta_1,\ldots\theta_k}^{N,k}$ are the subsets of  $\R^{N,k}_{\neq}$ and $\mathcal R^{N,k}$, respectively, such that the $m$th coordinate has sum $\theta_m$. Hence, the second bijection stated in the proposition follows from the first and Lemma \ref{lem:Dsum_pres}. It now suffices to show the following.
\begin{enumerate} [label=\textup{(\roman*)}]
\item \label{itm:Dim} If $(\mbf X_1,\ldots,\mbf X_k) \in \R_{\neq}^{N,k}$, then $\D^{N,k}(\mbf X_1,\ldots,\mbf X_k) \in \mathcal R^{N,k}$, and 
\[
\mathcal J^{N,k}\bigl(\D^{N,k}(\mbf X_1,\ldots,\mbf X_k)\bigr) = (\mbf X_1,\ldots,\mbf X_k).
\]
\item \label{itm:Jim} If $(\mbf U_1,\ldots,\mbf U_k) \in \mathcal R^{N,k}$, then $\mathcal J^{N,k}(\mbf U_1,\ldots,\mbf U_k) \in \R_{\neq}^{N,k}$, and
\[
\D^{N,k}\bigl(\mathcal J^{N,k}(\mbf U_1,\ldots,\mbf U_k)\bigr) = (\mbf U_1,\ldots,\mbf U_k).
\]
\end{enumerate}

\begin{figure}
    \centering
    \includegraphics[width=.8\linewidth]{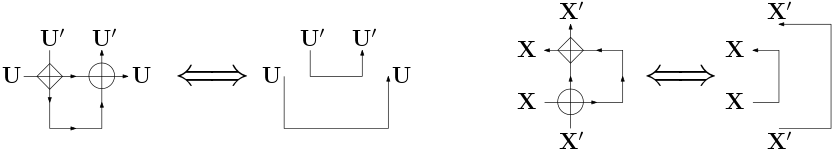}
    \caption{The $k=2$ case of Proposition \ref{prop:transform} depicted here serves as the basic relation to prove the general $k$ case. The left figure records the fact that that applying $\mathcal J^{N,2}$ and then $\mathcal D^{N,2}$ is equivalent to applying the identity map on $\mathcal R^{N,2}$. The right figure records the fact that that applying $\mathcal D^{N,2}$ and then $\mathcal J^{N,2}$ is equivalent to applying the identity map on $\R^{N,2}_{\neq}$. Here and elsewhere arrows (without decorated crosses) depict identity maps.}
    \label{fig:k2case}
\end{figure} 

\begin{figure}
    \centering
    \includegraphics[width=1\linewidth]{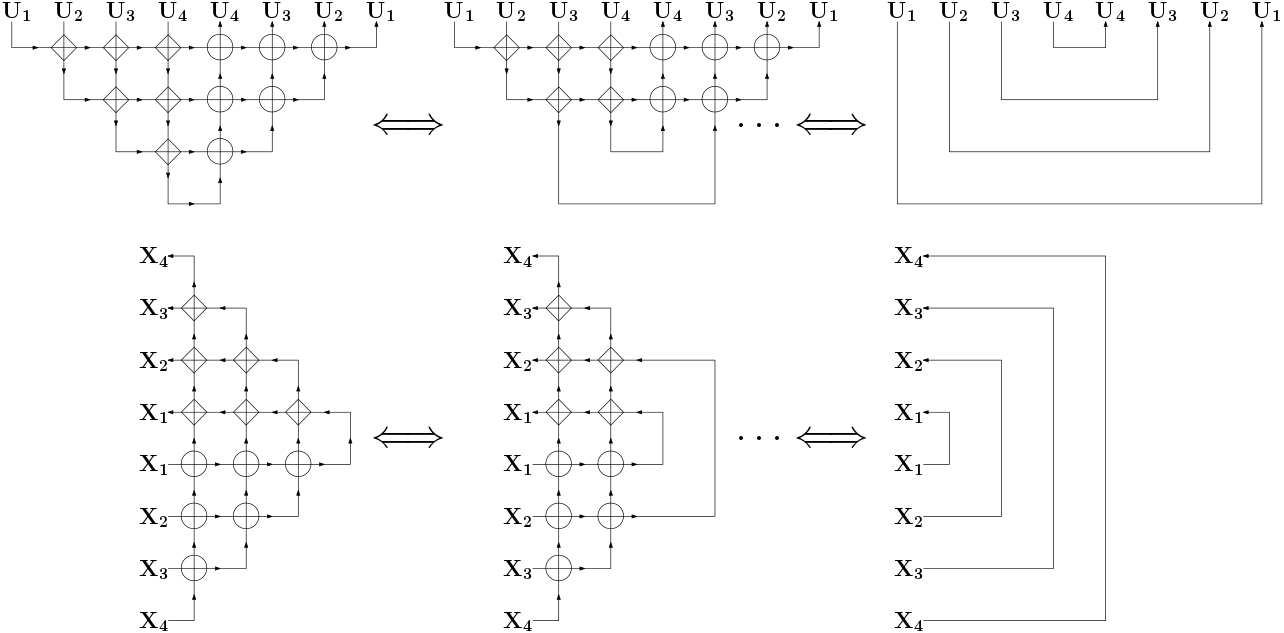}
    \caption{The proof of Proposition \ref{prop:transform} is shown graphically for $k=4$. Repeated application of the unwinding identities shown in Figure \ref{fig:k2case} imply (on the top of the figure) that applying $\mathcal J^{N,k}$ (recall Figure \ref{fig:DJ}) and then $\mathcal D^{N,k}$ is equivalent to applying the identity map on $\mathcal R^{N,k}$ and (on the bottom of the figure) that applying $\mathcal D^{N,k}$ and then $\mathcal J^{N,k}$ is equivalent to applying the identity map on $\R^{N,k}_{\neq}$.}
    \label{fig:bijection}
\end{figure} 

 We prove Items \ref{itm:Dim} and \ref{itm:Jim} together using induction, starting from the $k = 2$ case, see Figure \ref{fig:k2case} (the $k = 1$ case is trivial). If $(\mbf X_1,\mbf X_2) \in \R_{\neq}^{N,2}$ (meaning $\vecsum(\mbf X_1) \neq \vecsum(\mbf X_2)$), then Lemma \ref{lem:sum_v_order} implies that 
    \[
    \D^{N,2}(\mbf X_1,\mbf X_2) = (\mbf X_1,D^{N,2}(\mbf X_1,\mbf X_2)) \in \mathcal R^{N,2}.
    \]
    On the other hand, if $(\mbf U_1,\mbf U_2) \in \mathcal R^{N,2}$, then it necessarily follows that $\vecsum(\mbf U_1) \neq \vecsum(\mbf U_2)$. By the preservation of sums from Lemma \ref{lem:Dsum_pres}, we have $\mathcal J^{N,2}(\mbf U_1,\mbf U_2) \in \R_{\neq}^{N,2}$.

    Next, since $J^{N,1}(\mbf U_1) = \mbf U_1$ and $D^{N,1}(\mbf X_1) = \mbf X_1$, we must show that, for any given $\mbf X_1 \in \R^{\Z_N}$,  if $(\mbf X_1,\mbf U_2) \in \mathcal R^{N,2}$, then   $J^{N,2}(\mbf X_1,\mbf U_2) = \mbf X_2$ if and only if $D^{N,2}(\mbf X_1,\mbf X_2) = \mbf U_2$. For this, it is more convenient to use the coordinates $Y_i = X_{2,i} - X_{1,i}$ and $W_i = U_{2,i} - X_{1,i}$. The assumption $(\mbf X_1,\mbf U_2) \in \mathcal R^{N,2}$ means that $W_i > 0$ for all $i \in \Z_N$ or $W_i < 0$ for all $i \in \Z_N$.  By \eqref{Jimap}, $J^{N,2}(\mbf X_1,\mbf U_2) = \mbf X_2$ is equivalent to 
\be \label{ZtoY}
Y_i = W_i + \log\Bigl(\f{e^{W_{i+1}} - 1}{e^{W_i} - 1}\Bigr),\quad i \in \Z_N,
\ee
and by \eqref{eq:D_intro}, $D^{N,2}(\mbf X_1,\mbf X_2) = \mbf U_2$ is equivalent to 
\be \label{YtoZ}
W_i = Y_i + \log\Biggl(\f{\sum_{j \in \Z_N} e^{Y_{(i,j]}}     }{\sum_{j \in \Z_N} e^{Y_{(i-1,j]}  }}\Biggr),\quad i \in \Z_N.
\ee
We must show that \eqref{ZtoY} and \eqref{YtoZ} are equivalent. Assuming \eqref{ZtoY},  we make the observation that 
\[
Y_{(i,j]} = W_{(i,j]} + \log \Bigl(\f{e^{W_{j+1}} - 1}{e^{W_{i+1}} - 1}\Bigr),
\]
from which it follows that
\begin{align*}
    Y_i + \log\Biggl(\f{\sum_{j \in \Z_N} e^{Y_{(i,j]}}     }{\sum_{j \in \Z_N} e^{Y_{(i-1,j]}  }}\Biggr) &= W_i + \log\Bigl(\f{e^{W_{i+1}} - 1}{e^{W_i} - 1}\Bigr) + \log \Biggl(\f{\sum_{j \in \Z_N} e^{W_{(i,j]}} \Bigl(\f{e^{W_{j+1}} - 1}{e^{W_{i+1}} - 1}\Bigr)   }{\sum_{j \in \Z_N} e^{W_{(i-1,j]}} \Bigl(\f{e^{W_{j+1}} - 1}{e^{W_{i}} - 1}\Bigr)   }   \Biggr) \nonumber \\
    &= W_i + \log\Biggl(\f{\sum_{j \in \Z_N}e^{W_{(i,j]}}(e^{W_{j+1}} - 1)   }{\sum_{j \in \Z_N}e^{W_{(i-1,j]}}(e^{W_{j+1}} - 1)  }\Biggr) \\
    &= W_i + \log\Biggl(\f{\prod_{j \in \Z_N}e^{W_j} - 1 }{\prod_{j \in \Z_N}e^{W_j} - 1 }   \Biggr) = W_i.
\end{align*}
In the penultimate step, we note there is a telescoping of terms. Next, assuming \eqref{YtoZ} we have
\begin{align*}
W_i + \log\Bigl(\f{e^{W_{i+1}} - 1}{e^{W_i} - 1}\Bigr) &= Y_i + \log\Bigl(\f{\sum_{j \in \Z_N} e^{Y_{(i,j]}}}{\sum_{j \in \Z_N} e^{Y_{(i-1,j]}}}\Bigr) + \log\Biggl(\f{e^{Y_{i+1}} \Bigl(\f{\sum_{j \in \Z_N} e^{Y_{(i+1,j]}}}{\sum_{j \in \Z_N} e^{Y_{(i,j]}}}\Bigr) - 1}{e^{Y_{i}} \Bigl(\f{\sum_{j \in \Z_N} e^{Y_{(i,j]}}}{\sum_{j \in \Z_N} e^{Y_{(i-1,j]}}}\Bigr) - 1}   \Biggr) \\
&= Y_i + \log\Biggl(\f{\sum_{j \in \Z_N} [e^{Y_{[i+1,j]}}- e^{Y_{(i,j]}}]   }{\sum_{j \in \Z_N} [e^{Y_{[i,j]}}- e^{Y_{(i-1,j]}}] }\Biggr) = Y_i + \log\Biggl(\f{\prod_{j \in \Z_N}e^{Y_j} - 1   }{\prod_{j \in \Z_N}e^{Y_j} - 1}\Biggr) = Y_i.
\end{align*}
In summary, we have shown \ref{itm:Dim} and \ref{itm:Jim} for $k = 2$.

The induction in $k$ now proceeds as depicted in Figure \ref{fig:bijection}. Assume that \ref{itm:Dim} and \ref{itm:Jim} hold for some $k \ge 2$. We first prove \ref{itm:Dim} for $k + 1$. Let $(\mbf X_1,\ldots,\mbf X_{k+1}) \in \R_{\neq}^{N,k+1}$, and set $(\mbf U_1,\ldots,\mbf U_{k+1}) := \D^{N,k+1}(\mbf X_1,\ldots,\mbf X_{k+1})$. By definition of $\D^{N,k}$, we see that 
\be \label{101}
(\mbf U_1,\ldots,\mbf U_k)  = \D^{N,k}(\mbf X_1,\ldots,\mbf X_k).
\ee
The assumption $(\mbf X_1,\ldots,\mbf X_{k+1}) \in \R_{\neq}^{N,k+1}$ immediately implies that $(\mbf X_1,\ldots,\mbf X_{k}) \in \R_{\neq}^{N,k}$. Hence, by \eqref{101} and \ref{itm:Dim} of the induction assumption, $(\mbf U_1,\ldots,\mbf U_k) \in \mathcal R^{N,k}$. Furthermore, by  definition of the map $\D^{N,k+1}$ and Lemma \ref{lem:Diter}, we observe that 
\[
\mbf U_{k+1} = D^{N,k+1}(\mbf X_1,\ldots,\mbf X_{k+1}) = D^{N,k}(\mbf X_1,\ldots,\mbf X_{k-1},D^{N,2}(\mbf X_k,\mbf X_{k+1})).
\]
Hence, if we define $\wt{\mbf X}_{k+1} := D^{N,2}(\mbf X_k,\mbf X_{k+1})$, then 
\be \label{102}
(\mbf U_1,\ldots,\mbf U_{k-1},\mbf U_{k+1}) = \D^{N,k}(\mbf X_1,\ldots,\mbf X_{k-1},\wt{\mbf X}_{k+1}).
\ee
By Lemma \ref{lem:Dsum_pres}, $\vecsum(\wt{\mbf X}_{k+1}) = \vecsum(\mbf X_{k+1})$, so $(\mbf X_1,\ldots,\mbf X_{k-1},\wt{\mbf X}_{k+1}) \in \R^{N,k}_{\neq}$. Then \eqref{102} and \ref{itm:Dim} of the induction assumption implies that $(\mbf U_1,\ldots,\mbf U_{k-1},\mbf U_{k+1}) \in \mathcal R^{N,k}$. By \eqref{101}-\eqref{102} and \ref{itm:Dim} of the induction assumption, 
\be \label{103}
\begin{aligned}
(\mbf X_1,\ldots,\mbf X_{k-1},\mbf X_k) &= \mathcal J^{N,k}(\mbf U_1,\ldots,\mbf U_{k-1},\mbf U_{k}),\quad\text{and}  \\
(\mbf X_1,\ldots,\mbf X_{k-1},\wt{\mbf X}_{k+1}) &= \mathcal J^{N,k}(\mbf U_1,\ldots,\mbf U_{k-1},\mbf U_{k+1}).
\end{aligned}
\ee
Since $\vecsum(\wt{\mbf X}_{k+1}) = \vecsum(\mbf X_{k+1}) \neq \vecsum(\mbf X_k)$, the $m = 2, r =1$ case of Lemma \ref{lem:Hmont} implies that $(\mbf X_{k},\wt{\mbf X}_{k+1}) \in \mathcal R^{N,2}$. In summary, we have shown that 
\[
\begin{aligned}
    (\mbf U_1,\ldots,\mbf U_{k-1},\mbf U_k) \in \mathcal R^{N,k},\quad (\mbf U_1,\ldots,\mbf U_{k-1},\mbf U_{k+1}) \in \mathcal R^{N,k}, \quad\text{and}\\
    (\mbf X_k,\wt{\mbf X}_{k+1}) \in \mathcal R^{N,2}, \quad\text{where}\quad \mbf X_k, \wt{\mbf X}_{k+1} \text{ are determined
 by }\eqref{103}
 \end{aligned}
\]
These are precisely the conditions for $(\mbf U_1,\ldots,\mbf U_{k+1}) \in \mathcal R^{N,k+1}$. Furthermore, since we defined $\wt{\mbf X}_{k+1} = D^{N,2}(\mbf X_k,\mbf X_{k+1})$, the $k = 2$ case implies that $\mbf X_{k+1} = J^{N,2}(\mbf X_k,\wt{\mbf X}_{k+1})$. Combined with \eqref{103} and recalling the definition of $\mathcal J^{N,k + 1}$, we see that 
\[
\mathcal J^{N,k+1}\bigl(\D^{N,k+1}(\mbf X_1,\ldots,\mbf X_{k+1})\bigr) = \mathcal J^{N,k+1}\bigl(\mbf U_1,\ldots,\mbf U_{k+1}\bigr) = (\mbf X_1,\ldots,\mbf X_{k+1}),
\]
thus proving \ref{itm:Dim} for $k+1$.

We finish by proving \ref{itm:Jim} for $k+1$. Assume $(\mbf U_1,\ldots,\mbf U_{k+1}) \in \mathcal R^{N,k+1}$, and define 
\[
(\mbf X_1,\ldots,\mbf X_{k+1}) := \mathcal J^{N,k+1}(\mbf U_1,\ldots,\mbf U_{k+1}).
\]
We seek to show that $\vecsum(\mbf X_m)$, $1 \le m \le k+1$, are all distinct, and $\D^{N,k+1}(\mbf X_1,\ldots,\mbf X_{k+1}) =(\mbf U_1,\ldots,\mbf U_{k+1}) $.

Since  $(\mbf U_1,\ldots,\mbf U_{k+1}) \in \mathcal R^{N,k+1}$, it follows that $(\mbf U_1,\ldots,\mbf U_{k-1},\mbf U_k),(\mbf U_1,\ldots,\mbf U_{k-1},\mbf U_{k+1})  \in \mathcal R^{N,k}$; and, recalling \eqref{eq.XJ1} and \eqref{eq.XJ2} (which define $\mbf X_1,\ldots ,\mbf X_{k+1}$ and $\wt{\mbf X}_{k+1}$),
it also follows that $(\mbf X_k,\wt{\mbf X}_{k+1}) \in \mathcal R^{N,2}$.
By \eqref{eq.XJ1}, \eqref{eq.XJ2}, and \ref{itm:Jim} of the induction assumption, $(\mbf X_1,\ldots,\mbf X_{k-1},\mbf X_k), (\mbf X_1,\ldots,\mbf X_{k-1},\wt{\mbf X}_{k+1}) \in \R_{\neq}^{N,k}$. In order to show $(\mbf X_1,\ldots,\mbf X_{k+1}) \in \R_{\neq}^{N,k}$, it remains to show that $\vecsum(\mbf X_m) \neq \vecsum(\mbf X_{k+1})$ for $1 \le m \le k$.  Note that $\vecsum(\mbf X_k) \neq \vecsum(\wt{\mbf X}_{k+1})$ because $(\mbf X_k,\wt{\mbf X}_{k+1}) \in \mathcal R^{N,2}$. Also, $\vecsum(\wt{\mbf X}_{k+1}) \neq \vecsum(\mbf X_m)$ for $1 \le m \le k$ since $(\mbf X_1,\ldots,\mbf X_{k-1},\wt{\mbf X}_{k+1}) \in \R_{\neq}^{N,k}$. By definition of the map $\mathcal J^{N,k+1}$, we have
$
\mbf X_{k+1} = J^{N,2}(\mbf X_k,\wt{\mbf X}_{k+1}),
$
and by Lemma \ref{lem:Dsum_pres}, $\vecsum(\mbf X_{k+1}) = \vecsum(\wt{\mbf X}_{k+1})$. Hence, $(\mbf X_1,\ldots,\mbf X_{k+1}) \in  \R_{\neq}^{N,k+1}$. 

Finally we show  $\D^{N,k+1}(\mbf X_1,\ldots,\mbf X_{k+1}) = (\mbf U_1,\ldots,\mbf U_{k+1})$. By \eqref{eq.XJ1} and \ref{itm:Jim} of the inductive assumption, 
\[
\D^{N,k}(\mbf X_1,\ldots,\mbf X_k) = (\mbf U_1,\ldots,\mbf U_k),
\]
so by definition of $\D^{N,k+1}$ we just need to show that 
$\mbf U_{k+1} = D^{N,k+1}(\mbf X_1,\ldots,\mbf X_{k+1})$.
By \eqref{eq.XJ2} and \ref{itm:Jim} of the inductive assumption, $(\mbf U_1,\ldots,\mbf U_{k-1},\mbf U_{k+1}) = \D^{N,k}(\mbf X_1,\ldots,\mbf X_{k-1},\wt{\mbf X}_{k+1})$, which by definition, implies
\be \label{106}
 \mbf U_{k+1} = D^{N,k}(\mbf X_1,\ldots,\mbf X_{k-1},\wt{\mbf X}_{k+1}).
\ee
We have shown that $\mbf X_{k+1} = J^{N,2}(\mbf X_k,\wt{\mbf X}_{k+1})$, so the $k = 2$ case of the bijection implies that $\wt{\mbf X}_{k+1} = D^{N,2}(\mbf X_k,\mbf X_{k+1})$. Combined with \eqref{106} and Lemma \ref{lem:Diter}, we have
\[
\mbf U_{k+1} =  D^{N,k}\bigl(\mbf X_1,\ldots,\mbf X_{k-1},D^{N,2}(\mbf X_k,\mbf X_{k+1})\bigr) = D^{N,k}(\mbf X_1,\ldots,\mbf X_{k+1}), 
\]
as desired.
\end{proof}

We close by noting two facts (neither is used below) about the structure of the domain  $\mathcal R^{N,k}$. The first is recorded as Corollary \ref{cor:RNKcontained} and shows the strict inclusion of $\mathcal R^{N,k}$ in the set of all strictly ordered (up to permutation) $\mbf U_m$. The second, proved later as Corollary \ref{cor:RNK_permute}, is that $\mathcal R^{N,k}$ is invariant under permutations; i.e.,  whenever $(\mbf U_1,\ldots,\mbf U_k) \in \mathcal R^{N,k}$ and $\sigma \in \mathcal S(k)$, then we have $(\mbf U_{\sigma(1)},\ldots,\mbf U_{\sigma(k)}) \in \mathcal R^{N,k}$ too.

\begin{corollary} \label{cor:RNKcontained}
    For $N,k \ge 1$, we have the inclusion 
    \[
    \mathcal R^{N,k} \subseteq \bigcup_{\sigma \in \mathcal S(k)}\Bigl\{(\mbf U_1,\ldots,\mbf U_k) \in (\R^{\Z_N})^k: \mbf U_{\sigma(1)} < \mbf U_{\sigma(2)} < \cdots < \mbf U_{\sigma(k)}\Bigr\}.
    \]
    When $k \ge 3$, the inclusion is strict.
\end{corollary}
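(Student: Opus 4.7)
The plan is to dispatch the inclusion quickly via Proposition \ref{prop:transform} and Lemma \ref{lem:sum_v_order}, and then to prove strictness for $k\geq 3$ by exhibiting a single explicit counterexample at $k=3$, reducing the general $k$ case to this one.

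For the inclusion, take $(\mbf U_1,\ldots,\mbf U_k)\in \mathcal R^{N,k}$ and use Proposition \ref{prop:transform} to obtain the unique preimage $(\mbf X_1,\ldots,\mbf X_k)\in \R^{N,k}_{\neq}$ with $\D^{N,k}(\mbf X_1,\ldots,\mbf X_k)=(\mbf U_1,\ldots,\mbf U_k)$. The slopes $\theta_m:=\vecsum(\mbf X_m)$ are pairwise distinct, so pick $\sigma\in \mathcal S(k)$ with $\theta_{\sigma(1)}<\cdots<\theta_{\sigma(k)}$. Lemma \ref{lem:sum_v_order}(ii) immediately gives $\mbf U_{\sigma(1)}<\cdots<\mbf U_{\sigma(k)}$ pointwise, as required.

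For strictness at $k=3$, I would exhibit a pointwise ordered triple not in $\mathcal R^{N,3}$. Take $N=2$ and $\mbf U_1=(0,0)$, $\mbf U_2=(1,2)$, $\mbf U_3=(2,\,2.1)$; then plainly $\mbf U_1<\mbf U_2<\mbf U_3$ coordinatewise. The inductive definition of $\mathcal J^{N,3}$ in \eqref{eq.XJ1}--\eqref{eq.XJ2} requires the pair $\bigl(J^{N,2}(\mbf U_1,\mbf U_2),\,J^{N,2}(\mbf U_1,\mbf U_3)\bigr)$ to lie in $\mathcal R^{N,2}$. Computing from \eqref{Jimap} gives
\[
\mbf X_2 := J^{N,2}(\mbf U_1,\mbf U_2) = \bigl(1+\log(e+1),\,2-\log(e+1)\bigr),
\]
\[
\wt{\mbf X}_3 := J^{N,2}(\mbf U_1,\mbf U_3) = \Bigl(2+\log\tfrac{e^{2.1}-1}{e^2-1},\;2.1-\log\tfrac{e^{2.1}-1}{e^2-1}\Bigr).
\]
Since $\log(e+1)\approx 1.31$ while $\log\tfrac{e^{2.1}-1}{e^2-1}\approx 0.11$, one checks $X_{2,0}\approx 2.31>2.11\approx \wt X_{3,0}$ but $X_{2,1}\approx 0.69<1.99\approx \wt X_{3,1}$. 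Hence $\mbf X_2$ and $\wt{\mbf X}_3$ are not pointwise comparable, so $(\mbf X_2,\wt{\mbf X}_3)\notin \mathcal R^{N,2}$, and $(\mbf U_1,\mbf U_2,\mbf U_3)\notin \mathcal R^{N,3}$. For $k\geq 4$, extend the triple by setting $\mbf U_m:=\mbf U_{m-1}+C\mathbf 1$ for $m=4,\ldots,k$ with $C>0$ large; pointwise order and distinct slopes persist. If the extended tuple lay in $\mathcal R^{N,k}$, Proposition \ref{prop:transform} would supply a preimage $(\mbf X_1,\ldots,\mbf X_k)\in \R^{N,k}_{\neq}$, and reading off the first three coordinates of \eqref{DNk_intro} gives $\D^{N,3}(\mbf X_1,\mbf X_2,\mbf X_3)=(\mbf U_1,\mbf U_2,\mbf U_3)$, so by Proposition \ref{prop:transform} again $(\mbf U_1,\mbf U_2,\mbf U_3)\in \mathcal R^{N,3}$ --- contradicting the preceding step.

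The main obstacle is isolating a workable $k=3$ counterexample, i.e.\ finding $\mbf U_1<\mbf U_2<\mbf U_3$ pointwise for which the induced pair $(J^{N,2}(\mbf U_1,\mbf U_2),J^{N,2}(\mbf U_1,\mbf U_3))$ is not pointwise comparable. The map $J^{N,2}$ in \eqref{Jimap} is a nonlinear combination of $\log$-of-differences of exponentials that redistributes mass between coordinates, so the construction comes down to tuning $\mbf U_3$ so that one coordinate of $U_{3,\cdot}-U_{1,\cdot}$ grows much faster than the other, flipping the ordering of $J^{N,2}$ in one coordinate but not both. The verification is a routine numerical check, but it is what reveals that $\mathcal R^{N,k}$ is genuinely smaller than the union of orderable orbits once the third layer of recursion is engaged.
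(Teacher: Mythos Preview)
Your proof is correct and follows essentially the same strategy as the paper's: the inclusion via Proposition~\ref{prop:transform} and Lemma~\ref{lem:sum_v_order} is identical, and for strictness both arguments reduce to showing that the condition $\bigl(J^{N,2}(\mbf U_1,\mbf U_2),J^{N,2}(\mbf U_1,\mbf U_3)\bigr)\in\mathcal R^{N,2}$ can fail even when $\mbf U_1<\mbf U_2<\mbf U_3$ pointwise.

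The differences are minor but worth noting. For $k=3$, the paper is content to assert that non-monotonicity of $J^{N,2}$ in its second argument ``can be seen directly from the definition,'' whereas you supply an explicit numerical counterexample at $N=2$; your version is more self-contained. For $k>3$, the paper invokes the inductive definition of $\mathcal R^{N,k}$ directly (membership in $\mathcal R^{N,k}$ forces $(\mbf U_1,\mbf U_2,\mbf U_3)\in\mathcal R^{N,3}$ by unwinding the recursion), which is slightly more direct than your route through the bijection and the triangular structure of $\D^{N,k}$; but both arguments are valid and short.
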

\begin{proof}
Let $(\mbf U_1,\ldots,\mbf U_k) \in \mathcal R^{N,k}$. By Proposition \ref{prop:transform}, $(\mbf U_1,\ldots,\mbf U_k) = \D^{N,k}(\mbf X_1,\ldots,\mbf X_k)$ for some \\
$(\mbf X_1,\ldots,\mbf X_k) \in \R_{\neq}^{N,k}$.
By definition of $\R_{\neq}^{N,k}$ \eqref{RA2}, for each pair $r \neq m$, $\vecsum(\mbf X_m) \neq \vecsum(\mbf X_r)$. Then, by Lemma \ref{lem:sum_v_order}, either $\mbf U_m < \mbf U_r$ or $\mbf U_r < \mbf U_m$. This proves the inclusion.  
We will show the strict inclusion by direct inspection when $k=3$; for $k>3$ it follows from the inductive definition of $ \mathcal R^{N,k}$. 
By definition, 
\[
\mathcal J^{N,3}(\mbf U_1,\mbf U_2,\mbf U_3) = \Bigl(\mbf U_1,J^{N,2}(\mbf U_1,\mbf U_2),J^{N,2}\bigl(J^{N,2}(\mbf U_1,\mbf U_2),J^{N,2}(\mbf U_1,\mbf U_3)\bigr)\Bigr),
\]
which is defined on the domain
\[
\mathcal R^{N,3} = \Bigl\{(\mbf U_1,\mbf U_2,\mbf U_3): (\mbf U_1,\mbf U_2) \in \mathcal R^{N,2},(\mbf U_1,\mbf U_3) \in \mathcal R^{N,2}, \text{ and }\bigl(J^{N,2}(\mbf U_1,\mbf U_2),J^{N,2}(\mbf U_1,\mbf U_3)\bigr) \in \mathcal R^{N,2}   \Bigr \}.
\]
We note that the last condition implies, but is not equivalent to $(\mbf U_2,\mbf U_3) \in \mathcal R^{N,2}$. If, without loss of generality, $J^{N,2}(\mbf U_1,\mbf U_2) < J^{N,2}(\mbf U_1,\mbf U_3)$, then the $k = 2$ case of the bijection in Proposition \ref{prop:transform} below, along with the monotonicity of Lemma \ref{lem:Hmont} implies that 
\[
\mbf U_2 = D^{N,2}\bigl(\mbf U_1,J^{N,2}(\mbf U_1,\mbf U_2)\bigr) < D^{N,2}\bigl(\mbf U_1,J^{N,2}(\mbf U_1,\mbf U_3)\bigr) = \mbf U_3.
\]
However, one can see directly from the definition of the map $J^{N,2}$ that $\mbf U_2 < \mbf U_3$ does not imply that $J^{N,2}(\mbf U_1,\mbf U_2) <  J^{N,2}(\mbf U_1,\mbf U_3)$. This implies that the inclusion for $k=3$ is strict.
\end{proof}

\subsection{Properties of the measures and the proof of Proposition \ref{prop:disc_consis}} \label{sec:Burke_intertwine}
The culmination of this subsection is the proof of  Proposition \ref{prop:disc_consis} (the main challenge is to prove Proposition \ref{prop:disc_consis}\ref{itm:g_perm_invar}, the consistency and symmetry). The key tools needed for the proof are the bijection in Corollary \ref{cor:DR_bijection}, the Burke property in Proposition \ref{prop:Burke}, and the intertwining in Proposition \ref{prop:full_intertwine}.

We introduce two additional maps.
Define $T^{N,2}:\R^{\Z_N} \times \R^{\Z_N} \to \R^{\Z_N}$ by 
\be \label{Rdef}
T_i^{N,2}(\mbf X_1,\mbf X_2) = X_{1,i} + \log\Biggl(\f{\sum_{j \in \Z_N} e^{Y_{[i,j]}}  }{\sum_{j \in \Z_N} e^{Y_{[i+1,j]}}}\Biggr),\quad \mbox{ with } \mbf Y=\mbf X_2-\mbf X_1,
\ee
and $L^{N,2}:\mathcal R^{N,2} \to \R^{\Z_N}$ by
\be \label{Ldef}
L_i^{N,2}(\mbf U_1,\mbf U_2) = U_{1,i} + \log\Bigl( \f{1 - e^{U_{1,i-1} - U_{2,i-1}}}{1-e^{U_{1,i} - U_{2,i}}}\Bigr).
\ee
Lemma \ref{lem:Rbij} below shows that $T^{N,2}$ and $L^{N,2}$ are inverse to each other in a certain sense, while Lemma \ref{lem:R-intertwine} shows that $T^{N,2}$ is actually the composition of the $J^{N,2}$ and $D^{N,2}$ maps. Specifically, $T^{N,2}(\mbf X_1,\mbf X_2) = J^{N,2}(D^{N,2}(\mbf X_1,\mbf X_2),\mbf X_1)$. Note that this is different than the composition $J^{N,2}(\mbf X_1,D^{N,2}(\mbf X_1,\mbf X_2))$, which is simply $\mbf X_2$ by Proposition \ref{prop:transform}.

We first note that $T^{N,2}$ and $L^{N,2}$ preserve the sum of their coordinates. Letting $Y_\ell = X_{2,\ell} - X_{1,\ell}$, we have 
\be \label{eq.TL}
\begin{aligned}
    \sum_{i \in \Z_N} T_i^{N,2}(\mbf X_1,\mbf X_2) &= \sum_{i \in \Z_N} \Biggl(X_{1,i} + 
  \log\Bigl(\f{\sum_{j \in \Z_N} e^{Y_{[i,j]}}  }{\sum_{j \in \Z_N} e^{Y_{[i+1,j]}}}\Bigr)\Biggr) = \sum_{i \in \Z_N} X_{1,i}, \\
   \sum_{i \in \Z_N} L_i^{N,2}(\mbf U_1,\mbf U_2) &= \sum_{i \in \Z_N}\Biggl( U_{1,i} + \log\Bigl(\f{1 - e^{U_{1,i-1} - U_{2,i-1}}}{1 - e^{U_{1,i} - U_{2,i}}}\Bigr)\Biggr) = \sum_{i \in \Z_N} U_{1,i}. 
\end{aligned}
\ee

\begin{lemma} \label{lem:Rbij}
For $N\in \N$ and $\theta_1 \neq \theta_2$, the map $(\mbf X_1,\mbf X_2) \mapsto (T^{N,2}(\mbf X_1,\mbf X_2),\mbf X_2)$ is a bijection $\R^{\Z_N}_{\theta_1} \times \R^{\Z_N}_{\theta_2} \to \mathcal R^{N,2}_{\theta_1,\theta_2}$ and also a bijection $\R^{N,2}_{\neq} \to \mathcal R^{N,2}$; in both cases the inverse is $(\mbf U_1,\mbf U_2) \mapsto (L^{N,2}(\mbf U_1,\mbf U_2),\mbf U_2)$.

\end{lemma}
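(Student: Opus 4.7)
\emph{Plan.} The proof mirrors the structure used for the $D^{N,2}$/$J^{N,2}$ bijection in the $k=2$ case of Proposition \ref{prop:transform}. The two claimed bijections are actually equivalent: since any $(\mbf U_1,\mbf U_2)\in\mathcal R^{N,2}$ automatically satisfies $\vecsum(\mbf U_1)\neq\vecsum(\mbf U_2)$, we have $\mathcal R^{N,2}=\bigsqcup_{\theta_1\neq\theta_2}\mathcal R^{N,2}_{\theta_1,\theta_2}$ and likewise $\R^{N,2}_{\neq}=\bigsqcup_{\theta_1\neq\theta_2}\R^{\Z_N}_{\theta_1}\times\R^{\Z_N}_{\theta_2}$, so the sum-preservation \eqref{eq.TL} reduces everything to working slope-by-slope.

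First I would verify that each map lands in the correct space. Fix $(\mbf X_1,\mbf X_2)\in\R^{\Z_N}_{\theta_1}\times\R^{\Z_N}_{\theta_2}$, set $Y_\ell=X_{2,\ell}-X_{1,\ell}$, $S=\theta_2-\theta_1$, and $A_i=\sum_{j\in\Z_N}e^{Y_{[i,j]}}$. By splitting $Y_{[i,j]}=Y_i+Y_{(i,j]}$ (for $j\neq i-1$) and accounting for the wrap-around term $j=i-1$ separately, one obtains the recursion $A_i = e^{Y_i}(1+A_{i+1}-e^{S})$, which gives
\[
T_i^{N,2}(\mbf X_1,\mbf X_2) - X_{2,i} = \log\!\Bigl(1+\frac{1-e^S}{A_{i+1}}\Bigr).
\]
Since $A_{i+1}>0$, this quantity has the sign of $-S$ \emph{simultaneously} for every $i\in\Z_N$; hence either $T^{N,2}(\mbf X_1,\mbf X_2)<\mbf X_2$ throughout or $>\mbf X_2$ throughout, so $(T^{N,2}(\mbf X_1,\mbf X_2),\mbf X_2)\in\mathcal R^{N,2}_{\theta_1,\theta_2}$ by \eqref{eq.TL}. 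Conversely, when $(\mbf U_1,\mbf U_2)\in\mathcal R^{N,2}$, the uniform strict inequality $\mbf U_1<\mbf U_2$ (or the reverse) makes each of $1-e^{U_{1,i}-U_{2,i}}$ uniformly positive (or uniformly negative), so the ratio inside the log in \eqref{Ldef} is strictly positive and $L^{N,2}(\mbf U_1,\mbf U_2)$ is well defined; \eqref{eq.TL} then places $(L^{N,2}(\mbf U_1,\mbf U_2),\mbf U_2)$ in $\R^{\Z_N}_{\vecsum(\mbf U_1)}\times\R^{\Z_N}_{\vecsum(\mbf U_2)}\subseteq \R^{N,2}_{\neq}$.

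Next I would verify the inverse identities. For $L^{N,2}\circ T^{N,2}=\mathrm{id}$ on the first coordinate, set $\mbf U_1:=T^{N,2}(\mbf X_1,\mbf X_2)$; the formula above yields $1-e^{U_{1,i}-X_{2,i}}=(e^S-1)/A_{i+1}$, so
\[
L_i^{N,2}(\mbf U_1,\mbf X_2) = U_{1,i}+\log\frac{(e^S-1)/A_i}{(e^S-1)/A_{i+1}} = U_{1,i}+\log\frac{A_{i+1}}{A_i} = X_{1,i},
\]
using $U_{1,i}-X_{1,i}=\log(A_i/A_{i+1})$ (which follows from the same splitting). For $T^{N,2}\circ L^{N,2}=\mathrm{id}$ on the first coordinate, set $\mbf X_1:=L^{N,2}(\mbf U_1,\mbf U_2)$ and $C_i:=e^{U_{1,i}-U_{2,i}}$; then $e^{Y_\ell}=\frac{1-C_\ell}{C_\ell(1-C_{\ell-1})}$, and an analogous telescoping computation in the product defining $e^{Y_{[i,j]}}$ yields an explicit closed form that makes $T_i^{N,2}(\mbf X_1,\mbf U_2)-U_{1,i}$ collapse to $0$.

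\emph{Main obstacle.} The only genuinely calculational step is identifying and exploiting the recursion $A_i = e^{Y_i}(1+A_{i+1}-e^S)$ (and its analogue on the $L^{N,2}$ side, $e^{Y_{[i,j]}}=\frac{1-C_j}{(1-C_{i-1})\prod_{\ell\in[i,j]}C_\ell}$ for $j\neq i-1$, with the $j=i-1$ term contributing $e^S$). Once these cyclic telescoping identities are in hand, both inverse computations collapse cleanly; there is no structural difficulty beyond keeping the cyclic index bookkeeping straight. If desired, one can avoid doing the second inverse identity by hand: Step 1 shows $(\mbf X_1,\mbf X_2)\mapsto(T^{N,2}(\mbf X_1,\mbf X_2),\mbf X_2)$ maps $\R^{\Z_N}_{\theta_1}\times\R^{\Z_N}_{\theta_2}$ into $\mathcal R^{N,2}_{\theta_1,\theta_2}$, and $L^{N,2}\circ T^{N,2}=\mathrm{id}$ forces this map to be injective, while Step 2 shows the reverse map also goes between these two sets, forcing both to be bijections and each other's inverse.
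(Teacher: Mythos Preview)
Your approach is correct and follows the same overall structure as the paper's proof: verify the maps land in the claimed spaces, then check both inverse identities directly. Your use of the recursion $A_i = e^{Y_i}(1+A_{i+1}-e^{S})$ (which is indeed valid, since $Y_{[i,j]}=Y_i+Y_{(i,j]}$ holds for \emph{all} $j\in\Z_N$, and $\sum_{j}e^{Y_{(i,j]}}=1+A_{i+1}-e^S$) is a genuine streamlining over the paper, which instead expands the sums $\sum_j e^{Y_{[i,j]}}$ term-by-term both to determine the sign of $X_{2,i}-T_i^{N,2}$ and to perform the telescoping in the inverse checks. Your closed form $1-e^{U_{1,i}-X_{2,i}}=(e^S-1)/A_{i+1}$ makes the verification of $L^{N,2}\circ T^{N,2}=\mathrm{id}$ essentially a one-liner; the paper's version is a longer direct substitution ending in the same telescoping cancellation. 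For $T^{N,2}\circ L^{N,2}=\mathrm{id}$, your formula $e^{Y_{[i,j]}}=\frac{1-C_j}{(1-C_{i-1})\prod_{\ell\in[i,j]}C_\ell}$ is correct and, combined with $e^S=\prod_\ell C_\ell^{-1}$, does lead to the needed identity $(1-C_{i-1})A_i=(1-C_i)A_{i+1}$ after a short telescoping; this is exactly what the paper does in different notation.

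One caveat: the ``soft'' shortcut you offer at the end is not valid as written. Knowing only that $T:\R^{\Z_N}_{\theta_1}\times\R^{\Z_N}_{\theta_2}\to\mathcal R^{N,2}_{\theta_1,\theta_2}$, $L:\mathcal R^{N,2}_{\theta_1,\theta_2}\to\R^{\Z_N}_{\theta_1}\times\R^{\Z_N}_{\theta_2}$, and $L\circ T=\mathrm{id}$ gives $T$ injective and $L$ surjective, but does \emph{not} force either to be a bijection. You would need the second identity $T\circ L=\mathrm{id}$ (or an invariance-of-domain type argument) to close the loop, so the direct computation cannot be avoided.
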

\begin{proof}
   That the map and, respectively, its inverse take $\R^{\Z_N}_{\theta_1} \times \R^{\Z_N}_{\theta_2}$ to $\mathcal R^{N,2}_{\theta_1,\theta_2}$ and visa-versa follows immediately from the coordinate sum preservation in  \eqref{eq.TL}. The same reasoning shows that the inverse takes $\mathcal R^{N,2}$ to $\R^{N,2}_{\neq}$, again by coordinate sum preservation. The other direction requires more than coordinate sum preservation since $\mathcal R^{N,2}$  is defined by coordinate-wise ordering, not just of the sum. In particular we must show that if $(\mbf X_1,\mbf X_2) \in \R_{\neq}^{N,k}$, then $(T^{N,2}(\mbf X_1,\mbf X_2),\mbf X_2) \in \mathcal R^{N,2}$. By definition, 
    \be \label{Rmont}
    \begin{aligned}
    X_{2,i} - T_i^{N,2}(\mbf X_1,\mbf X_2) =  Y_i + \log\Biggl(\f{\sum_{j \in \Z_N} e^{Y_{[i+1,j]}}  }{\sum_{j \in \Z_N} e^{Y_{[i,j]}}}\Biggr) &= \log\Biggl(\f{e^{Y_{i+1}} + e^{Y_{i+1} + Y_{i+2}} + \cdots + e^{Y_{i+1} + \cdots + Y_i} }{1 + e^{Y_{i+1}} + \cdots + e^{Y_{i+1} + \cdots + Y_{i-1}}  }\Biggr)\\
    &=  \log\Biggl(\f{e^{\vecsum}+e^{Y_{i+1}} + e^{Y_{i+1} + Y_{i+2}} + \cdots + e^{Y_{i+1} + \cdots + Y_{i-1}} }{1 + e^{Y_{i+1}} + \cdots + e^{Y_{i+1} + \cdots + Y_{i-1}}  }\Biggr),
    \end{aligned}
    \ee
    where we let $\vecsum = \sum_{\ell \in \Z_N} Y_\ell = \vecsum(\mbf X_2) - \vecsum(\mbf X_1)$. The final equality simply comes from replacing the term $e^{Y_{i+1} + \cdots + Y_i}$ in the numerator by $e^\vecsum$.   The sign of $\vecsum$ and the right-hand side of \eqref{Rmont} are the same as desired to show that either  $T^{N,2}(\mbf X_1,\mbf X_2)>\mbf X_2$ or   $T^{N,2}(\mbf X_1,\mbf X_2)<\mbf X_2$. 

    Having shown above that the map and its inverse go between the claimed spaces, it remains to prove that it is a bijection. To do this it suffices to show that, if $(\mbf X_1,\mbf X_2) \in \R_{\neq }^{N,2}$ and $(\mbf U_1,\mbf X_2) \in \mathcal R^{N,2}$, then $T^{N,2}(\mbf X_1,\mbf X_2) = \mbf U_1$ if and only if $\mbf X_1 = L^{N,2}(\mbf U_1,\mbf X_2)$. Assume first that $\mbf U_1 = T^{N,2}(\mbf X_1,\mbf X_2)$. Then, for $i \in \Z_N$, with $Y_\ell = X_{2,\ell} - X_{1,\ell}$,
    \begin{align*}
    L_i^{N,2}(\mbf U_1,\mbf X_2) &= U_{1,i} + \log\Bigl(\f{1 - e^{U_{1,i-1} - X_{2,i-1}}}{1 - e^{U_{1,i} - X_{2,i}}}\Bigr)  \\
    &= X_{1,i} + \log\Biggl(\f{\sum_{j \in \Z_N} e^{Y_{[i,j]}}  }{\sum_{j \in \Z_N} e^{Y_{[i+1,j]}}}\Biggr) + \log\Biggl(1 - e^{-Y_{i-1}}\f{\sum_{j \in \Z_N} e^{Y_{[i-1,j]}}    }{\sum_{j \in \Z_N}e^{Y_{[i,j]}}}     \Biggr) \\
       &\qquad\qquad\qquad\qquad\qquad - \log\Biggl(1 - e^{-Y_{i}}\f{\sum_{j \in \Z_N} e^{Y_{[i,j]}}    }{\sum_{j \in \Z_N}e^{Y_{[i+1,j]}}}     \Biggr) \\
    &= X_{1,i} + \log\Biggl(\f{\sum_{j \in \Z_N} e^{Y_{[i,j]}} - \sum_{j \in \Z_N} e^{Y_{(i-1,j]}}   }{\sum_{j \in \Z_N} e^{Y_{[i+1,j]}} - \sum_{j \in \Z_N} e^{Y_{(i,j]}}  }\Biggr) = X_{1,i} + \log\Biggl(\f{\prod_{j \in \Z_N} e^{Y_j} - 1}{\prod_{j \in \Z_N} e^{Y_j} - 1}\Biggr) = X_{1,i}.
    \end{align*}
    Conversely, assume that $\mbf X_1 = L^{N,2}(\mbf U_1,\mbf X_2)$. Observe by definition that 
    \[
    X_{1,[i,j]} = U_{1,[i,j]} + \log\Bigl( \f{1 - e^{U_{1,i-1} - X_{2,i-1}}}{1 - e^{U_{1,j} - X_{2,j}}}\Bigr).
    \]
    Then, again with $Y_\ell = X_{2,\ell} - X_{1,\ell}$, we have, for $i \in \Z_N$, 
    \begin{align*}
    T_i^{N,2}(\mbf X_1,\mbf X_2) &= X_{1,i} + \log\Biggl(\f{\sum_{j \in \Z_N} e^{Y_{[i,j]}}  }{\sum_{j \in \Z_N} e^{Y_{[i+1,j]}}}\Biggr) \\
    &= U_{1,i} + \log\Biggl(\f{1 - e^{U_{1,i-1} - X_{2,i-1}}   }{1 - e^{U_{1,i} - X_{2,i}}}\Biggr) +\log\Biggl(\sum_{j \in \Z_N} e^{X_{2,[i,j]} - U_{1,[i,j]}} \Bigl( \f{1 - e^{U_{1,j} - X_{2,j}}}{1 - e^{U_{1,i-1} - X_{2,i-1}}}\Bigr) 
   \Biggr) \\
   &\qquad\qquad\qquad\qquad\qquad - \log\Biggl(\sum_{j \in \Z_N} e^{X_{2,[i+1,j]} - U_{1,[i+1,j]}} \Bigl( \f{1 - e^{U_{1,j} - X_{2,j}}}{1 - e^{U_{1,i} - X_{2,i}}}\Bigr) 
   \Biggr) \\
   &= U_{1,i} + \log\Biggl(\f{\sum_{j \in \Z_N}[e^{X_{2,[i,j]} - U_{1,[i,j]}} - e^{X_{2,[i,j)} - U_{1,[i,j)}}]  }{\sum_{j \in \Z_N}[e^{X_{2,[i+1,j]} - U_{1,[i+1,j]}} - e^{X_{2,[i+1,j)} - U_{1,[i+1,j)}}]  }\Biggr) \\
   &= U_{1,i} + \log\Biggl( \f{\prod_{j \in \Z_N} e^{X_{2,j} - U_{1,j}} - 1}{\prod_{j \in \Z_N} e^{X_{2,j} - U_{1,j}} - 1}\Biggr) = U_{1,i}. \qedhere
    \end{align*}
\end{proof}

\begin{lemma} \label{lem:R-intertwine}
    For $N\in \N$ and all $(\mbf X_1,\mbf X_2) \in \R^{N,2}_{\neq}$,
$T^{N,2}(\mbf X_1,\mbf X_2) = J^{N,2}(D^{N,2}(\mbf X_1,\mbf X_2),\mbf X_1)$.
\end{lemma}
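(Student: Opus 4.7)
The plan is to verify the identity by a direct coordinate-by-coordinate computation. Set $\mbf Y := \mbf X_2 - \mbf X_1$ and $\vecsum := \vecsum(\mbf Y) = \vecsum(\mbf X_2) - \vecsum(\mbf X_1)$, which is nonzero by the assumption $(\mbf X_1,\mbf X_2) \in \R^{N,2}_{\neq}$. This nonzeroness plays two roles: first, by Lemma \ref{lem:sum_v_order} it ensures $(D^{N,2}(\mbf X_1,\mbf X_2), \mbf X_1) \in \mathcal R^{N,2}$, so the right-hand side is defined (the logarithm in $J^{N,2}$ has a strictly positive argument); second, a factor of $(1 - e^{\vecsum})$ will appear that must be nonzero so that it can be cancelled in a subsequent ratio.

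First I would expand the $i$-th coordinate of $J^{N,2}(D^{N,2}(\mbf X_1,\mbf X_2), \mbf X_1)$ using \eqref{Jimap}; this reduces the problem to evaluating $e^{X_{1,i} - D_i^{N,2}(\mbf X_1,\mbf X_2)} - 1$. Writing $S_i := \sum_{j \in \Z_N} e^{Y_{(i,j]}}$, so that $D_i^{N,2}(\mbf X_1,\mbf X_2) = X_{2,i} + \log(S_i/S_{i-1})$ by \eqref{eq:D_intro}, one has $e^{X_{1,i} - D_i^{N,2}(\mbf X_1,\mbf X_2)} = e^{-Y_i} S_{i-1}/S_i$. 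To simplify $e^{-Y_i} S_{i-1}$ I would use the cyclic-index identities $Y_{(i-1,j]} = Y_i + Y_{(i,j]}$ for $j \ne i-1$, together with $Y_{(i-1,i-1]} = 0$ and $Y_{(i,i-1]} = \vecsum - Y_i$. Splitting the sum according to whether $j = i-1$ or not and reindexing then yields
\[
e^{-Y_i} S_{i-1} = S_i + e^{-Y_i}(1 - e^{\vecsum}).
\]

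Dividing by $S_i$ and subtracting $1$ gives the key relation
\[
e^{X_{1,i} - D_i^{N,2}(\mbf X_1,\mbf X_2)} - 1 = \frac{1 - e^{\vecsum}}{e^{Y_i} S_i} = \frac{1 - e^{\vecsum}}{\sum_{j \in \Z_N} e^{Y_{[i,j]}}},
\]
where the last equality uses $Y_i + Y_{(i,j]} = Y_{[i,j]}$ (valid for all $j$, including the wrap-around $j = i-1$). Forming the ratio of the $(i+1)$-th and $i$-th such expressions, the $(1-e^{\vecsum})$ factors cancel — this is exactly where the assumption $\vecsum \neq 0$ is used — leaving $\bigl(\sum_j e^{Y_{[i,j]}}\bigr)\big/\bigl(\sum_j e^{Y_{[i+1,j]}}\bigr)$. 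Substituting back, the $i$-th coordinate of $J^{N,2}(D^{N,2}(\mbf X_1,\mbf X_2), \mbf X_1)$ becomes $X_{1,i} + \log\bigl(\sum_j e^{Y_{[i,j]}} / \sum_j e^{Y_{[i+1,j]}}\bigr)$, which matches $T_i^{N,2}(\mbf X_1,\mbf X_2)$ by \eqref{Rdef}.

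The only friction in carrying this out is careful bookkeeping of the cyclic intervals $(i,j]$ versus $[i,j]$ as $j$ wraps past $i-1$ and $i$; once the special cases are handled, the rest is routine algebra and the identity follows.
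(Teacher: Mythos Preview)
Your proposal is correct and follows essentially the same approach as the paper: both proofs set $\mbf Y=\mbf X_2-\mbf X_1$, compute $e^{X_{1,i}-D_i^{N,2}(\mbf X_1,\mbf X_2)}-1$ in terms of the cyclic sums $\sum_{j}e^{Y_{[i,j]}}$, and then form the ratio at $i+1$ and $i$ so that a common factor $1-e^{\vecsum(\mbf Y)}$ cancels, yielding $T_i^{N,2}$. The only difference is cosmetic organization of the algebra---you isolate the factor $(1-e^{\vecsum})$ explicitly via the identity $e^{-Y_i}S_{i-1}=S_i+e^{-Y_i}(1-e^{\vecsum})$, whereas the paper arrives at the same cancellation by observing that $\sum_{j}e^{Y_{(i,j]}}-\sum_{j}e^{Y_{[i+1,j]}}=1-\prod_{j}e^{Y_j}$.
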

\begin{proof}
 Lemma \ref{lem:sum_v_order} says that $\bigl(D^{N,2}(\mbf X_1,\mbf X_2),\mbf X_1\bigr) \in \mathcal R^{N,2}$ if and only if $(\mbf X_1,\mbf X_2) \in \R_{\neq}^{N,2}$. Since the domain of $J^{N,2}$ is $\mathcal R^{N,2}$, the condition $(\mbf X_1,\mbf X_2) \in \R^{N,2}_{\neq}$ ensures that the right-hand side is well-defined.

Set $Y_\ell = X_{2,\ell} - X_{1,\ell}$, and observe that
\begin{align*}
    X_{1,i} - D_i^{N,2}(\mbf X_1,\mbf X_2) = -Y_i + \log\Bigl(\f{\sum_{j \in \Z_N} e^{Y_{(i-1,j]}}   }{\sum_{j \in \Z_N} e^{Y_{(i,j]}} }    \Bigr) = \log\Bigl(\f{\sum_{j \in \Z_N} e^{Y_{(i-1,j]}}   }{\sum_{j \in \Z_N} e^{Y_{[i,j]}} }    \Bigr).
\end{align*}
Then, we have
\begin{align*}
&\quad \, J_i^{N,2}(D^{N,2}(\mbf X_1,\mbf X_2),\mbf X_1) = X_{1,i} + \log\Bigl(\f{e^{X_{1,i+1} - D_{i+1}^{N,2}(\mbf X_1,\mbf X_2)} - 1}{e^{X_{1,i} - D_i^{N,2}(\mbf X_1,\mbf X_2)} - 1}\Bigr) \\
&= X_{1,i} + \log\Bigl(\f{\sum_{j \in \Z_N} e^{Y_{[i,j]}} }{\sum_{j \in \Z_N} e^{Y_{[i+1,j]}} }\Bigr) + \log\Biggl(\f{\sum_{j \in \Z_N} e^{Y_{(i,j]}} -    \sum_{j \in \Z_N} e^{Y_{[i+1,j]}} }{\sum_{j \in \Z_N} e^{Y_{(i-1,j]}} -   \sum_{j \in \Z_N} e^{Y_{[i,j]}} }\Biggr) \\
&= T_i^{N,2}(\mbf X_1,\mbf X_2) + \log\Biggl(\f{1 - \prod_{j \in \Z_N} e^{Y_j}}{1 - \prod_{j \in \Z_N} e^{Y_j}}\Biggr) = T_i^{N,2}(\mbf X_1,\mbf X_2). \qedhere
\end{align*}
\end{proof}

The following bijection is at the heart of the consistency in Proposition \ref{prop:disc_consis}\ref{itm:mu_perm}. Its proof follows from twice applying the $k=2$ case of Proposition \ref{prop:transform}. A graphical depiction of the proof is given in Figure \ref{fig:Tidentity}.

\begin{figure}
    \centering
    \includegraphics[width=0.7\linewidth]{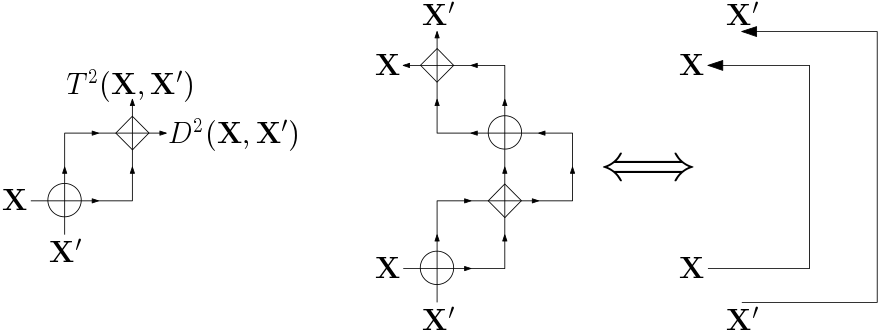}
    \caption{On the left, the map in Corollary \ref{cor:DR_bijection} is depicted as the composition of the circle and diamond maps (see Figure \ref{fig:DJ}). In the middle the map and its proposed inverse are depicted. Twice applying the $k=2$ case of  Proposition \ref{prop:transform}, as graphically depicted in Figure \ref{fig:k2case}, the map is reduced to the identity, proving the corollary.}
    \label{fig:Tidentity}
\end{figure}
\begin{corollary}[Periodic discrete geometric Pitman transform] \label{cor:DR_bijection}
For any $N\in\N$ and $\theta_1, \theta_2 \in \R$, the map $(\mbf X_1,\mbf X_2) \mapsto \bigl(T^{N,2}(\mbf X_1,\mbf X_2),D^{N,2}(\mbf X_1,\mbf X_2)\bigr)$ is a bijection from $\R_{\theta_1}^{\Z_N} \times \R_{\theta_2}^{\Z_N} \to \R_{\theta_1}^{\Z_N} \times \R_{\theta_2}^{\Z_N}$ with inverse $(\mbf V_1,\mbf V_2) \mapsto \bigl(D^{N,2}(\mbf V_2,\mbf V_1),T^{N,2}(\mbf V_2,\mbf V_1)\bigr)$. 
\end{corollary}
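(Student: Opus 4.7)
The plan is to reduce the bijection statement to two applications of the $k=2$ case of Proposition \ref{prop:transform}, by exploiting the factorization identity in Lemma \ref{lem:R-intertwine}. Write $W(\mbf X_1,\mbf X_2):=(T^{N,2}(\mbf X_1,\mbf X_2),D^{N,2}(\mbf X_1,\mbf X_2))$ for the forward map and $W^*(\mbf V_1,\mbf V_2):=(D^{N,2}(\mbf V_2,\mbf V_1),T^{N,2}(\mbf V_2,\mbf V_1))$ for the proposed inverse.

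First I would verify that both $W$ and $W^*$ send $\R_{\theta_1}^{\Z_N}\times\R_{\theta_2}^{\Z_N}$ into itself. This is immediate from the coordinate-sum identities \eqref{eq.TL} for $T^{N,2}$ and Lemma \ref{lem:Dsum_pres} for $D^{N,2}$. I would then dispose of the degenerate case $\theta_1=\theta_2$ separately: Lemma \ref{lem:sum_v_order} forces $D^{N,2}(\mbf X_1,\mbf X_2)=\mbf X_1$, while the second equality in \eqref{Rmont} (with $\vecsum=0$) forces $T^{N,2}(\mbf X_1,\mbf X_2)=\mbf X_2$, so both $W$ and $W^*$ reduce to the coordinate-swap involution $(\mbf X_1,\mbf X_2)\mapsto(\mbf X_2,\mbf X_1)$ on $\R^{\Z_N}_\theta\times\R^{\Z_N}_\theta$, trivially yielding the desired bijection.

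The substantive case is $\theta_1\neq\theta_2$, where $(\mbf X_1,\mbf X_2)\in\R^{N,2}_{\neq}$ and Lemma \ref{lem:R-intertwine} is available. Setting $(\mbf V_1,\mbf V_2):=W(\mbf X_1,\mbf X_2)$, the lemma rewrites $\mbf V_1 = J^{N,2}(\mbf V_2,\mbf X_1)$; the $k=2$ case of Proposition \ref{prop:transform}---under which $D^{N,2}(\mbf A,\cdot)$ and $J^{N,2}(\mbf A,\cdot)$ are mutual inverses whenever the image/preimage conditions hold---then yields $D^{N,2}(\mbf V_2,\mbf V_1)=\mbf X_1$, matching the first coordinate of $W^*(\mbf V_1,\mbf V_2)$. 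Since $(\mbf V_2,\mbf V_1)\in\R^{N,2}_{\neq}$, applying Lemma \ref{lem:R-intertwine} a second time gives
$$T^{N,2}(\mbf V_2,\mbf V_1) = J^{N,2}(D^{N,2}(\mbf V_2,\mbf V_1),\mbf V_2) = J^{N,2}\bigl(\mbf X_1,D^{N,2}(\mbf X_1,\mbf X_2)\bigr) = \mbf X_2,$$
the last step being another invocation of the $k=2$ bijection. Thus $W^*\circ W = \mathrm{id}$. The opposite composition $W\circ W^* = \mathrm{id}$ is obtained by running the identical two-step unwinding with the roles of $(\mbf X_1,\mbf X_2)$ and $(\mbf V_1,\mbf V_2)$ exchanged; the setup is symmetric because $W^* = s\circ W\circ s$ where $s$ is the coordinate swap, so precisely the same argument applies after swapping.

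There is no real obstacle here: the entire proof is a visual unwinding that mirrors Figure \ref{fig:Tidentity}, in which each circle--diamond pair cancels via the $k=2$ case of Proposition \ref{prop:transform}. The only conceptual ingredient is the factorization in Lemma \ref{lem:R-intertwine}, which lets us recognize the $T^{N,2}$ output as the result of placing a $J^{N,2}$ diamond atop a $D^{N,2}$ circle; everything else is bookkeeping.
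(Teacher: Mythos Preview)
Your proposal is correct and follows essentially the same approach as the paper's proof: both handle the degenerate case $\theta_1=\theta_2$ identically, then use Lemma \ref{lem:R-intertwine} to recognize $\mbf V_1 = J^{N,2}(\mbf V_2,\mbf X_1)$, apply the $k=2$ case of Proposition \ref{prop:transform} to recover $\mbf X_1 = D^{N,2}(\mbf V_2,\mbf V_1)$, and then repeat the Lemma \ref{lem:R-intertwine}/Proposition \ref{prop:transform} combination to recover $\mbf X_2$. Your observation that $W^* = s\circ W\circ s$ makes the symmetry of the two compositions explicit, which is exactly what the paper's ``symmetric argument'' remark alludes to.
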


\begin{proof}
In the case $\theta_1 \neq \theta_2$, \eqref{eq.TL} implies  that the map and its inverse go between the claimed spaces. If $\theta_1 = \theta_2$, then Lemma \ref{lem:sum_v_order} shows that $D^{N,2}(\mbf X_1,\mbf X_2) = \mbf X_1$, and \eqref{Rmont} shows $T^{N,2}(\mbf X_1,\mbf X_2) = \mbf X_2$, so  the map $(\mbf X_1,\mbf X_2) \mapsto \bigl(T^{N,2}(\mbf X_1,\mbf X_2),D^{N,2}(\mbf X_1,\mbf X_2)\bigr)$ is simply the map $(\mbf X_1,\mbf X_2) \mapsto (\mbf X_2,\mbf X_1)$, which is clearly a bijection. For the rest of the proof, we let $\theta_1 \neq \theta_2$. Assume that
\[
(\mbf V_1,\mbf V_2) = \bigl(T^{N,2}(\mbf X_1,\mbf X_2),D^{N,2}(\mbf X_1,\mbf X_2)\bigr).
\]
By Lemma \ref{lem:R-intertwine}, we have $\mbf V_1 = J^{N,2}(D^{N,2}(\mbf X_1,\mbf X_2),\mbf X_1)$. Proposition \ref{prop:transform} for $k=2$ then implies that 
\[
\mbf X_1 = D^{N,2}(D^{N,2}(\mbf X_1,\mbf X_2),\mbf V_1) = D^{N,2}(\mbf V_2,\mbf V_1).
\]
Then, since $\mbf V_2= D^{N,2}(\mbf X_1,\mbf X_2)$, Proposition \ref{prop:transform}, followed by Lemma \ref{lem:R-intertwine}, implies that 
\[
\mbf X_2 = J^{N,2}(\mbf X_1,\mbf V_2) = J^{N,2}(D^{N,2}(\mbf V_2,\mbf V_1),\mbf V_2) = T^{N,2}(\mbf V_2,\mbf V_1).
\]
If, on the other hand, we assume $(\mbf X_1,\mbf X_2) = (D^{N,2}(\mbf V_2,\mbf V_1),T^{N,2}(\mbf V_2,\mbf V_1))$, then a symmetric argument shows that $\mbf V_1 = T^{N,2}(\mbf X_1,\mbf X_2)$ and $\mbf V_2 = D^{N,2}(\mbf X_1,\mbf X_2)$. 
\end{proof}

\begin{lemma} \label{lem:DT_Add}
For all $N \in \N$, $(\mbf X_1,\mbf X_2) \in \R^{\Z_N} \times \R^{\Z_N}$ and $i \in \Z_N$,
\[
D_i^{N,2}(\mbf X_1,\mbf X_2) + T_{i-1}^{N,2}(\mbf X_1,\mbf X_2) = X_{1,i} + X_{2,i-1}.
\]
\end{lemma}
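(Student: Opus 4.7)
The plan is to prove this identity by direct computation, using the explicit formulas \eqref{eq:D_intro} for $D^{N,2}$ and \eqref{Rdef} for $T^{N,2}$, and observing that the logarithmic terms will cancel after suitable re-indexing.

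First I would write $\mbf Y = \mbf X_2 - \mbf X_1$ and recall
\[
D_i^{N,2}(\mbf X_1,\mbf X_2) = X_{2,i} + \log\Biggl(\frac{\sum_{j\in\Z_N} e^{Y_{(i,j]}}}{\sum_{j\in\Z_N} e^{Y_{(i-1,j]}}}\Biggr), \quad T_{i-1}^{N,2}(\mbf X_1,\mbf X_2) = X_{1,i-1} + \log\Biggl(\frac{\sum_{j\in\Z_N} e^{Y_{[i-1,j]}}}{\sum_{j\in\Z_N} e^{Y_{[i,j]}}}\Biggr).
\]
The key observation is that the half-open and closed sums are related by a single factor. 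Indeed, $[i-1,j] = \{i-1\} \cup (i-1,j]$ for every $j \in \Z_N$ (with the convention $(i-1,i-1]=\varnothing$ contributing $Y_{[i-1,i-1]} = Y_{i-1}$ and $e^{Y_{(i-1,i-1]}} = 1$). Hence
\[
\sum_{j \in \Z_N} e^{Y_{[i-1,j]}} = e^{Y_{i-1}} \sum_{j \in \Z_N} e^{Y_{(i-1,j]}}, \qquad \sum_{j \in \Z_N} e^{Y_{[i,j]}} = e^{Y_{i}} \sum_{j \in \Z_N} e^{Y_{(i,j]}}.
\]

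Substituting these into the expression for $T_{i-1}^{N,2}$ and adding, the log term of $T_{i-1}^{N,2}$ becomes $Y_{i-1}-Y_i+\log\bigl[\sum_{j} e^{Y_{(i-1,j]}}/\sum_{j} e^{Y_{(i,j]}}\bigr]$, which exactly cancels the log term of $D_i^{N,2}$. What remains is
\[
X_{2,i} + X_{1,i-1} + Y_{i-1} - Y_i = X_{2,i} + X_{1,i-1} + (X_{2,i-1}-X_{1,i-1}) - (X_{2,i}-X_{1,i}) = X_{1,i} + X_{2,i-1},
\]
which is the desired identity. There is no real obstacle here; the lemma is a short algebraic computation exploiting the telescoping between the $(\,\cdot\,]$ and $[\,\cdot\,]$ cyclic sums, and this cancellation is precisely what makes $T^{N,2}$ and $D^{N,2}$ ``dual'' in the manner used by Corollary \ref{cor:DR_bijection}.
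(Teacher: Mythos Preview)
Your proof is correct and follows essentially the same approach as the paper: both use the factorization $\sum_{j}e^{Y_{[i,j]}}=e^{Y_i}\sum_{j}e^{Y_{(i,j]}}$ to convert the closed-interval sums in $T_{i-1}^{N,2}$ into the half-open sums appearing in $D_i^{N,2}$, after which the logarithms cancel and only the linear terms $X_{2,i}+X_{1,i-1}+Y_{i-1}-Y_i=X_{1,i}+X_{2,i-1}$ remain. The paper simply arranges the same computation by rewriting $T_{i-1}^{N,2}$ first and then recognizing the result as $X_{2,i-1}+X_{1,i}-D_i^{N,2}$.
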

\begin{proof}
This follows quickly from the definitions: setting $Y_\ell = X_{2,\ell} - X_{1,\ell}$,
\begin{align*}
T_{i-1}^{N,2}(\mbf X_1,\mbf X_2) &= X_{1,i-1} + \log\Biggl(\f{\sum_{j \in \Z_N} e^{Y_{[i-1,j]}}   }{\sum_{j \in \Z_N}e^{Y_{[i,j]}}  }\Biggr) \\
&=  X_{1,i-1}  + Y_{i-1} - Y_{i} - \log\Biggl(\f{\sum_{j \in \Z_N} e^{Y_{(i,j]}}   }{\sum_{j \in \Z_N}e^{Y_{(i-1,j]}}  }\Biggr) \\
&= X_{2,i-1} + X_{1,i} - D_{i}^{N,2}(\mbf X_1,\mbf X_2).\qedhere 
\end{align*}
\end{proof}

\begin{lemma} \label{lem:exp_add_pres}
For  $N\in \N$, $(\mbf X_1,\mbf X_2) \in \R^{\Z_N} \times \R^{\Z_N}$ and $i \in \Z_N$,
\[
e^{-X_{1,i}} + e^{-X_{2,i}} = e^{-T_i^{N,2}(\mbf X_1,\mbf X_2)} + e^{-D_i^{N,2}(\mbf X_1,\mbf X_2)}.
\]
\end{lemma}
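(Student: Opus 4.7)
The plan is to reduce the identity to a short algebraic manipulation after a careful re-indexing of the cyclic sums. I would first introduce the abbreviations $Y_\ell := X_{2,\ell}-X_{1,\ell}$, $\Sigma := \sum_{\ell \in \Z_N} Y_\ell$, and
\[
A_i \;:=\; \sum_{j \in \Z_N} e^{Y_{(i,j]}} \;=\; 1 + e^{Y_{i+1}} + e^{Y_{i+1}+Y_{i+2}} + \cdots + e^{Y_{i+1}+\cdots+Y_{i-1}}.
\]
Using the cyclic-interval conventions from Section \ref{sec:not}, the numerator and denominator of the logarithm in $T_i^{N,2}$ agree with those of $D_{i}^{N,2}$ except in the single index $j=i-1$ (resp.\ $j=i$), where $[i,i-1]=[i+1,i]=\Z_N$ while $(i-1,i-1]=(i,i]=\varnothing$. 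This gives
\[
\sum_{j\in\Z_N} e^{Y_{[i,j]}} \;=\; A_{i-1} + e^{\Sigma} - 1, \qquad \sum_{j\in\Z_N} e^{Y_{[i+1,j]}} \;=\; A_{i} + e^{\Sigma} - 1.
\]

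With these in hand, the definitions \eqref{eq:D_intro} and \eqref{Rdef} yield
\[
e^{-D_i^{N,2}(\mbf X_1,\mbf X_2)} \;=\; e^{-X_{2,i}}\,\frac{A_{i-1}}{A_i}, \qquad e^{-T_i^{N,2}(\mbf X_1,\mbf X_2)} \;=\; e^{-X_{1,i}}\,\frac{A_i + e^{\Sigma}-1}{A_{i-1} + e^{\Sigma}-1}.
\]
The main (and only nontrivial) ingredient is the identity
\begin{equation}\label{eq:Akey}
e^{Y_i}\,A_i \;=\; A_{i-1} + e^{\Sigma} - 1,
\end{equation}
which is immediate from the definition of $A_i$: multiplying by $e^{Y_i}$ shifts the summation so that the $j=i-1$ term becomes $e^{\Sigma}$ while the previous $j=i$ term (which contributed $1$ to $A_i$) becomes $e^{Y_i}$, reproducing $A_{i-1}$ up to the replacement of its $j=i-1$ term ($=1$) by $e^{\Sigma}$.

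Given \eqref{eq:Akey}, one rewrites the claimed equality, after multiplying through by $e^{X_{1,i}}$ and using $e^{X_{1,i}-X_{2,i}} = e^{-Y_i}$, as
\[
1 + e^{-Y_i} \;=\; \frac{A_i + e^{\Sigma}-1}{A_{i-1} + e^{\Sigma}-1} \;+\; e^{-Y_i}\,\frac{A_{i-1}}{A_i}.
\]
Clearing denominators reduces this to the single relation $A_i = e^{-Y_i}(A_{i-1}+e^{\Sigma}-1)$, which is exactly \eqref{eq:Akey}. There is no real obstacle here: the only subtle point is getting the $j=i$ and $j=i-1$ boundary terms right when converting between the half-open cyclic intervals in $D$ and the closed cyclic intervals in $T$; once those are tracked, the identity is a one-line consequence of \eqref{eq:Akey}.
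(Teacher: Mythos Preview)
Your proof is correct and takes essentially the same approach as the paper. Both reduce the identity to the single relation $e^{Y_i}\sum_{j}e^{Y_{(i,j]}} = \sum_{j}e^{Y_{(i-1,j]}} + e^{\Sigma}-1$ (your \eqref{eq:Akey}), which the paper phrases as $\sum_j e^{Y_{[i,j]}}-\sum_j e^{Y_{(i-1,j]}}=e^{\Sigma}-1$; your $A_i$ notation packages this a bit more cleanly, but the content is identical.
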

\begin{proof}
Let $(\mbf V_1,\mbf V_2) := \bigl(T^{N,2}(\mbf X_1,\mbf X_2),D^{N,2}(\mbf X_1,\mbf X_2)\bigr)$. For $\ell \in \Z_N$, let $Y_\ell = X_{2,\ell} - X_{1,\ell}$ so that by definition
\[
V_{1,i} = X_{1,i} + \log\Biggl(\f{\sum_{j \in \Z_N} e^{Y_{[i,j]}}  }{\sum_{j \in \Z_N} e^{Y_{[i+1,j]}}}\Biggr),\quad \text{and}\quad V_{2,i} = X_{2,i} + \log\Biggl(\f{\sum_{j \in \Z_N} e^{Y_{(i,j]}}  }{\sum_{j \in \Z_N} e^{Y_{(i-1,j]}}}\Biggr).
\]
Then, 
\begin{align*}
 e^{-V_{1,i}} + e^{-V_{2,i}} &= e^{-X_{1,i}}\Biggl(\f{\sum_{j \in \Z_N} e^{Y_{[i+1,j]}}}{\sum_{j \in \Z_N} e^{Y_{[i,j]}}  }\Biggr) + e^{-X_{2,i}}\Biggl(\f{\sum_{j \in \Z_N} e^{Y_{(i-1,j]}}}{\sum_{j \in \Z_N} e^{Y_{(i,j]}}  }\Biggr) \\
&=\f{e^{-X_{2,i}}}{\sum_{j \in \Z_N} e^{Y_{(i,j]}}}\Biggl(\sum_{j \in \Z_N} e^{Y_{[i+1,j]}} + \sum_{j \in \Z_N} e^{Y_{(i-1,j]}} \Biggr). 
\end{align*}
Hence, the statement of the lemma is equivalent to 
\[
(1 + e^{Y_i})\sum_{j \in \Z_N} e^{Y_{(i,j]}} = \sum_{j \in \Z_N} e^{Y_{[i+1,j]}} + \sum_{j \in \Z_N} e^{Y_{(i-1,j]}},
\]
which may be rearranged to 
\[
\sum_{j \in \Z_N} e^{Y_{[i,j]}} - \sum_{j \in \Z_N} e^{Y_{(i - 1,j]}} = \sum_{j \in \Z_N} e^{Y_{[i+1,j]}} - \sum_{j \in \Z_N} e^{Y_{(i,j]}},
\]
and after a cancellation of terms, both sides are seen to be equal to $\prod_{j \in \Z_N} e^{Y_j} - 1$, completing the proof.
\end{proof}

\begin{proposition}[Burke property] \label{prop:Burke}
For $N\in \N$, $\theta_1,\theta_2 \in \R$ and $\beta > 0$, the bijection in Corollary \ref{cor:DR_bijection} preserves the measure $\nu_\beta^{N,(\theta_1,\theta_2)}$ from Definition \ref{def:p_meas}. More generally, for $\gamma_1,\gamma_2 \in \R$, if $\mbf X_1,\mbf X_2 \in \R^{\Z_N}$ are independent, and each is an (unconditioned) i.i.d. sequence of log-inverse-gamma random variables with shape $\gamma_1,\gamma_2$, respectively, and common scale $\beta^{-2}$, then 
this bijection preserves the distribution of $(\mbf X_1,\mbf X_2)$
\end{proposition}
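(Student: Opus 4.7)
The plan is to prove the more general unconditioned statement first and then derive the conditioned version from it. Let $\mu_{\gamma_1,\gamma_2}$ denote the law of $(\mbf X_1,\mbf X_2)\in\R^{\Z_N}\times\R^{\Z_N}$ in which the $X_{m,i}$ are independent log-inverse-gamma with shape $\gamma_m>0$ and common scale $\beta^{-2}$. Since the map $W(\mbf X_1,\mbf X_2):=\bigl(T^{N,2}(\mbf X_1,\mbf X_2),D^{N,2}(\mbf X_1,\mbf X_2)\bigr)$ preserves both marginal sums $\vecsum(\mbf X_1)$ and $\vecsum(\mbf X_2)$ (by \eqref{eq.TL} and Lemma~\ref{lem:Dsum_pres}), once we show $W_*\mu_{\gamma_1,\gamma_2}=\mu_{\gamma_1,\gamma_2}$ the invariance of $\nu_\beta^{N,(\theta_1,\theta_2)}$ follows by disintegrating $\mu_{\gamma_1,\gamma_2}$ along the fibers of $(\vecsum(\mbf X_1),\vecsum(\mbf X_2))$, since the conditional distribution on these fibers is by definition $\nu_\beta^{N,(\theta_1,\theta_2)}$ and is independent of $\gamma_1,\gamma_2$.

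\textbf{Density identity.} The Lebesgue density of $\mu_{\gamma_1,\gamma_2}$ on $\R^{2N}$ is
\[
f(\mbf X_1,\mbf X_2)=C\exp\Big(-\gamma_1\vecsum(\mbf X_1)-\gamma_2\vecsum(\mbf X_2)-\beta^{-2}\!\sum_{i\in\Z_N}\bigl(e^{-X_{1,i}}+e^{-X_{2,i}}\bigr)\Big),
\]
so its exponent depends on $(\mbf X_1,\mbf X_2)$ only through the three scalar observables $\vecsum(\mbf X_1)$, $\vecsum(\mbf X_2)$, and $\sum_i(e^{-X_{1,i}}+e^{-X_{2,i}})$. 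The first two are preserved by $W$ as noted, and the third is preserved (pointwise in $i$) by Lemma~\ref{lem:exp_add_pres}. Hence $f\circ W=f$ identically on $\R^{2N}$. By the change-of-variables formula, this reduces the claim $W_*\mu_{\gamma_1,\gamma_2}=\mu_{\gamma_1,\gamma_2}$ to showing that the absolute value of the Jacobian determinant of $W$ is identically $1$.

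\textbf{Reduction of the Jacobian.} Apply the unit-Jacobian coordinate changes $(\mbf X_1,\mbf X_2)\mapsto(\mbf X_1,\mbf Y)$ with $Y_i=X_{2,i}-X_{1,i}$, and $(\mbf V_1,\mbf V_2)\mapsto(\mbf V_1,\mbf W)$ with $W_i=V_{2,i}-V_{1,i}$. A direct manipulation using \eqref{eq:D_intro}, \eqref{Rdef} and the identity $R(k)=e^{Y_k}Q(k)$ shows that in these coordinates the map takes the block-triangular form
\[
V_{1,i}=X_{1,i}+\log\!\frac{R(i)}{R(i+1)},\qquad W_i=Y_{i+1}+\log Q(i+1)-\log Q(i-1),
\]
where $Q,R$ depend on $\mbf Y$ alone. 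Thus the $2N\times 2N$ Jacobian is block triangular with identity upper-left block, and $|\det J_W|=|\det M|$, where $M_{ij}=\partial W_i/\partial Y_j$ is an $N\times N$ matrix indexed by $\Z_N$.

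\textbf{The core computation and main obstacle.} Writing $M=P+(P-P^{-1})K$, where $P$ is the cyclic shift ($P_{ij}=\delta_{j,i+1}$) and $K_{k\ell}=\partial\log Q(k)/\partial Y_\ell=\sum_{j:\ell\in(k,j]}e^{Y_{(k,j]}}\!/Q(k)$, the remaining task is to prove $|\det M|=1$. The plan is to exploit two structural features of $K$: its diagonal vanishes because $Y_k$ does not appear in $Q(k)$, and its rows are connected by the telescoping cancellations underlying the proof of Lemma~\ref{lem:exp_add_pres}. These features permit one to perform explicit row/column operations collapsing $M$ to a matrix whose determinant can be read off as a ratio of total mass sums that equals $\pm1$. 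This determinant calculation is the main obstacle of the proof; as sanity checks, one can verify $|\det M|=1$ at $\mbf Y=\mbf 0$, where $K=0$ and $M=P$, and in the $N=2$ case, where a direct calculation (as after Lemma~\ref{lem:Rbij}) shows the map $\mbf Y\mapsto\mbf W$ is simply the coordinate swap. With $|\det J_W|\equiv 1$ in hand, Step~1 then yields the conditioned statement of the proposition.
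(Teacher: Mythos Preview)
Your overall strategy matches the paper's: reduce the conditioned statement to the unconditioned one via disintegration along the preserved sums, observe that the density $f$ is invariant because $\vecsum(\mbf X_1)$, $\vecsum(\mbf X_2)$ and $\sum_i(e^{-X_{1,i}}+e^{-X_{2,i}})$ are all preserved, and then reduce everything to showing the Jacobian determinant of $W$ has absolute value $1$. Your block-triangular reduction in the $(\mbf X_1,\mbf Y)$ and $(\mbf V_1,\mbf W)$ coordinates is correct, and the resulting identity $|\det J_W|=|\det M|$ with $M_{ij}=\partial W_i/\partial Y_j$ is valid.

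The gap is that you do not actually prove $|\det M|=1$. You describe a plan (vanishing diagonal of $K$, telescoping row relations) and offer sanity checks at $\mbf Y=0$ and $N=2$, but no argument for general $\mbf Y$ and $N$. This is precisely the heart of the proposition, and your outline does not give enough to see how the determinant collapses.

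The paper closes this gap by a different route that avoids computing $\det M$ directly. Working in the original $(\mbf X_1,\mbf X_2)$ coordinates, it writes the Jacobian as a $2\times 2$ block matrix $(A_{r,m})$ with $A_{r,m}=(\partial V_{r,i}/\partial X_{m,j})$. Two linear relations are then exploited: first, $A_{1,1}+A_{1,2}=A_{2,1}+A_{2,2}=I_N$, which is exactly the ``transport'' fact you already use; second, and crucially, the additive identity of Lemma~\ref{lem:DT_Add}, namely $V_{2,i}+V_{1,i-1}=X_{1,i}+X_{2,i-1}$, which you do not invoke. These two relations allow row and column operations that reduce the determinant to $\det(A_{1,1})\det(M-A_{1,1}^{-1})$ for a fixed bidiagonal matrix $M$. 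The final step uses that $A_{1,1}^{-1}$ is the Jacobian of the explicit inverse map $L^{N,2}$ from Lemma~\ref{lem:Rbij}, which is itself bidiagonal with column sums equal to $1$; a cofactor expansion then gives $\det(M-A_{1,1}^{-1})=(-1)^{N-1}\det(A_{1,1}^{-1})$, hence $|\det J_W|=1$. The missing ingredients in your write-up are therefore Lemma~\ref{lem:DT_Add} and the bidiagonal structure of the inverse $L^{N,2}$; with those in hand the determinant becomes an explicit finite computation rather than an open ``main obstacle''.
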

\begin{proof}
We prove the second statement. Then, the first claim of the proposition follows by combining this claim with the fact that the bijection preserves the coordinate sums (Lemmas \ref{lem:Dsum_pres} and \ref{lem:Rbij}), and the fact that the conditional law of $(\mbf X_1,\mbf X_2)$ (well-defined since the log-inverse-gamma distribution has a density) given coordinate sums $\vecsum(\mbf X_1) = \theta_1$ and  $\vecsum(\mbf X_2) = \theta_2$ is precisely $\nu_\beta^{N,(\theta_1,\theta_2)}$.

The joint density of $(X_{1,i},X_{2,i})_{i \in \Z_N}$ on $\R^{\Z_N} \times \R^{\Z_N}$  is proportional to 
\be \label{eq:prod_lg_dens}
\exp\Bigl(- \sum_{i \in \Z_N} (\gamma_1 x_{1,i} + \gamma_2 x_{2,i}) - \beta^{-2} \sum_{i \in \Z_N} (e^{-x_{1,i}} +e^{- x_{2,i}})\Bigr).
\ee
We claim the same for the joint density of $(V_{1,i},V_{2,i})_{i \in \Z_N}$. By Proposition \ref{prop:transform} and Lemmas \ref{lem:Rbij} and \ref{lem:exp_add_pres}, we have 
$$
\sum_{i \in \Z_N} V_{r,i} = \sum_{i \in \Z_N} X_{r,i},\quad \textrm{for }r \in \{1,2\},\qquad \textrm{and}\qquad
e^{-X_{1,i}} + e^{-X_{2,i}} = e^{-V_{1,i}} + e^{-V_{2,i}}\quad \textrm{for }i \in \Z_N.
$$
Substituting these relations into \eqref{eq:prod_lg_dens} preserves the form of the density. All the remains is to show that the determinant of the Jacobian matrix of the bijection has absolute value $1$ (though not the same, this is reminiscent of a similar property of the geometric RSK correspondence, see \cite[Theorem 3.1]{O’Connell2014}). The Jacobian matrix is the $2N \times 2N$ block matrix
\[
\begin{pmatrix}
A_{1,1} & A_{1,2} \\
A_{2,1} & A_{2,2}
\end{pmatrix},
\]
where, for $r,m \in \{1,2\}$ $A_{r,m} = A_{r,m}(\mbf X_1,\mbf X_2)$ is the matrix of derivatives whose $(i,j)$ entry is
\[
\df{\partial V_{r,i}}{\partial X_{m,j}}.
\]
As a convention, $0$ is the starting index for these matrices. We make the observation that by the definition of $D^{N,2}$ and $T^{N,2}$, both $V_{2,i} - X_{2,i}$ and $V_{1,i} -  X_{1,i}$ are functions of $(X_{2,\ell} - X_{1,\ell})_{\ell \in \Z}$. Hence, for $r \in \{1,2\}$, we have the relations
\[
\df{\partial V_{r,i}}{\partial X_{1,j}} + \df{\partial V_{r,i}}{\partial X_{2,j}} = \ind\{i = j\},
\]
which implies that $A_{1,1} + A_{1,2} = A_{2,1} + A_{2,2} = I_N$, where $I_N$ is the $N \times N$ identity matrix. Furthermore, by Lemma \ref{lem:DT_Add}, $V_{2,i} + V_{1,i-1} = X_{1,i} + X_{2,i-1}$, which gives us the relation
\[
\df{\partial V_{2,i}}{\partial X_{1,j}} + \df{\partial V_{1,i-1}}{\partial X_{1,j}} = \ind\{i = j\}.
\]
In other words, the $i-1$st row (in cyclic order) of $A_{1,1}$ plus the $i$th row of $A_{2,1}$ has an entry of $1$ at $i$ and $0$ elsewhere. Below, we use these relations and row operations to get the following equality for the determinant. In the second equality below, we add the columns from the right half of the matrix to those on the left, and in the last equality, we add the $i-1$ row of the top of the matrix to the $i$th row of the bottom of the matrix:
\[
\det \begin{pmatrix}
A_{1,1} & A_{1,2} \\
A_{2,1} & A_{2,2}
\end{pmatrix} = \det \begin{pmatrix}
A_{1,1} & I_N - A_{1,1} \\
A_{2,1} & I_N - A_{2,1}
\end{pmatrix} = \det \begin{pmatrix}
A_{1,1} & I_N \\
A_{2,1} & I_N
\end{pmatrix} = \det \begin{pmatrix}
A_{1,1} & I_N \\
I_N & M
\end{pmatrix}.
\]
Here, we define the $N \times N$ matrix $M = (m_{i,j})_{i,j \in \Z_N}$ so that, for $i \in \Z_N$, $m_{i,i} = m_{i,i-1} = 1$ and $m_{i,j} = 0$ for $j \notin \{i-1,1\}$. 
To compute this determinant, we use the following standard identity for block matrices (with square matrix entries of the same size), under the assumption that $A$ is invertible:
\[
\det \begin{pmatrix}
A & B \\
C & D
\end{pmatrix} = \det(A)\det(D - CA^{-1} B).
\]
Applied in our setting,
\[
\det \begin{pmatrix}
A_{1,1} & I \\
I & M
\end{pmatrix} = \det(A_{1,1})\det(M - A_{1,1}^{-1}).
\]
To complete the proof, we show the right-hand side equals $(-1)^{N-1}$ by showing that $\det(M - A_{1,1}^{-1}) = (-1)^{N-1}\det(A_{1,1}^{-1})$. 
We first justify that $A_{1,1} = A_{1,1}(\mbf X_1,\mbf X_2)$ is invertible for Lebesgue-a.e. $\mbf X_1,\mbf X_2$ as follows: recall that $A_{1,1}$ is the matrix of partial derivatives $\df{\partial V_{1,i}}{\partial X_{1,j}}$. For a fixed choice of $\mbf X_2$, Lemma \ref{lem:Rbij} shows that $\mbf X_1 \mapsto T^{N,2}(\mbf X_1,\mbf X_2)$ is a bijection between the set 
\[
\{\mbf X_1 \in \R^{\Z_N}: (\mbf X_1,\mbf X_2) \in \R^{N,2}_{\neq} \}\quad\text{and the set}\quad \{\mbf U_1 \in \R^{\Z_N}: (\mbf U_1,\mbf X_2) \in \mathcal R^{N,2} \}.
\]
For each fixed choice of $\mbf X_2$, $A_{1,1}$ is the matrix of partial derivatives of this bijection and hence is invertible. Furthermore, $A_{1,1}^{-1}$ is the Jacobian matrix of the inverse transformation $\mbf V_1 \mapsto L^{N,2}(\mbf V_1,\mbf X_2)$. We recall the definition of the map $L^{N,2}$ \eqref{Ldef}:
\[
X_{1,i} = L_i^{N,2}(V_1,X_2) = V_{1,i} + \log\Biggl(\f{1 - e^{V_{1,i-1} - X_{2,i-1}}}{1 - e^{V_{1,i} - X_{1,i}}}\Biggr).
\]
We see that, for $i \in \Z_N$,
$
\df{\partial X_{1,i}}{\partial V_{1,i} } + \df{\partial X_{1,i+1}}{\partial V_{1,i} } = 1,
$
while $\df{\partial X_{1,i}}{ \partial V_{1,j}} = 0$ for $j \notin \{i,i-1\}$. 

This implies that $\det(M - A_{1,1}^{-1}) = (-1)^{N-1}\det(A_{1,1}^{-1})$, as desired. To see this last deduction, we observe more generally that for any choice $a_0,\ldots,a_{N-1}\in \R$, if we define  
\begin{align*}
  B:= \begin{pmatrix}
  a_0 &0 &0 &\cdots &0 &1-a_{N-1} \\
  1 - a_0 &a_1 &0 &\cdots &0 &0 \\
  0 &1 - a_1 &a_2 &\cdots &0 &0 \\
  \vdots &\ddots & & & \vdots &\vdots \\
  0 &0 &0 &\cdots &1 - a_{N-2} &a_{N-1}
  \end{pmatrix},\qquad \textrm{and} \qquad
  M:= \begin{pmatrix}
  1 &0 &0 &\cdots &0 &1 \\
  1  &1 &0 &\cdots &0 &0 \\
  0 &1 &1 &\cdots &0 &0 \\
  \vdots &\ddots & & & \vdots &\vdots \\
  0 &0 &0 &\cdots &1 &1
  \end{pmatrix},
\end{align*}
then we have $\det(M-B) = (-1)^{N-1} \det(B)$. This is proved by cofactor expansion which shows that 
\begin{align*}
\det B &= \prod_{i \in \Z_N} a_i + (-1)^{N-1} \prod_{i \in \Z_N}(1 - a_i),\qquad\text{and}\\
\det(M - B) &= \prod_{i \in \Z_N}(1-a_i) + (-1)^{N-1}\prod_{i \in \Z_N} a_i = (-1)^{N-1} \det(B).\qedhere
\end{align*}
\end{proof}

The final ingredient we will use to prove the consistency and symmetry of the measures $\mu_\beta^{N,(\theta_1,\ldots,\theta_k)}$ from \eqref{def:mu_meas} is the following intertwining result. This identity is depicted graphically in Figure \ref{fig:intertwining}.
\begin{figure}
    \centering
    \includegraphics[width=0.6\linewidth]{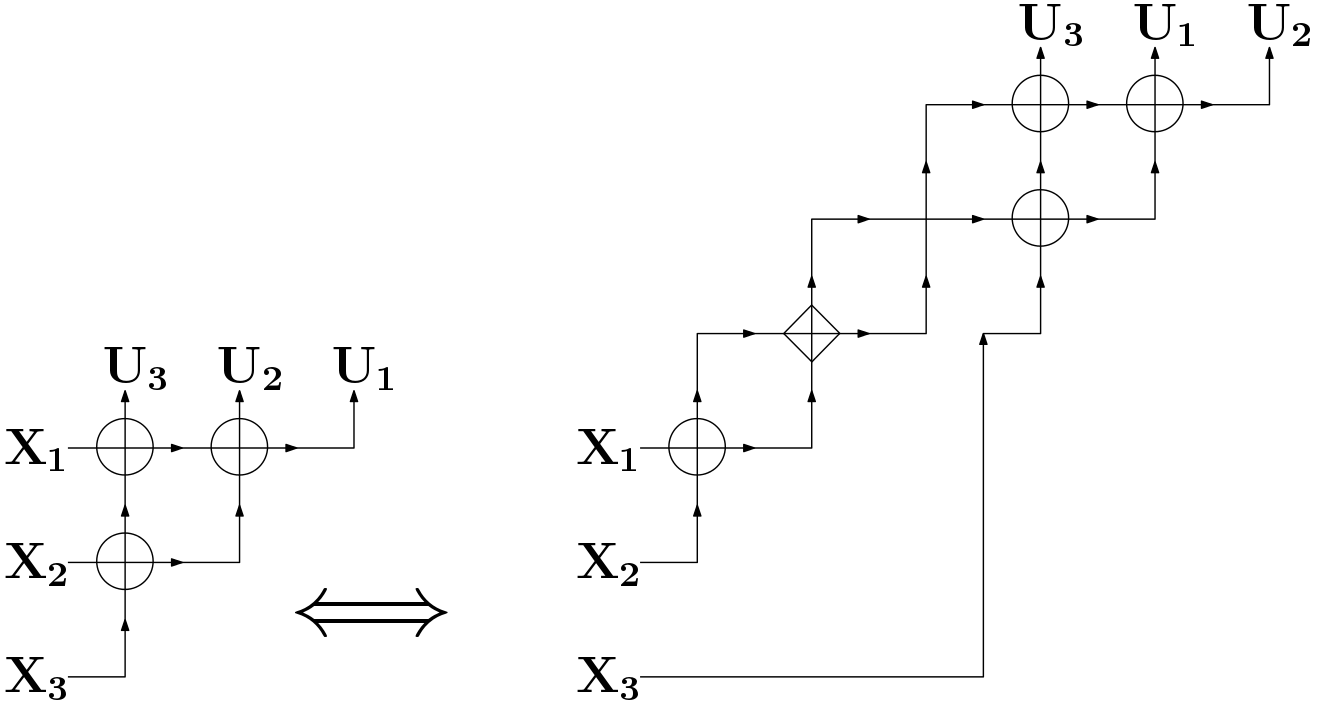}
    \caption{Graphical depiction of the identity in Proposition \ref{prop:full_intertwine}. The crossed paths (without a circle or diamond) act as the identity, mapping the horizontal/vertical inputs to their respective horizontal/vertical outputs. The identity only claims that the left-most vertical output matches between the two sides. However, it is easily deduced from Corollary \ref{cor:DR_bijection} that the other two vertical outputs are swapped between the two sides as depicted above.}
    \label{fig:intertwining}
\end{figure}
\begin{proposition} \label{prop:full_intertwine}
For $(\mbf X_1,\mbf X_2,\mbf X_3) \in (\R^{\Z_N})^3$, 
\[
D^{N,3}(\mbf X_1,\mbf X_2,\mbf X_3) = D^{N,3}\Bigl(D^{N,2}(\mbf X_1,\mbf X_2),T^{N,2}(\mbf X_1,\mbf X_2), \mbf X_3\Bigr).
\]
\end{proposition}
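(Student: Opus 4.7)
The plan is to reduce the identity to a purely algebraic statement about cyclic exponential sums, and then verify that statement by direct computation. Using the iterated definition $D^{N,3}(\cdot,\cdot,\cdot) = D^{N,2}(\cdot,D^{N,2}(\cdot,\cdot))$ together with the increment formula $D^{N,m}_i = X_{m,i} + Q^{N,m}_i - Q^{N,m}_{i-1}$, the proposition reduces to showing that, for all $i \in \Z_N$,
\[
Q_i^{N,3}(\mbf X_1,\mbf X_2,\mbf X_3) - Q_i^{N,3}(\mbf U_1,\mbf V_1,\mbf X_3) \text{ is independent of } i,
\]
where $\mbf U_1 := D^{N,2}(\mbf X_1,\mbf X_2)$ and $\mbf V_1 := T^{N,2}(\mbf X_1,\mbf X_2)$. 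Based on the $N=2$ case I can check by hand (using Lemma \ref{lem:exp_add_pres} in the form $e^{-U_{1,\ell}} + e^{-V_{1,\ell}} = e^{-X_{1,\ell}} + e^{-X_{2,\ell}}$), I actually expect the two $Q$'s to agree exactly.

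Writing $\alpha_\ell := X_{3,\ell} - X_{1,\ell}$ and $\beta_\ell := X_{3,\ell} - X_{2,\ell}$, the representation \eqref{eq:alternate_rep} gives $e^{Q_i^{N,3}(\mbf X_1,\mbf X_2,\mbf X_3)} = \sum_{j_1,j_2 \in \Z_N} e^{\alpha_{(i,j_1]} + \beta_{(j_1,j_2]}}$, with the analogous expression for the tilded side using $\alpha'_\ell := X_{3,\ell} - U_{1,\ell}$ and $\beta'_\ell := X_{3,\ell} - V_{1,\ell}$. Substituting the explicit forms
\[
U_{1,(i,j]} = X_{2,(i,j]} + \log(S_j/S_i), \qquad V_{1,(j_1,j_2]} = X_{1,(j_1,j_2]} + \log(\tilde S_{j_1+1}/\tilde S_{j_2+1}),
\]
with $S_\ell := \sum_m e^{X_{2,(\ell,m]} - X_{1,(\ell,m]}}$ and $\tilde S_\ell := e^{(X_2-X_1)_\ell} S_\ell$, the telescoping of the $\log$-terms over the cyclic arcs $(i,j_1]$ and $(j_1,j_2]$ cleanly factors the $i$-dependence out as $S_i$ and yields
\[
e^{Q_i^{N,3}(\mbf U_1,\mbf V_1,\mbf X_3)} = S_i \sum_{j_1,j_2 \in \Z_N} \frac{\tilde S_{j_2+1}}{S_{j_1}\, \tilde S_{j_1+1}}\, e^{\beta_{(i,j_1]} + \alpha_{(j_1,j_2]}},
\]
with the roles of $\alpha$ and $\beta$ swapped across the two arcs relative to the untilded expression.

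Because the claimed equality must hold for every $\mbf X_3$, I can expand the $e^{X_{3,(i,j_1]} + X_{3,(j_1,j_2]}}$ factors on both sides and match coefficients of each multiset of $X_{3,\ell}$'s. This reduces the problem to a family of purely algebraic identities in $\mbf X_1,\mbf X_2$ alone, one for each pair $(i,j_2)$ and each ``winding class'' (whether $j_1 \in [i,j_2]$ or not, which determines whether the concatenated path $i \to j_1 \to j_2$ wraps around the torus). The essential technical input is the telescoping identity
\[
\tilde S_{\ell+1} - S_\ell = e^{\vecsum(\mbf X_2) - \vecsum(\mbf X_1)} - 1 =: K,
\]
a constant independent of $\ell$, which I verify by expanding $\tilde S_{\ell+1} = e^{Y_{\ell+1}}S_{\ell+1}$ and $S_\ell$ as sums of $N$ ``arc'' exponentials of partial sums of $Y_\cdot := (X_2 - X_1)_\cdot$ and comparing term by term. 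Using $\tilde S_{j+1} = S_j + K$ to rewrite the prefactors $\tilde S_{j_2+1}/(S_{j_1}\tilde S_{j_1+1})$ and expanding, both sides collapse onto the same sum of exponentials of cyclic subarcs of $\mbf Y$ after telescoping cancellations.

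The main obstacle will be the bookkeeping of the cyclic intervals: unlike the full-line setting, on $\Z_N$ the concatenation $X_{a,(i,j]} + X_{a,(j,k]}$ equals either $X_{a,(i,k]}$ or $X_{a,(i,k]} + \vecsum(\mbf X_a)$ depending on whether the path $i \to j \to k$ winds around. All these winding contributions are controlled by the single constant $K$ above, but combining them correctly so that the telescoping cancellations go through requires a careful case analysis on the relative cyclic positions of $i,j_1,j_2$. The resulting identity is the periodic analog of the key lemma appearing in the full-line works cited in the introduction (e.g., \cite[Lemma 4.4]{Fan-Seppalainen-20} and \cite[Proposition 5.8]{Bates-Fan-Seppalainen}), and its verification is the technical core of the proof.
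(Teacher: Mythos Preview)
Your reduction to the equality $Q_i^{N,3}(\mbf X_1,\mbf X_2,\mbf X_3) = Q_i^{N,3}(\mbf U_1,\mbf V_1,\mbf X_3)$ is exactly the paper's starting point (its equation \eqref{eq:123_eq}), and your observation that matching coefficients of the $X_3$-monomials amounts to grouping the double sum by the total path length $\ell$ is also how the paper organizes the computation: it reindexes the sum by $(j,\ell)$ with $0\le j\le \ell$ and $0\le\ell\le 2N-2$, splitting into the non-winding range $0\le\ell\le N-1$ and the winding range $N\le\ell\le 2N-2$.

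Where the two approaches diverge is the verification of the resulting per-$\ell$ identities. You propose to substitute the explicit sums $S_\ell=\sum_m e^{Y_{(\ell,m]}}$ and $\tilde S_\ell=e^{Y_\ell}S_\ell$, use the (correct) telescoping identity $\tilde S_{\ell+1}-S_\ell = e^{\vecsum(\mbf Y)}-1$, and then expand; but you leave this computation open and flag it as the technical core. The paper does \emph{not} substitute the global $S_\ell$'s at all. Instead it uses two purely pointwise identities relating $(\alpha,\beta)$ to $(\alpha',\beta')$: from Lemma~\ref{lem:exp_add_pres}, $e^{\alpha'_\ell}+e^{\beta'_\ell}=e^{\alpha_\ell}+e^{\beta_\ell}$ (which you cite), and from Lemma~\ref{lem:DT_Add}, $\alpha'_\ell+\beta'_{\ell-1}=\alpha_\ell+\beta_{\ell-1}$ (which you do not). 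With these two in hand, the non-winding identity \eqref{eq:sl} is proved by \emph{strong induction on $\ell$}: one rewrites $\sum_{j=0}^{q+1}e^{\alpha_{(i,i+j]}+\beta_{(i+j,i+q+1]}}$ as $(e^{\alpha_{i+1}}+e^{\beta_{i+1}})\,(\text{sum for }q)-e^{\beta_{i+1}+\alpha_{i+2}}\,(\text{sum for }q-1)$, applies the two identities to the prefactors, and invokes the inductive hypothesis. The winding case \eqref{eq:BL} is then reduced to the non-winding case via the additive identity from Lemma~\ref{lem:DT_Add} along the shared ``wrapped'' segment.

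So the difference is structural: the paper's argument is local and inductive, driven by the pair of Lemmas~\ref{lem:DT_Add} and~\ref{lem:exp_add_pres}, while yours is global and relies on an explicit rational-function identity in the $S_\ell$'s whose direct verification you have not supplied. Your telescoping relation plays a role loosely analogous to Lemma~\ref{lem:DT_Add}, and your route may well close (the $\ell=0,1$ cases check out using exactly your identity), but as written it is incomplete at the step you yourself identify as the crux. If you want to complete it along the paper's lines, the missing ingredient is Lemma~\ref{lem:DT_Add} and the inductive decomposition above.
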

\begin{proof}
For ease of notation, let $\mbf X_1' = D^{N,2}(\mbf X_1,\mbf X_2)$ and $\mbf X_2' = T^{N,2}(\mbf X_1,\mbf X_2)$. By Lemma \ref{lem:Dnm_alt}, it suffices to show that, for $i \in \Z_N$, $Q_i^{N,3}(\mbf X_1,\mbf X_2,\mbf X_3) = Q_i^{N,3}(\mbf X_1',\mbf X_2',\mbf X_3)$, meaning 
\be \label{eq:123_eq}
\sum_{j_1,j_2 \in \Z_N} e^{X_{3,(i,j_1]} - X_{1,(i,j_1]} + X_{3,(j_1,j_2]} - X_{2,(j_1,j_2]}} = \sum_{j_1,j_2 \in \Z_N} e^{X_{3,(i,j_1]} - X_{1,(i,j_1]}' + X_{3,(j_1,j_2]} - X_{2,(j_1,j_2]}'}.
\ee
We adopt the following shorthand notation:
\be \label{400}
Y_{1,\ell} = X_{3,\ell}- X_{1,\ell}, \qquad Y_{1,\ell}' = X_{3,\ell} - X_{1,\ell}',\qquad Y_{2,\ell} = X_{3,\ell}- X_{2,\ell},\qquad\text{and}\qquad  Y_{2,\ell}' = X_{3,\ell}- X_{1,\ell}'.
\ee
After changing the indices of summation we may rewrite the sum on the left-hand side of \eqref{eq:123_eq} as
\be \label{401}
\sum_{\substack{j,\ell \in \Z \\ 0 \le j \le \min(\ell,N-1) \\ 0 \le \ell - j \le N-1}} e^{Y_{1,(i,i+j]} +  Y_{2,(i+j,i+\ell]}}.
\ee
We note the subtle choice here that we take $j,\ell \in \Z$ instead of $\Z_N$, although we identify the indices of the vectors with their respective elements on $\Z_N$. We do this for the following reason. Writing 
\[
Y_{1,(i,i+j]}+   Y_{2,(i+j,i+\ell]} = \sum_{m = 1}^{j} Y_{1,i+m} + \sum_{m = j+1}^\ell Y_{2,i+m},
\]
(where the first sum is $0$ if $j = 0$ and the second sum is $0$ if $\ell = j$), we have $j$ terms of the form $Y_{1,i+m}$ and $\ell - j$ terms of the form $Y_{2,i+m}$; a total of $\ell$ terms. The conditions on $\ell$ and $j$ in \eqref{401} imply that $0 \le \ell \le N-1 + j \le 2N - 2$. Then, the sum in \eqref{401} is equal to
\be \label{402}
\sum_{\ell = 1}^{2N - 2}\!\!\!\!\!\!\!\! \sum_{\substack{j \in \Z \\ 0 \le j \le \min(\ell,N-1) \\ 0 \le \ell - j \le N-1 }}e^{Y_{1,(i,i+j]} +  Y_{2,(i+j,i+\ell]} } 
=\sum_{\ell = 0}^{N-1} \sum_{j = 0}^\ell e^{Y_{1,(i,i+j]} + Y_{2,(i+j,i+\ell]} } + \sum_{\ell = N}^{2N - 2} \sum_{j = \ell - N+1}^{N-1} e^{Y_{1,(i,i+j]}  +  Y_{2,(i+j,i+\ell]}}.
\ee
We prove the proposition by showing that the sum in \eqref{402} is unchanged if we replace $Y_{r,j}$ with $Y_{r,j}'$ for $r \in \{1,2\}$ and $j \in \Z_N$.
For this, it suffices to show two things:
\begin{enumerate} [label=\textup{(\roman*)}]
\item \label{itm:sl} For $i \in \Z_N$ and $0 \le \ell \le N-1$,
\be \label{eq:sl}
\sum_{j = 0}^\ell e^{Y_{1,(i,i+j]} +  Y_{2,(i+j,i+\ell]}} = \sum_{j = 0}^\ell e^{Y_{1,(i,i+j]}' +   Y_{2,(i+j,i+\ell]}'}.
\ee
\item \label{itm:BL} For $i \in \Z_N$ and $N \le \ell \le 2N - 2$,
\be \label{eq:BL}
\sum_{j = \ell - N+1}^{N-1} e^{Y_{1,(i,i+j]} + Y_{2,(i+j,i+\ell]}} = \sum_{j = \ell - N+1}^{N-1} e^{Y_{1,(i,i+j]}' + Y_{2,(i+j,i+\ell]}'}.
\ee
\end{enumerate}

To prove these, we make frequent use of the two key identities from Lemmas \ref{lem:DT_Add} and \ref{lem:exp_add_pres}: for $i \in \Z_N$,
\[
e^{-X_{1,i}'} + e^{-X_{2,i}'} = e^{-X_{1,i}} + e^{-X_{2,i}}\qquad \text{and}\qquad X_{1,i}' + X_{2,i-1}' = X_{1,i} + X_{2,i-1},
\]
which, by the definitions in \eqref{400}, imply the following:
\begin{align}
\quad e^{Y_{1,i}'} + e^{Y_{2,i}'} &= e^{Y_{1,i}} + e^{Y_{2,i}},\qquad\text{and} \label{403}\\
 Y_{1,i}' + Y_{2,i-1}' &= Y_{1,i} + Y_{2,i-1}. \label{404}
\end{align}

We start by proving \ref{itm:sl} by strong induction on $\ell$. For $i \in \Z_N$, the $\ell = 0$ case of \eqref{eq:sl} is simply $1 = 1$. Replacing $i$ with $i+1$ in \eqref{403} gives the $\ell = 1$ case of \eqref{eq:sl}. Now, assume by induction, that, for some $1 \le q \le N-2$, \eqref{eq:sl} holds for $\ell = q,\ell = q-1$, and all $i \in \Z_N$. Then, for $i \in \Z_N$ and $\ell = q+1$, we write 
\begin{align*}
&\quad \, \sum_{j = 0}^{q + 1} e^{Y_{1,(i,i+j]} +  Y_{2,(i+j,i+q + 1]}} = e^{Y_{2,i+1} + Y_{2,(i+1,i+q+1]}} + e^{Y_{1,i+1}}\sum_{j = 1}^{q + 1} e^{Y_{1,(i+1 ,i+j]} +  Y_{2,(i+j,i+q+1]}} \\
&=e^{Y_{2,i+1}} \Biggl(\sum_{j = 1}^{q + 1} e^{Y_{1,(i+1 ,i+j]} +  Y_{2,(i+j,i+q+1]}} - \sum_{j = 2}^{q+1}e^{Y_{1,(i+1 ,i+j]} +  Y_{2,(i+j,i+q+1]}}  \Biggr) + e^{Y_{1,i+1}}\sum_{j = 1}^{q + 1} e^{Y_{1,(i+1 ,i+j]} +  Y_{2,(i+j,i+q+1]}}.
\end{align*}
Now, shift the indices of the sums to start at $j = 0$ and combine terms to obtain
\be \label{405}
\begin{aligned} 
&\quad \, \sum_{j = 0}^{q + 1} e^{Y_{1,(i,i+j]} +  Y_{2,(i+j,i+q + 1]}} \\
&= \bigl(e^{Y_{2,i+1}} + e^{Y_{1,i+1}}\bigr)\sum_{j = 0}^{q } e^{Y_{1,(i+1 ,i+1+j]} +  Y_{2,(i+1 +j,i+1+q]}} - e^{Y_{2,i+1} + Y_{1,i+2}}\sum_{j = 0}^{q-1} e^{Y_{1,(i+2 ,i+2+j]} +  Y_{2,(i+2+j,i+2 + q-1]}}.
\end{aligned}
\ee
Note that \eqref{405} simply is a rearranging of terms, and hence, it also holds if we replace $Y$ with $Y'$.
Next, by applying \eqref{403} and \eqref{404}, followed by the strong induction assumption to $\ell = q,i+1$ and $\ell = q-1,i+2$, we obtain
\begin{align*}
&\quad \, \bigl(e^{Y_{2,i+1}} + e^{Y_{1,i+1}}\bigr)\sum_{j = 0}^{q } e^{Y_{1,(i+1 ,i+1+j]} +  Y_{2,(i+1 +j,i+1+q]}} - e^{Y_{2,i+1} + Y_{1,i+2}}\sum_{j = 0}^{q-1} e^{Y_{1,(i+2 ,i+2+j]} +  Y_{2,(i+2+j,i+2 + q-1]}} \\
&=\bigl(e^{Y_{2,i+1}'} + e^{Y_{1,i+1}'}\bigr)\sum_{j = 0}^{q } e^{Y_{1,(i+1 ,i+1+j]} +  Y_{2,(i+1 +j,i+1+q]}} - e^{Y'_{2,i+1} + Y'_{1,i+2}}\sum_{j = 0}^{q-1} e^{Y_{1,(i+2 ,i+2+j]} +  Y_{2,(i+2+j,i+2 + q-1]}} \\
&= \bigl(e^{Y_{2,i+1}'} + e^{Y_{1,i+1}'}\bigr)\sum_{j = 0}^{q } e^{Y_{1,(i+1 ,i+1+j]}' +  Y_{2,(i+1 +j,i+1+q]}'} - e^{Y_{2,i+1}' + Y_{1,i+2}'}\sum_{j = 0}^{q-1} e^{Y_{1,(i+2 ,i+2+j]}' +  Y_{2,(i+2+j,i+2 + q-1]}'}. 
\end{align*}
Then, applying \eqref{405} to both $Y$ and $Y'$, we have
\[
\sum_{j = 0}^{q + 1} e^{Y_{1,(i,i+j]} +  Y_{2,(i+j,i+q + 1]}} = \sum_{j = 0}^{q + 1} e^{Y_{1,(i,i+j]}' +  Y_{2,(i+j,i+q + 1]}'},
\]
thus proving \eqref{eq:sl} for $\ell = q+1$ and all $i \in \Z_N$. 

We turn to proving \ref{itm:BL}. Observe that, for a vector $\mbf X \in \R^{\Z_N}$, whenever $0 \le j \le m \le N-1$, 
\be \label{406}
X_{(i,i+m]} = X_{(i,i+j]} + X_{(i+j,i+m]}.
\ee
The condition that $m \le N-1$ is important; otherwise the index $i+m$ starts to wrap around the circle. For example, if $m = N$ and $j =1$, then
\[
X_{(i,i+m]} = X_{(i,i]} = 0 \neq X_{(i,i+1]} + X_{(i+1,i]} = X_{(i,i+j]} + X_{(i+j,i+m]}.
\]
Let $N \le \ell \le 2N -2$. Using \eqref{406}, observe that 
\be \label{407}
\begin{aligned}
&\sum_{j = \ell - N+1}^{N-1} e^{Y_{1,(i,i+j]} + Y_{2,(i+j,i+\ell]}} \\
&= e^{Y_{1,(i,i+\ell -N+1 ]} + Y_{2,(i+N-1,i+\ell]}} \sum_{j = \ell - N + 1}^{N-1} e^{Y_{1,(i+\ell - N +1,i+j]} + Y_{2,(i+j,i +N-1]}} \\
&= e^{Y_{1,(i,i+\ell-N +1 ]} + Y_{2,(i+N-1,i+\ell]}} \sum_{j = 0}^{2N - 2 - \ell} e^{Y_{1,(i+\ell-N +1,i+\ell-N + 1+ j ]} + Y_{2,(i+\ell-N + 1+ j,i+ N- 1]}},
\end{aligned}
\ee
where in the last line, we shifted the index $j$. Observe \eqref{407} is a rearranging of terms; it also holds if we replace $Y$ with $Y'$. Since $Y_i = Y_{i+N}$ for $i \in \Z_N$,
\be \label{408}
\begin{aligned}
\, Y_{1,(i,i+\ell-N +1 ]} + Y_{2,(i+N-1,i+\ell]} & \,\,\,\,= \sum_{j = i+1}^{i+\ell-N +1} Y_{1,j} + \sum_{j = i+N}^{i+\ell} Y_{2,j}\\
 = \sum_{j = i+1}^{i+\ell - N+1} [Y_{1,j} + Y_{2,j-1}] & \overset{\eqref{404}}{=} \sum_{j = i+1}^{i+\ell - N+1} [Y_{1,j}' + Y_{2,j-1}'] =  Y_{1,(i,i+\ell-N +1 ]}' + Y_{2,(i+N-1,i+\ell]}',
\end{aligned}
\ee
where the last equality follows by the first two equalities with $Y'$ in place of $Y$. 
Furthermore, we observe that $2N - 2 - \ell \le N-1$, so by Item \ref{itm:sl}, we obtain
\be \label{409}
\begin{aligned}
&\quad \, \sum_{j = 0}^{2N - 2 - \ell} e^{Y_{1,(i+\ell-N +1,i+\ell-N + 1+ j ]} + Y_{2,(i+\ell-N + 1+ j,i+ N- 1]}} \\
&= \sum_{j = 0}^{2N - 2 - \ell} e^{Y_{1,(i+\ell-N +1,i+\ell-N + 1+ j ]}' + Y_{2,(i+\ell-N + 1+ j,i+ N- 1]}'}.
\end{aligned}
\ee
Then, by \eqref{408} and \eqref{409}, and applying \eqref{407} to both $Y$ and $Y'$, we obtain
\begin{align*}
&\, \sum_{j = \ell - N+1}^{N-1} e^{Y_{1,(i,i+j]} + Y_{2,(i+j,i+\ell]}} \\
&= e^{Y_{1,(i,i+\ell-N +1 ]} + Y_{2,(i+N-1,i+\ell]}} \sum_{j = 0}^{2N - 2 - \ell} e^{Y_{1,(i+\ell-N +1,i+\ell-N + 1+ j ]} + Y_{2,(i+\ell-N + 1+ j,i+ N- 1]}} \\
&= e^{Y_{1,(i,i+\ell-N +1 ]}' + Y_{2,(i+N-1,i+\ell]}'} \sum_{j = 0}^{2N - 2 - \ell} e^{Y_{1,(i+\ell-N +1,i+\ell-N + 1+ j ]}' + Y_{2,(i+\ell-N + 1+ j,i+ N- 1]}'} \\
&= \sum_{j = \ell - N+1}^{N-1} e^{Y_{1,(i,i+j]}' + Y_{2,(i+j,i+\ell]}'}. \qedhere 
\end{align*}
\end{proof}

\begin{figure}
    \centering
    \includegraphics[width=0.7\linewidth]{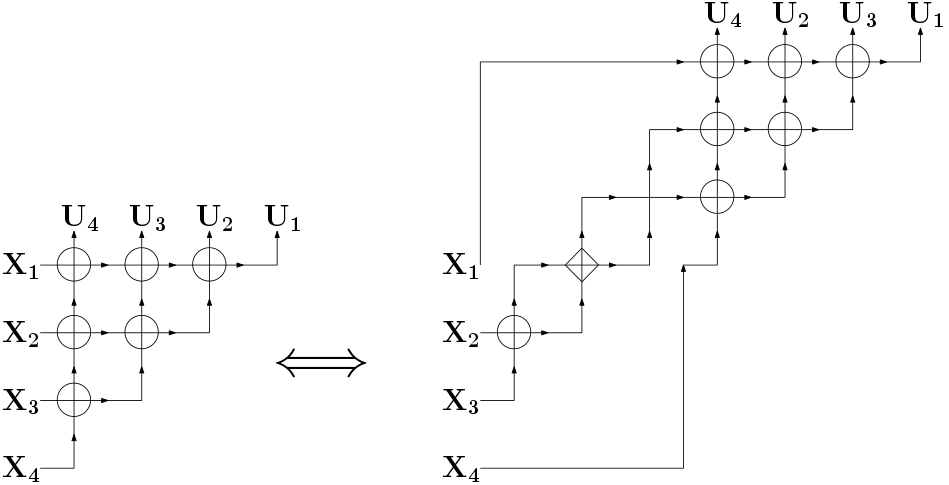}
    \caption{Graphical depiction of the result of Corollary \ref{cor:D_perm} for $k=4$. If the output of the first figure is the vector $(\mbf U_1,\mbf U_2,\mbf U_3,\mbf U_4)$ then the output of the second figure is  $(\mbf U_1,\mbf U_3,\mbf U_2,\mbf U_4)$, i.e., permuted at slots $2$ and $3$. The proof of the corollary relies on the identity depicted in Figure \ref{fig:intertwining}.}
    \label{fig:k4intertwining}
\end{figure}

We obtain the following as a corollary of Corollary \ref{cor:DR_bijection} and Proposition \ref{prop:full_intertwine}. It shows how we may write a permutation of the output of $\D^{N,k}$ as the output of $\D^{N,k}$ for a different choice of inputs. In particular, if we  apply the bijection from Corollary \ref{cor:DR_bijection} to the input in slots $m-1$ and $m$, then reverse the order of these inputs, this results in permuting the output in those slots. See Figure \ref{fig:k4intertwining} for a graphical depiction of this result. 
\begin{corollary} \label{cor:D_perm}
    Let $N \ge 1$, $k \ge 2$, and $(\mbf X_1,\ldots,\mbf X_k) \in (\R^{\Z_N})^k$, and define
    \[
    (\mbf U_1,\ldots,\mbf U_k) := \mathcal D^{N,k}(\mbf X_1,\ldots,\mbf X_k).
    \]
    Then, for any $1 \le m \le k$,
    \begin{multline*}
    (\mbf U_1,\ldots,\mbf U_{m-2},\mbf U_m,\mbf U_{m-1},\mbf U_{m+1},\ldots,\mbf U_{k}) \\
    = \mathcal D^{N,k}\bigl(\mbf X_1,\ldots,\mbf X_{m-2}, D^{N,2}(\mbf X_{m-1},\mbf X_m),T^{N,2}(\mbf X_{m-1},\mbf X_m),\mbf X_{m+1},\ldots,\mbf X_k\bigr).
    \end{multline*}
\end{corollary}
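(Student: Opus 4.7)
The plan is to verify the claimed identity coordinate-by-coordinate. Set $\mbf Y_{m-1} := D^{N,2}(\mbf X_{m-1},\mbf X_m)$ and $\mbf Y_m := T^{N,2}(\mbf X_{m-1},\mbf X_m)$, and let $(\mbf V_1,\ldots,\mbf V_k) := \mathcal D^{N,k}(\mbf X_1,\ldots,\mbf X_{m-2},\mbf Y_{m-1},\mbf Y_m,\mbf X_{m+1},\ldots,\mbf X_k)$; by the definition \eqref{DNk_intro} of $\mathcal D^{N,k}$, this means $\mbf V_j = D^{N,j}$ applied to the first $j$ entries of that input list. The claim reduces to showing $\mbf V_j = \mbf U_j$ for $j \notin \{m-1,m\}$ and $(\mbf V_{m-1},\mbf V_m) = (\mbf U_m,\mbf U_{m-1})$. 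We take $m \ge 2$ (the statement is vacuous for $m = 1$).

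For $1 \le j \le m-2$ both sides manifestly equal $D^{N,j}(\mbf X_1,\ldots,\mbf X_j)$. For $j = m-1$, Lemma \ref{lem:Diter} (with $r = m-2$) and the definition of $\mbf Y_{m-1}$ give
\begin{align*}
\mbf V_{m-1} = D^{N,m-1}(\mbf X_1,\ldots,\mbf X_{m-2},\mbf Y_{m-1}) &= D^{N,m-1}\bigl(\mbf X_1,\ldots,\mbf X_{m-2},D^{N,2}(\mbf X_{m-1},\mbf X_m)\bigr) \\
&= D^{N,m}(\mbf X_1,\ldots,\mbf X_m) = \mbf U_m.
\end{align*}
For $j = m$, Lemma \ref{lem:Diter} followed by the identity $D^{N,2}(\mbf Y_{m-1},\mbf Y_m) = \mbf X_{m-1}$ (read off from the inverse map in Corollary \ref{cor:DR_bijection}) yields
\begin{align*}
\mbf V_m = D^{N,m}(\mbf X_1,\ldots,\mbf X_{m-2},\mbf Y_{m-1},\mbf Y_m) &= D^{N,m-1}\bigl(\mbf X_1,\ldots,\mbf X_{m-2},D^{N,2}(\mbf Y_{m-1},\mbf Y_m)\bigr) \\
&= D^{N,m-1}(\mbf X_1,\ldots,\mbf X_{m-1}) = \mbf U_{m-1}.
\end{align*}

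The remaining case is $m < j \le k$. Set $\mbf W := D^{N,j-m}(\mbf X_{m+1},\ldots,\mbf X_j)$, and apply Lemma \ref{lem:Diter} twice (first to collapse slots $m+1,\ldots,j$ into $\mbf W$, then to peel off the first $m-2$ inputs) on both sides:
\begin{align*}
\mbf U_j = D^{N,j}(\mbf X_1,\ldots,\mbf X_j) &= D^{N,m-1}\bigl(\mbf X_1,\ldots,\mbf X_{m-2},D^{N,3}(\mbf X_{m-1},\mbf X_m,\mbf W)\bigr), \\
\mbf V_j &= D^{N,m-1}\bigl(\mbf X_1,\ldots,\mbf X_{m-2},D^{N,3}(\mbf Y_{m-1},\mbf Y_m,\mbf W)\bigr).
\end{align*}
The desired equality $\mbf V_j = \mbf U_j$ therefore reduces to $D^{N,3}(\mbf X_{m-1},\mbf X_m,\mbf W) = D^{N,3}(\mbf Y_{m-1},\mbf Y_m,\mbf W)$, which is exactly Proposition \ref{prop:full_intertwine} applied to the triple $(\mbf X_{m-1},\mbf X_m,\mbf W)$.

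All nontrivial algebraic input is furnished by Lemma \ref{lem:Diter}, Corollary \ref{cor:DR_bijection}, and Proposition \ref{prop:full_intertwine}, so no step presents a real obstacle; the only care needed is bookkeeping at the boundary values $m = 2$ (where the initial block $j \le m-2$ is empty) and $m = k$ (where the block $j > m$ is empty). In both boundary cases the argument above still goes through unchanged, since the corresponding block of equalities simply does not appear.
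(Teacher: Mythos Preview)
Your proof is correct and follows essentially the same approach as the paper's: the same three-case split ($j<m-1$, $j\in\{m-1,m\}$, $j>m$), the same use of Lemma \ref{lem:Diter} to peel off inputs, Corollary \ref{cor:DR_bijection} for the identity $D^{N,2}(\mbf Y_{m-1},\mbf Y_m)=\mbf X_{m-1}$, and Proposition \ref{prop:full_intertwine} for the $j>m$ case after collapsing the tail into $\mbf W$. The only differences are cosmetic (your $\mbf Y,\mbf V$ versus the paper's $\mbf X',\mbf U'$) and your explicit remark about the degenerate boundary cases $m=2$ and $m=k$.
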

\begin{proof}
To ease the notation, define $\mbf X_{m-1}' = D^{N,2}(\mbf X_{m-1},\mbf X_m)$, and $\mbf X_m' = T^{N,2}(\mbf X_{m-1},\mbf X_m)$. For $r \notin \{m-1,m\}$, set $\mbf X_r' = \mbf X_r$ and define $(\mbf U_1',\ldots,\mbf U_k') = \D^{N,k}(\mbf X_1',\ldots,\mbf X_k')$. We wish to prove that $\mbf U_r = \mbf U_r'$ for $r \notin \{m-1,m\}$, $\mbf U_{m-1}' = \mbf U_m$, and $\mbf U_m' = \mbf U_{m-1}$. We break this up into three cases:

\medskip \noindent \textbf{Case 1:} ($r < m - 1$)
We have $\mbf X_\ell = \mbf X_\ell'$ for $\ell \le r$, so by definition of the map $\D^{N,k}$, 
\[
\mbf U_r = D^{N,r}(\mbf X_1,\ldots,\mbf X_r) = D^{N,r}(\mbf X_1',\ldots,\mbf X_r') = \mbf U_r'.
\]

\medskip \noindent \textbf{Case 2:} (Showing $\mbf U_{m-1}' = \mbf U_m$ and $\mbf U_m' = \mbf U_{m-1}$)  
Using the definition of $\mbf U_{m-1}'$ and the map $\mathcal D^{N,k}$,
\[
\mbf U_{m-1}' = D^{N,m-1}(\mbf X_1',\ldots,\mbf X_{m-1}') = D^{N,m-1}(\mbf X_1,\ldots,\mbf X_{m-2},D^{N,2}(\mbf X_{m-1},\mbf X_m)).
\]
 By Lemma \ref{lem:Diter}, this equals $D^{N,m}(\mbf X_1,\ldots,\mbf X_m) = \mbf U_m$. Using Lemma \ref{lem:Diter} again, 
\begin{align*}
\mbf U_m' &= D^{N,m}(\mbf X_1,\ldots,\mbf X_{m-2},\mbf X_{m-1}',\mbf X_m') = D^{N,m-1}\Bigl(\mbf X_1,\ldots,\mbf X_{m-2},D^{N,2}\bigl(\mbf X_{m-1}',\mbf X_{m}'\bigr)\Bigr).
\end{align*}
Since $\mbf X_{m-1}' = D^{N,2}(\mbf X_{m-1},\mbf X_m)$, and $\mbf X_m' = T^{N,2}(\mbf X_{m-1},\mbf X_m)$, Corollary \ref{cor:DR_bijection} implies that\\ $D^{N,2}(\mbf X_{m-1}',\mbf X_{m}') = \mbf X_{m-1}$. Hence, 
\[
\mbf U_m' = D^{N,m-1}(\mbf X_1,\ldots,\mbf X_{m-1}) = \mbf U_{m-1}.
\]

\medskip \noindent \textbf{Case 3:} ($r > m$) In this case, we seek to show that $\mbf U_r = \mbf U_r'$. By definition and Lemma \ref{lem:Diter},
\begin{multline*}
\mbf U_r' = D^{N,r}(\mbf X_1,\ldots,\mbf X_{m-2},\mbf X_{m-1}',\mbf X_{m}',\mbf X_{m+1},\ldots,\mbf X_r) \\
=D^{N,m-1}\bigl(\mbf X_1,\ldots,\mbf X_{m-2}, D^{N,r-m+2}(\mbf X_{m-1}',\mbf X_m',\mbf X_{m+1},\ldots,\mbf X_r)\bigr).
\end{multline*}
By another application of Lemma \ref{lem:Diter}, it suffices to show that 
\be \label{201}
D^{N,r-m+2}(\mbf X_{m-1}',\mbf X_m',\mbf X_{m+1},\ldots,\mbf X_r) = D^{N,r-m+2}(\mbf X_{m-1},\mbf X_m,\mbf X_{m+1},\ldots,\mbf X_r).
\ee
Using Lemma \ref{lem:Diter}, then Proposition \ref{prop:full_intertwine}, the left-hand side of \eqref{201} is
\[
D^{N,3}\bigl(\mbf X_{m-1}',\mbf X_m', D^{N,r-m}(\mbf X_{m+1},\ldots,\mbf X_r)\bigr) = D^{N,3}\bigl(\mbf X_{m-1},\mbf X_m,D^{N,r-m}(\mbf X_{m+1},\ldots,\mbf X_r) \bigr),
\]
and this is equal to the right-hand side of \eqref{201} by another application of Lemma \ref{lem:Diter}. 
\end{proof}

For the next corollary, recall the set $\mathcal R^{N,k}$ defined early in Section \ref{sec.sst}. This corollary is not used later, but is a nice symmetry of the sets $\mathcal R^{N,k}$.
\begin{corollary} \label{cor:RNK_permute}
For $k,N \ge 1$, if $(\mbf U_1,\ldots,\mbf U_k) \in \mathcal R^{N,k}$, then for any permutation $\sigma \in \mathcal S(k)$, 
\[
\bigl(\mbf U_{\sigma(1)},\ldots,\mbf U_{\sigma(k)}\bigr) \in \mathcal R^{N,k}.
\]
\end{corollary}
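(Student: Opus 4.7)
The plan is to leverage Proposition \ref{prop:transform} (which gives the bijection $\mathcal D^{N,k}:\R^{N,k}_{\neq} \to \mathcal R^{N,k}$) together with Corollary \ref{cor:D_perm} (which shows how to realize an adjacent transposition of the $\mbf U$'s as $\mathcal D^{N,k}$ of a modified sequence of $\mbf X$'s). Since the symmetric group $\mathcal S(k)$ is generated by adjacent transpositions, and membership in $\mathcal R^{N,k}$ is preserved under each such swap, the general case will follow by iteration.

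First I would reduce to the case of an adjacent transposition. Fix $2 \le m \le k$ and consider the transposition $\sigma = (m-1,m)$. Given $(\mbf U_1,\ldots,\mbf U_k) \in \mathcal R^{N,k}$, Proposition \ref{prop:transform} furnishes a unique $(\mbf X_1,\ldots,\mbf X_k) \in \R^{N,k}_{\neq}$ with $\mathcal D^{N,k}(\mbf X_1,\ldots,\mbf X_k) = (\mbf U_1,\ldots,\mbf U_k)$. Now define
\[
\mbf X_r' = \mbf X_r \text{ for } r \notin \{m-1,m\},\qquad \mbf X_{m-1}' = D^{N,2}(\mbf X_{m-1},\mbf X_m),\qquad \mbf X_m' = T^{N,2}(\mbf X_{m-1},\mbf X_m).
\]
By Corollary \ref{cor:D_perm}, the permuted vector $(\mbf U_1,\ldots,\mbf U_{m-2},\mbf U_m,\mbf U_{m-1},\mbf U_{m+1},\ldots,\mbf U_k)$ equals $\mathcal D^{N,k}(\mbf X_1',\ldots,\mbf X_k')$. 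By Proposition \ref{prop:transform} again, it therefore suffices to verify that $(\mbf X_1',\ldots,\mbf X_k') \in \R^{N,k}_{\neq}$, i.e., that the coordinate sums $\vecsum(\mbf X_r')$ are pairwise distinct.

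To check this, I would use Lemma \ref{lem:Dsum_pres} (giving $\vecsum(D^{N,2}(\mbf X_{m-1},\mbf X_m)) = \vecsum(\mbf X_m)$) together with \eqref{eq.TL} (giving $\vecsum(T^{N,2}(\mbf X_{m-1},\mbf X_m)) = \vecsum(\mbf X_{m-1})$). Thus $\vecsum(\mbf X_{m-1}') = \vecsum(\mbf X_m)$ and $\vecsum(\mbf X_m') = \vecsum(\mbf X_{m-1})$, while all other sums are unchanged. Hence the multiset of sums is merely permuted, and pairwise distinctness is preserved. This shows $(\mbf X_1',\ldots,\mbf X_k') \in \R^{N,k}_{\neq}$, so the adjacent swap lands back in $\mathcal R^{N,k}$.

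Finally, for a general $\sigma \in \mathcal S(k)$ I would write $\sigma$ as a composition of adjacent transpositions and iterate the argument above: each successive swap takes an element of $\mathcal R^{N,k}$ to another element of $\mathcal R^{N,k}$, and composing all the swaps realizes the full permutation. There does not appear to be a serious obstacle here once the bijection of Proposition \ref{prop:transform} and the swap identity of Corollary \ref{cor:D_perm} are in hand; the only thing to be slightly careful about is that the sum-preservation statements used to verify the distinctness condition come from two different results (Lemma \ref{lem:Dsum_pres} for $D^{N,2}$ and the direct computation \eqref{eq.TL} for $T^{N,2}$), but both are already established in the paper.
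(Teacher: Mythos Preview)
Your proposal is correct and follows essentially the same approach as the paper's proof: reduce to adjacent transpositions, pull back through the bijection of Proposition~\ref{prop:transform} to obtain $(\mbf X_1,\ldots,\mbf X_k) \in \R^{N,k}_{\neq}$, apply Corollary~\ref{cor:D_perm} to express the swapped tuple as $\mathcal D^{N,k}$ of the modified inputs $\mbf X_r'$, and verify $(\mbf X_1',\ldots,\mbf X_k') \in \R^{N,k}_{\neq}$ via the sum-preservation properties. The only cosmetic difference is that the paper cites Lemma~\ref{lem:Rbij} for the sum preservation of $T^{N,2}$, whereas you cite \eqref{eq.TL} directly; these amount to the same thing.
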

\begin{proof}
It suffices to prove this for transpositions $(m-1,m)$ for $m \in \{2,\ldots,k\}$.
By Proposition \ref{prop:transform}, and $(\mbf U_1,\ldots,\mbf U_k) \in \mathcal R^{N,k}$ is equal to $\D^{N,k}(\mbf X_1,\ldots,\mbf X_k)$ for some $(\mbf X_1,\ldots,\mbf X_k) \in \R_{\neq}^{N,k}$. Define $\mbf X_r' = \mbf X_r$ for $r \notin \{m-1,m\}$, $\mbf X_{m-1}' = D^{N,2}(\mbf X_{m-1},\mbf X_m)$, and $\mbf X_m' = T^{N,2}(\mbf X_{m-1},\mbf X_m)$.  By Lemmas \ref{lem:Dsum_pres} and \ref{lem:Rbij}, $\vecsum(\mbf X_{m-1}') = \vecsum(\mbf X_m)$ and $\vecsum(\mbf X_m') = \vecsum(\mbf X_{m-1})$, so $(\mbf X_1',\ldots,\mbf X_k') \in \R_{\neq}^{N,k}$ By Corollary \ref{cor:D_perm}, $(\mbf U_{\sigma(1)},\ldots,\mbf U_{\sigma(k)}) = \D^{N,k}(\mbf X_1',\ldots,\mbf X_k')$, so by Proposition \ref{prop:transform}, $(\mbf U_{\sigma(1)},\ldots,\mbf U_{\sigma(k)}) \in \mathcal R^{N,k}$. 
\end{proof}

We are now prepared to prove Proposition \ref{prop:disc_consis}.
\begin{proof}[Proof of Proposition \ref{prop:disc_consis}] We address each of the four items sequentially. 

\noindent \textbf{Item \ref{itm:mu_perm}:}
Let $(\mbf X_1,\ldots,\mbf X_k) \sim \nu_\beta^{N,(\theta_1,\ldots,\theta_k)}$, and let $(\mbf U_1,\ldots,\mbf U_k) = \D^{N,k}(\mbf X_1,\ldots,\mbf X_k)$ so that $(\mbf U_1,\ldots,\mbf U_k) \sim \mu_\beta^{N,(\theta_1,\ldots,\theta_k)}$. We wish to show that, for any $\sigma \in \mathcal S(k)$ and $1 \le m \le k$,
\[
(\mbf U_{\sigma(1)},\ldots,\mbf U_{\sigma(m)}) \sim \mu_\beta^{N,(\theta_{\sigma(1)},\ldots,\theta_{\sigma(m)})}.
\]
We first show that
\be \label{k_cutoff}
(\mbf U_{\sigma(1)},\ldots,\mbf U_{\sigma(k)}) \sim \mu_\beta^{N,(\theta_{\sigma(1)},\ldots,\theta_{\sigma(k)})}.
\ee
For this, it  suffices to consider transpositions $(m-1,m)$ for any $m \in \{2,\ldots,k\}$. 
For $r \notin \{m-1,m\}$, define $\mbf X_r' := \mbf X_r$, define $\mbf X_{m-1}' := D^{N,2}(\mbf X_{m-1},\mbf X_m)$, and define $\mbf X_m' := T^{N,2}(\mbf X_{m-1},\mbf X_m)$. By Proposition \ref{prop:Burke}, $(\mbf X_1',\ldots,\mbf X_k') \sim \nu_\beta^{N,(\theta_1,\ldots,\theta_{m-2},\theta_m,\theta_{m-1},\theta_{m+1},\ldots,\theta_k)}$, and by Corollary \ref{cor:D_perm},
\[
(\mbf U_1,\ldots,\mbf U_{m-2},\mbf U_m,\mbf U_{m-1},\mbf U_{m+1},\ldots,\mbf U_k) = \D^{N,k}(\mbf X_1',\ldots,\mbf X_k') \sim \mu _\beta^{N,(\theta_1,\ldots,\theta_{m-2},\theta_m,\theta_{m-1},\theta_{m+1},\ldots,\theta_k)}, 
\]
and thus we have proved \eqref{k_cutoff}. Hence, 
$
(\mbf U_{\sigma(1)},\ldots,\mbf U_{\sigma(k)}) \deq \D^{N,k}(\wt{\mbf X}_1,\ldots,\wt{\mbf X}_k)
$
where $(\wt{\mbf X}_1,\ldots,\wt{\mbf X}_k) \sim \nu_\beta^{N,(\theta_{\sigma(1)},\ldots,\theta_{\sigma(k)})}$.
Since this is a product measure, for $1 \le m \le k$, $(\wt{\mbf X}_1,\ldots,\wt{\mbf X}_m) \sim \nu_\beta^{N,(\theta_{\sigma(1)},\ldots,\theta_{\sigma(m)})}$.
Then, by the inductive definition of $\D^{N,k}$, \eqref{DNk_intro}, under restriction we have
\[
(\mbf U_{\sigma(1)},\ldots,\mbf U_{\sigma(m)}) \deq \D^{N,m}(\wt{\mbf X}_1,\ldots,\wt{\mbf X}_m) \sim \mu_\beta^{N,(\theta_{\sigma(1)},\ldots,\theta_{\sigma(m)})}. 
\]

\medskip \noindent \textbf{Item \ref{itm:mu_consis}:} If $(\mbf U_1,\ldots,\mbf U_k) = \D^{N,k}(\mbf X_1,\ldots,\mbf X_k)$, then $\mbf U_1 = \mbf X_1$ by definition. Hence, $\mbf U_1 \sim \mu_\beta^{N,(\theta_1)}$ whenever $(\mbf U_1,\ldots,\mbf U_k) \sim \mu_\beta^{N,(\theta_1,\ldots,\theta_k)}$. The case of general $\mbf U_m$ follows by Item \ref{itm:mu_perm}.

\medskip \noindent \textbf{Item \ref{itm:mu_order}:} This is a direct consequence of Lemma \ref{lem:sum_v_order}.

\medskip \noindent \textbf{Item \ref{itm:mu_cont}:} It is immediate that the function $D^{N,2}:\R^{\Z_N} \times \R^{\Z_N} \to \R^{\Z_N}$ is continuous, so the function $\D^{N,k}:(\R^{\Z_N})^k \to (\R^{\Z_N})^k$ is continuous, since it is built inductively from $D^{N,2}$ via composition of functions. Hence, it suffices to show that $\nu_\beta^{N,(\theta_1^{(n)},\ldots,\theta_k^{(n)})}$ converges weakly to $\nu_\beta^{N,(\theta_1,\ldots,\theta_k)}$ whenever $(\theta_1^{(n)},\ldots,\theta_k^{(n)}) \to (\theta_1,\ldots,\theta_k)$. Define
\[
\wt p_\beta^{N,(\theta_1,\ldots,\theta_k)}(\mbf x_1,\ldots,\mbf x_r) = \wt p_\beta^{N,(\theta_1,\ldots,\theta_k)}\bigl((x_{r,i})_{1\le r \le k, 1 \le i \le N-1}\bigr) := \prod_{r = 1}^k \prod_{i = 0}^{N-1} e^{-e^{-\beta^{-2} x_{r,i}}},
\]
where $x_{r,0} = \theta_r - \sum_{i = 1}^{N-1} x_i$. Then, recalling the definition of the probability density $p_\beta^{N,(\theta_1,\ldots,\theta_k)}$ from Definition \ref{def:p_meas},  $p_\beta^{N,(\theta_1,\ldots,\theta_k)}(\mbf x_1,\ldots,\mbf x_r) = \f{1}{C(\theta_1,\ldots,\theta_k)} \wt p_\beta^{N,(\theta_1,\ldots,\theta_k)}(\mbf x_1,\ldots,\mbf x_r)$ for a constant $C(\theta_1,\ldots,\theta_k)$. Immediately, we have that $\wt p_\beta^{N,(\theta_1^{(n)},\ldots,\theta_k^{(n)})} \to \wt p_\beta^{N,(\theta_1,\ldots,\theta_k)}$ pointwise. Furthermore, $\wt p_\beta^{N,(\theta_1,\ldots,\theta_k)}$ increases pointwise if any of the $\theta_1,\ldots,\theta_k$ increase. Hence, we have $\wt p_\beta^{N,(\theta_1^{(n)},\ldots,\theta_k^{(n)})} \le \wt p_\beta^{N,(\gamma_1,\ldots,\gamma_k)}$ pointwise for some choice of $\gamma_1,\ldots,\gamma_k$ and all $n \ge 1$. By the dominated convergence theorem, as $n \to \infty$, 
\[
C(\theta_1^{(n)},\ldots,\theta_k^{(n)}) = \int_{(\R^{N-1})^k} \wt p_\beta^{N,(\theta_1^{(n)},\ldots,\theta_k^{(n)})} \to \int_{(\R^{N-1})^k} \wt p_\beta^{N,(\theta_1,\ldots,\theta_k)}  = C(\theta_1,\ldots,\theta_k).
\]  
Hence, $p_\beta^{N,(\theta_1^{(n)},\ldots,\theta_k^{(n)})} \to p_\beta^{N,(\theta_1,\ldots,\theta_k)}$ pointwise, and so by Scheffe's Lemma, $\nu_\beta^{N,(\theta_1^{(n)},\ldots,\theta_k^{(n)})} \to \nu_\beta^{N,(\theta_1,\ldots,\theta_k)}$ in total variation distance as $n \to \infty$. 
\end{proof}

\subsection{Proof of Theorem \ref{thm:disc_MC}} \label{sec:proof_disc_MC}

To prove Theorem \ref{thm:disc_MC}, we define here a dual Markov chain with state space $(\R^{\Z_N})^k$. This Markov chain will be shown to satisfy an intertwining with the chain in \eqref{eq:coupled_disc_MC}. In the terminology introduced in \cite{Ferrari-Martin-2007} (and used also in \cite{Fan-Seppalainen-20,Seppalainen-Sorensen-21b,GRASS-23}) this could also be called a multiline process; while in \cite{Bates-Fan-Seppalainen}, the analogous process is called the parallel process. For a state $(\mbf X_1^{(m)},\ldots,\mbf X_k^{(m)}) \in (\R^{\Z_N})^k$ at time $m$ and an independent driving sequence $\mbf W \in \R^{\Z_N}$, let $\mbf W_1 = \mbf W$. Then, the state at time $m + 1$, $(\mbf X_r^{(m+1)})_{1 \le r \le k}$ is defined inductively as follows: For $r \ge 1$,
\be \label{eq:multline}
\begin{aligned}
\mbf X_r^{(m+1)} &= D^{N,2}(\mbf W_r,\mbf X_r^{(m)}),\qquad \text{and the input to the next level is}\qquad  \mbf W_{r+1} = T^{N,2}(\mbf W_r,\mbf X_r^{(m)}).
\end{aligned}
\ee
For $\mbf W \in \R^{\Z_N}$, let $\mathcal G_{\mbf W}^{N,k},\mathcal H_{\mbf W}^{N,k}: (\R^{\Z_N})^k \to (\R^{\Z_N})^k$ be defined so that 
\[
\mathcal G_{\mbf W}^{N,k}(\mbf U_1^{(m)},\ldots,\mbf U_k^{(m)}):=(\mbf U_1^{(m+1)},\ldots,\mbf U_k^{(m+1)}),\qquad 
\mathcal H_{\mbf W}^{N,k}(\mbf X_1^{(m)},\ldots,\mbf X_k^{(m)}):=(\mbf X_1^{(m+1)},\ldots,\mbf X_k^{(m+1)})
\]
where the first map is defined by \eqref{eq:coupled_disc_MC} and the second by  \eqref{eq:multline}.

The proof of Theorem \ref{thm:disc_MC} comes from the following intertwining relation, which is an inductive corollary of Proposition \ref{prop:full_intertwine}. Recall the map $\D^{N,k}:(\R^{\Z_N})^k \to (\R^{\Z_N})^k$ defined in \eqref{DNk_intro}.
\begin{proposition} \label{prop:DNk_disc_intertwine}
For each $\mbf W \in \R^{\Z_N}$, we have the intertwining relation on  $(\R^{\Z_N})^k$:
\[
\mathcal G_{\mbf W}^{N,k} \circ \D^{N,k} = \D^{N,k} \circ \mathcal H_{\mbf W}^{N,k}.
\]
\end{proposition}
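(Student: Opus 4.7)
The plan is to establish the intertwining coordinate by coordinate, reducing everything to a single use of Proposition~\ref{prop:full_intertwine} combined with Lemma~\ref{lem:Diter}, and then iterating. For each $m \in \{1,\ldots,k\}$, the $m$-th coordinate of $\mathcal G_{\mbf W}^{N,k} \circ \D^{N,k}(\mbf X_1,\ldots,\mbf X_k)$ is, by the definitions \eqref{eq:coupled_disc_MC} and \eqref{DNk_intro}, equal to $D^{N,2}\bigl(\mbf W, D^{N,m}(\mbf X_1,\ldots,\mbf X_m)\bigr)$, which by Lemma~\ref{lem:Diter} equals $D^{N,m+1}(\mbf W,\mbf X_1,\ldots,\mbf X_m)$. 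Meanwhile, the $m$-th coordinate of $\D^{N,k} \circ \mathcal H_{\mbf W}^{N,k}(\mbf X_1,\ldots,\mbf X_k)$ is $D^{N,m}(\mbf X_1',\ldots,\mbf X_m')$, where the $\mbf X_r'$ and the auxiliary inputs $\mbf W_r$ are defined by \eqref{eq:multline}. So the claim reduces to the family of identities
\[
D^{N,m+1}(\mbf W,\mbf X_1,\ldots,\mbf X_m) = D^{N,m}\bigl(D^{N,2}(\mbf W_1,\mbf X_1),\,\ldots,\,D^{N,2}(\mbf W_m,\mbf X_m)\bigr), \qquad m \ge 1,
\]
with $\mbf W_1 = \mbf W$ and $\mbf W_{s+1} = T^{N,2}(\mbf W_s,\mbf X_s)$.

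I would prove this identity by induction on $m$. The base case $m = 1$ is trivial since both sides equal $D^{N,2}(\mbf W,\mbf X_1)$. For the inductive step, one first applies Lemma~\ref{lem:Diter} in the form $D^{N,m+1}(\mbf W,\mbf X_1,\mbf X_2,\ldots,\mbf X_m) = D^{N,3}\bigl(\mbf W,\mbf X_1,D^{N,m-1}(\mbf X_2,\ldots,\mbf X_m)\bigr)$. Then Proposition~\ref{prop:full_intertwine} (applied with the triple $(\mbf W,\mbf X_1,D^{N,m-1}(\mbf X_2,\ldots,\mbf X_m))$) turns this into $D^{N,3}\bigl(D^{N,2}(\mbf W,\mbf X_1),T^{N,2}(\mbf W,\mbf X_1),D^{N,m-1}(\mbf X_2,\ldots,\mbf X_m)\bigr)$, which by another use of Lemma~\ref{lem:Diter} equals $D^{N,m+1}(\mbf X_1',\mbf W_2,\mbf X_2,\ldots,\mbf X_m)$ with $\mbf X_1' = D^{N,2}(\mbf W,\mbf X_1)$ and $\mbf W_2 = T^{N,2}(\mbf W,\mbf X_1)$.

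The final step uses the recursive definition \eqref{eq:D_iter_intro} to peel off $\mbf X_1'$: this rewrites the quantity as $D^{N,2}\bigl(\mbf X_1',D^{N,m}(\mbf W_2,\mbf X_2,\ldots,\mbf X_m)\bigr)$. The inductive hypothesis, applied to the reduced list $(\mbf X_2,\ldots,\mbf X_m)$ with starting input $\mbf W_2$ (and observing that the recursion $\mbf W_{s+1}=T^{N,2}(\mbf W_s,\mbf X_s)$ is exactly the one defined in \eqref{eq:multline}, so the $\mbf W_r$ agree), converts $D^{N,m}(\mbf W_2,\mbf X_2,\ldots,\mbf X_m)$ into $D^{N,m-1}(\mbf X_2',\ldots,\mbf X_m')$. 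One last application of the recursive definition of $D^{N,m}$ collapses $D^{N,2}\bigl(\mbf X_1',D^{N,m-1}(\mbf X_2',\ldots,\mbf X_m')\bigr)$ to $D^{N,m}(\mbf X_1',\ldots,\mbf X_m')$, completing the induction.

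The main work was done in Proposition~\ref{prop:full_intertwine}, so here the only real obstacle is keeping track of indices and making sure the $\mbf W_r$ generated by the two different recursions (the one inside the induction and the one in \eqref{eq:multline}) are consistent; once one observes that applying Proposition~\ref{prop:full_intertwine} at the top of the stack introduces exactly the pair $\bigl(D^{N,2}(\mbf W,\mbf X_1),T^{N,2}(\mbf W,\mbf X_1)\bigr)$, which is precisely what \eqref{eq:multline} prescribes, the induction is forced and there is nothing more to check.
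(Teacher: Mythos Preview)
Your proposal is correct and follows essentially the same approach as the paper: reduce to a coordinatewise identity, then induct using Proposition~\ref{prop:full_intertwine} and Lemma~\ref{lem:Diter}. The only cosmetic difference is that the paper carries along an auxiliary parameter $\ell$ and proves the slightly more general family \eqref{eq:gen_id}, whereas you induct directly on $m$; your organization is arguably a little cleaner, but the two arguments are the same in substance.
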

\begin{proof}
Let $\mbf W_1 = \mbf W$, and let $\mbf W_r$ for $r > 1$ be defined inductively by \eqref{eq:multline}. As both sides of the equation are functions $(\R^{\Z_N})^k \to (\R^{\Z_N})^k$, it suffices to show equality of each component map $(\R^{\Z_N})^k \to \R^{\Z_N}$. Recalling the definition of $\D^{N,k}$ \eqref{DNk_intro}, we must show that, for $1 \le r \le k$, 
\be \label{eq:int_id}
D^{N,2}\bigl(\mbf W_1,D^{N,r}(\mbf X_1,\ldots,\mbf X_r)\bigr) = D^{N,r}\bigl(D^{N,2}(\mbf W_1,\mbf X_1),\ldots,D^{N,2}(\mbf W_r,\mbf X_r)\bigr). 
\ee
The statement for $r = 1$ is immediate. For $r \ge 2$, we prove following more general identity: for $1 \le \ell \le r-1$,
\be \label{eq:gen_id}
\begin{aligned}
&D^{N,2}\bigl(\mbf W_1,D^{N,r}(\mbf X_1,\ldots,\mbf X_r)\bigr)  \\
&\qquad\qquad= D^{N,\ell + 1}\bigl(D^{N,2}(\mbf W_1,\mbf X_1),\ldots,D^{N,2}(\mbf W_\ell,\mbf X_\ell), D^{N,r - \ell + 1}(\mbf W_{\ell + 1},\mbf X_{\ell + 1},\ldots,\mbf X_{r})  \bigr),
\end{aligned}
\ee
noting that \eqref{eq:int_id} is the $\ell = r-1$ case of \eqref{eq:gen_id}. When $r = 2$, the only option is $\ell = 1$, and  since $\mbf W_2 = T^{N,2}(\mbf W_1,\mbf X_1)$ by definition, the equality \eqref{eq:gen_id} is exactly Proposition \ref{prop:full_intertwine}. We prove the case of general $r \ge 2$ by induction. We note that, given Proposition \ref{prop:full_intertwine}, the proof follows that of \cite[Lemma 4.5]{Fan-Seppalainen-20}, repeated also in \cite[Theorem 7.7]{Seppalainen-Sorensen-21b}, \cite[Lemma 6.11]{Bates-Fan-Seppalainen}, and \cite[Lemma A.3]{GRASS-23}.

Assume, for some $r \ge 2$ that \eqref{eq:gen_id} holds for all $1 \le \ell \le r-1$. We prove it holds for $r + 1$, starting with $\ell = 1$. In this case, the right-hand side of \eqref{eq:gen_id} is
\begin{align*}
&\quad \, D^{N,2}\bigl(D^{N,2}(\mbf W_1,\mbf X_1), D^{N,r+1}(\mbf W_2,\mbf X_2,\ldots,\mbf X_{r+1})   \bigr) \\
&= D^{N,3}\Bigl(D^{N,2}(\mbf W_1,\mbf X_1),\mbf W_2, D^{N,r}(\mbf X_2,\ldots,\mbf X_{r+1})\Bigr).
\end{align*}
Since $\mbf W_2 = T^{N,2}(\mbf W_1,\mbf X_1)$, by Proposition \ref{prop:full_intertwine} followed by Lemma \ref{lem:Diter}, this is equal to 
\begin{align*}
D^{N,3}\bigl(\mbf W_1,\mbf X_1,D^{N,r}(\mbf X_2,\ldots,\mbf X_{r+1})\bigr) = D^{N,2}\bigl(\mbf W_1,D^{N,r+1}(\mbf X_2,\ldots,\mbf X_{r+1})\bigr),
\end{align*}
and this is the left-hand side of \eqref{eq:gen_id} with $r$ replaced by $r + 1$.

Next, let $2 \le \ell \le r-1$. Then, by definition of the map $D^{N,\ell + 1}$, 
\begin{align*}
&\quad \, D^{N,\ell + 1}\bigl(D^{N,2}(\mbf W_1,\mbf X_1),\ldots,D^{N,2}(\mbf W_\ell,\mbf X_\ell), D^{N,r + 1 - \ell + 1}(\mbf W_{\ell + 1},\mbf X_{\ell + 1},\ldots,\mbf X_{r+1})  \bigr) \\
&= D^{N,2}\Bigl(D^{N,2}(\mbf W_1,\mbf X_1),D^{N,\ell}\bigl(D^{N,2}(\mbf W_2,\mbf X_2),\ldots,D^{N,2}(\mbf W_\ell,\mbf X_\ell),D^{N,r + 1-\ell + 1}(\mbf W_{\ell + 1},\mbf X_{\ell + 1},\ldots,\mbf X_{r+1}) \bigr).
\end{align*}
Applying the induction assumption, this is equal to
\begin{align*}
D^{N,2}\Bigl(D^{N,2}(\mbf W_1,\mbf X_1), D^{N,r+1}(\mbf W_2,\mbf X_2,\ldots,\mbf X_{r+1})    \Bigr).
\end{align*}
We have now reduced this to the $\ell = 1$ case, completing the proof of \eqref{eq:gen_id} for $r + 1$.
\end{proof}
We now complete the section by proving Theorem \ref{thm:disc_MC}.
\begin{proof}[Proof of Theorem \ref{thm:disc_MC}]
Let $\alpha \in \R$, $(\theta_1,\ldots,\theta_k)\in \R^k$, and let $(\mbf X_1,\ldots,\mbf X_k) \sim \nu_\beta^{N,(\theta_1,\ldots,\theta_k)}$. Let $(\mbf U_1,\ldots,\mbf U_k) = \D^{N,k}(\mbf X_1,\ldots,\mbf X_k)$ so that $(\mbf U_1,\ldots,\mbf U_k) \sim \mu_\beta^{N,(\theta_1,\ldots,\theta_k)}$. By definition of the Markov chain in \eqref{eq:coupled_disc_MC}, the statement to prove is that, if (i) $\mbf W$ is a sequence of $N$ i.i.d. log-inverse gamma random variables with shape $\gamma > 0$ and scale $\beta^{-2}$ or (ii) $\mbf W \sim \nu_\beta^{N,(\alpha)}$, independent of $(\mbf X_1,\ldots,\mbf X_k)$, then $\mathcal G_{\mbf Z}^{N,k}(\mbf U_1,\ldots,\mbf U_k) \sim \mu_\beta^{N,(\theta_1,\ldots,\theta_k)}$. In both of these cases, by inductively applying Proposition \ref{prop:Burke} and the fact that $D^{N,2}$ preserves the sum of the second coordinate (Lemma \ref{lem:Dsum_pres}), we have 
\[
 \mathcal H_{\mbf Z}^{N,k}(\mbf X_1,\ldots,\mbf X_k) \deq (\mbf X_1,\ldots,\mbf X_k).
\]
In other words, $\nu_\beta^{N,(\theta_1,\ldots,\theta_k)}$ is invariant for the dual process in both of these cases for the driving weight $\mbf W$.
Then, by Proposition \ref{prop:DNk_disc_intertwine}, 
\[
\mathcal G_{\mbf Z}^{N,k}(\mbf U_1,\ldots,\mbf U_k) = \D^{N,k} \circ \mathcal H_{\mbf Z}^{N,k}(\mbf X_1,\ldots,\mbf X_k)  \deq \D^{N,k}(\mbf X_1,\ldots,\mbf X_k) = (\mbf U_1,\ldots,\mbf U_k),
\]
as desired. 
\end{proof}

\section{The O'Connell-Yor polymer in a periodic environment and proof of Theorem \ref{thm:OCY_joint}} \label{sec:OCY} 

\subsection{Definition of the model} \label{sec:OCY_def}
 On a probability space $(\Omega,\Ff,\Pp)$, let $(B_r)_{r \in \Z_N}$ be a collection of i.i.d. two-sided standard Brownian motions. We periodically extend it to $(B_r)_{r \in \Z}$ by setting $B_r = B_j$ whenever $r \equiv j \mod N$. For $(s,m) < (t,n) \in \R \times \Z$,  define the space
\[
\pathsp_{(s,m),(t,n)} := \{(s_{m - 1},s_m,\ldots,s_n) \in \R^{n - m + 2}: s = s_{m - 1} < s_m < \cdots < s_n = t   \}.
\] 
For $(s,m) < (t,n) \in \R \times \Z$ and a parameter $\beta > 0$, we define a point-to-point partition function   as
\begin{equation} \label{OCYpart}
\OCY_\beta(t,n\viiva s,m) := \int_{\pathsp_{(s,m),(t,n)} }\exp \bigg\{\beta \sum_{r = m}^n \big(B_r(s_r) - B_r(s_{r-1})\big)\bigg\} d  \mbf s_{m:n-1},
\end{equation}
where $d \mbf s_{m:n-1}:=\prod_{i = m}^{n-1} ds_i$. As shorthand drop the $s$ if it equals $0$.  Define boundary cases
\[
\OCY_\beta(t,m\viiva s,m )= e^{\beta (B_m(t) - B_m(s))}\quad \textrm{for }m=n,\quad\textrm{and} \quad 
\OCY_\beta(s,n \viiva s,m) = \ind\{m = n\} \quad \textrm{for }s=t.
\]
If $t  < s$ or $n<m$,  set $\OCY_\beta(t,n \viiva s,m) = 0$. We can think of $\OCY_\beta(t,n \viiva s,m)$ as the integral over all up-right paths between $(s,m)$ and $(t,n)$, in a semi-discrete lattice, of the weight collected by the path. The weight in this case is the sum of Brownian increments along each integer level. See Figure \ref{fig:OCY} for a visual representation.
\begin{figure}
    \centering
    \includegraphics[width=0.5\linewidth]{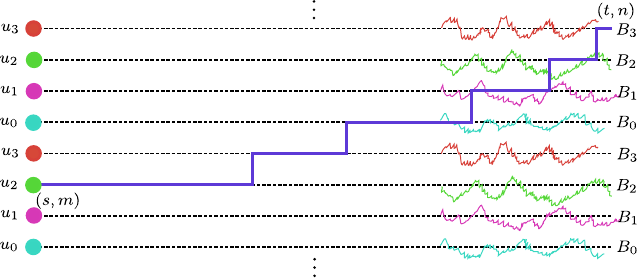}
    \caption{A depiction of the O'Connell-Yor polymer in a periodic environment. An up-right path is shown in blue from points $(s,m)$ to $(t,n)$ in the semi-discrete lattice. The environment is made up of a collection of two-sided Brownian motions $(B_r)_{r  \in \{1,2,3,4\}}$, one associated to each line and repeated periodically (these Brownian motions extend across the figure, but are truncated for the purposes of illustration). The periodic initial condition $\mbf u = (u_0,u_1,u_2,u_3)$ is represented by the disks starting on the left boundary at time $s$. The partition function is the integral over all up-right paths of the exponential of the sum of the Brownian increments along the path plus the cumulative sum of the initial condition to the path starting point.}
    \label{fig:OCY}
\end{figure}

The function $\OCY_\beta$ is the Green's function for a semi-discrete version of the SHE. We also consider the model with initial data. For a function $F:\{0,\ldots,N\} \to \R_{>0}$, extend $F$ to a function $\Z \to \R_{>0}$ by the condition that $\f{F(j+N)}{f(j)} = \f{F(N)}{F(0)}$ for all $j \in \Z$. Then, we define 
\be \label{Zfdef}
\OCY_\beta(t,i \viiva s,F) = \sum_{j \in \Z} F(j) \OCY_\beta(t,i \viiva s,j).
\ee
Next, for a sequence $\mbf u = (u_i)_{i \in \Z_N}$, extend to $(u_\ell)_{\ell \in \Z}$ by the condition $u_i = u_j$ whenever $i \equiv j \mod N$. Let $F:\Z \to \R_{>0}$ be the function satisfying $F(0) = 1$ and $\f{F(j)}{F(j-1)} = e^{u_j}$ for $j \in \Z$, and define
\begin{align}\label{Uudef}
U_\beta^N(t,i \viiva s,\mbf u) := \log \f{\OCY_\beta(t,i \viiva s,F)}{\OCY_\beta(t,i -1 \viiva s,F)}.
\end{align}
One should view the relation $u_j=\log F(j)-\log F(j-1)$ and the definition in \eqref{Uudef} as a semi-discrete Hopf-Cole transformation between the stochastic Burgers' and stochastic heat equations, and 
 \eqref{Uudef} and \eqref{Zfdef} are the corresponding solutions. Pictorially, one can think of $\mbf u$ as a discrete, periodic initial condition living on the left boundary in Figure \ref{fig:OCY}.

If $\vecsum(\mbf u) := \sum_{i\in \Z_N} u_i= \theta \in \R$, then for all $j \in \Z$, $\f{F(j+N)}{F(j)} = e^\theta$, and we may rewrite \eqref{Zfdef} as
\begin{align*}
\OCY_\beta(t,n \viiva s, F) = \sum_{m  = 0}^{N-1} \sum_{j \in \Z} F(m + jN) \OCY_\beta(t,n \viiva s,m+jN) = \sum_{m =0}^{N-1} F(m) \sum_{j \in \Z} e^{\theta j} \OCY_\beta(t,n \viiva s, m+ jN).
\end{align*}
This motivates the definition
\be \label{eq:reg_conv}
\OCYp_\beta(t,n \viiva s,m; \theta) := \sum_{j \in \Z} e^{\theta j} \OCY_\beta(t,n \viiva s,m+jN)
\ee
so that 
\be \label{per_conv}
\OCY_\beta(t,n \viiva s, F) = \sum_{m  = 0}^{N-1} F(m) \OCYp_\beta(t,n \viiva s,m;\theta).
\ee
 We note that the terms of the sum in \eqref{eq:reg_conv} are nonzero only for $j$ satisfying $m+ jN \le n$. Since the sum in \eqref{per_conv} is finite, Lemma \ref{lem:OCYp_finite} below ensures that $\OCY_\beta(t,n \viiva s,F)$ is finite as well. The following two results have standard proofs, which are deferred to Appendix \ref{appx:OCY_SDE_proofs}.

\begin{lemma} \label{lem:OCYp_finite}
With probability one, $\OCYp_\beta (t,n \viiva s,m; \theta)\in (0, \infty)$ for all $\theta \in \R$, $t > s$, and $n,m \in \Z$.
\end{lemma}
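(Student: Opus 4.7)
The plan is to separately verify strict positivity and finiteness of $\OCYp_\beta(t, n \viiva s, m; \theta)$, and to promote both to hold simultaneously in all parameters on a single almost-sure event.

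Positivity is immediate. Only indices $j \in \Z$ with $m + jN \le n$ contribute to the sum \eqref{eq:reg_conv}, since $\OCY_\beta$ vanishes otherwise by the convention in the definition \eqref{OCYpart}. For any such $j$, the integrand in \eqref{OCYpart} is the exponential of a finite sum of Brownian increments, hence strictly positive, while $\pathsp_{(s,m+jN),(t,n)}$ has positive Lebesgue measure; so each contributing term is strictly positive, and at least $j = \lfloor (n-m)/N \rfloor$ provides such a term.

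The substantive task is finiteness, which amounts to controlling the tail as $k := -j \to \infty$. My plan is to use the crude deterministic bound obtained by pulling the exponential integrand out at its supremum. Setting
\[
M_N(s, t) := \max_{r \in \Z_N} \sup_{u \in [s,t]} |B_r(u)|,
\]
which is almost surely finite by path continuity, every increment satisfies $|B_r(s_r) - B_r(s_{r-1})| \le 2 M_N(s, t)$, while the simplex $\pathsp_{(s, m-kN), (t, n)}$ has Lebesgue volume $(t-s)^D / D!$, with $D := n - m + kN$ the number of integration variables. This yields
\[
\OCY_\beta(t, n \viiva s, m - kN) \;\le\; e^{2\beta M_N(s,t)}\,\frac{\bigl[(t-s)\, e^{2\beta M_N(s,t)}\bigr]^{D}}{D!}.
\]
Since $D \ge kN$ and $M_N(s,t)$ does not depend on $k$, Stirling's formula shows that the factorial in the denominator dominates the base-$D$ exponential in the numerator, so after multiplication by $e^{-\theta k}$ the sum over $k \ge 0$ converges for every $\theta \in \R$.

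To conclude, I observe that the right-hand side of the above bound depends on $(s, t)$ only through $M_N(s, t)$. On the probability one event that every $B_r$ has continuous paths, $M_N(s, t)$ is finite for all $s < t$, so the finiteness estimate applies simultaneously for all $\theta \in \R$, $s < t \in \R$, and $n, m \in \Z$. I anticipate no real obstacle here; the only mildly delicate point is recognising that the factorial gain from the simplex volume defeats both the factor $e^{-\theta k}$ in the sum and the exponential-in-$D$ cost of the crude bound on the Brownian increments.
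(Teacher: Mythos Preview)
Your proof is correct and takes a genuinely different route from the paper. The paper first computes the expectation exactly,
\[
e^{\theta j}\,\Ee\bigl[\OCY_\beta(t,n\viiva s,m+jN)\bigr]=e^{\theta j}e^{\beta^2(t-s)/2}\,\frac{(t-s)^{n-m-jN}}{(n-m-jN)!},
\]
which is summable in $j$; this gives almost-sure finiteness for each fixed pair of \emph{integer} times $r_1<r_2$ and each \emph{integer} $\theta$. It then passes to a single full-probability event (countable intersection), and extends to arbitrary $s<t$ by sandwiching $s,t$ between integers and using the pathwise comparison $\OCY_\beta(t,n\viiva s,m)\le e^{\beta(B_n(t)-B_n(r_2)-B_m(s)+B_m(r_1))}\OCY_\beta(r_2,n\viiva r_1,m)$; finally, general $\theta$ follows by monotonicity. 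Your approach replaces all of this with a single pathwise estimate: the crude bound on the Brownian increments together with the simplex volume $(t-s)^D/D!$ gives a summand dominated by $e^{2\beta M_N(s,t)}[(t-s)e^{2\beta M_N(s,t)}]^D/D!$, whose factorial decay defeats any exponential in $k$. Because this bound depends on $(s,t)$ only through $M_N(s,t)$, which is finite for all $s<t$ on the continuity event, you get all parameters at once. Your argument is more elementary and avoids the two-step extension; the paper's route, on the other hand, exposes the Poisson-kernel structure of the expectation, which is thematically tied to later heat-kernel computations.
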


\begin{lemma} \label{lem:sd_SDE}
Fix any $s \in \R$, $k\in \N$, $\theta_1,\ldots,\theta_k\in \R$ and (random) $\mbf u_r = (u_{r,i})_{i \in \Z_N}$ satisfying $\vecsum(\mbf u_r) = \theta_r$ for $1\leq r\leq k$. With $U_{r,i}(t):=U_\beta^N(t,i \viiva s,\mbf u_r)$ from \eqref{Uudef}, define $\mbf U_r(t) := \bigl((U_{r,i}(t))_{i \in \Z_N}\bigr)_{t \ge s}$ for $1\leq r\leq k$.

\begin{enumerate} [label=\textup{(\roman*)}]
\item \label{itm:cont_lim} For $i \in \Z$ and $1 \le r \le k$, with probability one, $t \mapsto U_{r,i}(t)$ is continuous in $t \ge s$. Thus, $\lim_{t \searrow s} U_{r,i}(t) = U_{r,i}(s)=u_{r,i}$.
\item \label{itm:Zper} For $1 \le r \le k$ and $i,j \in \Z$, $U_{r,i}(t) = U_{r,j}(t)$ whenever $i \equiv j \mod N$.
\item \label{itm:prod_pres} For all $t \ge s$ and $1\leq r\leq k$,  $\vecsum\big(\mbf U_r(t)\big) = \theta_r$. 
\item \label{itm:SDE} $\big(\mbf U_1(t),\ldots,\mbf U_k(t)\big) $ is the unique (pathwise and in law) strong solution to the system of SDEs \eqref{eq:joint_OCY}
satisfying $\lim_{t \searrow s} \mbf U_r(t) = \mbf U_r(s)$ for $1 \le r \le k$,
where $(B_i)_{i \in \Z_N}$ are the same in \eqref{OCYpart} and   \eqref{eq:joint_OCY}.
\end{enumerate}
\end{lemma}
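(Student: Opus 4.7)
The plan is to translate everything into properties of the partition function
$$Z_{r,n}(t) := \OCY_\beta(t,n \viiva s,F_r), \qquad F_r(0)=1,\;\; F_r(j)/F_r(j-1) = e^{u_{r,j}},$$
and then transfer to $U_{r,n}(t) = \log Z_{r,n}(t) - \log Z_{r,n-1}(t)$ by taking logarithms. All four items then fall out of three structural properties of $Z_{r,n}$: positivity, continuity in $t$, and the periodicity identity $Z_{r,n+N}(t) = e^{\theta_r}\, Z_{r,n}(t)$. Positivity and finiteness follow from the decomposition \eqref{per_conv} together with Lemma \ref{lem:OCYp_finite} and $F_r>0$; continuity of $t \mapsto Z_{r,n}(t)$ on $[s,\infty)$ follows from the simplex integral representation \eqref{OCYpart} combined with continuity of Brownian paths and dominated convergence (the exponentials being uniformly bounded on compact time intervals); and the periodicity identity follows by a change of index $j \mapsto j-N$ in \eqref{Zfdef} combined with $\OCY_\beta(t,n+N \viiva s,m) = \OCY_\beta(t,n \viiva s,m-N)$, which in turn holds by $N$-periodicity of the driving Brownians.

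These three facts already deliver items (i)--(iii): item (ii) holds because the $e^{\theta_r}$ factor cancels in the ratio defining $U_{r,n}$; item (iii) holds because the sum $\sum_{i=0}^{N-1}U_{r,i}(t)$ telescopes to $\log(Z_{r,N-1}(t)/Z_{r,-1}(t)) = \theta_r$ by the periodicity identity; and item (i) combines continuity and positivity with the boundary computation $\OCY_\beta(s,n\viiva s,m) = \ind\{n=m\}$, which gives $Z_{r,n}(s) = F_r(n)$ and therefore $U_{r,n}(s) = u_{r,n}$.

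For item (iv), I would derive the SDE for $Z_{r,n}$ from the recursive integral representation $\OCY_\beta(t,n\viiva s,m) = \int_s^t \OCY_\beta(s',n-1 \viiva s,m)\,e^{\beta (B_n(t)-B_n(s'))}\,ds'$ for $n \ge m$, applying It\^o to the $e^{\beta(B_n(t)-B_n(s'))}$ factor to obtain
$$d\OCY_\beta(t, n \viiva s, m) = \OCY_\beta(t, n-1 \viiva s, m)\,dt + \tfrac{\beta^2}{2}\,\OCY_\beta(t, n \viiva s, m)\,dt + \beta\, \OCY_\beta(t, n \viiva s, m)\,dB_n(t).$$
Summing against $F_r(m)$ and interchanging the sum with the Lebesgue and stochastic integrals---justified by converting to the periodized sum \eqref{eq:reg_conv} and invoking Lemma \ref{lem:OCYp_finite} to control the tails uniformly on compact time intervals---yields the same SDE for $Z_{r,n}(t)$. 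Then It\^o's formula applied to $\log Z_{r,n}$ cancels the $\beta^2/2$ term, and differencing in $n$ produces
$$dU_{r,n}(t) = \bigl(e^{-U_{r,n}(t)} - e^{-U_{r,n-1}(t)}\bigr)\,dt + \beta\bigl(dB_n(t) - dB_{n-1}(t)\bigr),$$
which is exactly \eqref{eq:joint_OCY}. For uniqueness, the drift coefficients $\diff_i e^{-\mbf U}$ are locally Lipschitz in $\mbf U$, so pathwise uniqueness of strong solutions holds up to an explosion time; since the polymer-defined $\mbf U_r(t)$ is almost surely continuous on $[s,\infty)$ by (i), it does not explode in finite time, and comparing any other strong solution to $\mbf U_r$ up to successive exit times from growing compact subsets of $\R^{\Z_N}$ (via a standard Gronwall localization argument) gives global pathwise uniqueness. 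Uniqueness in law then follows from Yamada--Watanabe. The step I expect to require the most technical care is the interchange of the infinite sum $\sum_{m \in \Z}F_r(m)(\cdots)$ with the stochastic differential when passing from the Green's-function SDE to the SDE for $Z_{r,n}$; this is where the finiteness result of Lemma \ref{lem:OCYp_finite} has to be promoted to an estimate with sufficient uniformity in $t$ (and summability in $m$) to justify dominated convergence for the It\^o integral.
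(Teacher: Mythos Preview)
Your proposal is essentially correct and follows the same overall architecture as the paper's proof for items (i)--(iii): positivity plus finiteness from Lemma~\ref{lem:OCYp_finite}, continuity from the integral representation and dominated convergence, the periodicity identity $Z_{r,n+N}(t)=e^{\theta_r}Z_{r,n}(t)$, and telescoping for the slope preservation.

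For item (iv) the two approaches diverge in a useful way. You derive the SDE for $\OCY_\beta(t,n\viiva s,m)$ directly via It\^o and then sum over $m$, which forces you to interchange an infinite sum with a stochastic integral---the step you rightly flag as requiring the most care. The paper instead factors $\OCY_\beta(t,n\viiva s,m)=e^{\beta B_n(t)}I_\beta(t,n\viiva s,m)$ where $I_\beta$ satisfies an \emph{ordinary} differential equation $dI_\beta(t,n\viiva s,m)=e^{-\beta B_n(t)}\OCY_\beta(t,n-1\viiva s,m)\,dt$. This moves all the stochastic dependence into the single prefactor $e^{\beta B_n(t)}$, so summing over $m$ only requires interchanging a sum with an ordinary Lebesgue integral---a plain dominated convergence argument of the same type already used in Lemma~\ref{lem:OCYp_finite}. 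The paper then writes $U_i(t)=\beta B_i(t)-\beta B_{i-1}(t)+\log\sum_j F(j)I_\beta(t,i\viiva s,j)-\log\sum_j F(j)I_\beta(t,i-1\viiva s,j)$ and differentiates directly, with the Brownian increments already isolated. Your route is fine and arguably more natural from a semigroup perspective, but the paper's factoring trick sidesteps the sum-versus-stochastic-integral interchange entirely, which is what it buys. For uniqueness both proofs invoke local Lipschitz continuity of the drift; the paper cites \cite[Theorems 5.3.6--5.3.7]{ethi-kurt}, matching your Gronwall localization plus Yamada--Watanabe reasoning.
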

\begin{remark}
When we define $U_{r,i}(t)$, this is done for all $i \in \Z$. Lemma \ref{lem:sd_SDE}\ref{itm:Zper} then show that $U_{r,i}(t) = U_{r,i+N}(t)$ for all $i \in \Z$. Hence it suffices to consider $\mbf U_r(t)$ (defined in Lemma \ref{lem:sd_SDE}) which only tracks $i\in \Z_N$. 
\end{remark}

\subsection{Transforming the system of SDEs and the proof of Theorem \ref{thm:OCY_joint}} \label{sec:OCY_full_proof}
There are two main steps toward proving Theorem \ref{thm:OCY_joint}. The first is Proposition \ref{prop:NewSDE}, which demonstrates how the inverse map of $\D^{N,k}$, namely $\mathcal J^{N,k}$, transforms the system of SDEs \eqref{eq:joint_OCY}. Proposition \ref{prop:prod_invar} then shows that the product measure $\nu_\beta^{N,(\theta_1,\ldots,\theta_k)}$ is invariant for this transformed system of SDEs. 

We recall  the notation $\diff_i \mbf X = X_i - X_{i-1}$ and $\Lapl_i \mbf X = X_{i-1} + X_{i+1} - 2X_i$. When we write $\diff_i e^{-\mbf X}$ or $\Lapl_i e^{-\mbf X}$, we mean that we apply these operators to the sequence $e^{-\mbf X} := (e^{-X_i})_{i \in \Z_N}$.

\begin{proposition} \label{prop:NewSDE}
For $N,k \in \N$, let $\bigl(\mbf U_1(t),\ldots,\mbf U_k(t)\bigr)_{t \ge 0}$ be the unique strong solution to \eqref{eq:joint_OCY} from Lemma \ref{lem:sd_SDE} \ref{itm:SDE}, started from an initial condition $(\mbf U_1(0),\ldots,\mbf U_k(0)) \in \mathcal R^{N,k}$. With probability one, for all $t > 0$, $(\mbf U_1(t),\ldots,\mbf U_k(t)) \in \mathcal R^{N,k}$, and  $\bigl(\mbf X_1(t),\ldots,\mbf X_k(t) \bigr)_{t \ge 0} := \bigl(\mathcal J^{N,k}(\mbf U_1(t),\ldots,\mbf U_k(t)) \bigr)_{t \ge 0}$ is the unique (pathwise and in law) strong solution to the system of SDEs below with initial condition $\bigl(\mbf X_1(0),\ldots,\mbf X_k(0)\bigr)$:
\begin{equation}
\label{DxR_gen}
d X_{r,i}(t) = \Bigl(\diff_ie^{-\mbf X_r(t)} - \sum_{m = 1}^{r-1}\Lapl_ie^{-\mbf X_m(t)} \Bigr)\,dt 
 + \beta\diff_i d\mbf B(t),\qquad 1 \le r \le k, i \in \Z_N.
\end{equation}
\end{proposition}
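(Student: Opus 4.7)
The plan is to work with the forward map $\mathcal D^{N,k}$ rather than $\mathcal J^{N,k}$, invoking the bijection from Proposition \ref{prop:transform} and pathwise uniqueness from Lemma \ref{lem:sd_SDE}\ref{itm:SDE}. First I would define $(\mbf X_1(0),\ldots,\mbf X_k(0)) := \mathcal J^{N,k}(\mbf U_1(0),\ldots,\mbf U_k(0)) \in \R^{N,k}_{\neq}$ and establish strong existence and uniqueness for \eqref{DxR_gen} from this initial data by an argument parallel to Lemma \ref{lem:sd_SDE}. Since $\diff_i$ and $\Lapl_i$ are discrete differences summing to zero around $\Z_N$, $\vecsum(\mbf X_r(t))$ is conserved, so the solution remains in $\R^{N,k}_{\neq}$ for all $t \ge 0$. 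Setting $\wt{\mbf U}_r(t) := D^{N,r}(\mbf X_1(t),\ldots,\mbf X_r(t))$, Proposition \ref{prop:transform} places $(\wt{\mbf U}_1(t),\ldots,\wt{\mbf U}_k(t))$ in $\mathcal R^{N,k}$ throughout. The proof then reduces to checking that $(\wt{\mbf U}_1,\ldots,\wt{\mbf U}_k)$ solves \eqref{eq:joint_OCY}, since uniqueness from Lemma \ref{lem:sd_SDE}\ref{itm:SDE} forces $\wt{\mbf U}_r \equiv \mbf U_r$, simultaneously yielding the $\mathcal R^{N,k}$-preservation and identifying $\mathcal J^{N,k}(\mbf U_1(t),\ldots,\mbf U_k(t)) = (\mbf X_1(t),\ldots,\mbf X_k(t))$.

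The structural observation driving the It\^o computation is that every coordinate $\mbf X_s$ in \eqref{DxR_gen} is driven by the \emph{same} Brownian increment $\beta\diff_i d\mbf B(t)$, so each difference $\mbf X_s - \mbf X_{s'}$ is of finite variation. By the formula from Lemma \ref{lem:Dnm_alt}, $D^{N,r}_i(\mbf X_1,\ldots,\mbf X_r) = X_{r,i} + Q^{N,r}_i - Q^{N,r}_{i-1}$ where $Q^{N,r}_i$ depends on $(\mbf X_1,\ldots,\mbf X_r)$ only through these differences. Applying It\^o's lemma, the Hessian of $Q^{N,r}_i$ contracts only against the (vanishing) joint quadratic variations of differences, so no second-order correction arises and the Brownian component of $d\wt U_{r,i}$ reduces to that of $dX_{r,i}$, namely $\beta \diff_i d\mbf B(t)$, exactly as required by \eqref{eq:joint_OCY}. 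Substituting the drifts of \eqref{DxR_gen} then produces an explicit drift expression for $d\wt U_{r,i}$.

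The hard part is the algebraic identity asserting that this drift equals $\diff_i e^{-\wt{\mbf U}_r}$. I would set up the induction at the level of general sub-strings: for any $1 \le p \le q \le k$, the process $\mbf V_{p,q}(t) := D^{N,q-p+1}(\mbf X_p(t),\ldots,\mbf X_q(t))$ should satisfy
\begin{equation*}
dV_{p,q,i} = \Bigl(\diff_i e^{-\mbf V_{p,q}} - \sum_{m=1}^{p-1}\Lapl_i e^{-\mbf X_m}\Bigr)dt + \beta\diff_i d\mbf B(t),
\end{equation*}
with the case $p = q$ built directly into \eqref{DxR_gen}. The inductive step from $(p+1,q)$ to $(p,q)$ uses the recursion $\mbf V_{p,q} = D^{N,2}(\mbf X_p,\mbf V_{p+1,q})$ from \eqref{eq:D_iter_intro}; after substituting the drift of $d\mbf X_p$ and (by hypothesis) the drift of $d\mbf V_{p+1,q}$, the $-\sum_{m<p}\Lapl_i e^{-\mbf X_m}$ pieces telescope through cleanly, the difference of drifts simplifies to $\diff_i e^{-\mbf V_{p+1,q}} - \diff_{i+1} e^{-\mbf X_p}$, and the remaining identity collapses to the core $r=2$ case,
\begin{equation*}
\diff_i e^{-D^{N,2}(\mbf X_1,\mbf X_2)} = \diff_i e^{-\mbf X_2} - \Lapl_i e^{-\mbf X_1} + \sum_{j \in \Z_N}\partial_{Y_j}\bigl(Q^{N,2}_i - Q^{N,2}_{i-1}\bigr)(\mbf Y)\bigl(\diff_j e^{-\mbf X_2} - \diff_{j+1} e^{-\mbf X_1}\bigr),
\end{equation*}
with $\mbf Y = \mbf X_2 - \mbf X_1$. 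I would verify this $r=2$ identity by direct differentiation of $Q^{N,2}_i(\mbf Y) = \log\sum_{j\in\Z_N} e^{Y_{(i,j]}}$ followed by telescoping over the cyclic index, in the same spirit as the cancellations behind Lemmas \ref{lem:Rbij} and \ref{lem:exp_add_pres}. A more conceptual route rewrites $e^{-D^{N,2}(\mbf X_1,\mbf X_2)} = e^{-\mbf X_1} + e^{-\mbf X_2} - e^{-T^{N,2}(\mbf X_1,\mbf X_2)}$ via Lemma \ref{lem:exp_add_pres} and reduces the claim to a companion identity for $T^{N,2}$, exploiting the $D^{N,2} \leftrightarrow T^{N,2}$ symmetry from Corollary \ref{cor:DR_bijection}. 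Taking $(p,q) = (1,r)$ yields the required SDE for $\wt{\mbf U}_r$, completing the argument.
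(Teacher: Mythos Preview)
Your overall strategy inverts the paper's: the paper applies the inverse map $\mathcal J^{N,k}$ to the globally defined solution $\mbf U$ of \eqref{eq:joint_OCY} and verifies by It\^o's lemma (applied to the local map $J^{N,2}$, whose drift reduction is the content of Lemma \ref{lem:nasty_algebra}) that the image solves \eqref{DxR_gen}; you instead push the solution of \eqref{DxR_gen} forward through $\mathcal D^{N,k}$ and match it to $\mbf U$ via uniqueness. Your It\^o setup is correct---since every layer carries the same diffusion $\beta\diff_i d\mbf B$, the processes $\mbf V_{p+1,q}-\mbf X_p$ are finite variation and the second-order term in It\^o's lemma vanishes, just as it does in the paper's computation via the transport relations \eqref{FGder}--\eqref{eq:H_transport}. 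Your substring induction correctly reduces the drift to the $r=2$ identity, which is the same identity as Lemma \ref{lem:nasty_algebra} after reparameterizing via the $(D^{N,2},J^{N,2})$ bijection; note that the paper's choice to work in the $(\mbf U_1,\mbf U_2)$ variables makes the check local in $i$ (only $W_i,W_{i+1}$ appear), whereas your formulation involves the global sum $Q^{N,2}_i$, so the direct verification you propose would be longer.

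The real gap is the sentence ``establish strong existence and uniqueness for \eqref{DxR_gen}\ldots by an argument parallel to Lemma \ref{lem:sd_SDE}.'' There is no such parallel: Lemma \ref{lem:sd_SDE} obtains global existence from the explicit polymer partition function, and \eqref{DxR_gen} has no analogous representation. The coefficients are only locally Lipschitz, so standard theory gives existence only up to an explosion time $\tau$. Your bootstrap---identify $\mathcal D^{N,k}(\mbf X(t))$ with $\mbf U(t)$ for $t<\tau$ and then use global existence of $\mbf U$ to preclude explosion---stalls at exactly the point where you must rule out $\mbf U(\tau)\in\partial\mathcal R^{N,k}$, i.e., you still need domain preservation for $\mbf U$ independently. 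The paper handles this first, by induction on $k$ together with the comparison principle Lemma \ref{lem:SDE_strong_comp} applied to the finite-variation difference $\wt{\mbf X}_{k+1}-\mbf X_k$, and only then defines $\mbf X(t):=\mathcal J^{N,k}(\mbf U(t))$, thereby obtaining global existence for \eqref{DxR_gen} as a consequence rather than an assumption. Your route can be repaired, but it requires importing exactly this ingredient.
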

\begin{proof}
 In showing that $\bigl(\mbf X_1(t),\ldots,\mbf X_k(t)\bigr)$ is the unique strong solution to \eqref{DxR_gen}, we only need to show that $\bigl(\mbf X_1(t),\ldots,\mbf X_k(t)\bigr)$ is a strong solution to  \eqref{DxR_gen}. Indeed, since the coefficients of the SDE are locally Lipschitz, \cite[Theorem 5.3.7, page 297]{ethi-kurt}, implies pathwise uniqueness of solutions to the system of SDEs. Then \cite[theorem 5.3.6]{ethi-kurt} implies uniqueness of solutions in law.

For convenience of the reader, we recall here \eqref{eq:joint_OCY}:
\[
dU_{r,i}(t) = \diff_i e^{-\mbf X_r(t)}\,dt + \beta \diff_i d\mbf B(t) ,\qquad 1 \le r \le k,\; i \in \Z_N.
\]
We proceed by induction. When $k = 1$, $\mathcal J^{N,1}$ is the identity, and $\mathcal R^{N,1} = \R^{\Z_N}$, so there is nothing to show.
Assume the proposition holds for some $k \ge 1$. There are two things to show:
\begin{enumerate}[label=\textup{(\roman*)}]
    \item \label{itm:domain} If $\bigl(\mbf U_1(t),\ldots,\mbf U_{k+1}(t)\bigr)_{t \ge 0}$ solves \eqref{eq:joint_OCY} with $\bigl(\mbf U_1(0),\ldots,\mbf U_{k+1}(0)\bigr) \in \mathcal R^{N,k+1}$ then for all $t> 0$, we also have $\bigl(\mbf U_1(t),\ldots,\mbf U_{k+1}(t)\bigr) \in \mathcal R^{N,k+1}$.
    \item \label{itm:Ito} $\bigl(\mathcal J^{N,k+1}(\mbf U_1(t),\ldots,\mbf U_{k+1}(t)) \bigr)_{t \ge 0}$ is a strong solution to the SDE \eqref{DxR_gen} satisfying 
    \[
    \lim_{t \searrow 0} \mathcal J^{N,k+1}\bigl(\mbf U_1(t),\ldots,\mbf U_{k+1}(t)\bigr) = \mathcal J^{N,k+1}\bigl(\mbf U_1(0),\ldots,\mbf U_{k+1}(0)\bigr).
    \] 
\end{enumerate}

\noindent {\bf Item \ref{itm:domain}:} 
Let $\bigl(\mbf U_1(t),\ldots,\mbf U_{k+1}(t)\bigr)_{t \ge 0}$ solve \eqref{eq:joint_OCY} (with $k\mapsto k+1$) with $\bigl(\mbf U_1(0),\ldots,\mbf U_{k+1}(0)\bigr) \in \mathcal R^{N,k+1}$. Recalling $\mathcal R^{N,k}$  and $\mathcal J^{N,k}$ from  Section \ref{sec.sst}, the assumption $(\mbf U_1(0),\ldots,\mbf U_{k+1}(0)) \in \mathcal R^{N,k+1}$ means:
\be \label{eq:Rnk+1}
\begin{aligned}
    &\bigl(\mbf U_1(0),\ldots,\mbf U_{k-1}(0),\mbf U_{k}(0)\bigr), \bigl(\mbf U_1(0),\ldots,\mbf U_{k-1}(0),\mbf U_{k+1}(0)\bigr) \in \mathcal R^{N,k}, \qquad  \bigl(\mbf X_k(0),\wt{\mbf X}_{k+1}(0)\bigr) \in \mathcal R^{N,2},\\
    &\text{where}\quad \bigl(\mbf X_1(0),\ldots,\mbf X_{k-1}(0),\mbf X_{k}(0)\bigr) := \mathcal J^{N,k}\bigl(\mbf U_1(0),\ldots,\mbf U_{k-1}(0),\mbf U_{k}(0)\bigr),\\
    &\text{and} \quad \bigl(\mbf X_1(0),\ldots,\mbf X_{k-1}(0),\wt{\mbf X}_{k+1}(0)\bigr) := \mathcal J^{N,k}\bigl(\mbf U_1(0),\ldots,\mbf U_{k-1}(0),\mbf U_{k+1}(0)\bigr),
\end{aligned}
\ee
and the output of $\mathcal J^{N,k+1}(\mbf U_1(0),\ldots,\mbf U_{k+1}(0))$, is $(\mbf X_1(0),\ldots,\mbf X_{k+1}(0))$, where $(\mbf X_1(0),\ldots,\mbf X_k(0))$ is defined above (consistency follows from the definition of $\mathcal J^{N,k}$), and $\mbf X_{k+1}(0) = J^{N,2}\bigl(\mbf X_k(0),\wt{\mbf X}_{k+1}(0)\bigr)$.

Since $\bigl(\mbf U_1(t),\ldots,\mbf U_{k+1}(t)\bigr)_{t \ge 0}$ solve \eqref{eq:joint_OCY} (with $k\mapsto k+1$), marginally $\bigl(\mbf U_1(t),\ldots,\mbf U_{k-1}(t),\mbf U_{k}(t)\bigr)$ and $\bigl(\mbf U_1(t),\ldots,\mbf U_{k-1}(t),\mbf U_{k+1}(t)\bigr)$ both solve \eqref{eq:joint_OCY} with their respective initial data. Thus, applying Item \ref{itm:domain} (with $k$) of the induction assumption twice, we have that, for all $t \ge 0$,
\be \label{eq:UinR}
\bigl(\mbf U_1(t),\ldots,\mbf U_{k-1}(t),\mbf U_{k}(t)\bigr), \bigl(\mbf U_1(t),\ldots,\mbf U_{k-1}(t),\mbf U_{k+1}(t)\bigr) \in \mathcal R^{N,k}.
\ee
Therefore, if we define, for $t \ge 0$,
\begin{align*}
\big(\mbf X_1(t),\ldots,\mbf X_{k-1}(t),\mbf X_k(t)\big) &:= \mathcal J^{N,k}\big(\mbf U_1(t),\ldots,\mbf U_{k-1}(t),\mbf U_k(t)\big),\\
\big(\mbf X_1(t),\ldots,\mbf X_{k-1}(t),\wt{\mbf X}_{k+1}(t)\big) &:= \mathcal J^{N,k}\bigl(\mbf U_1(t),\ldots,\mbf U_{k-1}(t),\mbf U_{k+1}(t)\bigr),
\end{align*}
it follows from two applications of Item \ref{itm:Ito} (with $k$) of the induction assumption that \\ $(\mbf X_1(t),\ldots,\mbf X_k(t),\wt{\mbf X}_{k+1}(t))$ is a strong solution to the system of SDEs
\be \label{eq:pre_Ito}
\begin{aligned}
d X_{r,i}(t) &= \Bigl(\diff_i e^{-\mbf X_r(t)} - \sum_{m = 1}^{r-1}\Lapl_ie^{-\mbf X_m(t)} \Bigr)\,dt 
 + \beta\diff_i d\mbf B(t), & 1 \le r \le k, i \in \Z_N,  \\
d \wt{X}_{k + 1,i}(t) &= \Bigl(\diff_i e^{-\wt{\mbf X}_{k+1}(t)} - \sum_{m = 1}^{k-1}\Lapl_ie^{-\mbf X_m(t)} \Bigr)\,dt 
 +\beta \diff_i d\mbf B(t), &i \in \Z_N.
\end{aligned}
\ee
If we look at the different $\wt X_{k+1,i}(t) - X_{k,i}(t)$ for $i\in \Z_N$ then it satisfies
\[
\df{d}{dt} (\wt X_{k+1,i}(t) - X_{k,i}(t)) = \diff_ie^{-\wt{\mbf X}_{k+1}(t)} - \diff_ie^{-\mbf X_{k}(t)} = e^{-\wt X_{k+1,i}(t)} - e^{-X_{k,i}(t)} + e^{-X_{k,i-1}(t)} - e^{-\wt X_{k+1,i-1}(t)}.
\]
By \eqref{eq:Rnk+1}, $\big(\mbf X_k(0),\wt{\mbf X}_{k+1}(0)\big) \in \mathcal R^{N,2}$, so Lemma \ref{lem:SDE_strong_comp} implies that $\big(\mbf X_k(t),\wt{\mbf X}_{k+1}(t)\big) \in \mathcal R^{N,2}$ for all $t \ge 0$.
Proposition \ref{prop:transform} shows that  $\big(\mbf X_1(t),\ldots,\mbf X_{k-1}(t),\mbf X_k(t)\big),\big(\mbf X_1(t),\ldots,\mbf X_{k-1}(t),\wt{\mbf X}_{k+1}(t)\big)\in \R^{N,k}_{\neq}$. Combined with  $\big(\mbf X_k(t),\wt{\mbf X}_{k+1}(t)\big) \in \mathcal R^{N,2}$ (shown above) this implies that 
 $\big(\mbf X_1(t),\ldots,\mbf X_{k-1}(t),\mbf X_k(t),\wt{\mbf X}_{k+1}(t)\big)\in \R^{N,k+1}_{\neq}$.  Defining $\mbf X_{k+1}(t) := J^{N,2}\bigl(\mbf X_k(t),\wt{\mbf X}_{k+1}(t)\bigr)$, 
 Proposition \ref{prop:transform} implies that $\vecsum\big(\wt{\mbf X}_{k+1}(t)\big) = \vecsum\big(\mbf X_{k+1}(t)\big)$ and hence $\big(\mbf X_1(t),\ldots,\mbf X_{k-1}(t),\mbf X_k(t),\mbf X_{k+1}(t)\big)\in \R^{N,k+1}_{\neq}$. All the remains is to again invoke Proposition \ref{prop:transform} (with $k+1$) which shows the desired $k+1$ case of the inductive assumption, that 
 $$
 \mathcal D^{N,k+1}\big(\mbf X_1(t),\ldots,\mbf X_{k-1}(t),\mbf X_k(t),\mbf X_{k+1}(t)\big)\in \R^{N,k+1}_{\neq}= \big(\mbf U_1(t),\ldots,\mbf U_{k+1}(t)\big)\in \mathcal R^{N,k+1}.$$
 
\noindent{\bf Item \ref{itm:Ito}:} By the continuity of the sample paths $(\mbf U_1(t),\ldots,\mbf U_k(t))$ from Lemma \ref{lem:sd_SDE}\ref{itm:cont_lim} and the continuity of the map $\mathcal J^{N,k}$, the sample paths of $\bigl(\mbf X_1(t),\ldots,\mbf X_k(t)\bigr)$ are continuous. In particular, $\lim_{t \searrow 0} \mbf X_m(t) = \mbf X_m(0)$ for $1 \le m \le k$.  By \eqref{eq:pre_Ito} and inductive definition of the map $\mathcal J^{N,k+1}$, it only remains to show that, if we set $\mbf X_{k+1}(t) := \mathcal J^{N,2}(\mbf X_k(t),\wt{\mbf X}_{k+1}(t))$ for all $t \ge 0$, then $\mbf X_{k+1}(t)$ satisfies
\be \label{eq:k+1_eq}
d  X_{k+1,i}(t) = \Bigl(\diff_ie^{-\mbf X_{k+1}(t)} - \sum_{m = 1}^{k}\Lapl_ie^{-\mbf X_m(t)} \Bigr)\,dt 
 +\beta \diff_i d\mbf B(t),\quad i \in \Z_N.
\ee

We recall here  It\^o's lemma for a multi-dimensional diffusion (see, e.g., \cite[Theorem 5.10]{LeGall-book}): Assume that $\mbf Y(t)$ is an $d$-dimensional diffusion process satisfying  
\[
d \mbf Y(t) = \mbf A(\mbf Y(t))\,dt + \Sigma\,d \mbf B(t),
\]
where $\mbf A:\R \to \R^d$,  $\Sigma \in M_{d\times d}(\R)$, and
where $\mbf B$ is an $d$-dimensional Brownian motion. Then, for a twice continuously-differentiable function $f:\R^d \to \R$, we have
\be
d f(\mbf Y(t)) = \Biggl( (\text{grad} f)^T \mbf A(\mbf Y(t)) + \f{1}{2} \text{Tr}\Bigl(\Sigma^T (H f)\Sigma\Bigr)\Biggr)\,dt + (\text{grad} f)^T \Sigma d\mbf B(t), \label{eq:Ito}
\ee
where $\text{grad} f$ denotes the (column) gradient vector of $f$, and $H_{\mbf Y}f$ denotes the Hessian matrix of $f$. We apply Ito's lemma to the system \eqref{eq:pre_Ito} and the coordinate functions $J_i^{N,2}$. The key is that the diffusion term will be the same, and the Hessian term in \eqref{eq:Ito} will be shown to be $0$.

To properly apply It\^o's lemma, one starts from the full system \eqref{eq:pre_Ito} and considers $\mbf X_{k+1}(t)$ as a function of the $(k+1)N$-dimensional system $\bigl(\mbf X_1(t),\ldots,\mbf X_k(t),\wt{\mbf X}_{k+1}(t)\bigr)_{t \ge 0}$. Then, the covariance matrix for this system is of dimension $(k+1)N \times (k+1)N$. However, $\mbf X_{k+1}(t)$ depends only on $\mbf X_k(t)$ and $\wt{\mbf X}_{k+1}(t)$, and so we may reduce to $2N$-dimensions by considering only the equations obeyed by $\mbf X_k(t)$ and $\wt{\mbf X}_{k+1}(t)$. That is,
\be \label{eq:tXR_sys}
\begin{aligned}
d X_{k,i}(t) &= \Bigl(\diff_i e^{-\mbf X_k(t)} - \sum_{m = 1}^{k-1}\Lapl_ie^{-\mbf X_m(t)} \Bigr)\,dt 
 + \beta\diff_i d\mbf B(t), i \in \Z_N,  \\
d \wt{X}_{k + 1,i}(t) &= \Bigl(\diff_ie^{-\wt{\mbf X}_{k+1}(t)} - \sum_{m = 1}^{k-1}\Lapl_ie^{-\mbf X_m(t)} \Bigr)\,dt 
 + \beta \diff_i d\mbf B(t), i \in \Z_N.
 \end{aligned}
\ee
Then,  we consider the diffusion matrix for the system as the $2N \times 2N$ block matrix
\[
\Sigma = \begin{pmatrix}
    \sigma &0 \\
    \sigma &0
\end{pmatrix}, 
\]
where the $0$s above represent  $N \times N$ blocks of $0$s, and $\sigma$ is the $N \times N$ matrix with $\sigma_{i,i} = -\beta$, $\sigma_{i,i+1} = \beta$ (with addition modulo $N$), and $\sigma_{i,j} = 0$ otherwise. The zero blocks are present because we only use one set of Brownian motions for the system.

For $j \in \Z_N$, and $t \ge 0$, define
$
W_{k,j}(t) = \wt X_{k+1,j}(t) - X_{k,j}(t).
$
Then, for $i \in \Z_N$, by definition,
\[
X_{k+1,i}(t) = J_i^{N,2}\bigl(\mbf X_k(t),\wt{\mbf X}_{k+1}(t)\bigr) = \wt X_{k+1,i}(t) + \log \Bigl(\f{e^{W_{k,i+1}(t)} - 1}{e^{W_{k,i}(t)}-1}\Bigr).
\]
Then, for $j \in \{i-1,1\}$,
\be \label{FGder}
\df{\partial J_i^{N,2}}{\partial \wt X_{k+1,j}} = \ind\{i = j\} + (-1)^{i+j-1}\f{e^{W_{k,j}(t)}}{e^{W_{k,j}(t)}-1},\qquad\text{and}\qquad\df{\partial J_i^{N,2}}{\partial X_{k,j}} + \df{\partial J_i^{N,2}}{\partial \wt X_{k+1,j}} = \ind\{i = j\},
\ee
By It\^o's lemma and the second equality of \eqref{FGder}, the diffusion term for $X_{r+1,i}(t) = J_i^{N,2}\bigl(\mbf X_r(t),\wt{\mbf X}_{r+1}(t)\bigr)$ is 
\begin{align*}
\sum_{j = i}^{i+1} \Bigl(\df{\partial J_i^{N,2}}{\partial X_{k,j}} + \df{\partial J_i^{N,2}}{\partial\wt X_{k+1,j}}\Bigr)\Bigl(\beta dB_{j}(t) - \beta dB_{j-1}(t)\Bigr) = \beta \diff_i d\mbf B(t),
\end{align*}
and this matches the diffusion term in \eqref{eq:k+1_eq}. We turn our attention to the drift term. Let $H = H(i,k)$ be $2N \times 2N$  block Hessian matrix 
\[
H = \begin{pmatrix}
H_{k,k} & H_{k,k+1} \\
H_{k+1,k} & H_{k+1,k+1}
\end{pmatrix},
\]
where $(H_{k,k})_{m,n}$ is the $N \times N$ matrix of  partial derivatives,
$
\df{\partial^2 J_i^{N,2}}{\partial X_{k,m} \partial X_{k,n}},
$
$(H_{k,k+1})_{m,n}$ is the matrix of partial derivatives
$
\df{\partial^2 J_i^{N,2}}{\partial \wt X_{k+1,m} \partial \wt X_{k+1,n}},
$
and analogously for $H_{k+1,k}$ and $H_{k+1,k+1}$. We note these matrices depend on $i$ also, but suppress the dependence in the notation.  Note that the second equality in \eqref{FGder} implies 
\[
\df{\partial^2 J_i^{N,2}}{\partial X_{k,m}\partial X_{k,n}} = \df{\partial}{\partial X_{k,m}}\Bigl(\ind\{i = n\} - \df{\partial J_i^{N,2}}{\partial \wt X_{k+1,n}}\Bigr) = - \df{\partial^2 J_i^{N,2}}{\partial X_{k,m} \partial \wt X_{k+1,n}},
\]
and a similar computation shows that the right-hand side is also equal to $\df{\partial^2 J_i^{N,2}}{\partial \wt X_{k+1,m}\partial \wt X_{k+1,m}}$ and \\ $- \df{\partial^2 J_i^{N,2}}{\partial \wt X_{k+1,m} \partial X_{k,n}}$. Thus, 
\begin{equation} \label{eq:H_transport}
H_{k,k} = H_{k+1,k+1} = - H_{k,k+1} = -H_{k+1,k}
\end{equation}
Hence, the contribution of the Hessian term in It\^o's lemma \eqref{eq:Ito} to the drift of $X_{k+1,i}$ is
\[
\f{1}{2} \text{Tr} \big(\Sigma^T H \Sigma\big) = \f{1}{2} \text{Tr} \begin{pmatrix}\sigma^T H_{k,k} \sigma + \sigma^T H_{k+1,k+1} \sigma + 2\sigma^T H_{k,k+1} \sigma &0 \\
0 &0
\end{pmatrix} = \f{1}{2}\text{Tr} \begin{pmatrix} 0 &0 \\
0 &0
\end{pmatrix} = 0.
\]
Note that thus far, we have not yet used the specific form of $J^{N,2}$; the matching of diffusion terms and the disappearance of the Hessian term followed from the second equality of \eqref{FGder}. To compute the drift term of $X_{k+1,i}$, we now use the precise formula for $J^{N,2}$. Referring back to \eqref{eq:tXR_sys} and using the first equality of \eqref{FGder}, the drift term for $X_{k+1,i}(t) = J_i^{N,2}\bigl(\mbf X_k(t),\wt{\mbf  X}_{k+1}(t)\bigr)$ is  
\begin{align*}
&\quad \,\df{\partial J_i^{N,2}}{\partial X_{k,i}} \times \Bigl(\diff_i e^{-\mbf X_k(t)}- \sum_{m = 1}^{k-1} \Lapl_ie^{-\mbf X_m(t)}\Bigr)  + \df{\partial J_i^{N,2}}{\partial X_{k,i+1}}\times \Bigl(\diff_{i+1}e^{-\mbf X_k(t)} - \sum_{m = 1}^{k-1} \Lapl_{i+1}e^{-\mbf X_m(t)}\Bigr)  \\
&+ \df{\partial J_i^{N,2}}{\partial \wt X_{k+1,i}} \times \Bigl(\diff_i e^{-\wt{\mbf X}_{k+1}(t)}  - \sum_{m = 1}^{k-1} \Lapl_ie^{-\mbf X_m(t)}\Bigr) + \df{\partial J_i^{N,2}}{\partial \wt X_{k+1,i+1}(t)}\times \Bigl(\diff_{i+1}e^{-\wt{\mbf X}_{k+1}(t)}- \sum_{m = 1}^{k-1} \Lapl_{i+1}e^{-\mbf X_m(t)}\Bigr)  \\
&= \f{e^{W_{k,i}}}{e^{W_{k,i}(t)} - 1} \diff_ie^{-\mbf X_k(t)} - \f{e^{W_{k,i+1}(t)}}{e^{W_{k,i+1}} - 1} \diff_{i+1}e^{-\mbf X_k(t)} \\
&+ \Bigl(1 - \f{e^{W_{k,i}(t)}}{e^{W_{k,i}(t)} - 1}\Bigr)\diff_ie^{-\wt{\mbf X}_{k+1}(t)} +  \f{e^{W_{k,i+1}(t)}}{e^{W_{k,i+1}(t)} - 1}\diff_{i+1}e^{-\wt{\mbf X}_{k+1}(t)} -  \sum_{m = 1}^{k-1} \Lapl_ie^{-\mbf X_m(t)}.
\end{align*}
Hence, in order to prove \eqref{eq:k+1_eq}, it suffices to show that 
\begin{align*}
&\f{e^{W_{k,i}(t)}}{e^{W_{k,i}(t)} - 1} \diff_ie^{-\mbf X_k(t)} - \f{e^{W_{k,i+1}(t)}}{e^{W_{k,i+1}(t)} - 1} \diff_{i+1}e^{-\mbf X_k(t)} \\
&\qquad \qquad + \Bigl(1 - \f{e^{W_{k,i}(t)}}{e^{W_{k,i}(t)} - 1}\Bigr)\diff_ie^{-\wt{\mbf X}_{k+1}(t)} +  \f{e^{W_{k,i+1}(t)}}{e^{W_{k,i+1}(t)} - 1}\diff_{i+1}e^{-\wt{\mbf X}_{k+1}(t)} \\
   &= \diff_ie^{-\mbf X_{k+1}(t)} - \Lapl_ie^{-\mbf X_k(t)}.
\end{align*}
This follows as a direct application of Lemma \ref{lem:nasty_algebra}.
\end{proof}

\begin{proposition} \label{prop:prod_invar}
For $N\in \N$, $\beta >0$ and $\theta_1,\ldots,\theta_k \in \R$, let $\bigl(\mbf X_1(t),\ldots,\mbf X_k(t)\bigr)_{t \ge 0}$ solve \eqref{DxR_gen} with initial condition $\bigl(\mbf X_1(0),\ldots,\mbf X_k(0)\bigr) \sim \nu_\beta^{(\theta_1,\ldots,\theta_k)}$. Then, for $t > 0$,
\[
\bigl(\mbf X_1(t),\ldots,\mbf X_k(t)\bigr) \deq \bigl(\mbf X_1(0),\ldots,\mbf X_k(0)\bigr).
\]
\end{proposition}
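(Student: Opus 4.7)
The plan is to verify the stationary Fokker--Planck equation for the system \eqref{DxR_gen} with respect to the (unnormalized) product density $p(\mbf x_1,\ldots,\mbf x_k) := \prod_{r=1}^{k}\prod_{i \in \Z_N} \exp\bigl(-\beta^{-2} e^{-x_{r,i}}\bigr)$ on $(\R^{\Z_N})^k$, and then to condition on the conserved coordinate sums to recover invariance of $\nu_\beta^{N,(\theta_1,\ldots,\theta_k)}$. The generator of \eqref{DxR_gen} reads $\mathcal{L}f = \sum_{r,i} b_{r,i}(\mbf x)\, \partial_{x_{r,i}} f + \frac{1}{2}\sum_{r,m,i,j} a_{ij}\, \partial_{x_{r,i}} \partial_{x_{m,j}} f$, with drifts $b_{r,i}(\mbf x) = \diff_i e^{-\mbf x_r} - \sum_{m<r} \Lapl_i e^{-\mbf x_m}$ and diffusion coefficients $a_{ij} = \beta^2(2\delta_{i,j} - \delta_{i,j+1} - \delta_{i,j-1})$. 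Crucially, because all levels $\mbf X_r$ share the same driving Brownian motion $\mbf B$, the covariances $a_{ij}$ are \emph{independent of the level indices} $r,m$, producing nontrivial cross-level second-order couplings in $\mathcal{L}^*p$.

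Writing $\mathcal{L}^* p = -\sum_{r,i} \partial_{x_{r,i}}(b_{r,i} p) + \tfrac{1}{2} \sum_{r,m,i,j} a_{ij}\, \partial_{x_{r,i}} \partial_{x_{m,j}} p$ and using $\partial_{x_{r,i}} p = \beta^{-2} E_{r,i}\, p$ with $E_{r,i} := e^{-x_{r,i}}$, I expect the routine expansion (with the top-order $\sum_{r,i} E_{r,i}\,p$ contributions cancelling between $-\partial_{x_{r,i}} b_{r,i}\cdot p$ and the diagonal part of the second-order term) to collapse to the pointwise identity
\begin{equation*}
-\sum_{r,i} E_{r,i}\, \diff_i E_r \;+\; \sum_{r}\sum_{m<r}\sum_i E_{r,i}\, \Lapl_i E_m \;+\; \tfrac{1}{2} \sum_{r,m}\sum_i \diff_i E_r \cdot \diff_i E_m \;=\; 0.
\end{equation*}
Two cyclic summation-by-parts identities on $\Z_N$ will do the remaining work: $\sum_i E_{r,i}\, \diff_i E_r = \tfrac{1}{2} \sum_i (\diff_i E_r)^2$ rewrites the first term as $-\tfrac{1}{2}\sum_r \sum_i (\diff_i E_r)^2$, and $\sum_i E_{r,i} \Lapl_i E_m = -\sum_i \diff_i E_r \cdot \diff_i E_m$ rewrites the cross-drift term as $-\sum_{r>m}\sum_i \diff_i E_r \cdot \diff_i E_m$. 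The diagonal $r=m$ portion of the third term then cancels the rewritten first term, while its off-diagonal $r\neq m$ contribution cancels the rewritten second term via the symmetry $\sum_{r>m} = \sum_{r<m}$ in the unordered pair $\{r,m\}$. This is the precise point at which the triangular $\sum_{m<r}$ structure of the dual drift and the level-independence of the diffusion conspire.

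The final step is to promote the pointwise identity $\mathcal{L}^* p = 0$ into genuine distributional invariance of $\nu_\beta^{N,(\theta_1,\ldots,\theta_k)}$. Since \eqref{DxR_gen} conserves each $\vecsum(\mbf X_r)$ (the drifts $\diff_i e^{-\mbf X_r}$ and $\Lapl_i e^{-\mbf X_m}$ are discrete derivatives telescoping over $i \in \Z_N$, and $\sum_i \diff_i d\mbf B = 0$), the dynamics restrict to each product of hyperplanes $\R^{\Z_N}_{\theta_1} \times \cdots \times \R^{\Z_N}_{\theta_k}$, on which the normalized restriction of $p$ is precisely the density \eqref{p_dens1} defining $\nu_\beta^{N,(\theta_1,\ldots,\theta_k)}$. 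The main obstacle is technical rather than algebraic: the drifts $\diff_i e^{-\mbf x_r}$ and $\Lapl_i e^{-\mbf x_m}$ are only locally Lipschitz and unbounded, so extracting honest invariance of the Markov semigroup from $\mathcal{L}^*p = 0$ requires a localization via stopping times together with moment control on $e^{-X_{r,i}(t)}$ under the candidate invariant law---analogous to what was sketched for the single-species system \eqref{eq:OCY1_SDE} in Section~\ref{sec.simplecase}. I would defer these technicalities, as the paper does, to Appendix~\ref{appx:invariance}.
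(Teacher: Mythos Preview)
Your proposal is correct and follows the same strategy as the paper: verify the stationary Fokker--Planck identity $\mathcal{L}^* p = 0$ by direct computation and defer the passage from this formal identity to honest semigroup invariance to Appendix~\ref{appx:invariance}. The difference is organizational. You work on the full space $(\R^{\Z_N})^k$ with the unconstrained product density, where $\partial_{x_{r,i}} p = \beta^{-2} E_{r,i}\, p$ has no chain-rule terms, and package the cancellation via the two cyclic summation-by-parts identities $\sum_i E_{r,i}\,\diff_i E_r = \tfrac{1}{2}\sum_i (\diff_i E_r)^2$ and $\sum_i E_{r,i}\,\Lapl_i E_m = -\sum_i \diff_i E_r\cdot\diff_i E_m$; you then condition on the conserved sums. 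The paper instead first eliminates $x_{r,0} = \theta_r - \sum_{i=1}^{N-1} x_{r,i}$ and verifies the identity on the $k(N-1)$-dimensional reduced system~\eqref{eq:N_prod_SDE}, where $\partial_{x_{r,i}} p = \beta^{-2}(e^{-x_{r,i}} - e^{-x_{r,0}})\,p$ carries an extra term and the expansion is more brute-force. The two computations are equivalent: because $\sum_i b_{r,i} = 0$ and $\sum_j a_{ij} = 0$, the full-space pointwise identity $\mathcal{L}^* p = 0$ restricts exactly to the reduced-space identity the paper checks, so Proposition~\ref{p.invariantSDE} applies without modification. Your packaging is cleaner; the paper's choice has the mild advantage that the reduction is done before invoking the appendix, which is written for the reduced system.
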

\begin{proof}
We verify this invariance directly by showing that the adjoint of the generator applied to the probability density of $\nu_\beta^{N,(\theta_1,\ldots,\theta_k)}$ is $0$. There are some technical details to be handled in Appendix \ref{appx:invariance}, which are pointed out below. It is important to note that the measure $\nu_\beta^{N,(\theta_1,\ldots,\theta_k)}$ does not have a density with respect to Lebesgue measure on $\R^{kN}$ because it is supported on $(\mbf X_1,\ldots,\mbf X_k) \in (\R^{\Z_N})^k$ with $\vecsum(\mbf X_r)=\sum_{i \in \Z_N} X_{r,i} = \theta_r$ for $1\leq r\leq k$ (in other words, it is supported on $\R^{\Z_N}_{\theta_1}\times \cdots \times \R^{\Z_N}_{\theta_k}$ in the notation of \eqref{RA2}).  Indeed, it follows directly  from \eqref{DxR_gen} that $\df{d}{dt}\vecsum(\mbf X_{r}(t)) = 0$ for $1 \le r \le k$. The initial condition $\bigl(\mbf X_1(0),\ldots,\mbf X_k(0)\bigr)$ satisfies $\vecsum(\mbf X_r(0)) = \theta_r$ a.s. for $1 \le r \le k$, so we may write the system \eqref{DxR_gen} as the $k(N-1)$-dimensional diffusion process in the variables
 $\bigl((X_{1,i}(t))_{1 \le i \le N-1},\ldots,(X_{k,i}(t))_{1 \le i \le N-1}\bigr)$, with
 \[
 X_{r,0}(t) := \theta_r - \sum_{i = 1}^{N-1} X_{r,i}(t),\quad 1 \le r \le k.
 \]
In these variables, $\nu_\beta^{N,(\theta_1,\ldots,\theta_k)}$ has a density with respect to Lebesgue measure on $\R^{k(N-1)}$.

With this motivation, we wish to reduce the number of Brownian motions in the system \eqref{DxR_gen} from $N$ to $N-1$. The diffusion term for the $i = 1$ terms in \eqref{DxR_gen} is $\beta dB_1(t) - \beta dB_0(t)$, which has the same law as $\sqrt 2 \beta dB_1(t)$.  Looking at the Brownian motions with $i = 1$ and $i = 2$, it is quickly seen from the pairwise covariance structure that
\[
\Biggl(\beta B_1(t) - \beta B_0(t),\beta B_2(t) - \beta B_1(t) \Biggr) \deq \Biggl(\sqrt 2 \beta B_1(t), -\f{1}{\sqrt 2}\beta B_1(t) +  \sqrt{\f{3}{2}}\beta B_2(t) \Biggr),
\]
and observe that this reduces the number of Brownian motions from $3$ to $2$. There are  many ways to make this reduction (all of which result in the same generator for the SDE), but this choice is particularly simple. In general, we can see that 
\[
(\beta B_{i} - \beta B_{i-1})_{1 \le i \le N-1}\deq (\sigma_{i,i-1} B_{i-1} + \sigma_{i,i} B_i  )_{1 \le i \le N-1}, 
\]
where, for $1 \le i \le N-1$, $\sigma_{i,i-1} = -\beta \sqrt{\f{i-1}{i}} 
\ind\{i > 1\}$, $\sigma_{i,i} = \beta \sqrt{\f{i+1}{i}}$, and $\sigma_{i,j} = 0$ otherwise. The vector on the right-hand side only involves $N-1$ Brownian motions since the coefficient of $B_0$ (when $i = 1$) is $0$. We consider now $\sigma$ as an $(N-1) \times (N-1)$ matrix. Hence, when fixing $\vecsum(\mbf X_r(t))$ for $1 \le r \le k$, the system of SDEs \eqref{DxR_gen} restricted to $1 \le i \le N-1$, is equal in law to the system
\be \label{eq:N_prod_SDE}
\begin{aligned}
    d X_{r,i}(t) &= \Bigl(\diff_ie^{-\mbf X_r(t)} - \sum_{m = 1}^{r-1}\Lapl_ie^{-\mbf X_m(t)} \Bigr)\,dt - \sigma_{i,i-1} dB_{i-1}(t) + \sigma_{i,i} dB_i(t), \quad 1 \le i \le N-1, 1 \le r \le k.
\end{aligned}
\ee
In the definition above, note that 
\[
\diff_1e^{-\mbf X_r} = e^{-X_{r,1}} - e^{-X_{r,0}} = e^{-X_{r,1}} - e^{-(\theta_r - X_{r,1} - \cdots - X_{r,N-1})},
\]
and we similarly replace $X_{r,0}$ with $\theta_r -\sum_{i=1}^{N-1} X_{r,i}$ when this terms shows up in $\Lapl_1e^{-\mbf X_m}$ and $\Lapl_{N-1}e^{-\mbf X_m}$. Because we use the same set of Brownian motions for each value of $r$, the diffusion matrix for this system is the $k(N-1) \times k(N-1)$ dimensional block matrix 
\[
\Sigma = \begin{pmatrix}
\sigma &0 &\cdots &0 \\
\sigma &0 &\cdots &0 \\
\vdots &\ddots &\ddots &\vdots \\
\sigma &0 &\cdots &0
\end{pmatrix},\qquad \text{and we define}\qquad \mbf D := \f{1}{2} \Sigma \Sigma^T = \f{1}{2}\begin{pmatrix}
\sigma \sigma^T &\sigma \sigma^T &\cdots &\sigma \sigma^T \\
\sigma \sigma^T &\sigma \sigma^T &\cdots &\sigma \sigma^T \\
\vdots &\ddots &\ddots &\vdots \\
\sigma \sigma^T &\sigma \sigma^T &\cdots &\sigma \sigma^T
\end{pmatrix}
\]
By a straightforward computation,  $\sigma \sigma^T$ is the matrix with diagonal entries $2\beta^2$, and nearest-neighbor entries $-\beta^2$. Note, we do not consider $1$ and $N-1$ as nearest neighbors. Written out, 
\[
\sigma \sigma^T  = \beta \begin{pmatrix} 2 &-1 &0 &\cdots &0 &0 &0 \\
-1 &2 &-1 &\cdots &0 &0 &0 \\
\vdots &\vdots &\vdots &\ddots &\vdots &\vdots &\vdots \\
0 &0 &0 &\cdots &-1 &2 &-1\\
0 &0 &0 &\cdots &0 &-1 &2
\end{pmatrix}
\]

The formal generator of the system of SDEs \eqref{eq:N_prod_SDE} is 
\begin{align} \label{eq:generator}
\mathcal L f  := \sum_{r = 1}^k \sum_{i = 1}^{N-1} \Bigl(\diff_ie^{-\mbf x_r} - \sum_{m = 1}^{r-1}\Lapl_ie^{-\mbf x_m} \Bigr) \partial_{x_{r,i}}f + \sum_{r_1 = 1}^k \sum_{r_2 = 1}^k \sum_{i_1 = 1}^{N-1} \sum_{i_2 = 1}^{N-1} D_{(r_1,i_1),(r_2,i_2)} \partial_{x_{r_1,i_1}}\partial_{x_{r_2,i_2}}f,  
\end{align}
where  $D_{(r_1,i_1),(r_2,i_2)}$ denotes the $(i_1,i_2)$ entry of the $(r_1,r_2)$ block of $\mbf D$. Here, $f$ is a function of $\mbf x_1,\ldots,\mbf x_k$, each of which are vectors in $\R^{\Z_N}$. The generator \eqref{eq:generator} may be written in the more compact form
\[
\mathcal L f = (\text{grad} f)^T \mbf A + (\text{grad} f)^T \mbf D (\text{grad} f),
\]
where $\text{grad} f$ is the gradient operator, and $\mbf A$ the $k(N-1)$-dimensional vector whose entries are the drift coefficients in \eqref{eq:N_prod_SDE}. The formal adjoint of $\mathcal L$ is
\be \label{L_star}
\mathcal L^\star f = -\sum_{r = 1}^k \sum_{i=1}^{N-1} \partial_{x_{r,i}}\Bigl[\Bigl(\diff_ie^{-\mbf x_r} - \sum_{m = 1}^{r-1}\Lapl_ie^{-\mbf x_m} \Bigr) f \Bigr]    
+ \sum_{r_1 = 1}^k \sum_{r_2 =1}^k  \sum_{i_1 = 1}^{N-1}\sum_{i_2 = 1}^{N-1} \partial_{x_{r_1,i_1}} \partial_{x_{r_2,i_2}} [ D_{(r_1,i_1),(r_2,i_2)} f].
\ee

For short, we use the notation $p(\mbf x_1,\ldots,\mbf x_k) = p_\beta^{N,(\theta_1,\ldots,\theta_k)}(\mbf x_1,\ldots,\mbf x_k)$ for the density in Definition \ref{def:p_meas}, where $\mbf x_r = (x_{r,i})_{1 \le i \le N-1}$ with the convention $x_{r,0} = \theta_r - \sum_{i=1}^{N-1} x_{r,i}$. From \eqref{p_dens1}, we have 
\be \label{eq:dens_big}
p(\mbf x_1,\ldots,\mbf x_k) := C \prod_{r = 1}^k \exp\Bigl(-\beta^{-2}e^{-(\theta_r - x_{r,1} - \cdots - x_{r,N-1})}\Bigr)\prod_{i = 1}^{N-1} \exp\Bigl(-\beta^{-2}e^{-x_{r,i}}\Bigr)
\ee
for a normalizing constant $C > 0$. 

It remains to show the that $p$ is an invariant probability density for the process \eqref{eq:N_prod_SDE}. We claim that
\begin{equation}\label{e.Lstarzero}
    \mathcal L^\star p = 0,
\end{equation} which by \eqref{L_star}, means that
\be \label{FPE_N}
\begin{aligned}
    &\quad \, \sum_{r = 1}^k \sum_{i=1}^{N-1} \partial_{x_{r,i}}\Bigl[\Bigl(\diff_ie^{-\mbf x_r} - \sum_{m = 1}^{r-1}\Lapl_ie^{-\mbf x_m} \Bigr) p(\mbf x_1,\ldots,\mbf x_k) \Bigr]    
    \\
    &= \sum_{r_1 = 1}^k \sum_{r_2 =1}^k  \sum_{i_1 = 1}^{N-1}\sum_{i_2 = 1}^{N-1} \partial_{x_{r_1,i_1}} \partial_{x_{r_2,i_2}} [ D_{(r_1,i_1),(r_2,i_2)} p(\mbf x_1,\ldots,\mbf x_k)],
\end{aligned}
\ee
We show this by direct calculation. First note from \eqref{eq:dens_big} that, for $1 \le r \le N$ and $1 \le i \le N- 1$, 
\[
\partial_{x_{r,i}} p(\mbf x_1,\ldots,\mbf x_k) = \f{1}{\beta^2}(e^{-x_{r,i}} - e^{-x_{r,0}})p(\mbf x_1,\ldots,\mbf x_k).
\]
By definition of $D$ and the tri-diagonal structure of $\sigma$, the right-hand-side of \eqref{FPE_N} becomes 
\begin{align*}
    &\quad \, \beta^2 \Biggl(\sum_{i = 1}^{N-1}\sum_{r_1 = 1}^k \sum_{r_2 = 1}^k  \partial_{x_{r_1,i}}\partial_{x_{r_2,i}}         - \sum_{i = 2}^{N-1} \sum_{r_1 = 1}^k \sum_{r_2 = 1}^k \partial_{x_{r_1,i}}\partial_{x_{r_2,i-1}}            \Biggr)p(\mbf x_1,\ldots,\mbf x_k) \\
    &= \beta^2 \Biggl(\sum_{i = 1}^{N-1} \Bigl(\sum_{r = 1}^k \partial_{x_r,i}^2 + \sum_{1 \le r_1 < r_2 \le k}2 \partial_{x_{r_2,i}} \partial_{x_{r_1,i}}    
  \Bigr) \\
  &\qquad\qquad \qquad - \sum_{i = 2}^{N-1} \Bigl(\sum_{r = 1}^k \partial_{x_{r,i}} \partial_{x_{r,i-1}} + \sum_{1 \le r_1 < r_2 \le k}( \partial_{x_{r_2,i}} \partial_{x_{r_1,i-1}} + \partial_{x_{r_2,i-1}} \partial_{x_{r_1,i}})  \Bigr)  \Biggr)p(\mbf x_1,\ldots,\mbf x_k) \\
    &= \sum_{i = 1}^{N-1} \biggl(\sum_{r = 1}^k \partial_{x_{r,i}}\big((e^{-x_{r,i}} - e^{-x_{r,0}})p(\mbf x_1,\ldots,\mbf x_k)\big) + \sum_{1 \le r_1 \le r_2 < k} 2\partial_{x_{r_2,i}}\big((e^{-x_{r_1,i}} - e^{-x_{r_1,0}})p(\mbf x_1,\ldots,\mbf x_k)\big)\biggr)  \\
    &\qquad - \sum_{i = 2}^{N-1}\biggl( \sum_{r = 1}^k \partial_{x_{r,i}}\big((e^{-x_{r,i-1}} - e^{-x_{r,0}})p(\mbf x_1,\ldots,\mbf x_k)\big) \\
    &\qquad\qquad + \sum_{1\le r_1 < r_2 \le k}\big((\partial_{x_{r_2,i}}(e^{-x_{r_1,i-1}} - e^{-x_{r_1,0}})  + \partial_{x_{r_2,i-1}}(e^{-r_1,i} - e^{-x_{r_1,0}})   )p(\mbf x_1,\ldots,\mbf x_k)\big)   \biggr) \\
    &= \sum_{i = 1}^{N-1} \sum_{r = 1}^k \partial_{x_{r,i}}\big((e^{-x_{r,i}} - e^{-x_{r,i-1}})p(\mbf x_1,\ldots,\mbf x_k)\big) \\
    &\qquad + \sum_{i = 1}^{N-1} \sum_{1 \le r_1 < r_2 \le k} \partial_{x_{r_2,i}}\big((2e^{-x_{r_1,i}} -  e^{-x_{r_1,i-1}} - e^{-x_{r_1,i+1}})p(\mbf x_1,\ldots,\mbf x_k)\big),
\end{align*}
and this is equal to the left-hand side of \eqref{FPE_N}. 

Having shown that $\mathcal L^\star p = 0$ we want to conclude that the density $p(\cdot)$ is invariant for the process \eqref{eq:N_prod_SDE}, e.g., by using the Echeverr\'ia criterion \cite{Echeverria-1982}. However, in the current setting it is unclear to us how to justify the assumptions there. For example it seems nontrivial to show that the solution to \eqref{eq:N_prod_SDE} is a Feller process, since the coefficients of the SDE are unbounded and non-Lipschitz (although they are locally Lipschitz). Furthermore, the matrix $\mbf D$ is degenerate. For these reasons, we prove by hand that the density $p(\cdot)$ is invariant, by an approximation argument utilizing the fact that $\mathcal L^\star p = 0$. In particular, by Proposition~\ref{p.invariantSDE} (proved in the appendix), we conclude invariance and complete our proof.
\end{proof}

We may now easily conclude with the proof of Theorem \ref{thm:OCY_joint}.

\begin{proof}[Proof of Theorem \ref{thm:OCY_joint}]
We first prove the theorem in the case when the $\theta_1,\ldots,\theta_k$ are all distinct. Let $\bigl(\mbf U_1(t),\ldots,\mbf U_k(t)\bigr)$ solve \eqref{eq:joint_OCY} with initial condition 
\be \label{U0}
\bigl(\mbf U_1(0),\ldots,\mbf U_k(0)\bigr) = \D^{N,k}\bigl(\mbf X_1',\ldots,\mbf X_k'\bigr),
\ee
where $\bigl(\mbf X_1',\ldots,\mbf X_k'\bigr) \sim \nu_\beta^{N,(\theta_1,\ldots,\theta_k)}$ for $\theta_1,\ldots,\theta_k$ distinct. By the bijection of  Proposition \ref{prop:transform}, with probability one, we have $\bigl(\mbf U_1(0),\ldots,\mbf U_k(0)\bigr) \in \mathcal R^{N,k}_{\theta_1,\ldots,\theta_k}$. Then, by Proposition \ref{prop:NewSDE} and since $\df{d}{dt} \vecsum(\mbf U_m(t)) = 0$ for $1 \le m \le k$, $\bigl(\mbf U_1(t),\ldots,\mbf U_k(t)\bigr) \in \mathcal R^{kN}_{\theta_1,\ldots,\theta_k}$ for all $t \ge 0$. By the same proposition, if, for $t \ge 0$, we define
\[
\bigl(\mbf X_1(t),\ldots,\mbf X_k(t)\bigr) = \mathcal J^{N,k}\bigl(\mbf U_1(t),\ldots,\mbf U_k(t)\bigr),
\]
then $\bigl(\mbf X_1(t),\ldots,\mbf X_k(t)\bigr)$ solves the dual system of SDEs \eqref{DxR_gen}. Applying \eqref{U0} and the bijection of Proposition \ref{prop:transform}, $\bigl(\mbf X_1(0),\ldots,\mbf X_k(0)\bigr) = (\mbf X_1',\ldots,\mbf X_k')$. Applying Proposition \ref{prop:transform} yet again along with the invariance in Proposition \ref{prop:prod_invar}, we obtain, for $t > 0$,
\[
\bigl(\mbf U_1(t),\ldots,\mbf U_k(t)\bigr) = \D^{N,k}\bigl(\mbf X_1(t),\ldots,\mbf X_k(t)\bigr) \deq \D^{N,k} \bigl(\mbf X_1(0),\ldots,\mbf X_k(0)\bigr) =\bigl(\mbf U_1(0),\ldots,\mbf U_k(0)\bigr).
\]

Now, assume the $\theta_1,\ldots,\theta_k$ are not distinct. Then, there exists an index $\ell \in \{1,\ldots,k-1\}$ and $\sigma \in \mathcal S(k)$ so that $\theta_{\sigma(1)},\ldots,\theta_{\sigma(\ell)}$ are distinct, and for all $r > \ell$, there exists $m \in \{1,\ldots,\ell\}$ so that $\sigma(r) = \sigma(m)$.  By Proposition \ref{prop:disc_consis}\ref{itm:mu_order}, if $(\mbf U_1,\ldots,\mbf U_k) \sim \mu_\beta^{N,(\theta_1,\ldots,\theta_k)}$, then for each $r > \ell$, there exists $m \le \ell$ so that $\mbf U_{\sigma(r)} = \mbf U_{\sigma(m)}$ almost surely. Hence, it suffices to show that the law of $(\mbf U_{\sigma(1)},\ldots,\mbf U_{\sigma(\ell)})$ is invariant for the coupled SDE \eqref{eq:joint_OCY}. By Proposition \ref{prop:disc_consis}\ref{itm:mu_perm}, $(\mbf U_{\sigma(1)},\ldots,\mbf U_{\sigma(\ell)}) \sim \mu_\beta^{N,(\theta_{\sigma(1)},\ldots,\theta_{\sigma(\ell)})}$, so the invariance now follows from the case when the $(\theta_m)_{1 \le m \le k}$ are all distinct. 
\end{proof}

\section{Convergence of the periodic O'Connell-Yor polymer to the stochastic heat equation} \label{sec:SHE_conv_section}

There are two main results of this section. The first is is Theorem \ref{L2_conv_main_theorem}, the $L^2$ convergence of the partition function for the periodic O'Connell-Yor polymer model to the solution of the periodic stochastic heat equation. The second main result is Theorem \ref{thm:SHE_convergence}, which deals with convergence of the model with continuous initial data.

For $\beta > 0$, let $\beta_N := \beta N^{-1/2}$. For $\theta  \in \R$, $X,Y \in [0,1]$, $S < T$, and $N \in \N$, let (recall $\OCYp_{\beta}$ from \eqref{eq:reg_conv})
\be \label{ZNdef}
\scOCYp(T,Y \viiva S,X;\theta ) := Ne^{-(T-S)N^2 - \f{\beta_N^2}{2}(T-S)N^2} \OCYp_{\beta_N}(TN^2, \lfloor TN^2 + Y N \rfloor \viiva SN^2, \lfloor SN^2 + XN \rfloor; \theta ).
\ee
We also note that $Z_\beta^{\text{per}}$ (which appears in the theorem) is precisely defined in
\eqref{eq:Zper_def} below.

\begin{theorem} \label{L2_conv_main_theorem}
Let $(\Omega,\F,\Pp)$ be a probability space on which a space-time noise $\xi_{\T}$ is defined. On this probability space, for each $N \in \N$, we may couple the process $\scOCYp$ so that, for each fixed $\theta  \in \R$, $X,Y \in [0,1]$ and $S < T$, and any sequences $T_N \to T,S_N \to S, X_N \to X, Y_N \to Y$, and $\theta_N \to \theta$, we have the convergence
\begin{align*}
\lim_{N\to \infty}\Ee\Big[\big|\scOCYp(T_N,Y_N \viiva S_N,X_N;\theta_N) -  Z_\beta^{\text{per}}(T,Y \viiva S,X;\theta)\big|^2\Big] \to 0.
\end{align*}
\end{theorem}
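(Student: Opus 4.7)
The plan is to put both objects on a common probability space and expand them as Wiener chaos series, reducing the problem to a deterministic convergence of periodized heat kernels together with a uniform-in-$N$ summable dominating bound. On the space $(\Omega,\Ff,\Pp)$ carrying the periodic space--time white noise $\xi_\T$, I would couple the driving Brownian motions of the $N$-th O'Connell--Yor system to $\xi_\T$ by the strip averages
\begin{equation*}
B_i^N(t) := \sqrt{N}\,\xi_\T\big((0,t]\times [i/N,(i+1)/N)\big),\qquad i \in \Z_N,
\end{equation*}
periodically extended in $i$ and using the two-sided extension of $\xi_\T$ for $t<0$. With this coupling, the standard Wick/chaos expansion of the O'Connell--Yor partition function gives
\begin{equation*}
\scOCYp(T_N,Y_N \viiva S_N,X_N;\theta_N) = \sum_{k\ge 0} \beta^k \mathcal I_k^N,
\end{equation*}
where $\mathcal I_k^N$ is a $k$-fold stochastic integral against $\xi_\T$ whose (deterministic) kernel is an iterated convolution of periodized semi-discrete heat kernels of the form $K^{N,\mathrm{per}}_\theta(t,x):=\sum_{j\in\Z} e^{\theta j}\, K^N(tN^2,\lfloor xN\rfloor + jN)$, with $K^N$ the Poisson transition kernel of continuous-time nearest-neighbor random walk on $\Z$. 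Similarly, the mild formulation of the periodic SHE yields $Z_\beta^{\mathrm{per}}(T,Y\viiva S,X;\theta)=\sum_{k\ge 0}\beta^k \mathcal I_k$, with kernels built from iterated convolutions of the periodized Gaussians $G^{\mathrm{per}}_\theta(t,x):=\sum_{j\in\Z}e^{\theta j}\rho(t,x+j)$.

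Given the common coupling, It\^o's isometry collapses the $L^2$ distance between the two sides into the sum over $k$ of squared $L^2$-norms of the differences of the (deterministic) kernels. The next step is to prove, for each fixed $k$, convergence of the iterated periodized discrete kernel to the iterated continuum periodized kernel. The single-step input is a sharp local central limit theorem for the continuous-time nearest-neighbor walk, giving $NK^N(tN^2,\lfloor xN\rfloor)\to \rho(t,x)$ together with sub-Gaussian tail bounds uniform in $N$; summing over the periodization index $j\in\Z$ then transfers this to convergence $K^{N,\mathrm{per}}_{\theta_N}\to G^{\mathrm{per}}_\theta$ in $L^2_{\mathrm{loc}}$, with the perturbations $T_N\to T$, $X_N\to X$, $\theta_N\to\theta$ absorbed by continuity of $G^{\mathrm{per}}_\theta$ in its parameters. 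Iterating this convergence through the convolution structure on the $k$-simplex settles the $k$-th chaos contribution.

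The main obstacle, and the step that genuinely requires the periodic nature of the problem to be handled carefully, is producing a summable-in-$k$ upper bound for $\|\beta^k \mathcal I_k^N\|_{L^2(\Pp)}^2$ that is uniform in $N$ and in the parameter perturbations, so that dominated convergence in $k$ can close the argument. Unlike the full-line case, a single Gaussian bound does not suffice: the kernel $K^{N,\mathrm{per}}_\theta(t,x)$ behaves like the full-line Gaussian $\rho(t,x)$ when $1/N^2 \ll t \ll 1$, like a $\theta$-dependent constant on the torus when $t \gg 1$, and like the pointwise discrete $\ell^\infty$-bound $K^N(t,\cdot)\lesssim 1$ when $t\lesssim 1/N^2$. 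I would split the $k$-simplex of time variables $0<t_1<\cdots<t_k<T-S$ into products over these three regimes, treating each increment $t_i-t_{i-1}$ by the corresponding estimate (this is the case analysis after \eqref{eq:SumKj} in Lemma \ref{lem:JkN_unif_bd}). Combining the three regime bounds with the simplex integration yields a majorant of the form $C^k(T-S)^k/\Gamma(k/2+1)$, which is summable; this is the ingredient that upgrades the term-by-term convergence of chaos components to the claimed $L^2$ convergence of the full series.
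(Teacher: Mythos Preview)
Your overall strategy matches the paper's: expand both sides as chaos series against the common white noise $\xi_\T$, prove $L^2$-convergence of each fixed-$k$ term via a local CLT for the discrete kernel, and close the argument with a uniform-in-$N$ summable bound obtained from a multi-regime analysis of the periodized kernel. This is precisely the outline of the paper's proof (Propositions~\ref{prop:Z_conv}, \ref{prop:OCY_Chaos}, \ref{prop:ChaosL2conv}, and Lemma~\ref{lem:JkN_unif_bd}).

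The gap is in your coupling and the kernel it produces. The O'Connell--Yor transition kernel is the \emph{one-sided} Poisson kernel $q(t,n)=e^{-t}t^n/n!$ with mean $t$, not a symmetric nearest-neighbor walk kernel. Your claimed local CLT $NK^N(tN^2,\lfloor xN\rfloor)\to\rho(t,x)$ holds for the symmetric walk but fails for $q$: at time $tN^2$ the Poisson mass is centered near $tN^2$, while the argument $\lfloor xN\rfloor$ is only of order $N$, so you are exponentially deep in the left tail. Under the strip coupling $B_i^N(t)=\sqrt N\,\xi_\T((0,t]\times[i/N,(i+1)/N))$ with no shear, the chaos kernel of the scaled polymer therefore does not approach the Gaussian, and the term-by-term convergence you assert does not follow.

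The paper repairs this with a sheared coupling: the Brownian motions are first built from a white noise $\xi_N$ on $\R\times[0,N]$ via \eqref{eq:BM_coupling}, and then one changes variables by $t_i=s_i/N^2$, $y_i=(x_i-s_i)/N$ (a shear composed with a scaling). This produces the kernel $p_N(t,y\viiva s,x)=Nq\bigl((t-s)N^2,\lfloor tN^2+yN\rfloor-\lfloor sN^2+xN\rfloor\bigr)$ of \eqref{pN_def}, whose Poisson argument now sits at an $O(N)$ deviation from its $O(N^2)$ mean---exactly the local-CLT window---whence $p_N\to\rho$ (Lemma~\ref{lem:pNto_p}). Crucially, one must verify that the sheared noise is still a period-$1$ space-periodic white noise; this is the shear-invariance statement of Lemma~\ref{lem:scaling_relations}(ii), a step genuinely particular to the periodic setting that your proposal does not address.
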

\begin{remark}
One may modify Theorem \ref{L2_conv_main_theorem},  to allow for arbitrary $\beta_N$ such that $\beta_N N^{1/2}$ converges to $\beta$. The only difference in the chaos series is an extra term $\Bigl(\f{\beta_N N^{1/2}}{\beta}\Bigr)^k$, which should converge to $1$ for each $k$ and is bounded by $C^k$. This is not necessary for our purposes and so we simplify things by taking $\beta_N = \beta N^{-1/2}$. 
\end{remark}

The proof of Theorem \ref{L2_conv_main_theorem} is through convergence of the discrete chaos series for $\scOCYp$ to the continuum chaos series for $Z_\beta^{\text{per}}$. The $L^2$ coupling comes from building the Brownian motions involved in the polymer model from the continuum white noise as in \cite{Nica-2021}. The chaos approach (not necessarily with $L^2$ convergence though) has been implemented several times for KPZ models in the full space 
\cite{Alberts-Khanin-Quastel-2014b,CSZ, Corwin2017, 10.1214/17-EJP32, Nica-2021,GRASS-23} and half-space \cite{Wu2020,10.1214/22-EJP775,10.1214/23-AOP1634}.This is the first instance where it has been done in a periodic domain. As is usually the case, the technical challenge in implementing this approach lies in developing precise control over the discrete heat kernel and multiple stochastic integrals involving it.

The rest of this section is as follows. 
Section \ref{sec:white_noise} defines periodic white noise and stochastic integrals. Section \ref{sec.l2converge} proves $L^2$ convergence of the chaos series for $Z_\beta^{\text{per}}$. Section \ref{sec:OYchaos} develops the chaos series for the O'Connell-Yor partition function. The proof of Theorem \ref{L2_conv_main_theorem} is contained in Section \ref{sec:L2_conv_final}. Finally, Section \ref{sec.convergeinitial} states and proves Theorem \ref{thm:SHE_convergence} which extends the convergence result to general initial data.

\subsection{White noise on the cylinder and its periodic extension} \label{sec:white_noise}
For $N \in \N$, we consider a space-time white noise $\xi_{N\T}$ on $\R \times N\T$. When $N = 1$, we shall simply write $\xi_\T$.
Precisely $\xi_{N\T}$ is defined by its action on test functions $f \in L^2\bigl(\R \times [0,N]\bigr)$, namely,  $\Bigl\{\xi_{N\T}(f): f \in L^2\bigl(\R \times [0,N]\bigr)\Big\}$ is a mean-zero Gaussian process satisfying the following: 
\begin{enumerate}[label=\textup{(\roman*)}]  
\item For $f,g \in L^2(\R \times [0,N])$ and $a,b \in \R$, $\xi_{N\T}(af + bg) = a\xi_{N\T}(f) + b\xi_\T(g)$.
\item For $f,g \in L^2(\R \times [0,N])$, $\Ee[\xi_{N\T}(f) \xi_{N\T}(g)] = \int_0^N \int_\R f(t,x)g(t,x)\,dt\,dx$.
\end{enumerate}
We will denote $\xi_{N\T}(f)$ as
\[
\xi_{N\T}(f) = \int_0^N \int_\R f(t,x)\xi_{N\T}(dt,dx). 
\]
We extend $\xi_{N\T}$ to a noise on $\R \times \R$, which we denote as $\xi_N$. When $N = 1$, we simply write $\xi$. This is constructed formally by the equality $\xi_{N\T}(t,x) = \xi_N(t,x)$ for $x \in [0,N]$, and $\xi_N(t,x) = \xi_N(t,x+N)$ for all $x \in\R$. To give a rigorous definition, for a function $f:\R \times \R \to \R$, if $t \in \R$ and $x \in [0,N]$, we define 
\[
f^{N,\text{per}}(t,x) := \sum_{j \in \Z} f(t,x+jN).
\]
Let $L^{2,N,\text{per}}(\R \times \R)$ be the set of functions $f:\R \times \R \to \R$ satisfying $f^{N,\text{per}} \in L^2\bigl(\R \times [0,N]\bigr)$. We consider equivalence classes in this space so that $f = g$ if $f^{N,\text{per}} = g^{N,\text{per}}$ a.e. The space $L^{2,N,\text{per}}(\R \times \R)$ is equipped with the inner product
\[
\langle f,g \rangle_{L^{2,N,\text{per}}(\R \times \R)} = \langle f^{N,\text{per}},g^{N,\text{per}} \rangle_{L^2(\R \times [0,N])},
\]
which makes $L^{2,N,\text{per}}(\R \times \R)$ a Hilbert space.
Then, we define $\xi_N$ to be the mean-zero Gaussian process
\[
\Bigl\{\xi_N(f): f \in L^{2,N,\text{per}}(\R \times \R)  \Bigr\}
\]
satisfying
$\xi_N(f) = \xi_{N\T}(f^{N,\text{per}})$ for all $f \in L^{2,N,\text{per}}(\R \times \R)$. Formally, we shall write
\[
\xi_N(f) = \int_\R \int_\R f(t,x)\,\xi_N(dt,dx).
\]
We will also work with multiple integrals, denoted by
\begin{align*}
I_{N,k}(f) :=\int_{[0,N]^k} \int_{\R^k} f(t_1,\ldots,t_k,x_1,\ldots,x_k) \prod_{i = 1}^k \xi_{N\T}(dt_i,dx_i),\qquad\text{for}\qquad f \in L^2(\R^k \times [0,N]^k), \qquad k \ge 1.
\end{align*}
These multiple integrals are defined, e.g. in \cite[Section 1.1]{Nua06}. We also define
\begin{multline} \label{eq:int_def}
\int_{\R^k} \int_{\R^k} f(t_1,\ldots,t_k,x_1,\ldots,x_k) \prod_{i = 1}^k \xi_N(dt_i,dx_i)  \\
:= \int_{[0,N]^k} \int_{\R^k} \sum_{j_1,\ldots,j_k \in \Z} f(t_1,\ldots,t_k,x_1 + j_1 N,\ldots,x_k + j_k N) \prod_{i = 1}^k \xi_{N\T}(dt_i,dx_i), 
\end{multline}
whenever the integrand on the right-hand side above lies in $L^2(\R^k \times [0,N]^k)$. If we replace $\R^k$ or $[0,N]^k$ in the integration limits above with some subset, it is equivalent to inserting the indicator function of that subset into the integrand.

A key fact is the following It\^o isometry for multiple integrals (see, for example \cite[Page 9-10]{Nua06}). Recall the definition of the simplex $\Delta^k$ from Section \ref{sec:not}. For $f \in L^2\bigl(\Delta^k \times [0,N]^k\bigr)$, we shall define
\[
I_{N,k}^\Delta(f) := \int_{[0,N]^k} \int_{\Delta^k} f(t_1,\ldots,t_k,x_1,\ldots,x_k) \prod_{i = 1}^k \xi_{N\T}(dt_i,dx_i),\qquad\text{for}\qquad f \in L^2(\R^k \times [0,N]^k)
\]
\begin{lemma} \label{lem:Ito_isometry}
For $k,m \in \N$, and $f \in L^2(\Delta^k \times [0,N]^k)$ and $g \in L^2(\Delta^m \times [0,N]^m)$,
\[
\Ee[I_{N,k}^\Delta(f) I_{N,m}^\Delta(g)]= \begin{cases}
\langle  f,g \rangle_{L^2(\Delta^k \times [0,N]^k)} &k = m \\
0 &k \neq m.
\end{cases}
\]
In particular, $\Ee[(I_{N,k}^\Delta(f))^2] = \|f\|_{L^2(\Delta^k \times [0,N]^k)}^2$. Furthermore, $\Ee[I_{N,k}^\Delta(f)] = 0$. 
\end{lemma}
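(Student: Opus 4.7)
The plan is to deduce the claim from the classical It\^o isometry for multiple Wiener integrals against a Gaussian white noise on a $\sigma$-finite measure space, as formulated, for instance, in \cite[Proposition 1.1.2]{Nua06}. By construction, $\xi_{N\T}$ is exactly such a white noise on $(\R \times [0,N], dt \otimes dx)$, so the general framework applies. In particular, for any symmetric $\tilde h \in L^2\bigl((\R \times [0,N])^k\bigr)$ and $\tilde g \in L^2\bigl((\R \times [0,N])^m\bigr)$, one has $\Ee[I_{N,k}(\tilde h)] = 0$ for $k \ge 1$, the chaoses of different orders are orthogonal, and $\Ee[I_{N,k}(\tilde h)I_{N,k}(\tilde g)] = k! \langle \tilde h, \tilde g\rangle_{L^2}$.

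Next, for $f \in L^2(\Delta^k \times [0,N]^k)$, I would extend $f$ by zero outside $\Delta^k \times [0,N]^k$ (still denoting the extension by $f$) so that $I_{N,k}^\Delta(f) = I_{N,k}(f)$. Since the multiple integral depends only on the symmetrization of its integrand, $I_{N,k}(f) = I_{N,k}(\tilde f)$, where $\tilde f$ is the symmetrization of $f$ in the $k$ time-space pairs. A key calculation is then to relate the $L^2$ norms: because $f$ is supported on $\Delta^k \times [0,N]^k$ and the $k!$ permuted copies of $\Delta^k$ tile $\{(t_1,\ldots,t_k)\in\R^k : t_i \text{ pairwise distinct}\}$ (a set of full Lebesgue measure in $\R^k$), the $k!$ permuted copies of $f$ have pairwise disjoint supports, and one obtains
\[
\|\tilde f\|_{L^2((\R \times [0,N])^k)}^2 \;=\; \frac{1}{(k!)^2} \sum_{\sigma \in \mathcal S(k)} \|f \circ \sigma\|_{L^2}^2 \;=\; \frac{1}{k!}\, \|f\|_{L^2(\Delta^k \times [0,N]^k)}^2.
\]
Combined with the classical isometry this gives $\Ee[(I_{N,k}^\Delta(f))^2] = k! \|\tilde f\|^2 = \|f\|_{L^2(\Delta^k \times [0,N]^k)}^2$, and the bilinear version in the $k=m$ case follows by polarization.

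For the $k \ne m$ case, orthogonality of Wiener chaoses of different orders (again from \cite{Nua06}) gives $\Ee[I_{N,k}^\Delta(f)\,I_{N,m}^\Delta(g)] = 0$ directly. The mean-zero property $\Ee[I_{N,k}^\Delta(f)] = 0$ for $k \ge 1$ follows since $I_{N,k}^\Delta(f)$ lies in the $k$-th Wiener chaos, which is orthogonal to the constants (the zeroth chaos). No step here is a genuine obstacle; the only point demanding care is matching our definition of $I_{N,k}$ with the one in \cite{Nua06}, which is routine and can be checked on simple tensor indicators where both definitions reduce to sums of products of independent centered Gaussian increments over disjoint rectangles.
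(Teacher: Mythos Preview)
Your proposal is correct and aligns with the paper's treatment: the paper does not give a proof but simply cites \cite[Page 9--10]{Nua06} for this standard fact, and your argument is precisely the reduction to Nualart's framework via symmetrization that underlies that reference. The key computation you give, that the $k!$ permuted copies of $f$ supported on $\Delta^k$ have disjoint supports so that $\|\tilde f\|^2 = \tfrac{1}{k!}\|f\|^2$, is exactly what produces the simplex isometry from the general one.
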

\begin{remark}
As in \cite{Nua06}, one can also compute $\Ee[I_{N,k}(f)I_{N,k}(g)]$ for functions $f,g:\R^k \to [0,N]^k$, but one replaces $\langle  f,g \rangle_{L^2(\Delta^k \times [0,N]^k)}$ with $\langle  \wt f,\wt g \rangle_{L^2(\Delta^k \times [0,N]^k)}$, where $\wt f$ is the symmetrization of $f$. We do not need that more general fact in the present paper. 
\end{remark}

We also make use of the following scaling relations for white noise, which follows immediately from its definition. In particular, by using indicator functions, one can see that the periodicity is preserved under shear transforms, see Figure \ref{fig:shear}. The scaling factors are a result of Gaussian scaling from the It\^o isometry. 

\begin{figure}
    \centering
    \includegraphics[width=0.4\linewidth]{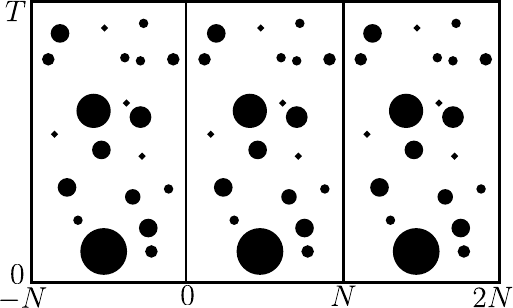} \hspace{1cm}
    \includegraphics[width=0.4\linewidth]{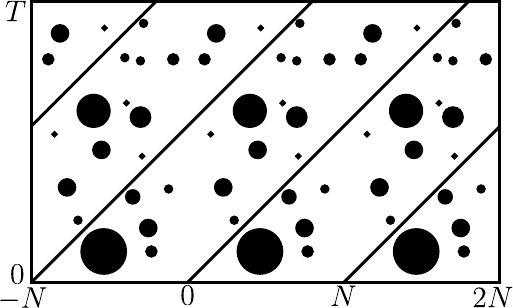}
    \caption{This figure represents shear invariance of the periodic noise $\xi_N$. On the left is a depiction of the noise $\xi_{N}$ for $(s,x) \in [-N,2N] \times [0,T]$ using discs of various sizes. We see the noise has been periodized by $N$ units of space. On the right is the same picture, but we have replaced the vertical lines $x = kN$ with $x = s + kN$ for $k \in \{-1,0,1,2\}$. We see that the noise is repeated between each of these diagonal lines. Under the transformation $y = x-s,t = s$, the region $\{(s,x): s + kN \le x \le s + (k+1)N \}$ is transformed to the region $\{(t,y): kN \le t \le (k+1)N\}$, and so the periodic structure is preserved.}
    \label{fig:shear}
\end{figure}
\begin{lemma} \label{lem:scaling_relations}
We have the following equalities in distribution under scaling.
\begin{enumerate} [label=\textup{(\roman*)}]
\item \textup{(Scaling)} For $N,m \in \N$, under the change of variable $y = \f{x}{N}$ and $t = \f{s}{m}$, we have 
\[
\xi_N(ds,dx) \deq (Nm)^{1/2} \xi(dt,dy),
\]
where we recall $\xi = \xi_1$. 
\item \textup{(Shear invariance)} For $N \in \N$, under the change of variable $y = x-s$ and $t = s$,
\[
\xi_N(ds,dx) \deq \xi_N(dt,dy).
\]
\end{enumerate}

\end{lemma}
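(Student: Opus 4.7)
The plan is to verify both claims by reducing them to the statement that two centered Gaussian processes, indexed by suitable Hilbert spaces of test functions, have matching covariance structures. Since any centered Gaussian process on a Hilbert space is determined by the variance of each linear functional (equivalently, by the $L^2$ norm of its test function), each claim reduces to an elementary change-of-variables computation in the inner products defining the white noises, together with a check that the relevant test functions lie in the correct periodic $L^2$-space.

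For part (i), I would take a generic test function $\phi \in L^{2,\text{per}}(\R \times \R)$ (i.e.\ periodic with period $1$ in its spatial argument, in the sense defined prior to the lemma) and set $\psi(s,x) := \phi(s/m, x/N)$. The periodicity $\phi(t, y+1) = \phi(t,y)$ immediately gives $\psi(s, x+N) = \psi(s,x)$, so $\psi \in L^{2,N,\text{per}}(\R \times \R)$, and conversely every element of $L^{2,N,\text{per}}$ arises this way. Under the change of variable $t = s/m$, $y = x/N$ (Jacobian factor $Nm$ on the fundamental strip $[0,N]$ in the $x$-variable), a direct computation yields
\[
\|\psi\|_{L^{2,N,\text{per}}}^2 = Nm\,\|\phi\|_{L^{2,\text{per}}}^2.
\]
Hence $\xi_N(\psi)$ and $(Nm)^{1/2}\xi(\phi)$ are centered Gaussians with the same variance, which is precisely the asserted equality in distribution.

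For part (ii), I would analogously take $\phi \in L^{2,N,\text{per}}(\R \times \R)$ and set $\psi(s,x) := \phi(s, x-s)$. The crucial observation is that the shear preserves the periodic structure:
\[
\psi(s, x+N) = \phi\bigl(s, (x-s)+N\bigr) = \phi(s, x-s) = \psi(s,x),
\]
so $\psi \in L^{2,N,\text{per}}(\R \times \R)$. Because the shear has Jacobian $1$ and $\phi(s,\cdot)$ is $N$-periodic, for each fixed $s$ we have $\int_0^N \phi(s, x-s)^2\,dx = \int_0^N \phi(s, y)^2\,dy$, giving $\|\psi\|_{L^{2,N,\text{per}}} = \|\phi\|_{L^{2,N,\text{per}}}$. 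Matching the Gaussian variances yields the claim.

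Neither part presents a genuine technical obstacle; both are ultimately checks that composition with the given coordinate change is an (appropriately scaled) isometry between the relevant periodic $L^2$ spaces. The only point requiring any care is the bookkeeping for the periodicity classes (since $L^{2,N,\text{per}}(\R \times \R)$ was defined using equivalence classes of $N$-periodizations), and this is exactly what the two displayed periodicity identities above verify. The figure illustrating the shear-invariance is a useful visualization, but the rigorous argument is the three-line Hilbert-space computation sketched above.
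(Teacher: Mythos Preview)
Your proposal is correct and matches the paper's approach: the paper does not give a detailed proof but simply notes that the lemma follows from the definition, with periodicity checked via indicator functions and the scaling factors coming from Gaussian scaling through the It\^o isometry. Your argument spells this out carefully by verifying that the relevant coordinate changes are (scaled) isometries between the periodic $L^2$ spaces, which is exactly the content of the paper's one-sentence justification.
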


\subsection{Chaos series for the periodic stochastic heat equation}\label{sec.l2converge}

Recall the following conventions from the Section \ref{sec:not}: for a set $\Lambda \subseteq \R$ and $x,y \in \R$, we write
\be \label{eq:Z_beta_ser}
\Lambda_{x,y}^{k} := \{
y_i \in \Lambda, 0 \le i \le k+1,
\text{ with set values } y_0 = x,\; \text{and}\; y_{k+1} = y \} \quad\text{and} \quad \Lambda_y^k = \Lambda_{0,y}^k.
\ee
We also define the notation
\[
\Delta^k(t \viiva s) := \{s= s_0 < s_1 < \cdots < s_k < s_{k + 1} = t: s_i \in \R\}\qquad\text{and for $t > 0$,}\qquad \Delta^k(t) = \Delta^k(t \viiva 0).
\]

 We define the following chaos series. We show later in Proposition \ref{prop:solve_SHE} that $Z_\beta(t,y \viiva s,x)$ is the Green's function of  \eqref{eq:SHE}; that is, it solves \eqref{eq:SHE} with initial condition $Z_\beta(s,y \viiva s,x) = \delta(y-x)$. 
 \be \label{eq:Z_b_series}
\begin{aligned}
Z_\beta(t,y \viiva s,x) &:= \sum_{k = 0}^\infty \beta^k I_k(t,y\viiva s,x)\qquad \text{where} \\  I_k(t,y\viiva s,x) &:= \int_{\R^k_{x,y}}\int_{\Delta^k(t \viiva s)} \prod_{i = 0}^k \rho(t_{i + 1} - t_i,x_{i +1} - x_i) \prod_{i = 1}^k \xi (dt_i,dx_i),
\end{aligned}
\ee
where we recall that $\rho(t,x)=(2\pi t)^{-1/2}\exp(-x^2/(2t))$ is the standard heat kernel. Here, notice that we only integrate over the variables $x_1,t_1,\ldots,x_k,t_k$, although we also refer to $x_0,t_0$ and  $x_{k+1},t_{k+1}$ which are set to values $x,s$ and $y,t$, respectively due to the definitions of $\R_{x,y}^k$ and $\Delta^k(t \viiva s)$.
Proposition \ref{prop:Z_conv} below shows that this series is convergent in $L^2(\Pp)$ for each $t > s$ and $x,y \in \R$. For $\theta \in \R$, we also define
\be \label{eq:Zper_def}
Z_\beta^{\text{per}}(t,y \viiva s,x;\theta) := \sum_{j \in \Z} e^{\theta j} Z_\beta(t,y \viiva s,x+j),
\ee
which will be used later for initial data whose logarithm has the slope $\theta$, see \eqref{eq:reg_conv} for its semi-discrete version.

\begin{proposition} \label{prop:Z_conv}
For $t > s$, and $\theta,x,y \in \R$, the following hold:
\begin{enumerate} [label=\textup{(\roman*)}]
\item \label{itm:s1} The series \eqref{eq:Z_b_series} converges in $L^2(\Pp)$. 
\item \label{itm:s2} The series \eqref{eq:Zper_def} converges in $L^2(\Pp)$. $Z_\beta^{\text{per}}(t,y \viiva s,x;\theta)$ has the following $L^2(\Pp)$ convergent chaos series:
\be \label{Jk_series}
\begin{aligned}
Z_\beta^{\text{per}}(t,y \viiva s,x;\theta) &= \sum_{k = 0}^\infty \beta^k J_k(t,y\viiva s,x;\theta),\qquad\text{where} \\
J_k(t,y\viiva s,x;\theta) &:= \int_{[0,1]^k_{x,y}} \int_{\Delta^k(t \viiva s)} \prod_{i = 0}^k  \sum_{r \in \Z} e^{-\theta r} \rho(t_{i+1} - t_i, y_{i+1} - y_i + r) \prod_{i = 1}^k \xi_{\T}(dt_i,dy_i).
\end{aligned}
\ee 
\end{enumerate}
\end{proposition}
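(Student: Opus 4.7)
My plan is to prove (i) by a direct It\^o-isometry computation of $\Ee[I_k^2]$ followed by summing the resulting series, and to prove (ii) by first establishing the fiber-wise identity $\sum_{j\in\Z}e^{\theta j}I_k(t,y\viiva s,x+j)=J_k(t,y\viiva s,x;\theta)$ for each $k$, and then justifying the interchange of summations via $L^2$ bounds that are uniform in $k$ and $j$.

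For part (i), Lemma \ref{lem:Ito_isometry} yields $\Ee[I_k^2]=\int_{\Delta^k(t\viiva s)}\int_{\R^k}\prod_{i=0}^k\rho(t_{i+1}-t_i,x_{i+1}-x_i)^2\,d\mathbf{x}\,d\mathbf{t}$. I would apply the elementary identity $\rho(\tau,z)^2=(2\sqrt{\pi\tau})^{-1}\rho(\tau/2,z)$; the Chapman--Kolmogorov relation then collapses the $\mathbf{x}$-integral to $\rho((t-s)/2,y-x)$, while the remaining time integral is the classical Dirichlet integral $\int_{\Delta^k(t\viiva s)}\prod_{i=0}^k(t_{i+1}-t_i)^{-1/2}d\mathbf{t}=(t-s)^{(k-1)/2}\,\pi^{(k+1)/2}/\Gamma((k+1)/2)$. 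The resulting bound $\Ee[I_k^2]\le C(t-s)^k\,\rho((t-s)/2,y-x)/\Gamma((k+1)/2)$, together with the superfactorial growth of $\Gamma((k+1)/2)$, gives $\sum_k\beta^{2k}\Ee[I_k^2]<\infty$, hence convergence in $L^2(\Pp)$.

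For part (ii), I would first establish the fiber-wise identity as follows. Unfolding $I_k(t,y\viiva s,x+j)$ via \eqref{eq:int_def} writes it as an integral against $\xi_\T$ over $[0,1]^k\times\Delta^k(t\viiva s)$ with integrand $\sum_{j_1,\dots,j_k\in\Z}\prod_{i=0}^k\rho(t_{i+1}-t_i,y_{i+1}+j_{i+1}-y_i-j_i)$, where we set $y_0=x$, $y_{k+1}=y$, $j_0=-j$, and $j_{k+1}=0$. The change of summation variables $n_i:=j_i-j_{i-1}$ for $1\le i\le k+1$ is a bijection $(j,j_1,\dots,j_k)\leftrightarrow(n_1,\dots,n_{k+1})$ on $\Z^{k+1}$, under which $e^{\theta j}=\prod_{i=1}^{k+1}e^{-\theta n_i}$ (since $j=-\sum_i n_i$) and the $i$-th heat-kernel factor depends only on $n_{i+1}$. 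Summing each $n_i$ independently produces $\prod_{i=0}^k\sum_{r\in\Z}e^{-\theta r}\rho(t_{i+1}-t_i,y_{i+1}-y_i+r)$, which is exactly the integrand of $J_k$ in \eqref{Jk_series}. This simultaneously proves the identity and exhibits $J_k$ as a $k$-fold It\^o integral against $\xi_\T$.

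The main technical obstacle will be to justify the various interchanges of infinite summations: the interior one just performed, and the outer interchange $\sum_j e^{\theta j}\sum_k\beta^k\longleftrightarrow\sum_k\beta^k\sum_je^{\theta j}$ needed to derive $Z_\beta^{\text{per}}=\sum_k\beta^k J_k$. Applying the bound from part (i) to the shifted partition function gives $\|I_k(t,y\viiva s,x+j)\|_{L^2(\Pp)}\le (C^k/\sqrt{\Gamma((k+1)/2)})\,\rho((t-s)/2,y-x-j)^{1/2}$, whose Gaussian decay in $|j|$ dominates the exponential weight $e^{\theta j}$. Consequently $\sum_j e^{\theta j}\|I_k(\cdot+j)\|_{L^2(\Pp)}\le C_\theta^k/\sqrt{\Gamma((k+1)/2)}$, and summing against $\beta^k$ gives absolute $L^2(\Pp)$-summability of the double series. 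The orthogonality of distinct chaos levels from Lemma \ref{lem:Ito_isometry} then yields $L^2$-convergence of $\sum_k\beta^k J_k$, and Fubini in $L^2(\Pp)$ identifies the limit with $Z_\beta^{\text{per}}(t,y\viiva s,x;\theta)$ as claimed. Throughout one uses that $Z_\beta^{\text{per}}$ is quasi-periodic, $Z_\beta^{\text{per}}(\cdot,\cdot\viiva\cdot,x+1;\theta)=e^{-\theta}Z_\beta^{\text{per}}(\cdot,\cdot\viiva\cdot,x;\theta)$, which follows trivially from the definition and keeps all arguments finite.
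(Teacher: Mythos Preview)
Your computation in part (i) is incorrect because you have applied the It\^o isometry as if $\xi$ were ordinary space-time white noise on $\R\times\R$. In this paper $\xi$ is the \emph{periodic extension} of $\xi_{\T}$, and by the definition \eqref{eq:int_def} the integral $I_k$ is really a $k$-fold integral against $\xi_{\T}$ with a periodized integrand. Lemma~\ref{lem:Ito_isometry} therefore gives
\[
\Ee[I_k^2]=\int_{[0,1]^k_{x,y}}\int_{\Delta^k(t\viiva s)}\Biggl(\sum_{\substack{j_1,\ldots,j_k\in\Z\\ j_0=j_{k+1}=0}}\prod_{i=0}^k\rho(t_{i+1}-t_i,\,x_{i+1}+j_{i+1}-x_i-j_i)\Biggr)^{\!2}\prod_{i=1}^k dt_i\,dx_i,
\]
not the clean expression $\int_{\R^k}\int_{\Delta^k}\prod_i\rho^2$ you wrote. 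Your formula captures only the diagonal terms $j_i=j_i'$ when the square is expanded; the off-diagonal terms are positive, so your expression is actually a strict \emph{lower} bound on $\Ee[I_k^2]$ and cannot be used to prove $L^2$ convergence. The paper handles this by enlarging the sum to allow $j_0$ to range over $\Z$, changing variables $r_i=j_{i+1}-j_i$ to factor the periodization, and then invoking Lemma~\ref{lem:rhobd} and Lemma~\ref{lem:int_sum_fin} (with $\theta=0$) to control the periodized kernels.

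This error propagates into your part (ii): the bound $\|I_k(t,y\viiva s,x+j)\|_{L^2(\Pp)}\le C^k\Gamma((k+1)/2)^{-1/2}\rho((t-s)/2,y-x-j)^{1/2}$ is not available, since it rests on the incorrect isometry. With the periodic noise there is no single-Gaussian decay in $j$; one must instead control tails of the form $\sum_{|r_0+\cdots+r_k|\ge P}\prod_i e^{-\theta r_i}\rho(\cdot,\cdot+r_i)$, which the paper does via Lemma~\ref{lem:ser_tail_to0}. Your change of variables $n_i=j_i-j_{i-1}$ for the fiber-wise identity $\sum_j e^{\theta j}I_k(\cdot+j)=J_k$ is the same idea the paper uses, but the justification of both the inner and outer interchange of sums needs the correct periodic-noise estimates rather than the full-space ones you invoke.
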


The proof of Proposition \ref{prop:Z_conv} is mostly standard though laborious and requires some uniform control on the decay of the Gaussian heat kernel, see Lemma \ref{lem:rhobd} below. We start with this lemma as well as other intermediate lemmas regarding the heat kernel and its integrals. For integers $m \le n$, recall the notation $\lzb m,n \rzb = \{m,m+1,\ldots,n\}$.
\begin{lemma} \label{lem:rhobd}
For $T > 0$ and $\theta \in \R$, there exists a function $f:\Z \to \R$, summable over $\Z$ and depending only on $T$ and $\theta$, so that, whenever $0 < t-s < T$, $x,y \in [0,1]$, and $r \in \Z$,
\be \label{eq:rhobd}
e^{-\theta r} \rho(t - s,y - x + r) \le f(r)\max_{j \in \lzb -3,3 \rzb} \rho(t - s,y - x + j).
\ee
\end{lemma}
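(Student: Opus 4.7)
The plan is to estimate the ratio $\dfrac{e^{-\theta r}\rho(\tau, z+r)}{\max_{j \in \lzb -3,3 \rzb}\rho(\tau, z+j)}$, where I set $\tau := t-s \in (0,T)$ and $z := y-x \in [-1,1]$, and then simply let $f(r)$ be (an upper bound on) this ratio. The prefactor $1/\sqrt{2\pi\tau}$ of $\rho$ cancels, so the ratio equals $e^{-\theta r}\exp\bigl(-[(z+r)^2 - m(z)]/(2\tau)\bigr)$ with $m(z) := \min_{j \in \lzb -3,3 \rzb}(z+j)^2$. I would first observe that since $z \in [-1,1]$, the integer nearest $-z$ lies in $\{-1,0,1\} \subset \lzb -3,3 \rzb$, so $m(z) \leq 1/4$, and moreover $(z+r)^2 \geq m(z)$ for every $r \in \Z$.

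I would then split into two regimes in $r$. For $|r| \leq 3$, the inner exponential is at most $1$ because $r$ itself lies in the minimization range, so the ratio is bounded by $e^{3|\theta|}$. For $|r| \geq 4$, I would use $|z+r| \geq |r|-1 \geq 3$ to get $(z+r)^2 - m(z) \geq (|r|-1)^2 - 1/4 \geq \tfrac12(|r|-1)^2$, and then $\tau < T$ to replace $1/(2\tau)$ by the uniform $1/(2T)$ in the (negative) exponent, yielding
\[
\exp\Bigl(-\frac{(z+r)^2 - m(z)}{2\tau}\Bigr) \leq \exp\Bigl(-\frac{(|r|-1)^2}{4T}\Bigr).
\]
Hence in this regime the ratio is at most $e^{|\theta|\cdot|r|}\exp(-(|r|-1)^2/(4T))$.

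Combining, I would take
\[
f(r) := e^{3|\theta|}\ind\{|r| \leq 3\} + e^{|\theta|\cdot|r|}\exp\bigl(-(|r|-1)^2/(4T)\bigr)\ind\{|r| \geq 4\},
\]
which depends only on $T$ and $\theta$ and is summable on $\Z$ since the Gaussian factor dominates $e^{|\theta||r|}$ at infinity. There is no real obstacle; the only care required is that the constant $1/(4T)$ in the final exponent be uniform in $\tau \in (0,T)$, which is precisely what the hypothesis $\tau < T$ supplies once the purely quadratic-in-$r$ portion of the exponent has been isolated from the harmless term $m(z) \leq 1/4$.
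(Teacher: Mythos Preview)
Your proof is correct and follows essentially the same approach as the paper: split into small $|r|$ and large $|r|$, and for large $|r|$ extract a Gaussian-in-$r$ decay from the exponent of the heat kernel that is uniform for $\tau\in(0,T)$. The only cosmetic differences are that the paper compares to the single choice $j=0$ (obtaining $\exp(-r^2/(4T)-\theta r)$) while you compare to the full maximum via $m(z)\le 1/4$, yielding the equivalent bound $e^{|\theta||r|}\exp(-(|r|-1)^2/(4T))$.
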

\begin{proof}
Start with the observation that for $y \in \R$, $r \in \Z$, and $ t > 0$, 
\be \label{rhobd1}
\rho(t,y + r) = \rho(t,y ) \exp\Bigl(-\f{r(2y + r)}{2t}\Bigr). 
\ee
If we assume that $0 < t-s < T$ and $x,y \in [0,1]$, then for all $|r| \geq  4$, $r(2(y - x) + r) \ge r^2/2$. Thus,
\[
\rho(t - s,y - x + r) \le \rho(t - s, y - x) \exp\Bigl(-\f{r^2}{4(t-s)}\Bigr).
\]
Multiplying both sides by $e^{-\theta r}$ we see that \eqref{eq:rhobd} holds if we let $f(r)=e^{-\theta r}$ for $r\in \lzb -3,3 \rzb$ and $f(r) = \exp\bigl(-\f{r^2}{4T} - \theta r\bigr)$ otherwise. Then, for $|r| > 3$, one can choose $j = 0$ in the right-hand side of \eqref{eq:rhobd}. Clearly, due to the decay of $-\f{r^2}{4T} - \theta r$, $f(r)$ is summable over $\Z$ as desired. 
\end{proof}

\begin{lemma} \label{lem:int_sum_fin}
For $\beta > 0, t > s,x,y \in [0,1]$, and $\theta \in \R$,
\be \label{eq:int_Series_fin}
\sum_{k = 0}^\infty \beta^{2k}\int_{[0,1]^k_{x,y}} \int_{\Delta^k(t \viiva s)} \bigg(\prod_{i = 0}^k \sum_{r \in \Z} e^{-\theta r} \rho(t_{i+1} - t_i, y_{i+1} - y_i + r) \bigg)^2 \prod_{i = 1}^k dt_i\,dy_i < \infty.
\ee
\end{lemma}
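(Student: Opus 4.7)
The plan is to reduce the infinite sum over $r$ to a sum over a finite window using Lemma \ref{lem:rhobd}, square the resulting product, and then compute the space and time integrals using the semigroup property of the Gaussian heat kernel and a Dirichlet-type integral formula, respectively. The time integration contributes a Gamma function factor that tames the combinatorial growth in $k$.

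First, since $0 < t_{i+1} - t_i < t-s$ and $y_i, y_{i+1} \in [0,1]$ for $1 \le i \le k$ (and similar for the endpoints $x, y \in [0,1]$), Lemma \ref{lem:rhobd} applied with $T = t-s$ yields a constant $C = C(t-s, \theta) := \sum_{r \in \Z} f(r) < \infty$ such that
\[
\sum_{r \in \Z} e^{-\theta r}\rho(t_{i+1}-t_i, y_{i+1}-y_i+r) \;\le\; C \max_{j \in \lzb -3,3 \rzb} \rho(t_{i+1}-t_i, y_{i+1}-y_i+j).
\]
Squaring and bounding the max by the sum over $\lzb -3, 3\rzb$ gives
\[
\Bigl(\sum_{r \in \Z} e^{-\theta r}\rho(t_{i+1}-t_i, y_{i+1}-y_i+r)\Bigr)^{\!2} \le C^2 \sum_{j \in \lzb -3, 3 \rzb}\rho(t_{i+1}-t_i, y_{i+1}-y_i+j)^2.
\]
Taking the product over $i \in \{0,\ldots,k\}$ and distributing, the integrand in \eqref{eq:int_Series_fin} is bounded by $C^{2(k+1)}$ times a sum over $(j_0,\ldots,j_k) \in \lzb -3,3 \rzb^{k+1}$ (which contributes a factor of $7^{k+1}$) of $\prod_{i=0}^{k}\rho(t_{i+1}-t_i, y_{i+1}-y_i+j_i)^2$.

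Next, use the elementary identity $\rho(t,y)^2 = \f{1}{2\sqrt{\pi t}}\,\rho(t/2,y)$ to rewrite each squared kernel. Extending the spatial integration from $[0,1]^k$ to $\R^k$ (the integrand is non-negative) and applying the convolution semigroup property of Gaussians,
\[
\int_{\R^k}\prod_{i=0}^{k} \rho\bigl((t_{i+1}-t_i)/2,\,y_{i+1}-y_i+j_i\bigr)\, d\mbf y_{1:k} = \rho\Bigl((t-s)/2,\, y-x+\sum_{i=0}^{k} j_i\Bigr) \le \f{1}{\sqrt{\pi(t-s)}},
\]
uniformly in the $j_i$'s. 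Therefore the spatial integral is bounded by $\f{1}{\sqrt{\pi(t-s)}}\prod_{i=0}^{k} \f{1}{2\sqrt{\pi(t_{i+1}-t_i)}}$.

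It remains to integrate in time over the simplex. By the Dirichlet integral formula (with all parameters equal to $1/2$),
\[
\int_{\Delta^k(t\viiva s)}\prod_{i=0}^{k}(t_{i+1}-t_i)^{-1/2}\,dt_1 \cdots dt_k \;=\; \f{\pi^{(k+1)/2}}{\Gamma\bigl((k+1)/2\bigr)}\,(t-s)^{(k-1)/2}.
\]
Combining everything, the $k$-th summand in \eqref{eq:int_Series_fin} is bounded by
\[
\f{(7C^2\beta^2)^{k}\,(7C^2)\,(t-s)^{(k-1)/2}}{2^{k+1}\sqrt{\pi(t-s)}\,\Gamma\bigl((k+1)/2\bigr)},
\]
which is of the form $A(t-s,\theta)^k/\Gamma((k+1)/2)$ up to a $k$-independent multiplicative prefactor. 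Since $\Gamma((k+1)/2)$ grows super-exponentially, this is summable in $k$, proving \eqref{eq:int_Series_fin}. The main bookkeeping obstacle is simply threading Lemma \ref{lem:rhobd} carefully through the squaring (to avoid losing track of the dependence on $t-s$ and $\theta$ in $C$) and verifying the Dirichlet integral; no analytically delicate step beyond these is required.
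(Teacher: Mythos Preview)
Your proof is correct and follows essentially the same approach as the paper: both invoke Lemma \ref{lem:rhobd} to collapse the infinite sum in $r$ to a finite window $\lzb -3,3\rzb$, expand the square into a sum over tuples $(j_0,\ldots,j_k)$, extend the spatial integration to $\R^k$, and extract the decisive factor $1/\Gamma((k+1)/2)$. The only difference is that the paper packages the space-time integral of $\prod_i \rho^2$ into a separate Lemma \ref{lem:rho_int_comp} (and then sums the resulting $\rho^2(t-s,y-x+j)$ over $j$), whereas you inline that computation via the identity $\rho(t,y)^2=\f{1}{2\sqrt{\pi t}}\rho(t/2,y)$, the Gaussian semigroup property, and the Dirichlet integral, bounding $\rho((t-s)/2,\cdot)$ by its maximum instead.
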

\begin{proof}
By Lemma \ref{lem:rhobd}, there exists a summable function $f:\Z \to \R$ and a constant $C$ (each depending on $\beta,\theta,t-s$ and $C$ is changing from line to line),  so that
\begin{align*}
&\quad \, \beta^{2k}\int_{[0,1]^k_{x,y}} \int_{\Delta^k(t \viiva s)} \bigg(\prod_{i = 0}^k \sum_{r \in \Z} e^{-\theta r} \rho(t_{i+1} - t_i, y_{i+1} - y_i + r) \bigg)^2 \prod_{i = 1}^k dt_i\,dy_i \\
&\le \beta^{2k} \int_{[0,1]^k_{x,y}} \int_{\Delta^k(t \viiva s)} \bigg(\prod_{i = 0}^k \sum_{r \in \Z} f(r) \max_{j \in \lzb - 3,3 \rzb} \rho(t_{i+1} - t_i, y_{i+1} - y_i + j)\bigg)^2 \prod_{i = 1}^k dt_i\,dy_i \\
&\le C^k  \int_{\R^k_{x,y}} \int_{\Delta^k(t \viiva s)} \prod_{i = 0}^k \max_{j \in \lzb -3, 3\rzb } \rho^2(t_{i+1} - t_i,y_{i+1} - y_i + j) \prod_{i = 1}^k dt_i\,dy_i \\
&\le C^k \sum_{j_0,\ldots,j_k \in \lzb -3,3\rzb }\int_{\R^k_{x,y}} \int_{\Delta^k(t \viiva s)} \prod_{i = 0}^k \rho^2(t_{i+1} - t_i,y_{i+1} - y_i + j_i) \prod_{i = 1}^k dt_i\,dy_i.  
\end{align*}
Now make the change of variables $x_i = y_i + j_{i - 1} + \cdots + j_0$ to obtain that the last line above is equal to
\begin{align*}
&=C^k \sum_{j_0,\ldots,j_k \in \{-3,\ldots, 3\}} \int_{\R_{x,y + j_0 + \cdots + j_k}^k} \int_{\Delta^k(t \viiva s)} \prod_{i = 0}^k \rho^2(t_{i+1} - t_i, x_{i+1} - x_i) \prod_{i = 1}^k dt_i\,dx_i\\
&\le C^k  \sum_{j \in \Z} \int_{\R^k_{x,y+j}} \int_{\Delta^k(t \viiva s)}  \prod_{i = 0}^k \rho^2(t_{i+1} - t_i, x_{i+1} - x_i) \prod_{i = 1}^k dt_i\,dx_i \\
&  C^k   \f{(t-s)^{(k+3)/2}\Gamma(1/2)^{k+1}}{\Gamma(\f{1}{2}(k+1))2^k \pi^{k/2}} \sum_{j \in \Z}\rho^2(t-s,y-x + j) \le  \f{C^k}{\Gamma(\f{1}{2}(k+1))},
\end{align*}
where the computation of the integral in the last line is Lemma \ref{lem:rho_int_comp}. Hence, the series \eqref{eq:int_Series_fin} is finite.
\end{proof}

\begin{lemma} \label{lem:ser_tail_to0}
For $k \in \N$, $t > s$ and $x,y,\theta \in \R$, the following holds
\[
\lim_{P \to \infty}  \int_{[0,1]^k_{x ,y}}\int_{\Delta^k(t \viiva s)} \Biggl(\sum_{\substack{r_0,\ldots,r_k \in \Z \\ |r_0 + \cdots + r_k| \ge P}}  \prod_{i = 0}^k e^{-\theta r_i} \rho(t_{i + 1} - t_i,x_{i +1}  - x_i + r_i) \Biggr)^2 \prod_{i = 1}^k \,dt_i\,dx_i = 0.
\]
\end{lemma}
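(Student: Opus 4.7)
The plan is to deduce this statement from the dominated convergence theorem applied on the finite-measure set $\Delta^k(t\viiva s) \times [0,1]^k_{x,y}$, with the constrained integrand in $P$ converging pointwise to zero and being dominated by an integrable function independent of $P$.

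For the pointwise convergence, I would fix a tuple of times and spatial variables $(t_1,\ldots,t_k,x_1,\ldots,x_k)$ and interchange the finite product with the infinite sum to write the unrestricted sum as
\[
S(t_0,\ldots,t_{k+1},x_0,\ldots,x_{k+1}) := \sum_{r_0,\ldots,r_k \in \Z} \prod_{i=0}^k e^{-\theta r_i} \rho(t_{i+1} - t_i, x_{i+1} - x_i + r_i) = \prod_{i = 0}^k \sum_{r \in \Z} e^{-\theta r}\rho(t_{i+1} - t_i, x_{i+1} - x_i + r).
\]
Each factor is finite by the Gaussian decay in $r$, as quantified by the estimate behind Lemma \ref{lem:rhobd}. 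Since all terms are nonnegative and the full sum converges absolutely, the tail restricted to $|r_0 + \cdots + r_k| \geq P$ tends to zero as $P \to \infty$ (for each fixed integer tuple, the constraint eventually fails).

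For the dominator, I would observe that the constrained sum is bounded above in absolute value by the full sum $S$, so the squared tail is bounded by $S^2$. The finite integrability of $S^2$ over $[0,1]^k_{x,y} \times \Delta^k(t \viiva s)$ is exactly the $k$-th term (with $\beta = 1$) of the convergent series in Lemma \ref{lem:int_sum_fin}, hence finite. A minor technical point is that Lemma \ref{lem:int_sum_fin} as stated requires $x,y \in [0,1]$, whereas here $x,y \in \R$. One handles this by writing $x = \lfloor x \rfloor + \{x\}$ (and similarly for $y$) and reindexing $r_0 \mapsto r_0 - \lfloor x \rfloor$ (resp.\ $r_k \mapsto r_k + \lfloor y \rfloor$), which absorbs $e^{-\theta \lfloor x \rfloor}$ and $e^{\theta \lfloor y \rfloor}$ into a finite multiplicative constant and reduces the $i=0$ and $i=k$ factors to the same form as in Lemma \ref{lem:int_sum_fin} up to fractional parts lying in $[0,1)$.

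With pointwise convergence to zero together with an integrable $P$-independent dominator, dominated convergence yields the claim. There is no serious obstacle here; the one thing to watch is the trivial adjustment for unbounded endpoints $x,y \in \R$ described above.
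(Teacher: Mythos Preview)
Your proposal is correct and the argument is complete: the constrained sum is dominated pointwise by the unconstrained product $S$, its square is integrable over $[0,1]^k_{x,y}\times\Delta^k(t\viiva s)$ by Lemma~\ref{lem:int_sum_fin} (after your reindexing to reduce the endpoints $x,y$ to their fractional parts), and the pointwise decay to zero follows because the full multi-index sum converges absolutely. Dominated convergence then finishes.

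The paper takes a different but closely related route. Rather than invoking dominated convergence, it applies Lemma~\ref{lem:rhobd} to each factor to obtain the uniform bound
\[
e^{-\theta r_i}\rho(t_{i+1}-t_i,x_{i+1}-x_i+r_i)\le f(r_i)\max_{j\in\lzb -3,3\rzb}\rho(t_{i+1}-t_i,x_{i+1}-x_i+j),
\]
which allows the constrained $(r_0,\ldots,r_k)$-sum to be pulled outside the integral as the scalar factor $\bigl(\sum_{|r_0+\cdots+r_k|\ge P}\prod_i f(r_i)\bigr)^2$; the remaining integral is a fixed finite quantity independent of $P$, and the scalar factor vanishes as $P\to\infty$ because $f$ is summable. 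Your approach has the advantage of reusing Lemma~\ref{lem:int_sum_fin} wholesale without repeating the $\rho$-kernel manipulations; the paper's approach is slightly more quantitative, as it exhibits the rate of convergence explicitly through the tail of the $f$-product sum. Your observation about the endpoints $x,y\in\R$ versus $[0,1]$ is a genuine point that the paper's proof glosses over, and your reindexing fix applies equally well there.
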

\begin{proof}
By Lemma \ref{lem:rhobd}, there is a summable function $f:\Z \to \R$ such that
\begin{align*}
&\quad \, \int_{[0,1]^k_{x ,y}}\int_{\Delta^k(t \viiva s)} \Biggl(\sum_{\substack{r_0,\ldots,r_k \in \Z \\ |r_0 + \cdots + r_k| \ge P}}  \prod_{i = 0}^k e^{-\theta r_i} \rho(t_{i + 1} - t_i,x_{i +1}  - x_i + r_i) \Biggr)^2 \prod_{i = 1}^k \,dt_i\,dx_i \\
&\le \Biggl(\sum_{\substack{r_0,\ldots,r_k \in \Z \\ |r_0 + \cdots + r_k| \ge P}}   \prod_{i = 0}^k f(r_i) \Biggr)^2 \int_{[0,1]^k_{x ,y}}\int_{\Delta^k(t \viiva s)} \prod_{i = 0}^k \max_{j \in \lzb -3,3\rzb}  \rho^2(t_{i + 1} - t_i,x_{i +1}  - x_i + j) \prod_{i = 1}^k \,dt_i\,dx_i \\
&\le \Biggl(\sum_{\substack{r_0,\ldots,r_k \in \Z \\ |r_0 + \cdots + r_k| \ge P}}    \prod_{i = 0}^k f(r_i) \Biggr)^2\int_{[0,1]^k_{x ,y}}\int_{\Delta^k(t \viiva s)} 
 \sum_{j_0,\ldots,j_k \in \lzb -3,3\rzb} \prod_{i = 0}^k  \rho^2(t_{i + 1} - t_i,x_{i +1}  - x_i + j_i) \prod_{i = 1}^k \,dt_i\,dx_i. 
\end{align*}
The integral above is finite, as can be seen by expanding the bounds $[0,1]_{x,y}^k$ to $\R^k_{x,y}$ and computing directly via Lemma \ref{lem:rho_int_comp}. Further, since $f:\Z \to \R$ is summable, it follows that 
\[
\lim_{P \to \infty}\sum_{\substack{r_0,\ldots,r_k \in \Z \\ |r_0 + \cdots + r_k| \ge P}}   \prod_{i = 0}^k f(r_i) = 0,
\]
from which the claimed result of the lemma readily follows.
\end{proof}

With the previous lemmas, we are ready to prove Proposition~\ref{prop:Z_conv}:
\begin{proof}[Proof of Proposition \ref{prop:Z_conv}]

\medskip \noindent \textbf{Item \ref{itm:s1}:} Recalling \eqref{eq:Z_b_series}, we wish to show that the series 
\[
Z_\beta(t,y \viiva s,x):= \sum_{k = 0}^\infty \beta^k I_k(t,y \viiva s,x)
\]
converges in $L^2(\Pp)$. By the It\^o isometry, Lemma \ref{lem:Ito_isometry}, and the definition of the multiple integrals in \eqref{eq:int_def}, 
\begin{align*}
&\quad \,\Ee[Z_\beta^2(t,y \viiva s,x)] = \sum_{k = 0}^\infty \beta^{2k} \Ee[I_k^2(t,y\viiva s,x)] \\
&= \sum_{k = 0}^\infty \beta^{2k} \int_{[0,1]_{x,y}^k} \int_{\Delta^k(t \viiva s)} \Biggl(\sum_{\substack{j_1,\ldots,j_k \in \Z  \\
j_0 = j_{k+1}  = 0}} \prod_{i = 0}^k \rho(t_{i+1} - t_i,x_{i+1} + j_{i+1} - x_i - j_i)\Biggr)^2 \prod_{i = 1}^k dt_i\,dx_i \\
&\le \sum_{k = 0}^\infty \beta^{2k} \int_{[0,1]_{x,y}^k} \int_{\Delta^k(t \viiva s)} \Biggl(\sum_{\substack{j_0,j_1,\ldots,j_k \in \Z  \\
j_{k+1}  = 0}} \prod_{i = 0}^k \rho(t_{i+1} - t_i,x_{i+1} + j_{i+1} - x_i - j_i)\Biggr)^2 \prod_{i = 1}^k dt_i\,dx_i,
\end{align*}
where in the last line, we simply increased the sum by allowing $j_0$ to vary over $\Z$. Making the change of variables $r_i = j_{i+1} - j_i$, we can rewrite the above expression as
\begin{align*}
&\quad \,\sum_{k = 0}^\infty \beta^{2k} \int_{[0,1]_{x,y}^k} \int_{\Delta^k(t \viiva s)} \Biggl(\sum_{r_0,\ldots,r_k \in \Z} \prod_{i = 0}^k \rho(t_{i+1} - t_i,x_{i+1} - x_i + r_i)\Biggr)^2 \prod_{i = 1}^k dt_i\,dx_i \\
&= \sum_{k = 0}^\infty \beta^{2k}\int_{[0,1]^k_{x,y}} \int_{\Delta^k(t \viiva s)} \bigg(\prod_{i = 0}^k \sum_{r \in \Z} \rho(t_{i+1} - t_i, x_{i+1} - x_i + r) \bigg)^2 \prod_{i = 1}^k dt_i\,dx_i,
\end{align*}
and this sum is finite by the $\theta = 0$ case of Lemma \ref{lem:int_sum_fin}.

\medskip \noindent \textbf{Item \ref{itm:s2}:} There are three things to prove here.
\begin{enumerate} [label=\textup{(\alph*)}]
    \item \label{a} The series $\sum_{k = 0}^\infty \beta^k J_k(t,y\viiva s,x;\theta)$ is convergent in $L^2(\Pp)$, where we define
\begin{align*}
J_k(t,y\viiva s,x;\theta) := \int_{[0,1]^k_{x,y}} \int_{\Delta^k(t \viiva s)} \prod_{i = 0}^k  \sum_{r \in \Z} e^{-\theta r} \rho(t_{i+1} - t_i, y_{i+1} - y_i + r) \prod_{i = 1}^k \xi_{\T}(dt_i,dy_i).
\end{align*}
    \item \label{b}The following series is  $L^2(\Pp)$ convergent
    \[
    Z_\beta^{\text{per}}(t,y \viiva s,x;\theta) := \sum_{j \in \Z} e^{\theta j} Z_\beta(t,y \viiva s,x+j).
    \]
    \item \label{c} We have the almost sure equality
    \[
    Z_\beta^{\text{per}}(t,y \viiva s,x;\theta) = \sum_{k = 0}^\infty \beta^k J_k(t,y\viiva s,x;\theta).
    \]
\end{enumerate}
Observe that Item \ref{a} follows by the It\^o isometry of Lemma \ref{lem:Ito_isometry} and the computation in Lemma \ref{lem:int_sum_fin}. Before proving Items \ref{b} and \ref{c}, we first prove the following:
\begin{align} \label{701}
\sum_{j \in \Z} e^{\theta j} I_k(t,y \viiva s,x+j) &= J_k(t,y \viiva s,x;\theta).
\end{align}
and in particular, the series on the left-hand side converges in $L^2(\Pp)$. Recalling he definition of $J_k$ \eqref{Jk_series}, it suffices to prove the following more general equality, for which \eqref{701} is the $M = 0$ case: for integers $M  \ge 0$,
\be \label{701a}
\begin{aligned}
&\quad \, \sum_{j \in \Z \setminus (-M,M)} e^{\theta j} I_k(t,y \viiva s,x+j) \\
&= \int_{[0,1]^k_{x ,y}}\int_{\Delta^k(t \viiva s)} \sum_{\substack{r_0,\ldots,r_k \in \Z \\ |r_0 + \cdots + r_k| \ge M}}  \prod_{i = 0}^k e^{-\theta r_i} \rho(t_{i + 1} - t_i,x_{i +1}  - x_i + r_i) \prod_{i = 1}^k \xi_\T (dt_i,dx_i).
\end{aligned}
\ee

We start by rewriting the left-hand side of \eqref{701a} as 
\begin{align*}
    &\quad \, \sum_{j \in \Z \setminus (-M,M)} e^{\theta j} \int_{\R_{x+j,y}^k} \int_{\Delta^k(t \viiva s)} \prod_{i = 0}^k \rho(t_{i+1} - t_i,x_{i+1}-x_i) \prod_{i = 1}^k \xi(dt_i,dx_i) \\
 &=  \sum_{j \in \Z \setminus (-M,M)} e^{\theta j} \int_{[0,1]^k_{x + j,y}}\int_{\Delta^k(t \viiva s)} \sum_{\substack{j_1,\ldots,j_{k} \in \Z \\ j_0 = j_{k+1} = 0}} \prod_{i = 1}^k \rho(t_{i + 1} - t_i,x_{i +1} + j_{i+1} - x_i - j_i) \prod_{i = 1}^k \xi_\T (dt_i,dx_i) \\
 &= \lim_{P \to \infty} \sum_{M \le |j| \le P} e^{\theta j} \int_{[0,1]^k_{x + j,y}}\int_{\Delta^k(t \viiva s)} \sum_{\substack{j_1,\ldots,j_{k} \in \Z \\ j_0 = j_{k+1} = 0}} \prod_{i = 1}^k \rho(t_{i + 1} - t_i,x_{i +1} + j_{i+1} - x_i - j_i) \prod_{i = 1}^k \xi_\T (dt_i,dx_i) \\
 &= \lim_{P \to \infty}  \int_{[0,1]^k_{x + j,y}}\int_{\Delta^k(t \viiva s)} \sum_{\substack{j_0,j_1,\ldots,j_{k} \in \Z \\ M \le |j_0| \le P,\; j_{k+1} = 0}} e^{\theta j_0} \prod_{i = 1}^k \rho(t_{i + 1} - t_i,x_{i +1} + j_{i+1} - x_i - j_i) \prod_{i = 1}^k \xi_\T (dt_i,dx_i) \\
 &= \lim_{P \to \infty} \int_{[0,1]^k_{x + j,y}}\int_{\Delta^k(t \viiva s)} \sum_{\substack{r_0,\ldots,r_k \in \Z \\ M \le |r_0 + \cdots + r_k| \le P}} \prod_{i = 1}^k e^{-\theta r_i} \rho(t_{i + 1} - t_i,x_{i +1} - x_i + r_i) \prod_{i = 1}^k \xi_\T (dt_i,dx_i).
\end{align*}
The first line is from the definition of $I_k$ in \eqref{eq:Z_b_series}, the second comes from the definition of the multiple stochastic integral over $\R^k \times \R^k$ in \eqref{eq:int_def}, the third is just the definition of the sum over $j$, the fourth comes from interchanging that, now finite, sum in $j$ with the two integrals, and the fifth line comes from changing variables to $r_i=j_{i+1}-j_i$ for $0\leq i\leq k$.

To justify convergence of the series in \eqref{701a} to $J_k(t,y \viiva s,x;\theta)$, we therefore must show that the limit $P\to \infty$ can be taken inside the integral, with respect to $L^2(\Pp)$. This follows by the It\^o isometry in Lemma \ref{lem:Ito_isometry}, along with Lemma \ref{lem:ser_tail_to0}.

We now complete the proof of Items \ref{b} and \ref{c}. The equality to show is 
\be \label{702}
\sum_{j \in \Z}e^{\theta j} Z_\beta(t,y \viiva s,x+j) = \sum_{k = 0}^\infty \beta^k J_k(t,y \viiva s,x;\theta),
\ee
where we note that we have seen that the right-hand side of \eqref{702} converges in $L^2(\Pp)$ (Item \ref{a}). Thus, our task is to prove that the series on the left-hand side of \eqref{702} converges in $L^2(\Pp)$ (Item \ref{b}) and its limit is the right-hand side (Item \ref{c}). 

By definition, the left-hand side of \eqref{702} may be written as
\begin{align}
    \sum_{j \in \Z} e^{\theta j} \sum_{k = 0}^\infty \beta^k I_k(t,y \viiva s,x+j) &= \lim_{M \to \infty} \sum_{j \in \Z \cap (-M,M)} e^{\theta j} \sum_{k = 0}^\infty  \beta^k I_k(t,y \viiva s,x+j) \nonumber  \\
    &=\lim_{M \to \infty} \sum_{k = 0}^\infty \beta^k \sum_{j \in \Z \cap (-M,M)} e^{\theta j} I_k(t,y \viiva s,x+j). \label{eq:Mlim}
\end{align}
On the other hand, by \eqref{701}, the right-hand side of \eqref{702} is 
\be \label{eq:klim}
\sum_{k = 0}^\infty \beta^k \sum_{j \in \Z} e^{\theta j} I_k(t,y \viiva s,x+j).
\ee
Now, comparing the expressions in \eqref{eq:Mlim} and \eqref{eq:klim}, it suffices to show that  
\begin{align*}
    &\lim_{M \to \infty}\Biggl\|\sum_{k = 0}^\infty \beta^k \sum_{j \in \Z \setminus (-M,M)} e^{\theta j} I_k(t,y \viiva s,x+j)\Biggr\|_{L^2(\Pp)} = 0. 
    \end{align*}
By \eqref{701a}, the It\^o isometry,  and Lemma \ref{lem:ser_tail_to0} for any choice of $K \in \N$, 
\begin{align*}
\lim_{M \to \infty} \sum_{k = 0}^K \beta^k \Biggl\| \sum_{j \in \Z \setminus (-M,M)} e^{\theta j} I_k(t,y \viiva s,x+j)  \Biggr\|_{L^2(\Pp)} = 0.
\end{align*}
Thus, it suffices to show that for any $\ve > 0$, we may choose $K \in \N$ sufficiently large so that
\[
\Biggl\| \sum_{k = K+1}^\infty \beta^k \sum_{j \in \Z \setminus (-M,M)} e^{\theta j} I_k(t,y \viiva s,x+j)\Biggr\|_{L^2(\Pp)} \le \ve,
\]
for all $M \ge 0$.
By \eqref{701a} along with the It\^o isometry in Lemma \ref{lem:Ito_isometry}, we have 
\begin{align*}
&\quad \, \Biggl\| \sum_{k = K+1}^\infty \beta^k \sum_{j \in \Z \setminus (-M,M)} e^{\theta j} I_k(t,y \viiva s,x+j)\Biggr\|_{L^2(\Pp)}^2 \\
&= \sum_{k = K+1}^\infty \beta^{2k}\int_{[0,1]^k_{x + j,y}}\int_{\Delta^k(t \viiva s)} \Biggl(\sum_{\substack{r_0,\ldots,r_k \in \Z \\ |r_0 + \cdots + r_k| \ge M}} \prod_{i = 1}^k e^{-\theta r_i} \rho(t_{i + 1} - t_i,x_{i +1} - x_i + r_i)\Biggr)^2 \prod_{i = 1}^k dt_i\,dx_i  \\
&\le \sum_{k = K+1}^\infty \beta^{2k}\int_{[0,1]^k_{x + j,y}}\int_{\Delta^k(t \viiva s)} \Biggl( \prod_{i = 1}^k \sum_{r \in \Z} e^{-\theta r }\rho(t_{i + 1} - t_i,x_{i +1} - x_i + r_i)\Biggr)^2 \prod_{i = 1}^k dt_i\,dx_i.
\end{align*}
 Lemma \ref{lem:int_sum_fin} states that this last series is convergent, and hence we may choose the desired value of $K$.
\end{proof}

\subsection{Chaos series for the O'Connell-Yor polymer partition function}\label{sec:OYchaos}
In this section, we show how to write the scaled O'Connell-Yor polymer partition function as an explicit $L^2(\Pp)$-convergent chaos series. This result is Proposition \ref{prop:OCY_Chaos} below, and is the goal of the present subsection. 

The chaos series in the semi-discrete setting (see \eqref{JkNdef} below) is the same as the chaos series \eqref{Jk_series}, except that $\rho$ is replaced with the semi-discrete heat kernel, defined in terms of the Poisson heat kernel $q$ by
\begin{equation} \label{pN_def}
p_N(t,y \viiva s,x) := N q\big((t-s)N^2, \lfloor tN^2 + yN \rfloor - \lfloor sN^2 + xN \rfloor\big), \quad \textrm{where}\quad q(t,n) := e^{-t} \f{t^n}{n!} \ind\{(t,n)\in \R_{\ge 0} \times \Z_{\ge 0}\}.
\end{equation}

Before stating Proposition \ref{prop:OCY_Chaos}, we motivate the reasoning for how one can write the partition function for O'Connell-Yor polymer as a chaos series. First, we describe how to couple the white noise to a collection of independent Brownian motions. This is the same coupling used in the full-line setting in \cite{Nica-2021,GRASS-23}, which also appeared in an unpublished manuscript \cite{MFQR-note}. We point out that this formulation of the coupling may look different form that in \cite{Nica-2021} because, opposite the convention there, we first couple the Brownian motions to the white noise before scaling, then observe how the noise behaves under the change of variables that produces the SHE in the limit. 

For $N \in \N$,  let $\xi_{N \T}$ be a space-time white noise on  $\R \times [0,N]$, and $\xi_N$ is periodic extension, as defined in Section \ref{sec:white_noise}.  We may couple a field of Brownian motions $(B_r)_{r \in \Z}$ by defining 
\be \label{eq:BM_coupling}
B_r(t) = \int_r^{r+1}  \int_0^{t}  \xi_{N}(ds,dx).
\ee
It follows immediately from the definition of white noise and checking the covariance structure from the It\^o isometry \eqref{eq:Ito} that $(B_r)_{r \in \{0,\ldots,N-1\}}$ are i.i.d., and $B_{r} = B_j$ whenever $j \equiv r \mod N$.

Now, we define an intermediate quantity that is more convenient to work with. For integers $n \ge m$, real numbers $t \ge s$, and $\gamma >0$, set 
\be\label{Ydef}   Y_\gamma^N(t,n \viiva s,m) := e^{-(t - s) - \f{\gamma^2}{2} (t - s)}\OCY_\gamma(t,n \viiva s,m),\ee
with $Z_\gamma^N$ the semi-discrete O'Connell-Yor partition function defined in \eqref{OCYpart}. Note that this relates to the scaled object $\scOCYp$ from \eqref{ZNdef} is via the equality
\be \label{ZperY}
\scOCYp(t,y \viiva s,x;\theta) = N \sum_{j \in \Z} e^{\theta j} Y_{\beta_N}^N\bigl(tN^2,\lfloor tN^2 + yN \rfloor \viiva sN^2, \lfloor sN^2 + xN + jN \rfloor\bigr),
\ee
where $\beta_N = \beta N^{-1/2}$. Lemma \ref{lem:OCYp_finite} shows that this sum converges and is strictly positive a.s.  The following is the Feynman-Kac formula for the quantity $Y_\gamma$. It can be proved rigorously using It\^o calculus (for example, the  proof of \cite[Lemma 3.5]{GRASS-23} works the same in this setting). 
\begin{lemma} \label{lem:Yrep}
For  all integer $n \ge m$ and real $t \ge s$, $Y_\gamma^N(t,n \viiva s,m)$ satisfies the It\^o integral equation
\be \label{Yrep}
Y_\gamma^N(t,n \viiva s,m) = q(t-s,n-m) + \gamma \sum_{m \le k \le n} \int_s^t  q(t - u,n-k) \tspb Y_\gamma^N(u,k \viiva s,m)\,d B_k(u),
\ee
where $\{B_r\}_{r \in \Z}$ are the periodized  Brownian motions used in defining $\OCY_\gamma$. 
\end{lemma}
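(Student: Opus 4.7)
My plan is to prove the integral representation by induction on $n - m \ge 0$, in essentially the same manner as \cite[Lemma 3.5]{GRASS-23}, since at this stage we are working with finitely many integer levels and the periodicity of the noise is immaterial. The strategy is to first derive the semi-discrete stochastic heat equation satisfied by $Y_\gamma^N$ in SDE form and then convert it to the mild (integral) form using Duhamel's principle, with the Poisson kernel $q$ playing the role of the fundamental solution.

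For the base case $n = m$, directly from \eqref{OCYpart} we have $\OCY_\gamma(t,m\viiva s,m) = e^{\gamma(B_m(t)-B_m(s))}$, so $Y_\gamma^N(t,m\viiva s,m) = \exp\bigl(-(t-s) - \tfrac{\gamma^2}{2}(t-s) + \gamma(B_m(t)-B_m(s))\bigr)$. A direct It\^o calculation gives
$$dY_\gamma^N(t,m\viiva s,m) = -Y_\gamma^N(t,m\viiva s,m)\,dt + \gamma Y_\gamma^N(t,m\viiva s,m)\,dB_m(t).$$
The right-hand side of \eqref{Yrep} in this case collapses (since $Y_\gamma^N(u,k\viiva s,m) = 0$ for $k < m$ and $q(t-u,m-k) = 0$ for $k > m$) to $e^{-(t-s)} + \gamma\int_s^t e^{-(t-u)} Y_\gamma^N(u,m\viiva s,m)\,dB_m(u)$, and integration by parts applied to $u \mapsto e^{-(t-u)} Y_\gamma^N(u,m\viiva s,m)$ (using the SDE above) verifies the equality.

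For the inductive step, I would decompose the partition function by the last jump time $s_{n-1} \in (s,t)$, which yields
$$\OCY_\gamma(t,n\viiva s,m) = \int_s^t e^{\gamma(B_n(t)-B_n(u))}\tsp \OCY_\gamma(u,n-1\viiva s,m)\,du,$$
and hence
$$Y_\gamma^N(t,n\viiva s,m) = \int_s^t e^{-(t-u) - \frac{\gamma^2}{2}(t-u) + \gamma(B_n(t)-B_n(u))}\tsp Y_\gamma^N(u,n-1\viiva s,m)\,du.$$
Applying It\^o's formula in $t$ to the exponential factor (which depends on $B_n$ only) and using a Leibniz rule for It\^o integrals produces
$$dY_\gamma^N(t,n\viiva s,m) = \bigl(Y_\gamma^N(t,n-1\viiva s,m) - Y_\gamma^N(t,n\viiva s,m)\bigr)\,dt + \gamma Y_\gamma^N(t,n\viiva s,m)\,dB_n(t),$$
with initial condition $Y_\gamma^N(s,n\viiva s,m) = \ind\{n=m\}$. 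This is the semi-discrete SHE on $\Z$, whose linear part has Green's function $q$ (since $\partial_t q(t,n) = q(t,n-1) - q(t,n)$, $q(0,n) = \ind_{n=0}$). To convert to mild form, I would apply It\^o's formula to $t \mapsto \sum_{k} q(T-t,n-k)\tspb Y_\gamma^N(t,k\viiva s,m)$ for fixed $T > s$, noting that on the finite set $\{m,\ldots,n\}$ the cancellations from the discrete Laplacian match the spatial convolution, leaving only the stochastic-integral contribution upon integrating from $s$ to $T$, which produces \eqref{Yrep}.

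The main obstacle is purely technical: justifying the It\^o--Fubini interchanges (when differentiating under the integral in the inductive step and in the Duhamel manipulation) requires enough integrability of $Y_\gamma^N(u,k\viiva s,m)$ and of its stochastic integrands. This is straightforward here because all objects involved are partition functions of finite, explicit integrals against exponentials of Brownian increments, and standard Gaussian moment bounds give finite moments of all orders locally uniformly in the parameters; together with the obvious truncation $m \le k \le n$ in the sum (enforced by $q(t-u,n-k)=0$ for $k>n$ and $Y_\gamma^N(u,k\viiva s,m)=0$ for $k<m$), this reduces the argument to a finite sum of one-dimensional It\^o integrals where the classical stochastic Fubini theorem applies.
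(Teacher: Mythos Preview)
Your proposal is correct and follows essentially the same approach the paper cites (namely \cite[Lemma 3.5]{GRASS-23}): derive the semi-discrete SHE for $Y_\gamma^N$ in SDE form via the last-jump decomposition, then convert to mild form by applying It\^o's formula to $t \mapsto \sum_k q(T-t,n-k)\,Y_\gamma^N(t,k\viiva s,m)$ with the Poisson kernel. One minor remark: the Duhamel step you describe uses the SDE at all levels $k \in \{m,\ldots,n\}$ simultaneously rather than the inductive hypothesis on \eqref{Yrep} itself, so the induction framing is somewhat superfluous---once the SDE is established for every level (which your argument does correctly), the mild form follows in one stroke.
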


Starting from Lemma \ref{lem:Yrep}, by iterating \eqref{Yrep} and for $n \ge m$ and $t > s$, $Y_\gamma^{N}(t,n \viiva s,m)$ may be written as the following chaos series, once we show that the series indeed converges in $L^2(\Pp)$.
    \be \label{eq:Y_chaos}
\begin{aligned}
    Y_\gamma^{N}(t,n \viiva s,m) &= \sum_{k = 0}^\infty \gamma^k I_k^{N}(t,n \viiva s,m), \qquad \text{ where} \\
    I_k^{N}(t,n \viiva s,m) &:= \sum_{m = m_0 \le \cdots \le m_{k+1} = n}\int_{\Delta^k(t \viiva s)}  \prod_{i = 0}^k q(s_{i+1} - s_i, m_{i+1} - m_i) \prod_{i = 1}^k dB_{m_i}(s_i).
\end{aligned}
\ee
Note that we can remove the condition $m_i \le m_{i+1}$ in the sum above because $q(s_{i+1} - s_i,m_{i+1} - m_i) = 0$ if $m_i > m_{i+1}$ by definition. Then, using the coupling described in \eqref{eq:BM_coupling} and writing the sum as an integral by introducing the floor functions (see, e.g., \cite[Lemma 3.7]{GRASS-23}), we have 
\be \label{Ik_wNint}
I_k^{N}(t,n \viiva s,m) = \int_{\R^k_{m,n}} \int_{\Delta^k(t \viiva s)}  \prod_{i = 0}^k q(s_{i+1} - s_i, \lfloor x_{i+1} \rfloor - \lfloor x_i \rfloor) \prod_{i = 1}^k \xi_{N}(ds_i,d x_i).
\ee
After a scaling, the proof of Proposition \ref{itm:Ns2} will demonstrate how to write $\scOCYp$ as a chaos series over white noise integrals for the white noise $\xi_\T$ on $\R \times [0,1]$.

We now state the main result of this subsection, which is the semi-discrete analogue of Proposition \ref{prop:Z_conv}. It is proved at the end of Section \ref{sec:OYchaos}. 
\begin{proposition} \label{prop:OCY_Chaos} The following hold:
\begin{enumerate} [label=\textup{(\roman*)}]
    \item \label{itm:Ns1} For $t > s$ and $x,y \in \R$, the series in \eqref{eq:Y_chaos} converges in $L^2(\Pp)$.
    \item \label{itm:Ns2} For a space-time white noise $\xi_\T$ on the cylinder $\R \times \T$, there exists a coupling of $\tilde{Z}_{\beta}^{N,\text{per}}$ so that for all $t > s$ and $x,y \in \R$, we have 
\be \label{JkNdef}
\begin{aligned}
\,\scOCYp(t,y \viiva s,x;\theta) &= \sum_{k = 0}^\infty \beta^k J_k^N(t,y \viiva s,x;\theta), \qquad\text{where} \\
J_k^N(t,y \viiva s,x;\theta) &:=  \int_{[0,1]^k_{x,y}} \int_{\Delta^k(t \viiva s)} \prod_{i = 0}^k \sum_{r \in \Z} e^{-\theta r} p_N(t_{i+1},y_{i+1} + r \viiva t_i,y_i) \prod_{i = 1}^k \xi_{\T}(dt_i, dy_i),
\end{aligned}
\ee
and the series converges in $L^2(\Pp)$ for all sufficiently large $N$.
\end{enumerate}
\end{proposition}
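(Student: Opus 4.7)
The strategy is to mirror the proof of Proposition~\ref{prop:Z_conv}, with the Gaussian heat kernel $\rho$ replaced by the semi-discrete Poisson kernel $p_N$, using the Brownian-motion-to-white-noise coupling \eqref{eq:BM_coupling} to embed the semi-discrete chaos into the continuum white-noise chaos. For Item~\ref{itm:Ns1}, I would apply the It\^o isometry to the Brownian representation in \eqref{eq:Y_chaos}, reorganizing the sum over $m_1,\ldots,m_k$ according to residue classes mod $N$ to express $I_k^N(t,n\viiva s,m)$ as a multiple stochastic integral against the $N$ independent Brownian motions $B_0,\ldots,B_{N-1}$. The resulting second moment is bounded by an integral of $\prod_i q^2(s_{i+1}-s_i, m_{i+1}-m_i)$ over $\Delta^k(t\viiva s)\times\{m\le m_1\le\cdots\le m_k\le n\}$; using the elementary estimate $\sum_{n\in\Z} q(s,n)^2\le 1$ together with the finiteness of $[m,n]$, this yields a bound of the form $\Ee[I_k^N(t,n\viiva s,m)^2]\le C(t-s,n-m,N)^k/k!$, from which the convergence of $\sum_k \gamma^{2k}\Ee[(I_k^N)^2]$ follows for every $\gamma>0$.

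For Item~\ref{itm:Ns2}, I would first use \eqref{eq:BM_coupling} to rewrite the Brownian integrals in $I_k^N$ as the white-noise integrals in \eqref{Ik_wNint}, then combine with \eqref{ZperY} to obtain the formal expansion
\[
\scOCYp(t,y\viiva s,x;\theta) = N\sum_{j\in\Z} e^{\theta j}\sum_{k=0}^\infty \beta_N^k\, I_k^N\bigl(tN^2,\lfloor tN^2+yN\rfloor \viiva sN^2,\lfloor sN^2+xN+jN\rfloor\bigr).
\]
The scaling relations in Lemma~\ref{lem:scaling_relations} convert the $\xi_N$ integrals into $\xi_\T$ integrals, producing a factor $N$ that combines with $\beta_N=\beta N^{-1/2}$ and the rescaling $N\,q((t-s)N^2,\cdot)=p_N(t,\cdot\viiva s,\cdot)$ to match the form \eqref{JkNdef} with the periodized kernel $\sum_{r\in\Z} e^{-\theta r} p_N(t_{i+1},y_{i+1}+r\viiva t_i,y_i)$. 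The remaining analytic work is to establish $L^2(\Pp)$ convergence of $\sum_k \beta^k J_k^N$, $L^2(\Pp)$ convergence of the sum over $j$ in \eqref{ZperY}, and to justify the interchange of these two sums. Each of these is carried out by following verbatim the three-part argument (a)--(c) in the proof of Proposition~\ref{prop:Z_conv}\ref{itm:s2}, once the appropriate semi-discrete kernel estimates are in hand.

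The main obstacle is establishing the semi-discrete analogues of Lemmas~\ref{lem:rhobd}, \ref{lem:int_sum_fin}, and \ref{lem:ser_tail_to0} uniformly for all sufficiently large $N$. The key input is a bound of the form
\[
e^{-\theta r}\, p_N(t,y+r\viiva s,x) \le f(r)\,\max_{|j|\le C} p_N(t,y+j\viiva s,x),\qquad r\in\Z,
\]
valid uniformly for $x,y\in[0,1]$, $0<t-s\le T$, and $N$ large, with $f:\Z\to\R$ a summable function depending only on $T$ and $\theta$. Since a shift of $r$ physical units corresponds to a Poisson lattice shift by $\lfloor rN\rfloor$ sites, while $q((t-s)N^2,\cdot)$ concentrates on scale $(t-s)^{1/2}N$, a Stirling estimate on the ratio $q(\tau,n+rN)/q(\tau,n)$ gives Gaussian-type decay $\exp(-cr^2/(t-s))$ for $|r|$ large, furnishing the summable envelope $f$ and matching the Gaussian argument in Lemma~\ref{lem:rhobd}. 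The restriction to sufficiently large $N$ in Item~\ref{itm:Ns2} reflects the breakdown of these uniform Poisson estimates when $(t-s)N^2$ is small. Once the uniform kernel estimates are secured, the rest of the proof is routine bookkeeping in direct parallel to Proposition~\ref{prop:Z_conv}.
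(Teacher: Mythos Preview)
Your proposal is correct and follows essentially the same route as the paper. For Item~\ref{itm:Ns2} you have identified exactly the paper's argument: the change of variables $t_i=s_i/N^2$, $y_i=(x_i-s_i)/N$ (which uses both parts of Lemma~\ref{lem:scaling_relations}) converts \eqref{Ik_wNint} into the $p_N$-integral form, and the interchange of the $j$-sum with the $k$-sum and integral is justified by mimicking parts (a)--(c) of Proposition~\ref{prop:Z_conv}\ref{itm:s2}, using the semi-discrete replacements for Lemmas~\ref{lem:rhobd}, \ref{lem:int_sum_fin}, \ref{lem:ser_tail_to0} (these are Lemmas~\ref{lem:pN_per_bd}, \ref{lem:JkN_unif_bd}, \ref{lem:pN_ser_tail_to0} in the paper).

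For Item~\ref{itm:Ns1} your argument is a bit more elementary than the paper's. The paper invokes Lemma~\ref{lem:disc_ser_converge}, which computes the Dirichlet integral exactly and bounds the factorial ratios; your route via $q(s,n)\le 1$ and the finiteness of the lattice range $[m,n]$ (giving at most $(n-m+1)^k$ choices of $(m_1,\ldots,m_k)$, each contributing at most $(t-s)^k/k!$ after integrating over $\Delta^k(t\viiva s)$) is cruder but suffices. One point to make explicit: after grouping by residue classes mod $N$, the It\^o isometry produces a square of a sum rather than a sum of squares, so a Cauchy--Schwarz step (contributing another factor of at most $((n-m)/N+1)^k$) is needed before your bound applies; this is absorbed into your constant $C(t-s,n-m,N)$.
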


The proof of Proposition \ref{prop:OCY_Chaos} follows a similar procedure as the proof of Proposition \ref{prop:Z_conv}, with some additional difficulties due to the discreteness. As such, several of the lemmas below have counterparts from the previous section.  We start by proving the semi-discrete analogue of Lemma \ref{lem:rhobd}.
\begin{lemma} \label{lem:pN_per_bd}
Assume that $(\theta_N)_{N \in \N}$ is a bounded sequence, and $T \in \R$. Then, there exists a function $h:\Z \to \R$ (depending on $T$ and the sequence $(\theta_N)_{N \in \N}$) that is summable over $\Z$, so that, for all sufficiently large $N$, whenever $0 < t-s < T$, $x,y \in [0,1]$, and $r \in \Z$,
\[
e^{-\theta_N r}p_N(t,y + r \viiva s,x) \le h(r) \max_{j \in \lzb -3,1 \rzb} p_N(t,y + j \viiva s,x).
\]
Furthermore, there exist constants $C,c > 0$ so that $h(r) \le C e^{-cr^2}$ for $r < 0$.  
\end{lemma}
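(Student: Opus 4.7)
Plan: The plan is to prove the lemma via careful analysis of Poisson ratios. Write $p_N(t,y+r\viiva s,x) = N q(\tau, m + rN)$ with $\tau := (t-s)N^2$ and $m := \lfloor tN^2+yN\rfloor - \lfloor sN^2+xN\rfloor$, noting the uniform expansion $m = \tau + (y-x)N + O(1)$ for $x, y \in [0,1]$. The lemma then reduces to bounding the Poisson ratio $q(\tau, m+rN)/q(\tau, m+jN)$ for $j \in \lzb -3,1\rzb$ and arbitrary $r \in \Z$, uniformly in $N$ sufficiently large.

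For $r \in \lzb -3,1 \rzb$ the bound is essentially immediate: since $(\theta_N)$ is bounded, $e^{-\theta_N r}$ is uniformly bounded on this finite set, and $p_N(t,y+r\viiva s,x) \le \max_{j \in \lzb-3,1 \rzb} p_N(t,y+j\viiva s, x)$ by definition. So I would simply take $h(r)$ to equal a finite constant on $\lzb -3, 1 \rzb$ (large enough to dominate $\sup_N e^{-\theta_N r}$).

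For $r \notin \lzb -3,1 \rzb$, I would appeal to Stirling's formula, which gives $q(\tau, k) \le C k^{-1/2}\exp(-\tau H(k/\tau))$ with $H(x) := x\log x - x + 1$ the Poisson relative entropy. Since $H$ is strictly convex with unique minimum $H(1)=0$, and admits the lower bounds $H(x) \ge (x-1)^2/3$ on $x \in (0,1]$, $H(x) \ge c(x-1)^2$ on $x \in [1,2]$, and $H(x) \ge c\, x\log x$ for $x \ge 2$, one obtains the uniform bound
\[\frac{q(\tau, m+rN)}{q(\tau, m+jN)} \;\le\; C \exp\bigl(-c\, r^2\bigr)\]
for suitable $C, c > 0$ depending only on $T$, via case analysis on the size of $|r|/N$. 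In the Gaussian regime $|r| \le (t-s)N$ this follows from Taylor expansion of $H$ around $1$, setting $k/\tau = 1 + (y-x+r)/((t-s)N) + O(1/\tau)$ and recovering the continuum bound $\exp(-(y-x+r)^2/(2(t-s)))$ to leading order. In the large-deviation regime $|r| \gtrsim (t-s)N$, the super-quadratic growth of $H$ yields a strictly stronger bound. When $m+rN < 0$ (which can only happen for $r \le -(t-s)N$) the ratio is zero, which is trivially dominated.

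Combining these estimates and absorbing the bounded factor $e^{-\theta_N r}\le e^{|r|\sup_N|\theta_N|}$ into the Gaussian envelope by shrinking $c$, one defines $h(r) := C' \exp(-c' r^2)$ on $\Z\setminus \lzb-3, 1\rzb$ and a finite constant on $\lzb-3,1\rzb$. This $h$ is summable over $\Z$ and, on $r < 0$, is dominated by $C'' e^{-c'' r^2}$ for suitable constants (the finitely many values $r \in \{-3,-2,-1\}$ are absorbed by enlarging $C''$). The main technical obstacle will be patching the Stirling-type bounds uniformly in $N$ across the Gaussian, moderate-deviation, and large-deviation regimes, since no single Taylor expansion of $H$ is valid throughout; however, each regime individually produces at least Gaussian decay in $r$ with constants depending only on $T$, which suffices for the claim.
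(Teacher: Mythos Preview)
Your Stirling/entropy route is a reasonable alternative to the paper's direct factorial manipulation, but the claim that one can take $h(r)=C'\exp(-c'r^2)$ on all of $\Z\setminus\lzb-3,1\rzb$ is false for $r>1$. The error is in the phrase ``super-quadratic growth of $H$'': the function $H(x)=x\log x-x+1$ grows only like $x\log x$ for large $x$, which is sub-quadratic. Concretely, fix $N=N_0$ (the minimal $N$ in ``for all sufficiently large $N$'') and $t-s=T$, so $\tau=TN_0^2$ is a fixed constant. As $r\to\infty$ one has $\tau H\bigl((m+rN_0)/\tau\bigr)\sim rN_0\log r$, while $q(\tau,m+N_0)$ stays bounded away from zero; thus the log-ratio behaves like $-rN_0\log r$, which for every $c>0$ eventually exceeds $-cr^2$. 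Since the bound must hold uniformly in $N\ge N_0$ (not merely as $N\to\infty$), no Gaussian envelope works on $\{r>1\}$.

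This does not break the lemma: the statement only asks that $h$ be summable, with Gaussian decay required solely for $r<0$. Your own bound $H(x)\ge c\,x\log x$ for $x\ge2$ already gives $\exp(-c'r\log r)$-type decay for $r>1$, which is summable after absorbing $e^{-\theta_N r}=e^{O(r)}$. For $r<-3$ your approach does give Gaussian decay, since the constraint $m+rN\ge0$ forces $(t-s)N\ge|r|-2$, keeping both $(m+rN)/\tau$ and $(m+N)/\tau$ in $[0,2]$ where $H(x)\asymp(x-1)^2$, so that $\tau$ times the entropy difference is $\gtrsim r^2/T$. The paper instead writes the ratio $p_N(t,y+r\viiva s,x)/p_N(t,y+1\viiva s,x)$ directly as a product of consecutive integers over a power of $\tau$ and applies AM--GM and $1+x\le e^x$, obtaining for $r>1$ the bound $\exp\bigl(C_1r-r^2/(64(T+r/C_2))\bigr)$ (Gaussian for moderate $r$, merely exponential for $r\gg T$, consistent with the obstruction above) and for $r<-3$ the clean $Ce^{-cr^2}$, without Stirling or regime-patching.
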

\begin{proof}
We observe that, since $t-s > 0$ and $x,y \in [0,1]$,
\be \label{eq:tibd}
\lfloor t N^2 + y N \rfloor - \lfloor s N^2 + x N \rfloor + N + 1 \ge (t - s)N^2 - N - 1 + N + 1  = (t - s)N^2 > 0.
\ee
This implies that $p_N(t,y+1 \viiva s,x) > 0$ for all $t > s$ and $x,y \in [0,1]$. It suffices to consider two cases: $r > 1$ and $r < -3$, because we may take $h(r) = e^{\sup_N |\theta_N| r}$ for $r \in \lzb -3,1 \rzb$. Specifically, we show that for $r \notin \lzb -3,1\rzb$, $e^{-\theta_N r} p_N(t,y+r \viiva s,x) \le h(r) p_N(t,y+1 \viiva s,x)$ for a summable function $h:\Z \to \R$. 

\medskip \noindent \textbf{Case 1: $r > 1$.} 
Then, $\lfloor t N^2 + y N \rfloor - \lfloor s N^2 + x N \rfloor + rN > 0$, and
\begin{align}
&\quad \, \f{e^{-\theta_Nr} p_N(t,y + r \viiva s, x)}{p_N(t,y + 1 \viiva s,x)} \nonumber \\
&=  \f{e^{-\theta_N r} ((t - s)N^2)^{(r-1)N}}{\prod_{\ell = N+1}^{rN}\bigl((\lfloor t N^2 + y N \rfloor - \lfloor s N^2 + x N \rfloor + \ell \bigr) }  \nonumber \\
&= e^{-\theta_N r}  \prod_{j = 0}^{(r-1)N - 1} \f{(t - s)N^2  }{\lfloor t N^2 + y N \rfloor - \lfloor s N^2 + x N \rfloor + rN - j} \label{big_prod} \\
&\le e^{-\theta_N r}  \prod_{j = 0}^{\lfloor \f{(r-1)N - 1}{2}     \rfloor  }\f{(t - s)N^2  }{(t - s)N^2 -N + rN - j - 1} \label{drop_terms} \\
&\le  e^{-\theta_N r}  \Biggl(\f{(t - s)N^2}{(t - s)N^2 
 + \lceil \f{(r-1)N - 1}{2}     \rceil }\Biggr)^{\lceil \f{(r-1)N - 1}{2}     \rceil} \nonumber \\
 &\le e^{-\theta_N r}  \exp\Biggl(- \f{\lceil \f{(r-1)N - 1}{2}     \rceil^2}{(t - s)N^2 
 + \lceil \f{(r-1)N - 1}{2}     \rceil}  \Biggr)
 \label{r>0bdpre} 
 \end{align}
 To get \eqref{drop_terms}, we have used the assumption that $x,y \in [0,1]$ to bound the denominator, and we discarded several terms. This is justified because \eqref{eq:tibd} implies all terms in the product in \eqref{big_prod} are less than $1$. To get \eqref{r>0bdpre},  we simply used the inequality $1 + x \le e^x$. Next, Observe that, for all $r \ge 2$ and $N \ge 6$,
 \[
\f{1}{8}rN \le  \lceil \f{(r-1)N - 1}{2}     \rceil \le rN.
 \]
 Substituting these bounds in \eqref{r>0bdpre}, along with the assumption $t-s < T$, we obtain the upper bound
\begin{align*} \label{r>0bd}
 e^{-\theta_N r} \exp\biggl(-\f{(rN)^2}{64(TN^2 + rN)}\biggr) = e^{-\theta_N r} \exp\biggl(-\f{r^2}{64(T + \f{r}{N})}\biggr).
\end{align*}
Let  $C_1 = -\inf_N \theta_N$, and choose $C_2 > \max\{6,128 C_1\}$. Then, for all $N \ge C_2$,
\be \label{r>0bd}
\f{e^{-\theta_Nr} p_N(t,y + r \viiva s, x)}{p_N(t,y + 1 \viiva s,x)} \le \exp\biggl(C_1 r - \f{r^2}{64(T + \f{r}{C_2})}\biggr).
\ee
 We claim now that this bound is summable over $r > 1$. Note for $r \ge C_2 T$, we have $T + \f{r}{C_2} \le \f{2r}{C_2}$, and so
\[
\exp\biggl(C_1 r - \f{r^2}{64(T + \f{r}{C_2})}\biggr) \le \exp\biggl(C_1 r - \f{C_2 r}{128}\biggr),
\]
which is summable by assumption on $C_2$.

\medskip \noindent \textbf{Case 2:} $r < -3$.  We may assume that $\lfloor t N^2  + y N\rfloor-\lfloor s N^2  + x N\rfloor + rN \ge 0$; otherwise, the bound is trivial. This makes all terms in the products below positive. In the penultimate step below, we bound the geometric mean by the arithmetic mean, and in the ultimate step we again use $1+x\leq e^x$.
\begin{align}
    &\quad \, \f{e^{-\theta_N r} p_N(t,y + r \viiva s, x)}{p_N(t,y + 1 \viiva s,x)} \nonumber  \\
    &= e^{-\theta_N r}  ((t - s)N^2)^{(r - 1)N} \prod_{j = rN + 1}^N \bigl(\lfloor t N^2 + y N \rfloor - \lfloor s N^2 + x N \rfloor + \ell\bigr) \nonumber\\
    &\le e^{-\theta_N r}((t - s)N^2)^{(r-1)N} \prod_{j = rN + 1}^N ((t - s)N^2 + \ell + N + 1)\nonumber \\
    &\le e^{-\theta_N r} \Bigl(\f{2(t - s)N^2 + (r+3)N + 3  }{2(t - s)N^2}\Bigr)^{(1-r)N}  \nonumber \\
    &\le e^{-\theta_N r} \exp\Bigl(  \f{((r+3)N +3)(1-r)N}{2(t - s)N^2}\Bigr).\nonumber 
\end{align}
Because $r < -3$, for $N > 3$,  this is bounded above by 
\be \label{r<0bd}
e^{-\theta_N r} \exp\Bigl( \f{\bigl((r+3)N + 3\bigr)(1-r)N}{2T N^2}\Bigr) \le C e^{-cr^2}
\ee
for $C,c > 0$ (noting $\theta_N$ is bounded by assumption).
Combining \eqref{r>0bd} and \eqref{r<0bd} completes the proof. 
\end{proof}

We prove two semi-discrete analogues of Lemma \ref{lem:int_sum_fin}; one for the unscaled Poisson kernel, and the other for the scaled Poisson kernel. These are Lemmas \ref{lem:disc_ser_converge} and \ref{lem:JkN_unif_bd} below.  The first lemma is used to prove Item \ref{itm:Ns1} of Proposition \ref{prop:OCY_Chaos}.  Then, in the proof of Proposition \ref{prop:OCY_Chaos}, we perform a change of variable for the scaled object, and will need an $L^2(\Pp)$ bound for the scaled series at the end of the proof of Proposition \ref{prop:OCY_Chaos}. While we could actually derive this latter bound from Lemma \ref{lem:disc_ser_converge} (after making some changes of variables), we need the uniform-in-$N$ bounds from Lemma \ref{lem:JkN_unif_bd} later in the proof of Theorem \ref{L2_conv_main_theorem}. Thus, it is more convenient to prove Lemma \ref{lem:JkN_unif_bd} in this section.   
\begin{lemma} \label{lem:disc_ser_converge}
For $t \ge s, m,n \in \Z$, $\beta > 0$, and $\theta \in \R$,
 \be \label{eq:disc_int}
   \sum_{k = 0}^\infty \beta^{2k} \int_{[0,N]^k_{m,n}} \int_{\Delta^k(t \viiva s)} \Biggl( \prod_{i = 0}^k \sum_{r \in \Z} e^{-\theta r} q(s_{i+1} - s_i, \lfloor x_{i+1} \rfloor - \lfloor x_i \rfloor + r N)\Biggr)^2 \prod_{i = 1}^k ds_i\,dx_i < \infty.
\ee
\end{lemma}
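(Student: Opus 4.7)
The plan is to mirror the proof of Lemma \ref{lem:int_sum_fin}, with the Gaussian kernel $\rho$ replaced throughout by the Poisson kernel $q$. I would first prove a pointwise domination for $q$ analogous to Lemma \ref{lem:rhobd}, then use it to bound the squared periodic sum factor by factor by a finite sum of $q^2$ terms, and finally evaluate the resulting space-time integral via the Poisson convolution identity and a Dirichlet integral on the time simplex.

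The first step is to establish the following: for fixed $T > 0$, $N \in \N$, and $\theta \in \R$, there exist a summable function $h : \Z \to \R_{>0}$ and a finite set $J \subset \Z$ such that, uniformly in $u \in (0, T]$ and in integers $a$ with $|a| \le 2N$,
\[
e^{-\theta r}\, q(u, a + rN) \le h(r)\, \max_{j \in J} q(u, a + j), \qquad r \in \Z.
\]
Only finitely many negative $r$ contribute since $q(u, \cdot)$ vanishes on the negative integers, so the real task is $r \to +\infty$. For large positive $r$ the ratio $q(u, a+rN)/q(u, a)$ factors as a product of $rN$ terms of the form $u/(a+j)$; once $j$ exceeds $u$ each factor is at most $1/2$, giving super-exponential decay in $r^2$ that absorbs the weight $e^{-\theta r}$. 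This is a direct discrete analogue of Case 1 in the proof of Lemma \ref{lem:pN_per_bd}. I expect this step to be the main obstacle, because the asymmetric tails and discrete support of $q$ require a more delicate ratio analysis than the purely Gaussian argument of Lemma \ref{lem:rhobd}, and one must produce a summable weight $h$ that is uniform in the admissible range of $a$.

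With the pointwise bound in hand, I would apply $(\max_j q)^2 \le \sum_j q^2$ and take the product over $i = 0,\ldots,k$ to dominate the integrand of \eqref{eq:disc_int} by $C^{2(k+1)}$ times a sum over $(j_0,\ldots,j_k) \in J^{k+1}$ of $\prod_{i=0}^k q(u_i,\, \lfloor x_{i+1}\rfloor - \lfloor x_i\rfloor + j_i)^2$, where $u_i = s_{i+1}-s_i$. Since this integrand depends on $x_i$ only through $\lfloor x_i \rfloor$, integration over $x_1,\ldots,x_k \in [0,N]$ reduces to a lattice sum over $\ell_i \in \{0,\ldots,N-1\}$, which I would enlarge to $\ell_i \in \Z$ (valid since the summands are nonnegative). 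Using the Stirling-type bound $q(u,b)^2 \le (C/\sqrt u)\, q(u,b)$ together with the Poisson convolution identity $\sum_{m \in \Z} q(u_1, m+j)\,q(u_2, \ell-m+j') = q(u_1+u_2,\ell+j+j')$, I can iteratively sum out $\ell_1,\ldots,\ell_k$, leaving a single factor $q(t-s,\, n-m + j_0 + \cdots + j_k) \le 1$ multiplied by a prefactor $C^{k+1}\prod_{i=0}^{k} u_i^{-1/2}$.

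Finally, integrating in the time variables against $\Delta^k(t\viiva s)$ via the Dirichlet formula
\[
\int_{\Delta^k(t \viiva s)} \prod_{i=0}^{k} u_i^{-1/2}\, du_1 \cdots du_k = \frac{\pi^{(k+1)/2}}{\Gamma\!\left(\frac{k+1}{2}\right)} (t-s)^{(k-1)/2},
\]
and incorporating the $|J|^{k+1}$ contribution from the sum over $(j_0,\ldots,j_k)$, the $k$-th term of \eqref{eq:disc_int} is bounded by $\beta^{2k} C^k / \Gamma\!\left(\frac{k+1}{2}\right)$ for some constant $C$ depending only on $\beta, \theta, t-s, N$. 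The full series then converges by the same gamma-function comparison used at the end of the proof of Lemma \ref{lem:int_sum_fin}, completing the argument.
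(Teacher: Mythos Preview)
Your approach is correct and yields the same $C^k/\Gamma((k+1)/2)$-type summand, but it takes a genuinely different route from the paper. The paper does \emph{not} set up a pointwise domination analogous to Lemma~\ref{lem:rhobd}. Instead it expands the square directly, converts the $x$-integral to a lattice sum over $m_i\in\Z_N$, and then evaluates the time integral in closed form using the explicit Poisson structure: since $q(u,b)=e^{-u}u^b/b!$, the integrand over $\Delta^k(t\viiva s)$ is $e^{-2(t-s)}$ times a monomial in the increments $u_i$, and the Dirichlet formula gives a single factorial $\bigl(2(n-m)+\sum(\ell_i+r_i)N+k\bigr)!$ in the denominator. The key inequality is then the elementary $(a+b)!\ge a!\,b!$, which decouples that factorial into a product over $i$ and produces a $1/k!$. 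Your route instead mirrors the Gaussian argument: pointwise domination reduces the periodic sum to a finite set $J$, the Stirling bound $q(u,b)^2\le (C/\sqrt u)\,q(u,b)$ separates a $\prod u_i^{-1/2}$ prefactor, and the Poisson semigroup property collapses the spatial sum. The paper's computation is shorter here because the Poisson kernel integrates so cleanly on the simplex, while your approach has the advantage of being a verbatim transcription of the proof of Lemma~\ref{lem:int_sum_fin} and would extend to any kernel satisfying a convolution identity plus a mode bound. One small correction: your stated range $|a|\le 2N$ for $a=\lfloor x_{i+1}\rfloor-\lfloor x_i\rfloor$ is only valid for the interior indices $1\le i\le k-1$; for $i=0$ and $i=k$ the range depends on the fixed integers $m,n$ as well, so $J$ must be chosen depending on $m,n,N,T$ (which is harmless since all are fixed).
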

\begin{proof}
Define $\mathfrak q_2(t,y,\ell,r) := q(t,y + \ell)q(t,y + r)$. Calling the $k$-th term in  \eqref{eq:disc_int} $L_k$ we have
\be \label{eq:sum_int}
\begin{aligned}
    L_k&=\beta^{2k}\int_{[0,N]^k_{m,n}} \int_{\Delta^k(t \viiva s)} \sum_{\substack{r_0,\ldots,r_k \in \Z \\ \ell_0,\ldots,\ell_k \in \Z}} \prod_{i=0}^k e^{-\theta (r_i + \ell_i)} \mathfrak q_2(s_{i+1} -s_i, \lfloor x_{i+1} \rfloor - \lfloor x_i \rfloor, \ell_i N,r_i N) \prod_{i = 1}^k ds_i\,dx_i \\
    &= \beta^{2k}\sum_{\substack{ m_1,\ldots,m_k \in \Z_N\\ m_0 = m,m_{k+1} = n}}\int_{\Delta^k(t \viiva s)} \sum_{\substack{r_0,\ldots,r_k \in \Z \\ \ell_0,\ldots,\ell_k \in \Z}} \prod_{i=0}^k e^{-\theta (r_i + \ell_i)} \mathfrak q_2(s_{i+1} -s_i, m_{i+1} - m_i, \ell_i N,r_i N) \prod_{i = 1}^k ds_i.
    \end{aligned}
    \ee
    Now, observe that the summands above are zero if $m_{i+1} - m_i + r_i N < 0$ or $m_{i+1} - m_i + \ell_i N > 0$, 
    and by definition, when $m_{i+1} - m_i + r_i N \ge 0$ and $m_{i+1} - m_i + \ell_i N \ge 0$, we have 
    \[
    \mathfrak q_2(s_{i+1} -s_i, m_{i+1} - m_i, \ell_i N,r_i N) = \f{(s_{i+1} -s_i)^{2(m_{i+1} - m_i) + (\ell_i + r_i)N}}{(m_{i+1} - m_i + r_i N)! (m_{i+1} - m_i + \ell_i N)!},
    \]
    so by Tonelli's theorem and the the computation of a Dirichlet integral, we obtain from \eqref{eq:sum_int} that 
    \begin{align*}
    &L_k=\beta^{2k}\f{(t-s)^k}{e^{2(t-s)}} \sum_{\substack{ m_1,\ldots,m_k \in \Z_N\\ m_0 = m,m_{k+1} = n}}\sum_{\substack{\ell_0,\ldots,\ell_k \in \Z \\
    r_0,\ldots, r_k \in \Z \\
    m_{i+1} - m_i + \ell_i N \ge 0 \\
    m_{i+1} - m_i + r_i N \ge 0}} \f{(t-s)^{2(n-m) + \sum_{i = 0}^k (\ell_i + r_i)N}}{\bigl(2(n-m) + \sum_{i = 0}^k (\ell_i + r_i)N + k\bigr)!} \\
&\qquad\qquad\qquad\qquad\qquad\qquad\qquad\qquad\qquad\qquad\qquad\qquad  \times \prod_{i = 0}^k   \f{e^{-\theta(\ell_i + r_i)}\big(2(m_{i+1} - m_i) + (\ell_i + r_i)N\big)!}{(m_{i+1} - m_i + r_i N)! (m_{i+1} - m_i + \ell_i N)!}. \\
    \end{align*}
 Since the factorial satisfies $(a+b)!\geq a! b!$ for $a,b\in \N$, it follows that
\[
\Bigl(2(n-m) + \sum_{i = 0}^k (\ell_i + r_i)N + k\Bigr)! \ge  k! \Bigl(2(n-m) + \sum_{i = 0}^k (\ell_i + r_i)N\Bigr)! \ge k! \prod_{i = 0}^k \Bigl(2(m_{i+1} - m_i) + (\ell_i + r_i)N\Bigr)!.
\]
From this we obtain the bound 
\begin{align*}
I_k&\le \f{\beta^{2k}(t-s)^{k + 2(n-m)}}{k! e^{2(t-s)}} \sum_{\substack{ m_1,\ldots,m_k \in \Z_N\\ m_0 = m,m_{k+1} = n}}\sum_{\substack{\ell_0,\ldots,\ell_N \in \Z \\
    r_0,\ldots, r_N \in \Z \\
    m_{i+1} - m_i + \ell_i N > 0 \\
    m_{i+1} - m_i + r_i N > 0}} \prod_{i = 0}^k  \f{e^{-\theta(\ell_i + r_i)}(t-s)^{\ell_i + r_i}}{(m_{i+1} - m_i + r_i N)! (m_{i+1} - m_i + \ell_i N)!} \\
    &\le \f{\beta^{2k} (t-s)^{k + 2(n-m)} N^k}{k! e^{2(t-s)}} \sum_{\substack{\ell_0,\ldots,\ell_N \in \Z_{\ge 0} \\ r_0,\ldots,r_N \in \Z_{\ge 0}}  } \prod_{i =0}^k \f{e^{-\theta(\ell_i + r_i)} (t-s)^{\ell_i + r_i}}{(r_i N)! (\ell_i N)!} \\
    &= \f{\beta^{2k}(t-s)^{k + 2(n-m)} N^k}{k! e^{2(t-s)}}  \Bigl(\sum_{r \ge 0} \f{e^{-\theta r}(t-s)^r}{(rN)!}\Bigr)^{2(k+1)}, 
\end{align*}
which   is summable in $k$. Hence, the series in \eqref{eq:disc_int} converges. 
\end{proof}

Next is the second semi-discrete analogue of Lemma \ref{lem:int_sum_fin}. 
\begin{lemma} \label{lem:JkN_unif_bd}
Let $\beta > 0$, and let $X_N,Y_N,S_N,T_N$ be sequences converging to $X,Y \in \R$ and $S < T$, respectively. Then, there exist constants $C,c > 0$ (depending on the sequences $X_N,Y_N,S_N,T_N)$ so that, for all sufficiently large $N$ and all $k \in \N$, 
\[
\beta^{2k} \int_{[0,1]^k_{X_N,Y_N}}  \int_{\Delta^k(T_N \viiva S_N)} \Bigl(\prod_{i = 0}^k \sum_{r \in \Z} e^{-\theta_N r} p_{N}(t_{i+1},y_{i+1} + r, t_i,y_i) \Bigr)^2 \prod_{i = 1}^k dt_i\, dy_i \le  C e^{-c k}.
\]
\end{lemma}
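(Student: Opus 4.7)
The plan is to reduce the integrand to an expression that mirrors the continuum Gaussian case treated in Lemma \ref{lem:int_sum_fin} and then collapse the spatial integrals via Chapman--Kolmogorov. Since $\theta_N$ is bounded (being convergent), Lemma \ref{lem:pN_per_bd} furnishes a summable function $h:\Z\to\R$ which is independent of $N$ once $N$ is large, such that
\[
\sum_{r\in\Z}e^{-\theta_N r}p_N(t_{i+1},y_{i+1}+r\viiva t_i,y_i)\le C_0 \max_{j\in\lzb-3,1\rzb}p_N(t_{i+1},y_{i+1}+j\viiva t_i,y_i),
\]
where $C_0:=\sum_r h(r)<\infty$. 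Squaring the bound and using $(\max_j a_j)^2=\max_j a_j^2\le\sum_j a_j^2$, the product over $i$ splits into a sum of at most $5^{k+1}$ terms indexed by $\mbf j=(j_0,\ldots,j_k)\in\lzb-3,1\rzb^{k+1}$, each of the form $\prod_{i=0}^k p_N^2(t_{i+1},y_{i+1}+j_i\viiva t_i,y_i)$.

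The next step is a translation of the discrete shifts into the endpoints. Setting $u_i:=y_i+(j_0+\cdots+j_{i-1})$ for $1\le i\le k$, with $u_0=X_N$ and $u_{k+1}=Y_N+(j_0+\cdots+j_k)$, the floor-function structure of $p_N$ yields the exact identity $p_N(t_{i+1},y_{i+1}+j_i\viiva t_i,y_i)=p_N(t_{i+1},u_{i+1}\viiva t_i,u_i)$ (an integer shift of $y$ translates the Poisson index by exactly a multiple of $N$). Enlarging the integration region from $\prod_{i=1}^k[a_i,a_i+1]$ to $\R^k$, each term in the finite sum is bounded by the integral of $\prod_{i=0}^k p_N^2(t_{i+1},u_{i+1}\viiva t_i,u_i)$ over $\R^k$ with $u_0, u_{k+1}$ fixed.

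The third step combines a uniform-in-$N$ $L^\infty$ bound with Chapman--Kolmogorov. Direct computation shows that $p_N$ satisfies the Chapman--Kolmogorov identity $\int_\R p_N(t',y\viiva s',z)p_N(s',z\viiva s,x)\,dz=p_N(t',y\viiva s,x)$ (using the Chapman--Kolmogorov identity for Poisson probabilities, together with the fact that the floor function is piecewise constant on intervals of length $1/N$). Meanwhile, I plan to establish the uniform estimate
\[
\|p_N(t,\cdot\viiva s,\cdot)\|_\infty\le \tfrac{1}{\sqrt{2\pi(t-s)}}\qquad\textrm{for all }t>s\textrm{ and all }N,
\]
by treating the two regimes of $T:=(t-s)N^2$ separately: for $T\ge 1/(2\pi)$, Stirling gives $q(T,n)\le 1/\sqrt{2\pi T}$ uniformly in $n$, whence $p_N\le 1/\sqrt{2\pi(t-s)}$; for $T<1/(2\pi)$, the Poisson probabilities are decreasing in $n$ so $q(T,n)\le q(T,0)=e^{-T}\le 1$, hence $p_N\le N\le 1/\sqrt{2\pi(t-s)}$ since $t-s<1/(2\pi N^2)$. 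Using $p_N^2\le\|p_N\|_\infty p_N$ factorwise together with iterated Chapman--Kolmogorov then gives
\[
\int_{\R^k}\prod_{i=0}^k p_N^2(t_{i+1},u_{i+1}\viiva t_i,u_i)\,d\mbf u\le p_N(T_N,u_{k+1}\viiva S_N,u_0)\prod_{i=0}^k\tfrac{1}{\sqrt{2\pi(t_{i+1}-t_i)}}\le \tfrac{C^{k+2}}{\sqrt{T_N-S_N}}\prod_{i=0}^k(t_{i+1}-t_i)^{-1/2}.
\]

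Finally, integrating over the simplex via the Dirichlet identity $\int_{\Delta^k(T_N\viiva S_N)}\prod_{i=0}^k(t_{i+1}-t_i)^{-1/2}\,d\mbf t=(T_N-S_N)^{(k-1)/2}\Gamma(\tfrac12)^{k+1}/\Gamma(\tfrac{k+1}{2})$, recombining the $5^{k+1}$ terms, and using $T_N-S_N\ge(T-S)/2$ for $N$ large, the left-hand side is bounded by $A\,B^k/\Gamma(\tfrac{k+1}{2})$ for constants $A,B$ depending on $(T-S,\beta,C_0,\sup_N|\theta_N|)$. By Stirling, $\Gamma(\tfrac{k+1}{2})$ grows super-exponentially in $k$, so $A\,B^k/\Gamma(\tfrac{k+1}{2})\le Ce^{-ck}$ for some $C,c>0$. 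The main technical obstacle is the short-time regime $(t-s)N^2\ll 1$, where $p_N$ may be as large as $N$ and the semi-discrete heat kernel behaves pre-diffusively; the split at $T=1/(2\pi)$ bridges the pre-diffusive and diffusive regimes and lets the same $1/\sqrt{t-s}$ bound cover both cases, after which Chapman--Kolmogorov collapses the $u$-integration exactly as in the continuum proof.
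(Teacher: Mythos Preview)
Your argument is correct and takes a genuinely different route from the paper. A small cosmetic fix: the precise constant $1/\sqrt{2\pi}$ in your $L^\infty$ bound does not hold at all $T$ (e.g.\ at $T=(t-s)N^2=0.5$ one has $q(T,0)=e^{-1/2}\approx 0.607>1/\sqrt{\pi}\approx 0.564$), but the weaker statement $\sup_{n}q(T,n)\le 1/\sqrt{T}$ for all $T>0$ is true---split at $T=1$ instead of $T=1/(2\pi)$, use $q(T,0)=e^{-T}\le 1\le 1/\sqrt T$ for $T<1$ and the Stirling bound $q(T,\lfloor T\rfloor)\le 1/\sqrt{2\pi\lfloor T\rfloor}\le 1/\sqrt{\pi T}$ for $T\ge 1$---and this suffices for everything downstream. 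Your Chapman--Kolmogorov identity for $p_N$ is correct (the floor function is constant on intervals of length $1/N$, reducing the $z$-integral to the Poisson convolution), and the shifted-endpoint bound $p_N(T_N,u_{k+1}\viiva S_N,u_0)\le C/\sqrt{T_N-S_N}$ holds regardless of how large $|j_0+\cdots+j_k|$ is, since the $L^\infty$ bound is uniform in the spatial variables.

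By contrast, the paper does not use Chapman--Kolmogorov or an $L^\infty$ bound. After the same first reduction via Lemma~\ref{lem:pN_per_bd}, it groups the $5^{k+1}$ terms by the total shift $j=j_0+\cdots+j_k$ (Lemma~\ref{lem:Djk_integral}), then invokes an explicit Stirling/Dirichlet computation of $\int_{\R^k}\int_{\Delta^k}\prod p_N^2$ with shifted endpoint $Y_N'-j$ (Lemma~\ref{lem:pNint_conv}). Because that integral degenerates when $j$ is of order $N$, the paper splits into cases $j<T_N'N/2$ versus $j\ge T_N'N/2$, with further subcases $N\gtrless k/C_2$. Your approach sidesteps this case analysis entirely: the $L^\infty$ bound absorbs the endpoint dependence before the Dirichlet integral is taken, yielding the clean $C^k/\Gamma((k+1)/2)$ directly. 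The paper's more laborious route is not wasted, however: Lemma~\ref{lem:pNint_conv} also establishes the convergence \eqref{eq:pN_full_conv} of the $p_N^2$-integral to its Gaussian counterpart, which is the dominating-function input to the generalized dominated convergence argument in Proposition~\ref{prop:ChaosL2conv}. Your argument gives the uniform bound needed for Lemma~\ref{lem:JkN_unif_bd} more cheaply, but does not by itself yield that convergence.
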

Lemma \ref{lem:JkN_unif_bd} is proved immediately after the next two intermediate lemmas. 
\begin{lemma} \label{lem:p_n_ineq}
Let $(\theta_N)_{N \in \N}$ be a bounded sequence. Then, for $T > 0$, there exists a constant $C > 0$ so that, for all sufficiently large $N$, all $k \in \N$, $t_0 < \cdots < t_{k+1}$ with $t_{k+1} - t_0 < T$, and $y_0,\ldots,y_{k+1} \in [0,1]$,
\begin{multline*}
\prod_{i = 0}^k \Biggl(\sum_{r \in \Z} e^{-\theta_N r} p_N(t_{i+1},y_{i+1} + r \viiva t_i,y_i)\Biggr)^2
\le C^k\!\!\!\!\!\!\!\!\!\! \sum_{\substack{j_0,\ldots,j_k \in \Z, j_{k+1} = 0 \\ j_{i+1} - j_i \in \lzb -3,1 \rzb,\, 0 \le i \le k}} \prod_{i = 0}^k p_N^2\big(t_{i+1},y_{i+1} + j_{i+1} \viiva t_i,y_i + j_i\big). 
\end{multline*}
\end{lemma}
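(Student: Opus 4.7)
The proof should be a direct consequence of the pointwise bound supplied by Lemma \ref{lem:pN_per_bd} together with the translation-invariance of the semi-discrete kernel $p_N$. Here is the plan.

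First, applying Lemma \ref{lem:pN_per_bd} to each factor on the left-hand side (valid for $N$ large enough that $t_{k+1}-t_0 < T$ falls in the regime covered there, and since $(\theta_N)$ is bounded) yields
\[
\sum_{r \in \Z} e^{-\theta_N r} p_N(t_{i+1},y_{i+1}+r \viiva t_i,y_i) \le K\,\max_{j \in \lzb -3,1\rzb}\, p_N(t_{i+1},y_{i+1}+j \viiva t_i,y_i),
\]
where $K := \sum_{r \in \Z} h(r) < \infty$. Squaring, bounding the max by a sum over $\lzb -3,1\rzb$, and then taking the product over $i \in \{0,\ldots,k\}$ gives
\[
\prod_{i=0}^k\Bigl(\sum_{r \in \Z} e^{-\theta_N r}p_N(t_{i+1},y_{i+1}+r \viiva t_i,y_i)\Bigr)^2 \le K^{2(k+1)}\!\!\!\sum_{r_0,\ldots,r_k \in \lzb -3,1\rzb}\prod_{i=0}^k p_N^2(t_{i+1},y_{i+1}+r_i \viiva t_i,y_i).
\]

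Next, I will rewrite this sum in the triangular/telescoping form appearing on the right-hand side of the lemma. The key observation, evident from the definition \eqref{pN_def}, is that $p_N$ depends on its space variables only through their difference (after the floor operation), so that for any integers $a,b$,
\[
p_N(t_{i+1},y_{i+1}+a \viiva t_i,y_i+b) = p_N(t_{i+1},y_{i+1}+(a-b) \viiva t_i,y_i).
\]
Using this, I will reparametrize the sum by setting $j_{k+1}=0$ and defining $j_i$ recursively by $j_i := j_{i+1}-r_i$, so that $r_i = j_{i+1}-j_i \in \lzb -3,1\rzb$. This is a bijection between tuples $(r_0,\ldots,r_k)\in \lzb -3,1\rzb^{k+1}$ and tuples $(j_0,\ldots,j_k)\in\Z^{k+1}$ with $j_{k+1}=0$ and consecutive increments in $\lzb -3,1\rzb$. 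Under this change of variables,
\[
p_N^2(t_{i+1},y_{i+1}+r_i \viiva t_i,y_i) = p_N^2(t_{i+1},y_{i+1}+j_{i+1} \viiva t_i,y_i+j_i),
\]
which converts the right-hand side of the displayed inequality above into exactly the sum claimed in the lemma, up to the constant factor $K^{2(k+1)}$. Taking $C := K^2$ then gives the stated bound $C\cdot C^k$.

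The only substantive step is the pointwise estimate from Lemma \ref{lem:pN_per_bd}; everything else is bookkeeping with the reparametrization. There is no real obstacle, but the bound obtained here is what will ultimately feed into Lemma \ref{lem:JkN_unif_bd} via a telescoping/convolution computation (using $\sum_j p_N^2 \lesssim$ the on-diagonal kernel estimate) to deliver the uniform-in-$N$ geometric decay $Ce^{-ck}$; the finite range $\lzb -3,1\rzb$ of the increments $j_{i+1}-j_i$ is what makes that subsequent step tractable, since the number of admissible sequences grows only like $5^k$ and is absorbed into the factorial decay.
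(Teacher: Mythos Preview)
Your proposal is correct and follows essentially the same approach as the paper: apply Lemma \ref{lem:pN_per_bd} factor by factor, square, bound the max by the sum over $\lzb -3,1\rzb$, expand the product, and then reparametrize via $r_i = j_{i+1}-j_i$ using the integer-shift invariance of $p_N$ (which the paper cites as Lemma \ref{lem:intshift}). The only cosmetic difference is that the paper absorbs the constant into a single $C^k$ with $C$ changing line to line, whereas you track $K^{2(k+1)}$ explicitly.
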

\begin{proof}
Using Lemma \ref{lem:pN_per_bd}, we obtain, for a summable function $h:\Z \to \R$, and a constant $C$ changing from line to line, 
\begin{align*}
    &\quad \, \prod_{i = 0}^k \Biggl(\sum_{r \in \Z} e^{-\theta_N r} p_N(t_{i+1},y_{i+1} + r \viiva t_i,y_i)\Biggr)^2   \\
    &\le \prod_{i = 0}^k \max_{j \in \lzb -3,1 \rzb} p_N^2(t_{i+1},y_{i+1} + j \viiva t_i,y_i)\Bigl(\sum_{r \in \Z} h(r)\Bigr)^2 \\
    &\le C^k \prod_{i = 0}^k \sum_{j \in \lzb -3,1 \rzb} p_N^2(t_{i+1},y_{i+1} + j \viiva t_i,y_i) \\
    &= C^k \sum_{r_0,\ldots,r_k \in \lzb -3,1 \rzb } \prod_{i = 0}^k p_N^2(t_{i+1},y_{i+1} + r_i \viiva t_i,y_i)  \nonumber \\
    &= C^k \sum_{\substack{j_0,\ldots,j_k \in \Z, j_{k+1} = 0 \\ j_{i+1} - j_i \in \lzb -3,1 \rzb,\, 0 \le i \le k}} \prod_{i = 0}^k p_N^2(t_{i+1},y_{i+1} + j_{i+1} \viiva t_i,y_i + j_i).
    \end{align*}
    The last line used the change of variables $r_i = j_{i+1} - j_i$ and the shift invariance of Lemma \ref{lem:intshift}. 
\end{proof}

Before introducing the next lemma, we define the following quantity: for $j,k \in \N$, set
\be \label{Djk}
D_{j,k} = \#\{(j_1,\ldots,j_k) \in \Z: j_{i+1} - j_i \in \lzb -3,1 \rzb \text{ for } 0 \le i \le k, \; j_0 = j,\text{ and }j_{k+1} = 0\}.
\ee
\begin{lemma} \label{lem:Djk_integral}
   Let $X_N,Y_N,S_N,T_N$ be sequences converging to $X,Y,S,T\in \R$, respectively, with $S < T$. Then, there exist sequences $T_N',Y_N'$ converging to $T-S$ and $Y-X$, respectively, so that, for each $N \in \N$,
   \be \label{i1}
   \begin{aligned}
   &\int_{\R^k_{X_N,Y_N}} \int_{\Delta^k(T_N \viiva S_N)} \sum_{\substack{j_0,\ldots,j_k \in \Z, j_{k+1} = 0 \\ j_{i+1} - j_i \in \lzb -3,1 \rzb,\, 0 \le i \le k}}\prod_{i = 0}^k p_{N}^2\big(t_{i+1},y_{i+1} + j_{i+1} \viiva t_i,y_i + j_i\big)  \prod_{i = 1}^k dt_i \,dy_i \\
   &\qquad\qquad\qquad =  \sum_{j \in \lzb -(k+1),3(k+1) \rzb 
 }     D_{j,k} \int_{\R^k_{Y_N' - j}} \int_{\Delta^k(T_N')} \prod_{i = 0}^k p_{N}^2\big(t_{i+1},y_{i+1} \viiva t_i,y_i\big) \prod_{i = 1}^k dt_i \,dy_i.
   \end{aligned}
   \ee
Additionally, the following analogue holds for $\rho$ in place of $p_N$:
   \be \label{i2}
   \begin{aligned}
   \int_{\R^k_{X,Y}} \int_{\Delta^k(T \viiva S)} \sum_{\substack{j_0,\ldots,j_k \in \Z, j_{k+1} = 0 \\ j_{i+1} - j_i \in \lzb -3,1 \rzb,\, 0 \le i \le k}}\prod_{i = 0}^k \rho^2\big(t_{i+1},y_{i+1} + j_{i+1} \viiva t_i,y_i + j_i\big)  \prod_{i = 1}^k dt_i \,dy_i \\
   =  \sum_{j \in \lzb -(k+1),3(k+1) \rzb 
 }     D_{j,k} \int_{\R^k_{Y - X - j}} \int_{\Delta^k(T)} \prod_{i = 0}^k \rho^2\big(t_{i+1},y_{i+1} \viiva t_i,y_i\big) \prod_{i = 1}^k dt_i \,dy_i.
   \end{aligned}
   \ee
\end{lemma}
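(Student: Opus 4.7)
The strategy for both \eqref{i1} and \eqref{i2} is identical in spirit and proceeds in two moves: first absorb the shifts $j_i$ via a spatial change of variables, then apply the shift invariance of Lemma~\ref{lem:intshift} to translate the integration region into canonical form. I expect the floor functions in the definition of $p_N$ to be the main source of technical care.

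\textbf{Step 1 (Absorbing the $j_i$).} Fix $j_0 = j \in \Z$. For each admissible tuple $(j_1,\ldots,j_k)$ with $j_{i+1} - j_i \in \lzb -3,1 \rzb$ and $j_{k+1} = 0$, perform the substitution $u_i = y_i + j_i$ for $i = 1,\ldots,k$; the Lebesgue measure is preserved and each $u_i$ still ranges over $\R$. Because $j_i N \in \Z$ and $p_N(t,y\viiva s,x)$ depends only on the integer difference $\lfloor tN^2 + yN \rfloor - \lfloor sN^2 + xN \rfloor$, adding $j_iN$ to both $yN$ and $xN$ leaves the kernel unchanged. Hence the integrand becomes $\prod_{i=0}^k p_N^2(t_{i+1},u_{i+1}\viiva t_i,u_i)$ with boundary values $u_0 = X_N + j$ and $u_{k+1} = Y_N$, independent of $(j_1,\ldots,j_k)$. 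The number of admissible interior tuples starting at $j$ and ending at $0$ is precisely $D_{j,k}$, and the increment constraint forces $-j = j_{k+1}-j_0 \in \lzb -3(k+1),k+1 \rzb$, i.e. $j \in \lzb -(k+1),3(k+1) \rzb$.

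\textbf{Step 2 (Shift invariance).} Applying Lemma~\ref{lem:intshift}, the integral
\[
\int_{\R^k_{X_N+j,\,Y_N}} \int_{\Delta^k(T_N \viiva S_N)} \prod_{i=0}^k p_N^2(t_{i+1},u_{i+1}\viiva t_i,u_i)\,dt_i\,du_i
\]
can be written as $\int_{\R^k_{Y_N'-j}} \int_{\Delta^k(T_N')} \prod p_N^2 \, dt_i\,du_i$, where $T_N' = T_N - S_N$ and $Y_N' = (Y_N - X_N) + \delta_N/N$ for some correction $\delta_N \in [0,1)$. This correction arises because shifting $t \mapsto t - S_N$ and $u \mapsto u - X_N$ changes the floor function argument $\lfloor tN^2 + uN + c_N \rfloor$ (with $c_N = S_N N^2 + X_N N$) into $\lfloor tN^2 + uN + \{c_N\}\rfloor$, and the fractional residue $\{c_N\} = \delta_N$ is absorbed into a further $O(1/N)$ translation of the spatial endpoint. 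The key observation is that $\delta_N$ depends only on $(S_N, X_N)$ and not on $j$, so it factors out cleanly. Since $\delta_N/N \to 0$, both $T_N' \to T - S$ and $Y_N' \to Y - X$ as required.

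\textbf{Step 3 (Gaussian case).} The proof of \eqref{i2} is essentially identical but strictly simpler, because $\rho(t,y)$ depends continuously on $t-s$ and $y-x$ without floor functions. The change of variable $t_i \mapsto t_i - S$, $y_i \mapsto y_i - X$ is exact, producing the region $\R^k_{Y-X-j}$ over $\Delta^k(T-S)$. (I read the $\Delta^k(T)$ in the statement of \eqref{i2} as a typo for $\Delta^k(T-S)$, consistent with dimensional analysis and with the $N$-dependent case.)

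\textbf{Expected obstacle.} The main technical obstacle is the bookkeeping in Step~2: verifying that the floor-function arguments behave correctly under the translation, and checking that the residual $\delta_N$ does not depend on the summation index $j$. The integer shifts $j_iN$ handle cleanly because $\Z$-valued shifts preserve the differences of floors, but the non-integer shift by $S_N N^2 + X_N N - \lfloor S_N N^2 + X_N N \rfloor$ must be carried through the entire product; this is exactly the content of Lemma~\ref{lem:intshift}, which I would invoke rather than reprove.
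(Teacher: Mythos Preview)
Your approach is essentially the same as the paper's: first change variables $u_i = y_i + j_i$ to absorb the integer shifts (yielding the $D_{j,k}$ count and the range constraint on $j_0$), then translate in time and space to canonical form. Your identification of the fractional residue $\{S_N N^2 + X_N N\}$ as the source of the $O(1/N)$ correction in $Y_N'$ is exactly right, and you correctly flag the typo $\Delta^k(T)$ versus $\Delta^k(T-S)$ in \eqref{i2}.

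One small correction: Lemma~\ref{lem:intshift} only covers spatial shifts $w$ with $wN\in\Z$; it says nothing about time shifts or fractional spatial shifts. So it is not quite accurate to say the second reduction ``is exactly the content of Lemma~\ref{lem:intshift}.'' The paper instead chooses $W_N$ so that $S_N N^2 + W_N N = \lfloor S_N N^2 + X_N N\rfloor$, making the combined time--space shift an integer shift of the floor arguments, and then verifies \eqref{pNtxuw} directly from the definition of $p_N$. This is the same mechanism you sketch (carrying $\{c_N\}$ through the product), just with a cleaner bookkeeping device; you would need to write out that verification rather than cite the lemma. Also, your $\delta_N$ need not lie in $[0,1)$ once both endpoints are accounted for---the paper's $Y_N'$ satisfies $T_N'N^2 + Y_N'N = \lfloor T_N N^2 + Y_N N\rfloor - \lfloor S_N N^2 + X_N N\rfloor \in \Z$, which differs from your formula by $\{d_N\}/N$---but either choice gives the same floor values and hence the same integral, so this is harmless.
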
 
\begin{proof}
We prove \eqref{i1}, and \eqref{i2} follows by a symmetric (and simpler) proof. Note that if $j_0,\ldots,j_k \in \Z$, $j_{k+1} = 0$, and $j_{i+1} - j_i \in \lzb -3,1 \rzb$ for $0 \le i \le k$, then we necessarily have $j_0 \in \lzb -(k+1),3(k+1) \rzb$. Using this notation we may write 
\be \label{1242}
\begin{aligned}
&\quad \, \int_{\R^k_{X_N,Y_N}} \int_{\Delta^k(T_N \viiva S_N)} \sum_{\substack{j_0,\ldots,j_k \in \Z, j_{k+1} = 0 \\ j_{i+1} - j_i \in \lzb -3,1 \rzb,\, 0 \le i \le k}}\prod_{i = 0}^k p_{N}^2\big(t_{i+1},y_{i+1} + j_{i+1} \viiva t_i,y_i + j_i\big)  \prod_{i = 1}^k dt_i \,dy_i,\\
&= \sum_{j_0 \in \lzb -(k+1),3(k+1) \rzb} \sum_{\substack{j_1,\ldots,j_k \in \Z, j_{k+1} = 0 \\ j_{i+1} - j_i \in \lzb -3,1 \rzb,\, 0 \le i \le k}} \int_{\R^k_{X_N + j_0,Y_N}} \int_{\Delta^k(T_N \viiva S_N)} \prod_{i = 0}^k p_{N}^2\big(t_{i+1},x_{i+1} \viiva t_i,x_i \big)  \prod_{i = 1}^k dt_i \,dx_i \\
&= \sum_{j \in \lzb -(k+1),3(k+1) \rzb 
 }     D_{j,k} \int_{\R^k_{X_N+j,Y_N}} \int_{\Delta^k(T_N \viiva S_N)} \prod_{i = 0}^k p_{N}^2\big(t_{i+1},x_{i+1} \viiva t_i,x_i\big) \prod_{i = 1}^k dt_i \,dx_i,
 \end{aligned}
 \ee
 where in the second line, we changed the order of the (finite) sum and integral and made the change of variable $x_i = y_i + j_i$. We complete the proof by making an additional change of variable, which we now describe. Choose $W_N$ so that 
 \be \label{WN_choice}
 S_N N^2 + W_N N = \lfloor S_N N^2 + X_N N \rfloor,
 \ee
 and make the change of variable $u_i = t_i - S_N$ and $w_i = x_i - W_N - j$ for $1 \le i \le k$. Define $T_N' = T_N - S_N$, and choose $Y_N' \to Y-X$ so that 
 \be \label{YN_choice}
 T_N' N^2 + Y_N' N = \lfloor T_N N^2 + Y_N N \rfloor - \lfloor S_N N^2 + X_N N \rfloor.
 \ee
  To obtain the right-hand side of \eqref{i1}, we must show that, for $1 \le i \le k$, 
 \be \label{pNtxuw}
 \begin{aligned}
    &p_N(t_{i+1},x_{i+1} \viiva t_i,x_i) = p_N(u_{i+1},w_{i+1} \viiva u_i,w_i), \quad \text{where}\\
    &t_0 = S_N, x_0 = X_N + j,t_{k+1} = T_N, x_{k+1} = Y_N,  \quad \text{and} \quad u_0 = w_0 = 0, u_{k+1} = T_N', w_{k+1} = Y_N' - j.
    \end{aligned}
    \ee
 To see this, first recall from the definition \eqref{pN_def} that 
    \be \label{eq:pN_recall}
    p_N(t_{i+1},x_{i+1} \viiva t_i,x_i) = N e^{-(t_{i+1} - t_i)N^2} \f{\bigl((t_{i+1} - t_i)N^2\bigr)^{(\lfloor t_{i+1} N^2 + x_{i+1} N \rfloor - \lfloor t_i N^2 + x_i N \rfloor)}}{ (\lfloor t_{i+1} N^2 + x_{i+1} N \rfloor - \lfloor t_i N^2 + x_i N \rfloor)!}.
    \ee
Note that $\lfloor a + c\rfloor - \lfloor b + c \rfloor \neq \lfloor a \rfloor - \lfloor b \rfloor$ in general, but equality holds whenever $c \in \Z$. Then, since $s_N N^2 + W_N N  + jN \in \Z$ by \eqref{WN_choice}, for $2 \le i \le k-1$,
    \begin{multline*}
     \lfloor t_{i+1} N^2 + x_{i+1} N \rfloor - \lfloor t_i N^2 + x_i N \rfloor \\
    =  \lfloor u_{i+1} N^2 + w_{i+1} N + S_N N^2 + X_N N + jN \rfloor - \lfloor u_i N^2 + w_i N +  S_N N^2 + X_N N + jN  \rfloor \\
    = \lfloor u_{i+1} N^2 + w_{i+1} N\rfloor - \lfloor u_i N^2 + w_i N   \rfloor,
    \end{multline*}
    and by definition of $Y_N'$ the same holds (excluding the middle equality) for $i = 1$ and $i= k$. Combining this observation with \eqref{eq:pN_recall} gives \eqref{pNtxuw}.
\end{proof}

\begin{proof}[Proof of Lemma \ref{lem:JkN_unif_bd}]
Apply Lemma \ref{lem:p_n_ineq}, then Lemma \ref{lem:Djk_integral} to get
\begin{align}
&\quad \, \beta^{2k} \int_{[0,1]^k_{X_N,Y_N}}  \int_{\Delta^k(T_N \viiva S_N)} \Bigl(\prod_{i = 0}^k \sum_{r \in \Z} e^{-\theta_N r} p_{N}(t_{i+1},y_{i+1} + r, t_i,y_i) \Bigr)^2 \prod_{i = 1}^k dt_i\, dy_i \nonumber  \\
 &\le C^k \sum_{\substack{j_0,\ldots,j_k \in \Z, j_{k+1} = 0 \\ j_{i+1} - j_i \in \lzb -3,1 \rzb}}\int_{\R^k_{X_N,Y_N}} \int_{\Delta^k(T_N \viiva S_N)} \prod_{i= 0}^k p_{N}^2(t_{i+1},y_{i+1} + j_{i+1} \viiva t_i,y_i + j_i) \prod_{i = 1}^k\, dt_i\,dy_i \nonumber \\
 &=   \sum_{j \in \lzb -(k+1),3(k+1) \rzb} C^k D_{j,k}\int_{\R^k_{Y_N' - j}}\int_{\Delta^k(T_N')} \prod_{i= 0}^k p_{N}^2(t_{i+1},x_{i+1}\viiva t_i,x_i) \prod_{i = 1}^k\, dt_i\,dx_i, \label{eq:SumKj}
 \end{align}
for a constant $C > 0$, and $D_{j,k}$ is as defined in \eqref{Djk}, and $T_N',Y_N'$ are sequences converging to $T-S$,$Y-X$, respectively. Observe from the definition of $p_N$ \eqref{pN_def} that the integrals in \eqref{eq:SumKj} are nonzero if and only if $\lfloor T_N' N^2 + Y_N N\rfloor - jN \ge 0$. Hence, some of the $j$ terms in the sum \eqref{eq:SumKj} may be $0$. Assuming $j \in \lzb -(k+1), 3(k+1)\rzb$, we bound the terms of the sum  via two cases.

 \medskip \noindent \textbf{Case 1:}  $j < \f{T_N'}{2}N$. Then,   observing that $D_{j,k} \le 5^k$ for all $j$, 
\begin{align*}
&\quad \,C^k D_{j,k} \int_{\R^k_{Y_N' - j}}\int_{\Delta^k(T_N')} \prod_{i= 0}^k p_{N}^2(t_{i+1},x_{i+1}\viiva t_i,x_i) \prod_{i = 1}^k\, dt_i\,dx_i \\
&\le  \f{C^k  p_N^2(T_N',Y_N' - j)}{{\Gamma((k+1)/2)}}   \sqrt{\f{\lfloor T_N' N^2 + Y_N' N \rfloor - jN}{N^2} }  \Bigl(\f{N^2}{\lfloor T_N' N^2 + Y_N' N \rfloor - jN }\Bigr)^{(k+1)/2} \\
&\le \f{C^k  p_N^2(T_N',Y_N' - j)}{{\Gamma((k+1)/2)}}   \sqrt{\f{\lfloor T_N' N^2 + Y_N' N \rfloor + (k+1)N }{N^2} }  \Biggl(\f{N^2}{\f{T_N'}{2} N^2 + Y_N' N -1} \Biggr)^{(k+1)/2}\\
&\le   \f{C^k  p_N^2(T_N',Y_N' - j)}{{\Gamma((k+1)/2)}} \le \f{C^k}{\Gamma((k+1)/2)} \max_{\ell \in \lzb -3, 1\rzb } p_N^2(T_N',Y_N' + \ell)  \le \f{C^k}{\Gamma((k+1)/2)},
\end{align*}
where the constant $C$ changes from line to line, the first inequality is by \eqref{eq:full_int_bd} of Lemma \ref{lem:pNint_conv}, the second inequality holds by the assumption $j \ge -(k+1)$, the penultimate inequality holds by Lemma \ref{lem:pN_per_bd}, and the last inequality holds by convergence of $p_N^2(T_N',Y_N' + \ell)$ to $\rho(T-S,Y-X +\ell)$ (Lemma \ref{lem:pNto_p}).

\medskip \noindent \textbf{Case 2:} $j \ge \f{T_N'}{2}N$.  In this case, we use the bound \eqref{eq:fib2} of Lemma \ref{lem:pNint_conv}: 
\begin{align*}
    &\quad\, C^k D_{j,k}\int_{\R^k_{Y_N' - j}}\int_{\Delta^k(T_N')} \prod_{i= 0}^k p_{N}^2(t_{i+1},x_{i+1}\viiva t_i,x_i) \prod_{i = 1}^k\, dt_i\,dx_i \\
&\overset{\eqref{eq:fib2}}{\le}   \f{C^k N^{k+1}}{k!} p_N^2(T_N',Y_N'-j) \sqrt{\f{\lfloor T_N' N^2 + Y_N' N - jN \rfloor + 1}{N^2} } \\
&\le  \f{C^k N^{k+1}}{k!} p_N^2(T_N',Y_N'-j) \\
&\le \f{C^k N^{k+1}}{k!} e^{-cj^2} \max_{\ell \in \lzb -3,1 \rzb} p_N^2(T_N',Y_N'+\ell) \qquad \text{(Using Lemma  \ref{lem:pN_per_bd})} \\
& \le  \f{C_1 ^k N^{k+1}}{k!} e^{-c_1N^2} 
\end{align*}
where the constants $c,C$ change from line to line, but the constants $C_1$ and $c_1$ are fixed for the remainder of the proof. The last line above follows by the convergence of $p_N(T_N',Y_N'+\ell)$ to $\rho(T,Y+\ell)$. We consider two further subcases, where $C_2$ is a fixed constant chosen to be larger than $C_1 e$.

\medskip \noindent \textbf{Case 2.1:} $N \ge \f{k}{C_2}$. We obtain the upper bound
\begin{align*}
&\quad \, \f{C_1^k N^{k+1}}{k!} e^{-c_1 N^2}  \le \f{ C_1^k N^{k+1} }{k!} e^{-c_1Nk/C_2}  \le  \f{C_1^k }{k!}  \Bigl(\f{N^2}{e^{c_1N/C_2}}\Bigr)^k \le \f{\wt C^k}{k!} ,
\end{align*}
for a constant $\wt C$ depending on $C_1,c_1$, and $C_2$.

\medskip \noindent \textbf{Case 2.2:} $N < \f{k}{C_2}$. Then, we have the bound 
\begin{align*}
\f{C_1^k N^{k+1}}{k!} e^{-c_1 N^2} \le \f{(C_1k)^k}{k! C_2^k} N e^{-c_1 N^2} \le \wt C e^{-\wt c k}
\end{align*}
 and $\wt C$ and $\wt c$ are positive constants. The last inequality holds by Stirling's formula since we took $C_2 > C_1e$.

In each case, we have shown that, for each $C > 0$, there exist constants $\wt C,\wt c$ so that, for $k,N \ge 1$,
\[
C^k D_{j,k} \int_{\R^k_{Y_N' - j}}\int_{\Delta^k(T_N')} \prod_{i= 0}^k p_{N}^2(t_{i+1},x_{i+1}\viiva t_i,x_i) \prod_{i = 1}^k\, dt_i\,dx_i \le \wt C e^{-\wt ck}.
\]
Then, by \eqref{eq:SumKj}, we have
\[
\beta^{2k} \int_{[0,1]^k_{X_N,Y_N}}  \int_{\Delta^k(T_N \viiva S_N)} \Bigl(\prod_{i = 0}^k \sum_{r \in \Z} e^{-\theta_N r} p_{N}(t_{i+1},y_{i+1} + r, t_i,y_i) \Bigr)^2 \prod_{i = 1}^k dt_i\, dy_i \le \sum_{j \in \lzb -(k+1),3(k+1) \rzb} \wt C e^{-\wt ck}.
\]
As there are $O(k)$ terms in the sum on the right-hand side, this is bounded by $Ce^{-ck}$, for new $C,c > 0$. 
\end{proof}

Lastly, we record the semi-discrete analogue of Lemma \ref{lem:ser_tail_to0}.
\begin{lemma} \label{lem:pN_ser_tail_to0}
For sufficiently large $N$, all $k \in \N$, $t > s$ and $x,y,\theta \in \R$, the following holds
\[
\lim_{P \to \infty}  \int_{[0,1]^k_{x ,y}}\int_{\Delta^k(t \viiva s)} \Biggl(\sum_{\substack{r_0,\ldots,r_k \in \Z \\ |r_0 + \cdots + r_k| \ge P}}  \prod_{i = 0}^k e^{-\theta r_i} p_N(t_{i + 1},x_{i+1} + r_i \viiva t_i,x_i) \Biggr)^2 \prod_{i = 1}^k \,dt_i\,dx_i = 0.
\]
\end{lemma}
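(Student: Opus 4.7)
The plan is to mirror the proof of Lemma \ref{lem:ser_tail_to0}, replacing the continuum heat kernel bound of Lemma \ref{lem:rhobd} with its semi-discrete counterpart, Lemma \ref{lem:pN_per_bd}. The key structural feature that makes the argument work is that the relevant pointwise estimate factors the dependence on $r$ away from the dependence on $(t_i,x_i)$.

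First, fixing $T>t-s$ and applying Lemma \ref{lem:pN_per_bd} to the constant sequence $\theta_N\equiv\theta$, I would obtain, for all sufficiently large $N$, a summable function $h:\Z\to\R$ such that, for all $r\in\Z$, $x_i,x_{i+1}\in[0,1]$, and $s\le t_i<t_{i+1}\le t$,
\[
e^{-\theta r}\, p_N(t_{i+1},x_{i+1}+r\viiva t_i,x_i)\;\le\; h(r)\,\max_{j\in\llbracket -3,1\rrbracket} p_N(t_{i+1},x_{i+1}+j\viiva t_i,x_i).
\]
Taking the product over $i=0,\dots,k$, the $r_i$-dependence and the $(t_i,x_i)$-dependence separate, so squaring and then bounding $(\max_j a_j)^2\le \sum_j a_j^2$ coordinate-wise yields
\[
\Biggl(\sum_{\substack{r_0,\ldots,r_k\in\Z \\ |r_0+\cdots+r_k|\ge P}}\prod_{i=0}^k e^{-\theta r_i}\, p_N(t_{i+1},x_{i+1}+r_i\viiva t_i,x_i)\Biggr)^2 \le A_P \cdot B_N(\mathbf t,\mathbf x),
\]
where $A_P:=\bigl(\sum_{|r_0+\cdots+r_k|\ge P}\prod_{i=0}^k h(r_i)\bigr)^2$ is deterministic and independent of $(\mathbf t,\mathbf x)$, and $B_N(\mathbf t,\mathbf x):=\sum_{j_0,\ldots,j_k\in\llbracket -3,1\rrbracket}\prod_{i=0}^k p_N^2(t_{i+1},x_{i+1}+j_i\viiva t_i,x_i)$.

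Second, I would bound the integral of $B_N$ over $[0,1]^k_{x,y}\times\Delta^k(t\viiva s)$. Enlarging the spatial domain from $[0,1]^k_{x,y}$ to $\R^k_{x,y}$ (legal since $B_N\ge 0$), the resulting expression matches the right-hand side in \eqref{i1} of Lemma \ref{lem:Djk_integral} (after absorbing constants), and Lemma \ref{lem:pNint_conv} provides a finite, $P$-independent bound. (One could alternatively just cite the calculation in the proof of Lemma \ref{lem:JkN_unif_bd}.)

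Third, I would show $A_P\to 0$ as $P\to\infty$. Rewriting $\sum_{|r_0+\cdots+r_k|\ge P}\prod_{i=0}^k h(r_i) = \sum_{|m|\ge P}(h^{*(k+1)})(m)$, where $*$ denotes $\ell^1$-convolution on $\Z$, the $\ell^1$-norm of $h^{*(k+1)}$ is $\|h\|_{\ell^1}^{k+1}<\infty$ by Young's inequality, so the tail vanishes as $P\to\infty$. Combining this with the $P$-independent bound on $\int B_N$ closes the argument via dominated convergence (or directly, since $A_P$ factors out). There is no real obstacle here beyond verifying the factorization in the first step; the heavy lifting has already been done in Lemma \ref{lem:pN_per_bd}.
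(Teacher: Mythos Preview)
Your proposal is correct and follows essentially the same approach as the paper's proof, which simply directs the reader to repeat the argument of Lemma \ref{lem:ser_tail_to0} with Lemma \ref{lem:pN_per_bd} in place of Lemma \ref{lem:rhobd} and \eqref{eq:full_int_bd} of Lemma \ref{lem:pNint_conv} in place of Lemma \ref{lem:rho_int_comp}. Your second step (enlarging to $\R^k_{x,y}$ and invoking Lemma \ref{lem:Djk_integral} together with Lemma \ref{lem:pNint_conv}) is precisely the change of variables needed to apply \eqref{eq:full_int_bd}, so the two routes coincide.
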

\begin{proof}
This follows an identical proof to Lemma \ref{lem:ser_tail_to0}, replacing the use of Lemma \ref{lem:rhobd} with Lemma \ref{lem:pN_per_bd} and replacing Lemma \ref{lem:rho_int_comp} with \ref{eq:full_int_bd} of Lemma \ref{lem:pNint_conv}.
\end{proof}

We are now prepared to prove Proposition \ref{prop:OCY_Chaos}. 

\begin{proof}[Proof of Proposition \ref{prop:OCY_Chaos}]
\textbf{Item \ref{itm:Ns1}:} This follows an identical proof to Proposition \ref{prop:Z_conv}\ref{itm:s1}, using the It\^o isometry from Lemma \ref{lem:Ito_isometry} applied to \eqref{eq:Y_chaos}, and replacing the use of Lemma \ref{lem:int_sum_fin} with Lemma \ref{lem:disc_ser_converge}.  

\medskip \noindent \textbf{Item \ref{itm:Ns1}:}
By the chaos series representation for $Y_\gamma$ in \eqref{eq:Y_chaos}, along with  \eqref{ZperY},
\begin{align} \label{eq:Sc_Ser}
\scOCYp(t,y \viiva s,x;\theta) = \sum_{j \in \Z} e^{\theta j} \sum_{k = 0}^\infty \beta^k N^{1-\f{k}{2}} I_k^N(tN^2,\lfloor tN^2 + yN \rfloor \viiva sN^2,\lfloor sN^2 + xN \rfloor + jN).
\end{align}
By the white-noise integral representation of $I_k$ in \eqref{Ik_wNint}, $I_k^N(tN^2, \lfloor tN^2 + yN \rfloor \viiva sN^2, \lfloor sN^2 + xN \rfloor)$ equals
\be \label{eq:long_int}
\int_{\R^k_{\lfloor sN^2 + xN \rfloor, \lfloor tN^2 + yN \rfloor}}  \int_{\Delta^k(t N^2 \viiva s N^2)}  \prod_{i = 0}^k q(s_{i+1} - s_i, \lfloor x_{i+1} \rfloor - \lfloor x_i \rfloor) \prod_{i = 1}^k \xi_{N}(ds_i,d x_i),
\ee
where we recall that $\xi_{N}$ is a white noise on $\R \times [0,N]$ that has been periodically extended by shifts of $N$ units of space. 
The rest of the proof is to show that this series expression \eqref{eq:Sc_Ser} may be written as \eqref{JkNdef}. For this, we need to make a change of variables in the integral \eqref{eq:long_int} and then justify changes of the order of summation and integration in \eqref{eq:Sc_Ser}.

Consider the change of variables $t_i = \f{s_i}{N^2}$ and $y_i = \f{x_i - s_i}{N}$ in the integral \eqref{eq:long_int} so that $s_i = t_i N^2$ and $x_i = t_i N^2 + y_i N$. This is a composition of two changes of variable, namely $w_i = x_i - s_i, u_i = s_i$, followed by $y_i = \f{w_i}{N}$ and $t_i = \f{u_i}{N^2}$. By shear invariance and scaling properties of white noise (Lemma \ref{lem:scaling_relations}) we have
\be \label{noise_shift}
\xi_{N}(ds_i,dx_i) = N^{\f{3}{2}}\xi(dt_i,dy_i),
\ee
where $\xi$ is a new noise that is a space-time white noise on $\R \times [0,1]$, and periodically extended by shifts of one unit of space, as in Section \ref{sec:white_noise}. The restriction of $\xi$ to $\R \times [0,1]$, denoted $\xi_\T$ is the white noise used in the coupling of the proposition.  To be precise, the coupling to the process $\scOCYp$ comes by reversing this transformation, building the Brownian motions from $\xi_{N\T}$ by \eqref{eq:BM_coupling}, then transforming back. 

By \eqref{noise_shift}, we have
\be \label{eq:white}
\begin{aligned}
    &\quad \, I_k\big(tN^2, \lfloor tN^2 + yN \rfloor \viiva sN^2 , \lfloor sN^2 + xN \rfloor\big) \\
    &= N^{\f{3k}{2}}\int_{\R^k_{x,y}} \int_{\Delta^k(t \viiva s)} \prod_{i = 0}^k q\big((t_{i+1} - t_i)N^2, \lfloor  t_{i+1} N^2 + y_{i+1} N \rfloor - \lfloor t_i N^2  + y_i N \rfloor\big) \prod_{i = 1}^k \xi(dt_i, dy_i)\\
    &= N^{\f{k}{2}-1} \int_{\R^k_{x,y}} \int_{\Delta^k(t \viiva s)} \prod_{i = 0}^k p_N\big(t_{i+1},y_{i+1} \viiva t_i,y_i\big) \prod_{i = 1}^k \xi(dt_i, dy_i) \\
    &= N^{\f{k}{2}-1} \int_{[0,1]^k_{x,y}} \int_{\Delta^k(t \viiva s)} \sum_{\substack{j_1,\ldots,j_k \in \Z, \\j_0 = j_{k+1} = 0}} \prod_{i = 0}^k p_N\big(t_{i+1},y_{i+1} + j_{i+1} \viiva t_i,y_i + j_i\big) \prod_{i = 1}^k \xi_\T(dt_i, dy_i),
\end{aligned}
\ee
where the second equality follows from the definition of $p_N$ \eqref{pN_def}, and the last equality follows from the definition of integrals over $\xi$ \eqref{eq:white}. Substituting this into \eqref{eq:Sc_Ser} we obtain that $\scOCYp(t,y \viiva s,x;\theta)$ equals
\be \label{888}
\sum_{j \in \Z} e^{\theta j} \sum_{k = 0}^\infty \beta^k  \int_{[0,1]^k_{x,y}} \int_{\Delta^k(t \viiva s)} \sum_{\substack{j_1,\ldots,j_k \in \Z, \\j_0 = j_{k+1} = 0}} \prod_{i = 0}^k p_N\big(t_{i+1},y_{i+1} + j_{i+1} \viiva t_i,y_i + j_i\big) \prod_{i = 1}^k \xi_\T(dt_i, dy_i),
\ee
and we recall from earlier in the proof that the sum over $k$ converges in $L^2(\Pp)$, and the the sum over $j$ converges almost surely by Lemma \ref{lem:OCYp_finite}. Recalling \eqref{JkNdef}, the claim of the proposition is that the expression in \eqref{888} is equal to 
\be \label{889}
 \sum_{k = 0}^\infty \beta^k \int_{[0,1]^k_{x,y}} \int_{\Delta^k(t \viiva s)} \prod_{i = 0}^k \sum_{r \in \Z} e^{-\theta r} p_N(t_{i+1},y_{i+1} + r \viiva t_i,y_i) \prod_{i = 1}^k \xi_{\T}(dt_i, dy_i).
\ee
We first note that by the It\^o isometry and Lemma \ref{lem:JkN_unif_bd} that the series in \eqref{889} converges in $L^2(\Pp)$ for each $N$. Expand the product in \eqref{889}, perform the change of variables $r_i = j_{i+1} - j_i, j_{k+1} = 0$, and use the shift invariance of Lemma \ref{lem:intshift}  to obtain that the expression in \eqref{889} equals
\begin{align*}
 &\quad \, \sum_{k = 0}^\infty \beta^k \int_{[0,1]^k_{x,y}} \int_{\Delta^k(t \viiva s)} \sum_{r_0,\ldots,r_k \in \Z}\prod_{i = 0}^k  e^{-\theta r_i} p_N(t_{i+1},y_{i+1} + r_i \viiva t_i,y_i) \prod_{i = 1}^k \xi_{\T}(dt_i, dy_i) \\
 &= \sum_{k = 0}^\infty \beta^k \int_{[0,1]^k_{x,y}} \int_{\Delta^k(t \viiva s)} \sum_{\substack{ j_0,\ldots,j_k \in \Z \\ j_{k+1} = 0}} e^{\theta j_0}\prod_{i = 0}^k   p_N(t_{i+1},y_{i+1} + j_{i+1} \viiva t_i,y_i + j_i) \prod_{i = 1}^k \xi_{\T}(dt_i, dy_i).
\end{align*}
Hence, it remains to show that, in the expression \eqref{888}, the sum over $j \in \Z$ may be brought inside the sum over $k$, then inside the integral.  This follows the exact proof as Proposition \ref{prop:Z_conv}\ref{itm:s2}, replacing the use of Lemmas \ref{lem:int_sum_fin} and \ref{lem:ser_tail_to0} with their semi-discrete analogues, Lemmas \ref{lem:JkN_unif_bd} and \ref{lem:pN_ser_tail_to0}, respectively. 
\end{proof}

\subsection{Proof of Theorem \ref{L2_conv_main_theorem}: $L^2$ convergence to the SHE} \label{sec:L2_conv_final} This section completes the proof of Theorem \ref{L2_conv_main_theorem}. Propositions \ref{prop:Z_conv}\ref{itm:s2} and \ref{prop:OCY_Chaos}\ref{itm:Ns2} establish the chaos series representations for the limiting and prelimiting objects, respectively.  The first ingredient of the proof is to show the $L^2(\Pp)$ convergence of each term in the chaos series.  
The proof of this result relies on the following:
\begin{lemma}[Generalized Dominated Convergence Theorem]\label{gdct}
On a general measure space, if $(f_n)_{n\in \N}$, $f$, $(g_n)_{n\in \N}$ and $g$ are real valued functions that satisfy the conditions that $f_N \to f$ a.e., $|f_N| \le g_N\to g$ a.e., and $\int g_N \to \int g < \infty$, then $\int f_N \to \int f$.
\end{lemma}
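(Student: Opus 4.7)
The plan is to reduce to Fatou's lemma applied to the nonnegative sequences $g_n + f_n$ and $g_n - f_n$, which is the standard trick for the classical dominated convergence theorem.

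First I would verify the basic integrability setup. Since $|f_n| \le g_n$ a.e., passing to the a.e.\ limit gives $|f| \le g$ a.e., and the hypothesis $\int g < \infty$ then guarantees $f$ is integrable; similarly each $f_n$ and $g_n$ is integrable for all sufficiently large $n$. Moreover, $g_n + f_n \ge 0$ and $g_n - f_n \ge 0$ almost everywhere, and both sequences converge a.e.\ to $g + f$ and $g - f$, respectively.

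Next I would apply Fatou's lemma to $g_n + f_n$. This gives
\[
\int (g + f) \;=\; \int \liminf_{n\to\infty}(g_n + f_n) \;\le\; \liminf_{n\to\infty}\int (g_n + f_n) \;=\; \lim_{n\to\infty}\int g_n \;+\; \liminf_{n\to\infty}\int f_n \;=\; \int g + \liminf_{n\to\infty}\int f_n,
\]
where the last equality uses the hypothesis $\int g_n \to \int g$. Subtracting the finite quantity $\int g$ yields $\int f \le \liminf_n \int f_n$. Applying Fatou's lemma in exactly the same way to $g_n - f_n$ produces
\[
\int g - \int f \;\le\; \liminf_{n\to\infty}\int (g_n - f_n) \;=\; \int g - \limsup_{n\to\infty}\int f_n,
\]
so $\limsup_n \int f_n \le \int f$. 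Combining the two inequalities forces $\lim_n \int f_n = \int f$.

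No step here is a serious obstacle; the proof is a direct imitation of the classical argument, with the only point of care being the justification that the constants $\int g_n$ may be separated out of the $\liminf$ (which is valid because $\int g_n$ is a convergent, hence bounded, numerical sequence). Since everything is elementary once Fatou's lemma is in hand, the total length of the proof is essentially the two displayed Fatou applications above.
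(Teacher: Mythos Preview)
Your proof is correct and is exactly the standard textbook argument via Fatou's lemma applied to $g_n \pm f_n$. The paper does not actually give a proof of this lemma; it is stated as a known result and used as a tool, so there is nothing to compare against.
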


\begin{proposition} \label{prop:ChaosL2conv}
For each $k \in \Z_{\ge 0}$, $\theta\in \R$, $X,Y \in [0,1]$, and real $S < T$, whenever $\theta_N,X_N,Y_N,S_N,T_N$ are sequences converging to $\theta,X,Y,S,T$, respectively, we have the convergence 
\[
\lim_{N \to \infty} \Ee\Big[\big(J_k^{N}(T_N,Y_N \viiva S_N,X_N;\theta_N) - J_k(T,Y \viiva S,X;\theta )\big)^2\Big] = 0.
\]
\end{proposition}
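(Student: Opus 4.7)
The plan is to reduce the claim via the It\^o isometry (Lemma \ref{lem:Ito_isometry}) to an $L^2$-convergence statement for deterministic integrands on $\R^k \times \R^k$, and then use the Hilbert space structure of $L^2$: pointwise a.e.\ convergence together with convergence of $L^2$ norms implies strong convergence in $L^2$. Concretely, set
\[
H_N(\vec t, \vec y) := \ind_{\Delta^k(T_N \viiva S_N)}(\vec t)\,\ind_{[0,1]^k_{X_N, Y_N}}(\vec y) \prod_{i=0}^k \sum_{r \in \Z} e^{-\theta_N r} p_N(t_{i+1}, y_{i+1} + r \viiva t_i, y_i)
\]
with boundary conventions $t_0 = S_N,\, t_{k+1} = T_N,\, y_0 = X_N,\, y_{k+1} = Y_N$; define $H$ analogously with $\rho$, $\theta$, and the limit endpoints. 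Lemma \ref{lem:Ito_isometry} then gives $\Ee[(J_k^N - J_k)^2] = \|H_N - H\|_{L^2}^2$, so it suffices to prove (i) $H_N \to H$ a.e.\ and (ii) $\|H_N\|_{L^2}^2 \to \|H\|_{L^2}^2$.

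For (i), the characteristic functions converge a.e.\ since the boundaries of the simplex and hypercube are Lebesgue null. For each fixed $r \in \Z$, the semi-discrete kernel converges $p_N(t_{i+1}, y_{i+1} + r \viiva t_i, y_i) \to \rho(t_{i+1} - t_i, y_{i+1} - y_i + r)$ by Lemma \ref{lem:pNto_p}. The interchange of $N \to \infty$ with the sum over $r$ is justified by dominated convergence, with summable dominating function supplied by Lemma \ref{lem:pN_per_bd} (uniformly in $N$ for $N$ large and in the bounded sequence $(\theta_N)$).

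For (ii), I apply the generalized dominated convergence theorem (Lemma \ref{gdct}) to $f_N := H_N^2$, with dominating function supplied by Lemma \ref{lem:p_n_ineq}:
\[
H_N^2 \le g_N := C^k\, \ind_{\Delta^k(T_N \viiva S_N)\times [0,1]^k_{X_N, Y_N}}\!\!\!\!\!\!\sum_{\substack{j_0,\ldots,j_k \in \Z,\, j_{k+1} = 0 \\ j_{i+1} - j_i \in \lzb -3, 1\rzb}}\!\!\!\! \prod_{i=0}^k p_N^2\bigl(t_{i+1}, y_{i+1} + j_{i+1} \viiva t_i, y_i + j_i\bigr),
\]
with $C$ independent of $N$. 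Since the sum over the $j_i$'s is finite and $p_N^2 \to \rho^2$ pointwise (Lemma \ref{lem:pNto_p}), $g_N \to g$ a.e., where $g$ is the corresponding expression with $\rho^2$ and the limit data. What remains is to show $\int g_N \to \int g$. Via the change of variables in Lemma \ref{lem:Djk_integral} (applied to both $g_N$ and $g$), this reduces to verifying, for each fixed $j$ in the bounded range $\lzb -(k+1), 3(k+1) \rzb$,
\[
\int_{\R^k_{Y_N' - j}} \int_{\Delta^k(T_N')} \prod_{i=0}^k p_N^2(t_{i+1}, y_{i+1} \viiva t_i, y_i)\, \prod_{i=1}^k dt_i\, dy_i \longrightarrow \int_{\R^k_{Y - X - j}} \int_{\Delta^k(T - S)} \prod_{i=0}^k \rho^2(t_{i+1} - t_i, y_{i+1} - y_i)\, \prod_{i=1}^k dt_i\, dy_i,
\]
where $Y_N' \to Y - X$ and $T_N' \to T - S$ come from Lemma \ref{lem:Djk_integral}.

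I expect this final step to be the main obstacle: obtaining $N$-uniform control on the iterated semi-discrete integrals of $p_N^2$ that matches the continuum $\rho^2$-integral asymptotically. The approach I anticipate is to invoke the precise two-sided estimates in Lemma \ref{lem:pNint_conv} together with the closed-form evaluation in Lemma \ref{lem:rho_int_comp}: both express the $k$-fold integral as a Beta-function factor (arising from the Dirichlet integral over $\Delta^k$) multiplied by the endpoint value $p_N^2(T_N', Y_N' - j)$ or $\rho^2(T-S, Y-X-j)$, plus correction terms that vanish as $N \to \infty$; pointwise convergence $p_N^2 \to \rho^2$ at the endpoint (Lemma \ref{lem:pNto_p}) together with a sandwich argument then delivers the limit. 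Combining (i) and (ii), the Hilbert space structure of $L^2$ gives $\|H_N - H\|_{L^2}^2 \to 0$, which is the claim.
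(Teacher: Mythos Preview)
Your proposal is correct and follows essentially the same route as the paper. The paper applies the generalized dominated convergence theorem (Lemma \ref{gdct}) directly to $f_N := (H_N - H)^2$ with limit $0$ and dominating sequence built from Lemma \ref{lem:p_n_ineq} plus the fixed $H^2$ term; you instead apply it to $H_N^2$ and then invoke the Riesz-type fact (a.e.\ convergence plus norm convergence implies $L^2$ convergence). Both routes rest on exactly the same analytic core: Lemmas \ref{lem:p_n_ineq}, \ref{lem:Djk_integral}, and \ref{lem:pNint_conv}. One small technical fix: when you invoke Lemma \ref{lem:Djk_integral} to compute $\int g_N$, you need the spatial integration over $\R^k_{X_N,Y_N}$ rather than $[0,1]^k_{X_N,Y_N}$ (the integer shifts $j_i$ push the variables outside $[0,1]$); the paper handles this by enlarging the indicator in $g_N$ at the cost of an inequality, which you should do as well. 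Your anticipated ``main obstacle'' is precisely the content of \eqref{eq:pN_full_conv} in Lemma \ref{lem:pNint_conv}, so no sandwich argument is needed beyond citing that lemma.
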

\begin{proof}
Recall $J_k$ and $J_k^N$ from \eqref{Jk_series} and \eqref{JkNdef}, respectively.  By the It\^o isometry (Lemma \ref{lem:Ito_isometry}),  
    \be \label{L2dist}
\begin{aligned} 
    &\Ee\Big[\big(J_k^{N}(T_N,Y_N \viiva S_N,X_N;\theta_N) - J_k(T,Y \viiva S,X;\theta )\big)^2\Big] \\
    &= \beta^{2k} \int_{[0,1]^k} \int_{\R^k} \Biggl[\ind\bigl\{[0,1]_{X_N,Y_N}^k \times \Delta^k(T_N \viiva S_N)\bigr\}\prod_{i = 0}^k \sum_{r \in \Z} e^{-\theta_N r} p_N(t_{i+1},y_{i+1} + r \viiva t_i,y_i) \\
    &\qquad\qquad\qquad\qquad - \ind\bigl\{[0,1]_{X,Y}^k \times \Delta^k(T \viiva S)\bigr\}\prod_{i = 0}^k \sum_{r \in \Z} e^{-\theta r} \rho(t_{i+1} - t_i,y_{i+1} - y_i + r)  \Biggr]^2 \prod_{i = 1}^k dt_i\,dy_i,
\end{aligned}
\ee
    where we have used the shorthand notation
    \[
\ind\bigl\{([0,1]_{X,Y}^k \times \Delta^k(T \viiva S)\bigr\} = \ind\bigl\{(y_0,y_1,\ldots,y_{k},y_{k+1}) \in [0,1]^k_{X,Y}, (t_0,\ldots,t_{k+1}) \in \Delta^k(T \viiva S)\bigr\}.
\]
Observe our convention under this notation that the endpoints $t_0,t_{k+1},y_0,y_{k+1}$ differ between the two terms in \eqref{L2dist}.
This proof of convergence follows in two steps:
\begin{enumerate} [label=\textup{(\roman*)}]
    \item \label{step1} Show that  we have the Lebesgue-a.e. convergence 
    \be \label{sum_conv}
    \begin{aligned}
    &\ind\bigl\{[0,1]_{X_N,Y_N}^k \times \Delta^k(T_N \viiva S_N)\bigr\} \prod_{i = 0}^k \sum_{r \in \Z} e^{-\theta_N r} p_N(t_{i+1},y_{i+1} + r \viiva t_i,y_i)  \\
    &\qquad\qquad\qquad\qquad \to \ind\bigl\{[0,1]_{X,Y}^k \times \Delta^k(T \viiva S)\bigr\}\prod_{i = 0}^k \sum_{r \in \Z} e^{-\theta r} \rho(t_{i+1} - t_i,y_{i+1} - y_i + r).
    \end{aligned}
    \ee
    \item \label{step2} 
    Apply the generalized dominated convergence theorem to show convergence of the integral in \eqref{L2dist}. 
\end{enumerate}

 \medskip \noindent \textbf{Step \ref{step1}:}  Since the products in \eqref{sum_conv} are finite, it suffices to show that, for $1 \le i \le k$, we have the Lebesgue-a.e. convergence  
\be \label{eq:es_conv}
 \begin{aligned}
 & \sum_{r \in \Z} e^{-\theta_N r} p_N(t_{i+1},y_{i+1} + r \viiva t_i,y_i)\ind\bigl\{[0,1]_{X_N,Y_N}^k \times \Delta^k(T_N \viiva S_N)\bigr\}  \\
    &\qquad\qquad\qquad\qquad \to \sum_{r \in \Z} e^{-\theta r} \rho(t_{i+1} - t_i,y_{i+1} - y_i + r)\ind\bigl\{[0,1]_{X,Y}^k \times \Delta^k(T \viiva S)\bigr\}.
    \end{aligned}
\ee
Specifically, we will show convergence everywhere except at the boundary of the set $\{[0,1]_{X,Y}^k \times \Delta^k(T \viiva S)\bigr\}$.

 By Lemma \ref{lem:pNto_p}, we have that  $p_N(t_{i+1},y_{i+1} + r \viiva t_i,y_i) \to \rho(t_{i+1} - t_i, y_{i+1} - y_i + r)$ pointwise. Since $T_N - S_N \to T-S$, the sequence $T_N - S_N$ is bounded, and by Lemma \ref{lem:pN_per_bd}, there exists a function $h:\Z \to \R$ (depending only on $\theta ,T-S$) that is summable over $r \in \Z$ so that, for sufficiently large $N$, $0 < t_{i+1} - t_i < T_N - S_N$, and $y_i,y_{i+1} \in [0,1]$,
\be \label{eq:maxbd}
\begin{aligned}
&\quad \, e^{-\theta_N r} p_N(t_{i+1},y_{i+1} + r \viiva t_i,y_i) \le h(r) \max_{j \in \lzb -3,1 \rzb} p_N(t_{i+1},y_{i+1} + j \viiva t_i,y_i) 
\end{aligned}
\ee   
 By the pointwise convergence of $p_N$ to $\rho$, the term $\max_{j \in \lzb -3,1 \rzb} p_N(t_{i+1},y_{i+1} + j \viiva t_i,y_i)$ is bounded in $N$. Then, for each choice of $t_{i},t_{i+1},y_i,y_{i+1}$, the summand on the left-hand side in \eqref{eq:es_conv} is bounded by $C h(r)$ for a constant $C$, and the dominated convergence theorem (applied to the sum) completes the proof of \eqref{eq:es_conv}.

\medskip \noindent \textbf{Step \ref{step2}:} We use the generalized dominated convergence theorem, Lemma \ref{gdct}. The measure space is $\R^k\times \R^k$ with Lebesgue measure, and we choose 
\be \label{fN}
\begin{aligned}
f_N(y_1,\ldots, y_k,t_1,\ldots ,t_k)&=\Biggl[\ind\bigl\{[0,1]_{X_N,Y_N}^k \times \Delta^k(T_N \viiva S_N)\bigr\}\prod_{i = 0}^k \sum_{r \in \Z} e^{-\theta_N r} p_N(t_{i+1},y_{i+1} + r \viiva t_i,y_i) \\
    &\qquad\quad - \ind\bigl\{[0,1]_{X,Y}^k \times \Delta^k(T \viiva S)\bigr\}\prod_{i = 0}^k \sum_{r \in \Z} e^{-\theta r} \rho(t_{i+1} - t_i,y_{i+1} - y_i + r)  \Biggr]^2.
    \end{aligned}
\ee
By Step \ref{step1}, $f_N$ converges Lebesgue a.e. to $0$ so set $f\equiv 0$ in the application of Lemma \eqref{gdct}. We wish to show that the integral of $f_N$ converges to $0$, so we identify an appropriate dominating function $g_N$. 

Using Lemma \ref{lem:p_n_ineq}, our choice for $g_N$ in this context is 
\be \label{gn}
\begin{aligned}
&C^k \Biggl[\sum_{\substack{j_0,\ldots,j_k \in \Z, j_{k+1} = 0 \\ j_{i+1} - j_i \in \lzb-3,1\rzb,\, 0 \le i \le k}} \ind\bigl\{\R_{X_N,Y_N}^k \times \Delta^k(T_N \viiva S_N)\bigr\}\prod_{i = 0}^k p_N^2(t_{i+1},y_{i+1} + j_{i+1} \viiva t_i,y_i + j_i)  \\
&\qquad\qquad\qquad\qquad\qquad\qquad+ \ind\bigl\{[0,1]_{X,Y}^k \times \Delta^k(T \viiva S)\bigr\}\prod_{i = 0}^k \Biggl(\sum_{r \in \Z} e^{-\theta r} \rho(t_{i+1} - t_i,y_{i+1} - y_i)\Biggr)^2  \Biggr] ,
\end{aligned}
\ee
for an appropriate constant $C$. Note that in the first term in \eqref{gn}, we have replaced the indicator $\bigl\{[0,1]_{X_N,Y_N}^k \times \Delta^k(T_N \viiva S_N)\bigr\}$ with $\bigl\{\R_{X_N,Y_N}^k \times \Delta^k(T_N \viiva S_N)\bigr\}$  at the cost of an inequality. This will make it possible to explicitly compute the integral. 
Since the term in the second line of \eqref{gn} does not depend on $N$ and is integrable (which follows by Lemma \ref{lem:int_sum_fin}), it suffices to show that the term on the first line converges pointwise to some function, and that the integrals converge as well. 
Since the sum in \eqref{gn} is finite, Lemma \ref{lem:pNto_p}  shows that we have the Lebesgue a.e.  convergence of \eqref{gn} to the function $g$ defined as
\be \label{pnpt}
\begin{aligned}
&C^k \Biggl[\sum_{\substack{j_0,\ldots,j_k \in \Z, j_{k+1} = 0 \\ j_{i+1} - j_i \in \lzb -3,1 \rzb,\, 0 \le i \le k}} \ind\bigl\{\R_{X,Y}^k \times \Delta^k(T \viiva S)\bigr\}\prod_{i = 0}^k \rho^2(t_{i+1} - t_i,y_{i+1} + j_{i+1} - y_i - j_i), \\
&\qquad\qquad\qquad\qquad\qquad\qquad+ \ind\bigl\{[0,1]_{X,Y}^k \times \Delta^k(T \viiva S)\bigr\}\prod_{i = 0}^k \Biggl(\sum_{r \in \Z} e^{-\theta r} \rho(t_{i+1} - t_i,y_{i+1} - y_i)\Biggr)^2  \Biggr] ,
\end{aligned}
\ee
where $C$ in \eqref{gn} and \eqref{pnpt} match. It remains to show that $\int g_n\to \int g < \infty$. By \eqref{i1} of Lemma \ref{lem:Djk_integral}, 
  \[
   \begin{aligned}
   &\int_{\R^k_{X_N,Y_N}} \int_{\Delta^k(T_N \viiva S_N)} \sum_{\substack{j_0,\ldots,j_k \in \Z, j_{k+1} = 0 \\ j_{i+1} - j_i \in \lzb -3,1 \rzb,\, 0 \le i \le k}}\prod_{i = 0}^k p_{N}^2\big(t_{i+1},y_{i+1} + j_{i+1} \viiva t_i,y_i + j_i\big)  \prod_{i = 1}^k dt_i \,dy_i \\
   &\qquad\qquad\qquad =  \sum_{j \in \lzb -(k+1),3(k+1) \rzb 
 }     D_{j,k} \int_{\R^k_{Y_N' - j}} \int_{\Delta^k(T_N')} \prod_{i = 0}^k p_{N}^2\big(t_{i+1},y_{i+1} \viiva t_i,y_i\big) \prod_{i = 1}^k dt_i \,dy_i.
   \end{aligned}
   \]
where $D_{j,k}$ is defined in \eqref{Djk}. By Lemma \ref{lem:pNint_conv}, this converges, as $N \to \infty$, to 
\[
\sum_{j \in \lzb -(k+1),3(k+1) \rzb 
 }     D_{j,k} \int_{\R^k_{Y - X -j}} \int_{\Delta^k(T-S)} \prod_{i = 0}^k \rho^2(t_{i+1} - t_i,y_{i+1} - y_i) \prod_{i = 1}^k dt_i \,dy_i,
\]
and by \eqref{i2} in Lemma \ref{lem:Djk_integral},  this is equal to 
\[
\int_{\R^k_{X,Y}} \int_{\Delta^k(T \viiva S)} \sum_{\substack{j_0,\ldots,j_k \in \Z, j_{k+1} = 0 \\ j_{i+1} - j_i \in \lzb -3,1 \rzb}} \prod_{i = 0}^k \rho^2(t_{i+1} - t_i,y_{i+1} - y_i + j_{i+1})  \prod_{i = 1}^k dt_i \,dy_i.
\] 
By Lemma \ref{lem:rho_int_comp}, this integral is finite.
We have thus shown convergence of the integrals of the functions in \eqref{gn} to the integral of \eqref{pnpt}, completing the proof.
\end{proof}

\begin{proof}[Proof of Theorem \ref{L2_conv_main_theorem}]
Recall that the statement to be proved is \begin{align} \label{L2dyadic_conv}
\lim_{N \to \infty}\Ee\Bigl[\bigl(\scOCYp(T_N,Y_N \viiva S_N,X_N;\theta_N) -  Z_\beta^{\text{per}}(T,Y \viiva S,X;\theta)\bigr)^2\Bigr] = 0.
\end{align}
Using the chaos expansions from \eqref{Jk_series} and Proposition \ref{prop:OCY_Chaos}, for any integer $K \ge 0$,
\be \label{eq:Chaos_triang}
\begin{aligned}
&\quad \, \|\scOCYp(T_N,Y_N \viiva S_N,X_N;\theta_N) - Z_\beta^{\text{per}}(T,Y \viiva S,X;\theta)\|_{L^2(\Pp)} \\
  &\le \sum_{k = 0}^K \beta^{k} \|J_k^{N}(T_N,Y_N\viiva S_N,X_N;\theta_N) - J_k(T,Y \viiva S,X;\theta) \|_{L^2(\Pp)} \\
  &\qquad\qquad +  \Biggl\| \sum_{k = K+1}^\infty \beta^k J_k^{N}(T_N,Y_N \viiva S_N,X_N;\theta_N) 
 \Biggr \|_{L^2(\Pp)} + \Biggl\|\sum_{k = K+1}^\infty \beta^k J_k(T,Y \viiva S,X;\theta) 
  \Biggr\|_{L^2(\Pp)}.
\end{aligned}
\ee
There are three terms on the right-hand side of \eqref{eq:Chaos_triang}, and we analyze them separately below.

By the It\^o isometry (Lemma \ref{lem:Ito_isometry}), 
\begin{align*}
     &\quad \, \Biggl\| \sum_{k = K+1}^\infty \beta^k J_k^{N}(T_N,Y_N \viiva S_N,X_N;\theta_N) 
 \Biggr \|_{L^2(\Pp)}^2  = \sum_{k = K + 1}^\infty \beta^{2k} \|J_k^{N}(T_N,Y_N \viiva S_N,X_N;\theta_N)\|_{L^2(\Pp)}^2 \nonumber \\
 &=\sum_{k = K+1}^\infty \beta^{2k} \int_{[0,1]^k_{X_N,Y_N}}  \int_{\Delta^k(T_N \viiva S_N)} \Bigl(\prod_{i = 0}^k \sum_{r \in \Z} e^{-\theta_N r} p_{N}(t_{i+1},y_{i+1} + r, t_i,y_i) \Bigr)^2 \prod_{i = 1}^k dt_i\, dy_i \,\, \le \sum_{k = K+1}^\infty C e^{-ck},
\end{align*}
for constants $C,c > 0$, where the last inequality is from Lemma \ref{lem:JkN_unif_bd}. Since $\sum_{k = 0}^\infty \beta^k J_k(T,Y \viiva S,X;\theta)$ is convergent in $L^2(\Pp)$ (Proposition \ref{prop:Z_conv}), for any $\ve > 0$ we may choose $K$ large enough so that, for all $N \in \N$,
\be \label{WNbd1}
\Biggl\| \sum_{k = K+1}^\infty \beta^k J_k^{N}(T_N,Y_N \viiva S_N,X_N;\theta_N) 
 \Biggr \|_{L^2(\Pp)} \le \f{\ve}{3},\qquad\text{and}\qquad \Biggl\|\sum_{k = K+1}^\infty \beta^k J_k(T,Y \viiva S,X;\theta) 
  \Biggr\|_{L^2(\Pp)} \le \f{\ve}{3}.
\ee
 For this choice of $K$, Proposition \ref{prop:ChaosL2conv} implies that for all sufficiently large $N$,
\[
\beta^k \|J_k^{N}(T_N,Y_N\viiva S_N,X_N;\theta_N) - J_k(T,Y \viiva S,X;\theta) \|_{L^2(\Pp)} \le \f{\ve}{3(K+1)}, \text{ for } 0 \le k \le K+1.
\]
Substituting this bound along with \eqref{WNbd1} into \eqref{eq:Chaos_triang} completes the proof.  
\end{proof}

\subsection{Solving the SHE and convergence of the model with initial data}\label{sec.convergeinitial}

For general initial data $F:[0,1] \to \R_{>0}$, we extend $F$ to a function $\R \to \R_{>0}$ by the condition $\f{F(x+1)}{F(x)} = \f{F(1)}{F(0)}$ for all $x \in \R$. For $t > s$ and $y \in \R$, define 
\[
Z_\beta(t,y \viiva s, F) = \int_\R Z_\beta(t,y \viiva s,x)F(x)\,dx.
\]
We now state the following key results on the solution to the stochastic heat equation. 
\begin{proposition} \label{prop:solve_SHE}
For some $\theta > 0$, let $F:[0,1] \to \R_{>0}$ be a continuous function satisfying $\f{F(1)}{F(0)} = e^\theta$. Then, with probability one:
\begin{enumerate} [label=\textup{(\roman*)}]
\item \label{itm:Z_conv} For all real $t > s$ and $y \in \R$,
\[
Z_\beta(t,y \viiva s,F) = \int_0^1 Z_\beta^{\text{per}}(t,y \viiva s,x;\theta)F(x)\,dx.
\]
\item \label{itm:SHE_soln} For all real $t > s$ and $y \in \R$, $Z_\beta(t,y \viiva s,F)\in (0,\infty)$ and $(t,y) \mapsto Z_\beta(t,y \viiva s,F)$, is the unique mild solution to \eqref{eq:SHE} with initial data $F$ started at time $s$. 
\item \label{itm:per_pres}  For all real $t > s$ and $y \in \R$, $\f{Z_\beta(t,y+1 \viiva s,F) }{Z_\beta(t,y \viiva s,F)} = e^\theta$.
\end{enumerate}
\end{proposition}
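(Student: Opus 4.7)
I plan to prove the three items in order, treating (i) and (iii) as relatively direct consequences of periodicity and of the chaos series, and reserving the technical work for (ii). For (i), I first note that the right-hand side $\int_0^1 Z_\beta^{\text{per}}(t,y\viiva s,x;\theta)F(x)\,dx$ is well-defined in $L^2(\Pp)$ for each fixed $(t,y)$, because $\sup_{x\in[0,1]}\|Z_\beta^{\text{per}}(t,y\viiva s,x;\theta)\|_{L^2(\Pp)}<\infty$ by Proposition \ref{prop:Z_conv}\ref{itm:s2} and $F$ is continuous (hence bounded) on $[0,1]$. I will take this as the definition of $Z_\beta(t,y\viiva s,F)$ and reconcile it with the formal expression $\int_\R Z_\beta(t,y\viiva s,x)F(x)\,dx$ by splitting $\R=\bigsqcup_{j\in\Z}[j,j+1)$ and using $F(x+j)=e^{\theta j}F(x)$ to write
\[
\int_\R Z_\beta(t,y\viiva s,x)F(x)\,dx = \sum_{j\in\Z}e^{\theta j}\int_0^1 Z_\beta(t,y\viiva s,x+j)F(x)\,dx = \int_0^1 Z_\beta^{\text{per}}(t,y\viiva s,x;\theta)F(x)\,dx,
\]
where the interchange is justified by Fubini and the $L^2(\Pp)$ convergence of the periodized sum uniformly in $x\in[0,1]$, as a consequence of Lemmas \ref{lem:rhobd} and \ref{lem:int_sum_fin}.

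For (iii), I will use the spatial translation invariance of the periodic noise $\xi$. Since any $f\in L^{2,\text{per}}(\R\times\R)$ satisfies $f(\cdot,\cdot-1)^{\text{per}}=f(\cdot,\cdot)^{\text{per}}$, the defining relation $\xi(f)=\xi_\T(f^{\text{per}})$ gives the pathwise (not merely distributional) equality $\xi\bigl(f(\cdot,\cdot-1)\bigr)=\xi(f)$. Applied to each term of the chaos series \eqref{eq:Z_b_series}, this yields $Z_\beta(t,y+1\viiva s,x+1)=Z_\beta(t,y\viiva s,x)$ almost surely for each quadruple $(t,y,s,x)$. Reindexing $j\mapsto j+1$ in the definition \eqref{eq:Zper_def} then gives $Z_\beta^{\text{per}}(t,y+1\viiva s,x;\theta)=e^\theta Z_\beta^{\text{per}}(t,y\viiva s,x;\theta)$, and (iii) follows from (i). The strengthening from "for each $(t,y)$" to "a.s., for all $(t,y)$" will be provided by the joint continuity established in the course of (ii).

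For (ii), I proceed in four sub-steps. \emph{Finiteness in $L^2(\Pp)$} follows from the uniform chaos estimates of Lemma \ref{lem:int_sum_fin} combined with continuity of $F$. \emph{Mild-solution property}: differentiating the chaos expansion \eqref{Jk_series} term by term (or equivalently rewriting it as a Picard iteration) produces the Duhamel identity
\[
Z_\beta(t,y\viiva s,F)=\int_0^1\rho^{\text{per}}_{t-s}(y,x)F(x)\,dx+\beta\int_s^t\!\!\int_0^1\rho^{\text{per}}_{t-u}(y,z)\,Z_\beta(u,z\viiva s,F)\,\xi_\T(du,dz),
\]
where $\rho^{\text{per}}_{\tau}(y,x)=\sum_{r\in\Z}e^{-\theta r}\rho(\tau,y-x+r)$; this is the classical Picard manipulation of the Bertini-Cancrini construction and is valid here once the $L^2$ chaos bounds are in hand. \emph{Uniqueness} among $L^2$ mild solutions follows from a Gronwall estimate on the second moment of the difference of two such solutions. \emph{Joint continuity} in $(t,y)$ follows from Kolmogorov continuity based on the moment bound $\|Z_\beta^{\text{per}}(t,y\viiva s,x;\theta)-Z_\beta^{\text{per}}(t',y'\viiva s,x;\theta)\|_{L^2}\le C(|t-t'|^{1/4}+|y-y'|^{1/2})$, which is obtained term by term from the chaos series using smoothness of the kernels. \emph{Strict positivity}: since $F>0$ on $[0,1]$ is continuous, it suffices to show that $(t,y,x)\mapsto Z_\beta^{\text{per}}(t,y\viiva s,x;\theta)$ is a.s. strictly positive. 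I will approximate $Z_\beta^{\text{per}}$ by the manifestly positive semi-discrete partition function $\scOCYp$ of Section \ref{sec:OCY} (Theorem \ref{L2_conv_main_theorem} gives the $L^2$ limit), which delivers nonnegativity; strict positivity then follows by adapting Mueller's support theorem for the SHE to the torus, via a standard tail-estimate argument applied to the mild-form equation.

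The main obstacle will be the strict-positivity claim together with its uniform-in-$(t,y)$ quantifier. In the full-line case Mueller's positivity theorem is classical but non-trivial, relying on support theorems for SPDEs; on the torus the argument is parallel but requires careful adaptation to the periodic kernel and to the slope-twisted initial data. Everything else — the chaos computation in (i), the translation-invariance argument in (iii), and the mild-solution/uniqueness/continuity package in (ii) — is essentially a periodic rephrasing of well-established machinery, with all the needed bounds already proved in Section \ref{sec.l2converge}.
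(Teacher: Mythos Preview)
Your proposal is correct and follows essentially the same route as the paper: (i) via splitting $\R$ into unit intervals and interchanging sum and integral (the paper uses Tonelli with nonnegativity of $Z_\beta$ rather than $L^2$ bounds, but both work), (iii) via the pathwise identity $Z_\beta(t,y+1\viiva s,x+1)=Z_\beta(t,y\viiva s,x)$ from periodicity of $\xi$, and (ii) via the mild-formulation/Picard package with positivity obtained from Mueller-type negative moment bounds (the paper additionally remarks that a shear transform reduces the positivity question to the $\theta=0$ case before invoking \cite{Mueller91} or \cite{HL22}).
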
 
\begin{proof}
\medskip \noindent \textbf{Item \ref{itm:Z_conv}:} 
Note that $\f{F(x+j)}{F(x)} = e^{\theta j}$ for $x \in \R$ and $j \in \Z$. Then, we may write 
\begin{align*}
Z_\beta(t,y \viiva s, F) &= \sum_{j \in \Z} \int_{j}^{j+1} Z_\beta(t,y \viiva s,x) F(x) \,dx \\
&= \sum_{j \in \Z} \int_0^1 Z_\beta(t,y \viiva s,x+j) F(x+j)\,dx   = \int_0^1 \Bigl(\sum_{j \in \Z} e^{\theta j} Z_\beta(t,y \viiva s,x+j)\Bigr) F(x)\,dx \\
&= \int_0^1 \OCYp_\beta(t,y \viiva s,x)F(x)\,dx,
\end{align*}
where the last line follows by definition of $\OCYp_\beta$ in \eqref{eq:Zper_def}. The interchange of sum and integral is justified by Tonelli's theorem because $Z_\beta(t,y\viiva s,x)$ and $F(x)$ are nonnegative. The nonnegativity of $Z_\beta$  can be seen, for example, by adapting the proof of Theorem \ref{L2_conv_main_theorem} to realize $Z_\beta(t,y \viiva s,x)$ is a scaling limit of $\OCY_\beta(t,y \viiva s,x)$ which is non-negative. 

\medskip \noindent \textbf{Item \ref{itm:SHE_soln}}: 
By the definition of $Z_\beta(t,y\viiva s,x)$ through the chaos expansion, see the beginning of Section~\ref{sec.l2converge}, it is straightforward to check that $Z_\beta(t,y\viiva s,F)$ satisfies the mild formulation 
\[
Z_\beta(t,y\viiva s,F)=\int_{\R} \rho(t-s,y-x)F(x)dx+\beta\int_s^t\int_{\R} \rho(t-u,y-x)Z_\beta(u,x\viiva s,F) \xi(du,dx).
\]
By a standard argument which  can be found e.g. in \cite[Chapter 3]{walsh1986introduction}, the mild solution is unique. For the proof of the strict positivity, we need the negative moment estimates combined with a standard continuity argument. For the negative moment bound, one can either follow the argument in \cite{Mueller91} or \cite[Section 4]{HL22}. Note that in either case we can  first perform a shear transformation as in the proof of Lemma~\ref{l.equallaw} below to reduce the problem to the case of $\theta=0$.

\medskip \noindent \textbf{Item \ref{itm:per_pres}:} By the periodicity of $\xi$, we have that $Z_\beta(t,y + n \viiva s,x+n) = Z_\beta(t,y \viiva s,x)$ for $n \in \Z$. Then,
\begin{align*}
\f{Z_\beta(t,y + 1 \viiva s,F)}{Z_\beta(t,y \viiva s,F)} &= \f{\int_\R Z_\beta(t,y + 1 \viiva s,x) F(x)\,dx }{\int_\R Z_\beta(t,y \viiva s,x)F(x)\,dx}  = \f{\int_\R Z_\beta(t,y \viiva s,x -1) F(x)\,dx }{\int_\R Z_\beta(t,y \viiva s,x)F(x)\,dx} \\
&= \f{\int_\R Z_\beta(t,y \viiva s,x) F(x + 1)\,dx }{\int_\R Z_\beta(t,y \viiva s,x)F(x)\,dx} 
= \f{\int_\R Z_\beta(t,y \viiva s,x) e^\theta F(x)\,dx }{\int_\R Z_\beta(t,y \viiva s,x)F(x)\,dx} = e^\theta. \qedhere
\end{align*}
\end{proof}

\begin{theorem} \label{thm:SHE_convergence}
Let $\beta > 0$ and $\theta \in \R$. Let $\theta_N$ be a sequence with $\theta_N \to \theta$, and let $\beta_N = \beta N^{-1/2}$.  Let $F:[0,1] \to \R_{>0}$ be a random continuous function satisfying $F(0) = 1$ and $F(1) = e^\theta$ almost surely, and let $F^N:\{0,\ldots,N\} \to \R_{>0}$ be a sequence of random functions satisfying $F^N(0) = 1$ and $F^N(N) = e^{\theta_N}$ almost surely.
Assume that $(F^N,F)$ is independent of $\{Z_{N,\beta}^{\text{per}}(t,y \viiva 0,x;\theta_N), Z_\beta^{\text{per}}(t,y \viiva 0,x;\theta): t >0,x,y \in \R\}$ for $N \ge 1$, and assume further that 
\be \label{eq:FntoF}
\lim_{N \to \infty} \sup_{x \in [0,1]} \Ee\Bigl[\bigl|F^N(\lfloor xN \rfloor) - F(x)\bigr|\Bigr]  = 0, \qquad \text{and}\qquad \sup_{x \in [0,1]} \Ee\bigl[F(x)\bigr] < \infty.
\ee
Then, for  any sequences $T_N \to T > 0$ and $Y_N \to Y \in [0,1]$, 
\begin{align*}
    \lim_{N \to \infty} \Ee \Bigl[ \Bigl|e^{-T_N N - \f{\beta_{N}^2}{2}T_N N^2 
 }\OCY_{\beta_{N}}(T_N N^2, \lfloor T_N N^2 + Y_N N \rfloor \viiva F^N) - Z_\beta(T,Y \viiva F)\Bigr|  \Bigr] = 0.
\end{align*}
\end{theorem}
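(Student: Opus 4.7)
The plan is to reduce everything to Theorem \ref{L2_conv_main_theorem} together with the identity \eqref{per_conv}, and then to pass to the limit using a Riemann-sum/dominated-convergence argument in the integration variable for the initial data. The factor in the theorem should be read as $e^{-T_N N^2 - \f{\beta_N^2}{2}T_N N^2}$ (matching the scaling in \eqref{ZNdef}). Using \eqref{per_conv} and \eqref{ZNdef}, and writing $n_N := \lfloor T_N N^2 + Y_N N \rfloor$, we have the clean identity
\[
e^{-T_N N^2 - \tfrac{\beta_N^2}{2} T_N N^2}\OCY_{\beta_N}(T_N N^2, n_N \viiva F^N)
= \sum_{m=0}^{N-1} \f{F^N(m)}{N}\,\scOCYp\!\Bigl(T_N, Y_N \bviiva 0,\f{m}{N};\theta_N\Bigr),
\]
and by Proposition \ref{prop:solve_SHE}\ref{itm:Z_conv},
\[
Z_\beta(T,Y \viiva F) = \int_0^1 F(x)\,Z_\beta^{\text{per}}(T,Y\viiva 0,x;\theta)\,dx.
\]
Rewriting the Riemann sum as an integral $\int_0^1 F^N(\lfloor xN\rfloor)\,\scOCYp(T_N,Y_N\viiva 0,\lfloor xN\rfloor/N;\theta_N)\,dx$ reduces the problem to showing convergence in $L^1(\Pp)$ of the integrand, integrated in $x \in [0,1]$.

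Next, I would bound the $L^1(\Pp)$ difference by the triangle inequality, splitting it into a ``change of initial data'' piece and a ``change of partition function'' piece. Using the independence of $(F^N,F)$ from the noise, the target quantity is controlled by
\[
\int_0^1 \Ee\bigl[|F^N(\lfloor xN\rfloor) - F(x)|\bigr]\cdot \Ee\bigl[|\scOCYp(T_N,Y_N\viiva 0,\lfloor xN\rfloor/N;\theta_N)|\bigr]\,dx
\]
\[
+ \int_0^1 \Ee[F(x)]\cdot\Ee\bigl[|\scOCYp(T_N,Y_N\viiva 0,\lfloor xN\rfloor/N;\theta_N) - Z_\beta^{\text{per}}(T,Y\viiva 0,x;\theta)|\bigr]\,dx.
\]
For each fixed $x \in [0,1]$ the sequence $\lfloor xN\rfloor/N \to x$, so Theorem \ref{L2_conv_main_theorem} gives pointwise convergence to $0$ of the second integrand, while $\Ee[|\scOCYp|]$ is uniformly bounded (cf.\ below). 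The second integral then vanishes by dominated convergence, and the first vanishes using hypothesis \eqref{eq:FntoF} combined with the uniform $L^1(\Pp)$ bound on $\scOCYp$ and hypothesis $\sup_x \Ee[F(x)]<\infty$.

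The main technical point, and the only obstacle requiring care, is to establish the uniform-in-$x$ (and uniform-in-$N$) second moment bound
\[
\sup_{N \ge N_0,\, x \in [0,1]}\, \Ee\Bigl[\scOCYp(T_N,Y_N\viiva 0,\lfloor xN\rfloor/N;\theta_N)^2\Bigr] < \infty,
\]
since the convergence in Theorem \ref{L2_conv_main_theorem} is only stated for fixed converging sequences of points. I would extract this from the chaos expansion in Proposition \ref{prop:OCY_Chaos}\ref{itm:Ns2} combined with Lemma \ref{lem:JkN_unif_bd}: inspecting its proof, the constants $C,c$ depend on the sequences $(T_N,S_N,X_N,Y_N,\theta_N)$ only through their limits and the bound $\sup_N|\theta_N|$, so one in fact obtains $\|J^N_k\|_{L^2(\Pp)}^2 \le C e^{-ck}$ uniformly in $x \in [0,1]$ with $X_N=\lfloor xN\rfloor/N$ (since these sequences lie in a compact set). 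Summing the $L^2$-orthogonal chaos series via the It\^o isometry (Lemma \ref{lem:Ito_isometry}) gives the uniform second moment bound and, in particular, uniform integrability of $|\scOCYp - Z_\beta^{\text{per}}|$ in $(N,x)$. With this uniformity in hand, Cauchy–Schwarz and dominated convergence close the argument.
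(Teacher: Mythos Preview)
Your proposal is correct and follows essentially the same route as the paper: rewrite both sides as integrals over $[0,1]$ via \eqref{per_conv} and Proposition~\ref{prop:solve_SHE}\ref{itm:Z_conv}, split with the triangle inequality and independence, apply Theorem~\ref{L2_conv_main_theorem} pointwise in $x$, and finish by dominated convergence. The one notable difference is how the dominating bound is obtained: the paper simply computes $\Ee[\scOCYp]$ and $\Ee[Z_\beta^{\text{per}}]$ as the $k=0$ chaos terms (periodized heat kernels) and bounds them uniformly in $x$ via Lemmas~\ref{lem:pN_per_bd} and~\ref{lem:pn_ubd}, whereas you go through the full $L^2$ chaos estimate and Lemma~\ref{lem:JkN_unif_bd}. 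Your route works (the constants in that lemma can indeed be made uniform for $X_N=\lfloor xN\rfloor/N$, $x\in[0,1]$, by the compactness you note and Lemma~\ref{lem:pn_ubd}), but the paper's first-moment argument is shorter and avoids the need to revisit the proof of Lemma~\ref{lem:JkN_unif_bd}.
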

\begin{proof}
 By \eqref{per_conv}, for a function $G:\{0,\ldots,N\} \to \R$, with $G(1) = 1$ and $G(N) = \theta'$, 
 we have
\begin{align*}
&\quad \,e^{-t N - \f{\beta_{N}^2}{2}t N^2 
 }\OCY_{\beta_{N}}(tN^2, \lfloor tN^2 + y N \rfloor \viiva G) \\
 &=e^{-t N^2 - \f{\beta_{N}^2}{2}t N^2 
 } \sum_{m  = 0}^{N-1} \OCYp_{\beta_N}(tN^2, \lfloor tN^2 + yN \rfloor \viiva 0,m;\theta') G(m) \\
 &= e^{-t N^2 - \f{\beta_{N}^2}{2}t N^2 
 } \int_0^N \OCYp_{\beta_N}(tN^2, \lfloor tN^2 + yN \rfloor \viiva 0,\lfloor u \rfloor;\theta') G(\lfloor u \rfloor)\,du \\
 &= N e^{-t N^2 - \f{\beta_{N}^2}{2}t N^2}\int_0^1 \scOCYp(tN^2, \lfloor tN^2 + yN \rfloor \viiva 0,\lfloor xN \rfloor;\theta') G(\lfloor xN \rfloor)\,du \\
 &= \int_0^1 \scOCYp(t,y \viiva 0,x;\theta') G(\lfloor xN \rfloor)\,dx.
\end{align*}
Combined with Proposition \ref{prop:Z_conv}\ref{itm:Z_conv}, we have
\begin{align}
&\quad \Ee\bigg[\Big|e^{-T_N N - \f{\beta_{N}^2}{2}T_N N^2 
 }\OCY_{\beta_{N}}(T_N N^2, \lfloor T_N N^2 + Y_N N \rfloor \viiva F^N) - Z_\beta(T,Y \viiva F)\Big|\bigg] \nonumber  \\
 &=\Ee \bigg[\Big| \int_0^1 \Bigl(\scOCYp(T_N,Y_N \viiva 0,x;\theta_N) F^N(\lfloor x N \rfloor) - Z_\beta^{\text{per}}(T,Y \viiva 0,x;\theta) F(x)\Bigr)\,dx\Big|\bigg] \nonumber  \\
  &\le \int_0^1 \Ee\bigg[ \Big| \scOCYp(T_N,Y_N \viiva 0,x;\theta_N) - Z_\beta^{\text{per}}(T,Y \viiva 0,x;\theta)    \Big|\bigg] \Ee[F^N(\lfloor xN\rfloor)]\,dx \label{int1}\\
  &\qquad\qquad+ \int_0^1 \Ee\big[Z_\beta^{\text{per}}(T,Y \viiva 0,x;\theta)\big] \Ee\big[|F^N(\lfloor xN \rfloor) - F(x)|\big]\,dx, \label{int2}
\end{align}
where we used the triangle inequality and independence in the last step. We argue that the integrals in \eqref{int1} and \eqref{int2} converge to $0$. By the assumption \eqref{eq:FntoF}, we have $\sup_{N \ge 1, x \in [0,1]} \Ee[F^N(\lfloor x N \rfloor)] < \infty$, and the integral in \eqref{int1} is bounded above by 
\[
C \int_0^1 \Ee\bigg[ \Big| \scOCYp(T_N,Y_N \viiva 0,x;\theta_N) - Z_\beta^{\text{per}}(T,Y \viiva 0,x;\theta)    \Big|\bigg]\,dx
\]
for a constant $C$. By Theorem \ref{L2_conv_main_theorem}, $\Ee\bigg[ \Big| \scOCYp(T_N,Y_N \viiva 0,x;\theta_N) - Z_\beta^{\text{per}}(T,Y \viiva 0,x;\theta)    \Big|\bigg] \to 0$ for each $x \in [0,1]$ (bounding the $L^1$ norm by the $L^2$ norm). Furthermore, the integrand is bounded above by
\begin{align*}
&\quad \, \Ee\big[\scOCYp(T_N,Y_N \viiva 0,x;\theta_N)\big] + \Ee\big[Z_\beta^{\text{per}}(T,Y \viiva 0,x;\theta)\big] \\
&= \sum_{r \in \Z} e^{-\theta_N r} p_{N}(T_N,Y_N + r \viiva 0,x) + \sum_{r \in \Z} e^{-\theta_N r} \rho(T,Y + r - x),
\end{align*}
where the computation of the expectation is by the chaos series representations in  Propositions \ref{prop:Z_conv} and \ref{prop:OCY_Chaos} (noting that the $k = 0$ term in the chaos expansions are the only ones with nonzero expectation by Lemma \ref{lem:Ito_isometry}). The second sum is independent of $N$, and by \eqref{rhobd1} from Lemma \ref{lem:rhobd}, it is bounded for $x \in [0,1]$. Using Lemma \ref{lem:pN_per_bd}, we also get the bound 
\[
\sum_{r \in \Z} e^{-\theta_N r} p_N(T_N,Y_N + r \viiva 0,x) \le C \max_{j \in \lzb -3,1 \rzb} p_N(T_N,Y_N + j \viiva 0,x),
\]
for a constant $C > 0$. As $T_N$ and $T$ are bounded away from $0$ and $x,Y \in [0,1]$, Lemma \ref{lem:pn_ubd} implies that there exists a constant $C$ so that $p_N(T_N,Y_N \viiva 0,x) \le C$ for all $x \in [0,1]$ and sufficiently large $N$.  The bounded convergence theorem completes the proof that \eqref{int1} converges to $0$. 

To handle \eqref{int2}, we have already seen that $\Ee\big[Z_\beta^{\text{per}}(T,Y \viiva 0,x;\theta)\big]$ is bounded by a constant for fixed $T > 0$ and $x,Y \in [0,1]$. Hence, we wish to show that 
\[
\lim_{N \to \infty} \int_0^1 \Ee\big[|F^N(\lfloor x N \rfloor) - F(x)|\big]\,dx = 0, 
\]
which follows by the assumption \eqref{eq:FntoF}.
\end{proof}

\section{Proof of invariance of the periodic KPZ horizon and its limits}\label{sec:proofmaintthm}

\subsection{Convergence of random walk bridges to sloped Brownian bridge}
Before getting to the proof of the main theorems, we prove an intermediate lemma that shows convergence of the measures $\mu_\beta^{N,(\theta_1,\ldots,\theta_k)}$ (recall Definition \ref{def:mu_meas}) to the limiting measures $\Pm_\beta^{(\theta_1,\ldots,\theta_k)}$ at the level of a coupling. This is Lemma \ref{lem:initial_data_conv}.
For this, we recall the definition of the map $\D^{N,k}$ in \eqref{DNk_intro}.

Before proving Lemma \ref{lem:initial_data_conv}, we give a lemma that allows us to rewrite the output of $\D^{N,k}$ as an expression involving multiple integrals. In the following, for a function $f:\{0,\ldots,N\} \to \R$ and $0 \le i,j \le N$, we define 
    \be \label{eq:f_bracket}
    f\ls i,j\rs   := f(j) - f(i) + \ind\{i > j\}\bigl(f(N) - f(0)).
    \ee
The notation $f[i,j]$ depends on the domain $\{0,\ldots,N\}$, but to avoid unnecessarily complicated notation, we do not specify the dependence on $N$, as the domain will always be clear from context. This is a discrete analogue of the notation $f(x,y) = f(y) - f(x) + \ind(x > y)(f(1) - f(0))$ for functions $f;[0,1] \to \R$.
\begin{lemma} \label{lem:GR_walk}
For $N,k \in \N$, and $(\mbf X_1,\ldots,\mbf X_k) \in (\R^{\Z_N})^k$, let $(\mbf U_1,\ldots,\mbf U_k) = \D^{N,k}(\mbf X_1,\ldots,\mbf X_k)$. For $m \in \{1,\ldots,k\}$ and $i \in \{0,\ldots,N\}$, define 
    \[
    f_m(i) = \begin{cases}
    0 &i = 0 \\
    X_{m,[1,i]} &1 \le i \le N.
    \end{cases}
    \]
    Recall $X_{m,[1,i]}=\sum_{\ell = 1}^i X_{m,\ell}$ (taken in cyclic order on $\Z_N$), and we have $X_{m,N} = X_{m,0}$ so that $f_m(N) = \sum_{i \in \Z_N} X_{m,i}$. 
    Define $g_m(i)$ as $f_m(i)$ with $U$ replacing $X$. 
    Then, for $2 \le m \le k$, $g_m(0) = 0$, and $x,y \in [0,1]$, 
    \be \label{eq:gr_fr}
    \begin{aligned}
    &\quad \, g_m \Bigl \ls \lfloor xN \rfloor, \lfloor yN  \rfloor \Bigr \rs \\
    &= f_m\Bigl \ls \lfloor xN \rfloor, \lfloor yN \rfloor \Bigr \rs + \log \Biggl(\f{\int_{[0,1]^{m-1}, x_0 = y} \prod_{r = 1}^{m-1} \,dx_r\, e^{f_m \ls\lfloor x_{r-1}N \rfloor ,\lfloor x_r N \rfloor\rs - f_r^N\ls \lfloor x_{r-1}N \rfloor ,\lfloor x_r N \rfloor \rs   
  }}{\int_{[0,1]^{m-1}, x_0 = x} \prod_{i = 1}^{r-1} \,dx_i\, e^{f_r^N \ls\lfloor x_{i-1}N \rfloor ,\lfloor x_i N \rfloor\rs - f_i^N\ls \lfloor x_{i-1}N \rfloor ,\lfloor x_i N \rfloor \rs   
  }}\Biggr).
    \end{aligned}
    \ee
\end{lemma}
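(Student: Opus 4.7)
The plan is to exploit the alternative representation for $D^{N,m}$ given in Lemma \ref{lem:Dnm_alt}, namely
\[
U_{m,i} \;=\; D^{N,m}_i(\mbf X_1,\ldots,\mbf X_m) \;=\; X_{m,i} + Q_i^{N,m} - Q_{i-1}^{N,m},
\]
where $Q_i^{N,m} = \log \sum_{j_1,\ldots,j_{m-1} \in \Z_N,\, j_0 = i} \prod_{r=1}^{m-1} e^{X_{m,(j_{r-1},j_r]}-X_{r,(j_{r-1},j_r]}}$. Telescoping the $U$'s from $1$ to $i$ immediately gives $g_m(i) = f_m(i) + Q_i^{N,m} - Q_0^{N,m}$ for every $0 \le i \le N$; in particular $g_m(0) = 0$, as asserted. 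By Lemma \ref{lem:Dsum_pres} we also have $g_m(N) = \vecsum(\mbf U_m) = \vecsum(\mbf X_m) = f_m(N)$. Plugging these into the definition \eqref{eq:f_bracket} of the bracket notation and noting that the two $Q_0^{N,m}$ offsets cancel shows, for any $i',j' \in \{0,\ldots,N-1\}$,
\[
g_m\ls i',j'\rs \;=\; f_m\ls i',j'\rs + Q_{j'}^{N,m} - Q_{i'}^{N,m}.
\]

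The second step is to identify the cyclic increment $X_{m,(i,j]}$ appearing inside $Q_i^{N,m}$ with the bracket $f_m\ls i,j\rs$. Direct inspection of the cyclic-sum convention on $\Z_N$ shows that, for all $i,j \in \Z_N$,
\[
X_{m,(i,j]} \;=\; f_m(j) - f_m(i) + \ind\{i>j\}\bigl(f_m(N) - f_m(0)\bigr) \;=\; f_m\ls i,j\rs,
\]
the indicator term accounting exactly for the wrap-around when $i>j$. Hence the summand in $Q_i^{N,m}$ can be rewritten as $\prod_{r=1}^{m-1} e^{f_m\ls j_{r-1},j_r\rs - f_r\ls j_{r-1},j_r\rs}$.

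The final ingredient is the elementary Riemann-sum identity $\sum_{j \in \Z_N} a_j = N \int_0^1 a_{\lfloor xN \rfloor}\,dx$, applied independently to each of the $m-1$ summation variables $j_1,\ldots,j_{m-1}$. This converts $Q_i^{N,m}$ into
\[
Q_i^{N,m} \;=\; (m-1)\log N \;+\; \log \!\int_{[0,1]^{m-1},\,\lfloor x_0 N\rfloor = i} \prod_{r=1}^{m-1} e^{f_m\ls\lfloor x_{r-1}N\rfloor,\lfloor x_r N\rfloor\rs - f_r\ls\lfloor x_{r-1}N\rfloor,\lfloor x_r N\rfloor\rs}\,dx_1\cdots dx_{m-1}.
\]
Setting $i = \lfloor yN\rfloor$ corresponds to imposing $x_0 = y$, since $\lfloor x_0 N\rfloor = \lfloor yN\rfloor$ whenever $x_0 = y \in [0,1)$. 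Subtracting the analogous expression with $i = \lfloor xN \rfloor$ (where we impose $x_0 = x$) cancels the deterministic offset $(m-1)\log N$ and produces precisely the logarithm of the ratio of integrals appearing on the right-hand side of \eqref{eq:gr_fr}. Combining this with the bracket identity $g_m\ls \lfloor xN\rfloor,\lfloor yN\rfloor\rs - f_m\ls \lfloor xN\rfloor,\lfloor yN\rfloor\rs = Q_{\lfloor yN\rfloor}^{N,m} - Q_{\lfloor xN\rfloor}^{N,m}$ derived above completes the proof.

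There is no serious obstacle here; the lemma is essentially a bookkeeping identity that reformulates the discrete polymer partition function defining $Q_i^{N,m}$ as a (discretized) integral, of the type appearing in the definition of $\cDm^m$ in \eqref{eq:alternate_rep}. The only minor point to handle is the measure-zero boundary case $x=1$ or $y=1$, where $\lfloor xN\rfloor$ or $\lfloor yN\rfloor$ equals $N$ rather than an element of $\Z_N$; since $Q^{N,m}$ is periodic in its index modulo $N$ this causes no difficulty, and the formula \eqref{eq:gr_fr} extends by convention across this null set.
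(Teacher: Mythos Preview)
Your proof is correct and follows essentially the same approach as the paper's: invoke Lemma \ref{lem:Dnm_alt}, telescope to obtain $g_m(i) = f_m(i) + Q_i^{N,m} - Q_0^{N,m}$, identify $X_{m,(i,j]} = f_m\ls i,j\rs$, and rewrite the cyclic sums defining $Q^{N,m}$ as Lebesgue integrals via $\sum_{j\in\Z_N} a_j = N\int_0^1 a_{\lfloor xN\rfloor}\,dx$. The paper's proof is a one-line reference to exactly these ingredients; you have simply made the bookkeeping explicit.
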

\begin{proof}
Since $f_m[i,j] = X_{m,(i,j]}$, the result follows from Lemma \ref{lem:Dnm_alt} and writing the sum as an integral. 
\end{proof}

\begin{lemma}[Coupling] \label{lem:initial_data_conv}
Let $(\theta_1,\ldots,\theta_k) \in \R^k$, $\beta > 0$, and consider any $N \in \N$. Define $\beta_N = \beta N^{-1/2}$, let $\gamma_N$ be chosen so that $\psi(\gamma_N) = \log \beta_N^{-2}$, and for $1 \le m \le k$, set $\theta_m^N =  \f{\theta_m}{\beta} \sqrt{N \psi_1(\gamma_N)}$.
Let $(\mbf U_1^N,\ldots,\mbf U_k^N) \sim \mu_{\beta_N}^{N,(\theta_1^N,\ldots,\theta_k^N)}$ and let $g_m^N(i)$, for $m\in \{1,\ldots, k\}$ and $i\in\{0,\ldots,N\}$, be defined as in Lemma \ref{lem:GR_walk} with $(\mbf U_1^N,\ldots,\mbf U_k^N)$ in place of $(\mbf U_1,\ldots,\mbf U_k)$.
Then for each $N\in \N$, there exists a probability space $(\Omega_N,\Ff_N,\Pp_N)$ on which $(g_1^N,\ldots,g_k^N)$ and $(g_1,\ldots,g_k) \sim \Pm_\beta^{(\theta_1^N,\ldots,\theta_k^N)}$ can be commonly defined such that for $1 \le m \le k$,
\begin{equation}\label{eq:couplinglimit}
    \lim_{N \to \infty}\Ee_N\bigg[\sup_{x \in [0,1]}\Big| e^{g_m^N(\lfloor x N \rfloor )}-e^{g_m(x)}\Big|\bigg] = 0.
\end{equation}
\end{lemma}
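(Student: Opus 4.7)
The plan has three main steps: a KMT-type coupling of the driving log-inverse-gamma random walk bridges to Brownian bridges; transfer of that coupling through the explicit polymer-type formulas of $\D^{N,k}$ (Lemma~\ref{lem:GR_walk}) and $\FcDm^k$ \eqref{eq:alternate_rep}; and an upgrade to $L^1$-convergence of exponentials via the marginal identity in Proposition~\ref{prop:disc_consis}\ref{itm:mu_consis}.

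With $(\mbf U_1^N,\ldots,\mbf U_k^N)=\D^{N,k}(\mbf X_1^N,\ldots,\mbf X_k^N)$ for $(\mbf X_1^N,\ldots,\mbf X_k^N)\sim \nu_{\beta_N}^{N,(\theta_1^N,\ldots,\theta_k^N)}$, let $f_m^N(i):=X^N_{m,[1,i]}$ be the partial-sum profile as in Lemma~\ref{lem:GR_walk}. Then $f_m^N$ is a random walk bridge of $N$ i.i.d.\ log-inverse-gamma increments (shape $\gamma_N$, scale $\beta_N^{-2}$) conditioned to terminate at $\theta_m^N$. The tuning $\psi(\gamma_N)=\log\beta_N^{-2}$ makes the increment mean zero, while $N\psi_1(\gamma_N)\to\beta^2$ and $\theta_m^N\to\theta_m$ ensure that $f_m^N(\lfloor \cdot N\rfloor)$ is poised to approximate a variance-$\beta^2$ Brownian bridge of slope $\theta_m^N$ (and, in the limit, $\B_{\beta,\theta_m}$). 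The KMT bridge coupling of Appendix~\ref{sec:appBB}, applied independently across $m$, provides on a single space $(\Omega_N,\Ff_N,\Pp_N)$ independent Brownian bridges $f_m\deq\B_{\beta,\theta_m^N}$ together with the $f_m^N$'s such that $\sup_{0\le i\le N}|f_m^N(i)-f_m(i/N)|\to 0$ in probability, with uniform-in-$N$ exponential moment bounds on $\sup_i f_m^N(i)$ and $-\inf_i f_m^N(i)$.

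Setting $(g_1,\ldots,g_k):=\FcDm^k(f_1,\ldots,f_k)\sim\Pm_\beta^{(\theta_1^N,\ldots,\theta_k^N)}$, the formulas for $g_m^N$ in Lemma~\ref{lem:GR_walk} and for $g_m$ in \eqref{eq:alternate_rep} are exact discrete and continuum analogues: each equals $f_m$ (evaluated at $x$ or $\lfloor xN\rfloor$) plus the logarithm of a ratio of $(m-1)$-fold integrals of $\exp\bigl(\sum_r(f_m-f_r)(\cdot,\cdot)\bigr)$, with the bracket $f\ls \lfloor xN\rfloor,\lfloor yN\rfloor\rs$ a Riemann-type approximation of $f(x,y)$, wrap-around term included. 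Uniform convergence $f_m^N(\lfloor\cdot N\rfloor)\to f_m(\cdot)$ from Step~1, a.s.\ continuity of $f_m$, positivity and uniform lower bounds on the denominator integrals (which follow from compactness of $[0,1]$ and continuity of the exponent), and the continuous-mapping theorem together give $\sup_{x\in[0,1]}|g_m^N(\lfloor xN\rfloor)-g_m(x)|\to 0$ in probability. The decisive input for promoting this to the stated $L^1$-limit of exponentials is Proposition~\ref{prop:disc_consis}\ref{itm:mu_consis}: marginally $\mbf U_m^N\sim\nu_{\beta_N}^{N,(\theta_m^N)}$, so $g_m^N\deq f_m^N$, and the exponential moment bounds on $\sup_i f_m^N(i)$ and $-\inf_i f_m^N(i)$ transfer verbatim to $g_m^N$. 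Combined with the analogous (automatic) moment bound for $g_m$, this yields uniform integrability of $\sup_x\max\bigl(e^{2g_m^N(\lfloor xN\rfloor)},e^{-2g_m^N(\lfloor xN\rfloor)}\bigr)$, and a Vitali-type argument converts convergence in probability into the claimed $L^1$-limit.

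The main obstacle is Step~1: the KMT coupling is needed in a setting where the step distribution depends on $N$ through the shape $\gamma_N\to\infty$ and the terminal value $\theta_m^N$ itself drifts with $N$. This requires a strong invariance principle for random walk bridges with varying step laws and terminal conditions, which is the purpose of Appendix~\ref{sec:appBB}. A secondary technical point is uniform (in $N$, $x$) positivity of the denominator integrals in Lemma~\ref{lem:GR_walk}, which should follow from the exponential lower tail control on the $f_m^N$ inherited from the coupling.
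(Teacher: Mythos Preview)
Your proposal is correct and follows the same two opening moves as the paper: a KMT-type coupling of the log-inverse-gamma bridges $f_m^N$ to Brownian bridges $f_m$ (exactly Lemma~\ref{lem:BB_coup}), followed by a comparison of the discrete formula in Lemma~\ref{lem:GR_walk} with the continuum formula \eqref{eq:alternate_rep}. The paper handles one detail you gloss over---the wrap-around discrepancy when $x_{r-1}>x_r$ but $\lfloor x_{r-1}N\rfloor=\lfloor x_rN\rfloor$ (see \eqref{eq:fcomp3})---but this set has Lebesgue measure $O(1/N)$ and is absorbed by the moment bounds, so your sketch is fine.

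The genuine difference is in the $L^1$ upgrade. The paper does \emph{not} invoke the marginal identity $g_m^N\deq f_m^N$. Instead it writes, via Cauchy--Schwarz,
\[
\Ee_N\bigl[\sup_x|e^{g_m^N}-e^{g_m}|\bigr]\le \Ee_N\bigl[\sup_x e^{2g_m}\bigr]^{1/2}\,\Ee_N\bigl[\sup_x|e^{g_m^N-g_m}-1|^2\bigr]^{1/2},
\]
bounds the first factor using the crude inequality $|g_m(x)|\le C_m\sum_{r\le m}\sup_x|f_r(x)|$ (equation \eqref{gm_bd}) together with Gaussian tails for $\sup|f_r|$, and drives the second factor to zero by an explicit decomposition of $g_m^N-g_m$ into pieces $A_m^N(x),B_m^N$ that are controlled directly by the KMT error. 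Your route---transfer the uniform exponential moments from $f_m^N$ to $g_m^N$ via Proposition~\ref{prop:disc_consis}\ref{itm:mu_consis} and then apply Vitali---is a clean shortcut that replaces the explicit $L^2$ computation by a soft uniform-integrability argument. The trade-off is that you lean on Proposition~\ref{prop:disc_consis}\ref{itm:mu_consis}, which itself rests on the periodic Burke property (Proposition~\ref{prop:Burke}) and the intertwining (Proposition~\ref{prop:full_intertwine}); the paper's argument is more self-contained at this stage. Both approaches require, and the paper proves, the uniform-in-$N$ exponential-moment control on $\sup_x|f_m^N(\lfloor xN\rfloor)-f_m(x)|$ encoded in \eqref{eq:fisup2}.
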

\begin{remark}
It is shown in the proof of Lemma \ref{lem:BB_coup} that $\theta_m^N \to \theta_m$ as $N \to \infty$. This particular choice of $\theta_m^N$ is made only for convenience in applying a result from \cite{Dmitrov-Wu-2021} (Theorem \ref{thm:KMTBB} herein). 
\end{remark}
\begin{proof}
The basic premise of this proof is that the laws $\mu_{\beta_N}^{N,(\theta_1^N,\ldots,\theta_k^N)}$ and $\Pm_\beta^{(\theta_1^N,\ldots,\theta_k^N)}$ are both the push forwards under the same map of certain tuples of conditioned random walks / Brownian bridges. We use the KMT coupling (Lemma \ref{lem:BB_coup}) to couple those underlying random processes and then utilize certain regularity estimates about the map to prove our lemma. 

Let $(\mbf U_1^N,\ldots,\mbf U_k^N) := \D^{N,k}(\mbf X_1,\ldots,\mbf X_k)$ where  $(\mbf X_1^N,\ldots,\mbf X_k^N) \sim \nu_{\beta_N}^{N,(\theta_1^N,\ldots,\theta_k^N)}$.
Let $f_m^N(i)$, for $m\in \{1,\ldots, k\}$ and $i\in\{0,\ldots,N\}$, be defined as in Lemma \ref{lem:GR_walk} with $(\mbf X_1^N,\ldots,\mbf X_k^N)$ in place of $(\mbf X_1,\ldots,\mbf X_k)$, and note that by the independence of the $\mbf X_m^N$ for $1\le m\le k$, the functions $(f^N_1,\ldots,f^N_k)$ are also independent. Let $(f_1,\ldots,f_k) \in C([0,1])^k$ be independent Brownian bridges such that for $1 \le m \le k$, $f_m \deq \B_{\beta,\theta_m}$.

By Lemma \ref{lem:BB_coup} and independence of the $f_m$ and $f^N_m$, for each $N\in \N$ there exists a probability space $(\Omega_N,\Ff_N,\Pp_N)$
on which $(f_1^N,\ldots, f_k^N)$ and $(f_1,\ldots,f_k)$ are commonly defined and on which for all $u \ge 0$,
  \be \label{eq:PN_dif}
  \Pp_N\big(\sup_{0 \le x \le 1} \big|f_m(x) - f_m^N(\lfloor xN \rfloor)\big| \ge u\bigg) \le C N^{\alpha' - \f{a u \sqrt N}{\log N}}
  \ee
  for constants $C,\alpha',a > 0$. Define $(g_1,\ldots,g_m) = \FcDm^k(f_1,\ldots,f_k)$ (recall \eqref{Psi_map}). By \eqref{eq:alternate_rep}, for $1 \le m \le k$,
\be \label{gm_exp}
g_m(x) = f_m(x) + \log \Biggl(\f{\int_{[0,1]^{m-1}, x_0 = x} \prod_{r = 1}^{m-1} \,dx_r\, e^{f_m(x_{r-1},x_r ) - f_r( x_{r-1}, x_r)   
  }}{\int_{[0,1]^{m-1}, x_0 = 0} \prod_{r = 1}^{m-1} \,dx_r\, e^{f_m(x_{r-1},x_r ) - f_r( x_{r-1}, x_r) 
  }}\Biggr),
\ee
where we recall the definition $f_r(x,y) = f_r(y) - f_r(x) + \ind\{x > y\} (f_r(1) - f_r(0))$. Compare this formula for $g_m$ in terms of $f_m$ to \eqref{eq:gr_fr}. By Cauchy-Schwartz, we can bound
\be \label{eq:Holder}
\Ee_N\bigg[\sup_{x \in [0,1]}\big|e^{g_m^N(\lfloor xN\rfloor )} - e^{g_m(x)}\big|\bigg] \le \Ee_N\bigg[\sup_{x \in [0,1]} e^{2g_m(x)}\bigg]^{1/2} \Ee_N\bigg[\sup_{x \in [0,1]}\big|e^{g_m^N(\lfloor xN\rfloor) - g_m(x)} - 1\big|^2\bigg]^{1/2}.
\ee
Next, note that $f_r(0) = 0$ and 
\be \label{fr_bound}
|f_r(x_{r-1},x_r)| = |f_r(x_{r-1}) - f_r(x_r) + \ind\{x > y\}f_r(1)| \le 3 \sup_{x \in [0,1]}|f_r(x)|.
\ee
The same bound holds with $f_m$ in place of $f_r$. Hence, from \eqref{gm_exp}, for $1 \le m \le k$, we have 
\be \label{gm_bd}
|g_m(x)| \le 7m \sup_{x \in [0,1]}|f_m(x)| +   6 \sum_{r = 1}^{m-1} \sup_{x \in [0,1]} |f_r(x)|, 
\ee
because \eqref{fr_bound} gives $1 + 6m \le 7m$ terms of $\sup_{x \in[0,1]} |f_m(x)|$ and $6$ terms of $\sup_{x \in [0,1]}|f_r(x)|$ for each $r$.

Since $f_m(x) \deq \B_{\beta,\theta_r}(x) \deq \beta \B_{1,0}(x) + \theta_m x$ for $1 \le r \le k$, the tail bounds  of Lemma \ref{lem:BB_max} guarantee  $\Ee_N[\sup_{x \in [0,1]} e^{2g_m(x)}]^{1/2} < \infty$. Consequently, in order to prove \eqref{eq:couplinglimit}, it suffices to show that 
\begin{equation}\label{eq:couplinglimit2}
\lim_{N\to\infty} \Ee_N\bigg[\sup_{x \in [0,1]}\big|e^{g_m^N(\lfloor xN\rfloor) - g_m(x)} - 1\big|^2\bigg]=0.
\end{equation}

By Lemma \ref{lem:GR_walk}, $g_m^N(\lfloor xN\rfloor ) - g_m(x) = f_m^N(\lfloor xN \rfloor) - f_m(x) + \log\big(A_m^N(x) + 1\big) + \log\big(B_m^N + 1\big)$,
where
\begin{align*}
&A_m^N(x) := \f{1}{\int_{[0,1]^{m-1}, x_0 = x} \prod_{r = 1}^{m-1} \,dx_r\, e^{f_m(x_{r-1},x_r ) - f_r( x_{r-1}, x_r) }} \\
  &\times \int_{[0,1]^{m-1}, x_0 = x} \Bigl(\prod_{r = 1}^{m-1} e^{f_m^N[\lfloor x_{r-1}N \rfloor ,\lfloor x_r N \rfloor] - f_r^N[\lfloor x_{r-1}N \rfloor ,\lfloor x_r N \rfloor]} -  \prod_{r = 1}^{m-1} e^{f_m(x_{r-1},x_r ) - f_r( x_{r-1}, x_r)  
  }\Bigr)\prod_{r = 1}^{m-1}\,dx_r, \\
  &B_m^N := \f{1}{\int_{[0,1]^{m-1}, x_0 = 0} \prod_{r = 1}^{m-1} \,dx_r \, e^{f_m^N[\lfloor x_{r-1}N \rfloor ,\lfloor x_r N \rfloor] - f_r^N[\lfloor x_{r-1}N \rfloor ,\lfloor x_r N \rfloor]   
  }} \\
  &\times \int_{[0,1]^{m-1}, x_0 = 0} \Bigl( \prod_{r = 1}^{m-1}  e^{f_m(x_{r-1},x_r ) - f_r( x_{r-1}, x_r)   
  } - \prod_{r = 1}^{m-1} e^{f_m^N[\lfloor x_{r-1}N \rfloor ,\lfloor x_r N \rfloor] - f_r^N[\lfloor x_{r-1}N \rfloor ,\lfloor x_r N \rfloor]   
  }\Bigr)\prod_{r = 1}^{m-1}\,dx_r,
\end{align*}
and we have used the shorthand notation $f[i,j]$ from \eqref{eq:f_bracket}.
Therefore,
\be \label{eq:egr_exp}
e^{g_m^N(\lfloor xN\rfloor ) - g_m(x)} - 1 
= e^{f_m^N(\lfloor xN \rfloor) -  f_m(x)} - 1 + e^{f_m^N(\lfloor xN \rfloor) -  f_m(x)}   
 \bigl(A_m^N(x) B_m^N + A_m^N(x) + B_m^N\bigr).
 \ee
From this and H\"older's inequality, to prove \eqref{eq:couplinglimit2} it suffices to show the following:
 \begin{enumerate} [label=\textup{(\roman*)}]
      \item \label{itm:frHolder} For each $b > 0$, $\lim_{N \to \infty} \Ee_N[\sup_{x \in [0,1]}|e^{f_m^N(\lfloor xN \rfloor) -  f_m(x)} - 1|^b] = 0$. In particular,  \\$\Ee_N[\sup_{x \in [0,1]}e^{b(f_m^N(\lfloor xN \rfloor) -  f_m(x))}]$ is bounded, uniformly in $N$. 
     \item \label{itm:frHolder2} For each $b > 0$, $\lim_{N \to \infty}\Ee_N[\sup_{x \in [0,1]}|A_m^N(x)|^b] = \lim_{N \to \infty} \Ee_N[|B_m^N|^b] = 0$. 
 \end{enumerate}
To accomplish this, we will compare $f_r^N\bigl[\lfloor wN \rfloor, \lfloor y N \rfloor\bigr]$ with $f_r (w,y)$ for $w,y \in [0,1]$ and $1 \le r \le k$. When $w \le y$, then also $\lfloor w N \rfloor \le \lfloor y N \rfloor$, and so
\be \label{eq:fcomp1}
f_r^N\bigl[\lfloor wN \rfloor, \lfloor y N \rfloor\bigr] - f_r(w,y) = f_r^N(\lfloor y N \rfloor) - f_r(y)  + f_r(w) - f_r^N(\lfloor w N \rfloor).
\ee
Next, if $\lfloor wN \rfloor > \lfloor y N \rfloor$, then also $w > y$, so 
\be \label{eq:fcomp2}
f_r^N\bigl[\lfloor wN \rfloor, \lfloor y N \rfloor\bigr] - f_r(w,y) = f_r^N(\lfloor y N \rfloor) - f_r(y)  + f_r(w) - f_r^N(\lfloor w N \rfloor) + f_r^N(N) - f_r(1).
\ee
In both of these cases, we may apply Lemma \ref{lem:BB_coup} to control the difference. However, in the case of $w > y$, but $\lfloor w N \rfloor = \lfloor y N \rfloor$, we have 
\be \label{eq:fcomp3}
f_r^N\bigl[\lfloor wN \rfloor, \lfloor y N \rfloor\bigr] - f_r(w,y) = f_r^N(\lfloor y N \rfloor) - f_r(y)  + f_r(w) - f_r^N(\lfloor wN \rfloor) - f_r(1),
\ee
and the additional term $f_r(1)$ must be handled. However, the Lebesgue measure, $\mathrm m(N)$, of the subset of $[0,1]^{m-1}$ where $x_{r-1} > x_r$, but $\lfloor x_{r-1} N \rfloor = \lfloor x_r N \rfloor$ for some $1 \le r \le m-1$ is vanishing as $N \to \infty$. We next claim the following bounds:  
\[
    \sup_{x \in [0,1]}|A_m^N(x)| \le A_m^{N,1} (A_m^{N,2} + \mathrm m(N) A_m^{N,3})\qquad\text{and}\qquad |B_m^N| \le B_m^{N,1} (A_m^{N,2} + \mathrm m(N) A_m^{N,3}),\]
where
\begin{align*}
    A_m^{N,1} &= \prod_{r = 1}^m \sup_{y \in [0,1]} e^{6|f_r(y)| }, \qquad\qquad\qquad\qquad\qquad\;\;
    A_m^{N,2} = \bigg(\prod_{r = 1}^m \sup_{y \in [0,1]} e^{3|f_r^N(\lfloor  y N \rfloor) -f_r(y)|} \bigg) - 1, \\
    A_m^{N,3} &=  \prod_{r = 1}^m \sup_{y \in  [0,1]} e^{3 |f_r(y)|} \sup_{y \in [0,1]}e^{2|f_r^N(\lfloor y N \rfloor)|}, \qquad \text{and}\qquad  B_m^{N,1} = \prod_{r = 1}^m \sup_{y \in [0,1]} e^{6|f_r^N(\lfloor y N \rfloor )| }.
\end{align*}
To see this, we write the integrands as $e^{w} - e^z = e^z(e^{w-z} - 1)$ and use the fact that $|e^x - 1| \le e^{|x|} - 1$ for all $x\in \R$. The terms $A_m^{N,1}$ and $B_m^{N,1}$ control the denominators of $A_m^N(x)$ and $B_m^N$,  as well as the $e^w$ term. Hence, the factor of $6$ comes from applying \eqref{fr_bound} twice: once for the denominators and once for the term $e^w$. The factor of $3$ in the term $A_{m}^{N,2}$ comes because in \eqref{eq:fcomp1} and \eqref{eq:fcomp2}, we are comparing at most 3 differences.  The term $A_m^{N,3}$ controls the integrands on the set $\mathrm m(N)$, using \eqref{eq:fcomp3}.

 Repeated applications of H\"older's inequality and the triangle inequality implies that it further suffices to show that, for $1 \le m \le k$ and $b,c > 0$,
\begin{align}
&\quad \,\Ee_N\bigg[\prod_{r = 1}^m \sup_{y \in [0,1]} e^{b|f_r(y)|}    \bigg] < \infty,\qquad \text{and} \label{eq:fisup1}\\
&\lim_{N \to \infty}\Ee_N \Bigg[\Biggl(\bigg[\prod_{r = 1}^m \sup_{y \in [0,1]} e^{c |f_r^N(\lfloor  y N  \rfloor) -f_r(y)|} \bigg] - 1\Biggr)^b\Bigg] = 0. \label{eq:fisup2}
\end{align}
Indeed, \eqref{eq:fisup1} and \eqref{eq:fisup2} imply \ref{itm:frHolder2}, that all moments of $\sup_{x \in [0,1]}|A_m^N(x)|$ and $|B_m^N|$ converge to $0$ as $N \to \infty$, and the $m = 1$ case of \eqref{eq:fisup2} (along with $|e^x - 1| \le e^{|x|}  - 1$) implies \ref{itm:frHolder}. We start with proving \eqref{eq:fisup1}: For sufficiently large $z$, using the bounds on the max of Brownian bridge in Lemma \ref{lem:BB_max},
\begin{align*}
    \Pp_N\Bigl(\prod_{r = 1}^m \sup_{x \in [0,1]} e^{|f_r(x)|} > z  \Bigr)
    \le \sum_{r = 1}^m \Pp_N\Big(\sup_{y \in [0,1]} |f_r(y)| > \f{\log(z)}{ m} \Big)  \le 2 \sum_{r = 1}^m e^{ - 2 (\f{\log z}{m} - \theta_r)^2},
\end{align*}
and so $\prod_{r = 1}^m \sup_{y \in [0,1]} e^{|f_r(y)|}$ has finite moments of all orders, showing  \eqref{eq:fisup1}. For \eqref{eq:fisup2}, first observe that
  \be \label{eq:conv_bound}
  u > \f{2 \alpha' \log N}{a \sqrt N} \Longrightarrow \alpha' - \f{au\sqrt N}{\log N} < -\f{au\sqrt N}{2 \log N}.
  \ee
  Thus, for $z > \ell_N := e^{\f{2\alpha' cr \log N}{a \sqrt N}} - 1$, we use \eqref{eq:PN_dif} and \eqref{eq:conv_bound} to show that 
\begin{align*}
    &\quad \,\Pp_N\Biggl(\Bigl[\prod_{r = 1}^m \sup_{y \in [0,1]} e^{c \sum_{r = 1}^{m}|f_r^N(\lfloor y N  \rfloor) -f_r(y)|} \Bigr] - 1 > z   \Biggr) \\
    &\le \sum_{r = 1}^m \Pp_N\Bigl(\sup_{y \in [0,1]} |f_r^N(\lfloor y N \rfloor) - f_r(y)| > \f{\log(z+1)}{cm}   \Bigr) \\
    &\le Cm  N^{-\f{a \log(z+1) \sqrt N}{2 \log N}} = \f{Cm}{(z+1)^{\f{a}{2c} \sqrt N}}.
\end{align*}
Then, for $b \ge 1$ (bounding tail probabilities by $1$ for $z \le \ell_N$) 
\begin{align*}
    &\quad \, \Ee_N\Biggl[\Biggl(\Bigl[\prod_{r = 1}^m \sup_{y \in [0,1]} e^{c \sum_{r = 1}^{m}|f_r^N(\lfloor y N \rfloor) -f_r(y)|} \Bigr] - 1\Biggr)^b\Biggr] \le b\int_0^{\ell_N} z^{b-1} + C m b \int_{\ell_N}^\infty \f{z^{b-1}}{(z+1)^{\f{a}{2c}\sqrt N}},
\end{align*}
and the right-hand side converges to $0$ by the dominated convergence theorem, thus proving \eqref{eq:fisup2}.
\end{proof}

\subsection{Proof of the invariance and 1F1S principle for the periodic KPZ horizon}\label{sec:proof234}
Here, we prove prove Theorems \ref{thm:KPZ_invar_main} and \ref{thm:1f1s}.

\begin{proof}[Proof of Theorem \ref{thm:KPZ_invar_main}]
    Let $(\theta_1,\ldots,\theta_k) \in \R^k$, $\beta > 0$, and for $N \in \N$,  let $(\mbf U_{1}^N,\ldots,\mbf U_k^N) \sim \mu_{\beta}^{N,(\theta_1^N,\ldots,\theta_k^N)}$ where $\theta_m^N = \f{\theta_m}{\beta}\sqrt{N \log (\beta^{-2}N)}$. For $1 \le m \le N$, let $G^N_m(0) = 1$, and for $1 \le i \le N$, set 
    \[
    G_m^N(i) = e^{g_m^N(i)},\qquad\text{where}\qquad g_m^N(i) = \sum_{j = 1}^i U_{m,j}.
    \]
   Let $T > 0$, and for $N \in \N$,  define $T_N = \f{\lfloor TN^2 \rfloor}{N^2}$ so that $T_N \to T$ and $T_N N^2 \in \Z$. Then, by Lemma \ref{lem:sd_SDE}\ref{itm:SDE}, the invariance property proved in Theorem \ref{thm:OCY_joint}, and the shift invariance from Corollary \ref{cor:shift}, for $N \in \N$, we have the equality in distribution 
   \begin{align*}
   \Biggl(\f{Z_{\beta_N}^N(T_N N^2, T_N N^2 + i \viiva G_m^N)}{Z_{\beta_N}^N(T_N N^2, T_N N^2 \viiva G_m^N)}: 0 \le i \le N, 1 \le m \le k \Biggr) \deq \Bigl(G^N_m(i): 0 \le i \le N, 1 \le m \le k\Bigr),
   \end{align*}
   where we recall that $Z_{\beta_N}^N$ was defined in \eqref{Zfdef} and $\beta_N = \beta N^{-1/2}$.
   This implies the following distributional equality as processes on the product of Skorokhod spaces $D([0,1],\R)^k$:
   \be \label{eq:process_eq}
   \Biggl(\f{Z_{\beta_N}^N(T_N N^2,  T_N N^2 + \lfloor y N \rfloor \viiva G_m^N)}{Z_{\beta_N}^N(T_N N^2, T_N N^2 \viiva G_m^N)}: y \in [0,1],1 \le m \le k \Biggr) \deq \bigl(G^N_m(\lfloor yN \rfloor ): y \in [0,1],1 \le m \le k\bigr).
   \ee
   Here, $D[0,1]$ is the Skorokhod space of cadlag  functions $[0,1] \to \R$, together with the Skorokhod topology. $D[0,1]$ contains $C[0,1]$ as a subspace, and uniform convergence implies convergence in $D[0,1]$ (see, e.g., \cite[Section 12]{billing}).
   The proof of the theorem follows by taking limits on both sides of \eqref{eq:process_eq}. In particular, let $G_m(y) = e^{g_m(y)}$ where the process $\bigl(g_m(y):y \in [0,1],1 \le m \le k\bigr)\sim \Pm_\beta^{(\theta_1,\ldots,\theta_k)}$. To complete the proof of the main theorem, we must show that
   \be \label{eq:limt_equality}
   \Biggl(\f{Z_\beta(T,y \viiva G_m)}{Z_\beta(T,0 \viiva G_m)}: y \in [0,1] \Biggr)_{1 \le m \le k} \deq \bigl(G_m(y): y \in [0,1]\bigr)_{1 \le m \le k}.
   \ee
   Note that by Proposition \ref{prop:solve_SHE}\ref{itm:SHE_soln}, $Z_\beta(T,0 \viiva G_m) > 0$ with probability one, so the left-hand side is well-defined. 

   We first show uniform convergence of the right-hand side of \eqref{eq:process_eq} to  that of  \eqref{eq:limt_equality}. By Lemma \ref{lem:initial_data_conv}, there exists for each $N\in \N$ a common probability space  $(\Omega_N,\Ff_N,\Pp_N)$ on which $\bigl(g_m^N(\lfloor yN \rfloor: y \in [0,1], 1 \le m \le k \bigr)$ and $\bigl(g_m(y):y \in [0,1],1 \le m \le k\bigr)$ are supported such that we have the desired convergence: for $1 \le m \le k$,
   \be \label{eq:gn_conv}
   \lim_{N \to \infty}\Ee_N\bigg[\sup_{y \in [0,1]} \big|G_m^N(\lfloor yN \rfloor) - G_m(y) \big|\bigg] = 0.
   \ee

   We turn now to convergence of the left-hand sides. The convergence in distribution of the right-hand side of \eqref{eq:process_eq} to the right-hand side of \eqref{eq:limt_equality} implies that the process on the left-hand side of \eqref{eq:process_eq} is tight as $N \to \infty$. It suffices to show that, for all subsequential limits, the finite dimensional distributions agree with those on the left-hand side of \eqref{eq:limt_equality}. Let $y_{1,1},\ldots,y_{1,r_1},\ldots,y_{k,1},\ldots,y_{k,r_k} \in \R$, and set $y_{m,0} = 0$ for $1 \le m \le k$. Using \eqref{eq:gn_conv} along with standard bounds on the maximum of Brownian bridge, Theorem \ref{thm:SHE_convergence} implies that, for $1 \le m \le k$ and $0 \le \ell \le r_m$,
   \[
   \lim_{N \to \infty}  \wt \Ee_N \Bigl[ \Bigl|e^{-T_N N - \f{\beta_{N}^2}{2}T_N N^2 
 }\OCY_{\beta_{N}}(T_N N^2, \lfloor T_N N^2 + y_{m,\ell} N \rfloor \viiva G_m^N) - Z_\beta(T,y_{m,\ell} \viiva G_m)\Bigr|  \Bigr] = 0,
   \]
   where $\wt \Ee_N$ denotes the expectation with respect to the product space of $(\Omega_N,\Ff_N,\Pp_N)$ and the noise.
   This convergence holds for all these points in the same coupling. Using that $Z_\beta(T,0 \viiva G_m) > 0$ with probability one (Proposition \ref{prop:solve_SHE}\ref{itm:SHE_soln}), after canceling the prefactor $e^{-T_N N - \f{\beta_N^2}{2}T_N N^2}$, this implies distributional convergence
   \begin{multline*}
   \Biggl(\f{Z_{\beta_N}^N(T_N N^2,  T_N N^2 +\lfloor y_{m,\ell} N \rfloor \viiva G_m^N)}{Z_{\beta_N}^N(T_N N^2, T_N N^2 \viiva G_m^N)}: 1 \le m \le k, 1 \le \ell \le r_m \Biggr)\\
   \overset{N \to \infty}{\Longrightarrow} \Biggl(\f{Z_\beta(T,y_{m,\ell} \viiva G_m)}{Z_\beta(T,0 \viiva G_m)}:1 \le m \le k, 1 \le \ell \le r_m \Biggr).
   \end{multline*}
   Hence, \eqref{eq:limt_equality} holds by taking process-level distributional limits on both sides of \eqref{eq:process_eq}.
\end{proof}
 
Next, we prove the 1F1S principle. The following shear invariance of the KPZ equation reduces this to the 1F1S principle when $\theta=0$, i.e., on the torus without slope. 

\begin{lemma}\label{l.reductiontheta0}
For any $\theta\in\R$ and $f\in C[0,1]$ with $f(1)-f(0)=\theta$, define $g(x)=f(x)-\theta x$. Then, 
\[
h_\beta(0,y\viiva -t,f)=\tilde{h}_\beta(0,y\viiva -t,\tilde{g}_t)+\theta y+\frac12\theta^2 t
\]
where $\tilde{h}_\beta$ solves the KPZ equation with noise $\xi_\theta(s,y):=\xi(s,y-\theta s)$, and initial data $\tilde{g}_t(x):=g(x+\theta t)$.
\end{lemma}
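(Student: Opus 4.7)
The plan is to lift the shearing identity to the stochastic heat equation through the Cole-Hopf transform and prove it as an instance of Galilean invariance of SHE. Let $Z_\beta(s,y\viiva -t,x)$ denote the SHE Green's function from Proposition~\ref{prop:solve_SHE}, write $Z(s,y):=\int_\R Z_\beta(s,y\viiva -t,x)\,e^{f(x)}\,dx$ so that $h_\beta(s,y\viiva -t,f)=\log Z(s,y)$, and define the Galilean-sheared field
\[
\tilde Z(s,y):=e^{-\theta y+\tfrac{\theta^2}{2}s}\,Z(s,y-\theta s),\qquad s\in[-t,0],\ y\in\R.
\]
The core of the proof is to identify $\tilde Z$, up to the constant factor $e^{\theta^2 t/2}$, with the Cole-Hopf solution to the SHE driven by $\xi_\theta(s,y):=\xi(s,y-\theta s)$ with initial data $e^{\tilde g_t}$ at time $-t$. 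Granted this, evaluating at $s=0$ produces $-\theta y+h_\beta(0,y\viiva -t,f)=\log\tilde Z(0,y)=\tfrac{\theta^2}{2}t+\tilde h_\beta(0,y\viiva -t,\tilde g_t)$, which rearranges to the claimed identity.

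My preferred route for the verification is via the $L^2$-convergent chaos expansion \eqref{eq:Z_b_series}, which avoids any subtlety of stochastic calculus against distribution-valued noise. In the $k$-th chaos term of $Z_\beta(s,y-\theta s\viiva -t,x)$, substitute $x_i\mapsto z_i-\theta t_i$ for each internal spacetime variable. Completion of the square in each heat kernel gives
\[
\rho(t_{i+1}-t_i,\,x_{i+1}-x_i)=\rho(t_{i+1}-t_i,\,z_{i+1}-z_i)\cdot e^{\theta(z_{i+1}-z_i)-\theta^2(t_{i+1}-t_i)/2},
\]
and the telescoping product of the exponential factors produces the overall deterministic prefactor $e^{\theta(y-x)+\theta^2(t-s)/2}$. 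The shear invariance of white noise (Lemma~\ref{lem:scaling_relations}) shows that $\xi_\theta$ is itself a space-time white noise and that integration against $\xi(dt_i,dx_i)$ becomes integration against $\xi_\theta(dt_i,dz_i)$; note also that $\xi_\theta$ inherits the spatial periodicity of $\xi$ since $\xi_\theta(s,y+1)=\xi(s,y+1-\theta s)=\xi_\theta(s,y)$. The termwise substitution combined with the $L^2$ convergence of Proposition~\ref{prop:Z_conv} yields the Green's function identity
\[
Z_\beta(s,y-\theta s\viiva -t,x)=e^{\theta(y-x)+\theta^2(t-s)/2}\,\tilde Z_\beta(s,y\viiva -t,x-\theta t),
\]
where $\tilde Z_\beta$ denotes the corresponding Green's function against $\xi_\theta$.

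To lift this to the Cole-Hopf solution, multiply both sides by $e^{f(x)}$ and integrate in $x$, then substitute $u=x-\theta t$. Using the identity $f(u+\theta t)-\theta u=\tilde g_t(u)+\theta^2 t$ (which follows from $f=g+\theta\cdot\mathrm{id}$ and $\tilde g_t(u)=g(u+\theta t)$), the exponentials collapse to give $\tilde Z(s,y)=e^{\theta^2 t/2}\,\tilde Z_\beta(s,y\viiva -t,e^{\tilde g_t})$. The initial condition $\tilde g_t$ is $1$-periodic with zero slope, since $g(x+1)-g(x)=f(x+1)-f(x)-\theta=0$ makes $g$ (and hence $\tilde g_t$) $1$-periodic; therefore the sheared SHE falls within the framework of Proposition~\ref{prop:solve_SHE} and $\tilde h_\beta=\log\tilde Z_\beta$ is well-defined, strictly positive, and continuous. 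The main technical step is thus the Gaussian change-of-variables in the chaos series, which is routine; the conceptual content is simply that Galilean invariance of SHE lifts through Cole-Hopf to the additive drift $\theta y+\tfrac{\theta^2}{2}t$ on $h_\beta$, as required.
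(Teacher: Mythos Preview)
Your proof is correct and takes a genuinely different route from the paper. The paper argues via a Feynman--Kac representation for spatially mollified noise, applies the Cameron--Martin theorem to shift the drift of the underlying Brownian motion, and then passes to the white-noise limit by approximation. You instead stay at the white-noise level throughout, performing the Galilean change of variables $x_i\mapsto z_i-\theta t_i$ directly inside the $L^2$ chaos expansion \eqref{eq:Z_b_series}; the heat-kernel identity $\rho(\tau,w-\theta\tau)=\rho(\tau,w)e^{\theta w-\theta^2\tau/2}$ plus telescoping gives the deterministic prefactor, and the pushforward of $\xi$ under the shear produces exactly $\xi_\theta$. Your route is more self-contained within the paper's framework (it avoids the mollification and limit, and leverages Proposition~\ref{prop:Z_conv} and the shear discussion around Lemma~\ref{lem:scaling_relations}), while the paper's route has the virtue of making the Cameron--Martin origin of the drift $\theta y+\tfrac12\theta^2 t$ transparent. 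One small point worth being explicit about: you need the identity to hold almost surely, not just in law, so the relevant fact is that $\xi_\theta$ is the \emph{pushforward} of $\xi$ under the measure-preserving shear (so that the termwise substitution in the chaos integrals is an exact equality of random variables), rather than merely the distributional statement of Lemma~\ref{lem:scaling_relations}; this is straightforward but slightly stronger than what the lemma states.
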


\begin{proof}
    First, assume that the noise $\xi$ is only white in time but smooth in the spatial variable with the spatial covariance function $r(x)$. Then by the Feynman-Kac formula \cite{Bertini1995}, one can write $h_\beta$ as 
    \[
    h_\beta(0,y\viiva -t,f)=\log \Ee_B e^{\beta\int_0^t \xi(-s,y+B_s)ds-\frac12\beta^2r(0)t} e^{\theta (y+B_t)+g(y+B_t)},
    \]
    where $B$ is a standard Brownian motion starting from the origin and $\Ee_B$ is the expectation on $B$. By the Cameron-Martin theorem, we rewrite the above expression as 
    \[
    \begin{aligned}
    h_\beta(0,y\viiva -t,f)&=\log \Ee_B e^{\beta\int_0^t \xi(-s,y+B_s)ds-\frac12\beta^2r(0)t} e^{\theta B_t-\frac12\theta^2 t} e^{g(y+B_t)}+\theta y+\frac12\theta^2 t\\
    &=\log \Ee_B e^{\beta\int_0^t \xi(-s,y+B_s+\theta s)ds-\frac12\beta^2r(0)t}  e^{g(y+B_t+\theta t)}+\theta y+\frac12\theta^2 t\\
    &=\log \Ee_B e^{\beta\int_0^t \xi_\theta(-s,y+B_s)ds-\frac12\beta^2r(0)t}  e^{\tilde{g}_t(y+B_t)}+\theta y+\frac12\theta^2 t.
    \end{aligned}
    \]
    Note that the last expression is precisely $\tilde{h}_\beta(0,y\viiva -t,\tilde{g}_t)+\theta y+\frac12\theta^2 t$, so this proves the lemma for the noise that is smooth in the spatial variable. Then it suffices to apply a standard approximation argument, e.g. in \cite{Bertini1995}, to derive the same result for the white noise, thus completing the proof.
\end{proof}

The following 1F1S principle for the periodic KPZ equation was shown by rather different means in \cite{TR22}. We record another proof using results from \cite{GK23}.
\begin{lemma}\label{l.ofos}
    For any $\theta\in\R$ and $f\in C[0,1]$ satisfying $f(1)-f(0)=\theta$, we have  that
    \[
    \Bigl(h_\beta(0,x \viiva -t,f)-h_\beta(0,0 \viiva -t,f): x \in [0,1] \Bigr)
    \]
    converges to a limit in probability in $\Cpin[0,1]$, as $t\to\infty$. The limit depends only on $\theta$ and not on the particular choice of $f$. In particular, one can choose $f$ depending on $t$.
\end{lemma}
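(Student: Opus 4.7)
The plan is to combine a deterministic shear reduction to the slope-zero case with a synchronization estimate for the slope-zero equation that we extract from \cite{GK23}. First, apply Lemma \ref{l.reductiontheta0}: writing $g(x) := f(x) - \theta x$ (so that $g(0) = g(1)$) and $\tilde g_t(x) := g(x + \theta t)$, we get
\[
h_\beta(0,x\viiva -t, f) - h_\beta(0,0\viiva -t, f) = \tilde h_\beta(0,x\viiva -t, \tilde g_t) - \tilde h_\beta(0,0\viiva -t,\tilde g_t) + \theta x,
\]
where $\tilde h_\beta$ solves \eqref{eq:KPZ} driven by $\xi_\theta(s,y) := \xi(s,y-\theta s)$, and $\xi_\theta \deq \xi$ by space-translation invariance of the noise. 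Hence it suffices to prove convergence in probability in $\Cpin[0,1]$, as $t \to \infty$, for the $\theta = 0$ equation with initial data $\tilde g_t$. Note that $\tilde g_t$ depends on $t$ only through a shift of the fixed $1$-periodic function $g$, so $\{\tilde g_t : t \ge 0\}$ is equicontinuous and uniformly bounded.

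The core input is the following synchronization estimate, essentially the content of the contraction/ergodicity results in \cite{GK23}: for any two zero-slope initial data families $(g^{(1)}_t), (g^{(2)}_t) \subset C[0,1]$ lying in a uniformly bounded equicontinuous set,
\[
\sup_{x \in [0,1]} \Big| \bigl[h_\beta(0,x\viiva -t,g^{(1)}_t) - h_\beta(0,0\viiva -t,g^{(1)}_t)\bigr] - \bigl[h_\beta(0,x\viiva -t,g^{(2)}_t) - h_\beta(0,0\viiva -t,g^{(2)}_t)\bigr]\Big| \overset{p}{\longrightarrow} 0
\]
as $t \to \infty$. In \cite{GK23} this appears as a byproduct of exponential-moment and spectral-gap bounds for the periodic KPZ free energy. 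Given this, existence of the limit follows by a Cauchy-in-probability argument: setting $F_t(x) := h_\beta(0,x\viiva -t,\tilde g_t) - h_\beta(0,0\viiva -t,\tilde g_t)$, the Markov property of the KPZ semigroup gives, for $0 < s < t$,
\[
F_t(x) = h_\beta(0,x\viiva -s, G_{t,s}) - h_\beta(0,0\viiva -s, G_{t,s}), \qquad G_{t,s}(\cdot) := h_\beta(-s,\cdot \viiva -t, \tilde g_t).
\]
By Theorem \ref{thm:KPZ_invar_main} with $k=1$ together with standard regularity bounds for the KPZ semigroup on the torus, $G_{t,s} - G_{t,s}(0)$ is tight in $\Cpin[0,1]$ uniformly in $t > s$. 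Applying the synchronization at time $-s$ with $g^{(1)}_s = G_{t,s}$ and $g^{(2)}_s = \tilde g_s$, we deduce $F_t - F_s \overset{p}{\to} 0$ as $s \to \infty$ uniformly in $t > s$, so $(F_t)$ is Cauchy in probability and converges. A further application of synchronization to two different choices of $f$ (with the same slope $\theta$) shows the limit is independent of $f$, so it depends only on $\theta$.

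The main obstacle---the one for which we must cite \cite{GK23}---is the synchronization estimate itself: showing that the centered gap between two KPZ solutions on the torus driven by the same noise vanishes in probability. Everything else---the shear reduction, the Cauchy-in-probability / semigroup argument, and the tightness of $G_{t,s}$---is structural, modulo the mild technical point that $\tilde g_t$ is $t$-dependent, which is why we phrase the synchronization over a precompact family of initial data rather than a single fixed one.
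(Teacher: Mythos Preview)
Your shear reduction via Lemma~\ref{l.reductiontheta0} matches the paper exactly, and you correctly identify that the heart of the matter is a synchronization/contraction estimate from \cite{GK23}. But the way you package and apply that estimate introduces a gap, and the paper's route is both simpler and avoids it.

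The paper does not work directly with $h_\beta$; instead it passes to the polymer endpoint density $\rho_\beta(0,x\viiva -t,f) = e^{h_\beta(0,x\viiva -t,f)}\big/\int_0^1 e^{h_\beta(0,y\viiva -t,f)}\,dy$, notes that the centered $h_\beta$-increment equals the centered $\log\rho_\beta$-increment, and then proves directly that for \emph{any} $f_1,f_2$ with $f_i(1)=f_i(0)$ and any $t_1,t_2\to\infty$,
\[
\Ee\Big[\sup_{x}\big|\log\rho_\beta(0,x\viiva -t_1,f_1)-\log\rho_\beta(0,x\viiva -t_2,f_2)\big|^p\Big]\to 0.
\]
This follows from the elementary bound $|\log a-\log b|\le (a^{-1}+b^{-1})|a-b|$ together with \cite[Proposition~A.1, (A.3)--(A.4)]{GK23}, whose crucial feature is that the contraction and moment bounds for $\rho_\beta$ hold \emph{uniformly over all initial measures} $\mu,\nu\in\mathcal M_1(\T)$. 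This immediately gives Cauchy-in-$L^p$ (hence in probability), independence from $f$, and the ``$f$ may depend on $t$'' clause, with no Markov-property detour and no tightness argument.

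Your route, by contrast, states the synchronization only for deterministic initial data lying in a fixed uniformly bounded equicontinuous set, and then tries to apply it with $g^{(1)}_s = G_{t,s}$, which is random. Tightness of $G_{t,s}-G_{t,s}(0)$ in law does not place $G_{t,s}$ almost surely in any fixed precompact set, so your synchronization hypothesis as stated does not apply; you would need either a quantitative rate depending measurably on the initial datum, or---what \cite{GK23} actually gives---uniformity over all initial data, at which point the whole Markov/tightness step becomes unnecessary. (The appeal to Theorem~\ref{thm:KPZ_invar_main} with $k=1$ for tightness is also misplaced: that theorem identifies the invariant measure, it does not control regularity of solutions from arbitrary data.) The cleanest fix is simply to adopt the paper's argument via $\rho_\beta$.
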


\begin{proof}
    By Lemma~\ref{l.reductiontheta0}, it is enough to consider the case of $\theta=0$ so that one can pose the problem on a torus. Rather than   the solution to the KPZ equation, we consider a related object, the endpoint density of the directed polymer on a cylinder: define 
    \[
    \rho_\beta(0,x \viiva -t,f):= \frac{e^{h_\beta(0,x \viiva -t,f)}}{\int_0^1 e^{h_\beta(0,y \viiva -t,f)} dy},
    \]
    which can be interpreted as the polymer  density at time $0$, with the initial distribution at time $-t$ given by $\f{e^{f(x)}}{\int_0^1 e^{f(y)}dy}$. The proposition is equivalent to showing that $\log \rho_\beta(0,\cdot \viiva -t,f)-\log \rho_\beta(0,0 \viiva -t,f)$ converges in probability in $\Cpin[0,1]$, with the limit independent of the choice of $f$. This is a consequence of the following  result: for any $p\in [1,\infty)$, and any functions $f_1,f_2 \in C[0,1]$ such that $f_i(1)=f_i(0),i=1,2$, 
    \begin{equation}\label{e.contractionpolymer}
    \Ee\bigg[ \sup_{x\in[0,1]} \Big|\log\big(\rho_\beta(0,x \viiva -t_1,f_1)\big)-\log\big(\rho_\beta(0,x \viiva -t_2,f_2)\big)\Big|^p\bigg] \to0,
    \end{equation}
    as $t_1,t_2\to\infty$. To show \eqref{e.contractionpolymer}, first, by the elementary inequality 
    \[
    |\log x_1-\log x_2| \leq (x_1^{-1}+x_2^{-1})|x_1-x_2|,
    \]
    we estimate the difference as
\[
\begin{aligned}
  &\sup_{x\in[0,1]} \Big|\log\big(\rho_\beta(0,x \viiva -t_1,f_1)\big)-\log\big(\rho_\beta(0,x \viiva -t_2,f_2)\big)\Big|^p\\
  &\leq  C\Big(\sup_{x\in[0,1]} \rho_\beta(0,x\viiva -t_1,f_1)^{-p}+\sup_{x\in[0,1]} \rho_\beta(0,x\viiva -t_2,f_2)^{-p}\Big)
   \sup_{x\in[0,1]} \Big|\rho_\beta(0,x \viiva -t_1,f_1)-\rho_\beta(0,x \viiva -t_2,f_2)\Big|^p.
  \end{aligned}
    \]
    The right-hand side converges to $0$ as $t_1,t_2 \to \infty$ by \cite[Proposition A.1, equations (A.3) and (A.4)]{GK23}.
\end{proof}

\begin{proof}[Proof of Theorem \ref{thm:1f1s}]
 Consider the $(\Cpin[0,1])^k$ valued random variable,
 \[
 \mathbf{Y}_t:=\bigl(h_\beta(0,y \viiva -t, f_1) - h_\beta(0,0 \viiva  -t,f_1),\ldots,h_\beta(0,y \viiva  -t,f_k) - h_\beta(0,0 \viiva -t,f_k) \bigr).
 \]
 The goal is to show it converges in probability as $t\to\infty$ and the limit has the law given by $\mathcal{P}_\beta^{(\theta_1,\ldots,\theta_k)}$. 
 The convergence directly comes from Lemma~\ref{l.ofos}, applied to each component of $\mathbf{Y}_t$. Since the limit does not depend on the choice of $f_m$, one can sample $\{f_m\}_{m=1}^k$ from $\mathcal{P}_\beta^{\theta_1,\ldots,\theta_k}$ so that  the law of $\mathbf{Y}_t$ is unchanged as $t$ varies, which implies that the limit of $\mathbf{Y}_t$ has the law $\mathcal{P}_\beta^{\theta_1,\ldots,\theta_k}$.
\end{proof}

\subsection{Proof of the properties of the periodic KPZ horizon (Proposition \ref{prop:g_prop_intro})} \label{sec:proof567}
We first need a lemma that gives an alternate description of the map $\cDm^2$, which will be be more convenient in many applications. 
\begin{lemma} \label{lem:Phiout}
For $f_1,f_2 \in (\Cpin[0,1])$, and $x \in [0,1]$,
\begin{align*}
\cDm^2(f_1,f_2)(x) &= f_1(x) + \log \Biggl(\f{e^{f_2(1) - f_1(1)}\int_0^x e^{f_2(y) - f_1(y)}\,dy + \int_x^1 e^{f_2(y) - f_1(y)}\,dy   }{\int_0^1 e^{f_2(y) - f_1(y)}\,dy}    \Biggr) \\
&= f_1(x) + \log\Biggl(1 + \f{(e^{f_2(1) - f_1(1)} - 1)\int_0^x e^{f_2(y) - f_1(y)}\,dy}{\int_0^1 e^{f_2(y) - f_1(y)}\,dy}\Biggr)
\end{align*}
\end{lemma}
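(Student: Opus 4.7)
The plan is to unpack the definition \eqref{eq:Phi_def} of $\cDm^2$ and verify the two stated identities by direct computation. Since both $f_1,f_2 \in \Cpin[0,1]$ we have $f_1(0)=f_2(0)=0$, so setting $g(y):=f_2(y)-f_1(y)$ one gets
\[
f_2(x,y) - f_1(x,y) = g(y) - g(x) + \ind\{x > y\}\bigl(f_2(1) - f_1(1)\bigr),
\]
where the indicator terms from $f_2(x,y)$ and $f_1(x,y)$ combine cleanly because $f_i(0)=0$.

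Substituting this into the definition of $Q^2(f_1,f_2)(x)$, the constant-in-$y$ factor $e^{-g(x)}$ pulls out of the integral, and the integral splits at $y=x$ according to whether the indicator is $0$ or $1$. This gives
\[
Q^2(f_1,f_2)(x) = -g(x) + \log\!\Bigl(e^{f_2(1)-f_1(1)}\!\int_0^x e^{g(y)}\,dy + \int_x^1 e^{g(y)}\,dy\Bigr).
\]
Evaluating at $x=0$ makes the first integral vanish and $g(0)=0$, so $Q^2(f_1,f_2)(0)=\log\int_0^1 e^{g(y)}\,dy$. Forming the difference and using $f_2(x) - g(x) = f_1(x)$ yields the first displayed equality of the lemma.

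For the second equality, I would simply rewrite the numerator inside the logarithm as
\[
\int_0^x e^{g(y)}\,dy + \int_x^1 e^{g(y)}\,dy + (e^{f_2(1)-f_1(1)} - 1)\int_0^x e^{g(y)}\,dy = \int_0^1 e^{g(y)}\,dy + (e^{f_2(1)-f_1(1)} - 1)\int_0^x e^{g(y)}\,dy,
\]
and divide through by $\int_0^1 e^{g(y)}\,dy$ to recover the stated $1 + \cdots$ form. The argument is entirely algebraic; there is no genuine obstacle, merely the bookkeeping of indicator and boundary terms induced by the modular increment convention $f(x,y)$, and this is handled at the outset by invoking $f_1(0)=f_2(0)=0$.
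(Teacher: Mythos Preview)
Your proof is correct and follows essentially the same approach as the paper: both unpack the definition of $\cDm^2$ via $Q^2$, use $f_1(0)=f_2(0)=0$, split the integral at $y=x$ according to the indicator, and simplify using $f_2(x)-g(x)=f_1(x)$. Your introduction of the shorthand $g=f_2-f_1$ makes the bookkeeping slightly cleaner, and you spell out the second equality explicitly whereas the paper leaves it implicit, but there is no substantive difference.
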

\begin{proof}
With $f_1(0) = f_2(0) = 0$, we have
\begin{align*}
\cDm^2(f_1,f_2)(x) &= f_2(x) + \log \Biggl(\f{\int_0^1 e^{f_2(x,y) - f_1(x,y)}\,dy}{\int_0^1 e^{f_2(0,y)- f_1(0,y)}}\Biggr) \\
&= f_2(x) + \log \Biggl(\f{\int_0^x e^{f_2(y) - f_1(x) + f_2(1) - (f_1(y) - f_1(x) + f_1(1) )}\,dy + \int_x^1 e^{f_2(y) - f_2(x) - (f_1(y) - f_1(x))}\,dy  }{\int_0^1 e^{f_2(y) - f_1(y)}\,dy}\Biggr) \\
&= f_1(x) + \log \Biggl(\f{e^{f_2(1) - f_1(1)}\int_0^x e^{f_2(y) - f_1(y)}\,dy + \int_x^1 e^{f_2(y) - f_1(y)}\,dy   }{\int_0^1 e^{f_2(y) - f_1(y)}\,dy}    \Biggr). \qedhere
\end{align*} 
\end{proof}

The final lemma before the proof of Proposition \ref{prop:g_prop_intro} is the continuum analogue of Lemma \ref{lem:sum_v_order}. 
\begin{lemma} \label{lem:Phi_ord}
Let $f_1,f_2 \in (\Cpin[0,1])$.
\begin{enumerate} [label=\textup{(\roman*)}]
\item If $f_1(1) = f_2(1)$, then $\cDm^2(f_1,f_2)(x) = f_1(x)$ for all $x \in [0,1]$.
\item If $f_1(1) < f_2(1)$, then $f_1(x,y) < \cDm^2(f_1,f_2)(x,y)$ for $x,y \in [0,1]$. 
\item If $f_1(1) > f_2(1)$, then $f_1(x,y) > \cDm^2(f_1,f_2)(x,y)$ for $x,y \in [0,1]$.
\end{enumerate}
\end{lemma}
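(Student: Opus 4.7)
The plan is to reduce everything to a one-dimensional monotonicity statement by applying the compact formula for $\cDm^2$ from Lemma \ref{lem:Phiout}. Set $A(x) := \int_0^x e^{f_2(y)-f_1(y)}\,dy$ and $c := e^{f_2(1)-f_1(1)}-1$. By Lemma \ref{lem:Phiout},
\[
\cDm^2(f_1,f_2)(x) = f_1(x) + h(x), \qquad h(x) := \log\!\Bigl(1 + c\,\tfrac{A(x)}{A(1)}\Bigr).
\]
First I would record three elementary properties of $h$: (i) $h(0)=0$; (ii) $h(1)=\log(1+c)=f_2(1)-f_1(1)$, which also confirms the identity $\cDm^2(f_1,f_2)(1) = f_2(1)$ stated after \eqref{Phi_iter}; and (iii) since $x \mapsto A(x)/A(1)$ is strictly increasing from $0$ to $1$, the function $h$ is strictly monotone in the same direction as $c$, and in particular $h\equiv 0$ when $c=0$.

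This immediately handles the first bullet: if $f_1(1)=f_2(1)$ then $c=0$, so $h\equiv 0$ and $\cDm^2(f_1,f_2)\equiv f_1$. For the other two bullets, I would use the definition $g(x,y) := g(y) - g(x) + \ind\{x>y\}(g(1)-g(0))$ together with $\cDm^2(f_1,f_2)(0)=0$ and $\cDm^2(f_1,f_2)(1)=f_2(1)$ to get the key identity
\[
\cDm^2(f_1,f_2)(x,y) - f_1(x,y) = \bigl[h(y) - h(x)\bigr] + \ind\{x>y\}\bigl(f_2(1)-f_1(1)\bigr).
\]
Using $h(1)-h(0)=f_2(1)-f_1(1)$, this equals $h(y)-h(x)$ when $x\le y$, and equals $h(y)+\bigl[h(1)-h(x)\bigr]$ when $x>y$.

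Now consider Case 2, $f_1(1)<f_2(1)$ (the sign case $f_1(1)>f_2(1)$ is identical with inequalities reversed, since then $c<0$ and $h$ is strictly decreasing). Here $c>0$, so $h$ is strictly increasing on $[0,1]$. For $x<y$, the difference is $h(y)-h(x)>0$. For $x>y$, the difference is $h(y)+\bigl[h(1)-h(x)\bigr]$, and both summands are nonnegative; since $h$ is \emph{strictly} increasing and $x>y$, we have $h(y)\ge h(0)=0$ and $h(1)\ge h(x)$, with equality in the first iff $y=0$ and in the second iff $x=1$. Thus the difference is strictly positive unless $(x,y)=(1,0)$, which is the antipodal identification on the circle and where both sides of the inequality vanish by direct inspection (the relation $1\sim 0$ is implicit in treating these as functions in $\Cpin[0,1]$ associated to slope-$\theta$ periodic extensions). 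The symmetric argument with $c<0$ and $h$ strictly decreasing handles Case 3.

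The only subtlety is the handling of the boundary cases $x=y$ and $(x,y)=(1,0)$, where $f_1(x,y)=\cDm^2(f_1,f_2)(x,y)=0$ by direct computation from the definitions; on the circle these are the two cases where $x$ and $y$ represent the same point, so the strict inequality is to be interpreted for $x\not\equiv y \pmod 1$. Everything else is a routine calculation from the monotonicity of $h$.
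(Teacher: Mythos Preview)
Your proof is correct and follows essentially the same approach as the paper: both use Lemma \ref{lem:Phiout} to write $\cDm^2(f_1,f_2)(x)-f_1(x)$ as $\log\bigl(1+c\,A(x)/A(1)\bigr)$, observe that this is strictly monotone with sign matching that of $c=e^{f_2(1)-f_1(1)}-1$, and then read off the increment comparison. The paper compresses your case analysis by noting that the $x>y$ case decomposes as $f(x,1)+f(0,y)$, reducing everything to the statement that $h$ is strictly monotone; your explicit splitting into $h(y)-h(x)$ and $h(y)+[h(1)-h(x)]$ is equivalent but more hands-on.

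You are also more careful than the paper about the degenerate pairs $x=y$ and $(x,y)=(1,0)$, where both sides vanish identically and the strict inequality cannot hold. The paper's proof (and, strictly speaking, its statement) tacitly excludes these, writing ``it suffices to take $0\le x<y\le 1$'' and then equating the desired inequality with strict monotonicity of $h$. Your caveat that the strict inequality should be read for $x\not\equiv y\pmod 1$ is the right interpretation and matches how the lemma is actually used (e.g.\ in Proposition \ref{prop:g_prop_intro}\ref{itm:g_mont}, where the content is that $x\mapsto g_r(x)-g_m(x)$ is strictly monotone).
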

\begin{proof}
    Since, for $f:[0,1] \to \R$,
    \[
    f(x,y) = \begin{cases}
    f(y) - f(x) &y \ge x,\\
    f(1) - f(x) + f(y) - f(0) &y < x,
    \end{cases}
    \]
    it suffices to take $0 \le x < y \le 1$ in the statements above. Furthermore, for $g_1,g_2:[0,1] \to \R$, by writing 
    \be \label{eq:g2inc_mont}
    g_2(y) - g_2(x) - \bigl(g_1(y) - g_1(x)\bigr) = g_2(y) - g_1(y) - \bigl(g_2(x) - g_1(x)\bigr),
    \ee
    we see that the condition that $g_1(x,y) < g_2(x,y)$ for all $x < y$ is equivalent to the condition that $x \mapsto g_2(x) - g_1(x)$ is strictly increasing. 
    From the second equality of Lemma \ref{lem:Phiout}, we see that $\cDm^2(f_1,f_2)(x) - f_1(x)$ is $0$ if $f_2(1) = f_1(1)$, strictly increasing if $f_2(1) > f_1(1)$ and strictly decreasing if $f_2(1) < f_1(1)$.
\end{proof}

\begin{proof}[Proof of Proposition \ref{prop:g_prop_intro}]
\medskip \noindent \textbf{Items \ref{itm:g_perm_invar}-\ref{itm:g_consis}:} These follow immediately from their discrete versions in Proposition \ref{prop:disc_consis}\ref{itm:mu_perm}-\ref{itm:mu_consis}, and passing to the limit via Lemma \ref{lem:initial_data_conv}.

\medskip \noindent \textbf{Item \ref{itm:g_mont}:} The statement for $r,m \in \{1,2\}$ follows from Lemma \ref{lem:Phi_ord}. Then, the statement for general $r,m \in \{1,\ldots,k\}$ follows from the permutation invariance in Item \ref{itm:g_perm_invar}. The fact that $x \mapsto g_r(x)- g_m(x)$ is increasing comes from \eqref{eq:g2inc_mont}.
 
\medskip \noindent \textbf{Item \ref{itm:Pm_cont}:} If $(\theta_1^{(n)},\ldots,\theta_k^{(n)}) \to (\theta_1,\ldots,\theta_k)$, and $(f_1^{(n)},\ldots,f_k^{(n)})$ are independent, with $f_m^{(n)} \deq \B_{\beta,\theta_m^{(n)}}$, then clearly, $(f_1^{(n)},\ldots,f_k^{(n)})$ converges in distribution to $(f_1,\ldots,f_k)$, where  $(f_1,\ldots,f_k)$ are independent, and $f_m \deq \B_{\beta,\theta_m}$. It then suffices to show that the function $\FcDm^k:(\Cpin[0,1])^k \to (\Cpin[0,1])^k$ is continuous with respect to the sup norm on functions. The function $\cDm^2: \Cpin[0,1] \times \Cpin[0,1] \to \Cpin[0,1]$ is readily seen to be continuous, and since $\FcDm^k$ is an iterative composition of $\cDm^2$, the continuity follows. 
\end{proof}

\subsection{Proof of limits of the periodic KPZ horizon in $\beta$ (Proposition \ref{prop:betalim})}

\begin{proof}[Proof of Proposition \ref{prop:betalim}] \label{sec:proof8910}
When $\ICH_\beta \sim \Pm_\beta$, recall the definition $\wt \ICH_\beta = (\wt g_{\beta,\theta})_{\theta \in \R}$, where  $\wt g_{\beta,\theta}(x) = \f{1}{\beta} g_{\beta,\beta \theta}(x)$. By the monotonicity of Proposition \ref{prop:g_prop_intro}\ref{itm:g_mont}, for $\beta > 0$ and $\theta_1,\theta_2 \in \R$,
\[
\sup_{x \in [0,1]} |\wt g_{\beta,\theta_1}(x) - \wt g_{\beta,\theta_2}(x)| \le |\wt g_{\beta,\theta_1}(1) - \wt g_{\beta,\theta_2}(1)| = |\theta_2 - \theta_1|.
\]
Furthermore, by Proposition \ref{prop:g_prop_intro}\ref{itm:g_consis}, and rescaling, for each $\theta \in \R$, $\wt g_{\beta,\theta}$ has the law of $\B_{1,\theta}$.
Therefore, the family of random functions $(\theta \mapsto \wt g_{\beta,\theta})_{\beta > 0}$ is tight in $C\bigl(\R,\Cpin[0,1]\bigr)$. It then suffices, separately for the cases $\beta \searrow 0$ and $\beta \to \infty$, to characterize the subsequential limits.

\medskip \noindent \textbf{$\beta \searrow 0$:} It suffices to show that, for any $x \in [0,1]$ and any pair $\theta_1,\theta_2 \in \R$, $\wt g_{\beta,\theta_2}(x) - \wt g_{\beta,\theta_1}(x)$ converges in probability to $(\theta_2 - \theta_1)x$. 

Let $f_1,f_2$ be independent, with $f_m \deq \B_{1,\theta_m}$ for $m \in \{1,2\}$, and for $\beta > 0$, let $f_{\beta,m}(x) = \beta f_m(x)$. Then, $(f_{\beta,1},f_{\beta,2})$ are independent, with $f_{\beta,m} \deq \B_{\beta,\beta \theta_m}$ for $m \in \{1,2\}$, so 
\[
\wt g_{\beta,\theta_2}(x) - \wt g_{\beta,\theta_1}(x) \deq \f{\cDm^2(f_{\beta,1},f_{\beta,2})(x) - f_{\beta,1}(x)}{\beta}.
\]
By Lemma \ref{lem:Phiout}, the right-hand side is
\[
\f{1}{\beta}\log \Biggl(1 + \f{(e^{\beta(\theta_2 - \theta_1)} - 1)\int_0^x e^{\beta (f_2(y) - f_1(y))}\,dy   }{\int_0^1 e^{\beta(f_2(y) - f_1(y))}\,dy}\Biggr),
\]
and, in this coupling, we have the almost sure limit
\begin{align*}
&\quad \, \lim_{\beta \searrow 0} \f{1}{\beta}\log \Biggl(1 + \f{(e^{\beta(\theta_2 - \theta_1)} - 1)\int_0^x e^{\beta (f_2(y) - f_1(y))}\,dy   }{\int_0^1 e^{\beta(f_2(y) - f_1(y))}\,dy}\Biggr)  \\
&= \lim_{\beta \searrow 0} (\theta_2 - \theta_1) \f{\int_0^x e^{\beta (f_2(y) - f_1(y))}\,dy   }{\int_0^1 e^{\beta(f_2(y) - f_1(y))}\,dy} = (\theta_2 - \theta_1)x.
\end{align*}

\medskip \noindent \textbf{$\beta \to \infty$:} Let $(\theta_1,\ldots,\theta_k) \in \R^k$, and let $(f_1,\ldots,f_k)$ be independent, with $f_m \deq \B_{1,\theta_m}$ for $1 \le m \le k$. Define $f_{\beta,m} = \beta f_m$ so that $f_{\beta,m} \deq \B_{\beta,\beta \theta_m}$. Then, 
\be \label{gf_eq}
(\wt g_{\beta,\theta_1},\ldots,\wt g_{\beta,\theta_k}) \deq \f{1}{\beta} \FcDm^k(f_{\beta,1},\ldots,f_{\beta,k}) = \Bigl(\f{1}{\beta}\cDm^1(f_{\beta,1}),\ldots,\f{1}{\beta}\cDm^k(f_{\beta,1},\ldots,f_{\beta,k})\Bigr).
\ee
We have seen that the right-hand side,   indexed by $\beta$, is tight in $(\Cpin[0,1])^k$ since the law of each marginal does not depend on $\beta$. By \eqref{eq:alternate_rep}, for $1 \le m \le k$,
\begin{align*}
\f{1}{\beta}\cDm^m(f_{\beta,1},\ldots,f_{\beta,m})(x) = f_m(x) +\f{1}{\beta}\log \int_{[0,1]^{m-1}, x_0 = x} \prod_{r = 1}^{m-1} \,dx_r\, e^{\beta f_{m}(x_{r-1},x_r ) - \beta f_r( x_{r-1}, x_r)}  \\
- \f{1}{\beta}\log \int_{[0,1]^{m-1}, x_0 = 0} \prod_{r = 1}^{m-1} \,dx_r\, e^{\beta f_{m}(x_{r-1},x_r ) - \beta f_r( x_{r-1}, x_r)} \\
\overset{\beta \to \infty}{\longrightarrow} f_m(x) + \sup_{x_1,\ldots,x_{m-1} \in [0,1],x_0 = x} \Bigl\{\sum_{r = 1}^{m-1}[f_m(x_{r-1},x_r) - f_r(x_{r-1},x_r)]  \Bigr\} \\
\qquad \qquad\qquad \qquad - \sup_{x_1,\ldots,x_{m-1} \in [0,1],x_0 = 0} \Bigl\{\sum_{r = 1}^{m-1}[f_m(x_{r-1},x_r) - f_r(x_{r-1},x_r)]  \Bigr\},
\end{align*}
where, in this coupling, the limit holds with probability one for every $x \in [0,1]$. Since we know the law of the process in \eqref{gf_eq} is tight, this characterizes the law of the limit. 
\end{proof}

\subsection{Proof of limit to the full-line KPZ horizon (Proposition \ref{prop:width_lim})} \label{sec:proof111213}

Proposition \ref{prop:width_lim} states the convergence of the periodic KPZ horizon to the full-line KPZ horizon, when the periodicity is sent to $+\infty$. Recall that the periodic KPZ horizon is described in terms of maps of independent Brownian bridges, while the full-line KPZ horizon is described in terms of maps of independent Brownian motions. For this, we need a result that connects convergence of scaled Brownian bridge to Brownian motion to convergence of the outputs of these maps. This is done in Lemma \ref{lem:Phi_close}. Before then, we prove Lemma \ref{lem:Phi_pres_slope}, which is the continuum analogue of Lemma \ref{lem:Dsum_pres}. Afterwards, we prove Lemma \ref{lem:Phi2_ScaleL}, which gives a form of the output of $\cDm^2$ under scaling that is useful for taking limits. 

\begin{lemma} \label{lem:Phi_pres_slope}
Let $(f_1,f_2) \in \Cpin[0,1]$. Then, $\cDm^2(f_1,f_2)(0) = 0$ and $\cDm^2(f_1,f_2)(1) = f_2(1)$.
\end{lemma}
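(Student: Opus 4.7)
The plan is to verify both identities directly from the definition
\[
\cDm^2(f_1,f_2)(x) = f_2(x) + Q^2(f_1,f_2)(x) - Q^2(f_1,f_2)(0), \quad Q^2(f_1,f_2)(x) = \log\int_0^1 e^{f_2(x,y)-f_1(x,y)}\,dy,
\]
together with the fact that $f_1,f_2 \in \Cpin[0,1]$ enforces $f_1(0)=f_2(0)=0$, and the definition $f(x,y) = f(y)-f(x) + \ind\{x>y\}\bigl(f(1)-f(0)\bigr)$.

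For the first identity, the value at $x=0$ is immediate: $\cDm^2(f_1,f_2)(0) = f_2(0) + Q^2(f_1,f_2)(0) - Q^2(f_1,f_2)(0) = f_2(0) = 0$.

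For the second identity, I would show that $Q^2(f_1,f_2)(1) = Q^2(f_1,f_2)(0)$, from which $\cDm^2(f_1,f_2)(1) = f_2(1)$ follows instantly. The key observation is that for each $f \in \Cpin[0,1]$ and for Lebesgue-a.e.\ $y \in [0,1]$, both $f(0,y)$ and $f(1,y)$ reduce to $f(y)$. Indeed, using $f(0)=0$,
\[
f(0,y) = f(y) - f(0) + \ind\{0 > y\}\bigl(f(1)-f(0)\bigr) = f(y)
\]
for all $y \in [0,1]$, while
\[
f(1,y) = f(y) - f(1) + \ind\{1 > y\}\bigl(f(1)-f(0)\bigr) = f(y) - f(1) + f(1) = f(y)
\]
for all $y \in [0,1)$ (the single point $y=1$ is Lebesgue-negligible). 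Applying this to both $f_1$ and $f_2$, the integrands defining $Q^2(f_1,f_2)(1)$ and $Q^2(f_1,f_2)(0)$ agree a.e.\ on $[0,1]$, giving the required equality.

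There is no real obstacle here; the statement is a structural consistency check on the pinning convention $f_m(0)=0$ and the cyclic increment function $f(x,y)$. The only small point to be careful about is the boundary value at $y=1$ in the indicator, which is handled by noting it occurs on a null set and does not affect the integrals.
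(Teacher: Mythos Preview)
Your proof is correct. The paper's proof instead appeals to the alternate formula in Lemma~\ref{lem:Phiout}, namely
\[
\cDm^2(f_1,f_2)(x) = f_1(x) + \log\Biggl(1 + \f{(e^{f_2(1) - f_1(1)} - 1)\int_0^x e^{f_2(y) - f_1(y)}\,dy}{\int_0^1 e^{f_2(y) - f_1(y)}\,dy}\Biggr),
\]
from which both values are read off immediately (at $x=0$ the numerator integral vanishes; at $x=1$ the logarithm collapses to $f_2(1)-f_1(1)$). Your route works directly from the defining expression \eqref{eq:Phi_def} by showing $Q^2(f_1,f_2)(1)=Q^2(f_1,f_2)(0)$ via the a.e.\ identity $f(0,y)=f(1,y)=f(y)$, which is slightly more self-contained since it avoids the intermediate rewriting in Lemma~\ref{lem:Phiout}. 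Both arguments are one-line computations; the paper's version has the advantage that Lemma~\ref{lem:Phiout} is needed elsewhere anyway, while yours makes the cyclic structure of $f(x,y)$ explicit.
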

\begin{proof}
This follows immediately from the second equation in Lemma \ref{lem:Phiout}. 
\end{proof}

\begin{lemma} \label{lem:Phi2_ScaleL}
Let $\theta_1,\theta_2 \in \R$, and let $f_{L,1},f_{L,2}$ be functions $[0,1] \to \R$ satisfying $f_{L,1}(0) = f_{L,2}(0) = 0$ and $f_{L,m}(L) = \theta_m L$ for $m \in \{1,2\}$. Extend $f_{L,1}$ and $f_{L,2}$ to  functions $\R \to \R$ by setting $f_{L,m}(x+1)-f_{L,m}(x)=f_{L,m}(1)-f_{L,m}(0)=\theta_m L$ for $m\in \{1,2\}$ and $x\in \R$, and similarly for $\cDm^2(f_{L,1},f_{L,2})$. For $L > 0$, $m \in \{1,2\}$, and $x \in \R$, define $\wt f_{L,m}(x) = f_m\bigl(\f{x}{L}\bigr)$. Then, for $x\in[0,L]$,
\be \label{601}
\cDm^2(f_{L,1},f_{L,2})\Bigl( \f{x}{L}\bigr) = \wt f_{L,1}(x) + \log \Biggl(1 + \f{(e^{(\theta_2 - \theta_1)L} - 1) \int_0^x e^{\wt f_{L,2}(y) - \wt f_{L,1}(y)}\,dy}{\int_{0}^L e^{\wt f_{L,2}(y) - \wt f_{L,1}(y)}\,dy}\Biggr),
\ee
while for $x\in[-L,0]$,
\be \label{602}
\cDm^2(f_{L,1},f_{L,2})\Bigl( \f{x}{L}\bigr) = \wt f_{L,1}(x) + \log \Biggl(1 + \f{(e^{(\theta_1 - \theta_2) L}-1)\int_{0}^{x} e^{\wt f_{L,2}(y) - \wt f_{L,1}(y)}\,dy}{\int_{-L}^{0} e^{\wt f_{L,2}(y) - \wt f_{L,1}(y)}\,dy}  \Biggr).
\ee
\end{lemma}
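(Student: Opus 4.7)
\emph{Proof plan for Lemma \ref{lem:Phi2_ScaleL}.} Both identities are purely algebraic, derived from the second formula of Lemma \ref{lem:Phiout}
$$\cDm^2(f_{L,1},f_{L,2})(u) = f_{L,1}(u) + \log\Biggl(1 + \f{(e^{f_{L,2}(1)-f_{L,1}(1)}-1)\int_0^u e^{f_{L,2}(v)-f_{L,1}(v)}\,dv}{\int_0^1 e^{f_{L,2}(v)-f_{L,1}(v)}\,dv}\Biggr),\qquad u \in [0,1],$$
together with the rescaling $v = y/L$ (which contributes a factor $1/L$ to numerator and denominator of the ratio that cancels) and the identity $f_{L,m}(u) = \wt f_{L,m}(Lu)$.

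For \eqref{601} with $x \in [0,L]$, I would set $u = x/L \in [0,1]$, insert $f_{L,m}(1)-f_{L,m}(0) = \theta_m L$, and change variables $v = y/L$ in both integrals; this immediately produces the stated formula.

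For \eqref{602} with $x \in [-L,0]$, the point $x/L \in [-1,0]$ lies outside the domain of $\cDm^2$, so I would invoke the additive periodic extension. Using Lemma \ref{lem:Phi_pres_slope} to get $\cDm^2(f_{L,1},f_{L,2})(1) = \theta_2 L$, the extension yields
$$\cDm^2(f_{L,1},f_{L,2})(x/L) = \cDm^2(f_{L,1},f_{L,2})\bigl((x+L)/L\bigr) - \theta_2 L,$$
and the argument $(x+L)/L$ now lies in $[0,1]$, so the already-proved equation \eqref{601} applies. The main work is then the algebraic reconciliation: I would decompose $\int_0^{x+L} e^{\wt f_{L,2}-\wt f_{L,1}} = \int_0^L + \int_L^{x+L}$ and use the key shift identity
$$\wt f_{L,2}(z+L)-\wt f_{L,1}(z+L) = \wt f_{L,2}(z) - \wt f_{L,1}(z) + (\theta_2-\theta_1)L,$$
to rewrite $\int_L^{x+L}$ as $e^{(\theta_2-\theta_1)L}\int_0^x e^{\wt f_{L,2}-\wt f_{L,1}}$, and to rewrite $\int_0^L = e^{(\theta_2-\theta_1)L}\int_{-L}^0 e^{\wt f_{L,2}-\wt f_{L,1}}$. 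Substituting these into the output of \eqref{601}, factoring a power of $e^{(\theta_2-\theta_1)L}$ out of the logarithm (which cancels the additive $\theta_2 L$ correction together with $\wt f_{L,1}(x+L)-\wt f_{L,1}(x) = \theta_1 L$), leaves precisely the ratio in \eqref{602}.

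The only real obstacle is bookkeeping of signs and exponential prefactors in the second case; once the two shift relations above are in hand the identity drops out after simplification, so I would keep the derivation systematic by factoring $e^{(\theta_2-\theta_1)L}$ out of the log argument as early as possible.
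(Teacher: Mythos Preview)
Your proposal is correct and follows essentially the same route as the paper: for \eqref{601} you both invoke the second identity of Lemma \ref{lem:Phiout} and change variables $v=y/L$, and for \eqref{602} you both use the additive periodic extension together with Lemma \ref{lem:Phi_pres_slope} to reduce to the point $(x+L)/L\in[0,1]$, followed by the shift identities for $\wt f_{L,m}$. The only cosmetic difference is that for \eqref{602} the paper restarts from the \emph{first} identity of Lemma \ref{lem:Phiout} at $(L+x)/L$, whereas you bootstrap from the already-established \eqref{601}; the resulting algebra is the same.
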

\begin{proof}
For $x\in[0,L]$, we start from the second equality of Lemma \ref{lem:Phiout}:
\begin{align*}
\cDm^2(f_{L,1},f_{L,2})\Bigl( \f{x}{L}\Bigr) &= f_{L,1}\Bigl(\f{x}{L}\Bigr) + \log\Biggl(1 +  \f{(e^{(\theta_2 - \theta_1)L} - 1) \int_0^{\f{x}{L}} e^{f_{2,L}(y) - f_{1,L}(y)  }\,dy}{\int_0^1 e^{f_{2,L}(y) - f_{1,L}(y)  }\,dy }\Biggr) \\
&=f_{L,1}\Bigl(\f{x}{L}\Bigr) + \log\Biggl(1 + (e^{(\theta_2 - \theta_1)L} - 1) \f{\int_0^{x} e^{f_{2,L}(\f{y}{L}) - f_{1,L}(\f{y}{L})  }\,dy}{\int_0^L e^{f_{2,L}(\f{y}{L}) - f_{1,L}(\f{y}{L})  }\,dy }\Biggr),
\end{align*}
and this is equal to the right-hand side of \eqref{601}. By Lemma \ref{lem:Phi_pres_slope}, $\cDm^2(f_{L,1},f_{L,2})(1) = f_{L,2}(1) = \theta_2 L$. Then, for $x\in[-L,0]$, starting from the first equality of Lemma \ref{lem:Phiout},
\begin{align*}
    &\quad \, \cDm^2(f_{L,1},f_{L,2})\Bigl(\f{x}{L}\Bigr) =  \cDm^2(f_{L,1},f_{L,2})\Bigl(\f{x + L}{L}\Bigr)  - \theta_2 L  \\
    &= f_{L,1}\Bigl(\f{L+x}{L}\Bigr) - \theta_2 L +   \log \Biggl(\f{e^{\theta_2 L - \theta_1 L}\int_0^{\f{L+x}{L}} e^{f_{L,2}(y) - f_{L,1}(y)}\,dy + \int_{\f{L+x}{L}}^1 e^{f_{L,2}(y) - f_{L,1}(y)}\,dy   }{\int_0^1 e^{f_{L,2}(y) - f_{L,1}(y)}\,dy}    \Biggr) \\
    &= f_{L,1}\Bigl(\f{L+x}{L}\Bigr) - \theta_1 L + \log \Biggl(\f{\int_0^{\f{L+x}{L}} e^{f_{L,2}(y) - f_{L,1}(y)}\,dy + e^{(\theta_1 - \theta_2)L} \int_{\f{L+x}{L}}^1 e^{f_{L,2}(y) - f_{L,1}(y)}\,dy   }{\int_0^1 e^{f_{L,2}(y) - f_{L,1}(y)}\,dy}   \Biggr) \\
    &= f_{L,1}\Bigl(\f{x}{L}\Bigr) - \theta_1 L + \log \Biggl(1 + \f{(e^{(\theta_1 - \theta_2)L}-1)\int_{\f{L+x}{L}}^1 e^{f_{L,2}(y) - f_{L,1}(y)}\,dy   }{\int_0^1 e^{f_{L,2}(y) - f_{L,1}(y)}\,dy} \Biggr) \\
    &= f_{L,1}\Bigl(\f{x}{L}\Bigr) - \theta_1 L + \log \Biggl(1 + \f{(e^{(\theta_1 - \theta_2)L}-1)\int_{L+x}^{L} e^{f_{L,2}(\f{y}{L}) - f_{L,1}(\f{y}{L})}\,dy   }{\int_0^L e^{f_{L,2}(\f{y}{L}) - f_{L,1}(\f{y}{L})}\,dy} \Biggr) \\
    &= f_{L,1}\Bigl(\f{x}{L}\Bigr) - \theta_1 L + \log \Biggl(1 + \f{(e^{(\theta_1 - \theta_2)L}-1)\int_{0}^{x} e^{f_{L,2}(\f{L+y}{L}) - \theta_2 L - f_{L,1}(\f{L+y}{L}) + \theta_1 L}\,dy   }{\int_{-L}^0 e^{f_{L,2}(\f{L+y}{L}) - \theta_2 L - f_{L,1}(\f{L+y}{L}) + \theta_1 L}\,dy} \Biggr).
\end{align*}
Since $\wt f_{L,m}(x) = f_{L,m}\bigl(\f{L+x}{L}\bigr) - \theta_m L$ for $x\in[-L,0]$ and $m \in \{1,2\}$, we obtain the right-hand side of \eqref{602}.
\end{proof}

The following lemma is the key input needed for the proof of Proposition \ref{prop:width_lim}. It connects convergence of scaled Brownian bridge to Brownian motion to convergence of the scaled output of $\cDm^2$ to $\cDm_\infty^2$.

\begin{lemma} \label{lem:Phi_close}
Let $\theta_1,\theta_2 \in \R$,  $\beta > 0$, and  $(L_n)_{n \ge 0}$ be a sequence converging to $+\infty$. Let  $\bigl(f_{1}^{(n)},f_{2}^{(n)}\bigr)_{n \ge 1}$ be a coupling of processes on a probability space $(\Omega,\Ff,\Pp)$, taking values in $\Cpin[0,1] \times \Cpin[0,1]$, together with two random functions $(f_1,f_2)$ taking values in $C(\R) \times C(\R)$ such that the following hold:
\begin{enumerate} [label=\textup{(\roman*)}]
    \item For each $n$, $f_{1}^{(n)}$ and $f_{2}^{(n)}$ are independent, with $f_{m}^{(n)} \deq \B_{\beta \sqrt L_n,\theta_m}$ for $m = 1,2$.
    \item For each $\ve > 0$ and each compact set $K \subseteq \R$, 
    \be \label{close_assumption}
    \limsup_{n \to \infty} \Pp\Biggl(\sup_{\substack{x \in K \\ m \in \{1,2\}}} \Bigl|f_{m}^{(n)}\Bigl(\f{x}{L_n}\Bigr) - f_m(x)   \Bigr| > \ve \Biggr) = 0,
    \ee
    where we extend $f_m^{(n)}$ to functions on $\R$ by setting $f_m^{(n)}(x+1) - f_m^{(n)}(x) =f_m^{(n)}(1) - f_m^{(n)}(0)$ for all $x \in \R$.
\end{enumerate}
Then,  for each compact set $K \subseteq \R$ and $\ve > 0$,
\[
\limsup_{n \to \infty} \Pp\Biggl(\sup_{x \in K } \Bigl|\cDm^2(f_1^{(n)},f_2^{(n)})\Bigl(\f{x}{L_n}\Bigr) 
  - \cDm_\infty^2(f_1,f_2)(x)   \Bigr| > \ve \Biggr) = 0,
\]
where $\cDm^2(f_1^{(n)},f_2^{(n)}):[0,1] \to \R$ is extended to a function on $\R$ in the same manner as $f_1^{(n)},f_2^{(n)}$. 
\end{lemma}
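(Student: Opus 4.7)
The plan is to analyze the $L_n\to\infty$ asymptotics of $\cDm^2(f_1^{(n)},f_2^{(n)})(x/L_n)$ using the explicit formulas in Lemma~\ref{lem:Phiout} (equivalently Lemma~\ref{lem:Phi2_ScaleL}), splitting into three cases according to the sign of $\theta_2-\theta_1$. When $\theta_1=\theta_2$ the argument is immediate: Lemma~\ref{lem:Phi_ord}(i) gives $\cDm^2(f_1^{(n)},f_2^{(n)})\equiv f_1^{(n)}$, while the $\mathfrak c(f_1)=\mathfrak c(f_2)$ branch of \eqref{eq:Phi_inf_def} reads $\cDm_\infty^2(f_1,f_2)\equiv f_1$, so hypothesis (ii) suffices.

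For $\theta_1<\theta_2$ I would combine \eqref{601}--\eqref{602} with the slope-periodic identity $\int_0^{L_n}e^{\wt f_{L_n,2}-\wt f_{L_n,1}}=e^{L_n(\theta_2-\theta_1)}\int_{-L_n}^{0}e^{\wt f_{L_n,2}-\wt f_{L_n,1}}$ (a direct consequence of $\wt f_{L_n,m}(z+L_n)-\wt f_{L_n,m}(z)=L_n\theta_m$) to produce the unified representation, valid for all $x$ in a fixed compact set $K\subseteq\R$ and $n$ large,
\[
\cDm^2(f_1^{(n)},f_2^{(n)})(x/L_n)-\wt f_{L_n,1}(x)=\log\!\left(1+(1-e^{-L_n(\theta_2-\theta_1)})\,\frac{\int_0^{x}e^{\wt f_{L_n,2}-\wt f_{L_n,1}}}{\int_{-L_n}^{0}e^{\wt f_{L_n,2}-\wt f_{L_n,1}}}\right),
\]
with the convention $\int_0^x=-\int_x^0$ when $x<0$. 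The prefactor tends to $1$, and the numerator tends to $\int_0^x e^{f_2-f_1}$ uniformly for $x\in K$ directly from (ii); so it suffices to establish $\int_{-L_n}^{0}e^{\wt f_{L_n,2}-\wt f_{L_n,1}}\to\int_{-\infty}^{0}e^{f_2-f_1}$ in probability. The identity $\int_{-\infty}^{0}+\int_0^{x}=\int_{-\infty}^x$ together with continuity of $\log$ then recovers the $\mathfrak c(f_1)<\mathfrak c(f_2)$ branch of $\cDm_\infty^2$. The symmetric case $\theta_1>\theta_2$ is handled by the same manipulation, yielding the integrals over $(x,\infty)$ and $(0,\infty)$ of the other branch of \eqref{eq:Phi_inf_def}.

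The main obstacle is uniform-in-$n$ control of the denominator tail $\int_{-L_n}^{-A}e^{\wt f_{L_n,2}-\wt f_{L_n,1}}$ for large $A$: the Brownian-bridge fluctuations over the length-$L_n$ interval are of order $\sqrt{L_n}$, so naive sup-norm bounds diverge. My strategy is to use the explicit bridge representation $\wt f_{L_n,m}(z)=\sqrt{L_n}\beta\B_m(z/L_n+1)+\theta_m z$ on $[-L_n,0]$, substitute $v=-z$, and apply the Brownian-bridge time-reversal $(\B_2-\B_1)(1-s)\stackrel{d}{=}(\hat\B_2-\hat\B_1)(s)$ to recast the tail as
\[
\int_A^{L_n}e^{-(\theta_2-\theta_1)v+\sqrt{L_n}\beta(\hat\B_2-\hat\B_1)(v/L_n)}\,dv.
\]
Brownian scaling identifies $\sqrt{L_n}(\hat\B_2-\hat\B_1)(v/L_n)$ with a two-sided Brownian motion $(\hat B_2-\hat B_1)(v)$ in the limit. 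Combined with a Skorokhod representation that promotes (ii) to almost-sure uniform convergence on compact sets, plus a two-scale split at a large auxiliary threshold $B\gg A$ (controlling $[A,B]$ by the coupling and the remainder $[B,L_n]$ by marginal moment estimates), the tail control reduces to the fact that the Yor-type exponential functional $\int_A^{\infty}e^{-(\theta_2-\theta_1)v+\beta(\hat B_2-\hat B_1)(v)}\,dv$ is a.s.\ finite and vanishes as $A\to\infty$, which holds because $\theta_2>\theta_1$.
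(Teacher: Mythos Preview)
Your approach is correct and follows essentially the same route as the paper. Both proofs reduce the case $\theta_2>\theta_1$ to convergence in probability of the denominator $\int_{-L_n}^{0}e^{\wt f_2^{(n)}-\wt f_1^{(n)}}\to\int_{-\infty}^{0}e^{f_2-f_1}$ (which the paper isolates as Lemma~\ref{lem:BB_tail}), and both handle this via a near/far split at a large threshold, using hypothesis~(ii) on the near part and marginal Gaussian estimates on the far part.

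Two minor differences are worth noting. First, your unified formula, obtained from the slope-periodic identity, handles $x>0$ and $x<0$ simultaneously, whereas the paper treats the two half-lines separately via \eqref{601} and \eqref{602}; your version is slightly cleaner. Second, for the far-tail control the paper's argument is more direct than yours: rather than invoking time-reversal plus Skorokhod, it writes the bridge on $[0,L_n]$ as $B(y)-\tfrac{y}{L_n}B(L_n)$, conditions on the event $\{\,|B_2(L_n)-B_1(L_n)|/L_n<\tfrac12(\theta_2-\theta_1)/\beta\,\}$ (probability $\to 1$), and on that event bounds the tail integral by $\int_T^{\infty}e^{\beta(B_2-B_1)(y)+\frac12(\theta_2-\theta_1)y}\,dy$, which is small for large $T$ by standard drifted-Brownian-motion facts. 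This avoids both the time-reversal bookkeeping and any appeal to Skorokhod (which is in any case unnecessary here, since you are proving convergence in probability on the given coupling).
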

\begin{proof}
By independence of $f_1^{(n)}$ and $f_1^{(n)}$ and the distributional convergence in Lemma \ref{lem:BB_to_BM}, $(f_1,f_2)$ are independent, and $f_m \deq B_{\beta,\theta_m}$ for $m \in \{1,2\}$. In particular, for $m \in \{1,2\}$, we have the almost sure limit
\[
\lim_{|x| \to \infty} \f{f_m(x)}{x} = \theta_m.
\]
We consider the case $\theta_2 > \theta_1$, and the case $\theta_2 < \theta_1$ follows by an analogous proof. The case $\theta_1 = \theta_2$ is tautological since $\cDm^2(f_1^{(n)},f_2^{(n)}) = f_1^{(n)}$ (Lemma \ref{lem:Phi_ord}), and $\cDm^2(f_1,f_2) = f_1$ by definition \eqref{eq:Phi_inf_def}. 

It further suffices to consider compact sets $K$ of the form $[0,M]$ and $[-M,0]$ for $M > 0$. Adopt the shorthand notation $\wt f_m^{(n)}(x) = f_m^{(n)}\bigl(\f{x}{L_n}\bigr)$. In the case $\theta_2 > \theta_1$, we have (recall \eqref{eq:Phi_inf_def}) 
\be \label{603}
\cDm_\infty^2(f_1,f_2)(x) = f_1(x) + \log \Biggl(\f{\int_{-\infty}^x e^{f_2(y) - f_1(y)}\,dy}{\int_{-\infty}^0 e^{f_2(y) - f_1(y)}\,dy}\Biggr)
\ee

\medskip \noindent \textbf{Case 1: $K = [0,M]$:} Assume that $L_n \ge M$. 
For $x \in [0,M]$, we use the equality \eqref{601}:
\be \label{604}
\cDm^2(f_1^{(n)},f_2^{(n)})\Bigl( \f{x}{L_n}\bigr) = \wt f_1^{(n)}(x) + \log \Biggl(1 + \f{(e^{(\theta_2 - \theta_1)L_n} - 1) \int_0^x e^{\wt f_2^{(n)}(y) - \wt f_1^{(n)}(y)}\,dy}{\int_{0}^L e^{\wt f_2^{(n)}(y) - \wt f_1^{(n)}(y)}\,dy}\Biggr),
\ee
and write \eqref{603} as
\be \label{604a}
\cDm_\infty^2(f_1,f_2)(x) = f_1(x) + \log \Biggl(1 + \f{\int_{0}^x e^{f_2(y) - f_1(y)}\,dy}{\int_{-\infty}^0 e^{f_2(y) - f_1(y)}\,dy}\Biggr).
\ee
Comparing \eqref{604} with \eqref{604a}, by uniform continuity of the function $x \mapsto \log(1+x)$ over $x \in [0,\infty)$, it suffices to show that, for each $M,\ve > 0$,
\be \label{605}
\limsup_{n \to \infty} \Pp\Biggl(\sup_{x \in [0,M]}\Biggl|\f{(e^{(\theta_2 - \theta_1)L_n} - 1) \int_0^x e^{\wt f_2^{(n)}(y) - \wt f_1^{(n)}(y)}\,dy}{\int_{0}^{L_n} e^{\wt f_2^{(n)}(y) - \wt f_1^{(n)}(y)}\,dy} - \f{\int_{0}^x e^{f_2(y) - f_1(y)}\,dy}{\int_{-\infty}^0 e^{f_2(y) - f_1(y)}\,dy}\Biggr| > \ve  \Biggr) = 0.
\ee
To give a sketch of the proof below, it follows by the assumption \eqref{close_assumption}  that the difference
\[
\int_0^x e^{\wt f_2^{(n)}(y) - \wt f_1^{(n)}(y)}\,dy - \int_0^x e^{\wt f_2^{(n)} - \wt f_1^{(n)}(y)}\,dy
\]
is small with high probability. Furthermore, since $\theta_2 > \theta_1$. for large $n$,
\[
\f{(e^{(\theta_2 - \theta_1)L_n} - 1)}{\int_{0}^{L_n} e^{\wt f_2^{(n)}(y) - \wt f_1^{(n)}(y)}\,dy} \approx \f{1}{\int_{0}^{L_n} e^{\wt f_2^{(n)}(y) - \theta_2 L_n - \wt f_1^{(n)}(y) + \theta_1 L_n}\,dy},
\]
and by a change of variables, the denominator in the right-hand side above is $\int_{-L_n}^0 e^{\wt f_2^{(n)}(y) - \wt f_1^{(n)}(y)}\,dy$. This heuristically justifies \eqref{605}. We now make this rigorous as follows:

For $x \in [0,M]$, write 
\[
\begin{aligned}
&\quad \, \f{(e^{(\theta_2 - \theta_1)L_n} - 1) \int_0^x e^{\wt f_2^{(n)}(y) - \wt f_1^{(n)}(y)}\,dy}{\int_{0}^{L_n} e^{\wt f_2^{(n)}(y) - \wt f_1^{(n)}(y)}\,dy} = \f{e^{(\theta_2 - \theta_1)L_n} - 1}{e^{(\theta_2 - \theta_1)L_n}} \f{  \int_0^x e^{\wt f_2^{(n)}(y) - \wt f_1^{(n)}(y)}\,dy}{\int_{0}^{L_n} e^{\wt f_2^{(n)}(y) - \theta_2 L_n - \wt f_1^{(n)}(y) + \theta_1 L_n}\,dy} \\
&= \f{e^{(\theta_2 - \theta_1)L_n} - 1}{e^{(\theta_2 - \theta_1)L_n}} \f{  \int_0^x e^{\wt f_2^{(n)}(y) - \wt f_1^{(n)}(y)}\,dy}{\int_{-L_n}^0 e^{\wt f_2^{(n)}(L_n+y) - \theta_2 L_n - \wt f_1^{(n)}(L_n+y) + \theta_1 L_n}\,dy} =   \f{e^{(\theta_2 - \theta_1)L_n} - 1}{e^{(\theta_2 - \theta_1)L_n}} \f{  \int_0^x e^{\wt f_2^{(n)}(y) - \wt f_1^{(n)}(y)}\,dy}{\int_{-L_n}^0 e^{\wt f_2^{(n)}(y)  - \wt f_1^{(n)}(y)}\,dy}.
\end{aligned}
\]
Then, 
\begin{align}
&\quad \, \f{(e^{(\theta_2 - \theta_1)L_n} - 1) \int_0^x e^{\wt f_2^{(n)}(y) - \wt f_1^{(n)}(y)}\,dy}{\int_{0}^{L_n} e^{\wt f_2^{(n)}(y) - \wt f_1^{(n)}(y)}\,dy} - \f{\int_{0}^x e^{f_2(y) - f_1(y)}\,dy}{\int_{-\infty}^0 e^{f_2(y) - f_1(y)}\,dy} \nonumber  \\
&=\f{e^{(\theta_2 - \theta_1)L_n} - 1}{e^{(\theta_2 - \theta_1)L_n}\int_{-L_n}^0 e^{\wt f_2^{(n)}(y) - \wt f_1^{(n)}(y)}\,dy}\int_0^x \Bigl(e^{\wt f_2^{(n)}(y) - \wt f_1^{(n)}(y)} - e^{f_2(y) - f_1(y)}\Bigr)\,dy \nonumber \\
&\qquad + \Biggl(\f{e^{(\theta_2 - \theta_1)L_n} - 1}{e^{(\theta_2 - \theta_1) L_n}\int_{-L_n}^0 e^{\wt f_2^{(n)}(y) - \wt f_1^{(n)}(y)}\,dy} - \f{1}{\int_{-\infty}^0 e^{f_2(y) - f_1(y)}\,dy}\Biggr)\int_{0}^x e^{f_2(y) - f_1(y)}\,dy. \nonumber  \\
&= \f{e^{(\theta_2 - \theta_1)L_n} - 1}{e^{(\theta_2 - \theta_1)L_n}\int_{-L_n}^0 e^{\wt f_2^{(n)}(y) - \wt f_1^{(n)}(y)}\,dy}\int_0^x e^{f_2(y) - f_1(y)}\Bigl(e^{\wt f_2^{(n)}(y) - f_2(y) + f_1(y) - \wt f_1^{(n)}(y)} - 1\Bigr)\,dy \label{606}\\
&\qquad + \f{e^{(\theta_2 - \theta_1)L_n }- 1}{e^{(\theta_2 - \theta_1)L_n}}\Biggl(\f{\int_{-\infty}^0 e^{f_2(y) - f_1(y)}\,dy - \int_{-L_n}^0 e^{\wt f_2^{(n)}(y) - \wt f_1^{(n)}(y)}\,dy}{\int_{-\infty}^0 e^{f_2(y) - f_1(y)}\,dy \int_{-L_n}^0 e^{\wt f_2^{(n)}(y) - \wt f_1^{(n)}(y)}\,dy }\Biggr)\int_0^x e^{f_2(y) - f_1(y)}\,dy \label{607}\\
&\qquad + \Biggl(\f{e^{(\theta_2 - \theta_1)L_n} - 1}{e^{(\theta_2 - \theta_1)L_n}} - 1\Biggr) \f{\int_0^x e^{f_2(y) - f_1(y)}\,dy}{\int_{-\infty}^0 e^{f_2(y) - f_1(y)}\,dy} \label{608}
\end{align}
Write the terms in \eqref{606},\eqref{607}, and \eqref{608}, as $A_{L_n,1}(x),A_{L_n,2}(x)$, and $A_{L_n,3}(x)$, respectively. It suffices to show that for $m \in \{1,2,3\}$ and every $\ve > 0$, $\limsup_{n \to \infty} \Pp(\sup_{x \in [0,M]} |A_{L_n,m}(x)| > \ve) = 0$. We show this for $A_{L,1}$, and the other terms are handled using the same estimates.
\begin{align}
&\quad \, \Pp(\sup_{x \in [0,M]} |A_{L_n,m}(x)| > \ve)  \\
&\le \Pp \biggl(\f{e^{(\theta_2 - \theta_1)L_n}\int_{-L_n}^0 e^{\wt f_2^{(n)}(y) - \wt f_1^{(n)}(y)}\,dy}{e^{(\theta_2 - \theta_1)L_n} - 1}   < \delta \Biggr) + \Biggl(\sup_{0 \le x \le M}e^{f_2(y) - f_1(y)} > D\biggr) \label{609} \\
&+ \Pp\Biggl(\sup_{0 \le x \le M}\Bigl|e^{\wt f_2^{(n)}(y) - f_2(y) + f_1(y) - \wt f_1^{(n)}(y)} - 1\Bigl| > \f{\delta \ve}{D M} \Biggr). \label{610}
\end{align}
For any choice of $\delta,\ve,D,M > 0$, as $n \to \infty$, the term \eqref{610} vanishes by the assumption \eqref{close_assumption}. Take the $\limsup$ of the right-hand side above as $n \to \infty$, then limits as $\delta \searrow 0$ and $D \to \infty$. By the convergence in probability $\int_{-L_n}^0 e^{\wt f_2^{(n)}(y) - \wt f_1^{(n)}(y)}\,dy \overset{p}{\longrightarrow} \int_{-\infty}^0 e^{f_2(y) - f_1(y)}\,dy$ (Lemma \ref{lem:BB_tail}) and since $f_1$ and $f_2$ are Brownian motions with drift, $\limsup_{n \to \infty}\Pp(\sup_{x \in [0,M]} |A_{n,m}(x)| > \ve)  = 0$.   

\medskip \noindent \textbf{Case 2: $K = [-M,0]$:} We assume $L_n > M$, and for $x \in [-M,0]$, we use the equality \eqref{602}:
\be \label{611}
\cDm^2(f_1^{(n)},f_2^{(n)})\Bigl( \f{x}{L_n}\bigr) = \wt f_{1}^{(n)}(x) + \log \Biggl(1 + \f{(e^{(\theta_1 - \theta_2) L_n}-1)\int_{0}^{x} e^{\wt f_{2}^{(n)}(y) - \wt f_{1}^{(n)}(y)}\,dy}{\int_{-L_n}^{0} e^{\wt f_{2}^{(n)}(y) - \wt f_{1}^{(n)}(y)}\,dy}  \Biggr).
\ee
Next, write \eqref{603} as
\be \label{612}
\cDm_\infty^2(f_1,f_2)(x) = f_1(x) + \log\Biggl(1 - \f{\int_0^x e^{f_2(y) - f_1(y)}\,dy}{\int_{-\infty}^0 e^{f_2(y) - f_1(y)}\,dy} 
 \Biggr).
\ee
Since $\theta_1 < \theta_2$, $e^{(\theta_1 - \theta_2)L_n} \to 0$ as $n \to \infty$. Thus, we formally see that \eqref{611} should be close to \eqref{612} with high probability as $n \to \infty$. The proof follows essentially the same proof as in the $K = [0,M]$ case. The only additional difficulty here is that we are dealing with the function $\log(1 + y)$ for $y > -1$, which is not uniformly continuous. However, $y \mapsto \log(1+ y)$ is uniformly continuous on the domain $[-1 + \delta,\infty)$ for $\delta > 0$. By continuity of the process \eqref{612} in $x$, for any $\ve ' > 0$, there exists $\delta > 0$ sufficiently small so that
\[
\Pp\Biggl(\inf_{x \in [-M,0]}\Bigl[1 - \f{\int_0^x e^{f_2(y) - f_1(y)}\,dy}{\int_{-\infty}^0 e^{f_2(y) - f_1(y)}\,dy}\Bigr] < \delta \Biggr) \le \ve'.
\]
By taking a limit as $\delta \searrow 0$, it suffices to show that, for every $\ve > 0$,
\[
\limsup_{n \to \infty} \Pp\Biggl(\sup_{x \in [-M,0]}\Bigl|\f{(e^{(\theta_1 - \theta_2) L_n}-1)\int_{0}^{x} e^{\wt f_{2}^{(n)}(y) - \wt f_{1}^{(n)}(y)}\,dy}{\int_{-L_n}^{0} e^{\wt f_{2}^{(n)}(y) - \wt f_{1}^{(n)}(y)}\,dy} - \f{\int_0^x e^{f_2(y) - f_1(y)}\,dy}{\int_{-\infty}^0 e^{f_2(y) - f_1(y)}\,dy}\Bigr| > \ve \Biggr) = 0.
\]
and this follows similarly as in the $K = [0,M]$ case.
\end{proof}

We now complete the proof of Proposition \ref{prop:width_lim}.
\begin{proof}[Proof of Proposition \ref{prop:width_lim}]
Let $\beta > 0$ and $(\theta_1,\ldots,\theta_k) \in \R^k$. Let $(L_n)_{n \in \N}$ be a sequence going to $\infty$ as $n \to \infty$. For $n \in \N$, let $(f_1^{(n)},\ldots,f_k^{(n)})$ be independent, with $f_m^{(n)} \deq \B_{\beta \sqrt L_n,L\theta_m}$ for $1 \le m \le k$, and define
\[
(g_1^{(n)},\ldots,g_k^{(n)}) := \FcDm^k(f_1^{(n)},\ldots,f_k^{(n)})
\]
so that $(g_1^{(n)},\ldots,g_k^{(n)}) \sim \Pp_{\beta\sqrt L_n}^{(L\theta_1,\ldots,L\theta_k)}$. For $1 \le m \le k$, extend $f_m^{(n)}$ and $g_{m}^{(n)}$ to functions on $\R$ by the condition that $f(x+1) - f(x) = f(1) - f(0)$ for all $x \in \R$. Then, for $1 \le m \le k$, define $\wt g_m^{(n)}(x) = g_m^{(n)}\bigl(\f{x}{L_n}\bigr)$ and  $\wt f_m^{(n)}(x) =f_m^{(n)}\bigl(\f{x}{L_n}\bigr)$. By Lemma \ref{lem:BB_to_BM}, $\wt f_m^{(n)}$ converges in distribution, with respect to the topology of uniform convergence on compact sets, to $B_{\beta,\theta_m}$ (two-sided Brownian motion with diffusivity $\beta$ and drift $\theta_m$). By Skorokhod representation (see, e.g., \cite[Thm.~11.7.2]{dudl}), on some probability space, we may couple $(\wt f_1^{(n)},\ldots,\wt f_k^{(n)})$ and a process $(f_1,\ldots,f_k) \in C(\R)^k$ of independent functions with marginal distributions $f_m \deq B_{\beta,\theta_m}$, so that, for each $1 \le m \le k$, with probability one, $\wt f_m^{(n)}$ converges uniformly on compact sets to $f_m$. In particular, for each $\ve > 0$, $1 \le m \le k$, and compact set $K\subseteq \R$,
\be \label{eq:fclose}
\limsup_{n \to \infty} \Pp\Big( \sup_{x \in K}| \wt f_m^{(n)}(x) - f_m(x)| > \ve\Big) = 0.
\ee
Define $(\wt g_1,\ldots,\wt g_k) = \FcDm_\infty(f_1,\ldots,f_k)$ so that $(\wt g_1,\ldots,\wt g_k) \deq \Pm_{\beta,\infty}^{(\theta_1,\ldots,\theta_k)}$. By definition of the maps $\FcDm$ and $\FcDm_\infty$, for $1 \le m \le k$,
\[
g_m^{(n)} = \cDm^m(f_1^{(n)},\ldots,f_m^{(n)}),\qquad \text{and}\qquad f_m = \cDm^m_\infty(f_1,\ldots,f_m),
\]
and we recall the inductive definitions $\cDm^1(f_1) = f_1$, $\cDm^m(f_1,\ldots,f_m) = \cDm^2\bigl(f_1,\cDm^2(f_2,\ldots,f_m)\bigr)$ and similarly for $\cDm_\infty^2$. Then, by inductively applying \eqref{eq:fclose} and Lemma \ref{lem:Phi_close}, we have, for $1 \le m \le k$, $\ve > 0$, and any compact set $K \subseteq \R$,
\[
\limsup_{n \to \infty} \Pp\Big( \sup_{x \in K}| \wt g_m^{(n)}(x) - \wt g_m(x)| > \ve\Big) = 0.
\]
In particular, $(\wt g_1^{(n)},\ldots,\wt g_k^{(n)})$ converges in distribution, on the space $C(\R)^k$, to $(\wt g_1,\ldots,\wt g_k)$, as desired. 
\end{proof}

\section{Gaussian process from height fluctuations}
\label{s.applicationSection}

In this section, we present the proof of Theorem~\ref{t.conG}. The only needed inputs from the previous sections is the existence of jointly invariant measures for the KPZ equation from Theorem \ref{thm:KPZ_invar_main} and some notation and results for the Green's function of the SHE from Section \ref{sec:SHE_conv_section}. In particular, in this section, we do not make use of the particular  form of the jointly invariant measures described in Theorem \ref{thm:KPZ_invar_main}. Hence, this section may be read independently from the previous sections of the paper. Recall from \eqref{e.defGamma} the process
\[
\mathcal{X}_\beta^{(\theta)}(t):=\frac{\log Z_\beta^{(\theta)}(t)-\gamma_\beta^{(\theta)} t}{\sqrt{t}}.
\]
Our goal is to show the convergence, as $t\to\infty$, of $\{\mathcal{X}_\beta^{(\theta)}(t):\theta\in\R\}$ to a limiting Gaussian process with covariance expressed in terms of $\mathcal{P}_\beta^{(0,-\theta)}$ as in \eqref{e.defA}. The idea is to approximate  $\log Z_\beta^{(\theta)}(t )-\gamma_\beta^{(\theta)}t $ by a martingale, written in the form of an It\^o integral, then to apply the martingale central limit theorem. It turns out that the stochastic integrand in the It\^o integral can be expressed in terms of the solution of the stochastic Burgers' equation with mean $-\theta$. Therefore, the formula for limiting correlation relies on the joint distribution of the stationary KPZ equation with different slopes.

By  shear invariance of $\xi$ (see Lemma \ref{l.reductiontheta0}), $\log Z_\beta^{(\theta)}(t)\deq\log Z_\beta^{(0)}(t)+\frac12\theta^2t$. Combined with \cite[Equation (5.3)]{ADYGTK22} and \cite[Proposition 4.1]{GK211}, we have as $t\to\infty$
\be \label{eq:exp_comp}
\Ee \log Z_\beta^{(\theta)}(t)=\Ee \log Z_\beta^{(0)}(t)+\frac12\theta^2 t=\gamma_\beta^{(\theta)} t+O(1),
\ee
so it is enough to show that Theorem~\ref{t.conG} holds for $\frac{1}{\sqrt{t}} (\log Z_\beta^{(\theta)}(t)-\Ee \log Z_\beta^{(\theta)}(t))$. 

The rest of the  proof consists of several steps, which we distribute to the following three sections.
 
\subsection{Stochastic integral representation}  To write $\log Z_\beta^{(\theta)}(t)-\Ee \log Z_\beta^{(\theta)}(t)$  as an It\^o integral, we resort to the Clark-Ocone formula \cite[Proposition 6.3]{CKNP21}, which applies to $\log Z_\beta^{(\theta)}(t)$ and gives 
\be \label{eq:CO}
\log Z_\beta^{(\theta)}(t)-\Ee \log Z_\beta^{(\theta)}(t)=\int_0^t\int_0^1 \Ee[\D_{s,y} \log Z_\beta^{(\theta)}(t)|\F_s] \xi(ds,dy).
\ee
Here $\D_{s,y}$ is the Malliavin derivative operator associated with the white noise $\xi$ (see \cite[Chapter 1.2.1]{Nua06}), and $\{\F_s\}_{s\geq0}$ is the natural filtration generated by $\xi$. In this subsection, we prove Proposition \ref{p.stoIn}, which gives an explicit formula for  $\Ee[\D_{s,y} \log Z_\beta^{(\theta)}(t)|\F_s]$.

We recall some notations:  $Z_\beta, Z_\beta^{\text{per}}$  were defined in Section~\ref{sec.l2converge}, see \eqref{eq:Z_b_series} and \eqref{eq:Zper_def}, and $Z_\beta$ is the Green's function of the SHE on $ \R$, see Section \ref{sec.convergeinitial}. To ease the notation, we let 
\begin{equation}\label{e.defGbeta}
Z_\beta^{\text{per}}(t,y\viiva s,x):=Z_\beta^{\text{per}}(t,y \viiva s,x;0)=\sum_{n\in\Z}  Z_\beta(t,y\viiva s,x+n),
\end{equation}
which, by Proposition \ref{prop:solve_SHE}\ref{itm:Z_conv}, may be viewed  as the Green's function of the SHE on the unit torus. To write \eqref{eq:CO} more explicitly in the case $\theta=0$, a direct calculation combined with the equation satisfied by $Z_\beta$ and Tonelli's theorem gives
\[
\begin{aligned}
\D_{s,y}\log Z_\beta^{(0)}(t)=\frac{\D_{s,y}Z_\beta^{(0)}(t)}{Z_\beta^{(0)}(t)}&=\beta\frac{\sum_{n\in\Z}\int_{\R} Z_\beta(t,x\viiva s,y+n)dx\cdot Z_\beta (s,y+n \viiva 0,0)}{\int_{\R} Z_\beta (t,x\viiva 0,0)dx}\\
&=\beta\frac{\int_0^1 Z_\beta^{\text{per}} (t,x \viiva s,y)dx \cdot Z_\beta^{\text{per}} (s,y\viiva 0,0)}{\int_0^1\int_0^1  Z_\beta^{\text{per}}(t,x \viiva s,y') Z_\beta^{\text{per}} (s,y'\viiva 0,0)dxdy'}.
\end{aligned}
\]
To obtain the second line, we used the fact that $\int_{\R} Z_\beta(t,x|s,y+n)dx=\int_0^1 Z_\beta^{\text{per}}(t,x|s,y)dx$, which comes from the periodicity of the noise and the following calculation
\[
\begin{aligned}
&\int_{\R} Z_\beta(t,x|s,y+n)dx=\int_{\R} Z_\beta(t,x-n|s,y)dx=\int_{\R} Z_\beta(t,x|s,y)dx\\
&=\sum_{k\in\Z} \int_0^1 Z_\beta(t,x+k|s,y)dx=\int_0^1 Z_\beta^{\text{per}}(t,x|s,y)dx.
\end{aligned}
\]
From the expression and the strict positivity of $Z_\beta^{\text{per}}$ (which can be derived following the argument of \cite{HL22}, see the proof of Proposition \ref{prop:solve_SHE}\ref{itm:SHE_soln}),
we conclude that $\D_{s,y}\log Z_\beta^{(0)}(t)>0$ and 
$\int_0^1 \D_{s,y}\log Z_\beta^{(0)}(t) dy=\beta$. As a matter of fact, $\beta^{-1}\D_{s,y} \log Z_\beta^{(0)}(t)$ can be interpreted as the quenched midpoint density of the (point-to-line) directed polymer evaluated at $(s,y)$, if we view the polymer path as lying on a cylinder.  For non-zero $\theta$, the calculation of $\D_{s,y}\log Z_\beta^{(\theta)}(t)$ is more involved. To state the result,  define 
\begin{equation}\label{e.defhu}
\begin{aligned}
h^{(\theta)}(s,y)&:= \log Z_\beta^{\text{per}}(s,y \viiva 0,0;\theta) = \log \sum_{n\in \Z} Z_\beta(s,y \viiva 0,n)e^{\theta n},\qquad\text{and}\\
u^{(\theta)}(s,y)&=\partial_y h^{(\theta)}(s,y),
\end{aligned}
\end{equation}   
where $\partial_y h^{(\theta)}(s,y)$ is taken in the distributional sense. Since $ Z_\beta^{\text{per}}(s,y \viiva 0,0;\theta)$ solves \eqref{eq:SHE}, one may view $h^{(\theta)}$ as the solution to \eqref{eq:KPZ},  
and $u^{(\theta)}$ as the solution to \eqref{eq:SBE}. Note that we have kept the dependence of $h^{(\theta)},u^{(\theta)}$ on the parameter $\beta$ implicit. 

The following is the main result of this subsection.
\begin{proposition}\label{p.stoIn}
We have 
\begin{equation}\label{e.co}
\log Z_\beta^{(\theta)}(t)- \Ee \log Z_\beta^{(\theta)}(t)=\beta \int_0^t\int_0^1 \Ee  [f^{(\theta)}(t,s,y)|\F_s]\xi(ds,dy),
\end{equation}
with
\begin{equation}\label{e.deff}
f^{(\theta)}(t,s,y):=\frac{ \int_0^1 Z_\beta^{\text{per}} (t,x\viiva s,y)dx \cdot \exp(\theta y+ h^{(-\theta)}(s,y))}{\int_0^1(\int_0^1 Z_\beta^{\text{per}}(t,x \viiva s,y')dx) \cdot \exp(\theta y'+ h^{(-\theta)}(s,y'))dy'}.
\end{equation}
\end{proposition}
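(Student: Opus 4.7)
My plan begins with the Clark-Ocone formula \eqref{eq:CO}, which reduces the claim to the pointwise identity of conditional expectations
$\Ee[\D_{s,y} \log Z_\beta^{(\theta)}(t) \mid \F_s] = \beta \, \Ee[f^{(\theta)}(t,s,y) \mid \F_s]$
for $(s,y) \in (0,t) \times [0,1]$. The first step is to compute the Malliavin derivative pathwise. Since the periodic noise $\xi$ is generated by $\xi_\T$ on $\R \times [0,1]$, perturbing $\xi_\T$ at $(s,y)$ perturbs $\xi$ simultaneously at every integer shift $(s,y+n)$, so the standard SHE Malliavin rule must be summed over $n \in \Z$. Multiplying by $e^{\theta x}$ and integrating in $x$, then using the shift identity $G^{(\theta)}(t,s,y+n) = e^{\theta n} G^{(\theta)}(t,s,y)$ and the definitional computation $\sum_n e^{\theta n} Z_\beta(s,y+n\viiva 0,0) = \sum_n e^{-\theta n} Z_\beta(s,y\viiva 0,n) = e^{h^{(-\theta)}(s,y)}$ (both consequences of noise periodicity and \eqref{e.defhu}), yields
\begin{equation*}
\D_{s,y}\log Z_\beta^{(\theta)}(t) = \beta \, \frac{G^{(\theta)}(t,s,y) \, e^{h^{(-\theta)}(s,y)}}{\int_0^1 G^{(\theta)}(t,s,y') \, e^{h^{(-\theta)}(s,y')} \, dy'}, \qquad G^{(\theta)}(t,s,y) := \int_\R e^{\theta x} Z_\beta(t,x\viiva s,y)\, dx,
\end{equation*}
where the denominator equals $Z_\beta^{(\theta)}(t)$ after decomposing via the Markov property at time $s$.

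The subtlety is that this formula involves $G^{(\theta)}$, whereas $f^{(\theta)}$ is written in terms of $G^{(0)}(t,s,y) = \int_0^1 Z_\beta^{\text{per}}(t,x\viiva s,y)\, dx$ (by Proposition~\ref{prop:solve_SHE}\ref{itm:Z_conv}), and these are genuinely different random fields on $[0,1]$. The plan is to close the gap via a Cameron-Martin/Galilean identity. Starting from the Feynman-Kac representation $G^{(\theta)}(t,s,y) = \Ee_{B_s=y}\bigl[e^{\theta B_t}\, :\!\exp\!:\!\bigl(\beta \int_s^t \xi(u,B_u)\, du\bigr)\bigr]$ and applying the Girsanov shift $B \mapsto B + \theta(\cdot - s)$ to absorb the tilt $e^{\theta B_t}$, one obtains
\begin{equation*}
G^{(\theta)}(t,s,y) = e^{\theta y + \frac{\theta^2}{2}(t-s)}\, \wt G(t,s,y), \qquad \wt G(t,s,y) := \int_\R \wt Z_\beta(t,x\viiva s,y)\, dx,
\end{equation*}
where $\wt Z_\beta$ is the Green's function of the SHE driven by the sheared noise $\xi^{\theta,s}(u,v) := \xi(u, v + \theta(u-s))$. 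By Lemma~\ref{lem:scaling_relations}, $\xi^{\theta,s}$ restricted to $(s,t] \times [0,1]$ remains a periodic space-time white noise with the same joint law as $\xi$ there and is independent of $\F_s$; hence, conditional on $\F_s$, the random function $y \mapsto \wt G(t,s,y)$ has the same conditional distribution as $y \mapsto G^{(0)}(t,s,y)$.

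Substituting the Galilean identity into the ratio for $\D_{s,y}\log Z_\beta^{(\theta)}(t)$, the factor $e^{\theta^2(t-s)/2}$ cancels between numerator and denominator, and $e^{\theta y}$ combines with the $\F_s$-measurable $e^{h^{(-\theta)}(s,y)}$ to give $e^{\theta y + h^{(-\theta)}(s,y)}$. Because both this weight and its analogue in the denominator are $\F_s$-measurable while $\wt G(t,s,\cdot) \deq G^{(0)}(t,s,\cdot)$ conditional on $\F_s$, replacing $\wt G$ by $G^{(0)}$ inside the conditional expectation preserves its value, producing exactly $\beta \,\Ee[f^{(\theta)}(t,s,y) \mid \F_s]$.

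The main obstacle is the rigorous implementation of the Cameron-Martin step, since the Feynman-Kac representation is only formal and the Wick exponential against space-time white noise is singular. I would justify the Galilean identity either via spatial mollification of $\xi$, where classical Cameron-Martin applies and the limit is controlled by the $L^2$ convergence machinery of Section~\ref{sec:SHE_conv_section}, or more directly by checking that $G^{(\theta)}(t,s,y)\, e^{-\theta y - \theta^2(t-s)/2}$ and $\wt G(t,s,y)$ satisfy the same backward SHE in $(s,y)$ with the same terminal condition at $s=t$ and invoking uniqueness of solutions. Ancillary technical points---integrability for applying Clark-Ocone, exchange of the Malliavin derivative with the $x$-integral and the sum over $n$, and positivity of the denominators---can be handled using the positive and negative moment bounds on $Z_\beta$ from \cite{HL22} combined with the Gaussian decay of the heat kernel.
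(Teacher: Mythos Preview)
Your proposal is correct and follows essentially the same route as the paper. The paper's proof simply cites \cite[Proposition 2.2]{GK23} to obtain the conditional-expectation identity with $\int_0^1 Z_\beta^{\text{per}}(t,x\viiva s,y)\,dx$ already in place, and then performs the algebraic rewriting to reach \eqref{e.deff}; your argument unpacks that citation by computing the Malliavin derivative directly (yielding the ratio involving $G^{(\theta)}$) and then carrying out the Galilean/Cameron--Martin shear to pass from $G^{(\theta)}$ to $G^{(0)}$ inside the conditional expectation. This shear step is precisely the content that the paper borrows from \cite{GK23}, and your proposed justifications via mollification or the backward SHE mirror the paper's own handling of such identities in Lemmas~\ref{l.reductiontheta0} and~\ref{l.equallaw}.
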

\begin{proof}
By \cite[Proposition 2.2]{GK23}, we have $\Ee[\D_{s,y} \log Z_\beta^{(\theta)}(t)|\F_s] = \Ee[f^{(\theta)}(t,s,y)|\F_s]$, where
\[
f^{(\theta)}(t,s,y):= \frac{\int_0^1 Z_\beta^{\text{per}}(t,x \viiva s,y)dx \sum_{n\in\Z} e^{\theta(y+n)}Z_\beta (s,y+n \viiva 0,0)}{\int_{\R} (\int_0^1 Z_\beta^{\text{per}} (t,x \viiva s,y')dx)e^{\theta y'}Z_\beta (s,y'\viiva 0,0)dy'} .
\]
By \eqref{eq:CO}, it suffices to show that $f^{(\theta)}$ can be written as in \eqref{e.deff}. 
First, one can check that $\int_0^1 f^{(\theta)}(t,s,y)dy=1$, so it can be viewed as a (random) probability density on $\bT$. Then one can  use the $h^{(\theta)}$ defined in \eqref{e.defhu} to rewrite the numerator in the above expression, and using the fact that the denominator is the integral over $\bT$ of the numerator, we obtain \eqref{e.deff}, completing the proof. 
 \end{proof}
 
 \begin{remark}
On a formal level, we can view $h^{(\theta)}$ as the solution to the KPZ equation with the initial data $h^{(\theta)}(0,y)$ given by $\log \sum_{n\in\Z} \delta(y-n)e^{\theta n}$, which, roughly speaking, can be viewed as the interface with an ``average slope'' $\theta$. Therefore, the corresponding Burgers' solution $u^{(\theta)}$  has the ``average'' $\theta$. For different values of $\theta$, the process $ h^{(\theta)}$ solves \eqref{eq:KPZ} driven by the same noise $\beta \xi$, but with different initial conditions. Thus, in the long time limit of $s\to\infty$, we expect that $\{h^{(\theta)}(s,\cdot) - h^{(0)}(s,0)\}_\theta$ converges to the jointly invariant measures obtained in Theorem~\ref{thm:KPZ_invar_main}. 
 \end{remark}

 \begin{remark}\label{r.proofconjecture}
     To prove Conjecture~\ref{conj1},  one needs to take the (high order) derivatives of $\log Z_\beta^{(\theta)}(t)$ with respect to $\theta$. A starting point could be the above Clark-Ocone representation in \eqref{e.co}. In other words, we differentiate with respect to $\theta$ on both sides of \eqref{e.co}, and proceed from there. From the expression of $f^{(\theta)}$ in \eqref{e.deff}, it is clear that one needs to understand how the long time behavior of the recentered version of $h^{(\theta)}$ depends on  $\theta$. This leads us to study the jointly invariant measures in  Theorem~\ref{thm:KPZ_invar_main}. In the proof below, we use elements of the proof of the 1F1S principle in Theorem \ref{thm:1f1s}, adapted to this specific situation where we have a sum of delta initial conditions. 
 \end{remark}

\subsection{Approximation} 
In this section, we construct an approximation of the stochastic integrand $f^{(\theta)}(t,s,y)$, which appeared in the Clark-Ocone representation \eqref{e.co}. In its expression, one can view 
\[
\frac{\int_0^1 Z_\beta^{\text{per}}(t,x\viiva s,y)dx}{\int_0^1\int_0^1 Z_\beta^{\text{per}}(t,x\viiva s,y')dxdy'}
\]
as the endpoint density  of the directed polymer of length $t-s$, running backwards in time on the cylinder, with the starting point sampled from the uniform distribution at time $t$. For $t-s\gg1$, the endpoint density evaluated at $(s,y)$ can be approximated by $\exp(\beta \B(y))/\int_0^1 \exp(\beta \B(y'))dy'$, where $\B$ is a standard Brownian bridge connecting $(0,0)$ and $(1,0)$. For the other factor $\exp\bigl(\theta y + h^{(-\theta)}(s,y)\bigr)$, we expect that for large $s\gg1$, $\{h^{(-\theta)}(s,\cdot) - h^{(-\theta)}(s,0)\}_{\theta}$  is close in law to the periodic KPZ horizon $\mathcal{P}_\beta$.

We introduce a few notations   on the endpoint densities
of the ``forward'' and ``backward'' polymer path on a cylinder. Let $\mathcal{M}_1(\mathbb{T})$ be the set of Borel probability
measures  on $\bT$. 
For
any $\nu\in{\mathcal M}_1(\bT)$ and $t>s$, let
\begin{equation}\label{e.forwardbackward}
\begin{aligned}
&\rho_{\mathrm{f}}(t,x \viiva s,\nu):=\frac{\int_0^1 Z_\beta^{\text{per}}(t,x \viiva s,y)\nu(dy)}{\int_0^1\int_0^1 Z_\beta^{\text{per}}  (t,x'\viiva s,y)\nu(dy)dx'}, \quad x\in\bT,\\
&\rho_{\mathrm{b}}(t,\nu\viiva s,y):=\frac{\int_0^1Z_\beta^{\text{per}} (t,x\viiva s,y)\nu(dx)}{\int_0^1\int_0^1 Z_\beta^{\text{per}} (t,x \viiva s,y')\nu(dx)dy'},\quad y\in\bT.
\end{aligned}
\end{equation}
In other words, for $\rho_{\mathrm{f}}$, $s$ is the initial time and $t$ is the terminal time, while for $\rho_{\mathrm{b}}$, it is the other way around.
Here the subscripts ``$\mathrm{f}, \mathrm{b}$'' represent ``forward'' and ``backward'' respectively. 
Using the new notation, we can rewrite $f^{(\theta)}$ from \eqref{e.deff} as follows, where ``$\mathfrak{m}$'' is the Lebesgue measure on $\bT$:
\begin{equation}\label{e.fthetanew}
f^{(\theta)}(t,s,y)=\frac{ \rho_{\mathrm{b}}(t,\mathfrak{m} \viiva s,y) \cdot \exp(\theta y+h^{(-\theta)}(s,y)-h^{(-\theta)}(s,0))}{\int_0^1\rho_{\mathrm{b}}(t,\mathfrak{m} \viiva s,y')\cdot \exp(\theta y'+h^{(-\theta)}(s,y')-h^{(-\theta)}(s,0))dy'}.
\end{equation}

To construct the approximation of $f^{(\theta)}(t,s,y)$, we let $\{H^{(\theta)}(t,x), t\geq0, x\in\bT\}_{\theta}$ be jointly stationary solutions  to the KPZ equation: Let $\{H^{(\theta)}(0,\cdot)\}_\theta$ be sampled from the measure $\mathcal P_\beta$ defined in Corollary \ref{cor:ICH_process}, independent from $\xi$, and let $H^{(\theta)}$ solve \eqref{eq:KPZ} with this initial condition. In other words, $h^{(\theta)}$ and $H^{(\theta)}$ solve the   equation with the same noise, but with different initial data.

We will use $\Ee_{\mathcal P_\beta}$ to denote the expectation only with respect to the initial data $\{H^{(\theta)}(0,\cdot)\}_\theta$, while $\Ee$ remains the expectation on the noise $\xi$ only.

Assume $\B$ is a Brownian bridge connecting $(0,0)$ and $(1,0)$, independent of $\{H^{(\theta)}(0,\cdot)\}_\theta$ and $\xi$, and let $\nu_\B$ be the (random) probability measure on $\bT$ such that 
\[
\nu_\B(dy)=\frac{\exp(\beta \B(y))}{\int_0^1 \exp(\beta \B(y'))dy'}dy.
\] 

Using $\nu_\B$ as the initial distribution of the backward polymer and replacing $h^{(-\theta)}$ by $H^{(-\theta)}$, we define the approximation of $f^{(\theta)}$   as   
\begin{equation}\label{e.deftildef}
F^{(\theta)}(t,s,y)= \frac{ \rho_{\mathrm{b}}(t,\nu_\B \viiva s,y) \cdot \exp(\theta y+H^{(-\theta)}(s,y)-H^{(-\theta)}(s,0))}{\int_0^1\rho_{\mathrm{b}}(t,\nu_\B \viiva s,y')\cdot \exp(\theta y'+H^{(-\theta)}(s,y')-H^{(-\theta)}(s,0))dy'},
\end{equation}
which we will show to be ``close'' to $f^{(\theta)}(t,s,y)$, in the regime of $t-s\gg1, s\gg1$. 

For any given $\theta\in\R$ and $t>s$, using the invariance of the Brownian bridge under the KPZ evolution, one can check that 
\begin{equation}\label{e.flaw}
\{F^{(\theta)}(t,s,y)\}_{y\in\bT}\deq \left\{\frac{\exp(\beta \B(y))\exp(\beta \tilde{\B}(y))}{\int_0^1 \exp(\beta \B(y'))\exp(\beta \tilde{\B}(y')dy'}\right\}_{y\in\bT},
\end{equation}
where $\tilde{\B}$ is an independent copy of $\B$. As a matter of fact, this comes from (i) for any $\theta\in\R$ and $s\geq0$, the process $\{\theta y +H^{(-\theta)}(s,z)-H^{(-\theta)}(s,0)\}_{z\in[0,1]}$  has the same distribution as $\beta \tilde{\B}$; (ii) for any $t\geq s$, we have 
\[
\{\rho_{\mathrm{b}}(t,\nu_\B\viiva s,y)\}_{y\in\bT}\deq\left\{\frac{\exp(\beta \B(y))}{\int_0^1 \exp(\beta \B(y'))dy'}\right\}_{y\in\bT}.
\]
Further, define 
\begin{equation}\label{e.defM}
M^{(\theta)}(t)=\beta \int_0^t\int_0^1 \Ee_{\B}\Ee[F^{(\theta)}(t,s,y)|\F_s]\xi(s,y)dyds,
\end{equation}
where $\Ee_\B$ is the expectation on $\B$ only. It is worth noting the multiple sources of randomnesses in the above expression: the Brownian bridge $\B$, the noise $\xi$, and the initial data $H^{(\theta)}(0,\cdot)$, so here we stress that  the It\^o integral is defined for almost  every realization of $H^{(\theta)}(0,\cdot)$.

Dividing by $t$ in the following result shows that $\f{M^{(\theta)}(t)}{t}$ approximates $\f{\log Z_\beta^{(\theta)}(t)-\Ee \log Z_\beta^{(\theta)}(t)}{t}$ for large $t$. It relies on estimates from \cite{GK21} showing mixing of the polymer endpoints from different initial conditions.

\begin{proposition}\label{p.maapp}
There exists a constant $C>0$ independent of $t\geq1$ such that 
\[
\Ee_{\mathcal P_\beta}\Ee \left|\log Z_\beta^{(\theta)}(t)-\Ee \log Z_\beta^{(\theta)}(t)-M^{(\theta)}(t)\right|^2 \leq C.
\]
\end{proposition}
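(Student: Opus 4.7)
Subtracting \eqref{e.defM} from the Clark--Ocone representation \eqref{e.co}, I would write the quantity in question as a single It\^o integral,
\begin{equation*}
\log Z_\beta^{(\theta)}(t) - \Ee\log Z_\beta^{(\theta)}(t) - M^{(\theta)}(t) \;=\; \beta \int_0^t\!\!\int_0^1 G(t,s,y)\,\xi(ds,dy),
\end{equation*}
where $G(t,s,y) := \Ee[f^{(\theta)}(t,s,y) \mid \F_s] - \Ee_\B\Ee[F^{(\theta)}(t,s,y) \mid \F_s]$. Since the initial data $\{H^{(-\theta)}(0,\cdot)\}$ used to define $F^{(\theta)}$ and the Brownian bridge $\B$ are independent of the driving noise $\xi$, the It\^o isometry (applied first in $\xi$ and then followed by $\Ee_{\mathcal P_\beta}$) gives
\begin{equation*}
\Ee_{\mathcal P_\beta}\Ee\bigl|\log Z_\beta^{(\theta)}(t) - \Ee\log Z_\beta^{(\theta)}(t) - M^{(\theta)}(t)\bigr|^2 \;=\; \beta^2 \int_0^t\!\!\int_0^1 \Ee_{\mathcal P_\beta}\Ee\bigl|G(t,s,y)\bigr|^2\,dy\,ds.
\end{equation*}
It therefore suffices to obtain a pointwise bound on $\Ee_{\mathcal P_\beta}\Ee|G(t,s,y)|^2$ that is integrable in $s$ uniformly in $t\ge 1$ and $y\in[0,1]$.

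The integrand $G$ differs from zero for two independent reasons, which I would separate by inserting an intermediate object $\widetilde F^{(\theta)}(t,s,y)$ defined exactly as $F^{(\theta)}$ in \eqref{e.deftildef} but with $h^{(-\theta)}$ in place of $H^{(-\theta)}$, so that $G = G_1 + G_2$ with
\begin{equation*}
G_1 := \Ee[f^{(\theta)}\mid\F_s] - \Ee_\B\Ee[\widetilde F^{(\theta)}\mid\F_s], \qquad G_2 := \Ee_\B\Ee\bigl[\widetilde F^{(\theta)} - F^{(\theta)}\,\big|\,\F_s\bigr].
\end{equation*}
The term $G_1$ only changes the initial distribution of the backward polymer from the uniform measure $\mathfrak m$ to $\nu_\B$; comparing \eqref{e.fthetanew} and \eqref{e.deftildef}, this amounts to replacing $\rho_{\mathrm b}(t,\mathfrak m\viiva s,y)$ by $\rho_{\mathrm b}(t,\nu_\B\viiva s,y)$ inside a ratio whose numerator and denominator are both $\F_s$-measurable. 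The backward polymer density on the cylinder equilibrates exponentially fast uniformly in the initial probability measure, a contraction property established in \cite{GK21} (see also \cite[Proposition~A.1]{GK23}); combining this with the elementary bound $|\log x_1 - \log x_2|\le (x_1^{-1}+x_2^{-1})|x_1-x_2|$ and negative moment bounds on $Z_\beta^{\text{per}}$ (the same ones used in \eqref{e.contractionpolymer}), one gets $\Ee_{\mathcal P_\beta}\Ee|G_1(t,s,y)|^2\le C e^{-c(t-s)}$ uniformly in $y$ and $\theta$. The term $G_2$ only changes $h^{(-\theta)}(s,\cdot)-h^{(-\theta)}(s,0)$ into $H^{(-\theta)}(s,\cdot)-H^{(-\theta)}(s,0)$; by the 1F1S principle of Proposition~\ref{prop:1f1s} (or, in the polymer form, Lemma~\ref{l.ofos}), the former converges to the latter as $s\to\infty$. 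Upgrading convergence in probability to an $L^2$ bound with exponential rate via the same contraction estimate of \cite[Proposition~A.1]{GK23} and uniform negative moment bounds on the denominators in \eqref{e.fthetanew}--\eqref{e.deftildef} should yield $\Ee_{\mathcal P_\beta}\Ee|G_2(t,s,y)|^2 \le C e^{-cs}$.

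Combining the two estimates gives $\Ee_{\mathcal P_\beta}\Ee|G(t,s,y)|^2 \le C\bigl(e^{-c(t-s)}+e^{-cs}\bigr)$, whose double integral over $[0,t]\times[0,1]$ is bounded by a constant independent of $t\ge 1$, proving the proposition. \emph{The main obstacle} is the $G_2$ estimate: while convergence of $h^{(-\theta)}(s,\cdot)-h^{(-\theta)}(s,0)$ to the stationary profile is known in probability, securing a quantitative exponential $L^2$ rate that is also uniform in $\theta\in\R$ (or at least in $\theta$ on compacts, which is what is ultimately needed for the application in Theorem~\ref{t.conG}) requires carefully exploiting the ratio structure of $F^{(\theta)}$ and $\widetilde F^{(\theta)}$, controlling suprema of $e^{H^{(-\theta)}(s,\cdot)}$ and $e^{h^{(-\theta)}(s,\cdot)}$ via moment bounds, and propagating the polymer contraction estimate through these ratios---mirroring, but refining, the argument sketched in \cite[Section 4]{GK23} that our cleaner framework should make substantially more transparent.
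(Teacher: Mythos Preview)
Your overall strategy---apply It\^o isometry to the difference, then show the resulting integrand is bounded by $C\bigl(e^{-c(t-s)}+e^{-cs}\bigr)$ via polymer contraction---matches the paper's and leads to the right final bound. But the execution differs in one essential technical point, and that point is exactly what you flag as ``the main obstacle.''

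The paper does \emph{not} split $G$ into your $G_1$ (change of backward initial data) and $G_2$ (change of $h^{(-\theta)}$ to $H^{(-\theta)}$). Instead, it first applies a shear transformation (Lemma~\ref{l.equallaw}) that replaces the pair $(f^{(\theta)},F^{(\theta)})$ by an equal-in-law pair $(\tilde f^{(\theta)},\tilde F^{(\theta)})$, each of which can then be written explicitly (equation~\eqref{e.newexfF}) as a normalized product of a backward polymer density $\rho_{\mathrm b}(t,\cdot\,\viiva s,y)$ and a \emph{forward} polymer density $\rho_{\mathrm f}(s,y\viiva 0,\cdot)$ on the \emph{slope-zero} cylinder, with only the initial distributions differing. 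Once both objects are in this form, the contraction estimates \eqref{051705-23}--\eqref{e.mmbdrho} from \cite{GK21} apply directly and uniformly in the (now hidden) parameter~$\theta$, giving $\Ee\sup_y|\tilde f^{(\theta)}-\tilde F^{(\theta)}|^2\le C(e^{-\lambda(t-s)}+e^{-\lambda s})$ in one stroke. Your $G_2$ step asks for an exponential $L^2$ rate comparing slope-$(-\theta)$ KPZ solutions started from a delta array versus the stationary profile; the polymer contraction of \cite{GK21} is stated for the torus (slope zero), so you would in any case need the shear of Lemma~\ref{l.reductiontheta0} to reduce to that setting---which is precisely what Lemma~\ref{l.equallaw} packages in a way that treats both terms symmetrically.

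There is also a boundary issue your sketch does not address. The pointwise bound $\Ee|G(t,s,y)|^2\le C(e^{-c(t-s)}+e^{-cs})$ cannot hold uniformly in $y$ for $s$ near $0$, because the delta initial data for $h^{(-\theta)}$ makes $f^{(\theta)}(t,s,\cdot)$ concentrate like a heat kernel. The paper handles this by splitting the $s$-integral into $[1,t-1]$ and $[0,t]\setminus[1,t-1]$: on the bulk the exponential contraction applies; on the boundary one shows separately that $\int_0^1\int_0^1\Ee|\tilde f^{(\theta)}(t,s,y)|^2\,dy\,ds$ is bounded using $\|\tilde f^{(\theta)}(t,s,y)\|_2\le C\sum_n\rho(s,n+y-\cdot)$ and an explicit heat-kernel calculation, while the $\tilde F^{(\theta)}$ contribution is controlled by the stationarity identity~\eqref{e.flaw}.
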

\begin{proof} 
 Define 
\begin{align*}
\tilde{f}^{(\theta)}(t,s,y)&:=\frac{ \rho_{\mathrm{b}}(t,\mathfrak{m} \viiva s,y) \cdot \exp(\theta y+\tilde{h}^{(-\theta)}(s,y)-\tilde{h}^{(-\theta)}(s,0))}{\int_0^1\rho_{\mathrm{b}}(t,\mathfrak{m} \viiva s,y')\cdot \exp(\theta y'+\tilde{h}^{(-\theta)}(s,y')-\tilde{h}^{(-\theta)}(s,0))dy'},\\
\tilde{F}^{(\theta)}(t,s,y)&:=\frac{ \rho_{\mathrm{b}}(t,\nu_\B \viiva s,y)  \exp(\theta y+\tilde{H}^{(-\theta)}(s,y)-\tilde{H}^{(-\theta)}(s,0))}{\int_0^1 \rho_{\mathrm{b}}(t,\nu_\B\viiva s,y')  \exp(\theta y'+\tilde{H}^{(-\theta)}(s,y')-\tilde{H}^{(-\theta)}(s,0))dy'},
\end{align*}
where 
\begin{equation}\label{e.deftildeh}
\begin{aligned}
\tilde{h}^{(\theta)}(t,x)&:=\theta x+\frac12\theta^2 t+\log \sum_{n\in\Z} Z_\beta(t,x\viiva 0,n-\theta t), \qquad\text{and}\\
\tilde{H}^{(\theta)}(t,x)&:=\theta x+\frac12\theta^2 t+\log \int_{\R} Z_\beta (t,x \viiva 0,y)\exp\bigl(H^{(\theta)}(0,y+\theta t)-\theta(y+\theta t)\bigr) dy.
\end{aligned}
\end{equation}
By Lemma~\ref{l.equallaw} below,  for any fixed $\theta$ and $t>s$, we have
\begin{equation}\label{e.flawnew}
\bigg(\{f^{(\theta)}(t,s,y)\}_{y\in\bT}, \{F^{(\theta)}(t,s,y)\}_{y\in\bT}\bigg)\deq\bigg(\{\tilde{f}^{(\theta)}(t,s,y)\}_{y\in\bT}, \{\tilde{F}^{(\theta)}(t,s,y)\}_{y\in\bT}\bigg).
\end{equation}
Thus, taking the difference between \eqref{e.co} and \eqref{e.defM} and  applying the It\^o isometry, we obtain
\begin{equation}\label{e.itoisometry}
\begin{aligned}
&\Ee_{\mathcal P_\beta}\Ee \left|\log Z_\beta^{(\theta)}(t)-\Ee \log Z_\beta^{(\theta)}(t)-M^{(\theta)}(t)\right|^2\\
&=\beta^2\int_0^t\int_0^1 \Ee_{\mathcal P_\beta}\Ee\big[ |\Ee[\tilde{f}^{(\theta)}(t,s,y)|\F_s]-\Ee_{\B}\Ee [\tilde{F}^{(\theta)}(t,s,y)|\F_s]|^2\big] dyds.
\end{aligned}
\end{equation}

The rest of the proof consists of two steps. 

\medskip
  \emph{Step 1, rewrite $\tilde{f}^{(\theta)}$ and $\tilde{F}^{(\theta)}$ using the polymer endpoint density.} To estimate the right-hand side of \eqref{e.itoisometry}, we will   rewrite the factors $\exp(\theta y+\tilde{h}^{(-\theta)}(s,y)-\tilde{h}^{(-\theta)}(s,0))$ and $\exp(\theta y+\tilde{H}^{(-\theta)}(s,y)-\tilde{H}^{(-\theta)}(s,0))$ in terms of the forward polymer endpoint. By definition, we have  
\[
\theta y+\tilde{h}^{(-\theta)}(s,y)-\tilde{h}^{(-\theta)}(s,0)=\log \sum_{n\in\Z} Z_\beta (s,y\viiva 0,n+\theta s)-\log \sum_{n\in\Z} Z_\beta (s,0\viiva 0,n+\theta s),
\]
and
\[
\begin{aligned}
\theta y+\tilde{H}^{(-\theta)}(s,y)-\tilde{H}^{(-\theta)}(s,0)=&\log \int_{\R} Z_\beta(s,y\viiva 0,y')\exp\bigl(H^{(-\theta)}(0,y'-\theta s)+\theta(y'-\theta s)\bigr) dy'\\
&-\log \int_{\R} Z_\beta (s,0\viiva 0,y')\exp\bigl(H^{(-\theta)}(0,y'-\theta s)+\theta(y'-\theta s)\bigr) dy'.
\end{aligned}
\]
One can further rewrite the summation as
\[
\sum_{n\in\Z} Z_\beta(s,y \viiva 0,n+\theta s)=\sum_{n\in\Z} Z_\beta (s,y+n \viiva 0,\theta s-\lfloor \theta s\rfloor)=Z_\beta^{\text{per}}(s,y\viiva 0,\theta s-\lfloor \theta s\rfloor).
\]
Similarly, since $H^{(-\theta)}(0,y)+\theta y$ is a periodic function, we have 
\[
\begin{aligned}
&\int_{\R} Z_\beta (s,y \viiva 0,y')\exp\bigl(H^{(-\theta)}(0,y'-\theta s)+\theta(y'-\theta s)\bigr) dy'\\
&=\int_0^1 Z_\beta^{\text{per}}(s,y \viiva 0,y')\exp\bigl(H^{(-\theta)}(0,y'-\theta s)+\theta(y'-\theta s)\bigr) dy'.
\end{aligned}
\]
For any $\theta$ and $s$, define the following two probability measures on $\bT$: 
\[
\mu_{\theta,s}(dy)=\delta(y-(\theta s-\lfloor \theta s\rfloor)), \quad\text{and}\quad \pi_{\theta,s}(dy)=\frac{\exp(H^{(-\theta)}(0,y-\theta s)+\theta(y-\theta s))}{\int_0^1 \exp(H^{(-\theta)}(0,y'-\theta s)+\theta(y'-\theta s)) dy'}dy.
\]
Then, we can rewrite $\tilde{f}^{(\theta)},\tilde{F}^{(\theta)}$ as 
\begin{equation}\label{e.newexfF}
\begin{aligned}
\tilde{f}^{(\theta)}(t,s,y)=\frac{ \rho_{\mathrm{b}}(t,\mathfrak{m} \viiva s,y) \cdot \rho_{\mathrm{f}}(s,y\viiva 0,\mu_{\theta,s})}{\int_0^1 \rho_{\mathrm{b}}(t,\mathfrak{m}\viiva s,y') \cdot \rho_{\mathrm{f}}(s,y'\viiva 0,\mu_{\theta,s})dy'},\\
\tilde{F}^{(\theta)}(t,s,y)=\frac{ \rho_{\mathrm{b}}(t,\nu_{\B}\viiva s,y) \cdot \rho_{\mathrm{f}}(s,y\viiva 0,\pi_{\theta,s})}{\int_0^1 \rho_{\mathrm{b}}(t,\nu_{\B}\viiva s,y') \cdot \rho_{\mathrm{f}}(s,y'\viiva 0,\pi_{\theta,s})dy'}.
\end{aligned}
\end{equation}

\emph{Step 2, error estimates in \eqref{e.itoisometry}.} Compare the expressions of $\tilde{f}^{(\theta)}$ and $\tilde{F}^{(\theta)}$ in \eqref{e.newexfF}, we see that the only differences come from the initial distributions in $\rho_{\mathrm{b}}$ and $\rho_{\mathrm{f}}$. Since the polymer endpoint mixes exponentially fast, we expect that the error $\tilde{f}^{(\theta)}(t,s,y)-\tilde{F}^{(\theta)}(t,s,y)$ to be exponentially small in $s\gg1$ and $t-s\gg1$. To make the heuristics precise, we make use of the following results obtained in \cite{GK21}, summarized in \cite[Proposition A.1]{GK23}: (i) for any $\nu\in \mathcal{M}_1(\mathbb{T})$ and $t>s$, we have 
\begin{equation}\label{e.rhofb}
\{\rho_{\mathrm{b}}(t,\nu\viiva s,y)\}_{y\in\bT}\deq\{\rho_{\mathrm{f}}(t,y\viiva s,\nu)\}_{y\in\bT},
\end{equation}
(ii) for any $p\geq 1$, there exist $C,\lambda>0$ such that for all $t\geq1$, 
\begin{equation}
  \label{051705-23}
\Ee \sup_{\mu,\nu\in\mathcal{M}_1(\mathbb{T})} \sup_{x\in\mathbb{T}} |\rho_{\mathrm{f}}(t,x\viiva 0,\mu)-\rho_{\mathrm{f}}(t,x\viiva 0,\nu)|^p  \leq Ce^{-\lambda t}, 
\end{equation}
and
\begin{equation}\label{e.mmbdrho}
\Ee \sup_{\nu\in \mathcal{M}_1(\mathbb{T})}\sup_{x\in\mathbb{T}}\, \{\rho_{\mathrm{f}}(t,x\viiva 0,\nu)^p
+\rho_{\mathrm{f}}(t,x\viiva 0,\nu)^{-p}\} \leq C.
\end{equation} 
Without loss of generality, we assume $t\geq10$ and decompose the integral on the right-hand side of \eqref{e.itoisometry} into two parts:
\[
\begin{aligned}
&\beta^{-2}\Ee_{\mathcal P_\beta}\Ee \left|\log Z_\beta^{(\theta)}(t)-\Ee \log Z_\beta^{(\theta)}(t)-M^{(\theta)}(t)\right|^2\\
&=\left(\int_1^{t-1}+\int_{[0,t]\setminus[1,t-1]}\right)\int_0^1\Ee_{\mathcal P_\beta}\Ee\big[ |\Ee[\tilde{f}^{(\theta)}(t,s,y)|\F_s]-\Ee_{\B}\Ee [\tilde{F}^{(\theta)}(t,s,y)|\F_s]|^2\big]dyds=:I_{1,t}+I_{2,t}.
\end{aligned}
\]
To complete the proof of lemma, it suffices to show that $I_{1,t}$ and $I_{2,t}$ are each bounded by a constant $C$, independent of $t$. 

For $I_{1,t}$, by \eqref{e.rhofb}, \eqref{051705-23} and \eqref{e.mmbdrho}, it is straightforward to derive that, for every realization of $\nu_{\B}$ and $\pi_{\theta,s}$ (note that they are random probability measures), we have 
\[
\Ee \,\sup_{y\in\bT}|\tilde{f}^{(\theta)}(t,s,y)-\tilde{F}^{(\theta)}(t,s,y)|^2 \leq C(e^{-\lambda(t-s)}+e^{-\lambda s}), \quad\quad s\in[1,t-1],
\]
and this implies (with an application of Jensen's inequality for the conditional expectation)
\[
I_{1,t} \leq C\int_1^{t-1}(e^{-\lambda(t-s)}+e^{-\lambda s}) ds\leq C/\lambda.
\]

For $I_{2,t}$, we estimate the two terms associated with $\tilde{f}^{(\theta)}$ and $\tilde{F}^{(\theta)}$ separately. For $\tilde{F}^{(\theta)}$, by \eqref{e.flawnew} and an application of Jensen's inequality, we have 
\[
\int_{[0,t]\setminus[1,t-1]}\int_0^1 \Ee_{\mathcal P_\beta}\EE |\Ee_{\B}\EE [\tilde{F}^{(\theta)}(t,s,y)|\F_s]|^2 dyds \leq \int_{[0,t]\setminus[1,t-1]}\int_0^1\Ee_{\mathcal P_\beta}\EE\Ee_{\B} |F^{(\theta)}(t,s,y)|^2 dyds.
\]
Further applying \eqref{e.flaw}, we conclude that the above term is bounded independent of $t$. 
For $\tilde{f}^{(\theta)}$, there is  a singularity at $s$ near $0$, so we deal with the integration domain $s\in[0,1]$ and $s\in[t-1,t]$ separately:

(i) For $ s\in[t-1,t]$, we use the expression in \eqref{e.newexfF}, and apply \eqref{e.mmbdrho} and H\"older inequality to obtain   
\[
\EE |\tilde{f}^{(\theta)}(t,s,y)|^2 \leq C\sqrt{\EE | \rho_{\mathrm{b}}(t,\mathfrak{m}\viiva s,y)|^4} \leq C,
\]
where the second inequality comes from the standard positive and negative moment estimates of \\ $\int_0^1Z_\beta^{\text{per}} (t,x\viiva s,y)dx$, which can be found e.g. in \cite[Lemma B.2]{GK21}. 

(ii) For $s\in[0,1]$, by Jensen's inequality (noting that $\rho_{\mathrm{f}}(s,y'\viiva 0,\mu_{\theta,s})dy'$ is a probability measure), we have
\[
\begin{aligned}
 \Bigl|\int_0^1 \rho_{\mathrm{b}}(t,\mathfrak{m}\viiva s,y') \cdot \rho_{\mathrm{f}}(s,y'\viiva 0,\mu_{\theta,s})dy'\Bigr|^{-p} &\leq  \int_0^1 |\rho_{\mathrm{b}}(t,\mathfrak{m}\viiva s,y')|^{-p}\rho_{\mathrm{f}}(s,y'\viiva 0,\mu_{\theta,s})dy'\\
&\leq  \sup_{y'\in\bT} |\rho_{\mathrm{b}}(t,\mathfrak{m}\viiva s,y')|^{-p},
\end{aligned}
\]
which is bounded after taking the expectation, in light of \eqref{e.mmbdrho}. Next, from \eqref{e.newexfF}, we apply H\"older's inequality and conclude that
\[
\|\tilde{f}^{(\theta)}(t,s,y)\|_2 \leq C\|\rho_{\mathrm{f}}(s,y\viiva 0,\mu_{\theta,s})\|_4,
\]
where $\| \cdot \|_2$ and $\|\cdot \|_4$ denote the $L^2$ and $L^4$ norms with respect to all sources of randomness. 
By \cite[Equation (3.22)]{GK23}, the right-hand side is bounded from above by $\sum_n \rho(s,n+y-(\theta s-\lfloor \theta s\rfloor))$, where $\rho(t,x)$ is the standard Gaussian kernel. Thus, we have 
\[
\int_0^1\int_0^1  \EE |\tilde{f}^{(\theta)}(t,s,y)|^2 dyds \leq C\int_0^1 \int_0^1 \Bigl|\sum_n \rho(s,n+y-(\theta s-\lfloor \theta s\rfloor))\Bigr|^2 dyds \leq C.
\]
The proof is complete.
\end{proof}

\begin{lemma}\label{l.equallaw}
Fix $\theta\in\R$ and $t>0$, we have 
\[
\bigg(\{h^{(\theta)}(t,x)\}_{x\in[0,1]}, \{H^{(\theta)}(t,x)\}_{x\in[0,1]}\bigg)\deq\bigg(\{\tilde{h}^{(\theta)}(t,x)\}_{x\in[0,1]}, \{\tilde{H}^{(\theta)}(t,x)\}_{x\in[0,1]}\bigg),
\]
where $\tilde h^{(\theta)}$ and $\tilde H^{(\theta)}$ are defined as in \eqref{e.deftildeh}.
\end{lemma}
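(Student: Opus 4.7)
The plan is to find a law-preserving transformation $T$ of the noise $\xi$ under which the pair $(h^{(\theta)},H^{(\theta)})$ becomes exactly $(\tilde h^{(\theta)},\tilde H^{(\theta)})$. For the given $t>0$, I take the shear-plus-translation
\[
T\xi(s,y)\;:=\;\xi\bigl(s,\,y+\theta(s-t)\bigr),\qquad s\in[0,t],\ y\in\R.
\]
Shear invariance of the periodic space-time white noise (the same property used in the proof of Lemma \ref{l.reductiontheta0}) combined with its spatial translation invariance gives $T\xi\deq\xi$, and since $H^{(\theta)}(0,\cdot)$ is independent of $\xi$ by construction, we in fact have the joint law equality $(T\xi,H^{(\theta)}(0,\cdot))\deq(\xi,H^{(\theta)}(0,\cdot))$.

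The core step is a deterministic identity for the SHE Green's function under $T$:
\begin{equation}\label{eq:plan-key}
Z^{T\xi}_\beta(t,x\viiva 0,y)\;=\;e^{\theta x-\theta y+\tfrac12\theta^2 t}\,Z_\beta(t,x\viiva 0,\,y-\theta t).
\end{equation}
To prove this I would decompose $T$ as the shear $S\xi(s,y):=\xi(s,y+\theta s)$ followed by the spatial translation $\xi'\mapsto \xi'(\cdot,\cdot-\theta t)$. The shear piece contributes a multiplicative gauge factor: by the Cameron--Martin / Girsanov computation from the proof of Lemma \ref{l.reductiontheta0}, adapted from the point-to-line to the point-to-point Green's function (or, equivalently, by verifying that both sides satisfy the same multiplicative SHE with the same delta initial condition and invoking uniqueness), one has
\[
Z^{S\xi}_\beta(t,x\viiva 0,y)\;=\;e^{\theta x-\theta y+\tfrac12\theta^2 t}\,Z_\beta(t,x+\theta t\viiva 0,y).
\]
The translation piece is elementary: shifting the noise spatially by a constant $-\theta t$ simultaneously shifts both source and target coordinates of $Z_\beta$ by $-\theta t$. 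Composing the two substeps yields \eqref{eq:plan-key}.

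With \eqref{eq:plan-key} in hand, everything follows by substitution. For the first coordinate,
\[
h^{(\theta)}_{T\xi}(t,x)\;=\;\log\sum_{n\in\Z}e^{\theta n}\,Z^{T\xi}_\beta(t,x\viiva 0,n)\;=\;\theta x+\tfrac12\theta^2 t+\log\sum_{n\in\Z}Z_\beta(t,x\viiva 0,n-\theta t)\;=\;\tilde h^{(\theta)}(t,x),
\]
the factors $e^{\theta n}$ and $e^{-\theta n}$ cancelling exactly. For the second coordinate, using the mild-solution formula $e^{H^{(\theta)}(t,x)}=\int_\R Z_\beta(t,x\viiva 0,y)e^{H^{(\theta)}(0,y)}\,dy$ and the change of variables $y\mapsto y+\theta t$, the shift in $Z_\beta$ is absorbed into a shift of the argument of the initial data and one recovers exactly $\tilde H^{(\theta)}(t,x)$, including the prefactor $\theta x+\tfrac12\theta^2 t$. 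Since $\xi\mapsto h^{(\theta)}$ and $(\xi,H^{(\theta)}(0,\cdot))\mapsto H^{(\theta)}$ are measurable functionals, the joint law equality $(T\xi,H^{(\theta)}(0,\cdot))\deq(\xi,H^{(\theta)}(0,\cdot))$ transfers to the equality in law of processes in $x\in[0,1]$ asserted in the lemma.

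The main obstacle will be a fully rigorous justification of \eqref{eq:plan-key}, since the Feynman--Kac argument in Lemma \ref{l.reductiontheta0} is phrased for noises smooth in space. The cleanest route is to work at the level of the chaos representation of $Z_\beta$ from Section \ref{sec:SHE_conv_section}: under the shear, each multiple stochastic integral picks up the claimed gauge factor (the Gaussian exponent of the heat kernel absorbs $e^{\theta x-\theta y+\theta^2 t/2}$ precisely), and under the translation the kernels shift pathwise; alternatively, one can spatially mollify $\xi$, apply Lemma \ref{l.reductiontheta0} verbatim, and pass to the limit in the mild formulation.
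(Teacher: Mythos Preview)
Your proposal is correct and follows essentially the same shear-invariance idea as the paper. The paper mollifies the noise, writes the Feynman--Kac representation, applies Cameron--Martin to the Brownian motion (which is exactly the path-integral version of your Green's function identity \eqref{eq:plan-key}), observes $\{\xi_\ve(t-s,\theta s+y)\}\deq\{\xi_\ve(t-s,y)\}$, and passes to the $\ve\to0$ limit; your chaos-series verification of \eqref{eq:plan-key} is a legitimate direct alternative that sidesteps the approximation, but the underlying mechanism is identical.
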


\begin{proof}
Recall from \eqref{e.defhu} that $h^{(\theta)}(t,x)=\log \sum_{n\in \Z} Z_\beta(t,x\viiva 0,n)e^{\theta n}$ and $H^{(\theta)}$ solves the KPZ equation with the initial data $H^{(\theta)}(0,\cdot)$ sampled from $\mathcal{P}_\beta$. The proof is done by  approximation. Let $Z_\beta^\ve$ be the Green's function of SHE with $\xi$ replaced by $\xi_\ve$, which is a spatial mollification of $\xi$: consider the  mollifier $\phi_\ve(x)=\ve^{-1}\phi(x/\ve)$ with an even $\phi\in C_c^\infty(\R)$, define $\xi_\ve(t,x)=\int_{\R} \phi_\ve(x-y)\xi(t,y)dy$, so its covariance function is 
\[
\Ee\big[ \xi_\ve(t,x)\xi_\ve(s,y)\big]=\delta(t-s)r_\ve(x-y):=\delta(t-s)\sum_{n\in\Z} \phi_\ve\star\phi_\ve(x-y+n).
\]
In the following, we define the approximations of $h^{(\theta)}, H^{(\theta)}$. 

(i) Define $h^{(\theta)}_{\ve}(t,x)=\log \sum_{n\in \Z} Z_\beta^\ve(t,x\viiva 0,n)e^{\theta n}$. By the Feynman-Kac formula, we can write it as 
\[
h^{(\theta)}_{\ve}(t,x)= \log \sum_{n\in\Z} \Ee_B \left[\exp(\beta \int_0^t \xi_\ve(t-s, x+B_s)ds-\frac12\beta^2r_\ve(0)t)\delta(x+B_t-n)e^{\theta n}\right],
\]
where $\Ee_B$ is only on the Brownian motion $B$. By the Cameron-Martin theorem, the above expression can be rewritten as 
\begin{equation}\label{e.hthetaeps}
\begin{aligned}
h_\ve^{(\theta)}(t,x)&= \log \sum_{n\in\Z} \Ee_B \left[\exp(\beta \int_0^t \xi_\ve(t-s, x+B_s)ds-\frac12\beta^2r_\ve(0)t)\delta(x+B_t-n)e^{\theta (x+B_t)}\right]\\
&=\theta x+\frac12\theta^2 t+\log \sum_{n\in\Z} \Ee_B \left[\exp(\beta\int_0^t \xi_\ve(t-s, x+\theta s+B_s)ds-\frac12\beta^2r_\ve(0)t)\delta(x+\theta t+B_t-n) \right].
\end{aligned}
\end{equation}

(ii) To approximate $H^{(\theta)}(t,x)$, we define $H^{(\theta)}_{\ve}(t,x)=\log \int_{\R} Z_\beta^\ve(t,x \viiva 0,y)e^{H^{(\theta)}(0,y)}dy$. Repeating the above argument, we derive
\begin{equation}\label{e.Hthetaeps}
\begin{aligned}
&H^{(\theta)}_{\ve}(t,x)=\theta x+\frac12\theta^2 t\\
&+\log \Ee_B\left[\exp(\beta \int_0^t \xi_\ve(t-s,x+\theta s+B_s)ds-\frac12\beta^2r_\ve(0)t)\exp(H^{(\theta)}(0,x+\theta t+B_t)-\theta(x+\theta t+B_t))\right].
\end{aligned}\end{equation}
For any fixed $t>0$, $\ve>0$ and $\theta\in\R$, the following two Gaussian processes have the same law:
\[
\{\xi_\ve(t-s,\theta s+y)\}_{s\in\R,y\in\R}\deq\{\xi_\ve(t-s,y)\}_{s\in\R,y\in\R},
\]
which can be checked e.g. by comparing the covariance functions. 
Therefore, we can replace the  $\{\xi_\ve(t-s,\theta s+y)\}_{s\in\R,y\in\R}$ by 
$\{\xi_\ve(t-s,y)\}$ in \eqref{e.hthetaeps} and \eqref{e.Hthetaeps}, without changing the joint law of $h^{(\theta)}_{\ve}$ and $H^{(\theta)}_{\ve}$, then it suffices to pass to the limit of $\ve\to0$ to complete the proof.
 \end{proof}

\subsection{Convergence of finite dimensional distributions and tightness} In this section, we  complete the proof of Theorem \ref{t.conG} by showing the convergence, as $t \to \infty$, of the $C(\R)$-valued process $\{\mathcal X_\beta^{(\theta)}:\theta \in \R\}$. This follows by convergence of finite-dimensional distributions (Lemma \ref{lem:fdd}), and by tightness (Lemma \ref{lem:tight}). By \eqref{eq:exp_comp} and by dividing by $t$ in \eqref{p.maapp}, for the finite-dimensional convergence, it suffices to consider the process $\bigl\{\f{M^{(\theta)}(t)}{t}:\theta \in \R\bigr\}$.

Before proving Lemma \ref{lem:fdd}, we make a few observations. From the expression in \eqref{e.defM}, it is not   clear if $M^{(\theta)}(t)$ is a martingale, since the stochastic integrand depends on $t$. However, because 
\[
\{\rho_{\mathrm{b}}(t,\nu_\B;s,y)\}_{y\in\bT}\deq \left\{\frac{\exp(\beta \B(y))}{\int_0^1 \exp(\beta\B(y'))dy'}\right\}_{y\in\bT},
\]
we conclude immediately that
\begin{equation} \label{eq:fths}
\Ee_{\B}\EE[F^{(\theta)}(t,s,y)|\F_s]=\Ee_{\B}\left[\frac{ \exp(\beta\B(y)) \cdot \exp(\theta y+H^{(-\theta)}(s,y)-H^{(-\theta)}(s,0))}{\int_0^1\exp(\beta\B(y'))\cdot \exp(\theta y'+H^{(-\theta)}(s,y')-H^{(-\theta)}(s,0))dy'}\right]=:\mathsf{f}^{(\theta)}(s,y).
\end{equation}
By our choice of $\{H^{(\theta)}(0,\cdot)\}_{\theta}$, we have, for any $\theta_1,\theta_2\in\R$,
\be \label{eq:statH}
\begin{aligned}
\quad \, \Bigl(H^{(\theta_1)}(s,y) -H^{(\theta_1)}(s,0),H^{(\theta_2)}(s,y) -H^{(\theta_2)}(s,0): y \in [0,1] \Bigr)_{s\geq0} 
\end{aligned}
\ee
is a stationary process. 
Hence, $\{\mathsf{f}^{(\theta)}(s,y):y\in[0,1]\}_{s\geq0}$ is stationary in the $s-$variable. Therefore, $M^{(\theta)}(t)$ can be rewritten as 
\begin{equation}\label{e.newexM}
M^{(\theta)}(t)=\beta\int_0^t\int_0^1 \mathsf{f}^{(\theta)}(s,y) \xi(ds,dy).
\end{equation}
which is a martingale with stationary increments. This allows us to use a result from \cite[Theorem 2.1]{TKCLSO12} in the proof below to show that we obtain a Gaussian process in the limit.
\begin{lemma} \label{lem:fdd}
For any $n\in\Z_+$ and $\theta_1,\ldots,\theta_n\in\R$, we have 
\begin{equation}\label{e.conma}
\bigl(\mathcal X_\beta^{(\theta_1)}(t),\ldots,  \mathcal X_\beta^{(\theta_n)}(t) \bigr)\Longrightarrow \bigl(\mathcal{A}_\beta^{(\theta_1)},\ldots, \mathcal{A}_\beta^{(\theta_n)}\bigr)
\end{equation}
in distribution as $t\to\infty$, where $\mathcal{A}_\beta^{(\theta)}$ is the centered Gaussian process with the covariance function $R_\beta^{(\theta)}$ given in \eqref{e.defA}.
\end{lemma}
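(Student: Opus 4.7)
The plan is to reduce to the martingale $M^{(\theta)}(t)$, apply the Cram\'er-Wold device together with a martingale central limit theorem, and identify the limit covariance with $R_\beta^{(\theta_j-\theta_i)}$ using the joint invariance of the periodic KPZ horizon. By \eqref{eq:exp_comp} and Proposition~\ref{p.maapp}, $(\log Z_\beta^{(\theta)}(t)-\gamma_\beta^{(\theta)}t-M^{(\theta)}(t))/\sqrt t$ tends to zero in $L^2$ as $t\to\infty$, so it suffices to show that, for any reals $c_1,\ldots,c_n$, the real-valued continuous martingale $L(t):=\sum_{i=1}^n c_i M^{(\theta_i)}(t)$ satisfies $L(t)/\sqrt t\Rightarrow\mathcal N(0,v)$ with $v:=\beta^2\sum_{i,j}c_i c_j R_\beta^{(\theta_j-\theta_i)}$.

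From \eqref{e.newexM}, $L(t)$ has quadratic variation
\[
\langle L\rangle_t=\beta^2\int_0^t\int_0^1\Bigl(\sum_i c_i\mathsf f^{(\theta_i)}(s,y)\Bigr)^2 dy\,ds.
\]
Joint invariance (Theorem~\ref{thm:KPZ_invar_main}) makes $s\mapsto\{H^{(\theta_i)}(s,\cdot)-H^{(\theta_i)}(s,0)\}_i$ stationary, hence the bracket integrand is stationary in $s$. I would then prove $\langle L\rangle_t/t\xrightarrow{p}v^\star:=\beta^2\Ee[\int_0^1(\sum_i c_i\mathsf f^{(\theta_i)}(0,y))^2 dy]$ by bounding $\Var(\langle L\rangle_t/t)$ directly: this variance is a double time-integral of four-point correlations of $\mathsf f^{(\theta)}$, which one estimates by combining the exponential decorrelation \eqref{051705-23} of the forward polymer endpoint with the uniform $L^p$ bounds \eqref{e.mmbdrho} to obtain an $O(1/t)$ estimate. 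The standard martingale CLT (e.g.\ \cite[Theorem~2.1]{TKCLSO12}), whose Lindeberg condition is automatic for continuous martingales with uniformly $L^p$-bounded integrands, then yields $L(t)/\sqrt t\Rightarrow\mathcal N(0,v^\star)$.

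What remains is the identification $v^\star=v$, performed term by term. For each pair $(\theta_i,\theta_j)$, the cross term $\beta^2\int_0^1\Ee[\mathsf f^{(\theta_i)}(0,y)\mathsf f^{(\theta_j)}(0,y)]dy$ is written via \eqref{eq:fths} with two \emph{independent} Brownian bridges $\B^{(1)},\B^{(2)}$, one per factor of $\mathsf f$; swapping the bridge expectations with the $y$-integral and invoking the distributional identity
\[
\bigl(H^{(-\theta_i)}(0,\cdot)-H^{(-\theta_i)}(0,0),\;H^{(-\theta_j)}(0,\cdot)-H^{(-\theta_j)}(0,0)\bigr)\deq(g_{\beta,-\theta_i},g_{\beta,-\theta_j})
\]
(from the definition of $\mathcal P_\beta$ and the consistency Proposition~\ref{prop:g_prop_intro}\ref{itm:g_perm_invar}) produces an integrand built from the joint periodic KPZ horizon at slopes $-\theta_i,-\theta_j$ together with the two independent bridges. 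The shear invariance of $\xi$ behind Lemma~\ref{l.reductiontheta0} implies that the limit process $\{\mathcal A_\beta^{(\theta)}\}_\theta$ is stationary in $\theta$, so one may translate $(\theta_i,\theta_j)\mapsto(0,\theta_j-\theta_i)$; the resulting expression matches the integrand defining $R_\beta^{(\theta_j-\theta_i)}$ in \eqref{e.defA} exactly.

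The main obstacle is the bracket convergence itself: the integrand $\mathsf f^{(\theta)}(s,\cdot)$ depends on the initial data $\{H^{(\theta')}(0,\cdot)\}_{\theta'}$, a source of randomness common to all times, so the decorrelation of $\mathsf f^{(\theta_i)}(s_1,\cdot)\mathsf f^{(\theta_j)}(s_2,\cdot)$ for $|s_2-s_1|$ large does not follow from \eqref{051705-23} on its own. One should instead apply the coupled 1F1S principle Proposition~\ref{prop:1f1s} (and its consequences developed in \cite{GK23}) to argue that, conditionally on the noise on $[0,s-r]$, the law of $\mathsf f^{(\theta)}(s,\cdot)$ is exponentially close, in $r$, to a version that has forgotten the initial data; this exponential mixing then feeds into the $\Var(\langle L\rangle_t/t)$ estimate and closes the argument.
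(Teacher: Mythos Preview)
Your overall strategy---reduce to $M^{(\theta)}$ via Proposition~\ref{p.maapp} and \eqref{eq:exp_comp}, apply Cram\'er--Wold, and invoke the martingale CLT of \cite[Theorem~2.1]{TKCLSO12}---is exactly the paper's. The covariance identification you sketch (two independent bridges, the distributional identity for $H^{(-\theta)}(0,\cdot)$ in terms of the periodic KPZ horizon, and shear stationarity in $\theta$) is also what the paper leaves implicit in matching its limit $\beta^2\int_0^1\Ee[\mathsf f^{(\theta_i)}(0,y)\mathsf f^{(\theta_j)}(0,y)]\,dy$ to $R_\beta^{(\theta_j-\theta_i)}$.

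The one genuine difference is how you establish $\langle L\rangle_t/t\to v^\star$. You propose an $L^2$ argument, bounding $\Var(\langle L\rangle_t/t)$ via exponential decorrelation of four-point functions of $\mathsf f^{(\theta)}$, and you correctly flag the obstacle that the common initial data $\{H^{(\theta')}(0,\cdot)\}_{\theta'}$ couples $\mathsf f^{(\theta)}(s_1,\cdot)$ and $\mathsf f^{(\theta)}(s_2,\cdot)$ for all times, so \eqref{051705-23} alone does not suffice. The paper sidesteps this entirely: since $s\mapsto\int_0^1\bigl(\sum_i c_i\mathsf f^{(\theta_i)}(s,y)\bigr)^2\,dy$ is stationary (from the joint invariance \eqref{eq:statH}) and the underlying Markov process is ergodic (uniqueness of the jointly invariant measure, Proposition~\ref{prop:1f1s}), Birkhoff's ergodic theorem gives $\langle L\rangle_t/t\to v^\star$ almost surely without any quantitative decay of correlations. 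The upgrade to $L^1$ convergence (needed for \cite[Theorem~2.1]{TKCLSO12}) then follows from uniform integrability, which is immediate from the uniform-in-$s$ $L^p$ bound supplied by the distributional identity \eqref{e.flaw}. Your route would also close, but it reinvents ergodicity by hand; the obstacle you identify is precisely what Birkhoff absorbs.
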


\begin{proof}
As remarked above, it suffices to instead show convergence 
 of \[
 \frac{1}{\sqrt{t}}\bigl(M^{(\theta_1)}(t),\ldots,M^{(\theta_n)}(t)\bigr)
 \]
 in distribution to the centered Gaussian random vector with the desired covariance matrix.
 From  \eqref{e.newexM} and the It\^o isometry, we write the quadratic variation of $M^{(\theta_i)}$ as
\[
\langle M^{(\theta_i)}\rangle (t)=\beta^2\int_0^t \int_0^1 |\mathsf{f}^{(\theta_i)}(s,y)|^2 dyds,
\]
and the covariation of $M^{(\theta_i)}, M^{(\theta_j)}$ is given by 
\[
\langle M^{(\theta_i)},M^{(\theta_j)}\rangle(t)=\beta^2\int_0^t \int_0^1 \mathsf{f}^{(\theta_i)}(s,y)\mathsf{f}^{(\theta_j)}(s,y)dyds.
\]
Note that both terms were written as the time integral of a stationary process. To prove the convergence of finite dimensional distributions to a Gaussian process, we consider any linear combination of $\frac{1}{\sqrt{t}}M^{(\theta_i)}(t)$. By \cite[Theorem 2.1]{TKCLSO12}, it suffices to show that 
\begin{equation}\label{e.871}
\frac{1}{t} \langle M^{(\theta_i)}\rangle(t)\to \Ee |\mathcal{A}_\beta^{(\theta_i)}|^2, \quad\text{and}\quad \frac{1}{t} \langle M^{(\theta_i)},M^{(\theta_j)}\rangle(t)\to \Ee \mathcal{A}_\beta^{(\theta_i)}\mathcal{A}_\beta^{(\theta_j)}
\end{equation}
in $L^1(\Omega)$, as $t\to\infty$, with
\[
\begin{aligned}
 &\Ee |\mathcal{A}_\beta^{(\theta_i)}|^2=\beta^2  \int_0^1 |\mathsf{f}^{(\theta_i)}(0,y)|^2 dy,\\
&\Ee \mathcal{A}_\beta^{(\theta_i)}\mathcal{A}_\beta^{(\theta_j)}=\beta^2\int_0^1 \mathsf{f}^{(\theta_i)}(0,y)\mathsf{f}^{(\theta_j)}(0,y)dy.
\end{aligned}
\]

We will  only prove the convergence of $\frac{1}{t} \langle M^{(\theta_i)}\rangle(t)$ below. The other case can be treated in the same way. 

First, by the Birkhoff Ergodic theorem, we know that the convergence of $\frac{1}{t} \langle M^{(\theta_i)}\rangle(t)\to \Ee |\mathcal{A}_\beta^{(\theta_i)}|^2$ holds almost surely. Next, for any $p\in[1,\infty)$, we apply the triangle inequality to obtain that 
\[
\|\frac{1}{t} \langle M^{(\theta_i)}\rangle(t)\|_p\leq \beta^2\frac{1}{t} \int_0^t\int_0^1 \|f^{(\theta_i)}(s,y)\|_{2p}^2 dyds \leq \beta^2 C,
\]
with the constant $C>0$ only depending on $p,\beta$. The last step comes from the fact that for any fixed $\theta\in\R$ and $s\geq0$, we have
\[
\{f^{(\theta)}(s,y)\}_{y\in[0,1]}\deq \left\{ \Ee_{\B}\left[\frac{ \exp(\beta\B(y)+\beta \tilde{\B}(y))  }{\int_0^1\exp(\beta\B(y')+\beta \tilde{\B}(y')) dy'}\right]\right\}_{y\in[0,1]}
\]
where $\tilde{\B}$ is a standard Brownian bridge independent of $\B$. This implies the uniform integrability hence the $L^1(\Omega)$ convergence. The proof is complete. 
\end{proof}
To complete the proof of Theorem~\ref{t.conG}, it remains to prove the tightness of the process $\{\mathcal{X}_\beta^{(\theta)}(t):\theta\in\R\}_{t\geq 1}$. This is done in  the following proposition:
\begin{lemma} \label{lem:tight}
$\{\mathcal{X}_\beta^{(\theta)}(t):\theta\in\R\}_{t\geq 1}$ is tight in $C(\R)$.
\end{lemma}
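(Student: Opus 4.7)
The plan is to apply the Kolmogorov-Chentsov tightness criterion for $C(\R)$-valued random processes: it suffices to find $p>1/2$ and, for every compact $K\subset\R$, a constant $C=C(p,K,\beta)$ with
\[
\sup_{t\ge 1}\Ee\bigl|\mathcal X_\beta^{(\theta_1)}(t)-\mathcal X_\beta^{(\theta_2)}(t)\bigr|^{2p}\le C\,|\theta_1-\theta_2|^{2p}\qquad \text{for all }\theta_1,\theta_2\in K.
\]
Since by \eqref{eq:exp_comp} the deterministic shift $\Ee\log Z_\beta^{(\theta)}(t)-\gamma_\beta^{(\theta)}t$ is \emph{independent of} $\theta$ (the two $\tfrac12\theta^2t$ terms cancel), the Clark-Ocone formula \eqref{e.co} gives the exact martingale representation
\[
\mathcal X_\beta^{(\theta_1)}(t)-\mathcal X_\beta^{(\theta_2)}(t)=\frac{\beta}{\sqrt t}\int_0^t\!\int_0^1\Ee\bigl[f^{(\theta_1)}(t,s,y)-f^{(\theta_2)}(t,s,y)\bigm|\F_s\bigr]\xi(ds,dy).
\]

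First, I would apply the Burkholder-Davis-Gundy inequality to this $t$-martingale, then strip off the conditional expectation using conditional Jensen and pull the Lebesgue integral out by classical Jensen. This yields, for every integer $p\ge 1$,
\[
\Ee\bigl|\mathcal X_\beta^{(\theta_1)}(t)-\mathcal X_\beta^{(\theta_2)}(t)\bigr|^{2p}
\le\frac{C_p\beta^{2p}}{t}\int_0^t\!\int_0^1\Ee\bigl|f^{(\theta_1)}(t,s,y)-f^{(\theta_2)}(t,s,y)\bigr|^{2p}\,dy\,ds,
\]
reducing the task to the pointwise moment bound $\sup_{t>s\ge 0,\,y\in[0,1]}\Ee|f^{(\theta_1)}(t,s,y)-f^{(\theta_2)}(t,s,y)|^{2p}\le C'_p|\theta_1-\theta_2|^{2p}$. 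I would then note that in the representation \eqref{e.newexfF} the $\theta$-dependence of $f^{(\theta)}$ enters only through $G^{(\theta)}(s,y):=\theta y+h^{(-\theta)}(s,y)-h^{(-\theta)}(s,0)$: namely, $f^{(\theta)}(t,s,y)$ is a Gibbs probability density on $[0,1]$ with weight $\rho_{\mathrm b}(t,\mathfrak m\mid s,y)\,e^{G^{(\theta)}(s,y)}$. Standard ratio-of-exponentials calculations combined with the uniform polymer endpoint moment estimates \eqref{e.mmbdrho} then reduce the $L^{2p}$ bound to an almost-sure (or $L^q(\Omega)$ for every $q$) Lipschitz-in-$\theta$ estimate of the form
\[
\sup_{s\ge 0,\,y\in[0,1]}\bigl|G^{(\theta_1)}(s,y)-G^{(\theta_2)}(s,y)\bigr|\le C_*\,|\theta_1-\theta_2|.
\]

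The main obstacle is establishing this uniform-in-$s$ Lipschitz bound on $G^{(\theta)}$. In the stationary regime $s\to\infty$, the one-force-one-solution principle (Proposition \ref{prop:1f1s}) implies that $G^{(\theta)}(s,\cdot)$ converges in law to $\theta y+g_{\beta,-\theta}(y)$, for which the bound is immediate from the periodic KPZ horizon continuity estimate \eqref{eq.ctnty} underlying Corollary \ref{cor:ICH_process}. For finite (especially small) $s$, where $h^{(-\theta)}$ sits near its $\theta$-dependent delta-type initial data, the Lipschitz control can be transferred back via Lemma \ref{l.equallaw} combined with the joint invariance from Theorem \ref{thm:KPZ_invar_main}, together with the polymer mixing estimates \eqref{051705-23}-\eqref{e.mmbdrho}. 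With this in hand the required Kolmogorov-Chentsov moment bound follows, and combined with the finite-dimensional convergence in Lemma \ref{lem:fdd} this completes the proof of Theorem \ref{t.conG}.
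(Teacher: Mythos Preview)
Your route through the Clark--Ocone representation and BDG is genuinely different from the paper's argument, and it could in principle be made to work, but the key step you flag as the ``main obstacle'' is indeed a gap as written. The paper instead takes a much shorter path: differentiate directly in $\theta$ to get
\[
\partial_\theta \mathcal X_\beta^{(\theta)}(t)=\frac{1}{\sqrt t}\Bigl(\frac{\partial_\theta Z_\beta^{(\theta)}(t)}{Z_\beta^{(\theta)}(t)}-\theta t\Bigr),
\]
observe that stationarity in $\theta$ reduces the $L^2$ norm to $\theta=0$, where $\partial_\theta Z_\beta^{(\theta)}(t)/Z_\beta^{(\theta)}(t)\big|_{\theta=0}$ is the quenched mean of the point-to-line polymer endpoint, and then invoke the diffusivity result of \cite{YGTK22} to bound $\frac{1}{t}\Ee|\,\cdot\,|^2$ uniformly in $t\ge 1$. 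Minkowski's inequality then gives $\|\mathcal X_\beta^{(\theta_1)}(t)-\mathcal X_\beta^{(\theta_2)}(t)\|_2\le C|\theta_1-\theta_2|$, which with tightness at $\theta=0$ suffices. No Clark--Ocone, no BDG, no analysis of $G^{(\theta)}$.

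Concerning your argument for the Lipschitz bound on $G^{(\theta)}(s,y)=\theta y+h^{(-\theta)}(s,y)-h^{(-\theta)}(s,0)$: the tools you invoke do not deliver it. The one-force--one-solution principle only describes the $s\to\infty$ limit in law, not a uniform-in-$s$ pathwise bound. The polymer mixing estimates \eqref{051705-23}--\eqref{e.mmbdrho} concern the forward endpoint density $\rho_{\mathrm f}(s,\cdot\,\viiva 0,\mu)$ and say that for large $s$ it forgets $\mu$; this gives smallness of differences \emph{independent} of $\theta_1,\theta_2$, not a factor $|\theta_1-\theta_2|$, and says nothing Lipschitz for small $s$. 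In fact the $\theta$-dependence in the representation \eqref{e.newexfF} enters through the Dirac mass $\mu_{\theta,s}=\delta_{\theta s-\lfloor\theta s\rfloor}$, whose location moves by order $|\theta_1-\theta_2|\,s$; combined with mixing this does not integrate to a uniform Lipschitz constant over $s\in[0,t]$.

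The Lipschitz bound you want is nevertheless true, but for a different reason: pathwise polymer comparison. From $e^{h^{(-\theta)}(s,y)}=\sum_n e^{-\theta n}Z_\beta(s,y\,\viiva\,0,n)$ one gets
\[
\partial_\theta\bigl[h^{(-\theta)}(s,y)-h^{(-\theta)}(s,0)\bigr]=-\bigl(\Ee^{y}_{-\theta}[n]-\Ee^{0}_{-\theta}[n]\bigr),
\]
where $\Ee^{y}_{-\theta}$ is the quenched expectation over the starting index $n$ with weight $\propto e^{-\theta n}Z_\beta(s,y\,\viiva\,0,n)$. By polymer monotonicity (stochastic ordering of the starting point in the endpoint $y$) and the periodicity $\Ee^{1}_{-\theta}[n]=\Ee^{0}_{-\theta}[n]+1$, this derivative lies in $[-1,0]$ for $y\in[0,1]$, so $|\partial_\theta G^{(\theta)}(s,y)|\le 1$ almost surely. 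With this in hand, your BDG reduction (already correct for $p=1$) goes through, modulo checking the uniform $L^2$ moment of $f^{(\theta)}$ needed to absorb the ratio-of-exponentials step, for which the $s$-near-zero estimates in the proof of Proposition~\ref{p.maapp} suffice.
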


\begin{proof}
By the convergence in distribution of $\mathcal{X}_\beta^{(0)}(t)$, it is enough to show that  
\[
\|\mathcal{X}_\beta^{(\theta_1)}(t)-\mathcal{X}_\beta^{(\theta_2)}(t)\|_2 \leq C |\theta_1-\theta_2|,
\]
for some constant $C$ independent of $t\geq1$. Again, $\|\cdot\|_2$ is the $L^2$ norm of the probability space of all sources of randomnesses. By definition, we have 
\[
\begin{aligned}
\partial_\theta \mathcal{X}_\beta^{(\theta)}(t)=\frac{1}{\sqrt{t}} \left(\frac{\partial_\theta Z_\beta^{(\theta)}(t)}{Z_\beta^{(\theta)}(t)}-\theta t\right).
\end{aligned}
\]
Since $\mathcal{X}_\beta^{(\theta)}(t)$ is stationary in $\theta$, we know $\partial_\theta \mathcal{X}_\beta^{(\theta)}(t)$ is also stationary in $\theta$, thus, we have 
\[
\Ee |\partial_\theta \mathcal{X}_\beta^{(\theta)}(t)|^2=\frac{1}{t}\Ee \left|\frac{\partial_\theta Z_\beta^{(\theta)}(t)}{Z_\beta^{(\theta)}(t)}\right|^2\bigg|_{\theta=0}.
\]
Note that $\frac{\partial_\theta Z_\beta^{(\theta)}(t)}{Z_\beta^{(\theta)}(t)}\Big|_{\theta=0}$ is the quenched mean of the endpoint of the point-to-line directed polymer. It was shown in \cite{YGTK22} that the polymer endpoint is diffusive, in particular, by \cite[Propositions 3.7 and 5.1]{YGTK22}, we know that 
\[
\frac{1}{t}\Ee \left|\frac{\partial_\theta Z_\beta^{(\theta)}(t)}{Z_\beta^{(\theta)}(t)}\right|^2\bigg|_{\theta=0} \leq C
\]
for some constant $C>0$ independent of $t\geq1$. This implies that, for any $\theta_1>\theta_2$, we have 
\[
\begin{aligned}
\|\mathcal{X}_\beta^{(\theta_1)}(t)-\mathcal{X}_\beta^{(\theta_2)}(t)\|_2&=\left\|\int_{\theta_2}^{\theta_1} \partial_\theta \mathcal{X}_\beta^{(\theta)}(t) d\theta\right\|_2\\
&\leq \int_{\theta_2}^{\theta_1}\|\partial_\theta \mathcal{X}_\beta^{(\theta)}(t)\|_2 d\theta \leq C(\theta_1-\theta_2),
\end{aligned}
\]
which completes the proof.
\end{proof}

\appendix
\section{Basic properties of the periodic O'Connell-Yor and inverse-gamma polymers} \label{appx:OCY_SDE_proofs}

\subsection{The O'Connell-Yor polymer}
In this section, we prove some routine lemmas stated in Section \ref{sec:OCY_def}.
\begin{proof}[Proof of Lemma \ref{lem:OCYp_finite}]
Strict positivity follows because we are adding nonnegative terms, not all of which are $0$. We turn to proving finiteness. The expectation of $\OCY_\beta(t,n \viiva s,m + jN)$ can be computed explicitly. Use Tonelli's theorem and independence of the disjoint increments of Brownian motion, we obtain
\[
e^{\theta j}\Ee[\OCY_\beta( t,n \viiva s,m + jN)] = e^{\theta j}e^{\beta^2 (t-s)/2} \f{(t-s)^{n - m - jN}}{(n-m-jN)!} \ind\{n-m - jN \ge 0\}.
\]
Hence,
$
\Ee\Bigl[\OCYp
_\beta (t,n \viiva s,m + jN)\Bigr] < \infty,
$
so there exists a full probability event on which $\OCYp_\beta(r_2,n \viiva r_1,m + jN;\theta) < \infty$ for all pairs of integers $r_1 < r_2$ and integers $\theta$. For an integer $\theta$ and arbitrary $s < t$, choose integers $r_1,r_2$ so that $r_1 < s < t < r_2$. Then, for all $n \ge m$, we rewrite $\OCY_\beta(t,n \viiva s,m)$ as
\begin{align*}
 &\quad \, e^{\beta(B_n(t) - B_m(s))} \int_{\pathsp_{(s,m),(t,n)}} \exp\Bigl(\beta(B_m(s_m) - B_n(s_{n-1})) +\beta \sum_{r = j+1}^{i-1} \big(B_r(s_r) - B_r(s_{r-1})\big)  \Bigr)\,d\mbf s_{m:n-1} \\
 &\le e^{\beta(B_n(t) - B_m(s))} \int_{\pathsp_{(r_1,m),(r_2,n)}} \exp\Bigl(\beta(B_m(s_m) - B_n(s_{n-1})) +\beta \sum_{r = j+1}^{i-1} \big(B_r(s_r) - B_r(s_{r-1})\big)   \Bigr)\,d\mbf s_{m:n-1} \\
&= e^{\beta(B_n(t) - B_n(r_2) - B_m(s) + B_m(r_1))} \OCY_\beta(r_2,n \viiva r_1,m).  
\end{align*}
Thus, since $B_{r + jN} = B_{r}$ for all integers $j$, we have 
\begin{align*}
&\quad \OCYp_\beta(t,n \viiva s,m; \theta) = \sum_{j \in\Z} e^{\theta j} \OCY_\beta(t,n \viiva s,m + jN)  \\
&\le e^{\beta(B_n(t) - B_n(r_2) - B_m(s) + B_m(r_1))} \sum_{j \in \Z} e^{\theta j} \OCY_\beta(r_2 ,n \viiva r_1, m + jN)\\
&= e^{\beta(B_n(t) - B_n(r_2) - B_m(s) + B_m(r_1))}\sum_{j \in \Z} e^{\theta j}\OCY_\beta(r_2 ,n \viiva r_1, m + jN) \\
&=e^{\beta(B_n(t) - B_n(r_2) - B_m(s) + B_m(r_1))} \OCYp_\beta(r_2,n \viiva r_1, m; \theta ) < \infty. 
\end{align*}
The extension to all $\theta  \in \R$ follows by immediately by monotonicity (noting the sum is finite if and only if the sum over negative indices is finite). 
\end{proof}

\begin{proof}[Proof of Lemma \ref{lem:sd_SDE}]
For ease of notation, we prove these items in the case $k = 1$; the extension to general $k$ follows immediately. To remind the reader of the notation, we let $\mbf u = (u_i)_{i \in \Z_N} \in\R^{\Z_N}$ satisfy $\vecsum(\mbf u) = \theta$, and we extend $\mbf u$ to $(u_i)_{i \in \Z}$ by $u_i = u_j$ if $i \equiv j \mod N$. Define $F:\Z \to \R_{>0}$ so that $F(0) = 1$ and $\f{F(j)}{F(j-1)} = e^{u_j}$ for $j \in \Z$. Then, we define $\mbf U(t) = \bigl(U_i(t) \bigr)_{i \in \Z}$ by
\[
U_{i}(t) = U_\beta^N(t,i \viiva s,\mbf u).
\]

\medskip \noindent \textbf{Item \ref{itm:cont_lim}:} By definition \eqref{Zfdef},
\[
\OCY_\beta(t,i \viiva s,F) = \sum_{j \in \Z} F(j) \OCY_\beta(t,i \viiva s,j).
\]
By definition of $\OCY_\beta$, each term $\OCY_\beta(t,i \viiva s,j)$ is a continuous function for $t \ge s$. Since $\f{F(j)}{F(j-1)} = e^{u_j}$, and $(u_j)_{j \in \Z}$ is periodic in shifts of $N$, $F$ grows exponentially. Then, a similar procedure as in the proof of Lemma \ref{lem:OCYp_finite} allows us to use the dominated convergence theorem to conclude continuity of $t \mapsto \OCY_\beta(t,i \viiva s,F)$. Recalling that $\OCY_\beta(s,i \viiva s,j) = \ind\{i = j\}$, we see that $\OCY_\beta(s,i \viiva s,F) = F(i)$.   Then, 
\[
t \mapsto U_i(t) = \log \f{\OCY_\beta(s,i \viiva s,F)}{\OCY_\beta(s,i-1 \viiva s,F)}
\]
is continuous for $t \ge s$. In particular, $\lim_{t \searrow s} U_i(t) = U_i(s) = \f{F(i)}{F(i-1)} = e^{u_i}$.

\medskip \noindent \textbf{Item \ref{itm:Zper}:} We first note that $\OCY_\beta(t,i + N \viiva s, j+N) = \OCY_\beta(t,i \viiva s,j)$ for any $i \ge j$ and $s < t$ by the periodicity of the $B_j$. Observe also that 
\[
\f{F(j+N)}{F(j)} = \prod_{i = 0}^{N-1} \f{F(j + i + 1)}{F(j+i) } = \prod_{i = 1}^{N} e^{u_i} = e^\theta.
\]
Then, 
\begin{align*}
\OCY_\beta(t,i \viiva s,F) &= \sum_{j \le i} F(j) \OCY_\beta(t,i \viiva s,j) = \sum_{j \le i} e^{-\theta } F(j+N) \OCY_\beta(t,i +N\viiva s,j+N)\\
&= e^{-\theta } \sum_{j \le i + N}F(j) \OCY_\beta(r,i+N \viiva s,j) = e^{-\theta } \OCY_\beta(t,i+ N \viiva s,F). 
\end{align*}
Then, we see that 
\[
U_{i+N}(t) = \log \f{\OCY_\beta(t,i +N \viiva s,F)}{\OCY_\beta(t,i +N - 1 \viiva s,F)} = \log \f{\OCY_\beta(t,i \viiva s,F)}{\OCY_\beta(t,i - 1 \viiva s,F)} = U_i(t).
\]

\medskip \noindent \textbf{Item \ref{itm:prod_pres}}: Observe that 
\[
\prod_{i = 0}^{N-1} e^{U_j(t)} = \f{\OCY_\beta(t,N-1 \viiva s, F)}{\OCY_\beta(t,-1 \viiva s,F)} = e^\theta,
\]
the last equality following by Item \ref{itm:Zper}. 

\medskip \noindent \textbf{Item \ref{itm:SDE}:} For $j < i$, set
\[
I_\beta(t,i \viiva s,j) = \int_{\pathsp_{(s,j),(t,i)}} \exp\Bigl(-\beta B_i(s_{i-1}) + \beta \sum_{r = j}^{i-1} \big(B_r(s_r) - B_r(s_{r-1})\big)   \Bigr)\,d\mbf s_{j:i-1},
\]
and for $i = j$, set $I_\beta(t,i \viiva s,i) = e^{-\beta B_i(s)}$
so that $\OCY_\beta (t,i \viiva s,j) = e^{ \beta B_i(t)}I_\beta (t,i \viiva s,j)$ for $i \le j$. When $i > j$, observe that 
\[
I_\beta (t,i \viiva s,j) = \int_s^t e^{-\beta B_{i}(u)}\OCY_\beta (u,i-1 \viiva s,j)\,du ,
\]
and so, letting $d$ denote differentiation in the $t$ variable with $s$ fixed, 
\be \label{Idt}
d I_\beta(t,i \viiva s,j) = e^{-\beta B_i(t)}\OCY_\beta (t,i-1 \viiva s,j)\,dt.
\ee
This also holds in the case $i = j$, as both sides of the equality are $0$. Similar estimates used in the proof of Lemma \ref{lem:OCYp_finite} justify the interchange of differentiation and summation. Then, we have
\[
U_i(t) = \beta B_{i}(t) - \beta B_{i-1}(t) + \log \sum_{j \le i} f(j) I_\beta (t,i \viiva s,j) - \log \sum_{j \le i-1}f(j) I_\beta (t,i-1 \viiva s,j),
\]
and so, by \eqref{Idt}, 
\begin{align*}
    dU_i(t) &= \beta\bigl(dB_{i}(t) - dB_{i-1}(t)\bigr)\\
    &\qquad+ \biggl(\f{e^{-\beta B_i(t)}\sum_{j \le i - 1} F(j)\OCY(t,i-1 \viiva s,j) }{\sum_{j \le i}F(j) I_\beta(t,i\viiva s,j )} - \f{e^{-\beta B_{i-1}(t)}\sum_{j \le i-1}F(j)\OCY(t,i-2 \viiva s,j) }{\sum_{j \le i-1}F(j) I_\beta(t,i-1 \viiva s,j)}\biggr)\,dt \\
    &= \beta\bigl(dB_{i}(t) - dB_{i-1}(t)\bigr) + \biggl(\f{\sum_{j \le i - 1} F(j)\OCY_\beta(t,i-1 \viiva s,j) }{\sum_{j \le i}F(j) \OCY_\beta(t,i\viiva s,j )} - \f{\sum_{j \le i-1}F(j)\OCY_\beta(t,i-2 \viiva s,j) }{\sum_{j \le i-1}F(j) \OCY_\beta(t,i-1 \viiva s,j)}\biggr)\,dt\\
    &= \beta\bigl(dB_{i}(t) - dB_{i-1}(t)\bigr) + (e^{-U_i(t)} - e^{-U_{i-1}(t)})\,dt.
\end{align*}
Because the coefficients of the equation are smooth and therefore locally bounded and locally Lipschitz, \cite[Theorem 5.3.7, page 297]{ethi-kurt} implies pathwise uniqueness of solutions to the system of SDEs. Then \cite[theorem 5.3.6]{ethi-kurt} implies uniqueness of solutions in law.
\end{proof}

The following is used in the proof of Proposition \ref{prop:NewSDE} to show  a strong comparison principle. 
\begin{lemma} \label{lem:SDE_strong_comp}
Let $(\mbf U(t))_{t \ge 0} = \bigl((U_i(t))_{i \in \Z_N}\bigr)_{t \ge 0}$ and $(\mbf V(t))_{t \ge 0} = \bigl((V_i(t)_{i \in \Z_N}\bigr)_{t \ge 0}$ be continuous functions of $t$ such that, for $i \in \Z_N$, $t \mapsto V_i(t) - U_i(t)$ is differentiable, with
\be \label{eq:VUdif}
\df{d}{dt}\bigl(V_i(t) - U_i(t)\bigr) = e^{-V_i(t)} - e^{-U_i(t)} + e^{-U_{i-1}(t)} - e^{-V_{i-1}(t)}. 
\ee
If $\mbf U(0) < \mbf V(0)$, then for all $t > 0$, $\mbf U(t) < \mbf V(t)$. 
\end{lemma}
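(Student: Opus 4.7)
My plan is to work with the difference $W_i(t) := V_i(t) - U_i(t)$, which satisfies $W_i(0) > 0$ for every $i \in \Z_N$. Using $e^{-V_i} = e^{-U_i}e^{-W_i}$, the hypothesis \eqref{eq:VUdif} rewrites as
\[
\dot W_i(t) = e^{-U_i(t)}\bigl(e^{-W_i(t)} - 1\bigr) - e^{-U_{i-1}(t)}\bigl(e^{-W_{i-1}(t)} - 1\bigr),
\]
and because $\mbf U$ and $\mbf V$ are continuous, each $\dot W_i$ is in fact a continuous function of $t$. The goal becomes to show that $W_i(t) > 0$ for all $t > 0$ and all $i \in \Z_N$.

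The first observation is that $S(t) := \sum_{i \in \Z_N} W_i(t)$ is conserved. Summing the ODE over $i \in \Z_N$, the right-hand side telescopes on the cycle: each term $e^{-U_i(t)}(e^{-W_i(t)} - 1)$ appears exactly once with a plus and once with a minus sign. Hence $S(t) \equiv S(0) > 0$.

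For the second step, I propose a minimum-principle argument. Assume for contradiction that $\tau := \inf\{t > 0 : \min_{i \in \Z_N} W_i(t) \le 0\}$ is finite. By continuity $\min_{i \in \Z_N} W_i(\tau) = 0$, while $W_j(t) > 0$ for all $j \in \Z_N$ and $t \in [0,\tau)$. Pick any index $i^*$ achieving the minimum at $\tau$. On the one hand, $W_{i^*}$ decreases to $0$ as $t \nearrow \tau$, so $\dot W_{i^*}(\tau) \le 0$. On the other hand, evaluating the ODE at $\tau$ using $W_{i^*}(\tau) = 0$ gives
\[
\dot W_{i^*}(\tau) = -e^{-U_{i^* - 1}(\tau)}\bigl(e^{-W_{i^* - 1}(\tau)} - 1\bigr) \ge 0,
\]
since $W_{i^* - 1}(\tau) \ge 0$. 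Hence $\dot W_{i^*}(\tau) = 0$, which forces $W_{i^* - 1}(\tau) = 0$. Iterating the same reasoning with $i^*$ replaced by $i^* - 1$, then $i^* - 2$, and so on propagates the equality $W_j(\tau) = 0$ around the entire cycle $\Z_N$. This yields $S(\tau) = 0$, contradicting $S(\tau) = S(0) > 0$, and thus forces $\tau = +\infty$.

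The main subtlety I expect is the possibility that several coordinates of $\mbf W$ vanish simultaneously at $\tau$; the naive sign check at a single minimizing index only shows $\dot W_{i^*}(\tau) = 0$ rather than strict positivity, so some additional global input is required to close the argument. Conservation of the linear functional $S$ is precisely this ingredient, since it converts the cyclic propagation $W_{i^*}(\tau) = W_{i^* - 1}(\tau) = \cdots = 0$ into an immediate contradiction with the initial data.
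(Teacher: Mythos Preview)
Your proof is correct and shares the core strategy of the paper's argument: both use the conservation of $S(t) = \sum_i W_i(t)$ and the stopping time $\tau$ at which some $W_i$ first vanishes. The difference is in how the contradiction at $\tau$ is closed. The paper observes directly from conservation that there must exist an index $i$ with $W_i(\tau) = 0$ and $W_{i-1}(\tau) > 0$, and then runs an integral estimate on a short interval $[t_0(\varepsilon),\tau]$ to show $W_i(\tau) > 0$, a contradiction. Your argument instead works pointwise at $\tau$: you use the sign of $\dot W_{i^*}(\tau)$ to propagate the zero from $i^*$ to $i^*-1$, then cycle around $\Z_N$ until all coordinates vanish, contradicting $S(\tau) > 0$. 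Your route avoids the integral estimate entirely and is a bit cleaner; the paper's route avoids the propagation step by locating the ``boundary'' index up front. One small wording point: $W_{i^*}$ need not \emph{decrease} to $0$ on $[0,\tau)$, but your conclusion $\dot W_{i^*}(\tau) \le 0$ still follows from $W_{i^*}(t) > 0 = W_{i^*}(\tau)$ for $t < \tau$ and differentiability at $\tau$.
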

\begin{proof}
Set $
\tau := \inf\{t > 0: U_i(t) = V_i(t) \text{ for some }i \in \Z_N\}.
$
By continuity of sample paths, the lemma will follow if we can show that almost surely $\tau=+\infty$. We will assume the contrary that $\tau < \infty$ and reach a contradiction. \eqref{eq:VUdif} implies the conservation law $\df{d}{dt} \vecsum\big(\mbf V(t) - \mbf U(t)\big) = 0$ and thus $\vecsum\big(\mbf V(t) - \mbf U(t)\big)>0$ for all $t\geq 0$ (since it holds for $t=0$). This implies that at the stopping time $\tau$ (assumed presently to be finite), there must exist some $i \in \Z_N$ such that $V_i(\tau) - U_i(\tau) = 0$ and $V_{i-1}(\tau) - U_{i-1}(\tau) > 0$ (recall that the indices are elements of $\Z_N$ with cyclic subtraction).
Now, with our choice of $i$, for each $\ve \in (0,V_i(0) - U_i(0))$, set
\[
t_0(\ve) := \sup\{t \in (0,\tau): V_i(t) - U_i(t) = \ve\}.
\]
By continuity, $t_0(\ve)\in (0,\tau)$ exists, $V_i(t_0(\ve)) - U_i(t_0(\ve)) = \ve$, and $V_i(t) - U_i(t)\in [0,\ve]$ for $t \in [t_0(\ve),\tau]$. Set
\[
C_1 := \inf_{0 \le t \le \tau} e^{-V_{i-1}(t)}\Bigl(e^{V_{i-1}(t) - U_{i-1}(t)} - 1\Bigr), \qquad \text{and}\qquad C_2 = \sup_{0 \le t \le \tau} e^{-V_i(t)},
\]
and observe that $C_1,C_2 \in (0,\infty)$ by continuity and because $V_{i-1}(t) - U_{i-1}(t) > 0$ for all $t \in [0,\tau]$ by choice of $\tau$ and $i$. Now, choose $\ve \in (0,V_i(0) - U_i(0))$ sufficiently small so that $C_1 - C_2(e^{\ve} - 1) > 0$. 
Then, by \eqref{eq:VUdif},
\begin{align*}
0 = V_i(\tau)- U_i(\tau) &= \ve + \int_{t_0(\ve)}^\tau\Bigl(e^{-V_i(t)} - e^{-U_i(t)} + e^{-U_{i-1}(t)} - e^{-V_{i-1}(t)}\Bigr)\,dt  \\\
&=\ve + \int_{t_0(\ve)}^\tau\Biggl[ e^{-V_{i-1}(t)}\Bigl(e^{V_{i-1}(t) - U_{i-1}(t)} - 1\Bigr) -  e^{-V_i(t)} \Bigl(e^{V_i(t) - U_i(t)} - 1\Bigr)\Biggr]\,dt  \\
&\ge  \ve + (\tau - t_0(\ve))(C_1 - C_2(e^\ve - 1)) > \ve,
\end{align*}
giving a contradiction.
\end{proof}

\subsection{Justifying the polymer interpretation from Section \ref{sec:polymerinterpretation}}
Here, we present a lemma that connects the polymer model defied in Section \ref{sec:polymerinterpretation} to the discrete-time Markov chain in \eqref{eq:coupled_disc_MC}.

\begin{lemma} \label{lem:inv_gamma}
Let $\mbf U^{(0)} \in \R^{\Z_N}$, and $(\mbf W_m)_{m \ge 1}$ be a sequence of vectors in $\R^{\Z_N}$. Extend these to sequences in $\R^\Z$ by $W_{m,i} = W_{m,j}$ for $i \equiv j \mod N$, and define $Z^N(m,i \viiva F)$ as in \eqref{eq:disc_poly_init}. Set $F_m(i) = Z^N(m,i|F)$ for $m \ge 1$ and $i \in \Z$, and define $F_0(i) = F(i)$. Then,
\begin{enumerate} [label=\textup{(\roman*)}]
    \item \label{itm:finsumcond} For $i \in \Z$ and $m \ge 1$, $F_m(i) < \infty$ if and only if \be \label{eq:sum_order_cond}
    \vecsum(\mbf U^{(0)}) > \vecsum(\mbf W_r)\qquad\text{for}\qquad 1 \le r \le m.
    \ee
\end{enumerate}
If for some $m \ge 1$, \eqref{eq:sum_order_cond} holds, for $i \in \Z$, define $U^{(m)}_i = \log \f{F_m(i)}{F_m(i-1)}$ as in \eqref{eq:poly_part}. Then, the following hold.
\begin{enumerate}[label=\textup{(\roman*)},resume]
\item \label{itm:Xper} $U^{(m)}_i = U^{(m)}_j$ whenever $i \equiv j \mod N$. Hence, we may consider $\mbf U^{(m)}$ as a sequence in $\R^{\Z_N}$.
\item \label{itm:DN2map} $\mbf U^{(m)} = D^{N,2}(\mbf W_m,\mbf U^{(m-1)})$. 
\end{enumerate}
\end{lemma}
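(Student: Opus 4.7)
The starting point is to rewrite $Z^N(m,i\viiva F)$ by expanding each path in $\Pi_{(1,i_0+1),(m,j)}$ according to its ``exit positions'' $j_r$ (the largest spatial coordinate visited in row $r$) for $r=1,\ldots,m-1$, with the conventions $j_0 = i_0$ and $j_m = j$. Collecting bulk weights row by row and combining with the initial-data sum yields
\[
Z^N(m,j\viiva F) \;=\; \sum_{j_0 < j_1 < \cdots < j_{m-1} < j_m = j} F(j_0) \prod_{r=1}^{m} e^{W_{r,(j_{r-1},j_r]}}.
\]
I plan to use this single representation to handle all three items.

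For \ref{itm:finsumcond}, I will argue by induction on $m$. Since $F(j_0 - N) = e^{-\vecsum(\mbf U^{(0)})} F(j_0)$ and the periodicity of $\mbf W_1$ gives $W_{1,(j_0 - N, j_1]} = W_{1,(j_0,j_1]} + \vecsum(\mbf W_1)$, the inner sum over $j_0 < j_1$ converges for a fixed $j_1$ if and only if $\vecsum(\mbf W_1) < \vecsum(\mbf U^{(0)})$; setting $g_1(j_1) := \sum_{j_0 < j_1} F(j_0) e^{W_{1,(j_0,j_1]}}$, a direct substitution shows that $g_1(j_1 - N) = e^{-\vecsum(\mbf U^{(0)})} g_1(j_1)$, so $g_1$ inherits the same periodic ratio as $F$. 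The remaining sum matches the formula above with $m$ replaced by $m-1$, $F$ replaced by $g_1$, and weight rows $\mbf W_2,\ldots,\mbf W_m$, so the inductive hypothesis finishes the equivalence.

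Item \ref{itm:Xper} is immediate from the substitution $j_r \mapsto j_r - N$ for $0 \le r \le m$: each factor $e^{W_{r,(j_{r-1}-N, j_r - N]}}$ is unchanged by periodicity of $\mbf W_r$, while $F(j_0 - N) = e^{-\vecsum(\mbf U^{(0)})} F(j_0)$, giving
\[
Z^N(m, j - N \viiva F) = e^{-\vecsum(\mbf U^{(0)})} Z^N(m, j \viiva F),
\]
and hence $U^{(m)}_{j-N} = U^{(m)}_j$ from the log-ratio definition.

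For \ref{itm:DN2map}, I will split the sum on $j_{m-1}$ to obtain
\[
Z^N(m, j \viiva F) = \sum_{j_{m-1} < j} Z^N(m-1, j_{m-1} \viiva F)\, e^{W_{m,(j_{m-1}, j]}},
\]
then parameterize $j_{m-1} = \ell - kN$ with $\ell \in \{j-N,\ldots,j-1\}$ and $k \ge 0$ and sum the geometric series in $k$ using \ref{itm:Xper} and the periodicity of $\mbf W_m$. The resulting prefactor $(1 - e^{\vecsum(\mbf W_m) - \vecsum(\mbf U^{(0)})})^{-1}$ cancels when one forms $U^{(m)}_j = \log(Z^N(m,j\viiva F)/Z^N(m,j-1\viiva F))$. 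Using the log-ratio definition of $U^{(m-1)}$ to write $Z^N(m-1, \ell \viiva F)/Z^N(m-1, j \viiva F)$ as a product of $e^{\pm U^{(m-1)}_\cdot}$, and the identity $\vecsum(\mbf U^{(m-1)}) = \vecsum(\mbf U^{(0)})$ (obtained by comparing the $j \to j+N$ ratio via the two expressions) to absorb the boundary term, I will re-express the remaining sums as the cyclic sums $\sum_{j' \in \Z_N} e^{U^{(m-1)}_{(j, j']} - W_{m,(j, j']}}$ and $\sum_{j' \in \Z_N} e^{U^{(m-1)}_{(j-1, j']} - W_{m,(j-1, j']}}$ appearing in $D^{N,2}_j(\mbf W_m, \mbf U^{(m-1)})$. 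The main obstacle will be bookkeeping in the transition from the linear ordering on $\Z$ to the cyclic ordering on $\Z_N$ and keeping track of the leftover $e^{U^{(m-1)}_j}$ factor that accounts for the prefactor in the $D^{N,2}$ formula; all convergence issues in these manipulations are guaranteed by \ref{itm:finsumcond}.
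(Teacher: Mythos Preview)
Your proposal is correct and follows essentially the same route as the paper: both arguments rest on the one-row recursion $Z^N(m,j\viiva F)=\sum_{j_{m-1}<j}Z^N(m-1,j_{m-1}\viiva F)\,e^{W_{m,(j_{m-1},j]}}$, a geometric-series summation over residue classes mod $N$, and the same cyclic rewriting to land on $D^{N,2}$.

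The only noteworthy difference is in the induction for \ref{itm:finsumcond}. The paper peels off the \emph{outermost} row $k+1$ and therefore needs $\vecsum(\mbf U^{(k)})=\vecsum(\mbf U^{(0)})$, which it obtains from \ref{itm:DN2map} at lower levels together with Lemma~\ref{lem:Dsum_pres}; consequently it proves \ref{itm:finsumcond} and \ref{itm:DN2map} simultaneously by strong induction. You instead peel off the \emph{innermost} row $1$, observe directly that $g_1(j_1):=\sum_{j_0<j_1}F(j_0)e^{W_{1,(j_0,j_1]}}$ inherits the same periodic ratio $e^{\vecsum(\mbf U^{(0)})}$ as $F$, and then induct. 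This decouples \ref{itm:finsumcond} from \ref{itm:DN2map} and is slightly cleaner. For \ref{itm:DN2map} itself, your computation and the paper's are the same up to the choice of normalizing factor ($Z^N(m-1,j)$ versus $F_k(i-1)$) before the final cyclic rewrite.
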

\begin{proof}
For $m \ge 1$, we first quickly prove Item \ref{itm:Xper} under the assumption that $F_m(i) < \infty$ for all $i \in \Z$. Recall that we define $F:\Z \to \R$ by extending $\mbf U^{(0)}$ periodically, then set $F(0) = 1$ and $\f{F(i)}{F(i-1)} = e^{U^{(0)}_i}$ for $i \in \Z$. Since the initial condition $\mbf U^{(0)}$ and the environment are periodic, we have
\begin{align*}
Z^N(m,i \viiva F) &= \sum_{j \le i - m} F(j) Z^N(m,i \viiva 1,j+1) = e^{-\vecsum(\mbf U^{(0)})} \sum_{j \le i-m} F(j+N) Z^N(m,i+N \viiva j + N + 1) \\
&= e^{-\vecsum(\mbf U^{(0)})} \sum_{j \le i + N -m} F(j) Z^N(m,i+N \viiva j) = e^{-\vecsum(\mbf U^{(0)})}Z^N(m,i + N \viiva F). 
\end{align*}
Item \ref{itm:Xper} now follows by taking ratios.

Next, we prove Items \ref{itm:finsumcond} and \ref{itm:DN2map}. First, we prove the following: for $k \ge 0$, if $F_k(j) < \infty$ for all $j \in \Z$, then
\begin{align} \label{eq:inf_sum}
\f{F_{k+1}(i)}{F_k(i-1)} = e^{U_i^{(k)}}\sum_{j = i - N}^{i-1} \prod_{\ell = j+1}^{i} e^{W_{k+1,\ell} - U^{(k)}_\ell} \sum_{r = -\infty}^0 e^{-r(\vecsum(\mbf W_{k+1}) - \vecsum(\mbf U^{(k)}))}
\end{align}
Recalling the definitions in \eqref{eq:poly_part} and \eqref{eq:Zdis}, the dynamic programming principle gives us 
\be \label{eq:F_conv}
F_{k+1}(i) = \sum_{j \le i-1} F_{k}(j) \prod_{\ell = j+1}^i e^{W_{k+1,\ell}}.
\ee
 Then, we observe that
\begin{align*}
\f{F_{k+1}(i)}{F_k(i - 1)} &= \sum_{j \le i-1} \f{F_{k}(j)}{F_{k}(i-1)} \prod_{\ell = j+1}^{i} e^{W_{k+1,\ell}} = e^{U_i^{(k)}}\sum_{j \le i-1} \prod_{\ell = j+1}^i e^{W_{k+1,\ell} - U^{(k)}_\ell}  \\
&= e^{U_i^{(k)}}\sum_{j = i - N}^{i-1} \sum_{r = -\infty}^0 \prod_{\ell = j+rN + 1}^i e^{W_{k+1,\ell} - U^{(k)}_\ell} \\
&=  e^{U_i^{(k)}}\sum_{j = i - N}^{i-1} \prod_{\ell = j+1}^{i} e^{W_{k+1,\ell} - U^{(k)}_\ell}   \sum_{r = -\infty}^0 \prod_{\ell = j+rN + 1}^j e^{W_{k+1,\ell} - U^{(k)}_\ell}  \\
&=  e^{U_i^{(k)}}\sum_{j = i - N}^{i-1} \prod_{\ell = j+1}^{i} e^{W_{k+1,\ell} - U^{(k)}_\ell} \sum_{r = -\infty}^0 e^{-r(\vecsum(\mbf W_{k+1}) - \vecsum(\mbf U^{(k)}))}. \label{eq:inf_sum}
\end{align*}

 We now prove Items \ref{itm:finsumcond} and \ref{itm:DN2map} together by strong induction. To start with the base case, we have, by definition, $F(i) = F_0(i) < \infty$ for all $i \in \Z$. Then, \eqref{eq:inf_sum} implies that $F_1(i) < \infty$ if and only if $\vecsum(\mbf U^{(0)}) > \vecsum(\mbf W_{1})$. If this holds, then using \eqref{eq:inf_sum}, we have, for $i \in \Z$,
 \begin{align*}
     U_i^{(1)} = \log \f{F_1(i)}{F_1(i-1)} =   \log \Biggl(\f{F_1(i)}{F_0(i-1)} \f{F_0(i-2)}{F_1(i-1)}e^{U^{(0)}_{i-1}}\Biggr).
 \end{align*}
 Then, applying the $k = 0$ case of \eqref{eq:inf_sum} for both $i$ and $i-1$, we obtain
 \begin{align*}
 U_i^{(1)} &= U_i^{(0)}  +\log\Biggl(\f{\sum_{j = i - N}^{i-1} \prod_{\ell = j+1}^{i} e^{W_{1,\ell} - U^{(0)}_\ell}}{\sum_{j = i-1 - N}^{i-2} \prod_{\ell = j+1}^{i-1} e^{W_{1,\ell} - U^{(0)}_\ell}}    \Biggr) \\
&= U^{(0)}_i + \log \Biggl(\f{\prod_{\ell \in \Z_N} e^{W_{1,\ell} - U^{(0)}_\ell} +
  \sum_{j \in \Z_N \setminus \{i\} } e^{W_{1,(j,i]} - U^{(0)}_{(j,i]}}   }{\prod_{\ell \in \Z_N} e^{W_{1,\ell} - U^{(0)}_\ell} +
  \sum_{j \in \Z_N \setminus \{i-1\} } e^{W_{1,(j,i-1]} - U^{(0)}_{(j,i-1]}}   }\Biggr)  \\
  &= U^{(0)}_i + \log \Biggl(\f{
  \sum_{j \in \Z_N } e^{U^{(0)}_{(i,j]} - W_{1,(i,j]} }   }{
  \sum_{j\in \Z_N  } e^{U^{(0)}_{(i-1,j]} - W_{1,(i-1,j]} }   }\Biggr) = D_i^{N,2}(\mbf W_{1},\mbf U^{(0)}),
 \end{align*}
where in the penultimate step, we multiplied by $\prod_{\ell \in \Z_N} e^{W_{1,\ell} - U^{(0)}_\ell}$ in both the numerator and denominator, noting that, for a sequence $\mbf Y \in \R^{\Z_N}$, $Y_{(j,i]} + Y_{(i,j]} = \sum_{\ell \in \Z_N} Y_\ell$ for $j \neq i$, and $Y_{(i,i]} = 0$. This completes the proof of the $m =1$ case of Item \ref{itm:DN2map}.

Now, we assume by induction, that, for some $k \ge 1$, for all $1 \le m \le k$, $F_m(i) < \infty$ if and only if $\vecsum(\mbf U^{(0)}) > \vecsum(\mbf W_r)$ for $1 \le r \le m$, and whenever this is the case, we have $\mbf U^{(m)} = D^{N,2}(\mbf W_m,\mbf U^{(m-1)})$.

Now, assume that $F_{k+1}(i) < \infty$ for some $i \in \Z$. From \eqref{eq:F_conv}, it follows that $F_k(j) < \infty$ for $j \le i -1$. By Item \ref{itm:finsumcond} of the inductive assumption, $\vecsum(\mbf U^{(0)}) > \vecsum(\mbf W_r)$ for $1 \le r \le k$.  By Item \ref{itm:DN2map} of the inductive assumption, $\mbf U^{(m)} = D^{N,2}(\mbf W_m,\mbf U^{(m-1)})$ for $1 \le m \le k$. Then, by iterating Lemma \ref{lem:Dsum_pres},
\be \label{eq:Xmsumpres}
\vecsum(\mbf U^{(k)}) = \vecsum(\mbf U^{(0)})
\ee

Thus, by \eqref{eq:Xmsumpres} and \eqref{eq:inf_sum}, since we assumed $F_{k+1}(i) < \infty$, we must have $\vecsum(\mbf W_{k+1}) < \vecsum(\mbf U^{(k)}) = \vecsum(\mbf U^{(0)})$. This completes one implication of Item \ref{itm:finsumcond} for $m = k +1$. If, on the other hand, $F_{k+1}(i) = \infty$ for some $i \in \Z$, then we consider two cases. If $F_k(j) = \infty$ for some $j \in \Z$, then Item \ref{itm:finsumcond} of the inductive assumption implies $\vecsum(\mbf U^{(0)}) \le \vecsum(\mbf W_r)$ for some $1 \le r \le k$.  If, on the other hand, $\vecsum(\mbf U^{(0)}) >\vecsum(\mbf W_r)$  for all $1 \le r \le k$, then $F_k(j) < \infty$ for all $j \in \Z$, so \eqref{eq:Xmsumpres} and \eqref{eq:inf_sum} show that $\vecsum(\mbf W_{k+1}) \ge \vecsum(\mbf U^{(k)}) = \vecsum(\mbf U^{(0)})$. This proves Item \ref{itm:finsumcond} for $m = k+1$. Item \ref{itm:DN2map} for $m = k+1$ is then proved from \eqref{eq:inf_sum} similarly as in the base case.
\end{proof}

\section{Further calculations involving the $D$ and $J$ bijection} \label{sec:consist_technical}
The following gives an alternate description of the output of the map $D^{N,m}$ for $m \ge 2$.
\begin{lemma} \label{lem:Dnm_alt}
For any $N,m\in \N$, $(\mbf X_1,\ldots,\mbf X_m) \in \R^{\Z_N}$ and $i \in \Z_N$,
\be \label{eq:Qnm_gen}
\begin{aligned}
D^{N,m}_i(\mbf X_1,\ldots,\mbf X_m) &= X_{m,i} + Q_i^{N,m}(\mbf X_1,\ldots,\mbf X_m) - Q_{i-1}^{N,m}(\mbf X_1,\ldots,\mbf X_m) \quad \text{where}\\
Q_i^{N,m}(\mbf X_1,\ldots,\mbf X_m) &= \log \sum_{\substack{j_1,\ldots,j_{m-1} \in \Z_N \\ j_0 = i}} \prod_{r = 1}^{m-1} e^{X_{m,(j_{r - 1},j_r]}-X_{r,(j_{r - 1},j_r]}}.
\end{aligned}
\ee
\end{lemma}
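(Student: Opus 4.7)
The plan is to prove Lemma~\ref{lem:Dnm_alt} by induction on $m$, with the base case $m=2$ reading directly off the definition \eqref{eq:D_intro} of $D^{N,2}$ (and the $m=1$ case being trivial since $D^{N,1}(\mbf X_1)=\mbf X_1$ and the empty product gives $Q^{N,1}\equiv 0$). For the inductive step, I would use the recursive definition \eqref{eq:D_iter_intro}, namely
\[
D^{N,m}(\mbf X_1,\ldots,\mbf X_m)=D^{N,2}\bigl(\mbf X_1,\mbf Y\bigr),\qquad \mbf Y:=D^{N,m-1}(\mbf X_2,\ldots,\mbf X_m),
\]
and then unfold both the $D^{N,2}$ formula and the inductive formula for $\mbf Y$, showing the pieces reassemble into the claimed polymer-type expression.

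Concretely, from the induction hypothesis applied to $(\mbf X_2,\ldots,\mbf X_m)$ I get $Y_i=X_{m,i}+Q^{N,m-1}_i(\mbf X_2,\ldots,\mbf X_m)-Q^{N,m-1}_{i-1}(\mbf X_2,\ldots,\mbf X_m)$, and summing along a cyclic interval the $Q^{N,m-1}$ terms telescope to give
\[
Y_{(i,j]}=X_{m,(i,j]}+Q^{N,m-1}_{j}(\mbf X_2,\ldots,\mbf X_m)-Q^{N,m-1}_{i}(\mbf X_2,\ldots,\mbf X_m).
\]
Plugging this into the definition of $D^{N,2}_i(\mbf X_1,\mbf Y)$ pulls the factor $e^{-Q^{N,m-1}_i(\mbf X_2,\ldots,\mbf X_m)}$ out of the sum over $j$, and the remaining sum is (after renaming the summation variable to $j_1$) exactly
\[
\sum_{j_1\in\Z_N}e^{X_{m,(i,j_1]}-X_{1,(i,j_1]}}\,e^{Q^{N,m-1}_{j_1}(\mbf X_2,\ldots,\mbf X_m)}.
\]
Expanding the inner $e^{Q^{N,m-1}_{j_1}}$ as a sum over $j_2,\ldots,j_{m-1}\in\Z_N$ (with the dummy variable shift $k_r\mapsto j_{r+1}$ identifying the starting index $k_0$ with $j_1$), the product structure recombines into $e^{Q^{N,m}_i(\mbf X_1,\ldots,\mbf X_m)}$, yielding the identity
\[
\log\sum_{j\in\Z_N}e^{Y_{(i,j]}-X_{1,(i,j]}}=Q^{N,m}_i(\mbf X_1,\ldots,\mbf X_m)-Q^{N,m-1}_i(\mbf X_2,\ldots,\mbf X_m).
\]

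Substituting this together with the analogous identity with $i-1$ in place of $i$ into the formula $D^{N,2}_i(\mbf X_1,\mbf Y)=Y_i+\log\sum_j e^{Y_{(i,j]}-X_{1,(i,j]}}-\log\sum_j e^{Y_{(i-1,j]}-X_{1,(i-1,j]}}$ and using the inductive expression for $Y_i$ produces cancellation of all the $Q^{N,m-1}$ terms, leaving precisely $X_{m,i}+Q^{N,m}_i-Q^{N,m}_{i-1}$, as required. The only non-routine step is the bookkeeping in this reindexing/telescoping calculation; once the correct change of dummy variable is made the cancellation is automatic, so I expect the main care to go into notation (cyclic interval conventions and the shift $\wt{\mbf X}_k=\mbf X_{k+1}$) rather than any real combinatorial difficulty.
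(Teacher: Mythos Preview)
Your proposal is correct and follows essentially the same approach as the paper's proof: induction on $m$ with the base case $m=2$ read off from the definition, then in the inductive step applying $D^{N,2}$ to $\mbf X_1$ and $\mbf Y=D^{N,m-1}(\mbf X_2,\ldots,\mbf X_m)$, using the telescoping identity $Y_{(i,j]}=X_{m,(i,j]}+Q^{N,m-1}_j-Q^{N,m-1}_i$ to pull out the $Q^{N,m-1}_i$ factor, and recombining the nested sum into $Q^{N,m}_i$. The paper's argument is identical up to the cosmetic choice of writing the step as $m\to m+1$ rather than $m-1\to m$.
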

\begin{proof}
We prove this by induction on $m$. The $m = 2$ case is by definition. Assume that \eqref{eq:Qnm_gen} holds for some $m \ge 2$. We apply the induction assumption to $(\mbf X_2,\ldots,\mbf X_{m+1})$ to see that
\begin{align*}
D^{N,m}_i(\mbf X_2,\ldots,\mbf X_{m+1}) &= X_{m+1,i} + Q_i^{N,m}(\mbf X_2,\ldots,\mbf X_{m+1}) -  Q_{i-1}^{N,m}(\mbf X_2,\ldots,\mbf X_{m+1}),\\
D^{N,m}_{(i,j]}(\mbf X_2,\ldots,\mbf X_{m+1}) &= X_{m+1,(i,j]} + Q_j^{N,m}(\mbf X_2,\ldots,\mbf X_{m+1}) - Q_i^{N,m}(\mbf X_2,\ldots,\mbf X_{m+1}),
\end{align*}
Since $D^{N,m+1}(\mbf X_1,\ldots,\mbf X_{m+1}) = D^{N,2}\bigl(\mbf X_1,D^{N,m}(\mbf X_2,\ldots,\mbf X_{m+1})\bigr)$ by definition, it follows from above that 
\begin{align*}
D_i^{N,m+1}(\mbf X_1,\ldots,\mbf X_{m+1}) &= D_i^{N,m}(\mbf X_2,\ldots,\mbf X_{m+1}) + \log \Biggl(\f{\sum_{j \in \Z_N} e^{D_{(i,j]}^{N,m}(\mbf X_2,\ldots,\mbf X_{m+1}) - X_{1,(i,j]} }}{\sum_{j \in \Z_N} e^{D_{(i-1,j]}^{N,m}(\mbf X_2,\ldots,\mbf X_{m+1}) - X_{1,(i-1,j]} }} \Biggr) \\
&=  X_{m+1,i} + Q_i^{N,m}(\mbf X_2,\ldots,\mbf X_{m+1}) -  Q_{i-1}^{N,m}(\mbf X_2,\ldots,\mbf X_{m+1})  \\
&+ \log\Biggl(\f{\sum_{j \in \Z_N} e^{X_{m+1,(i,j]} + Q_j^{N,m}(\mbf X_2,\ldots,\mbf X_{m+1}) - Q_i^{N,m}(\mbf X_2,\ldots,\mbf X_{m+1}) - X_{1,(i,j]}}   }{\sum_{j \in \Z_N} e^{X_{m+1,(i-1,j]} + Q_j^{N,m}(\mbf X_2,\ldots,\mbf X_{m+1}) - Q_{i-1}^{N,m}(\mbf X_2,\ldots,\mbf X_{m+1}) - X_{1,(i-1,j]}} }\Biggr) \\
&= X_{m+1,i} + \log\Biggl(\f{\sum_{j \in \Z_N} e^{X_{m+1,(i,j]} - X_{1,(i,j]} + Q_j^{N,m}(\mbf X_2,\ldots,\mbf X_{m+1}) }   }{\sum_{j \in \Z_N} e^{X_{m+1,(i-1,j]}- X_{1,(i,j]} + Q_j^{N,m}(\mbf X_2,\ldots,\mbf X_{m+1})} }\Biggr). 
\end{align*}
The proof is complete upon observing that 
\[
\begin{aligned}
&\log \sum_{j \in \Z_N} e^{X_{m+1,(i,j]} - X_{1,(i,j]} + Q_j^{N,m}(\mbf X_2,\ldots,\mbf X_{m+1}) } \\
&= \log \sum_{j \in \Z_N} e^{X_{m+1,(i,j]} - X_{1,(i,j]}}\sum_{\substack{j_2,\ldots,j_{m} \in \Z_N \\ j_1 = j}} \prod_{r = 2}^{m} e^{X_{m+1,(j_{r - 1},j_r]}-X_{r,(j_{r - 1},j_r]}}  = Q_i^{N,m+1}(\mbf X_1,\ldots,\mbf X_{m+1}). \qedhere
\end{aligned}
\]
\end{proof}

We now prove some additional technical inputs that are used in the main body.
For the next lemma, recall that, for $\mbf X = ( X_i)_{i \in \Z} \in \R^{\Z_N}$, we defined $\diff_ie^{-\mbf X} = e^{-X_i} - e^{-X_{i-1}}$ and $\Lapl_ie^{-\mbf X} = e^{-X_{i-1}} + e^{-X_{i+1}} - 2e^{-X_i}$.

\begin{lemma} \label{lem:nasty_algebra}
Recall the definition of $\mathcal R^{N,2}$ in \eqref{RA2} and $\mathcal J^{N,2}$ in  \eqref{Jimap}. Let $(\mbf U_1,\mbf U_2) \in \mathcal R^{N,2}$, let $W_i = U_{2,i} - U_{1,i}$ for $i \in \Z_N$, and let $(\mbf X_1,\mbf X_2) =(\mbf U_1, J^{N,2}(\mbf U_1,\mbf U_2))$.  Then, for $i \in \Z_N$,
\begin{equation}
\begin{aligned}\label{eq:lemmab3}
    &\quad \, \f{e^{W_i}}{e^{W_i} - 1} \diff_ie^{-\mbf U_1} - \f{e^{W_{i+1}}}{e^{W_{i+1}} - 1} \diff_{i+1}e^{-\mbf U_1} + \Bigl(1 - \f{e^{W_i}}{e^{W_i} - 1}\Bigr)\diff_ie^{-\mbf U_2}+ \f{e^{W_{i+1}}}{e^{W_{i+1}} - 1}\diff_{i+1}e^{-\mbf U_2}  \\
   &= \diff_ie^{-\mbf X_2}- \Lapl_{i}e^{-\mbf X_1}.
\end{aligned}
\end{equation}
\end{lemma}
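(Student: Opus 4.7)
\textbf{Proof plan for Lemma \ref{lem:nasty_algebra}.} The claim is a pure algebraic identity in the $3N$ real variables $\{U_{1,i-1},U_{1,i},U_{1,i+1},W_{i-1},W_i,W_{i+1}\}$, once one substitutes $X_{1,j}=U_{1,j}$ and the formula
\[
X_{2,j}=J_j^{N,2}(\mbf U_1,\mbf U_2)=U_{2,j}+\log\Bigl(\tfrac{e^{W_{j+1}}-1}{e^{W_j}-1}\Bigr)
\]
for $j\in\{i-1,i\}$. The condition $(\mbf U_1,\mbf U_2)\in\mathcal R^{N,2}$ is used only to guarantee that the quantities $e^{W_j}-1$ do not vanish; no further use of the hypothesis is needed. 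The natural strategy is therefore a direct computation, organized so that the identity collapses via telescoping.

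The cleanest way to carry this out is to introduce the substitution $a_j:=e^{-U_{1,j}}$ and $b_j:=e^{-U_{2,j}}$, so that $W_j=U_{2,j}-U_{1,j}$ gives $e^{W_j}=a_j/b_j$ and
\[
\tfrac{1}{e^{W_j}-1}=\tfrac{b_j}{a_j-b_j},\qquad \tfrac{e^{W_j}}{e^{W_j}-1}=\tfrac{a_j}{a_j-b_j},\qquad 1-\tfrac{e^{W_j}}{e^{W_j}-1}=-\tfrac{b_j}{a_j-b_j}\cdot\tfrac{a_j}{b_j}\cdot\tfrac{b_j}{a_j}=-\tfrac{1}{e^{W_j}-1}.
\]
Using the key identity $e^{-U_{2,j}}(e^{W_j}-1)=e^{-U_{1,j}}-e^{-U_{2,j}}$, I get the compact formulas
\[
e^{-X_{2,j}}=e^{-U_{2,j}}\cdot\tfrac{e^{W_j}-1}{e^{W_{j+1}}-1}=\tfrac{(a_j-b_j)\,b_{j+1}}{a_{j+1}-b_{j+1}},
\]
and in particular the right-hand side of \eqref{eq:lemmab3} becomes
\[
\tfrac{(a_i-b_i)\,b_{i+1}}{a_{i+1}-b_{i+1}}-\tfrac{(a_{i-1}-b_{i-1})\,b_i}{a_i-b_i}-a_{i+1}+2a_i-a_{i-1}.
\]

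The left-hand side of \eqref{eq:lemmab3} becomes, after regrouping the two terms at index $i$ and the two at index $i+1$ and pulling the $a_{i+1}-b_{i+1}$ denominator out of the $i+1$ pair,
\[
\tfrac{a_i(a_i-a_{i-1})-b_i(b_i-b_{i-1})}{a_i-b_i}+\tfrac{a_{i+1}(a_i-b_i)}{a_{i+1}-b_{i+1}}-a_{i+1}.
\]
The main (and only) step of the argument is then to subtract the two expressions. The $a_{i+1}$ terms cancel immediately, and the second fraction on each side combines into $\tfrac{(a_i-b_i)(a_{i+1}-b_{i+1})}{a_{i+1}-b_{i+1}}=a_i-b_i$, eliminating all dependence on the index $i+1$. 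What remains is
\[
\tfrac{a_i(a_i-a_{i-1})-b_i(b_i-b_{i-1})+(a_{i-1}-b_{i-1})\,b_i}{a_i-b_i}+(a_i-b_i)-2a_i+a_{i-1},
\]
and the numerator collapses via $a_i^2-b_i^2=(a_i-b_i)(a_i+b_i)$ and a cancellation of the $b_ib_{i-1}$ terms to $(a_i-b_i)(a_i+b_i)-a_{i-1}(a_i-b_i)$. Dividing by $a_i-b_i$ yields $a_i+b_i-a_{i-1}$, which exactly cancels the remaining $(a_i-b_i)-2a_i+a_{i-1}$, giving zero.

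There is no real obstacle here; the proof is a one-page algebraic verification and the only ``trick'' is the substitution $a_j,b_j$ together with the elementary identity $b_j(e^{W_j}-1)=a_j-b_j$, which makes all the appearances of $e^{W_j}-1$ in denominators combine productively. I would present the argument exactly in the order above: (i) rewrite both sides in the $a_j,b_j$ variables, (ii) show that the $i+1$-indexed contributions cancel, and (iii) reduce the remaining expression on the $i,i-1$ indices to zero using the factorization $a_i^2-b_i^2=(a_i-b_i)(a_i+b_i)$.
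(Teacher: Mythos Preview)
Your proof is correct and follows essentially the same approach as the paper: a direct algebraic verification of the identity. The paper expands both sides over the common denominator $(e^{W_{i+1}}-1)(e^{W_i}-1)$ and matches the resulting twelve-term expressions, whereas your substitution $a_j=e^{-U_{1,j}}$, $b_j=e^{-U_{2,j}}$ and the staged cancellation (first the $i{+}1$-indexed contributions, then the factorization $a_i^2-b_i^2=(a_i-b_i)(a_i+b_i)$) organize the same computation more transparently.
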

\begin{proof}
The left-hand side  of \eqref{eq:lemmab3} equals
\be \label{eq:drift1}
\begin{aligned}
   &\quad \, \f{1}{e^{W_i} - 1}(e^{W_i - U_{1,i}} - e^{W_i - U_{1,i-1}} + e^{-U_{2,i-1}} - e^{-U_{2,i}}  ) \\
   &\qquad\qquad + \f{1}{e^{W_{i+1}} - 1}(e^{-U_{1,i+1}} - e^{W_{i+1}-U_{2,i}}  -e^{W_{i+1}-U_{1,i+1}} + e^{W_{i+1}-U_{1,i}} )  \\
   &=\f{1}{(e^{W_{i+1}}-1)(e^{W_i} - 1)}\Bigl(-e^{W_i - U_{1,i}} + e^{-U_{2,i}}+ e^{W_{i+1} - U_{2,i-1}}- e^{-U_{2,i-1}} + 2e^{W_{i+1} + W_i - U_{1,i}} - 2e^{W_{i+1} - U_{1,i}}  \\
&\qquad\qquad- e^{W_{i+1} + W_i - U_{1,i-1}} - e^{W_i - U_{1,i-1}} - e^{W_{i+1} + W_i - U_{1,i+1}} + e^{W_i - U_{1,i+1}} + e^{W_{i+1} - U_{1,i+1}} - e^{-U_{1,i+1}}\Bigr).
\end{aligned}
\ee

Since $\mbf X_1 = \mbf U_1$ and $\mbf X_2 = J^{N,2}(\mbf U_1,\mbf U_2)$, by definition of $J^{N,2}$ \eqref{Jimap}, the right-hand side of \eqref{eq:lemmab3} equals
\begin{align*}
&\quad \, e^{-U_{2,i}}\f{e^{W_i}-1}{e^{W_{i+1}}-1} - e^{-U_{2,i-1}} \f{e^{W_{i-1}} - 1}{e^{W_i} - 1} + 2e^{-U_{1,i}} - e^{-U_{1,i+1}}- e^{-U_{1,i-1}} \\
&= \f{e^{-U_{1,i}} - e^{-U_{2,i}}}{e^{W_{i+1}} - 1} - \f{e^{-U_{1,i-1}} - e^{-U_{2,i-1}}}{e^{W_i}-1}+2e^{-U_{1,i}} - e^{-U_{1,i+1}}  - e^{-U_{1,i-1}} \\
&= \f{e^{W_i - U_{1,i}} - 2e^{-U_{1,i}} + e^{-U_{2,i}} - e^{W_{i+1} - U_{1,i-1}} + e^{W_{i+1} - U_{2,i-1}} + e^{-U_{1,i-1}} - e^{-U_{2,i-1}} }{(e^{W_{i+1}}-1)(e^{W_i} - 1)} \\
&+\f{1}{(e^{W_{i+1}}-1)(e^{W_i} - 1)}\Bigl(2e^{W_{i+1} + W_i - U_{1,i}} - 2e^{W_i -U_{1,i}} -2 e^{W_{i+1} - U_{1,i}} + 2e^{-U_{1,i}} - e^{W_{i+1} + W_i - U_{1,i-1}}\\
&- e^{W_i - U_{1,i-1}}  + e^{W_{i+1} - U_{1,i-1}} - e^{-U_{1,i-1}} - e^{W_{i+1} + W_i - U_{1,i+1}} + e^{W_i - U_{1,i+1}} + e^{W_{i+1} - U_{1,i+1}} - e^{-U_{1,i+1}} \Bigr) \\
&= \f{1}{(e^{W_{i+1}}-1)(e^{W_i} - 1)}\Bigl(-e^{W_i - U_{1,i}} + e^{-U_{2,i}}+ e^{W_{i+1} - U_{2,i-1}}- e^{-U_{2,i-1}} + 2e^{W_{i+1} + W_i - U_{1,i}}  \\
& - 2e^{W_{i+1} - U_{1,i}} - e^{W_{i+1} + W_i - U_{1,i-1}} - e^{W_i - U_{1,i-1}} - e^{W_{i+1} + W_i - U_{1,i+1}} + e^{W_i - U_{1,i+1}} + e^{W_{i+1} - U_{1,i+1}} - e^{-U_{1,i+1}}\Bigr),
\end{align*}
which we can see matches  \eqref{eq:drift1}.
\end{proof}

The following shows an intertwining of the map $\D^{N,k}$ with the shift operator $\tau_1: \R^{\Z_N} \to \R^{\Z_N}$ defined by $(\tau_1 \mbf X)_i = X_{i+1}$. 
Below, we note that the operator $\tau_1$ is applied separately to each sequence. This is a different type of shift than that in Proposition \ref{prop:disc_consis}\ref{itm:mu_perm}, where the order of the sequences is permuted. 
\begin{lemma} \label{lem:shift}
Let $(\mbf X_1,\ldots,\mbf X_k) \in (\R^{\Z_N})^k$, and define $(\mbf U_1,\ldots,\mbf U_k) = \D^{N,k}(\mbf X_1,\ldots,\mbf X_k)$. Then,
\[
(\tau_1 \mbf U_1,\ldots,\tau_1 \mbf U_k) = \D^{N,k}(\tau_1 \mbf X_1,\ldots\tau_1 \mbf X_k).
\]
\end{lemma}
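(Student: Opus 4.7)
The plan is to reduce the statement to the corresponding shift-equivariance for the building block $D^{N,2}$, and then iterate. Since $\D^{N,k}$ is defined componentwise by $D^{N,m}$ for $1 \le m \le k$ via the recursion \eqref{eq:D_iter_intro}–\eqref{DNk_intro}, it suffices to show that for every $m \ge 1$,
\[
D^{N,m}(\tau_1 \mbf X_1,\ldots,\tau_1 \mbf X_m) = \tau_1\, D^{N,m}(\mbf X_1,\ldots,\mbf X_m).
\]

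The main step is the $m = 2$ case, which I would verify by direct calculation from the explicit formula \eqref{eq:D_intro}. Setting $\wt{\mbf X}_r := \tau_1 \mbf X_r$ and $\wt Y_\ell := \wt X_{2,\ell} - \wt X_{1,\ell} = Y_{\ell+1}$ (where $Y_\ell = X_{2,\ell} - X_{1,\ell}$), the change of summation index $j \mapsto j' = j+1$ (using that $\Z_N$ is a group and the sums run over all of $\Z_N$) gives
\[
\sum_{j \in \Z_N} e^{\wt Y_{(i,j]}} = \sum_{j \in \Z_N} e^{Y_{(i+1,j+1]}} = \sum_{j' \in \Z_N} e^{Y_{(i+1,j']}},
\]
and similarly for the denominator with $i$ replaced by $i-1$. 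Combined with $\wt X_{2,i} = X_{2,i+1}$, this yields
\[
D_i^{N,2}(\tau_1\mbf X_1,\tau_1\mbf X_2) = D_{i+1}^{N,2}(\mbf X_1,\mbf X_2) = \bigl(\tau_1 D^{N,2}(\mbf X_1,\mbf X_2)\bigr)_i,
\]
which is exactly the $m=2$ case. I do not expect any obstacle here; the computation is a routine reindexing that works precisely because the sums defining $D^{N,2}$ are over the full cyclic group $\Z_N$.

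The $m \ge 3$ cases then follow by induction using \eqref{eq:D_iter_intro}. Assuming the identity holds for $m-1$,
\[
D^{N,m}(\tau_1\mbf X_1,\ldots,\tau_1\mbf X_m) = D^{N,2}\bigl(\tau_1\mbf X_1, D^{N,m-1}(\tau_1\mbf X_2,\ldots,\tau_1\mbf X_m)\bigr) = D^{N,2}\bigl(\tau_1\mbf X_1, \tau_1 D^{N,m-1}(\mbf X_2,\ldots,\mbf X_m)\bigr),
\]
and applying the $m=2$ equivariance to the pair $(\mbf X_1, D^{N,m-1}(\mbf X_2,\ldots,\mbf X_m))$ gives $\tau_1 D^{N,m}(\mbf X_1,\ldots,\mbf X_m)$. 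Applying this identity in each slot of the definition \eqref{DNk_intro} of $\D^{N,k}$ completes the proof.
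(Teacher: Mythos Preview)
Your proof is correct and follows essentially the same approach as the paper: reduce to the $m=2$ case by induction on the recursive definition, then verify $D_i^{N,2}(\tau_1\mbf X_1,\tau_1\mbf X_2) = D_{i+1}^{N,2}(\mbf X_1,\mbf X_2)$ via the reindexing $j \mapsto j+1$ in the cyclic sums. The paper's proof is slightly terser (it simply states that the inductive definition reduces matters to $k=2$ without writing out the induction step), but the content is the same.
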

\begin{proof}
By the inductive definition of $\D^{N,k}$, it suffices to show that $\tau_1 \mbf U_2 = \D^{N,2}(\tau_1 \mbf X_1,\tau_1 \mbf X_2)$. For $\ell \in \Z_N$, define $Y_\ell = X_{2,\ell} - X_{1,\ell}$ and $Y_\ell' = X_{2,\ell + 1} - X_{1,\ell + 1}$. Then, observe that
\begin{align*}
D^{N,2}_i(\tau_1 \mbf X_1,\tau_1 \mbf X_2) &= X_{2,i+1} + \log\Biggl(\f{\sum_{j \in \Z_N} e^{Y'_{(i,j]}}     }{\sum_{j \in \Z_N} e^{Y'_{(i-1,j]}  }}\Biggr) =  X_{2,i+1} + \log\Biggl(\f{\sum_{j \in \Z_N} e^{Y_{(i+1,j+1]}}     }{\sum_{j \in \Z_N} e^{Y'_{(i,j+1]}  }}\Biggr)  \\
&=  X_{2,i+1} + \log\Biggl(\f{\sum_{j \in \Z_N} e^{Y_{(i+1,j]}}     }{\sum_{j \in \Z_N} e^{Y'_{(i,j]}  }}\Biggr) = D_{i+1}^{N,2}(\mbf X_1,\mbf X_2) = \bigl[\tau_1 \D^{N,2}(\mbf X_1,\mbf X_2)\bigr]_i. \qedhere 
\end{align*}
\end{proof}

\begin{corollary} \label{cor:shift}
For $(\theta_1,\ldots,\theta_k) \in \R^k$, let $(\mbf U_1,\ldots,\mbf U_k) \sim \mu_\beta^{N,(\theta_1,\ldots,\theta_k)}$. Then, we also have
\be \label{eq:U_shift}
(\tau_1 \mbf U_1,\ldots,\tau_1 \mbf U_k) \sim \mu_\beta^{N,(\theta_1,\ldots,\theta_k)}.
\ee
\end{corollary}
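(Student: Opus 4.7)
The plan is to derive this directly from Lemma \ref{lem:shift} (the intertwining of $\D^{N,k}$ with the cyclic shift $\tau_1$) combined with the manifest shift-invariance of the driving product measure $\nu_\beta^{N,(\theta_1,\ldots,\theta_k)}$.

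First, I would unpack the definition: by Definition \ref{def:mu_meas}, we can write $(\mbf U_1,\ldots,\mbf U_k) = \D^{N,k}(\mbf X_1,\ldots,\mbf X_k)$ where $(\mbf X_1,\ldots,\mbf X_k) \sim \nu_\beta^{N,(\theta_1,\ldots,\theta_k)}$. The goal then reduces, via Lemma \ref{lem:shift}, to showing
\[
(\tau_1 \mbf X_1,\ldots,\tau_1 \mbf X_k) \sim \nu_\beta^{N,(\theta_1,\ldots,\theta_k)},
\]
since applying $\D^{N,k}$ to both sides of this distributional equality and invoking Lemma \ref{lem:shift} yields $(\tau_1 \mbf U_1,\ldots,\tau_1 \mbf U_k) \deq \D^{N,k}(\mbf X_1,\ldots,\mbf X_k) = (\mbf U_1,\ldots,\mbf U_k)$, which is exactly \eqref{eq:U_shift}.

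The remaining claim about $\nu_\beta^{N,(\theta_1,\ldots,\theta_k)}$ is essentially immediate from Definition \ref{def:p_meas}. Since $\nu_\beta^{N,(\theta_1,\ldots,\theta_k)}$ is the product $\nu_\beta^{N,(\theta_1)} \otimes \cdots \otimes \nu_\beta^{N,(\theta_k)}$, it suffices to check shift invariance of each factor. For fixed $\theta$, $\nu_\beta^{N,(\theta)}$ is the law of $N$ i.i.d.\ log-inverse-gamma random variables conditioned on $\vecsum(\mbf X) = \theta$; the unconditioned product law is invariant under permutations of coordinates (in particular under the cyclic shift $\tau_1$), and $\vecsum(\tau_1 \mbf X) = \vecsum(\mbf X)$, so the conditioning is preserved. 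Equivalently, one can read this off directly from the density formula \eqref{p_dens1}, which depends on $\mbf x$ only through the symmetric quantities $\sum_{i\in \Z_N} e^{-x_i}$ and $\vecsum(\mbf x)$, both of which are $\tau_1$-invariant.

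I do not anticipate any significant obstacle here — the corollary is essentially a one-line consequence of Lemma \ref{lem:shift} together with the $\tau_1$-symmetry built into the definition of the product measure. The only point deserving mention is the (trivial) verification that conditioning on the sum preserves cyclic symmetry, which we handle by noting that $\vecsum$ is itself $\tau_1$-invariant so that cyclic shift commutes with the conditioning.
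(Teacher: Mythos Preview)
Your proposal is correct and matches the paper's proof essentially line for line: the paper also notes that shift-invariance of $\nu_\beta^{N,(\theta_1,\ldots,\theta_k)}$ is immediate, then applies Lemma \ref{lem:shift}. Your additional justification for why the conditioning on $\vecsum(\mbf X)=\theta$ preserves cyclic symmetry is a reasonable elaboration of what the paper leaves implicit.
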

\begin{proof}
It is immediate that if $(\mbf X_1,\ldots,\mbf X_k) \sim \nu_\beta^{N,(\theta_1,\ldots,\theta_k)}$, then $(\tau_1 \mbf X_1,\ldots,\tau_1 \mbf X_k) \sim \nu_\beta^{N,(\theta_1,\ldots,\theta_k)}$ as well. Then, the shift invariance in \eqref{eq:U_shift}  follows from Lemma \ref{lem:shift}.
\end{proof}

The following was referenced in Section \ref{sec:approach}
\begin{lemma} \label{lem:Pitman_reflect}
Let $W:\R^{\Z_N} \times \R^{\Z_N} \to \R^{\Z_N} \times \R^{\Z_N} $ be the periodic Pitman transform defined by 
\[
W(\mbf X_1,\mbf X_2) = \bigl(T^{N,2}(\mbf X_1,\mbf X_2),D^{N,2}(\mbf X_1,\mbf X_2)\bigr)
\]
Let $r:\R^{\Z_N} \to \R^{\Z_N}$ be the reflection operator defined by $(r \mbf X)_i = - X_{N-i}$, and extend this to an operator $(\R^{\Z_N})^2 \to (\R^{\Z_N})^2$ by $r(\mbf X_1,\mbf X_2) = (r\mbf X_1,r\mbf X_2)$. Then, $rWr(\mbf X_1,\mbf X_2) = \bigl(\wt{\mbf X}_1,\wt{\mbf X}_2\bigr)$,
where, for $i \in \Z_N$,
\[
\wt X_{1,i} =  X_{2,i} - X_{2,i-1} + T_{i-1}^{N,2}(\mbf X_1,\mbf X_2),\quad\text{and}\quad \wt X_{2,i} = X_{1,i} - X_{1,i+1} + D_{i+1}^{N,2}(\mbf X_1,\mbf X_2).
\]
\end{lemma}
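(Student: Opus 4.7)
\textbf{Proof proposal for Lemma \ref{lem:Pitman_reflect}.} The plan is first to rewrite the claimed identities in a more symmetric form, then verify that the reflection essentially swaps $T^{N,2}$ and $D^{N,2}$ at the reflected index. Using the additive identity $D_i^{N,2}(\mbf X_1,\mbf X_2) + T_{i-1}^{N,2}(\mbf X_1,\mbf X_2) = X_{1,i} + X_{2,i-1}$ from Lemma \ref{lem:DT_Add}, I would first reduce the two claimed formulas to the equivalent pair
\[
\wt X_{1,i} = X_{1,i} + X_{2,i} - D_i^{N,2}(\mbf X_1,\mbf X_2),\qquad \wt X_{2,i} = X_{1,i} + X_{2,i} - T_i^{N,2}(\mbf X_1,\mbf X_2).
\]
Since $\wt X_{1,i} = -T_{N-i}^{N,2}(r\mbf X_1,r\mbf X_2)$ and $\wt X_{2,i} = -D_{N-i}^{N,2}(r\mbf X_1,r\mbf X_2)$ by definition of $r$, the substitution $n=N-i$ converts the lemma to the two claims
\[
T_n^{N,2}(r\mbf X_1,r\mbf X_2) = (r\mbf X_1)_n + (r\mbf X_2)_n + D_{N-n}^{N,2}(\mbf X_1,\mbf X_2),
\]
\[
D_n^{N,2}(r\mbf X_1,r\mbf X_2) = (r\mbf X_1)_n + (r\mbf X_2)_n + T_{N-n}^{N,2}(\mbf X_1,\mbf X_2),
\]
for all $n \in \Z_N$.

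To verify these, I would compute directly from \eqref{eq:D_intro} and \eqref{Rdef}. Setting $Y_\ell = X_{2,\ell}-X_{1,\ell}$, the difference for the reflected inputs is $Y'_\ell = (r\mbf X_2)_\ell - (r\mbf X_1)_\ell = -Y_{N-\ell}$. The substitution $k = N-j$ in the sums defining $T^{N,2}$ and $D^{N,2}$ at reflected arguments rewrites everything in terms of the quantities $Y_{[k,\nu]}$ and $Y_{(\nu,k]}$. The key algebraic identity driving the simplification for the first claim is
\[
-Y_{[k,\nu]} = Y_{(\nu,k-1]} - S,\qquad \text{for all } k \in \Z_N,
\]
where $S = \sum_{\ell \in \Z_N} Y_\ell$. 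This holds by the complement relation $\Z_N \setminus [k,\nu] = (\nu,k-1]$ when $k \neq \nu+1$, and reduces to $-S = -S$ in the boundary case $k=\nu+1$ via the convention $Y_{(\nu,\nu]} = 0$. Summing yields $\sum_k e^{-Y_{[k,\nu]}} = e^{-S}\sum_m e^{Y_{(\nu,m]}}$, which matches exactly the numerator/denominator pattern that appears in $D_{N-n}^{N,2}(\mbf X_1,\mbf X_2)$.

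For the second claim, the analogous computation requires tracking two additional degenerate terms: the value $j=n$ in $\sum_j e^{Y'_{(n,j]}}$, where $(n,n]=\varnothing$ contributes $1$, and the value $j=n-1$ in $\sum_j e^{Y_{[n,j]}}$, where $[n,n-1]=\Z_N$ contributes $e^S$. Accounting for these via the identity $\sum_k e^{-Y_{[k,n-1]}} = e^{-S}-1 + e^{Y_n}\sum_k e^{-Y_{[k,n]}}$ (proved by splitting off the exceptional term $k=n$ and writing $[k,n]=[k,n-1]\cup\{n\}$ otherwise), together with the analogous identity $\sum_j e^{Y_{[n,j]}} = e^{Y_n}\bigl(1-e^S + \sum_j e^{Y_{[n+1,j]}}\bigr)$, one shows after cancellations that the two ratios to be compared both reduce to $e^{Y_n} + e^{Y_n}(1-e^S)/\sum_m e^{Y_{[n+1,m]}}$, establishing the identity.

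The main obstacle is the bookkeeping of these cyclic boundary cases. Unlike the non-periodic Pitman transform, where such identities hold through a clean telescoping, the cyclic setting introduces genuine ``mismatches'' of the form $1-e^{\pm S}$, and the payoff is that the same correction appears on both sides of each desired equality and cancels only after one forms the appropriate ratios. Careful parallel handling of the two types of degeneracies ($[k,\nu]=\Z_N$ vs.\ $(k,\nu]=\varnothing$) is what makes the two cases of the lemma work out simultaneously.
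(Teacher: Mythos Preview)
Your proposal is correct and uses essentially the same engine as the paper's proof: the complement identity $Y_{[k,\nu]}+Y_{(\nu,k-1]}=S$ on cyclic intervals, together with Lemma~\ref{lem:DT_Add}. The organizational difference is that the paper computes $\wt X_{1,i}$ directly from the sum manipulations (arriving at the form involving $T_{i-1}^{N,2}$) and then deduces $\wt X_{2,i}$ by applying Lemma~\ref{lem:DT_Add} twice, once for the reflected arguments and once for the original ones; you instead invoke Lemma~\ref{lem:DT_Add} up front to recast both targets as the symmetric pair $\wt X_{m,i}=X_{1,i}+X_{2,i}-\{D,T\}_i^{N,2}(\mbf X_1,\mbf X_2)$ and then verify each by a separate cyclic-sum computation. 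The paper's route is slightly shorter because it avoids the second standalone verification (your ``$D$-to-$T$'' claim with its extra bookkeeping of the degenerate terms $(n,n]=\varnothing$ and $[n+1,n]=\Z_N$), while your route makes the $T\leftrightarrow D$ swap under reflection more transparent.
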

\begin{proof}
Let $(\mbf X_1,\mbf X_2)  \in \R^{\Z_N} \times \R^{\Z_N}$, and define $(\mbf X_1',\mbf X_2') = r(\mbf X_1,\mbf X_2)$. Then, for $i \in \Z_N$, by definition \eqref{Rdef},
\begin{align*}
 T_i^{N,2}(\mbf X_1',\mbf X_2') &= X_{1,i}' + \log\Biggl(\f{\sum_{j \in \Z_N} e^{X_{2,[i,j]}' - X_{1,[i,j]}'}  }{\sum_{j \in \Z_N} e^{X_{2,[i+1,j]}' - X_{1,[i+1,j]}'}}\Biggr) \\
&= -X_{1,N-i} + \log\Biggl(\f{\sum_{j \in \Z_N} e^{X_{1,[N-j,N-i]} - X_{2,[N-j,N-i]}}  }{\sum_{j \in \Z_N} e^{X_{1,[N-j,N - (i+1)]} - X_{2,[N-j,N-(i+1)]}}}\Biggr).
\end{align*}
Then, we have
\begin{align*}
\wt X_{1,i} = -T^{N,2}(\mbf X_1,\mbf X_2)_{N-i} &= X_{1,i} + \log\Biggl(\f{\sum_{j \in \Z_N} e^{X_{1,[N-j,i-1]} - X_{2,[N-j,i-1]}}}{\sum_{j \in \Z_N} e^{X_{1,[N-j,i]} - X_{2,[N-j,i]}}  }    \Biggr) \\
&= X_{1,i} + \log\Biggl(\f{\sum_{\ell \in \Z_N} e^{X_{2,(i-1,\ell]} - X_{1,(i-1,\ell]}}    }{\sum_{\ell \in \Z_N} e^{X_{2,(i,\ell]} - X_{1,(i,\ell]}}} \Biggr) \\
&= X_{2,i} - X_{2,i-1} + X_{1,i-1} + \log\Biggl(\f{\sum_{\ell \in \Z_N} e^{X_{2,[i-1,\ell]} - X_{1,[i-1,\ell]}}    }{\sum_{\ell \in \Z_N} e^{X_{2,[i,\ell]} - X_{1,[i,\ell]}}} \Biggr),
\end{align*}
and this is readily seen to be equal to $ X_{2,i} - X_{2,i-1} + T_{i-1}^{N,2}(\mbf X_1,\mbf X_2),$ as desired. To get the expression for $\wt X_{2,i}$, we use the relation in Lemma \ref{lem:DT_Add}:
\be \label{eq:DT_add}
D_i^{N,2}(\mbf X_1,\mbf X_2) + T_{i-1}^{N,2}(\mbf X_1,\mbf X_2) = X_{1,i} + X_{2,i-1}.
\ee
Then, we have 
\begin{align*}
    \wt X_{2,i} = - D^{N,2}_{N-i}(\mbf X_1',\mbf X_2') &= X_{1,i} + X_{2,i+1} - T_{N - (i+1)}^{N,2}(\mbf X_1',\mbf X_2') \\
    &=X_{1,i} + X_{2,i} - T_i^{N,2}(\mbf X_1,\mbf X_2).
\end{align*}
Where the second equality follows by the equality for $T^{N,2}(\mbf X_1',\mbf X_2')$ we previously proved. By another application of \eqref{eq:DT_add}, this is equal to $X_{1,i} - X_{1,i+1} + D_{i+1}^{N,2}(\mbf X_1,\mbf X_2)$, as desired. 
\end{proof}

\section{Invariant measure for the dual system of SDEs} \label{appx:invariance}
The goal of this section is to show the following result.
\begin{proposition}\label{p.invariantSDE}
    The density $p(\cdot)$ defined in \eqref{eq:dens_big} is invariant for the SDE \eqref{eq:N_prod_SDE}.
\end{proposition}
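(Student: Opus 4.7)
The proof will proceed by approximation, leveraging the distributional identity $\mathcal L^\star p = 0$ already established in the body of the proof of Proposition~\ref{prop:prod_invar}. The basic plan is to reduce the claim ``$X(t)\sim p$ whenever $X(0)\sim p$'' to the identity $\Ee[\mathcal L f(X(s))] = 0$ for all test functions $f \in C_c^\infty(\R^{k(N-1)})$ and $s \geq 0$, and then verify this identity through a truncation of the unbounded coefficients.

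The first step is to establish well-posedness and moment bounds for \eqref{eq:N_prod_SDE}. Local Lipschitz continuity of the coefficients gives a unique strong solution up to possible explosion, while the recurrent structure of the drift (dominated by the restoring terms $-e^{-X_{r,i}}$) rules out explosion. When $X(0) \sim p$, the tail estimates inherent to the log-inverse-gamma marginals of $p$, combined with comparison arguments, give uniform bounds on quantities like $\Ee[e^{-X_{r,i}(t)}]$ on compact time intervals. It\^o's formula then yields, for $f \in C_c^\infty$,
\[
\Ee[f(X(t))] - \int f\,dp = \int_0^t \Ee[\mathcal L f(X(s))]\,ds,
\]
with the martingale term having zero expectation thanks to the moment bounds.

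Since Echeverr\'ia's criterion does not directly apply (the diffusion matrix is highly degenerate, using only $N-1$ Brownian motions to drive a $k(N-1)$-dimensional system, and the drift is unbounded), the next step is to truncate: replace each $e^{-x_{r,i}}$ in the drift by $\phi_R(e^{-x_{r,i}})$, where $\phi_R$ is smooth, positive, bounded, Lipschitz, and equals the identity on $[R^{-1},R]$. The truncated SDE has bounded smooth coefficients, generates a Feller semigroup $P_t^R$, and in this regime standard theory permits a direct verification that any smooth $q$ with $(\mathcal L^R)^\star q = 0$ is an invariant density. Construct such a perturbed density $p^R$ (either as a small explicit perturbation of $p$ balancing the $O(R^{-\infty})$ error produced by $\phi_R$, or via the Krein--Rutman/ergodic theory for the truncated Feller process) and show $p^R \to p$ in total variation as $R \to \infty$.

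The final step is to couple $X^R$ and $X$ through the same driving Brownian motions and show that $X^R(t) = X(t)$ on $[0,T]$ outside an event of probability vanishing as $R \to \infty$; this follows from the moment bounds from the first step applied uniformly in $R$ via the comparison principle analogous to Lemma~\ref{lem:SDE_strong_comp}. Invariance of $p^R$ for $P_t^R$ then passes to invariance of $p$ for \eqref{eq:N_prod_SDE} by bounded convergence. The main obstacle will be Step three: constructing the perturbed invariant density $p^R$ and quantifying $\|p^R - p\|_{TV}$ sharply enough that the error terms in the passage to the limit genuinely vanish. An alternative route, should this construction prove cumbersome, is to bypass $p^R$ entirely and argue directly that $\partial_t \int P_t f \cdot p = \int \mathcal L P_t f \cdot p = \int P_t f \cdot \mathcal L^\star p = 0$, using the truncation only as an intermediate device to legitimize the integration by parts via the exponential decay of $p$.
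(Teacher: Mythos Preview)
Your alternative route at the end---keeping $p$ fixed and showing $\int P_t f\cdot p = \int f\cdot p$ by legitimizing the integration by parts $\int \mathcal L P_s f\cdot p = \int P_s f\cdot \mathcal L^\star p$ through a truncation---is essentially what the paper does, and is the right direction. Your primary route via a perturbed invariant density $p^R$ is, as you yourself suspect, the wrong place to invest effort: neither an explicit perturbation of $p$ nor Krein--Rutman will readily produce quantitative total-variation control, and it is unnecessary once the integration-by-parts route works.

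There is, however, a genuine gap that affects both of your routes. Truncating the drift makes the coefficients bounded and Lipschitz, but the diffusion matrix $\Sigma$ remains degenerate: only $N-1$ Brownian motions drive a $k(N-1)$-dimensional system, so $\Sigma\Sigma^T$ has rank at most $N-1$. Consequently the truncated generator is \emph{not} uniformly elliptic, and you cannot invoke ``standard theory'' to conclude that $P_s^R f$ is smooth with derivatives vanishing at infinity---which is exactly what you need to justify $\int \mathcal L^R P_s^R f\cdot p = \int P_s^R f\cdot (\mathcal L^R)^\star p$. The paper's fix is to add a second, independent $d$-dimensional Brownian motion with a vanishing coefficient $\alpha_n^{-1}$ (taking $\alpha_n$ enormous, e.g.\ $\exp(\exp(n^2))$) alongside the drift truncation. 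This makes the approximating generator $L^{(n)}$ uniformly elliptic, so classical parabolic theory gives $P_s^{(n)}f$ smooth with decaying derivatives, and the integration by parts is rigorous. One then shows $\int |(L^{(n)})^\star p|\,dx \to 0$: the extra Laplacian term contributes $\alpha_n^{-2}\int|\Delta p|$, which vanishes, and the drift part equals $\mathcal L^\star p = 0$ on the good set $S_n$ while on $S_n^c$ the double-exponential decay of $p$ beats the exponential growth of the truncated drift. The pathwise convergence $X^{(n)}\to X$ is handled by a Gronwall argument in which $\alpha_n$ is chosen large enough to kill the exponential constant coming from the Lipschitz bound on the truncated drift.
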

To simplify the notations, let us rewrite the SDE \eqref{eq:N_prod_SDE}  as follows:
\begin{equation}\label{e.simplesde}
d\mbf X(t) =\mbf A\bigl(X(t)\bigr)dt+\Sigma d\mbf B(t),
\end{equation}
where $\mbf B$ is a $d-$dimensional standard Brownian motion, $\Sigma$ is a constant matrix, and the vector-valued drift $\mbf A(\cdot)$ contains linear combinations of functions of the form $e^{-x_{r,i}}, e^{-(\theta_r-x_{r,1}-\ldots-x_{r,N-1})}$, with $r=1,\ldots,k,i=1,\ldots,N-1$ and the parameters  $\theta_1,\ldots,\theta_k\in\R$ are fixed in this section. We note that $d=k(N-1)$.

The challenge with the SDE \eqref{e.simplesde} is that the drift is unbounded and non-Lipschitz, so one can not apply standard theory. Additionally, the matrix $\Sigma$ is degenerate. Nevertheless, in our case by using the explicit solution and applying the It\^o formula, we have shown in Proposition \ref{prop:NewSDE} that a solution defined for all $t \ge 0$ exists, and that is unique both pathwise and in law. 

To apply standard semigroup and generator theory, we define an approximation. For any $n\in \N$, define
\[
S_n:=\Big\{\mbf x=(\bfx_1,\ldots,\bfx_k)\in\R^d:  \theta_r-\sum_{i=1}^{N-1} x_{r,i}> -n, x_{r,i} > -n \mbox{ for all } r\in \{1,\ldots, k\}, i\in \{1,\ldots,N-1\}\Big\}.
\]
In this region $b(\cdot)$ is bounded and Lipschitz. Define $\mbf A_n(\mbf x)=\mbf A(\mbf x)$ for those $\mbf x\in S_n$. Outside $S_n$ we define $\mbf A_n$ so that it is a $C_c^\infty(\R^d)$ function (continuous functions with compact support) with
\begin{equation}\label{e.bdbn}
\sup_{\mbf x\in\R^d} (|\mbf A_n(\mbf x)|+|\text{grad} \mbf A_n(\mbf x)|) \leq C \exp(Cn),
\end{equation}
for some constant $C>0$. Let $\mbf W(t)$ be another $d-$dimensional standard Brownian motion that is independent of $\mbf B$, and consider the approximation of \eqref{e.simplesde}
\begin{equation}\label{e.simplesdeapp}
d \mbf X^{(n)}(t)=\mbf A_n\bigl(\mbf X^{(n)}(t)\bigr)dt+\Sigma d\mbf B(t)+\alpha_n^{-1} d\mbf W(t),
\end{equation}
where $\alpha_n=\exp(\exp(n^2))$. In this way, we come up with a new SDE with nice coefficients, and the corresponding generator is uniformly elliptic which will imply smooth solutions to the backwards Kolmogorov equation below and allow us to perform integration by parts in  \eqref{e.ibpLn}. Note, we did not assume that $\Sigma$ had rank $d$ and in our application this is not the case. This is why we include $\mbf W(t)$. By \cite[Theorem 3.10, Theorem 3.7]{ethi-kurt}, for each initial condition $\mbf x \in \R^d$, there exists a pathwise-unique solution $\mbf X^{(n)}$ to \eqref{e.simplesde} satisfying $\mbf X^{(n)}(0) = x$. The following lemma shows that $\mbf X^{(n)}$ is a pathwise approximation of $\mbf X$.
\begin{lemma}
    For any $(t,\mbf x)\in \R_{\geq 0}\times \R^d$, if $\mbf X^{(n)}(0)=\mbf X(0)=\mbf x$,  then  $\mbf X^{(n)}(t) \to \mbf X(t)$  almost surely as $n\to \infty$.
\end{lemma}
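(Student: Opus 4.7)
The plan is to prove convergence by coupling the two SDEs through their common Brownian motion $\mbf B$ and exploiting the very rapid decay of $\alpha_n^{-1}$ relative to the Lipschitz constant of $\mbf A_n$. The argument splits into three steps: a non-explosion statement for $\mbf X$, a pathwise identity before first exit of $\mbf X$ from $S_n$, and a Gr\"onwall estimate.

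First, I would define the exit time $\tau_n := \inf\{s \geq 0 : \mbf X(s) \notin S_n\}$ and show that $\tau_n \uparrow \infty$ almost surely. This follows because Proposition \ref{prop:NewSDE} represents $\mbf X$ as the continuous image under $\mathcal J^{N,k}$ of the continuous process $\mbf U$ from Lemma \ref{lem:sd_SDE}; hence each coordinate $s \mapsto X_{r,i}(s)$ is a.s.\ bounded on $[0,t]$, so since $S_n$ increases to $\R^d$ we obtain $\Pp(\tau_n > t) \to 1$, and in fact $\tau_n > t$ for all $n$ large enough outside a null set. Next, I would couple $\mbf X^{(n)}$ to $\mbf X$ on the same probability space by having both driven by the common $\mbf B$, with $\mbf W$ an independent Brownian motion used only in the definition of $\mbf X^{(n)}$. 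Setting $\mbf Y^{(n)}(s) := \mbf X^{(n)}(s) - \mbf X(s)$ and using the fact that $\mbf A_n$ agrees with $\mbf A$ on $S_n$, for $s \leq \tau_n$ we have the pathwise identity
\[
\mbf Y^{(n)}(s) = \int_0^s \bigl[\mbf A_n(\mbf X^{(n)}(u)) - \mbf A_n(\mbf X(u))\bigr]\,du + \alpha_n^{-1} \mbf W(s).
\]

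By \eqref{e.bdbn}, $\mbf A_n$ is globally Lipschitz with constant $L_n \leq C e^{Cn}$, so the scalar Gr\"onwall inequality applied pathwise yields
\[
\sup_{s \leq \tau_n \wedge t} |\mbf Y^{(n)}(s)| \leq \alpha_n^{-1} e^{L_n t} \sup_{u \leq t} |\mbf W(u)|.
\]
The choice $\alpha_n = \exp(\exp(n^2))$ gives $\alpha_n^{-1} e^{L_n t} = \exp(-\exp(n^2) + C t e^{Cn}) \to 0$, while $\sup_{u \leq t}|\mbf W(u)| < \infty$ a.s. Combining this with the first step, outside a null set we have $\tau_n > t$ for all $n$ sufficiently large and $|\mbf X^{(n)}(t) - \mbf X(t)| \to 0$ on that event, proving the claim.

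The only nontrivial quantitative input is verifying that the super-exponentially small scale $\alpha_n^{-1}$ defeats the Gr\"onwall factor $e^{L_n t}$ on the good event $\{\tau_n > t\}$; this is precisely the reason for the extreme choice of $\alpha_n$ in the construction of the approximation and constitutes the main (soft) obstacle. Everything else is standard pathwise stability of SDEs with locally Lipschitz coefficients under a small additive noise perturbation, given that we have an a priori non-explosion result for the limit process $\mbf X$ coming from Proposition \ref{prop:NewSDE}.
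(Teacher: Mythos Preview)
Your proof is correct and follows essentially the same approach as the paper: write the difference $\mbf X^{(n)}-\mbf X$ in integral form, use that $\mbf A_n=\mbf A$ on $S_n$ while $\mbf X$ remains there, apply the global Lipschitz bound \eqref{e.bdbn} on $\mbf A_n$ and Gr\"onwall to obtain $|\mbf X^{(n)}(t)-\mbf X(t)|\le \alpha_n^{-1}e^{C_nt}\sup_{s\le t}|\mbf W(s)|$, then exploit $\alpha_n\gg e^{C_nt}$. The paper organizes the limits slightly differently (fixing a level $k$, taking $n\ge k$, then $n\to\infty$ followed by $k\to\infty$), but this is cosmetic.
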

\begin{proof}
By the integral formulation, we have
\[
\mbf X^{(n)}(t)-\mbf X(t)=\int_0^t \bigl(\mbf A_n(\mbf X^{(n)}(t))-\mbf A\bigl(\mbf X(s))\bigr)ds+\frac{1}{\alpha_n}\mbf W(t), \quad t \ge 0.
\]
Fix $k \in \N$, let $\tau_k = \inf\{t > 0: \mbf X(t) \notin S_k\}$. Then, since $\mbf A = \mbf A_n$ on $S_n$, for $n \ge k$ and $0 \le t \le \tau_k$, we have 
\[
\mbf X^{(n)}(t)-\mbf X(t) =\int_0^{t} \bigl(\mbf A_n(\mbf X^{(n)}(s))-\mbf A_n(\mbf X(s))\bigr)ds+\frac{1}{\alpha_n}\mbf W(t),
\]
which implies 
\[
|\mbf X^{(n)}(t )-\mbf X(t )| \leq C_n \int_0^{t} |\mbf X^{(n)}(s)-\mbf X(s)| ds+\frac{1}{\alpha_n}\max_{s\in[0,t]}|\mbf W(s)|,
\] 
where $C_n=C\exp(Cn)$ (see \eqref{e.bdbn}). The reason for replacing $|\mbf W(t)|$ with $\sup_{0 \le s \le t} |\mbf \mbf W(s)|$ is because the latter is a nondecreasing function of $t$, and so by Gronwall's inequality, we conclude
\[
|\mbf X^{(n)}(t)-\mbf X(t)|\leq \frac{1}{\alpha_n}e^{C_n t}\max_{s\in[0,t]}|\mbf W(s)|, \quad\quad 0\le t\leq \tau_k.
\]
Sending $n\to\infty$ and using the fact that $\alpha_n\gg e^{C_nt}$, we see that $\lim_{n \to \infty} \mbf X^{(n)}(t) = \mbf X(t)$ for $0 \le t \le \tau_k$. Sending $k \to \infty$ completes the proof. 
\end{proof}

Denote the semigroups associated with \eqref{e.simplesde} and \eqref{e.simplesdeapp} by $P_t$ and $P^{(n)}_t$, respectively.
\begin{corollary}\label{c.conPn}
For any $f\in C_c^\infty(\R^d)$ and $(t,\mbf x)\in \R_{\geq 0}\times \R^d$, we have $P_t^{(n)}f(\mbf x)\to P_tf(\mbf x)$ as $n\to\infty$.
\end{corollary}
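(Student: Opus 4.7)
The plan is to reduce this to the almost sure convergence $\mbf X^{(n)}(t) \to \mbf X(t)$ established in the preceding lemma, combined with the bounded convergence theorem.

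First I would unpack the definitions of the two semigroups. By definition of a Markov semigroup, for fixed initial condition $\mbf X^{(n)}(0) = \mbf X(0) = \mbf x$, we have $P_t^{(n)} f(\mbf x) = \Ee[f(\mbf X^{(n)}(t))]$ and $P_t f(\mbf x) = \Ee[f(\mbf X(t))]$. Both expectations are over a common probability space on which the Brownian motions $\mbf B$ and $\mbf W$ (entering \eqref{e.simplesdeapp}) are defined, with $\mbf W$ independent of $\mbf B$; note that adding the extra independent noise $\alpha_n^{-1} d\mbf W(t)$ to \eqref{e.simplesde} does not alter the law of $\mbf X(t)$, so this common coupling is a legitimate way to express $P_t f(\mbf x)$.

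Next, the preceding lemma gives $\mbf X^{(n)}(t) \to \mbf X(t)$ almost surely in this coupling. Since $f \in C_c^\infty(\R^d)$ is continuous, we immediately get $f(\mbf X^{(n)}(t)) \to f(\mbf X(t))$ almost surely as $n \to \infty$. Since $f$ has compact support, it is bounded, and the bounded convergence theorem yields
\begin{equation*}
P_t^{(n)} f(\mbf x) = \Ee[f(\mbf X^{(n)}(t))] \longrightarrow \Ee[f(\mbf X(t))] = P_t f(\mbf x),
\end{equation*}
as desired.

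There is no real obstacle here since the hard work (constructing the coupling and proving pathwise convergence) has already been done in the preceding lemma. The only minor point to verify is that the introduction of the auxiliary Brownian motion $\mbf W$ in \eqref{e.simplesdeapp} is consistent with the probabilistic representation of $P_t f(\mbf x)$: this is automatic from uniqueness in law of \eqref{e.simplesde}, established earlier in Lemma \ref{lem:sd_SDE}\ref{itm:SDE} (via the pathwise uniqueness of \eqref{e.simplesde} on the enlarged filtration that includes $\mbf W$, since $\mbf W$ is independent of the driving noise for $\mbf X$).
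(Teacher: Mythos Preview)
Your proof is correct and follows essentially the same approach as the paper: write $P_t^{(n)}f(\mbf x)-P_tf(\mbf x)=\Ee[f(\mbf X^{(n)}(t))-f(\mbf X(t))]$ in the coupling of the preceding lemma, then apply the bounded convergence theorem. Your additional remark about the auxiliary Brownian motion $\mbf W$ is a reasonable clarification that the paper leaves implicit.
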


\begin{proof}
We write $P_t^{(n)}f(\mbf x)-P_tf(\mbf x)=\Ee[f(X_t^{(n)})-f(X_t)|X_0^{(n)}=X_0=\mbf x]$, then apply bounded convergence theorem and the previous lemma.
\end{proof}

In the following, we use $\la\cdot,\cdot\ra$ to represent the $L^2(\R^d)$ inner product. Here is another elementary lemma.
\begin{lemma}\label{l.inv}
If a density $p(\cdot)$ satisfies
\begin{equation}\label{e.invPt}
\la P_tf,p\ra=\la f,p\ra
\end{equation} for any $f\in C_c^\infty(\R^d)$, then $p(\cdot)$ is an invariant density for $\mbf X$.
\end{lemma}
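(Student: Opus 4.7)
The plan is to translate both sides of the identity $\langle P_t f, p\rangle = \langle f, p\rangle$ into statements about the marginal law of $\mbf X(t)$ started from $p$, and then upgrade from $f\in C_c^\infty$ to a statement of measures via a standard measure-determining argument. Concretely, fix $t\ge 0$, set $\mu_0(d\mbf x):=p(\mbf x)\,d\mbf x$, and let $\mbf X$ solve \eqref{e.simplesde} with $\mbf X(0)\sim\mu_0$. Let $\mu_t$ denote the law of $\mbf X(t)$. For any $f\in C_c^\infty(\R^d)$, the function $P_tf(\mbf x)=\Ee_{\mbf x}[f(\mbf X(t))]$ is bounded by $\|f\|_\infty$ and Borel measurable (indeed continuous in $\mbf x$ by pathwise uniqueness and stability of solutions to \eqref{e.simplesde} established in Proposition \ref{prop:NewSDE}), so Fubini gives
\[
\langle P_tf,p\rangle=\int_{\R^d}\Ee_{\mbf x}[f(\mbf X(t))]\,p(\mbf x)\,d\mbf x=\Ee_{\mu_0}\bigl[f(\mbf X(t))\bigr]=\int f\,d\mu_t,
\]
while $\langle f,p\rangle=\int f\,d\mu_0$ tautologically. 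Hence the hypothesis is equivalent to $\int f\,d\mu_t=\int f\,d\mu_0$ for every $f\in C_c^\infty(\R^d)$.

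To conclude, I would invoke the fact that $C_c^\infty(\R^d)$ is measure-determining on the Borel $\sigma$-algebra of $\R^d$: by mollification, $C_c^\infty$ is uniformly dense in $C_c$, and by dominated convergence the equality extends to all $f\in C_c(\R^d)$; then by the Riesz representation theorem (applied to the finite signed measure $\mu_t-\mu_0$) one gets $\mu_t=\mu_0$ as Borel measures on $\R^d$. Since $t\ge 0$ was arbitrary and $\mu_0=p(\mbf x)\,d\mbf x$, the law of $\mbf X(t)$ equals $p(\mbf x)\,d\mbf x$ for all $t\ge 0$, which is precisely the definition of $p$ being an invariant density. There is no serious obstacle here; the only point that requires some care is the Borel measurability (and integrability) of $\mbf x\mapsto P_tf(\mbf x)$ needed to apply Fubini, which is ensured by the pathwise well-posedness results already proved for \eqref{e.simplesde}.
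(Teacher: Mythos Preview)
Your proof is correct and follows essentially the same route as the paper: both extend the identity $\langle P_tf,p\rangle=\langle f,p\rangle$ from $f\in C_c^\infty$ to a measure-determining class. The paper approximates indicator functions by $C_c^\infty$ functions and passes to the limit via dominated convergence, while you pass through $C_c$ and invoke Riesz representation; these are equivalent standard implementations of the same idea.
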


\begin{proof}
We claim \eqref{e.invPt} holds for indicator functions $f$, which would then imply that $p(\cdot)$ is an invariant density. Fix any indicator function $f$, and let $f_n\in C_c^\infty(\R^d)$ be such that $|f_n|\leq f$ for all $n$ and $f_n\to f$ Lebesgue-a.e. By assumption, $\la P_t f_n, p\ra=\la f_n,p\ra$. Using the bound $|P_tf_n(x)|\leq \sup_{x} |f_n(x)| \leq 1$ and the dominated convergence theorem, Corollary \ref{c.conPn} completes the proof.
\end{proof}

\begin{proof}[Proof of Proposition \ref{p.invariantSDE}]
We use the above deductions as well as some specific properties of the proposed invariant density $p$ (see Lemmas \ref{l.Lngzero} and \ref{l.integrability} below).

From now on, we fix $f\in C_c^\infty(\R^d)$. Let $L^{(n)}$ be the generator of $P_t^{(n)}$.  By \cite[Theorem 8.7]{LeGall-book}, we know that  $P_t^{(n)}$ is a Feller semigroup and $f$ lives in the domain of  $L^{(n)}$. Thus, 
\[
P_t^{(n)} f=f+\int_0^t L^{(n)} P_s^{(n)} f ds,
\]
where the above identity holds in $C_0(\R^d)$, the space of continuous functions that vanish at infinity (which, of course, contains $C_c^\infty(\R^d)$ as a subspace). Taking the inner product with $p(\cdot)$ on both sides, we obtain
\be \label{eq.pLint}
\la P_t^{(n)} f,p\ra=\la f,p\ra+ \int_0^t \la p, L^{(n)} P_s^{(n)} f\ra ds,
\ee
where above, we used Fubini's Theorem, noting that $L^{(n)} P_s^{(n)} f = P_s^{(n)} L^{(n)} f$, and so \[
\sup_{\mbf x \in \R^d} |L^{(n)} P_s^{(n)} f(\mbf x)| \le  \|L^{(n)} f\|_\infty,
\]
and this is finite because the coefficients and their derivatives in \eqref{e.simplesdeapp} are bounded.  Since \\ $\sup_{\mbf x \in \R^d} |P_t^{(n)}f(\mbf x)| \leq \sup_{\mbf x \in \R^d} |f(\mbf x)|$, Corollary \ref{c.conPn}  and the dominated convergence theorem imply that  
\[
\la P_t^{(n)}f,p\ra\to \la P_tf,p\ra.
\]
Then, taking limits on both sides of \eqref{eq.pLint}, by Lemma \ref{l.inv}, it suffices to show that as $n\to\infty$,
\begin{equation}\label{e.endes}
\int_0^t \la p, L^{(n)} P_s^{(n)} f\ra ds =\int_0^t\int_{\R^d} p(\mbf x) L^{(n)} P_s^{(n)} f(\mbf x)d\mbf x ds \to 0.
\end{equation}
Considering the integral in $\mbf x$, we claim that for any $s\in[0,t]$, 
\begin{equation}\label{e.ibpLn}
\int_{\R^d} p(\mbf x) L^{(n)} P_s^{(n)}f(\mbf x)d\mbf x=\int_{\R^d} (L^{(n)})^*p(\mbf x) P_s^{(n)}f(\mbf x)d\mbf x,
\end{equation}
where $(L^{(n)})^*p$ is the formal adjoint of $L^{(n)}$ applied to $p(\cdot)$. To justify the integration by parts, we note that, since $L^{(n)}$ is a uniform elliptic operator with smooth and bounded coefficients, by the classical PDE theory \cite[Chapter 1]{friedman2008partial} and the probabilistic representation of the solutions, see e.g. \cite[Chapter 5, Theorem 7.6 and Remark 7.8]{Karatzas_Shreve}, the function $P_s^n f(\mbf x)$ is the classical solution to the PDE $\partial_s P_s^{(n)} f(\mbf x)=L^{(n)} P_s^{(n)}f(\mbf x)$, where $L^{(n)}$ is the differential operator acting on $P_s^{(n)}f(\mbf x)$. In addition, the solution $P_s^{(n)}f(\mbf x)$ and its derivatives vanish at infinity.

To show \eqref{e.endes} we observe that for all $s\geq 0$
\begin{equation}\label{e.gotozero}
\bigg|\int_{\R^d} (L^{(n)})^*p(\mbf x) P_s^{(n)}f(\mbf x)d\mbf x\bigg| \leq \sup_{\mbf x\in\R^d}\big|f(\mbf x)\big| \int_{\R^d} \big|(L^{(n)})^*p(\mbf x)\big|d\mbf x.
\end{equation}
Applying Lemma \ref{l.Lngzero} below completes the proof of \eqref{e.endes} and the proposition.
\end{proof}
\begin{lemma}\label{l.Lngzero}
$\int_{\R^d} \big|(L^{(n)})^*p(\mbf x)\big|d\mbf x\to0$ as $n\to\infty$.
\end{lemma}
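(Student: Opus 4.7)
The plan is to decompose
\[
\int_{\R^d} \bigl|(L^{(n)})^\star p(\mbf x)\bigr|\,d\mbf x = \int_{S_n} \bigl|(L^{(n)})^\star p(\mbf x)\bigr|\,d\mbf x + \int_{S_n^c} \bigl|(L^{(n)})^\star p(\mbf x)\bigr|\,d\mbf x
\]
and control each piece using the explicit form of the formal adjoint
\[
(L^{(n)})^\star p = -\mathrm{div}(\mbf A_n p) + \tfrac12\mathrm{Tr}(\Sigma\Sigma^T Hp) + \tfrac{1}{2\alpha_n^2}\Lapl p.
\]
On $S_n$, by construction $\mbf A_n \equiv \mbf A$, so $(L^{(n)})^\star p = \mathcal L^\star p + \tfrac{1}{2\alpha_n^2}\Lapl p$. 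The key input from the preceding proof of Proposition~\ref{prop:prod_invar}---namely the identity \eqref{e.Lstarzero} that $\mathcal L^\star p \equiv 0$---then reduces this to $(L^{(n)})^\star p = \tfrac{1}{2\alpha_n^2}\Lapl p$. Consequently the $S_n$ integral is bounded by $(2\alpha_n^2)^{-1}\|\Lapl p\|_{L^1(\R^d)}$, which vanishes as $n\to\infty$ because $\alpha_n = \exp(\exp(n^2))$ and $\|\Lapl p\|_{L^1(\R^d)} < \infty$ by Lemma~\ref{l.integrability}.

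On $S_n^c$, the bound $|\mbf A_n(\mbf x)| + |\mathrm{grad}\,\mbf A_n(\mbf x)| \leq Ce^{Cn}$ from \eqref{e.bdbn} yields
\[
\int_{S_n^c}|(L^{(n)})^\star p|\,d\mbf x \leq Ce^{Cn}\int_{S_n^c}\bigl(p + |\mathrm{grad}\,p|\bigr)\,d\mbf x + C\int_{S_n^c}|Hp|\,d\mbf x + \frac{1}{2\alpha_n^2}\|\Lapl p\|_{L^1}.
\]
The last term was handled above, and the middle term vanishes by dominated convergence, since Lemma~\ref{l.integrability} gives $|Hp| \in L^1(\R^d)$ while $S_n^c$ shrinks (up to null sets) to the empty set. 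For the first term the decisive observation is structural: $S_n^c \subseteq \bigcup_{r=1}^k\bigcup_{i=0}^{N-1} A_{n,r,i}$, where $A_{n,r,i} = \{x_{r,i} \leq -n\}$ for $i \geq 1$ and $A_{n,r,0} = \{\theta_r - \sum_{j=1}^{N-1}x_{r,j} \leq -n\}$. From the explicit form \eqref{eq:dens_big} of $p$, on each such $A_{n,r,i}$ the density is pointwise dominated by $e^{-\beta^{-2}e^n}$ times a factor that is $L^1$ in the remaining coordinates. A direct computation of $\mathrm{grad}\,p$ shows the same on $A_{n,r,i}$, with at most polynomial factors in $e^{-y}$ introduced by differentiation and absorbed by the ambient $e^{-\beta^{-2}e^{-y}}$; the change of variable $u = \beta^{-2}e^{-y}$ reduces the resulting tail integrals to incomplete-gamma tails $\int_{\beta^{-2}e^n}^\infty u^j e^{-u}\,du \leq C_j e^{-\beta^{-2}e^n/2}$. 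Putting this together gives $\int_{S_n^c}(p + |\mathrm{grad}\,p|)\,d\mbf x \leq Ce^{-\beta^{-2}e^n/2}$, which crushes the $e^{Cn}$ prefactor and sends the $S_n^c$ piece to zero.

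The main obstacle is the super-exponential bound on the tail integrals of $p$ and its derivatives on $S_n^c$: it needs to beat the bare growth $e^{Cn}$ of $\mbf A_n$ and its divergence. This is ultimately elementary once one writes the derivatives of $p$ explicitly and uses the change of variable $u = \beta^{-2}e^{-y}$, but it is what forces the specific double-exponential choice $\alpha_n = \exp(\exp(n^2))$ and what justifies why the truncated process with extra regularizing noise faithfully approximates the original degenerate system. Granted these bounds and the integrability content of Lemma~\ref{l.integrability}, Lemma~\ref{l.Lngzero} follows by combining the two regional estimates and the previously established identity $\mathcal L^\star p \equiv 0$.
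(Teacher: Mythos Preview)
Your proposal is correct and follows essentially the same route as the paper: split $(L^{(n)})^\star p$ into the $\mathcal L^\star$-part (which vanishes on $S_n$ by \eqref{e.Lstarzero}) and the $\tfrac{1}{2\alpha_n^2}\Lapl$-part (handled via $\alpha_n\to\infty$ and Lemma~\ref{l.integrability}), then on $S_n^c$ use the bound \eqref{e.bdbn} together with the pointwise estimate $p(\mbf x)\le C\exp(-\tfrac12\beta^{-2}e^n)\cdot(\text{integrable factor})$ to beat the $e^{Cn}$ growth. The only organizational difference is that the paper first splits the operator and then restricts the drift piece to $S_n^c$, whereas you first split the domain; the substance is identical.
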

Before the proof of Lemma~\ref{l.Lngzero}, we recall that $p(\cdot)$ takes the form (see \eqref{eq:dens_big})
\[
p(\mbf x_1,\ldots,\mbf x_k) = C \prod_{r = 1}^k\Biggl[ \exp\Bigl(-\beta^{-2}e^{-(\theta_r - x_{r,1} - \cdots - x_{r,N-1})}\Bigr)\prod_{i = 1}^{N-1} \exp\Bigl(-\beta^{-2}e^{-x_{r,i}}\Bigr)\Biggr]=:C\prod_{r=1}^k \tilde{p}_r(\bfx_r),
\]
where $C>0$. The  function $\tilde{p}_r$ is defined on $\R^{N-1}$, and the following lemma  comes from a direct calculation.
\begin{lemma}\label{l.integrability}
For any $r=1,\ldots,k$ and $i,j=1,\ldots,N-1$, we have $\partial_i\tilde{p}_r, \partial_{ij} \tilde{p}_r\in L^1(\R^{N-1})$. 
\end{lemma}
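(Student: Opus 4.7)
The plan is to compute the first and second partial derivatives of $\tilde p_r$ by logarithmic differentiation, express them as explicit polynomial expressions in $e^{-x_{r,1}}, \ldots, e^{-x_{r,N-1}}$ and $e^{-g_0(\mbf x_r)}$ (where $g_0(\mbf x_r) := \theta_r - x_{r,1} - \cdots - x_{r,N-1}$) multiplied by $\tilde p_r$ itself, and then to prove the integrability of all such products against Lebesgue measure via a change of variables that reveals super-exponential decay at the boundary.

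Concretely, a direct computation gives $\partial_j \log \tilde p_r = \beta^{-2}\bigl(e^{-x_{r,j}} - e^{-g_0(\mbf x_r)}\bigr)$, hence $\partial_j \tilde p_r = \beta^{-2}(e^{-x_{r,j}} - e^{-g_0}) \tilde p_r$; differentiating once more shows that $\partial_{ij} \tilde p_r / \tilde p_r$ is a polynomial of degree $\le 2$ in $e^{-x_{r,1}}, \ldots, e^{-x_{r,N-1}}, e^{-g_0}$ with integer coefficients (the only new term is $-\beta^{-2}$ times $e^{-x_{r,j}} \delta_{ij} + e^{-g_0}$). Therefore integrability of both kinds of derivatives is implied by the single claim
\[
J(a_0, a_1, \ldots, a_{N-1}) := \int_{\R^{N-1}} e^{-a_0 g_0(\mbf x_r)} \prod_{i=1}^{N-1} e^{-a_i x_{r,i}} \, \tilde p_r(\mbf x_r) \, d\mbf x_r < \infty \qquad \text{for all } a_0, a_i \in \Z_{\ge 0}.
\]

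To establish this claim, I would substitute $y_i = e^{-x_{r,i}}$, so that $dx_{r,i} = -dy_i/y_i$ and $e^{-g_0(\mbf x_r)} = e^{-\theta_r}/(y_1\cdots y_{N-1})$. Up to the constant $e^{-a_0 \theta_r}$, the integral becomes
\[
\int_{(0,\infty)^{N-1}} (y_1 \cdots y_{N-1})^{a_0}\Bigl(\prod_{i=1}^{N-1} y_i^{a_i-1}\Bigr) \exp\!\Bigl(-\beta^{-2} \sum_{i=1}^{N-1} y_i - \frac{\beta^{-2} e^{-\theta_r}}{y_1 \cdots y_{N-1}}\Bigr) \prod_{i=1}^{N-1} dy_i.
\]

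The integrand is continuous on $(0,\infty)^{N-1}$, so only the behavior as $\mbf y$ approaches the boundary matters. If any $y_j \to \infty$, the factor $e^{-\beta^{-2} y_j}$ beats any polynomial in the $y_i$'s. In the complementary region where all $y_i$ stay in a bounded set $[0, M]$ but some $y_\ell \to 0$, the product $y_1 \cdots y_{N-1}$ tends to zero, so the factor $\exp\bigl(-\beta^{-2} e^{-\theta_r}/(y_1 \cdots y_{N-1})\bigr)$ provides super-exponential decay that dominates every negative power $y_\ell^{a_\ell - 1}$. Partitioning $(0,\infty)^{N-1}$ into the region $\{\max_i y_i > M\}$ and finitely many subsets of $[0,M]^{N-1}$ characterized by which subset of coordinates is close to zero (handled by the two bounds above) yields the desired finiteness of $J$, and hence the lemma. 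The only mild technical point is keeping track of the two competing decay mechanisms in the regime where both some $y_i$'s are small and others are moderately large; this is exactly what the partition resolves, since a single dominating factor always suffices in each piece.
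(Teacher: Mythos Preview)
Your proposal is correct and follows essentially the same route as the paper: compute the derivatives by logarithmic differentiation, observe they equal $\tilde p_r$ times polynomials in $e^{-x_{r,j}}$ and $e^{-g_0}$, then substitute $y_i=e^{-x_{r,i}}$ and exploit the two decay mechanisms (exponential in $y_i$ at infinity, super-exponential via $\exp(-c/(y_1\cdots y_{N-1}))$ near the coordinate axes). Two small remarks: (i) in your displayed integral the exponent on $(y_1\cdots y_{N-1})$ should be $-a_0$ rather than $+a_0$, though your subsequent argument about negative powers handles this correctly regardless; (ii) the paper replaces your domain partition with the single uniform bound $\sup_{x>0}e^{-\lambda x}x^\alpha<\infty$ applied with $x=1/(y_1\cdots y_{N-1})$, which absorbs all negative powers in one step and reduces the integral directly to $\int_{(0,\infty)^{N-1}}e^{-\beta^{-2}\sum y_i}(C_1+C_2 y_1)\,dy$.
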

\begin{proof}
By symmetry, it suffices to consider $\partial_1 \tilde{p}_r, \partial_{11}\tilde{p}_r$ and $\partial_{12}\tilde{p}_r$. With $\mbf y=(y_1,\ldots,y_{N-1})$, we have 
\[
\begin{aligned}
&\partial_1\tilde{p}_r(\mbf y)=\tilde{p}_r(\mbf y )\left(-\beta^{-2}e^{-(\theta_r - y_{1} - \cdots - y_{N-1})}+\beta^{-2}e^{-y_1}\right),\\
&\partial_{11} \tilde{p}_r(\mbf y)=\tilde{p}_r(\mbf y)\left(-\beta^{-2}e^{-(\theta_r - y_{1} - \cdots - y_{N-1})}+\beta^{-2}e^{-y_1}\right)^2+\tilde{p}_r(\mbf y )\left(-\beta^{-2}e^{-(\theta_r - y_{1} - \cdots - y_{N-1})}-\beta^{-2}e^{-y_1}\right),
\end{aligned}
\]
and 
\[
\begin{aligned}
\partial_{12}\tilde{p}_r(\mbf y)&=\tilde{p}_r(\mbf y)\left(-\beta^{-2}e^{-(\theta_r - y_{1} - \cdots - y_{N-1})}+\beta^{-2}e^{-y_1}\right)\left(-\beta^{-2}e^{-(\theta_r - y_{1} - \cdots - y_{N-1})}+\beta^{-2}e^{-y_2}\right)
\\
&-\tilde{p}_r(\mbf y)\beta^{-2}e^{-(\theta_r - y_{1} - \cdots - y_{N-1})}.
\end{aligned}
\]
We treat $\partial_1 \tilde{p}_r$ here (the other two can be treated analogously). By a change of variable $z_i=e^{-y_i}$, we have 
\begin{equation}\label{e.7171}
\begin{aligned}
&\int_{\R^{N-1}}|\partial_1 \tilde{p}_r(\mbf y)|d \mbf y\\
&\leq \int_{\R^{N-1}_{>0}}\exp\Big(-\frac{\beta^{-2}e^{-\theta_r}}{z_1\ldots z_{N-1}}-\beta^{-2}(z_1+\ldots z_{N-1})\Big)\left(\frac{\beta^{-2}e^{-\theta_r}}{z_1\ldots z_{N-1}}+ \beta^{-2}z_1 \right) \frac{1}{z_1\ldots z_{N-1}} dz_1\ldots dz_{N-1} \\
&= \int_{\R^{N-1}_{>0}}\exp\Big(-\beta^{-2}(z_1+\ldots z_{N-1})\Big)(C_1 + C_2 z_1) dz_1\ldots dz_{N-1} < \infty,
\end{aligned}
\end{equation}
where $C_1,C_2 > 0$ are constants, and in the last line, we used the fact that  $\sup_{x>0}e^{-\lambda x}x^\alpha < \infty$ for any $\lambda,\alpha >0$. 
  \end{proof}
  
\begin{proof}[Proof of Lemma~\ref{l.Lngzero}]
Since $L^{(n)}$ is the generator for $X^{(n)}$ which solves the SDE~\eqref{e.simplesdeapp}, we can write it as 
  \[
  L^{(n)}=\cL^{(n)}+\frac{1}{2\alpha_n^2}\Delta,
  \]
  with $\cL^{(n)}$ corresponding to the part $\mbf A_n(X^{(n)}(t))dt+\Sigma d\mbf B(t)$ of \eqref{e.simplesdeapp}. We treat the two parts separately.
  
  First, we have (owing to $\alpha_n\to\infty$ and $\Delta p(\cdot) \in L^1(\R^d)$ in light of Lemma~\ref{l.integrability}) that as $n\to \infty$, 
  \[
  \frac{1}{2\alpha_n^2} \int_{\R^d} |\Delta p(\mbf x)|d\mbf x \to0.
  \]

  Next, to deal with $\cL^{(n)}$, first, since $\mbf A_n(\mbf x)=\mbf A(\mbf x)$ for $\mbf x\in S_n$, we have $(\cL^{(n)})^*p(\mbf x)=\cL^*p(\mbf x)$ for $\mbf x\in S_n$, where $\cL^*$ was  defined explicitly in \eqref{L_star}. Since $\cL^* p=0$, see \eqref{e.Lstarzero} and the explicit calculation therein, we only need to estimate
  \begin{align}
  \int_{\R^d}| (\cL^{(n)})^*p(\mbf x) |d\mbf x&=\int_{\R^d}1\{\mbf x\in S_n^c\} |(\cL^{(n)})^*p(\mbf x)|d\mbf x,\qquad \textrm{where}\nonumber \\
  (\cL^{(n)})^*p(\mbf x)&=-\text{grad}\cdot (\mbf A_n(\mbf x)p(\mbf x))+\frac12\sum_{i,j=1}^{d}D_{ij}\partial_{ij}p(\mbf x),  \label{eq:Lnstar}
  \end{align}
and $D_{ij}=\sum_{k=1}^d \Sigma_{ik}\Sigma_{jk}$. 
For the term involving the second derivative, using the fact  that $\partial_{ij}p\in L^1(\R^d)$ (Lemma~\ref{l.integrability}), we obtain 
\[
\frac12\sum_{i,j=1}^d |D_{ij}|\int_{\R^d} 1\{\mbf x\in S_n^c\} |\partial_{ij}p(\mbf x)|d\mbf x\to0
\]
as $n\to\infty$. The other term in \eqref{eq:Lnstar} takes the form 
\[
-\sum_{i=1}^d \partial_i (A_{n,i}(\mbf x)p(\mbf x))=-\sum_{i=1}^d \big(\partial_iA_{n,i}(\mbf x) p(\mbf x)+A_{n,i}(\mbf x)\partial_ip(\mbf x)\big),
\]
where $A_{n,i}$ is the $i-$th component of $\mbf A_n$. By \eqref{e.bdbn}, for each $1 \le i \le N-1$, we have 
\begin{equation}\label{e.driftSn}
\int_{\R^d} 1\{\mbf x\in S_n^c\}|\partial_i (A_{n,i}(\mbf x)p(\mbf x))|d\mbf x \leq C\exp(Cn)\int_{\R^d} 1\{\mbf x\in S_n^c\} [p(\mbf x )+|\partial_i p(\mbf x)|]d\mbf x.
\end{equation}
For those $\mbf x\in S_n^c$, there are two cases: (i) there exists some $r,i$ so that $x_{r,i}\leq -n$ (ii) there exists some $r$ so that  $\theta_r-\sum_{i=1}^{N-1}x_{r,i}\leq -n$. In both cases, we bound the density $p(\bfx)$ as follows:
\[
\begin{aligned}
p(\bfx)&=C \prod_{r = 1}^k \Biggl[\exp\Bigl(-\beta^{-2}e^{-(\theta_r - x_{r,1} - \cdots - x_{r,N-1})}\Bigr)\prod_{i = 1}^{N-1} \exp\Bigl(-\beta^{-2}e^{-x_{r,i}}\Bigr)\Biggr]\\
&\leq C \exp\Bigl(-\frac12\beta^{-2}e^n\Bigr)\prod_{r = 1}^k \Biggl[ \exp\Bigl(-\frac12\beta^{-2}e^{-(\theta_r - x_{r,1} - \cdots - x_{r,N-1})}\Bigr)\prod_{i = 1}^{N-1} \exp\Bigl(-\frac12\beta^{-2}e^{-x_{r,i}}\Bigr)\Biggr].
\end{aligned}
\]
A similar bound can be obtained for $|\partial_i p(\mbf x)|$.  Since $\exp(Cn)\exp\Bigl(-\frac12\beta^{-2} e^{n}\Bigr)\to0$, we conclude that the right-hand side of \eqref{e.driftSn} converges to zero as $n\to\infty$, which completes the proof.
  \end{proof}

\section{Chaos series bounds and limits} \label{appx:Chaos}
We first prove some basic results about the discrete heat kernel, defined in \eqref{pN_def}. 
The following results follow straightforwardly from Stirling's approximation; we give the proof of the second.
\begin{lemma} \label{lem:pNto_p}
Let $t > s$ and $x,y \in \R$, and let $s_N,t_N,x_N,y_N$ be sequences converging to $s,t,x,y$, respectively. Then, $p_N(t_N,y_N \viiva s_N,x_N) \to \rho(t-s,y-x)$ (the full-space Gaussian heat kernel) as $N \to \infty$.
\end{lemma}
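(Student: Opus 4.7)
The plan is to treat this as a local central limit theorem for the Poisson distribution, which reduces to Stirling's formula applied to a single factorial. Writing out the definitions, we have
\[
p_N(t_N, y_N \viiva s_N, x_N) = N e^{-\lambda_N} \frac{\lambda_N^{k_N}}{k_N!}, \quad \text{where} \quad \lambda_N := (t_N - s_N) N^2, \quad k_N := \lfloor t_N N^2 + y_N N \rfloor - \lfloor s_N N^2 + x_N N \rfloor.
\]
The key asymptotics are $\lambda_N = (t-s)N^2 + O(N)$ and $d_N := k_N - \lambda_N = (y-x)N + O(1)$, so $d_N$ is of the order of the Poisson standard deviation $\sqrt{\lambda_N} \asymp N$. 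In particular $k_N \to \infty$, so Stirling's formula $k_N! = \sqrt{2\pi k_N}\,(k_N/e)^{k_N}(1 + o(1))$ applies.

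The main step is then to expand the logarithm. Substituting Stirling and rearranging,
\[
p_N(t_N, y_N \viiva s_N, x_N) = \frac{N}{\sqrt{2\pi k_N}} \cdot e^{d_N} \cdot \left(1 - \frac{d_N}{k_N}\right)^{k_N} \cdot (1 + o(1)).
\]
The prefactor $N/\sqrt{2\pi k_N}$ converges to $1/\sqrt{2\pi (t-s)}$ since $k_N / N^2 \to t - s$. Expanding the logarithm of the last factor gives
\[
k_N \log\left(1 - \frac{d_N}{k_N}\right) = -d_N - \frac{d_N^2}{2 k_N} + O\!\left(\frac{d_N^3}{k_N^2}\right) = -d_N - \frac{(y-x)^2}{2(t-s)} + o(1),
\]
where the error term $d_N^3/k_N^2 = O(N^3/N^4) = O(1/N)$ vanishes, and $d_N^2/(2k_N) \to (y-x)^2/(2(t-s))$. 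Combining with the factor $e^{d_N}$, the term $d_N$ cancels, leaving $e^{-(y-x)^2/(2(t-s))}$ in the limit. Multiplying by the prefactor yields $\rho(t-s, y-x)$.

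There is no real obstacle here beyond bookkeeping: all estimates are valid along the chosen sequences because $t-s > 0$ forces $\lambda_N \to \infty$, and the displacement $d_N$ lies in the ``local CLT window'' of order $\sqrt{\lambda_N}$. The only care needed is to ensure the floor functions in $k_N$ contribute only $O(1)$, which is immediate and absorbed into the $o(1)$ terms in the Stirling expansion.
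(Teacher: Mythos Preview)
Your proof is correct and matches the paper's approach: the paper simply states that the result ``follows straightforwardly from Stirling's approximation'' and omits the details, which are exactly the ones you supply. One small imprecision: the claim $\lambda_N = (t-s)N^2 + O(N)$ need not hold for arbitrary sequences $t_N\to t$, $s_N\to s$, but this is harmless since the argument only uses $d_N = k_N - \lambda_N = (y_N - x_N)N + O(1)$ (where the $N^2$ terms cancel exactly) together with $k_N/N^2 \to t-s$.
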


\begin{lemma} \label{lem:pn_ubd}
For each $\ve,T,K > 0$, there exists a constant $C = C(\ve,T,K)$ such that, for all sufficiently large $N$, $p_N(t,y\viiva s,x) \le C$ uniformly over $t-s \in [\ve,T]$ and $x,y \in [-K,K]$.
\end{lemma}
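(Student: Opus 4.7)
\textbf{Plan of proof for Lemma \ref{lem:pn_ubd}.} The plan is to apply Stirling's approximation directly to the Poisson mass function appearing in the definition of $p_N$. Writing $\lambda_N := (t-s)N^2$ and $k_N := \lfloor tN^2 + yN \rfloor - \lfloor sN^2 + xN \rfloor$, we have $p_N(t,y\viiva s,x) = N\,q(\lambda_N, k_N) = N e^{-\lambda_N}\lambda_N^{k_N}/k_N!$, so the content of the lemma is the uniform bound $N\,q(\lambda_N,k_N) \le C(\ve,T,K)$.

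First, under the assumptions $t-s \in [\ve,T]$ and $x,y \in [-K,K]$, the parameters satisfy $\ve N^2 \le \lambda_N \le T N^2$ and $|k_N - \lambda_N| \le 2KN + 2$, so for all $N$ large enough (depending only on $\ve,K$) one has $k_N \ge \tfrac{1}{2}\ve N^2$, in particular $k_N \to \infty$ uniformly. It is a standard fact that for fixed $\lambda > 0$ the Poisson pmf $k \mapsto q(\lambda,k)$ is maximized near $k = \lfloor \lambda \rfloor$, and by Stirling's formula one has the uniform bound
\[
\sup_{k \in \Z_{\ge 0}} q(\lambda,k) \;\le\; \frac{C_0}{\sqrt{\lambda}},\qquad \lambda \ge 1,
\]
for an absolute constant $C_0$. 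Applying this to $\lambda = \lambda_N \ge \ve N^2 \ge 1$ (for large $N$) yields
\[
p_N(t,y\viiva s,x) \;=\; N\,q(\lambda_N,k_N) \;\le\; \frac{C_0 N}{\sqrt{\lambda_N}} \;=\; \frac{C_0}{\sqrt{t-s}} \;\le\; \frac{C_0}{\sqrt{\ve}},
\]
which gives the desired uniform bound with $C(\ve,T,K) = C_0/\sqrt{\ve}$.

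The only thing to justify carefully is the $C_0/\sqrt{\lambda}$ Stirling estimate for the Poisson pmf. There is essentially no obstacle: one simply observes that $q(\lambda,\lfloor\lambda\rfloor+j)/q(\lambda,\lfloor\lambda\rfloor) \le 1$ by definition of the mode, and then applies Stirling's formula $k! \ge \sqrt{2\pi k}(k/e)^k$ at $k = \lfloor\lambda\rfloor$ to get $q(\lambda,\lfloor\lambda\rfloor) \le 1/\sqrt{2\pi\lfloor\lambda\rfloor}\cdot (1+o(1))$ as $\lambda \to \infty$, which bounds the whole thing by $C_0/\sqrt{\lambda}$. The statement is completely standard and no deeper analysis is required; in fact the pointwise convergence $p_N \to \rho$ of Lemma \ref{lem:pNto_p} already essentially encodes this estimate, and one could alternatively derive the uniform bound by a compactness/continuous-convergence argument from it. Either way the proof is short, and the only real work is bookkeeping the range of $k_N$ relative to $\lambda_N$ so that the Stirling estimate applies with constants depending only on $\ve,T,K$ and not on the particular sequences of $s,t,x,y$.
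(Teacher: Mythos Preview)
Your proof is correct and is actually cleaner than the paper's argument. The paper applies Stirling's approximation directly to $k_N!$ and must then control the product $\bigl((t-s)N^2/k_N\bigr)^{k_N}$ together with the exponential factor $e^{k_N - (t-s)N^2}$; this requires tracking that $|k_N - \lambda_N| = O(N)$ and doing a Taylor expansion of $\log(1+x)$ to show the resulting expression is bounded. Your approach bypasses all of this by invoking the elementary fact that the Poisson pmf is maximized at its mode and that $q(\lambda,\lfloor\lambda\rfloor) \le C_0/\sqrt{\lambda}$; this gives the bound $p_N \le C_0/\sqrt{t-s} \le C_0/\sqrt{\ve}$ with no further work. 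In particular your constant depends only on $\ve$, not on $T$ or $K$, which is slightly sharper than stated. (Your bookkeeping about $k_N \ge \tfrac12\ve N^2$ is not even needed: if $k_N < 0$ then $p_N = 0$, and if $k_N \ge 0$ the mode bound applies directly.) The remark about deducing the bound from pointwise convergence via compactness is less convincing without further argument, but since your main route is already complete this is harmless.
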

\begin{proof}
We use the definition of $p_N$ and Stirling's approximation to obtain, for a constant $C$, changing from line to line and depending on $\ve,K$,
\begin{align*}
    &\quad \, p_N(t,y \viiva s,x)
    = N e^{-(t-s)N^2} \f{\bigl((t-s)N^2\bigr)^{\lfloor tN^2 + yN \rfloor - \lfloor sN^2 + xN \rfloor}}{\bigl(\lfloor tN^2 + yN \rfloor - \lfloor sN^2 + xN \rfloor\bigr)!} \\
    &\le \f{CNe^{-(t-s)N^2} \bigl((t-s)N^2\bigr)^{\lfloor tN^2 + yN \rfloor - \lfloor sN^2 + xN \rfloor} }{\sqrt{\lfloor tN^2 + yN \rfloor - \lfloor sN^2 + xN \rfloor }}\Bigl(\f{e}{\lfloor tN^2 + yN \rfloor - \lfloor sN^2 + xN \rfloor}\Bigr)^{\lfloor tN^2 + yN \rfloor - \lfloor sN^2 + xN \rfloor}  \\
    &\le \f{CN}{\sqrt{\ve N^2 - 2KN - 1}} e^{\lfloor tN^2 + yN \rfloor - \lfloor sN^2 + xN \rfloor - (t-s)N^2} \Biggl(\f{(t-s)N^2}{\lfloor tN^2 + yN \rfloor - \lfloor sN^2 + xN \rfloor}\Biggr)^{\lfloor tN^2 + yN \rfloor - \lfloor sN^2 + xN \rfloor} \\
    &\le C e^{\lfloor tN^2 + yN \rfloor - \lfloor sN^2 + xN \rfloor - (t-s)N^2} \Biggl(1 + \f{(t-s)N^2 -\lfloor tN^2 + yN \rfloor + \lfloor sN^2 + xN \rfloor }{\lfloor tN^2 + yN \rfloor - \lfloor sN^2 + xN \rfloor}\Biggr)^{\lfloor tN^2 + yN \rfloor - \lfloor sN^2 + xN \rfloor}.
\end{align*}
The assumption that $t-s \in [\ve,T]$ and $x,y \in [-K,K]$, implies that
\[
\f{C_1}{N} \le \f{(t-s)N^2 -\lfloor tN^2 + yN \rfloor + \lfloor sN^2 + xN \rfloor }{\lfloor tN^2 + yN \rfloor - \lfloor sN^2 + xN \rfloor} \le \f{C_2}{N}
\]
and that $\lfloor tN^2 + yN \rfloor + \lfloor sN^2 + xN \rfloor  \ge cN^2$
for constants $c,C_1,C_2 > 0$ and sufficiently large $N$. Then, a Taylor expansion of the function $\log(1+x)$ shows that 
\[
p_N(t,y \viiva s,x) \le Ce^{\lfloor tN^2 + yN \rfloor - \lfloor sN^2 + xN \rfloor - (t-s)N^2} e^{(t-s)N^2 -\lfloor tN^2 + yN \rfloor + \lfloor sN^2 + xN \rfloor + C_3},
\]
for another constant $C_3$. This completes the proof, after a cancellation of terms. 
\end{proof}

\begin{lemma} \label{lem:intshift}
    Let $N \in \N$, $t > s$ and $x,y \in \R$. Then, if $w\in\R$ is such that $wN \in \Z$,
    \be \label{pNshift}
    p_N(t,y + w\viiva s,x) = p_N(t,y \viiva s,x-w).
    \ee
\end{lemma}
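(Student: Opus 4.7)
The plan is to read off the result directly from the definition \eqref{pN_def} of the discrete heat kernel, using the elementary observation that the floor function satisfies $\lfloor a + k \rfloor = \lfloor a \rfloor + k$ whenever $k \in \Z$.

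By definition,
\[
p_N(t, y+w \viiva s, x) = N\, q\bigl((t-s)N^2,\; \lfloor tN^2 + (y+w)N \rfloor - \lfloor sN^2 + xN \rfloor\bigr),
\]
and
\[
p_N(t, y \viiva s, x-w) = N\, q\bigl((t-s)N^2,\; \lfloor tN^2 + yN \rfloor - \lfloor sN^2 + (x-w)N \rfloor\bigr).
\]
Since $wN \in \Z$ by hypothesis, we have $\lfloor tN^2 + yN + wN \rfloor = \lfloor tN^2 + yN \rfloor + wN$ and $\lfloor sN^2 + xN - wN \rfloor = \lfloor sN^2 + xN \rfloor - wN$. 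Substituting these identities into the two displayed expressions shows that both second arguments of $q$ equal $\lfloor tN^2 + yN \rfloor - \lfloor sN^2 + xN \rfloor + wN$, which yields \eqref{pNshift}. There is no real obstacle here; the only thing to verify is the floor identity, which is immediate from $wN \in \Z$.
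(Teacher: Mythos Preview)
Your proof is correct and follows essentially the same approach as the paper: both unfold the definition \eqref{pN_def} and use the identity $\lfloor a + k \rfloor = \lfloor a \rfloor + k$ for $k \in \Z$ with $k = wN$.
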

\begin{proof}
     By definition,
     \begin{align*}
         p_N(t,y + w\viiva s,x) &= Nq((t-s)N^2,\lfloor tN^2 + (y+w)N \rfloor - \lfloor sN^2 + xN \rfloor) \\
         &=   Nq((t-s)N^2,\lfloor tN^2 + yN \rfloor - \lfloor sN^2 + (x-w)N \rfloor) = p_N(t,y \viiva s,x-w).
     \end{align*}
     where we have used the fact that $\lfloor x + j \rfloor = \lfloor x \rfloor + j$ for integers $j$. 
\end{proof}

\begin{lemma} \label{lem:rho_int_comp}
For $X,Y \in \R$ and $T > S$,
\[
     \int_{\R^k_{X,Y}} \int_{\Delta^k(T \viiva S)} \prod_{i = 0}^k \rho^2(t_{i+1} - t_i, y_{i+1}-y_i) \prod_{i = 1}^k dt_i dy_i 
     = \f{(T-S)^{(k+3)/2}\Gamma(1/2)^{k+1}}{\Gamma(\f{1}{2}(k+1))2^k \pi^{k/2}}\rho^2(T-S,Y-X).
\]
\end{lemma}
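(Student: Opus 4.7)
The plan is to exploit the identity $\rho^2(t,x) = \tfrac{1}{2\sqrt{\pi t}}\,\rho(t/2,x)$, which follows from $\rho(t,x) = (2\pi t)^{-1/2}e^{-x^2/(2t)}$ by direct computation and decouples a purely temporal prefactor from a Gaussian density of variance $t/2$. Applying this to each factor in the product, the integrand becomes
\[
\prod_{i=0}^k \frac{1}{2\sqrt{\pi(t_{i+1}-t_i)}}\,\rho\!\bigl((t_{i+1}-t_i)/2,\,y_{i+1}-y_i\bigr).
\]

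First I would carry out the $y_1,\ldots,y_k$ integrations. The prefactors are independent of $y$, and the remaining integral is a Chapman--Kolmogorov convolution of Gaussian densities of variances $(t_{i+1}-t_i)/2$ whose sum is $(T-S)/2$; it produces the single Gaussian $\rho((T-S)/2,\,Y-X)$. Converting back via the same identity rewrites this as $2\sqrt{\pi(T-S)}\,\rho^2(T-S,Y-X)$, so the pre-time integrand reduces to
\[
\frac{\sqrt{T-S}}{2^{k}\pi^{k/2}}\,\rho^2(T-S,Y-X)\,\prod_{i=0}^k (t_{i+1}-t_i)^{-1/2}.
\]

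Second, I would evaluate the time integral over $\Delta^k(T\viiva S)$. Changing variables to $u_i = t_{i+1}-t_i$ (with the constraint $u_0+\cdots+u_k = T-S$) and rescaling by $T-S$ reduces
\[
\int_{\Delta^k(T \viiva S)} \prod_{i=0}^k (t_{i+1}-t_i)^{-1/2}\, dt_1\cdots dt_k
\]
to a standard Dirichlet integral on the unit simplex with all exponents $-1/2$, which equals $\Gamma(1/2)^{k+1}/\Gamma((k+1)/2)$; the scaling contributes a factor $(T-S)^{(k-1)/2}$.

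Combining the two steps yields the prefactor $\tfrac{(T-S)^{k/2}\Gamma(1/2)^{k+1}}{\Gamma((k+1)/2)\,2^{k}\pi^{k/2}}$ times $\rho^2(T-S,Y-X)$, which matches the claimed identity (up to what appears to be a typographical discrepancy in the stated exponent of $T-S$; the $k=0$ check confirms the exponent should be $k/2$). No step is expected to be an obstacle; the argument is essentially bookkeeping of Gaussian convolution followed by a Dirichlet integral.
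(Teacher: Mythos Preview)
Your proof is correct and follows essentially the same route as the paper: rewrite $\rho^2$ to separate a $(t_{i+1}-t_i)^{-1/2}$ prefactor from a variance-$1/2$ Gaussian density, apply Chapman--Kolmogorov in the spatial variables, and finish with the Dirichlet integral in time. You are also right about the typo: the paper's own proof yields the exponent $(T-S)^{k/2}$ (as confirmed by the $k=0$ sanity check), not $(T-S)^{(k+3)/2}$, and since the lemma is only ever used for finiteness or as an upper bound elsewhere in the paper, this discrepancy has no downstream effect.
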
 

\begin{proof}
      By shift-invariance, it suffices to prove the $S = X = 0$ case. Rewriting the integrand as 
     \[
        \f{1}{2^{k+1}\pi^{(k+1)/2}}\prod_{i = 0}^k \f{1}{\sqrt{t_{i+1} - t_i}} \prod_{i = 0}^k \f{1}{\sqrt{\pi(t_{i+1} - t_i)}}e^{-\f{(y_{i+1} - y_i)^2}{t_{i+1} - t_i}},
     \]
we notice that the second product is the transition probability for a variance $1/2$ Brownian motion.  Hence, 
     \[
     \int_{\R^k_{Y}} \int_{\Delta^k(T)} \prod_{i = 0}^k \rho^2(t_{i+1} - t_i, y_{i+1}-y_i) \prod_{i = 1}^k dt_i dy_i  = \f{e^{-Y^2/T}}{2^{k+1}\pi^{(k+1)/2}\sqrt{\pi T}}  \int_{\R^k}\prod_{i = 0}^k \f{\ind\{t_{i+1} > t_i\}}{\sqrt{t_{i+1} - t_i}} \prod_{i = 1}^k\,dt_i.  
     \]
     To complete the proof, change variables and compute the Dirichlet integral. 
\end{proof}

The following is a more general version of \cite[Lemma 3.11]{GRASS-23}.  The proof here is different, somewhat streamlined, and allows for converging sequences $T_N,Y_N$ in \eqref{InkDef} below (instead of just $T$ and $Y$).  \begin{lemma} \label{lem:Dir_int_conv}
    Fix $T > 0, Y \in \R$, and sequences $T_N,Y_N$ converging to $T,Y$ respectively. For $k \in \Z_{>0}$, define
    \begin{align} \label{InkDef}
    I_{N,k}&:= \int_{\R^k}\Biggl( \prod_{i = 1}^k\,dt_i \sqrt{\f{N^2\ind\{t_i > 0\} }{\max(\lfloor t_i N^2 \rfloor, 1)}}\Biggr)  \sqrt{\f{N^2\ind\bigl\{\sum_{i = 1}^k \lfloor t_i N^2 \rfloor \le \lfloor T_N N^2 + Y_N N \rfloor\bigr\}}{\max\Big(\big(\lfloor T_N N^2 + Y_N N\rfloor - \sum_{i = 1}^k \lfloor t_i N^2 \rfloor\big),1\Big)}},\\
  \nonumber  I_k &:= \int_{\R^k} \prod_{i = 1}^{k+1} \f{\ind\{s_i > 0\}}{\sqrt s_i} \prod_{i = 1}^k \,ds_i = T^{(k-1)/2} \f{\Gamma(1/2)^k}{\Gamma((k+1)/2)} ,
    \end{align}
    with the convention $s_{k+1} = T - \sum_{i =1}^k s_i $.  Then, for each $k \ge 1$, $I_{N,k} \to I_k$ as $N \to \infty$. 
    Also, for all $N,k\in \N$,
    \begin{align} 
    \label{IMkbd} I_{N,k} &\le  I_k \Bigl(\f{\lfloor T_N N^2 + Y_N N \rfloor + k}{TN^2}\Bigr)^k \Bigl(\f{TN^2}{\lfloor T_N N^2 + Y_N N \rfloor }\Bigr)^{(k+1)/2}2^{k/2}\sqrt{k+2}  \quad \textrm{if}\quad\lfloor T_N N^2 + Y_N N \rfloor \ge 1,\\
     \label{IMkbd_alt}
  I_{N,k} &\le \f{N^{k+1}}{k!}\Bigl(\f{\lfloor T_N N^2 + Y_N N \rfloor + k}{N^2}\Bigr)^k \quad \textrm{if}\quad \lfloor T_N N^2 + Y_N N \rfloor \ge 0.
    \end{align} 
\end{lemma}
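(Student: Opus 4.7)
The plan is to convert $I_{N,k}$ into an explicit sum over a lattice of cubes of side $1/N^2$ and then compare this discrete sum to the Dirichlet integral $I_k$, both for establishing the two upper bounds and for the convergence. Since the integrand in \eqref{InkDef} depends on each $t_i$ only through $\lfloor t_i N^2 \rfloor$, it is piecewise constant on boxes $\prod_i [n_i/N^2,(n_i+1)/N^2)$ of volume $N^{-2k}$. Letting $M := \lfloor T_N N^2 + Y_N N \rfloor$ and integrating the constant pieces out,
\begin{equation}\label{eq:sum_rep_plan}
I_{N,k} \;=\; \frac{1}{N^{k-1}} \sum_{\substack{n_1,\dots,n_k \geq 0 \\ n_1+\cdots+n_k \leq M}} \prod_{i=1}^k \frac{1}{\sqrt{\max(n_i,1)}} \cdot \frac{1}{\sqrt{\max(M - \sum_i n_i,1)}}.
\end{equation}
All subsequent analysis will proceed from this identity.

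For the crude bound \eqref{IMkbd_alt}, each summand in \eqref{eq:sum_rep_plan} is at most $1$, while the number of tuples with $n_1+\cdots+n_k \leq M$ is $\binom{M+k}{k} \leq (M+k)^k/k!$; dividing by $N^{k-1}$ immediately yields the stated bound.

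For the refined bound \eqref{IMkbd}, the strategy is to compare \eqref{eq:sum_rep_plan} to the Dirichlet integral representing $I_k$ via a Riemann-sum estimate. The key elementary inequality is $1/(N\sqrt{n}) \le \int_{(n-1)/N^2}^{n/N^2} dx/\sqrt{x}$ for $n \ge 1$, which follows from $\sqrt{n}-\sqrt{n-1} \ge 1/(2\sqrt{n})$; applying this coordinatewise (and analogously to the $M - \sum n_i$ factor) dominates the bulk portion of \eqref{eq:sum_rep_plan} by a constant times the Dirichlet integral $\int_{\sum s_i < M/N^2,\, s_i > 0} \prod s_i^{-1/2}(M/N^2 - \sum s_i)^{-1/2}\,ds$, which in turn is a dilation of $I_k$ by $(M/(TN^2))^{(k-1)/2}$. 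The boundary terms (those with some $n_i=0$ or $M-\sum n_i = 0$) reduce to lower-dimensional analogues and get absorbed into the constant $2^{k/2}\sqrt{k+2}$, while the factors $((M+k)/TN^2)^k$ and $(TN^2/M)^{(k+1)/2}$ emerge from rearranging the Dirichlet dilation and the offset between $M$ and $TN^2$. The main obstacle will be the bookkeeping needed to match the constants precisely (rather than some coarser $k$-dependent expression); I would execute this by splitting \eqref{eq:sum_rep_plan} according to whether each coordinate lies in the interior or on the boundary of the feasible simplex and applying the Riemann-sum inequality to each part.

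For the convergence $I_{N,k} \to I_k$, I would apply dominated convergence to \eqref{eq:sum_rep_plan} viewed as a Riemann sum: after substituting $s_i = n_i/N^2$, each summand becomes $N^{-2k}$ times the discretized Dirichlet integrand, giving the Riemann sum for $\int \prod_i s_i^{-1/2}(T_N' - \sum s_i)^{-1/2}\,ds$ with $T_N' = M/N^2 \to T$. The integrand converges pointwise on the interior of the simplex $\{s_i > 0, \sum s_i < T\}$, since the $\max(\cdot,1)$ truncations become inactive once $s_i > 1/N^2$ and the $T_N + Y_N/N$ perturbation tends to $T$; the uniform bound from \eqref{IMkbd} supplies the dominating function. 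Passing to the limit gives $\int_{\sum s_i < T,\, s_i > 0} \prod_i s_i^{-1/2}(T-\sum s_i)^{-1/2}\,ds$, which is then evaluated by the standard Dirichlet formula to obtain the closed form $I_k$ claimed in the lemma.
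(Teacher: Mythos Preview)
Your discrete-sum identity is correct and your derivation of \eqref{IMkbd_alt} from it by counting lattice points is clean --- arguably cleaner than the paper's, which bounds $\sqrt{N^2/\max(\lfloor t_iN^2\rfloor,1)}\le N$ and then integrates over the simplex. For \eqref{IMkbd} and the convergence, however, the paper takes a different route: rather than passing to a sum, it makes the linear change of variables $s_i = t_i \cdot TN^2/(M+k)$ (with $M=\lfloor T_NN^2+Y_NN\rfloor$, so the Jacobian tends to $1$) and then bounds the transformed integrand \emph{pointwise} using the elementary inequality $\max(x-\ell,1)\ge x/(\ell+1)$. Applied with $\ell=1$ to each factor $\max(\lfloor s_i(M+k)/T\rfloor,1)$ and with $\ell=k$ to the last factor, this yields directly that the integrand is at most $(TN^2/M)^{(k+1)/2}\, 2^{k/2}\sqrt{k+1}$ times the Dirichlet integrand $\prod s_i^{-1/2}(T-\sum s_i)^{-1/2}$, giving both \eqref{IMkbd} and the dominating function for the convergence in one stroke.

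Your plan has a gap as written: you say ``the uniform bound from \eqref{IMkbd} supplies the dominating function'', but \eqref{IMkbd} is a bound on the total integral, not a pointwise majorant, so it cannot serve as the $h$ in dominated convergence. You would need to extract a pointwise bound --- your Riemann-sum inequality $1/(N\sqrt n)\le\int_{(n-1)/N^2}^{n/N^2}x^{-1/2}\,dx$ does essentially this coordinatewise, so the fix is available, but it is not what you wrote. Matching the precise constants in \eqref{IMkbd} via your interior/boundary decomposition also looks substantially more laborious than the paper's two-line pointwise estimate.
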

\begin{proof}
Making the change of variables $s_i = t_i \Bigl(\f{TN^2}{\lfloor T_N N^2 + Y_N N \rfloor + k}\Bigr)$ in \eqref{InkDef} yields
\be \label{eq:int1}
\begin{aligned}
&\quad \, \f{I_{N,k}}{\Bigl(\f{\lfloor T_N N^2 + Y_N N \rfloor + k}{TN^2}\Bigr)^k}  \\
&=  \int_{\R^k}\Biggl( \prod_{i = 1}^k\,ds_i \sqrt{\f{N^2 \ind\{s_i > 0\} }{\max\Big(\Bigl \lfloor s_i  \Bigl(\f{\lfloor T_N N^2 + Y_N N \rfloor + k}{T}\Bigr) \Bigr \rfloor,1\Big)}}\Biggr)\ind\Biggl\{\sum_{i = 1}^k \Bigl \lfloor s_i  \Bigl(\f{\lfloor T_N N^2 + Y_N N \rfloor + k}{T}\Bigr) \Bigr \rfloor \le \lfloor T_N N^2 + Y_N N \rfloor\Biggr\} \\
&\qquad\qquad   \times \sqrt{\f{N^2}{\max\bigg(\Big(\lfloor T_N N^2  + Y_N N\rfloor - \sum_{i = 1}^k \Bigl \lfloor s_i  \Bigl(\f{\lfloor T_N N^2 + Y_N N \rfloor + k}{T}\Bigr) \Bigr \rfloor\Big),1\bigg)}}.
\end{aligned}
\ee
For each $k \in \Z$, the Jacobian $\Bigl(\f{\lfloor T_N N^2 + Y_N N \rfloor + k}{TN^2}\Bigr)^k\to 1$ as $N \to \infty$, and the integrand converges to
$
\prod_{i = 1}^{k+1} \f{\ind\{s_i > 0\}}{\sqrt s_i}.
$
We use the dominated convergence theorem to show the integral converges. For $s_i > 0$,
\be \label{eq:xm1h}
    \max\Big(\Bigl \lfloor s_i  \Bigl(\f{\lfloor T_N N^2 + Y_N N \rfloor + k}{T}\Bigr) \Bigr \rfloor, 1\Big)  \ge \max\Big(\Bigl(s_i  \f{\lfloor T_N N^2 + Y_N N \rfloor}{T} - 1\Bigr), 1\Big) \ge s_i \f{\lfloor T_N N^2 + Y_N N \rfloor }{2T}, 
\ee
where the last inequality uses (with $\ell=1$) the inequality that for any $\ell>0$ and all $x\in \R$
\begin{equation}\label{lem:veebd}
    \max(x-\ell,1)\geq \frac{x}{\ell+1}.
\end{equation}
Observe that for $t_1,\ldots,t_k \in \R$, $\sum_{i = 1}^k \lfloor t_i N^2 \rfloor \ge \sum_{i = 1}^k  t_i N^2 - k$,  so if $ \lfloor T_N N^2 + Y_N N \rfloor\geq \sum_{i = 1}^k \lfloor t_i N^2 \rfloor$, then 
\[
\sum_{i = 1}^k t_i \le \f{\lfloor T_N N^2 + Y_N N \rfloor + k}{N^2}.
\]
In terms of the $s_i$ variables, this conclusion is precisely that $\sum_{i = 1}^k s_i \le T$. On this event
\be\label{eq:xmkh2}
\begin{aligned}
&\quad \, \max\bigg(\Bigl(\lfloor T_N N^2 + Y_N N \rfloor - \sum_{i = 1}^k \Bigl\lfloor s_i  \Bigl(\f{\lfloor T_N N^2 + Y_N N \rfloor + k}{T}\Bigr)\Bigr \rfloor\Bigr), 1\bigg) \\
&\ge \max\bigg(\Bigl(\f{\lfloor T_N N^2 + Y_N N \rfloor}{TN^2}(TN^2 - \sum_{i = 1}^k s_i N^2)  - k\Bigr) , 1\bigg) \ge \f{\lfloor T_N N^2 + Y_N N \rfloor}{TN^2}\f{(TN^2 - \sum_{i = 1}^k s_i N^2)}{k + 1},
\end{aligned}
\ee
where we applied \eqref{lem:veebd} with $\ell=k$ in the last step. Combining \eqref{eq:xm1h} and \eqref{eq:xmkh2}, we arrive at the domination
\[
\textrm{Integrand in }\eqref{eq:int1} \leq 
 \Bigl(\f{TN^2}{\lfloor T_N N^2 + Y_N N \rfloor }\Bigr)^{(k+1)/2}2^{k/2}\sqrt{k+1} \prod_{i = 1}^{k+1} \f{\ind\{s_i > 0\}}{\sqrt s_i}.
\]
As $I_k<\infty$ the dominated convergence theorem shows $I_{N,k} \to I_k$. Integrating the above bound yields \eqref{IMkbd}. 

To get the bound \eqref{IMkbd_alt}, we substitute the bound $\sqrt{\f{N^2}{ \max(\lfloor t_i N^2 \rfloor, 1)}} \le N$ into \eqref{InkDef} to obtain
\begin{align*}
    I_{N,k} &\le N^{k+1} \int_{\R^k}\ind\Biggl\{\sum_{i = 1}^k \lfloor t_i N^2 \rfloor \le \lfloor T_N N^2 + Y_N N \rfloor\Biggr\} \prod_{i = 1}^k\,dt_i \ind\{t_i > 0\}   \\
    &\le N^{k+1} \int_{\R^k}\ind\Biggl\{\sum_{i = 1}^k t_i \le \f{\lfloor T_N N^2 + Y_N N \rfloor + k}{N^2}\Biggr\} \prod_{i = 1}^k\,dt_i \ind\{t_i > 0\}.
\end{align*}
\eqref{IMkbd_alt} follows from the changing to  $s_i$ variables and computing the Dirichlet integral.
\end{proof}

In the following lemma, we recall the shorthand notation $\Delta^k(t) = \Delta^k(t \viiva 0)$ and $\R_y^k = \R_{0,y}^k$. Note that the independence of the constant $C$ below with respect to $Y$ and $Y_N$ is important because we apply this result uniformly for $Y_N = Y_N' - j$, where $Y_N' \in [0,1]$ and $j \in \Z$.

\begin{lemma} \label{lem:pNint_conv}
 Let $T_N,Y_N$ be sequences converging to $T,Y$, respectively, where $Y \in \R$ and $T > 0$. Then, 
\begin{multline} \label{eq:pN_full_conv}
\lim_{N \to \infty} \int\limits_{\R^k_{Y_N}} \int\limits_{\Delta^k(T_N)} \prod_{i = 0}^k p_{N}^2(t_{i+1},y_{i+1} \viiva t_i,y_i) \prod_{i = 1}^k dt_i \,dy_i = \int\limits_{\R^k_{Y}} \int\limits_{\Delta^k(T)} \prod_{i = 0}^k \rho^2(t_{i+1} - t_i,y_{i+1} - y_i) \prod_{i = 1}^k dt_i \,dy_i.
\end{multline}
Also, there exists $C>0$ depending on $T$ and the sequence $T_N$ (but not on $Y$ or $Y_N$) so that, for all $N\in \N$,
\be \label{eq:full_int_bd}
\begin{aligned}
 &\int\limits_{\R^k_{Y_N}} \int\limits_{\Delta^k(T_N)} \prod_{i = 0}^k p_{N}^2(t_{i+1},y_{i+1} \viiva t_i,y_i) \prod_{i = 1}^k dt_i \,dy_i \\
&\qquad \le \f{C^k}{\Gamma((k+1)/2)} p_N^2(T_N,Y_N) \sqrt{\f{\lfloor T_N N^2 + Y_N N \rfloor + 1}{N^2} }  \Bigl(\f{N^2}{\lfloor T_N N^2 + Y_N N \rfloor }\Bigr)^{(k+1)/2}
\end{aligned}
\ee
if $\lfloor T_N N^2 + Y_N N \rfloor \ge 1$, and so that if $\lfloor T_N N^2 + Y_N N \rfloor \ge 0$,
\be \label{eq:fib2}
\int\limits_{\R^k_{Y_N}} \int\limits_{\Delta^k(T_N)} \prod_{i = 0}^k p_{N}^2(t_{i+1},y_{i+1} \viiva t_i,y_i) \prod_{i = 1}^k dt_i \,dy_i  \le  \f{C^k N^{k+1}}{k!} p_N^2(T_N,Y_N) \sqrt{\f{\lfloor T_N N^2 + Y_N N \rfloor + 1}{N^2} }.
\ee
\end{lemma}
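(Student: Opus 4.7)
The plan is to establish all three claims by mirroring the Gaussian proof of Lemma \ref{lem:rho_int_comp}, but with two key ingredients adapted to the discrete setting: the Chapman--Kolmogorov identity for Poisson densities, $\sum_m q(a, m-\ell) q(b, n-m) = q(a+b, n-\ell)$, and local central limit theorem bounds coming from Stirling's formula. First, a change of variable $m_i := \lfloor t_i N^2 + y_i N \rfloor$ converts each space integral into a sum over $\Z$ with Jacobian $1/N$, giving
\[
\int_{\R_{Y_N}^k} \prod_{i=0}^k p_N^2 \, dy_1 \cdots dy_k = N^k \sum_{m_1, \ldots, m_k \in \Z} \prod_{i=0}^k N^2 q^2(\Delta t_i N^2, \Delta m_i),
\]
with $m_0 = 0$ and $m_{k+1} = M := \lfloor T_N N^2 + Y_N N \rfloor$. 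A direct computation from $q(a,n) = e^{-a} a^n/n!$ yields the exact multinomial factorization
\[
\prod_{i=0}^k q^2(a_i, n_i) = q^2\Bigl(\textstyle\sum_i a_i,\, M\Bigr) \binom{M}{n_0, \ldots, n_k}^2 \prod_{i=0}^k \Bigl(\frac{a_i}{\sum_j a_j}\Bigr)^{2n_i} \quad \text{whenever } \textstyle\sum n_i = M,
\]
and since $N^2 q^2(T_N N^2, M) = p_N^2(T_N, Y_N)$, the factor $p_N^2(T_N, Y_N)$ drops out of the sum cleanly.

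The remaining work is to bound
\[
S := \sum_{\sum n_i = M} \binom{M}{n_0, \ldots, n_k}^2 \prod_{i=0}^k (\Delta t_i/T_N)^{2 n_i} = \Pp(X = X'),
\]
where $X, X'$ are independent $\text{Multinomial}(M, (\Delta t_i/T_N)_i)$ vectors. Using $\Pp(X = X') \le \max_n \Pp(X = n)$ together with a local CLT for the multinomial bounds this maximum by $C^k/\sqrt{M^k \prod (\Delta t_i/T_N)}$ when every $\lfloor \Delta t_i N^2 \rfloor \ge 1$, while the trivial bound $1$ handles the case $\Delta t_i N^2 < 1$. The equivalence $M(\Delta t_i/T_N) \asymp \max(\lfloor \Delta t_i N^2 \rfloor, 1)$ interpolates the two regimes and produces exactly the integrand defining $I_{N,k}$ in Lemma \ref{lem:Dir_int_conv}, so the time integral of the resulting bound is controlled by \eqref{IMkbd} and \eqref{IMkbd_alt}, which after collecting powers of $N$ and $M$ give \eqref{eq:full_int_bd} and \eqref{eq:fib2} respectively.

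For the convergence \eqref{eq:pN_full_conv}, I will combine the pointwise limit $p_N \to \rho$ from Lemma \ref{lem:pNto_p}, the parallel Gaussian factorization (a repeat of the same argument with $\rho$ in place of $p_N$, using the semigroup property of Gaussians to extract $\rho^2(T, Y)$), and the convergence $I_{N,k} \to I_k$ from Lemma \ref{lem:Dir_int_conv}. The $t$-integrand converges pointwise by these ingredients, and the bound \eqref{eq:full_int_bd} furnishes an integrable majorant in the $t$-variables, so a generalized dominated convergence argument (as in Lemma \ref{gdct}) passes the limit inside the time integration.

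The main obstacle will be the uniform control of $S$ across the two regimes $\Delta t_i N^2 \ge 1$ and $\Delta t_i N^2 < 1$: in the former, the local CLT delivers the Gaussian scaling $1/\sqrt{\Delta t_i N^2}$, while in the latter the Poisson marginals degenerate to point masses at $0$ and the local CLT factor must be replaced by $O(1)$. The cutoff $\max(\lfloor t_i N^2 \rfloor, 1)$ built into the integrand of $I_{N,k}$ in Lemma \ref{lem:Dir_int_conv} is designed precisely to interpolate between these regimes, and verifying that the multinomial collision-probability bound matches this cutoff uniformly in $k$ and $N$ is the principal technical point of the argument.
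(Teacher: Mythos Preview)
Your multinomial/collision-probability route is a genuinely different alternative to the paper's argument and is worth comparing. The paper does not use the identity $\prod_i q(a_i,n_i) = q(\sum a_i, M)\binom{M}{n}\prod p_i^{n_i}$ at all; instead it applies Stirling in the form $(n!)^2 \asymp (2n)!\sqrt{\pi n}/4^n$ pointwise to each factor $q^2(a_i,n_i)$, producing a $(t,y)$-pointwise dominating function $g_N$. It then integrates $g_N$ in $t$ first (an exact Dirichlet integral, since $q(2a,2n)$ contributes $(t_{i+1}-t_i)^{2n_i}$), and the remaining $n$-sum $\sum_{\sum n_i = M}\prod_i 1/\sqrt{\max(n_i,1)}$ is rewritten as an integral that is \emph{literally} $I_{N,k}$. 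Your approach factors out $p_N^2(T_N,Y_N)$ exactly and then bounds the leftover multinomial collision probability $S$; this is conceptually clean, but the resulting $t$-integrand involves $\max(M\Delta t_i/T_N,1)$ rather than $\max(\lfloor t_i N^2\rfloor,1)$, so the claimed exact match to the $I_{N,k}$ integrand is only an equivalence up to constants (using $M/T_N\asymp N^2$) and the last factor $M-\sum_i\lfloor t_iN^2\rfloor$ needs separate handling. This is not fatal for the two inequalities, but ``produces exactly'' overstates it. (Also, the Jacobian $1/N$ gives a prefactor $N^{-k}$, not $N^k$, in your first displayed equation.)

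The more serious gap is in the convergence claim \eqref{eq:pN_full_conv}. You propose DCT in the $t$-variables, with majorant furnished by your collision bound. But the generalized DCT still requires pointwise convergence of the $t$-integrand $f_N(t):=\int_y\prod p_N^2\,dy$, and your ingredients do not deliver this: $p_N\to\rho$ is $(t,y)$-pointwise, while what you need is convergence of the $y$-integrated quantity, i.e.\ a \emph{precise} multinomial local CLT asymptotic $S\sim (2\pi)^{-k/2}(M^k\prod p_i)^{-1/2}$ with the correct constant, not merely the upper bound you invoke. The paper avoids this entirely by dominating in both variables simultaneously and applying Lemma~\ref{gdct} on $\R^k\times\R^k$; that is why the Stirling step is done before any integration. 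If you want to keep your route, you must either supply the sharp local CLT for $S$ (including the boundary regime where some $\Delta t_i N^2<1$), or else observe that integrating your exact factorization in $t$ first gives $\int S\,dt = T_N^k\frac{(M!)^2}{(2M+k)!}\sum_{\sum n_i=M}\prod_i\binom{2n_i}{n_i}$, at which point Stirling on $\binom{2n}{n}$ collapses your argument into the paper's.
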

\begin{proof}
For $\lfloor t_{i+1} N^2 + y_{i+1} N \rfloor \ge \lfloor t_{i} N^2 + y_{i} N \rfloor$, by definition we have 
\[
p_{N}^2(t_{i+1},y_{i+1} \viiva t_i,y_i) = N^2 e^{-2(t_{i+1} - t_i)N^2} \f{\big((t_{i+1} - t_i)N^2\big)^{2\big(\lfloor t_{i+1} N^2 + y_{i+1} N \rfloor - \lfloor t_{i} N^2 + y_{i} N \rfloor\big)}}{\Big(\big(\lfloor t_{i+1} N^2 + y_{i+1} N \rfloor - \lfloor t_{i} N^2 + y_{i} N \rfloor\big)!\Big)^2},
\]
while $p_N(t_{i+1},y_{i+1} \viiva t_i,y_i) = 0$ when $\lfloor t_{i+1} N^2 + y_{i+1} N \rfloor < \lfloor t_{i} N^2 + y_{i} N \rfloor$. Stirling's approximation yields
\begin{multline*}
f_N := \ind\{\R_{Y_N}^k \times \Delta^k(T_N)\}\prod_{i = 0}^k p_{N}^2(t_{i+1},y_{i+1} \viiva t_i,y_i)
\to \ind\bigl\{\R_{Y}^k \times \Delta^k(T )\bigr\}\prod_{i = 0}^k \rho^2(t_{i+1} - t_i,y_{i+1} - y_i) =: f,
\end{multline*}
    where we have used the shorthand notation
    \[
\ind\{\R_{Y}^k \times \Delta^k(T)\} := \ind\bigl\{(y_0,\ldots,y_{k+1}) \in \R_{Y}^k, (t_0,\ldots,t_{k+1}) \in \Delta^k(T)\bigr\}.
\]
We use the generalized dominated convergence theorem, Lemma \ref{gdct} with $f_N$ as above to show the integrals also converge.  By Stirling's approximation
\be \label{eq:Stir1}
(n!)^2 \sim \f{(2n)! \sqrt{\pi n}}{2^{2n}}.
\ee
Here $a_n\sim b_n$ means that $a_n/b_n\to 1$ as $n \to \infty$. Thus, there exists $C > 0$ so that for $n \ge 0$, 
\be \label{eq:Stir2}
\f{1}{(n!)^2} \le C \f{2^{2n}}{(2n)!\sqrt{\pi \max(n,1)}}.
\ee
Stirling's approximation also implies that for $k\geq 0$ fixed, as $n \to \infty$
\be \label{eq:Stir3}
\f{(n!)^2 2^{2n}}{(2n + k)!} \sim \f{\sqrt{\pi n}}{2^k n^k},
\ee
and that there exist $C_1,C_2 > 0$ so that for all $n,k \ge 0$,  
\be \label{eq:Stir4}
\f{(n!)^2 2^{2n}}{(2n + k)!} \le C_1 e^{k} \sqrt n \Bigl(\f{n}{n + k/2}\Bigr)^{2n} \f{1}{(2n + k)^k} \le C_2^k \f{\sqrt{n+1}}{(2n + k)^k}
\ee
Inserting the bound \eqref{eq:Stir2} into the definition of $p_N$, our choice of dominating function $g_N$ is 
\begin{align*}
g_N &:= C^{k+1} N^{2(k+1)}  \f{(2N^2)^{2(\lfloor T_N N^2 + Y_N N \rfloor )}}{\pi^{(k+1)/2}e^{2 T_N N^2}}  \ind\{\R_{Y_N}^k \times \Delta^k(T_N)\} \\
&\qquad\times \prod_{i = 0}^k \f{(t_{i+1} - t_i)^{2(\lfloor t_{i+1} N^2 + y_{i+1} N \rfloor - \lfloor t_{i} N^2 + y_{i} N \rfloor)}\ind\bigl\{\lfloor t_{i+1} N^2 + y_{i+1} N \rfloor - \lfloor t_{i} N^2 + y_{i} N \rfloor \ge 0 \bigr\}}{\Big(2\big(\lfloor t_{i+1} N^2 + y_{i+1} N \rfloor - \lfloor t_{i} N^2 + y_{i} N \rfloor\big)\Big)!\sqrt{\max\Big((\lfloor t_{i+1} N^2 + y_{i+1} N \rfloor - \lfloor t_{i} N^2 + y_{i} N \rfloor) ,1\Big)}}
\end{align*}
where $C$ is from \eqref{eq:Stir2}. Just as $f_N\to f$ we see now that $g_N\to g:=C^{k+1}f$. Lemma \ref{lem:rho_int_comp} implies that 
\be \label{rhointcomp}
     \begin{aligned}
     \int g &= \quad \; C^{k+1} \int\limits_{\R^k_{Y}} \int\limits_{\Delta^k(T)} \prod_{i = 0}^k \rho^2(t_{i+1} - t_i, y_{i+1}-y_i) \prod_{i = 1}^k dt_i dy_i \\
     &= C^{k+1} \f{\sqrt{T}}{2^{k}\pi^{k/2}} \rho^2(T,Y) \int_{B_k} \Biggl(\prod_{i = 1}^{k}\,dt_i\f{1}{\sqrt t_i}\Biggr) \f{1}{\sqrt{T -\sum_{i = 1}^k t_i}} < \infty,
     \end{aligned}
     \ee
     where 
$B_k = \{(t_1,\ldots,t_k) \in \R^k: t_i > 0, 1 \le i \le k,\; \sum_{i = 1}^{k} t_i < t-s\}$.

To bound the integral of $g_N$, we make the change of variables $x_i = t_iN^2 + y_i N$ so that 
\begin{align}
\int g_N &= C^{k+1} N^{k+2}  \f{(2N^2)^{2(\lfloor T_N N^2 + Y_N N \rfloor )}}{\pi^{(k+1)/2}e^{2T_N N^2}} \nonumber  \\
&\, \times \int_{\R^k_{\lfloor T_N N^2 + Y_N N \rfloor}} \int_{\Delta^k(T_N)} \prod_{i = 0}^k  \f{(t_{i+1} - t_i)^{2(\lfloor x_{i+1} \rfloor - \lfloor x_i \rfloor)} \ind\bigl\{\lfloor x_{i+1} \rfloor \ge \lfloor x_i \rfloor\bigr\}}{\big(2(\lfloor x_{i+1} \rfloor  - \lfloor x_i \rfloor) \big)!\sqrt{\max\big((\lfloor x_{i+1} \rfloor - \lfloor x_i \rfloor), 1\big)}} \prod_{i = 1}^k dt_i\,dx_i \nonumber\\
&= C^{k+1} N^{k+2}  \f{(2N^2T_N)^{2\lfloor T_N N^2 + Y_N N \rfloor } T_N^k }{\pi^{(k+1)/2}e^{2T_N N^2}\big(2\lfloor T_N N^2 + Y_N N \rfloor  + k\big)!} \nonumber\\
&\qquad\qquad  \times \int_{\R^k_{ \lfloor T_N N^2 + Y_N N \rfloor}} \prod_{i = 0}^k  \f{ \ind\bigl\{\lfloor x_{i+1} \rfloor \ge \lfloor x_i \rfloor\bigr\}}{\sqrt{\max\big((\lfloor x_{i+1} \rfloor - \lfloor x_i \rfloor),1\big)}} \prod_{i = 1}^k\,dx_i \nonumber\\
&\le C^{k+1} N^{k} \f{T_N^k}{\pi^{(k+1)/2}}  \f{\big(\lfloor T_N N^2 + Y_N N \rfloor !\big)^2 2^{2\lfloor T_N N^2 + Y_N N \rfloor}}{\big(2\lfloor T_N N^2 + Y_N N \rfloor + k\big)!} \nonumber\\
&\qquad \times p_N^2(T_N, Y_N) \int_{\R^k_{ \lfloor T_N N^2 + Y_N N \rfloor}} \prod_{i = 0}^k  \f{ \ind\bigl\{\lfloor x_{i+1} \rfloor \ge \lfloor x_i \rfloor\bigr\}}{\sqrt{\max\big((\lfloor x_{i+1} \rfloor - \lfloor x_i \rfloor), 1\big)}} \prod_{i = 1}^k\,dx_i \nonumber\\
&\sim C^{k+1} N^{1-k} \f{\sqrt{T}}{2^k \pi^{k/2}} \rho^2 (T, Y )  \int_{\R^k_{\lfloor T_N N^2 + Y_N N \rfloor}} \prod_{i = 0}^k  \f{ \ind\bigl\{\lfloor x_{i+1} \rfloor \ge \lfloor x_i \rfloor\bigr\}}{\sqrt{\max\big((\lfloor x_{i+1} \rfloor - \lfloor x_i \rfloor), 1\big)}} \prod_{i = 1}^k\,dx_i. \label{eq:int_left}
\end{align}
where the first equality is from the change of variables, the second is from taking the Dirichlet integral in the $t_i$ variables, the third is from Stirling's approximation, and the final asymptotic bound is from \eqref{eq:Stir3} with $n = \lfloor T_N N^2 + Y_N N \rfloor$.
The integral from \eqref{eq:int_left} equals
\begin{align*}
&\quad \sum_{0 = n_0 \le n_1 \le \cdots \le n_{k} \le n_{k+1} = \lfloor T_N N^2 + Y_N N \rfloor}  \prod_{i = 1}^{k+1}  \f{1}{\sqrt{\max\big((n_{i+1} - n_i), 1\big)}} = \sum_{\substack{m_i \ge 0 \\ \sum_{i = 1}^{k+1} m_i =\lfloor T_N N^2 + Y_N N \rfloor }} \prod_{i = 1}^{k+1} \f{1}{\sqrt{\max(m_i,1)}} \\
&= \int_{\R^k} \ind\bigl\{s_i > 0, 1 \le i \le k,\; s_{k+1} = \lfloor T_N N^2 + Y_N N \rfloor - \sum_{i = 1}^k \lfloor s_i  \rfloor \ge 0  \bigr\} \prod_{i = 1}^{k+1} \f{1}{\sqrt{\max(\lfloor s_i \rfloor,1)}}\prod_{i = 1}^k \,ds_i \\
&= N^{k - 1}\int_{B_k(N)} \Biggl(\prod_{i = 1}^k\,dt_i  \sqrt{\f{N^2}{\sqrt{\max(\lfloor t_i N^2 \rfloor, 1) }}} \Biggr)\sqrt{\f{N^2}{\max\Big(\big(\lfloor T_NN^2 + Y_N N \rfloor - \sum_{i = 1}^k \lfloor t_iN^2 \rfloor\big) , 1\Big)  }},
\end{align*}
where the last integral is taken over the set 
\[
     B_k(N) = \Bigl\{t_i > 0,\; 1 \le i \le k,\; \sum_{i = 1}^k \lfloor t_i N^2 \rfloor  \le \lfloor T_NN^2 + y_N N \rfloor \}.
     \]
Comparing to \eqref{rhointcomp}, to prove \eqref{eq:pN_full_conv}, it now suffices to show that 
\begin{align}
&\quad \, \lim_{N \to \infty} \int_{B_k(N)} \Biggl(\prod_{i = 1}^k\,dt_i  \sqrt{\f{N^2}{\sqrt{\max(\lfloor t_i N^2 \rfloor, 1) }}} \Biggr)\sqrt{\f{N^2}{\max\Big(\big(\lfloor T_NN^2 + Y_N N \rfloor  - \sum_{i = 1}^k \lfloor t_iN^2 \rfloor\big),1\Big)  }} \label{INkBig} \\
&= \int_{B_k}  \Biggl(\prod_{i = 1}^{k}\,dt_i\f{1}{\sqrt t_i}\Biggr) \f{1}{\sqrt{T - \sum_{i = 1}^k t_i}}, \nonumber 
\end{align}
and this is shown in Lemma \ref{lem:Dir_int_conv}

To prove \eqref{eq:full_int_bd}, we use the above computation of the integral, and instead of getting an asymptotic equivalence in \eqref{eq:int_left}, we use \eqref{eq:Stir4} to get that the integral in \eqref{eq:full_int_bd} is bounded above by
\begin{align*}
C^k p_N^2(T_N,Y_N) \sqrt{\f{\lfloor T_N N^2 + Y_N N \rfloor + 1}{N^2}  } \Biggl(\f{N^2}{2\lfloor T_N N^2 + Y_N N \rfloor + k}\Biggr)^k I_{N,k}(T_N,Y_N),
\end{align*}
where the constant $C$ depends on $T$ and the sequence $T_N$, but not $Y$ and $Y_N$,  and $I_{N,k}(T_N,Y_N)$ is the integral in \eqref{INkBig}.
The bound \eqref{eq:full_int_bd} follows by equation \eqref{IMkbd} of Lemma \ref{lem:Dir_int_conv}.
\end{proof}

\section{KMT coupling and Brownian bridge results}\label{sec:appBB}
\begin{lemma}\cite[Chapter 4, (3.40)]{Karatzas_Shreve} \label{lem:BB_max}
Let $\B \deq \B_{1,0}$ (a Brownian bridge with $\B(0) = \B(1) = 0$). For $u \ge 0$,
\[
\Pp\Big(\sup_{0 \le x \le 1} |\B(x)| > u\big) \le 2e^{-2u^2}.
\]
\end{lemma}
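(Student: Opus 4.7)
My plan is to prove this classical bound by (i) reducing from the two-sided supremum of $|\B|$ to the one-sided supremum of $\B$ using symmetry, and (ii) computing the tail of the one-sided supremum of the Brownian bridge via the reflection principle for standard Brownian motion, together with conditioning on the endpoint.

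First, observe that by the symmetry $\B \deq -\B$ of a standard Brownian bridge, the union bound gives
\[
\Pp\Bigl(\sup_{0 \le x \le 1}|\B(x)| > u\Bigr) \;\le\; \Pp\Bigl(\sup_{0 \le x \le 1}\B(x) > u\Bigr) \;+\; \Pp\Bigl(\inf_{0 \le x \le 1}\B(x) < -u\Bigr) \;=\; 2\,\Pp\Bigl(\sup_{0 \le x \le 1}\B(x) > u\Bigr),
\]
so it suffices to show $\Pp(\sup_{[0,1]} \B > u) \le e^{-2u^2}$ (in fact equality holds, but we only need the upper bound).

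Next, realize $\B$ as a conditioned Brownian motion: let $B$ be standard Brownian motion with $B(0)=0$. Using the reflection principle, for $u > 0$ and any $v \le u$,
\[
\Pp\Bigl(\sup_{0 \le x \le 1} B(x) \ge u,\; B(1) \in dv\Bigr) \;=\; \Pp\bigl(B(1) \in 2u-dv\bigr),
\]
that is, the joint density of $(\sup_{[0,1]} B,\, B(1))$ at $(u,v)$ (for $v < u$) equals the reflected Gaussian density $\rho(1, 2u-v)$. Dividing by the marginal density $\rho(1,v)$ of $B(1)$ and sending $v \to 0$ gives the conditional tail
\[
\Pp\Bigl(\sup_{0 \le x \le 1}\B(x) \ge u\Bigr) \;=\; \Pp\Bigl(\sup_{0 \le x \le 1} B(x) \ge u \;\Big|\; B(1)=0\Bigr) \;=\; \frac{\rho(1,2u)}{\rho(1,0)} \;=\; e^{-2u^2}.
\]
Combined with the symmetry bound above, this yields $\Pp(\sup_{[0,1]}|\B| > u) \le 2 e^{-2u^2}$, as desired.

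The only delicate step is the disintegration argument that turns the reflection identity for $B$ into a statement about $\B$; one must verify that the regular conditional law of $B$ given $B(1)=0$ is indeed that of a standard Brownian bridge and that the ratio of densities limit is legitimate. This can be handled cleanly by using the joint density formula first for $v$ in a neighborhood of $0$ and then invoking continuity, or alternatively by a direct absolute-continuity argument between the laws of $B$ and $\B$ on $[0,1-\epsilon]$ followed by a limit. Since both routes are entirely standard and the estimate we need is already recorded as \cite[Ch.~4, (3.40)]{Karatzas_Shreve}, the shortest option is simply to cite that reference, which is what the paper does.
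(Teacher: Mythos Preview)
Your proof is correct and standard; the paper does not give its own proof but simply cites the result from Karatzas--Shreve, exactly as you note at the end of your proposal.
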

We will use a special log-inverse-gamma random walk case of the KMT coupling result of \cite{Dmitrov-Wu-2021} for comparing random walk bridges and their Brownian bridge limits. To match notation, let $-Y_1(\gamma),\ldots,-Y_{N}(\gamma)$ be i.i.d. log-inverse-gamma random variables with shape $\gamma > 0$ and scale $1$ (recall Definition \ref{def:p_meas}). For $\theta \in \R$, let $u \mapsto S_{N,\theta}(u;\gamma)$ be the continuously-interpolated random walk bridge on $[0,N]$ conditioned to have $S_{N,\theta}(0;\gamma) = 0$, $S_{N,\theta}(N;\gamma) = \theta$, and increments $S_{N,\theta}(i;\gamma)-S_{N,\theta}(i-1;\gamma)   = \f{1}{\sqrt{\psi_1(\gamma)}}\bigl(Y_i(\gamma)-\psi(\gamma)\bigr)$ for $i\in \{1,\ldots, N\}$. The digamma $\psi$ and trigamma $\psi_1$ functions are the mean and variance of $Y_i(\gamma)$.

\begin{theorem}\cite[Theorem 8.1]{Dmitrov-Wu-2021}\label{thm:KMTBB}
    For any $b > 0$ and $\gamma_0 > 0$, there exists constants $0 < C,a,\alpha' <\infty$ such that the following holds. For every positive integer $N$ and $\gamma \ge \gamma_0$, there is a probability space $(\Omega_N,\mathcal F_N,\Pp_N)$ on which are defined a variance $1$, slope $\f{\theta}{\sqrt N}$ Brownian bridge $\B_{1,\f{\theta}{\sqrt N}}(x), x \in [0,1]$ and the family of processes $\bigl(S_{N,\theta}(x;\gamma): x \in [0,N], \theta \in \R\bigr)$ such that, for any $u \ge 0$,
    \be \label{eq:KMT}
    \Pp_N\Biggl(\sup_{0 \le x \le N} \Bigl|\sqrt N \B_{1,\f{\theta}{\sqrt N}}(N^{-1} x)- S_{N,\theta}(x;\gamma)\Bigr| \ge u \log N\Biggr) \le C N^{\alpha' - a u} e^{b\theta^2/N}.
    \ee 
\end{theorem}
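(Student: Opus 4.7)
The plan is to reduce the conditioned (bridge-to-bridge) coupling to the classical unconditioned KMT embedding by a change-of-measure argument on the common endpoint. Fix $\gamma \ge \gamma_0$ and let $\tilde S_N(k) := \sum_{i=1}^k \frac{Y_i(\gamma)-\psi(\gamma)}{\sqrt{\psi_1(\gamma)}}$ for $0 \le k \le N$, extended continuously by linear interpolation. These partial sums are of i.i.d., mean-zero, unit-variance increments whose exponential moments can be bounded uniformly for $\gamma$ in any compact subset of $(0,\infty)$, using the explicit log-inverse-gamma density. The classical Komlós--Major--Tusnády strong embedding theorem then produces a probability space on which $\tilde S_N$ and a standard Brownian motion $\tilde B$ on $[0,N]$ are jointly defined, and satisfy $\Pp\bigl(\sup_{0 \le x \le N}|\tilde S_N(x)-\tilde B(x)| \ge u\log N\bigr) \le C e^{-au}$ for universal $C,a>0$ depending only on $\gamma_0$.

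Next, I would build the bridges on this same space. For the Brownian side, $B^\theta(x) := \tilde B(x) - \frac{x}{N}(\tilde B(N)-\theta)$ is a Brownian bridge from $0$ to $\theta$ on $[0,N]$ for every $\theta$, with the same law as $\sqrt{N}\,\B_{1,\theta/\sqrt{N}}(x/N)$; crucially, all of these bridges are defined jointly as deterministic affine functionals of the single path $\tilde B$. For the walk side, $S_{N,\theta}$ must be defined through conditioning on the endpoint $\tilde S_N(N)$; concretely, I would disintegrate the law of $\tilde S_N$ along its endpoint and couple the resulting regular conditional laws $\{\Pp(\tilde S_N \in \cdot \mid \tilde S_N(N)=s)\}_s$ monotonically in $s$ (for instance by the quantile / Skorokhod representation on the Polish path space), thereby realizing the whole family $(S_{N,\theta})_{\theta \in \R}$ on the same probability space as $\tilde S_N$ and $\tilde B$, while preserving the KMT bound for $\theta$ near $\tilde S_N(N)$.

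The remaining core step is to transfer the unconditioned KMT control to bounds on $\sup_{x}|S_{N,\theta}(x)-B^\theta(x)|$ for arbitrary, deterministic $\theta$. Writing $\{\sup_x|S_{N,\theta}(x)-B^\theta(x)| \ge u\log N\}$ as a conditional probability given the common endpoint value $\theta$, one obtains a Radon--Nikodym factor between the law of $\tilde S_N$ conditioned on $\tilde S_N(N)=\theta\sqrt{\psi_1(\gamma)}$ and the unconditioned law. A local central limit theorem for $\tilde S_N(N)$ (uniform in $\gamma \ge \gamma_0$, relying on the smoothness and strict positivity of the log-inverse-gamma density to remove lattice obstructions) shows this factor is comparable to $\rho(N,\theta\sqrt{\psi_1(\gamma)})^{-1}$, and a direct Gaussian comparison with the endpoint density at $0$ contributes a multiplicative growth of order $e^{b\theta^2/N}$. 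An analogous Cameron--Martin-type change of measure handles the Brownian side. Multiplying the KMT tail by this Radon--Nikodym density and absorbing polynomial prefactors into $N^{\alpha'}$ produces the claimed bound.

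The main obstacle is maintaining uniformity in $\theta \in \R$ and in $\gamma \ge \gamma_0$. For $|\theta|$ in the Gaussian window $O(\sqrt{N\log N})$, the local CLT is sharp and the Gaussian comparison is accurate; but for larger $|\theta|$ one enters the large-deviations regime of $\tilde S_N(N)$, where the density ratio is no longer well-approximated by $e^{b\theta^2/N}$ at a fixed rate. Fortunately the theorem allows the bound to degrade like $e^{b\theta^2/N}$, so it suffices to show the density ratio is \emph{at most} $e^{b\theta^2/N}$ up to polynomial factors; this can be extracted from a Cram\'er-type upper bound using exponential tilting of the log-inverse-gamma increments, valid uniformly in $\gamma \ge \gamma_0$ thanks to uniform control of the cumulant generating function on compact subsets. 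The secondary technical issue is the joint measurability of the coupling of the family $(S_{N,\theta})_{\theta \in \R}$ in $\theta$; this is addressed by an explicit piecewise-linear interpolation scheme for the conditional quantiles, which preserves continuity of the KMT estimate in $\theta$.
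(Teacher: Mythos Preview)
The paper does not prove this theorem; it is quoted as \cite[Theorem 8.1]{Dmitrov-Wu-2021} and invoked as a black box in Lemma~\ref{lem:BB_coup}. There is therefore no proof in the present paper to compare against.

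Your change-of-measure strategy is a reasonable route to bridge-type KMT estimates, but the sketch has a gap in the coupling step. The unconditioned KMT embedding gives a joint realization of $(\tilde S_N,\tilde B)$ with $\sup_x|\tilde S_N(x)-\tilde B(x)|$ small, and your affine formula defines $B^\theta$ as a functional of $\tilde B$; but $S_{N,\theta}$ is defined only in law, as $\tilde S_N$ conditioned on its endpoint. Your proposal to ``couple the conditional laws monotonically in $s$'' does not by itself yield a pair $(S_{N,\theta},B^\theta)$ that inherits pathwise KMT closeness, because a quantile coupling of the conditional path laws bears no a priori relation to the original embedding of $\tilde S_N$ in $\tilde B$. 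The standard fix is to stay on the unconditioned space: set $B'(x):=\tilde B(x)-\tfrac{x}{N}\bigl(\tilde B(N)-\tilde S_N(N)\bigr)$, so that $B'$ is a Brownian bridge with the \emph{same} (random) endpoint as $\tilde S_N$, and note that $\sup_x|\tilde S_N(x)-B'(x)|$ is controlled by the unconditioned KMT bound plus $|\tilde B(N)-\tilde S_N(N)|$. Now disintegrate the joint law of $(\tilde S_N,B')$ over the value of the shared endpoint, and bound the conditional probability at $\theta$ via the local-CLT/large-deviation estimate on the density of $\tilde S_N(N)$; this is what actually produces the factor $e^{b\theta^2/N}$ in the way you describe. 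A secondary point: the exponential-moment input to KMT must hold uniformly over the noncompact range $\gamma\ge\gamma_0$, not merely on compacta as you wrote; this is in fact true (the standardized log-inverse-gamma increments converge to standard Gaussian as $\gamma\to\infty$), but it needs its own uniform argument rather than a compactness appeal.
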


For our application, we need to allow for general shape and scale parameters that vary  with $N$.
\begin{lemma} \label{lem:BB_coup}
    Let $\theta \in \R$ and  $\beta > 0$. For $N \in \N$, let $\beta_N = \beta N^{-1/2}$,  $\gamma_N$ be such that $\psi(\gamma_N) = \log(\beta_N^{-2})$, and $\theta_N = \f{\theta}{\beta} \sqrt{N \psi_1(\gamma_N)}$. Let $(X_i)_{i \in \Z_N} \sim \nu_{\beta_N}^{N,(\theta_N)}$ (recall from Definition \ref{def:p_meas}). Let $\wt S_{N,\theta_N}:[0,N]\to \R$ be the continuously-interpolated random walk bridge define by setting $\wt S_{N,\theta_N}(0) = 0$ and $\wt S_{N,\theta_N}(i) = \sum_{j = 1}^i X_j$. Then, there exist $C,a,\alpha' > 0$ and, for each $N\in \N$, probability spaces  $(\Omega_N,\Ff_N,\Pp_N)$ on which $\wt S_{N,\theta_N}$ and a variance $\beta^2$, slope $\theta$ Brownian bridge $\wt \B_{\beta,\theta}$ are defined such that for all $N \in \N$ and $u \ge 0$,
\be \label{eq:KMTfinal}
\Pp_N\Biggl(\sup_{0 \le x \le 1} \Bigl|\wt \B_{\beta,\theta}(x) - \wt S_{N,\theta_N}(\lfloor Nx\rfloor ) \Bigr| \ge \f{u \log N}{\sqrt N}   \Biggr) \le CN^{\alpha' - au}.
\ee
\end{lemma}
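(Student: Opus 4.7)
The plan is to reduce to Theorem \ref{thm:KMTBB} via an explicit change of variables, after which the conclusion follows by combining the KMT estimate with routine auxiliary bounds. As noted in Definition \ref{def:p_meas}, the conditioned law $\nu_{\beta_N}^{N,(\theta_N)}$ does not depend on the shape parameter, so I take $\gamma = \gamma_N$ throughout. A direct density comparison shows that a log-inverse-gamma variable with shape $\gamma_N$ and scale $\beta_N^{-2}$ is distributed (unconditionally) as $-Y_i(\gamma_N) + \psi(\gamma_N)$, where $Y_i(\gamma_N)$ is the variable appearing in Theorem \ref{thm:KMTBB}. Setting $\theta' := -\theta_N/\sqrt{\psi_1(\gamma_N)} = -\theta\sqrt N/\beta$, the conditioning $\sum_j X_j = \theta_N$ translates to conditioning $S_{N,\theta'}(N;\gamma_N) = \theta'$. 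Consequently, as continuously interpolated processes on $[0,N]$,
\[
\wt S_{N,\theta_N}(x) \deq -\sqrt{\psi_1(\gamma_N)}\, S_{N,\theta'}(x;\gamma_N).
\]

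Fix any $b>0$ and $\gamma_0>0$, and let $(\Omega_N,\Ff_N,\Pp_N)$ be the probability space provided by Theorem \ref{thm:KMTBB} with parameters $b,\gamma_0$ (for $N$ large enough that $\gamma_N\ge \gamma_0$), carrying $S_{N,\theta'}(\cdot;\gamma_N)$ and the Brownian bridge $\B_{1,-\theta/\beta}$. Use the distributional identity to construct $\wt S_{N,\theta_N}$ on $\Omega_N$ from $S_{N,\theta'}$, and set $\wt{\mathcal B}_{\beta,\theta}(x) := -\beta\, \B_{1,-\theta/\beta}(x)$; checking mean and covariance shows this is a variance $\beta^2$, slope $\theta$ Brownian bridge. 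Decompose
\[
\wt{\mathcal B}_{\beta,\theta}(x) - \wt S_{N,\theta_N}(\lfloor Nx\rfloor) = A_1(x) + A_2(x) + A_3(x),
\]
where
\begin{align*}
A_1(x) &= -\beta\bigl[\B_{1,-\theta/\beta}(x) - \B_{1,-\theta/\beta}(\lfloor Nx\rfloor/N)\bigr],\\
A_2(x) &= \sqrt{\psi_1(\gamma_N)}\,\bigl[S_{N,\theta'}(\lfloor Nx\rfloor;\gamma_N) - \sqrt N\, \B_{1,-\theta/\beta}(\lfloor Nx\rfloor/N)\bigr],\\
A_3(x) &= \bigl(\sqrt{\psi_1(\gamma_N)} - \beta/\sqrt N\bigr)\, \sqrt N\, \B_{1,-\theta/\beta}(\lfloor Nx\rfloor/N).
\end{align*}

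The standard asymptotic expansions $\psi(\gamma) = \log\gamma - \tfrac{1}{2\gamma} + O(\gamma^{-2})$ and $\psi_1(\gamma) = \gamma^{-1} + \tfrac{1}{2}\gamma^{-2} + O(\gamma^{-3})$, combined with $\psi(\gamma_N)=\log(N\beta^{-2})$, give $\gamma_N = N/\beta^2 + O(1)$, and hence $\sqrt{\psi_1(\gamma_N)} = \beta/\sqrt N + O(N^{-3/2})$. For $A_2$, Theorem \ref{thm:KMTBB} bounds the bracketed quantity by $u\log N$ uniformly on $[0,N]$ with probability at least $1-CN^{\alpha'-au}\exp(b\theta'^2/N)=1-CN^{\alpha'-au}\exp(b\theta^2/\beta^2)$; since $\exp(b\theta^2/\beta^2)$ is a finite constant depending only on $\theta,\beta,b$, multiplying by the prefactor yields $\sup_x|A_2(x)| \le C_1 u\log N/\sqrt N$ on the same event. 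For $A_3$, Lemma \ref{lem:BB_max} (applied to the standard bridge $\B_{1,0}(y)=\B_{1,-\theta/\beta}(y)+(\theta/\beta)y$) gives Gaussian tails on $\sup_{y\in[0,1]}|\B_{1,-\theta/\beta}(y)|$; combined with $|\sqrt{\psi_1(\gamma_N)} - \beta/\sqrt N| = O(N^{-3/2})$, this yields $\sup_x|A_3(x)| = O(\sqrt{\log N}/N)$ off an event whose probability decays faster than any power of $N$. Finally, $|A_1|$ is controlled by a classical modulus-of-continuity bound for Brownian bridges at scale $1/N$: $\sup_{|s-t|\le 1/N}|\B_{1,-\theta/\beta}(s) - \B_{1,-\theta/\beta}(t)| = O(\sqrt{\log N/N})$ off an event of probability $\le N^{-c}$ for any prescribed $c$.

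Each of the three terms is bounded by a constant times $u\log N/\sqrt N$ off an event of probability at most $CN^{\alpha'-au}$, after enlarging $C$ to absorb the faster-decaying failure probabilities of the $A_1$ and $A_3$ bounds; summing yields \eqref{eq:KMTfinal}. Uniformity over all $u\ge 0$ and $N\in\N$ is handled by trivially enlarging $C$: when $CN^{\alpha'-au}\ge 1$ the bound is automatic, and small $N$ (below the threshold $\gamma_N\ge \gamma_0$) are handled by further enlarging $C$. The main technical nuisance is verifying the asymptotic expansion of $\gamma_N$ and $\psi_1(\gamma_N)$ with enough precision to guarantee $A_3$ is of smaller order than the KMT contribution; this is routine from standard special-function estimates, and in fact only needs the one-term expansion $\psi_1(\gamma_N)=\beta^2/N+O(N^{-2})$.
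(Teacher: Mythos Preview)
Your proposal is correct and takes essentially the same approach as the paper: both reduce to Theorem \ref{thm:KMTBB} via the change of variables $X_i = \psi(\gamma_N) - Y_i(\gamma_N)$, use the digamma/trigamma asymptotics to obtain $\sqrt{\psi_1(\gamma_N)} = \beta/\sqrt N + O(N^{-3/2})$, and then combine the KMT bound with the Brownian bridge maximum bound (Lemma \ref{lem:BB_max}) and a modulus-of-continuity estimate to pass from $Nx$ to $\lfloor Nx\rfloor$. The only organizational difference is that you isolate the three error sources $A_1,A_2,A_3$ from the outset, whereas the paper first handles the scaling correction (your $A_3$, but bounding $\sup|\wt S_{N,\theta_N}|$ rather than $\sup|\B|$) to reach an intermediate estimate with $Nx$, and only then treats the floor correction (your $A_1$) separately.
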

\begin{proof}
We use Theorem \ref{thm:KMTBB}. Replace $\theta$ with $\theta'\sqrt N$, divide by $\sqrt N$, and use scaling invariance of Brownian bridge to rewrite \eqref{eq:KMT} as
\be \label{eq:KMT2}
\Pp_N\Biggl(\sup_{0 \le x \le 1} \Bigl|\B_{1,\theta'}(x) - \f{1}{\sqrt N} S_{N,\theta'\sqrt N}(Nx;\gamma_N)\Bigr| \ge u \f{\log N}{\sqrt N}\Biggr) \le C N^{\alpha' - am} e^{bz'}.
\ee
Here $\B_{1,\theta'}$ is a variance $1$, slope $\theta'$  Brownian bridge, and $S_{N,\theta' \sqrt N}$ is the continuously-interpolated random walk bridge on $[0,N]$ conditioned to have $S_{N,\theta' \sqrt N}(0) =0$, $S_{N,\theta' \sqrt N}(N) = \theta' \sqrt N$, and i.i.d. increments $S_{N,\theta' \sqrt N}(i)-S_{N,\theta' \sqrt N}(i-1)= Y_i(\gamma_N)$ for $i\in \{1,\ldots,N\}$ where $-Y_i(\gamma_N)$ has log-inverse-gamma distribution with shape $\gamma_N$ and scale $1$.

We will apply \eqref{eq:KMT2} with the following parameters:  $\theta' = -\f{\theta}{\beta}$, and $\gamma = \gamma_N$ is chosen so that $\psi(\gamma_N) = \log \beta_N^{-2}$, where $\beta_N = \beta N^{-1/2}$. Observe that, if $(X_i)_{i \in \Z_N}$ are i.i.d 
 log-inverse-gamma random variables with shape $\gamma$ and scale $\beta$, and if we set $Y_i = \log \beta^{-2} - X_i$, then $(-Y_i)_{i \in \Z_N}$ are i.i.d. log-inverse-gamma random variables with shape $\gamma$ and scale $1$. Thus, for i.i.d. log-inverse random variables $-Y_1(\gamma_N),\ldots,-Y_N(\gamma_N)$ with shape $\gamma_N$ and rate $1$, we set $X_i = X_i(\gamma_N,\beta_N) = \log \beta_N^{-2} - Y_i(\gamma_N)$. Then, $u\mapsto S_{N,\theta' \sqrt N}(u;\gamma)$ is the continuous-interpolated random walk bridge on $[0,N]$ conditioned to have $S_{N,\theta' \sqrt N}(0;\gamma) = 0$ and with increments  
$S_{N,\theta' \sqrt N}(i;\gamma)-S_{N,\theta' \sqrt N}(i-1;\gamma)=-\f{1}{\sqrt{\psi_1(\gamma_N)}} X_i(\gamma_N,\beta_N),
$
conditioned on 
\be \label{eq:Xi_cond}
\sum_{i = 1}^N  X_i(\gamma_N,\beta_N) = \f{\theta}{\beta}\sqrt{ N \psi_1(\gamma_N)} = \theta_N.
\ee
Observe now that if $x \mapsto \wt S_{N,\theta_N}(x)$ be the continuously interpolated random walk $[0,N] \to \R$ with increments
$X_i(\gamma_N,\beta_N)$ (without any rescaling), conditioned on \eqref{eq:Xi_cond}, then, under this conditioning, $\bigl(X_i(\gamma_N,\beta_N)\bigr)_{i \in \Z_N} \sim \nu_{\beta_N}^{N,(\theta_N)}$. This matches the assumption of the lemma. 

The asymptotic expansion for $\psi$ (e.g. \cite[Equation 5.11.2]{NIST-2003}) is $\psi(x) = \log x - \f{1}{2x} + O(x^{-2})$ as $x \to +\infty$. Hence, since we chose $\gamma_N$ so that $\log(\beta_N^{-2})  = \psi(\gamma_N)$, we see that $\gamma_N\to \infty$ as $N \to \infty$ and that  
\[
\log\f{\beta_N^{-2}}{\gamma_N} = \psi(\gamma_N) - \log \gamma_N = O\Bigl(\f{1}{\gamma_N}\Bigr).
\]
Then,
\[
\beta_N^{-2} - \gamma_N = \gamma_N (e^{\log(\beta_N^{-2})-\log(\gamma_N)} - 1) = \gamma_N \Bigl(e^{O(\gamma_N^{-1})} - 1)\Bigr) = O(1),\quad\text{so}\quad \f{N}{\gamma_N} = \beta^{-2} + O\Bigl(\f{1}{N}\Bigr), 
\]
where in the last step, we recall that
$\beta_N^{-2} = \beta^{-2} N$. By the asymptotic expansion for the trigamma function (e.g. \cite[Equation 5.15.8]{NIST-2003}), $\psi_1(x) = \f{1}{x} + \f{1}{2x^2} + O(x^{-3})$,
\be \label{sq_rt_asympt}
\sqrt{N \psi_1(\gamma_N)} = \sqrt{\f{N}{\gamma_N} + O\Bigl(\f{1}{N}\Bigr)} = \beta + O(N^{-1/2}).
\ee
 Then, \eqref{eq:KMT2} gives us 
\be \label{eq:KMT3}
\Pp_N\Biggl(\sup_{0 \le x \le 1} \Biggl|\beta \B_{1,-\f{\theta}{\beta}} + \f{\beta}{\sqrt{N\psi_1(\gamma_N)}}\wt S_{N,\theta}(Nx) \Biggr| \ge \f{u\beta  \log N}{\sqrt N}   \Biggr) \le CN^{\alpha' - au} e^{b\theta \beta^{-1}}.
\ee
But \eqref{sq_rt_asympt} implies $\f{\beta}{\sqrt{N \psi_1(\gamma_N)}} = 1 + O(N^{-1/2})$, so  we have
\begin{align*}
    &\quad \, \sup_{0 \le x \le 1} \Bigl|\beta \B_{1,-\f{\theta}{\beta}}(x)  + \wt S_{N,\theta}(Nx) \Bigr| \\
    &\le  \sup_{0 \le x \le 1} \Biggl|\beta\B_{1,-\f{\theta}{\beta}}(x) + \f{\beta}{\sqrt{N\psi_1(\gamma_N)}}\wt S_{N,\theta}(Nx) \Biggr| +\biggl|\f{\beta}{\sqrt{N\psi_1(\gamma_N)}} - 1\biggr| \sup_{0 \le x \le 1} \Bigl|\wt S_{N,\theta}(Nx)  \Bigr| \\
    &\le \sup_{0 \le x \le 1} \Biggl|\beta \B_{1,-\f{\theta}{\beta}}(x)+ \f{\beta}{\sqrt{N\psi_1(\gamma_N)}}\wt S_{N,\theta}(Nx) \Biggr| +\wt C N^{-1/2}  \sup_{0 \le x \le 1} \Bigl|\wt S_{N,\theta}(Nx)  \Bigr|,
\end{align*}
for a constant $\wt C > 0$. Hence, we have 
\begin{align*}
    &\quad \, \Pp_N\Biggl(\sup_{0 \le x \le 1} \Bigl|\beta \B_{1,-\f{\theta}{\beta}}(x) + \wt S_{N,\theta}(Nx) \Bigr| \ge \f{u \log N}{\sqrt N}   \Biggr)  \\
    &\le \Pp_N\Biggl(\sup_{0 \le x \le 1} \Biggl|\beta \B_{1,-\f{\theta}{\beta}}(x)+ \f{\beta}{\sqrt{N\psi_1(\gamma_N)}}\wt S_{N,\theta}(Nx) \Biggr| \ge \f{u\beta  \log N}{2\sqrt N}   \Biggr) + \Pp_N\Biggl(\sup_{0 \le x \le 1} \Bigl|\wt S_{N,\theta}(N,x) \Bigr| > \f{u\log N}{2 \wt C}\Biggr).
\end{align*}
The first term is handled by \eqref{eq:KMT3}. For the second term, we note
\[
\sup_{0 \le x \le 1} \Bigl|\wt S_{N,\theta}(N,x) \Bigr| \le \f{\sqrt{N\psi_1(\gamma_N)} }{\beta}\Biggl( \Biggl|\beta \B_{1,-\f{\theta}{\beta}}(x)+ \f{\beta}{\sqrt{N\psi_1(\gamma_N)}}\wt S_{N,\theta}(Nx) \Biggr|  +  \sup_{0 \le x \le 1} |\beta\B_{1,-\f{\theta}{\beta}}(x)| \Biggr).
\]
Applying \eqref{eq:KMT3} along with the tail bound $\Pp_N(\sup_{0 \le x \le 1} |\B_{1,0}(x)| > m) \le 2e^{-2m^2}$ from Lemma \ref{lem:BB_max} and the scaling invariance $\beta \B_{1,-\f{\theta}{\beta}}(x) \deq \beta \B_{1,0}(x) - \theta x$ , we obtain
\be \label{eq:KMT4}
\Pp_N\Biggl(\sup_{0 \le x \le 1} \Bigl|\wt \B_{\beta,\theta}(x) - \wt S_{N,\theta}(Nx) \Bigr| \ge \f{u \log N}{\sqrt N}   \Biggr) \le CN^{\alpha' - au},
\ee
for constants $C,\alpha',a > 0$, and we have defined $\wt \B_{\beta,\theta}(x) = -\beta \B_{1,-\f{\theta}{\beta}}(x)$. By scale invariance of Brownian bridge,  $\wt \B_{\beta,\theta}$ is a  variance $\beta^2$, slope $\theta$ Brownian bridge, and so the proof of \eqref{eq:KMTfinal} is nearly complete; in \eqref{eq:KMT4}, we only need to replace $Nx$ in the argument of $\wt S_{N,\theta}$ with $\lfloor Nx \rfloor$. 
We now write  
\begin{align}
&\quad \, \Pp\Biggl(\sup_{0 \le x \le 1} \Bigl|\wt \B_{\beta,\theta}(x) - \wt S_{N,\theta}(\lfloor Nx\rfloor ) \Bigr| \ge \f{u \log N}{\sqrt N}   \Biggr) \nonumber \\
&\le  \Pp\Biggl(\sup_{0 \le x \le 1} \Biggl|\wt \B(\f{\lfloor Nx \rfloor}{N})  - \wt S_{N,\theta}(\lfloor Nx\rfloor ) \Biggr| \ge \f{u \log N}{2\sqrt N}   \Biggr) \label{892}\\
&\qquad\qquad + \Pp\Biggl(\sup_{0 \le x \le 1} \Biggl|\wt \B_{\beta,\theta}(\f{\lfloor Nx \rfloor}{N}\Bigr)  - \wt \B_{\beta,\theta}(x) \Biggr| \ge \f{u \log N}{2\sqrt N}   \Biggr). \label{893}
\end{align}
We bound the term \eqref{892} using  \eqref{eq:KMT4}. For the term \eqref{893}, we use the distributional equality $\B_{\beta,\theta}(x) \deq \beta B(x) - \beta x B(1) + \theta x$, where $B$ is a standard Brownian motion. This term is bounded using the modulus of continuity of Brownian motion (see, e.g., \cite[Theorem 9.25]{Karatzas_Shreve}) and simple moment bounds on $B(1)$. 
\end{proof}

\begin{lemma} \label{lem:BB_to_BM}
Let $f_L \deq \B_{\beta \sqrt L,\theta L}$, and extend $f_L$  to a function $\R \to \R$ so that $f_L(x+1) - f_L(x) = f_L(1) - f_L(0)$ for all $x \in \R$. Define the rescaled function $f_L:\R \to \R$ by $\wt f_L(x) = f_L(\f{x}{L})$, 
Then, as $L \to \infty$, the rescaled $f_L$ converges in distribution to $B_{\beta,\theta}$ (two-sided Brownian motion with variance $\beta^2$ and drift $\theta$), with respect to the topology of uniform convergence on compact sets.  
\end{lemma}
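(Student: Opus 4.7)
The plan is to establish convergence in distribution on $C([-M, M])$ for each $M > 0$, which by the structure of the uniform-on-compact topology suffices to give convergence in $C(\R)$. Couple the Brownian bridge underlying $f_L$ to a standard Brownian motion $W$ on $[0,1]$ via the classical decomposition $\B(t) = W(t) - tW(1)$, so that
\[
f_L(t) = \beta\sqrt L\,\bigl[W(t) - tW(1)\bigr] + \theta L\, t, \quad t \in [0,1],
\]
and extend $f_L$ to $\R$ by $f_L(t+1) - f_L(t) = \theta L$. Set $\wt W(x) := \sqrt L\,W(x/L)$ for $x \in [0, L]$; this is a standard Brownian motion on $[0, L]$. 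Splitting $x/L \in [-1, 1]$ into its positive and negative parts and applying periodicity on the latter, a direct computation yields, for $x \in [-L, L]$,
\begin{equation*}
\wt f_L(x) = \begin{cases} \beta\, \wt W(x) + \theta x - \beta (x/L)\, \wt W(L), & x \in [0, L], \\[2pt] \beta\bigl[\wt W(x+L) - \wt W(L)\bigr] + \theta x - \beta (x/L)\, \wt W(L), & x \in [-L, 0]. \end{cases}
\end{equation*}

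Fix $M > 0$ and take $L > 2M$. On $[0, M]$ the process $\wt W$ is a standard Brownian motion starting from $0$. On $[-M, 0]$ the process $x \mapsto \wt W(L+x) - \wt W(L)$ depends only on increments of $\wt W$ over $[L-M, L]$; these are disjoint from $[0, M]$, hence independent of $\wt W|_{[0, M]}$, and by Brownian time-reversal the process has the law of $-\tilde W(-x)$ for an independent standard Brownian motion $\tilde W$ on $[0, M]$. Consequently, setting $\hat B_L(x) := \wt f_L(x) + \beta (x/L)\,\wt W(L)$ for $x \in [-M, M]$, we get, for every $L > 2M$, the exact distributional identity
\[
\bigl\{\hat B_L(x) : x \in [-M, M]\bigr\} \deq \bigl\{B_{\beta,\theta}(x) : x \in [-M, M]\bigr\}
\]
as random elements of $C([-M, M])$, both sides being two-sided Brownian motion with diffusivity $\beta$ and drift $\theta$ restricted to $[-M, M]$.

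Finally, the discrepancy $\wt f_L(x) - \hat B_L(x) = -\beta (x/L)\,\wt W(L)$ vanishes uniformly on $[-M, M]$ in probability as $L \to \infty$: since $\wt W(L) \deq \sqrt L\, Z$ with $Z \sim N(0,1)$,
\[
\sup_{|x| \le M}\bigl|\beta (x/L)\, \wt W(L)\bigr| \le \beta M L^{-1/2} |Z| \longrightarrow 0 \quad \text{in probability}.
\]
Combining the exact distributional identity with this negligible error yields convergence in distribution on $C([-M, M])$ for every $M > 0$, and hence on $C(\R)$ with the uniform-on-compact topology. No step presents a serious obstacle; the argument rests on the classical bridge--motion decomposition, independence of Brownian increments on disjoint intervals (automatic once $L > 2M$), Brownian time-reversal at the right endpoint, and a trivial $O(L^{-1/2})$ bound on the linear pinning term.
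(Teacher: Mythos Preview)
Your proof is correct and follows essentially the same approach as the paper: both rely on the bridge decomposition $\B(t) = W(t) - tW(1)$, periodicity to access the negative half-line, and independence of disjoint Brownian increments once $L$ is large. The only cosmetic difference is that the paper builds an explicit coupling to a fixed two-sided Brownian motion and obtains almost-sure convergence, whereas you work compact-by-compact and conclude via Slutsky.
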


\begin{proof}
We prove this by constructing a two-sided Brownian motion $B$ and a sequence $f_L' \deq f_L$ so that $f_L'$ converges uniformly on compact sets to $B$.

By Brownian scaling,  $\wt f_L:[0,L]\to \R$ has the law of $x\mapsto \beta\bigl(\wt B(x) - \f{x}{L}\wt B(L)\bigr) + \theta x$, where $B$ is a standard Brownian motion on $[0,\infty)$. It therefore suffices to consider the case of $\beta = 1$ and $\theta = 0$. Let $B$ be a standard, two-sided Brownian motion, and define a new Brownian motion $\wt B_L$ on $[0,L]$ as follows:
\[
\wt B_L(x) := \begin{cases}
B(x) &0 \le x \le \f{L}{2} \\
B\bigl(\f{L}{2}\bigr) + B(x-L) - B\bigl(-\f{L}{2}\bigr) &\f{L}{2} \le x \le L.
\end{cases}
\]
From this, define
\[
f_L'(x)  := \begin{cases}
\wt B_L(x) - \f{x}{L} \wt B_L(L), &0 \le x \le L \\
\wt B_L(L+x) - \f{L+x}{L} \wt B_L(L) &-L \le x \le 0 \\
\theta x & x \notin [-L,L],
\end{cases}
\] 
and note that, when restricted to processes on the interval $[-L,L]$, we have  $f_L' \deq \wt f_L$.
Under this coupling, 
\be \label{eq:fLB}
f_L'(x) = B(x) - \f{x}{L}\Biggl(B\Bigl(\f{L}{2}\Bigr) - B\Bigl(-\f{L}{2}\Bigr)\Biggr) \qquad \textrm{for } -\f{L}{2} \le x \le \f{L}{2},
\ee
and since $\f{1}{L}\Bigl(B\bigl(\f{L}{2}\bigr) - B\bigl(-\f{L}{2}\bigr)\Bigr) \to 0$ a.s., \eqref{eq:fLB} converges uniformly on compact sets to $B(x)$. 
\end{proof}

\begin{lemma} \label{lem:BB_tail}
Fix any $\theta_1 \neq \theta_2 \in \R$, and $\beta > 0$. Let $(L_n)_{n \ge 0}$ be a sequence converging to $+\infty$, and let  $(\Omega,\Ff,\Pp)$ be a probability space on which the processes $f_{1}^{(n)},f_{2}^{(n)}\in \Cpin[0,1]$ for $n\in \N$, and $f_1,f_2\in C(\R)$ are defined. Extend $f_1^{(n)}$ and $f_2^{(n)}$ to functions $\R \to \R$ by the condition $f_m^{(n)}(x+1) - f_m^{(n)}(x) = f_m^{(n)}(1) - f_m^{(n)}(0)$ for all $x \in \R$. Assume that the following properties of these processes hold:
\begin{enumerate} [label=\textup{(\roman*)}]
    \item For each $n\in \N$, $f_{1}^{(n)}$ and $f_{2}^{(n)}$ are independent, with $f_{m}^{(n)} \deq \B_{\beta \sqrt L_n,\theta_m}$ for $m = 1,2$.
    \item For each $\ve > 0$ and each compact set $K \subseteq \R$, 
    \be \label{close_assumptionApp}
    \limsup_{n \to \infty} \Pp\Biggl(\sup_{\substack{x \in K \\ m \in \{1,2\}}} \Bigl|f_{m}^{(n)}\Bigl(\f{x}{L_n}\Bigr) - f_m(x)   \Bigr| > \ve \Biggr) = 0.
    \ee
    \end{enumerate}
    Define $\wt f_m^{(n)}(x) = f_m^{(n)}\Bigl(\f{x}{L_n}\Bigr)$ for $m \in \{1,2\}$. Then if $\theta_1 > \theta_2$ or $\theta_2 > \theta_1$ we have convergence in probability of
    \be \label{802}
    \int_{0}^{L_n} e^{\wt f_2^{(n)}(y) - \wt f_1^{(n)}(y)}\,dy \overset{p}{\longrightarrow} \int_{0}^\infty e^{f_2(y) - f_1(y)}\,dy,\quad \textrm{or}\quad 
    \int_{-L_n}^0 e^{\wt f_2^{(n)}(y) - \wt f_1^{(n)}(y)}\,dy \overset{p}{\longrightarrow} \int_{-\infty}^0 e^{f_2(y) - f_1(y)}\,dy. 
    \ee  
    \end{lemma}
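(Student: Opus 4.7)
I will treat the first convergence (case $\theta_1 > \theta_2$); the second follows by the reflection $y \mapsto -y$. Set $c = \theta_1 - \theta_2 > 0$. The coupling hypothesis \eqref{close_assumptionApp} implies that $(\wt f_1^{(n)}, \wt f_2^{(n)})$ converges to $(f_1, f_2)$ in distribution (with respect to uniform convergence on compacts), and Lemma \ref{lem:BB_to_BM} identifies the marginal law of the limit: $f_m \deq B_{\beta, \theta_m}$. Since the $\wt f_m^{(n)}$ are independent for each $n$, so are $f_1$ and $f_2$, and we may write $f_2(y) - f_1(y) = \beta V(y) - c y$ for some two-sided Brownian motion $V$ of variance $2$. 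The SLLN then ensures $\int_0^\infty e^{f_2 - f_1}\,dy < \infty$ almost surely, with $\int_M^\infty e^{f_2 - f_1}\,dy \to 0$ a.s.\ as $M \to \infty$.

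\textbf{Three-term decomposition.} For any $M>0$ with $L_n > M$, write
\[
\int_0^{L_n} e^{\wt f_2^{(n)} - \wt f_1^{(n)}}\,dy - \int_0^\infty e^{f_2 - f_1}\,dy = \Bigl(\int_0^M e^{\wt f_2^{(n)} - \wt f_1^{(n)}}\,dy - \int_0^M e^{f_2 - f_1}\,dy\Bigr) - \int_M^\infty e^{f_2-f_1}\,dy + \int_M^{L_n} e^{\wt f_2^{(n)} - \wt f_1^{(n)}}\,dy.
\]
The ``compact piece'' tends to $0$ in probability as $n \to \infty$ for each fixed $M$, by the Lipschitz-type bound $|e^a - e^b| \le e^{\max(a,b)} |a-b|$ together with \eqref{close_assumptionApp} applied to $K = [0,M]$ and the a.s.\ finiteness of $\sup_{[0,M]}(f_2-f_1)$. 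The ``limit tail'' vanishes a.s.\ as $M \to \infty$ by the first paragraph. All that remains is to prove the \emph{prelimit tail estimate}
\[
\lim_{M\to\infty}\ \limsup_{n \to \infty}\ \Pp\Bigl(\int_M^{L_n} e^{\wt f_2^{(n)}(y) - \wt f_1^{(n)}(y)}\,dy > \varepsilon \Bigr) = 0 \qquad \text{for every } \varepsilon > 0.
\]

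\textbf{Uniform tail estimate (the main obstacle).} The difficulty is that \eqref{close_assumptionApp} gives no control of $\wt f_m^{(n)}$ beyond compact sets, yet the integrand lives on $[0,L_n]$. The key observation is that the probability in question depends only on the \emph{joint marginal law} of $(\wt f_1^{(n)}, \wt f_2^{(n)})$, so I may compute it in any convenient coupling. I use the ``bridge-as-conditioned-Brownian-motion'' representation: on $[0, L_n]$,
\[
\wt f_m^{(n)}(y) \deq \theta_m y + \beta \bigl[ W_m(y) - (y/L_n)\, W_m(L_n)\bigr],
\]
where $W_1, W_2$ are independent standard Brownian motions, extended to $[0,\infty)$. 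Writing $V_n := W_2 - W_1$, this gives
\[
\wt f_2^{(n)}(y) - \wt f_1^{(n)}(y) = -cy + \beta V_n(y) - \beta y V_n(L_n)/L_n.
\]
On the event $A_n := \{ |V_n(L_n)| \le c L_n/(2\beta) \}$, the ``bridge-correction'' is absorbed into half the drift, yielding $\wt f_2^{(n)}(y) - \wt f_1^{(n)}(y) \le -cy/2 + \beta V_n(y)$ for all $y \in [0, L_n]$. Hence on $A_n$,
\[
\int_M^{L_n} e^{\wt f_2^{(n)} - \wt f_1^{(n)}}\,dy \le G(M) := \int_M^\infty e^{-cy/2 + \beta V_n(y)}\,dy,
\]
where $G(M)$ has a law independent of $n$, is a.s.\ finite (by SLLN for $V_n$), and satisfies $G(M) \to 0$ a.s.\ as $M \to \infty$. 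A Gaussian tail bound gives $\Pp(A_n^c) \le 2\exp(-c^2 L_n/(16 \beta^2))$, which tends to $0$. Thus
\[
\Pp\Bigl(\int_M^{L_n} e^{\wt f_2^{(n)} - \wt f_1^{(n)}}\,dy > \varepsilon\Bigr) \le \Pp(A_n^c) + \Pp(G(M) > \varepsilon),
\]
and taking $\limsup_n$ then $\lim_M$ gives $0$. A standard triangle-inequality argument then combines the three pieces to conclude.
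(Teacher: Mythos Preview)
Your proof is correct and follows essentially the same approach as the paper's: both use the three-piece decomposition (compact interval, limit tail, prelimit tail), and both control the prelimit tail $\int_M^{L_n} e^{\wt f_2^{(n)} - \wt f_1^{(n)}}\,dy$ via the bridge-as-conditioned-Brownian-motion representation, splitting off a low-probability event on which the endpoint correction $V_n(L_n)/L_n$ is large and otherwise absorbing that correction into half the negative drift to dominate by $\int_M^\infty e^{-cy/2 + \beta V_n(y)}\,dy$. The only cosmetic differences are that the paper uses a one-sided bound on $(B_2(L_n)-B_1(L_n))/L_n$ where you use the two-sided $|V_n(L_n)| \le cL_n/(2\beta)$, and the paper handles the compact piece by introducing an auxiliary cutoff $D$ on $\sup e^{f_2-f_1}$ rather than your Lipschitz bound on $e^x$; both variants are equivalent.
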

\begin{proof} 
We prove the $\theta_1 > \theta_2$  case of \eqref{802} as the $\theta_1 < \theta_2$ case follows by a symmetry argument. By independence of $f_1^{(n)}$ and $f_2^{(n)}$ and \eqref{close_assumptionApp}, $f_1$ and $f_2$ must be independent as well. By Lemma \ref{lem:BB_to_BM}, $f_m \deq B_{\beta,\theta_m}$ (two-sided Brownian motion with variance $\beta^2$ and drift $\theta_m$) for $m \in \{1,2\}$.  

Observe by scaling that the process $x\mapsto (f_1^{(n)}(x),f_2^{(n)}(x)$ for  $x \in [0,L_n]$ has the same law as 
\[
x\mapsto \bigl(\beta \B_1^{(n)}(x) + \theta_1 x,\beta \B_2^{(n)}(x) + \theta_2 x\bigr),
\]
where $\B_1^{(n)},\B_2^{(n)}$ are standard Brownian bridges on $[0,L_n]$ with $\B_1^{(n)}(L_n) = \B_2^{(n)}(L_n) = 0$. Furthermore,
\[
x\mapsto \bigl(\B_1^{(n)}(x),\B_2^{(n)}(x)\bigr) \deq x\mapsto \Bigl(B_1(x) - \f{x}{L_n}B_1(L_n),B_2(x) - \f{x}{L_n}B_2(L_n)\Bigr),
\]
where $B_1,B_2$ are independent standard Brownian motions. Hence, for any $T \le L_n$,
\begin{align*}
&\quad \, \Pp\Biggl(\int_{T}^{L_n} e^{\wt f_2^{(n)}(y) - \wt f_1^{(n)}(y)}\,dy > \ve    \Biggr) \\
&= \Pp\Biggl(\int_T^{L_n} e^{\beta \bigl(B_2(y) -  B_1(y) - \f{y}{L_n}(B_2(L_n) - B_1(L_n))\bigr) + \theta_2 y - \theta_1 y} > \ve    \Biggr) \\
&\le \Pp\Bigl(\f{B_2(L_n) - B_1(L_n)}{L_n} < \f{\theta_2 - \theta_1}{2\beta } \Bigr) + \Pp\Biggl(\int_T^{L_n} e^{\beta \bigl(B_2(y) -  B_1(y)\bigr) + \f{\theta_2 - \theta_1}{2} y} > \ve    \Biggr),
\end{align*}
and from standard facts about Brownian motion, since $\theta_1 > \theta_2$, this implies that for any $\ve > 0$
\be \label{806}
\limsup_{T \to \infty}\limsup_{n \to \infty} \Pp\Biggl(\int_{T}^{L_n} e^{\wt f_2^{(n)}(y) - \wt f_1^{(n)}(y)}\,dy > \ve    \Biggr) = 0.
\ee
Next, for any $\ve,T,D > 0$ with $T \le L_n$,
\begin{align}
&\quad \, \Pp\Biggl(\Bigl|\int_{0}^{L_n} e^{\wt f_2^{(n)}(y) - \wt f_1^{(n)}(y)}\,dy -\int_{0}^\infty e^{f_2(y) - f_1(y)}\,dy\Bigr| > \ve\Biggr) \nonumber  \\
&\le \Pp\Biggl(\sup_{0 \le y < \infty} e^{f_2(y) - f_1(y)} > D\Biggr) \label{803}\\
&+ \Pp\Biggl(\int_{0}^{T}\Bigl| e^{\wt f_2^{(n)}(y) - f_n(y) + f_1(y) - \wt f_1^{(n)}(y)} - 1 \Bigr|\,dy  > \f{\ve}{3D}\Biggr) \label{804} \\
&\,+ \Pp\Biggl(\int_{T}^{L_n} e^{\wt f_2^{(n)}(y) - \wt f_1^{(n)}(y)}\,dy > \f{\ve}{3}    \Biggr) + \Pp\Biggl(\int_{T}^\infty e^{f_2(y) - f_1(y)}\,dy > \f{\ve}{3}\Biggr). \label{805}
\end{align}
Since $f_1$ and $f_2$ are Brownian motions with the same variance and drifts $\theta_1 > \theta_2$, for any $\ve ' > 0$, the term in \eqref{803} is bounded by $\ve'$ for all sufficiently large $D$. Similarly, we may choose $T > 0$ sufficiently large so that the second term in \eqref{805} is smaller than $\ve'$. By the assumption \eqref{close_assumptionApp}, the term in \eqref{804} goes to $0$ as $n \to \infty$ for any choice of $D,T,\ve$. Hence, taking a $\limsup$ as $n \to \infty$, then limits as $D,T \to \infty$ and using \eqref{806}, along with the fact that $f_1$ and $f_2$ are Brownian motions with $\theta_1 > \theta_2$, we obtain
\[
\lim_{n \to \infty} \Pp\Biggl(\Bigl|\int_{0}^{L_n} e^{\wt f_2^{(n)}(y) - \wt f_1^{(n)}(y)}\,dy -\int_{0}^\infty e^{f_2(y) - f_1(y)}\,dy\Bigr| > \ve\Biggr) = 0. \qedhere
\]
\end{proof}

\section{Comparison with the structure of jointly invariant measures in other works} \label{sec:comparison} 
In this section, we compare $D^{N,2}$ defined in \eqref{eq:D_intro} to similar maps appearing in two related works. Section \ref{sec:inv_gamma} deals with the inverse-gamma polymer on the full line, while Section \ref{sec:TASEP} discusses multi-type TASEP.

\subsection{The inverse-gamma polymer on a line} \label{sec:inv_gamma}
The work \cite{Bates-Fan-Seppalainen} studies the jointly invariant measures for the inverse-gamma polymer on the full line. In this section, we summarize that result and compare the the periodic setting. In particular, we describe our initial proposal for the structure of the  jointly invariant measures of the periodic system of SDEs \eqref{eq:joint_OCY}. This proposal, in hindsight, was correct, but came in a more complicated form that was more difficult to prove than the route we ultimately took. See  Proposition \ref{prop:inv_gam_D_map} and Remark \ref{rmk:ig_comp} below. The periodic Pitman transform in Section \ref{sec:alg} is necessary for both routes.

To define the model, for fixed $\gamma > 0$, let $(W_{x})_{x \in \Z^2}$ be i.i.d. log-inverse-gamma random variables with shape $\gamma$ and scale $1$ (Definition \ref{def:p_meas}).  For $i,j,m,n \in \Z^2$ with $i \le m$ and $j \le n$, define the partition function
\[
Z(m,n \viiva i,j) := \sum_\pi \prod_{x \in \pi \setminus (i,j)} e^{W_{x}},
\]
where the sum is over all up and right oriented lattice paths 
\[
\pi = \bigl((x_i)_{i = r}^\ell: x_r =  (i,j), x_\ell = (m,n),\;\; x_i -x_{i-1} \in \{(0,1),(1,0)\}, r+1 \le i \le \ell \bigr)
\]
connecting $(i,j)$ and $(m,n)$. This the point-to-point partition function for the \textit{inverse-gamma polymer} \cite{Seppalainen-2012}. See the right side of Figure \ref{fig:disc_per_environment} for an illustration of such up and right lattice paths.

We define a discrete analogue of the full-line KPZ equation for initial condition  $f:\Z \to \R$ by setting
\[
h(k,\ell \viiva f) := \log\bigg(\sum_{j = -\infty}^\ell e^{f(j)} Z(k,\ell \viiva 1,j)\bigg).
\]
We say that a coupled (random) initial condition $(f_1,\ldots,f_k)$ satisfying $f_m(0) = 0$ for $1 \le m \le k$ is jointly invariant for the model if, for all $m > 1$, 
\be \label{eq:ig_invar}
\Bigl(\big(h(m,n \viiva f_1) - h(m,0 \viiva f_1)\big)_{n \in \Z},\ldots,\big(h(m,n \viiva f_k) - h(m,0 \viiva f_k)\big)_{n \in \Z }\Bigr) \deq \bigl(f_1(n),\ldots,f_k(n)\bigr). 
\ee

To describe the jointly invariant conditions, we first introduce some notation. 
Similarly as in \cite[Section 6.1]{Bates-Fan-Seppalainen}, consider the space $\mathcal I$ of sequences $\mbf X = (\mbf X_i)_{i \in \Z} \in \R^{\Z}$ such that the following limit exists:
\[
\mathfrak c(\mbf X) := \lim_{j \to \infty} \f{1}{j} \sum_{i = -j + 1}^0 X_{i}
\]
For $k \ge 1$, define the set
\[
\mathcal I_k^{\uparrow} = \{(\mbf X_1,\mbf X_2) \in \mathcal I \times \mathcal  I: \mathfrak c(\mbf X_1) < \mathfrak c(\mbf X_2) < \cdots < \mathfrak c(\mbf X_k)\}.
\]
Define a map $\wt D^2:\mathcal I_2^{\uparrow} \to \mathcal I$ as follows: for $(\mbf X_1,\mbf X_2) \in \mathcal I_2^{\uparrow}$, let $\wt D^2(\mbf X_1,\mbf X_2) = \bigl(\wt D_i^2(\mbf X_1,\mbf X_2)\bigr)_{i \in \Z}$, where
\be \label{eq:BFSDJ}
\wt D_i^2(\mbf X_1,\mbf X_2) = X_{2,i} + \log \f{J_i(\mbf X_1,\mbf X_2)}{J_{i-1}(\mbf X_1,\mbf X_2)},\qquad \text{where}\qquad J_i(\mbf X_1,\mbf X_2) = \sum_{j = 0}^\infty e^{X_{1,i - j}} \prod_{\ell = 0}^{j- 1}e^{X_{1,i-\ell} -X_{2,i-\ell}}.
\ee
Here, we interpret the empty product as $1$. We caution, that, while we have borrowed notation from \cite{Bates-Fan-Seppalainen}, we have defined things slightly differently. In that setting, the they take $W = e^{\mbf X_1}$ and $I = e^{\mbf X_2}$ as their inputs, and define the output of the map $D$ as the exponential of what we have defined. We have chosen to write it this way to match the conventions in the present paper. 

Analogously as in \eqref{eq:D_iter_intro} and \eqref{DNk_intro}, we iterate this map to obtain maps $\wt D^{k}:\mathcal I_k^{\uparrow} \to \mathcal I$ by 
\[
\wt D^1(\mbf X_1) = \mbf X_1,\qquad\text{and for }k > 1,\qquad \wt D^k(\mbf X_1,\ldots,\mbf X_k) = \wt D^2\bigl(\mbf X_1,\wt D^{k-1}(\mbf X_2,\ldots,\mbf X_k)\bigr),
\]
and a map $\wt \D^k:\mathcal I_k^{\uparrow} \to (\mathcal I)^k$ by 
\[
\wt \D^{k}(\mbf X_1,\ldots,\mbf X_k) = \bigl(\mbf X_1,\wt D^{2}(\mbf X_1,\mbf X_2),\ldots, \wt D^{k}(\mbf X_1,\ldots,\mbf X_k)\bigr).
\]
In this language, we cite the following result.
\begin{theorem} \cite[Theorem 4.1]{Bates-Fan-Seppalainen}.
Let $\alpha_k < \cdots < \alpha_1 < \alpha$, and let $(\mbf X_1,\ldots,\mbf X_k) \in (\mathcal I)^k$ be independent and such that, for $1 \le r \le k$, $\mbf X_r = (X_{r,i})_{i \in \Z}$ is an i.i.d. sequence of log-inverse gamma random variables with rate $\alpha_r$ and scale $1$. Let $(\mbf U_1,\ldots,\mbf U_k) = \wt \D^{N,k}(\mbf X_1,\ldots,\mbf X_k)$, and for $1 \le r \le k$, let $f_r:\Z \to \R$ satisfy $f_r(0)$ and $f_r(i) - f_{r}(i-1) = U_{r,i}$ for $i \in \Z$. Then, $(f_1,\ldots,f_r)$ is jointly invariant in the sense of \eqref{eq:ig_invar}. 
\end{theorem}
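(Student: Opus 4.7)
The plan is to prove this theorem following the same three-step strategy used for the periodic O'Connell-Yor polymer in Section \ref{sec:OCY}, with the periodic Pitman transform replaced by its full-line analogue built from the map $\wt D^2$. The three key properties to establish are: (i) a bijection/inverse structure, (ii) a Burke-type property, and (iii) an intertwining with a dual Markov chain.

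First, I would show that the map $(\mbf X_1,\mbf X_2) \mapsto (\mbf X_1, \wt D^2(\mbf X_1,\mbf X_2))$ admits a natural inverse on $\mathcal I_2^\uparrow$, given by the full-line analogue of the map $J^{N,2}$. The proof proceeds by direct algebraic manipulation on the formula \eqref{eq:BFSDJ}, paralleling the calculation in the proof of Proposition \ref{prop:transform} for the $k=2$ case; the infinite sum $J_i(\mbf X_1,\mbf X_2)$ converges precisely because $\mathfrak c(\mbf X_1) < \mathfrak c(\mbf X_2)$, and one verifies that the analogue of the telescoping identity appearing at the end of that proof continues to hold (the infinite telescoping sum collapses to a single endpoint, in place of the collapse to $e^{\sum Y_j}-1$ in the periodic setting). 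Next, I would prove the Burke property: if $\mbf X_1$ and $\mbf X_2$ are independent i.i.d.\ log-inverse-gamma sequences with shapes $\alpha_1 > \alpha_2$ and common scale $1$, then under the map $(\mbf X_1,\mbf X_2) \mapsto (\wt T^2(\mbf X_1,\mbf X_2), \wt D^2(\mbf X_1,\mbf X_2))$ (with $\wt T^2$ the full-line analogue of $T^{N,2}$), the joint law is preserved. In the full-line setting this already appears in \cite[Theorem 3.3]{Seppalainen-2012} for the ``stationary'' inverse-gamma polymer.

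Second, I would establish the analogue of Proposition \ref{prop:full_intertwine}, namely the identity
\[
\wt D^3(\mbf X_1,\mbf X_2,\mbf X_3) = \wt D^3\!\left(\wt D^2(\mbf X_1,\mbf X_2),\, \wt T^2(\mbf X_1,\mbf X_2),\, \mbf X_3\right),
\]
valid on $\mathcal I_3^\uparrow$. This is the key algebraic identity; indeed, in our setting it is the periodic version of exactly the identity appearing in the references cited after Proposition \ref{prop:full_intertwine} (\cite[Lemma 4.4]{Fan-Seppalainen-20}, \cite[Proposition 5.8]{Bates-Fan-Seppalainen}, etc.). From this identity one deduces, by a symmetry argument as in Corollary \ref{cor:D_perm}, that the triangular map $\wt\D^k$ intertwines with a dual Markov chain (the multi-line or ``parallel process'' in the language of \cite{Ferrari-Martin-2007, Bates-Fan-Seppalainen}) whose invariant measure is manifestly a product of i.i.d.\ log-inverse-gamma sequences. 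Invariance of the original coupled polymer dynamics under $\wt\D^k$ applied to that product measure then follows by pushing the intertwining through a single time step of the polymer evolution.

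The main obstacle will be handling the convergence of the infinite sums that appear throughout (in $J_i$, in the dual chain, and in the partition functions used to define $h(m,n\viiva f_r)$). The ordering condition $\alpha_k < \cdots < \alpha_1$ ensures that each of these sums is a.s.\ finite, because at every level of the iterated map the effective drift difference has the correct sign; this is the analogue of the restriction to $\R^{N,k}_{\neq}$ and the monotonicity in $\vecsum$ in the periodic case (Lemma \ref{lem:sum_v_order}). Unlike in our periodic setting, here one cannot simply drop the ordering assumption because the map $\wt D^2$ is only defined on $\mathcal I_2^\uparrow$; this is a genuine restriction of the full-line theory and is one of the new phenomena highlighted by the periodic results of the present paper (see the discussion after Proposition \ref{prop:g_prop_intro}\ref{itm:g_perm_invar}). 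Once convergence is controlled, invariance follows by a discrete-time version of the argument in the proof of Theorem \ref{thm:disc_MC}, using the intertwining to transfer invariance from the product measure on the dual chain to the coupled polymer chain on the image side of $\wt\D^k$.
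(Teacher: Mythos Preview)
This theorem is not proved in the paper at all: it is simply \emph{cited} from \cite[Theorem 4.1]{Bates-Fan-Seppalainen} as a known result, included in Appendix \ref{sec:inv_gamma} only for comparison with the periodic setting. So there is no ``paper's own proof'' to compare against.

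That said, your proof plan is essentially the correct one, and indeed it is the strategy actually carried out in \cite{Bates-Fan-Seppalainen} (and in the related works \cite{Fan-Seppalainen-20, Seppalainen-Sorensen-21b, GRASS-23} cited after Proposition \ref{prop:full_intertwine}). The three ingredients you identify---bijection, Burke property, and the intertwining identity---are precisely the ones used there, and the present paper explicitly notes that once the periodic analogue of the intertwining identity (Proposition \ref{prop:full_intertwine}) is established, ``the proof follows that of \cite[Lemma 4.5]{Fan-Seppalainen-20}, repeated also in \ldots\ \cite[Lemma 6.11]{Bates-Fan-Seppalainen}.'' Your remarks about the ordering restriction $\alpha_k < \cdots < \alpha_1$ being essential on the full line (because the infinite sums in $J_i$ must converge) and about this being one of the genuine differences from the periodic case are also accurate and align with the paper's own discussion after Proposition \ref{prop:g_prop_intro}\ref{itm:g_perm_invar}.
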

In Proposition \ref{prop:inv_gam_D_map}, we relate the map $\wt D^2$ to the map $ D^{N,2}:\R^{\Z_N} \times \R^{\Z_N} \to \R^{\Z_N}$ that we recall from \eqref{eq:D_intro}:
\[
D^{N,2}_i(\mbf X_1,\mbf X_2) = X_{2,i} + \log\Biggl(\f{\sum_{j \in \Z_N} e^{Y_{(i,j]}}     }{\sum_{j \in \Z_N} e^{Y_{(i-1,j]}  }}\Biggr),\qquad\text{where}\qquad \mbf Y = \mbf X_2 - \mbf X_1.
\]
To make this comparison, suppose we have two sequences $\mbf X_1,\mbf X_2 \in \R^{\Z_N}$ satisfying $\vecsum(\mbf X_1) < \vecsum(\mbf X_2)$. Extend these periodically to sequences $\mbf X_1,\mbf X_2$ by the convention that, for $i \in \Z$ and $m \in \{1,2\}$,  $X_{m,i} = X_{m,j}$ whenever $i \equiv j \mod N$. Then, for $m \in \{1,2\}$, 
\[
\mathfrak c(\mbf X_m) = \f{\vecsum(\mbf X_m)}{N},
\]
where, on the left, $\mbf X_m$ is viewed as an element of $\R^\Z$, and viewed as an element of $\R^{\Z_N}$ on the right. In particular, the extended tuple $(\mbf X_1,\mbf X_2)$ lies in $\mathcal I_2^{\uparrow}$. When this tuple has this periodic structure, we can write the output of $\wt{D}^2$ as a function of $(\mbf X_{1,i},\mbf X_{2,i})_{i \in \Z_N}$. We now demonstrate that the expression in \eqref{eq:BFSDJ} becomes much simpler in this setting, and somewhat resembles the map $D^{N,2}$ defined in \eqref{eq:D_intro}, although they are not the same, as we see from the following lemma. 
\begin{proposition} \label{prop:inv_gam_D_map}
Let $\mbf X_1,\mbf X_2 \in \R^{\Z_N}$ satisfy $\vecsum(\mbf X_1) < \vecsum(\mbf X_2)$. Extend these to a pair $(\mbf X_1,\mbf X_2) \in \mathcal I_2^{\uparrow}$ as described above. Then, for $i \in \Z_N$,
\be \label{Di_per}
\wt D_i^2(\mbf X_1,\mbf X_2) = D_{i+1}^{N,2}(\mbf X_1',\mbf X_2),
\ee
where $\mbf X_1' = (X_{1,\ell}')_{\ell \in \Z_N}$ is defined by $X_{1,\ell}' = X_{1,\ell - 1}$.
\end{proposition}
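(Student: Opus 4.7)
The first step is to reduce the infinite series defining $J_i$ in \eqref{eq:BFSDJ} to a finite sum over $\Z_N$ by exploiting the $N$-periodicity of $\mbf X_1$ and $\mbf X_2$. Writing $j = qN + r$ with $0 \le r < N$ in the definition of $J_i$, periodicity ensures the $q$-th block of $N$ consecutive terms equals the zeroth block times $e^{-q\Delta}$, where $\Delta := \vecsum(\mbf X_2) - \vecsum(\mbf X_1) > 0$ by assumption. Summing the resulting geometric series in $q$ gives
\[
J_i(\mbf X_1,\mbf X_2) = \frac{1}{1-e^{-\Delta}}\, A_i, \qquad A_i := \sum_{k \in \Z_N} e^{X_{1,[k,i]} - X_{2,(k,i]}},
\]
with the index correspondence $k \equiv i - r \pmod N$. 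The prefactor cancels in the ratio $J_i/J_{i-1}$, so $\wt D_i^2(\mbf X_1,\mbf X_2) = X_{2,i} + \log(A_i/A_{i-1})$.

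\textbf{Reduction to a single identity.} On the other side, expanding $D^{N,2}_{i+1}(\mbf X_1',\mbf X_2)$ from \eqref{eq:D_intro} with $Y'_\ell = X_{2,\ell} - X_{1,\ell-1}$ yields
\[
D^{N,2}_{i+1}(\mbf X_1',\mbf X_2) = X_{2,i+1} + \log(B_i/B_{i-1}), \qquad B_i := \sum_{j \in \Z_N} e^{X_{2,(i+1,j]} - X_{1,(i,j-1]}}.
\]
The proposition thus reduces to the single pointwise identity $B_i = e^{\Delta - X_{2,i+1}} A_i$ for every $i \in \Z_N$: dividing by its analog at $i-1$ gives $B_i/B_{i-1} = e^{X_{2,i}-X_{2,i+1}} A_i/A_{i-1}$, and combining with the additive $X_{2,i+1}$ exactly matches the formula for $\wt D_i^2$.

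\textbf{Verifying the identity and main obstacle.} To prove $B_i = e^{\Delta - X_{2,i+1}} A_i$, I isolate the $j = i$ summand of $B_i$ (which equals $1$) and substitute $m = j-1$ in the remaining sum. Two cyclic accounting identities are then used: for $k \neq i$,
\[
X_{2,(i+1,k+1]} + X_{2,(k,i]} = \vecsum(\mbf X_2) - X_{2,i+1} + X_{2,k+1}, \qquad X_{1,(i,k]} + X_{1,[k,i]} = \vecsum(\mbf X_1) + X_{1,k},
\]
each reflecting that the indicated pair of cyclic intervals covers $\Z_N$ with one element counted twice. Combined with a further shift $m = k+1$, these produce a uniform multiplier $e^{\Delta - X_{2,i+1}}$ applied to $\sum_{m \neq i+1} e^{X_{1,[m,i]} - X_{2,(m,i]}}$, and the missing $m = i+1$ term of $A_i$ satisfies $X_{1,[i+1,i]} - X_{2,(i+1,i]} = \vecsum(\mbf X_1) - (\vecsum(\mbf X_2) - X_{2,i+1}) = -\Delta + X_{2,i+1}$, so $e^{\Delta - X_{2,i+1}}$ times this term equals $1$ and cancels the separated $j = i$ piece. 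The chief technical obstacle is precisely this cyclic boundary accounting: because $[k,i]$, $(k,i]$, and $(i,j-1]$ wrap around $\Z_N$, several special indices ($k = i$, $j = i$, $m = i+1$) must be extracted individually rather than absorbed into a single telescoping, and the argument succeeds only because the full cyclic trip $[i+1,i] = \Z_N$ contributes exactly the weight needed to balance the extracted boundary term.
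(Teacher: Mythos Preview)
Your approach is correct and follows essentially the same route as the paper: reduce the infinite series $J_i$ to a finite sum via $N$-periodicity and a geometric series, then match with $D^{N,2}$. The paper's proof is shorter only because it reorganizes $J_i$ into the form $e^{X_{2,i+1}}\sum_{j\le i}\prod_{\ell=j+1}^{i+1}e^{X_{1,\ell}'-X_{2,\ell}}$ and then invokes Lemma~\ref{lem:inv_gamma}, which already contains both the geometric-series reduction (equation~\eqref{eq:inf_sum}) and the identification with $D^{N,2}$ (equation~\eqref{eq:rat1} and the lines following it). Your self-contained computation reproduces that content directly.

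One index slip to fix: the summand of $B_i$ that equals $1$ is $j=i+1$, not $j=i$; indeed $X_{2,(i+1,i+1]}=0$ and $X_{1,(i,i]}=0$, whereas the $j=i$ term carries exponent $\Delta - X_{2,i+1}+X_{1,i}$. This slip propagates into the substitutions in your last paragraph and makes the bookkeeping a bit tangled. The cleanest verification of your key identity $B_i = e^{\Delta - X_{2,i+1}}A_i$ is a direct term-by-term bijection $j\leftrightarrow k$: for $k\neq i,i+1$ one has $(k,i]\sqcup(i+1,k]=\Z_N\setminus\{i+1\}$ and $[k,i]\sqcup(i,k-1]=\Z_N$, giving
\[
X_{2,(i+1,k]}-X_{1,(i,k-1]} \;=\; \bigl(\vecsum(\mbf X_2)-X_{2,i+1}-X_{2,(k,i]}\bigr)-\bigl(\vecsum(\mbf X_1)-X_{1,[k,i]}\bigr) \;=\; \Delta-X_{2,i+1}+X_{1,[k,i]}-X_{2,(k,i]},
\]
and the boundary cases $k=i$ and $k=i+1$ check separately. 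This avoids the extra shift $m=k+1$ and the double-counting identities.
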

\begin{remark} \label{rmk:ig_comp}
If $\mbf X_1 \sim \nu_\beta^{N,(\theta)}$, then $\mbf X_1' \deq \mbf X_1$, so combined with the shift invariance in Corollary \ref{eq:U_shift}, $(\mbf X_1,\wt D^2(\mbf X_1,\mbf X_2)) \deq \bigl(\mbf X_1,D^{N,2}(\mbf X_1,\mbf X_2)\bigr) \sim \mu_\beta^{N,(\theta_1,\theta_2)}$ when $(\mbf X_1,\mbf X_2) \sim \nu_\beta^{N,(\theta_1,\theta_2)}$. In other words, the map $\wt D^{2}$, restricted to $N$-periodic inputs can be used in place of $D^{N,2}$ in describing the distribution $\mu_\beta^{N,(\theta_1,\ldots,\theta_k)}$. This was our preliminary guess as to the structure of the jointly invariant measures for the system of SDEs \eqref{eq:joint_OCY}. However, our ultimately use of the map $D^{N,2}$ is much more convenient when working with the system of SDEs in Theorem \ref{thm:OCY_joint}). In Section \ref{sec.simplecase}, we outlined a principle-based approach for finding an appropriate mapping in the $N = 2$ case, which led us to our form of $D^{N,2}$ and the jointly invariant measures. We note that the expression in \eqref{Di_per} is not $X_{2,i}$ plus a function of the differences $(X_{2,\ell} - X_{1,\ell})_{\ell \in \Z}$. So while a fortiori we see that $\wt D^2$ could have been used in place of $D^{N,2}$, the key simplifications described in \eqref{sec.simplecase} that gave the same diffusion terms between the original system of SDEs and the modified system \eqref{eq:X_sys}, as well as the vanishing of the Hessian term in It\^o's formula would not occur. The connection made in \eqref{Di_per} came as an observation after constructing the map $D^{N,2}$ from scratch. It should be noted that even with the relation \ref{Di_per}, the proof of joint invariance in the periodic inverse-gamma setting requires developing almost all of the periodic Pitman transform machinery from Section \ref{sec:alg}.
\end{remark}
\begin{proof}[Proof of Proposition \ref{prop:inv_gam_D_map}]
Changing the order of summation in \eqref{eq:BFSDJ} and then rearranging terms, we obtain
\begin{align*}
J_i(\mbf X_1,\mbf X_2) &= \sum_{j = -\infty}^i e^{X_{1,j}}\prod_{\ell = j +1 }^i e^{X_{1,\ell} - X_{2,\ell}}  =\sum_{j = -\infty}^i e^{X_{2,i+1 }} \prod_{\ell = j+1}^{i+1} e^{X_{1,\ell}' - X_{2,\ell}},
\end{align*}
where $X_{1,\ell}' = X_{1,\ell - 1}$.
Hence, from \eqref{eq:BFSDJ}, 
\[
\wt D_i^2(\mbf X_1,\mbf X_2) = X_{2,i} + \log \f{J_i(\mbf X_1,\mbf X_2)}{J_{i-1}(\mbf X_1,\mbf X_2)} = X_{2,i+1} + \log \Biggl(\f{\sum_{j = -\infty}^i \prod_{\ell = j+1}^{i+1} e^{X_{1,\ell}' - X_{2,\ell}}}{\sum_{j = -\infty}^{i-1} \prod_{\ell = j+1}^{i} e^{X_{1,\ell}' - X_{2,\ell}}}\Biggr).
\]
The proof now follows from the computation in Lemma \ref{lem:inv_gamma}. 
\end{proof}

\subsection{Multi-species TASEP on a ring} \label{sec:TASEP}
We also compare to the work in the setting of the totally asymmetric simple exclusion process (TASEP) on a ring from \cite{Ferrari-Martin-2007}. There, we also see a similar map that describes the jointly invariant initial condition. We first define the model as follows. Consider a ring with $N$ sites, indexed by $\Z_N$, and $k+1$ types of particles, denoted $\{1,\ldots,k,\infty\}$. The convention is that particles with smaller types have priority, and a particle of class $\infty$ denotes a hole. Multi-type TASEP is a continuous-time Markov process with state space $\{1,\ldots,k,\infty\}^{\Z_N}$, whose dynamics are described as follows. To each site of $\Z_N$ is associated an independent rate-one Poisson clocks. If the clock at site $i$ rings, then the particle at site $i$ attempts to switch positions with the particle at site $i -1$. An exchange takes place if and only if the class of the particle at site $i$ is smaller than the class of the particle at site $i-1$. 

It is clear that the number of each class of particle is preserved by the dynamics. This is the analogue of the preservation of slopes in the periodic KPZ equation. For a fixed number of particles of each type, there is a unique invariant measure. The invariant measures for multi-type TASEP in the case $k = 2$ were first discovered in  \cite{Derrida-Janowsky-Lebowitz-1993} using the matrix product ansatz. An alternate description using combinatorial methods was later given in \cite{ANGEL-2006} and shortly thereafter after, \cite{Ferrari-Martin-2007} gave yet another description of these invariant measures and extended to general $k$.  We discuss here the description from \cite{Ferrari-Martin-2007}.

As in \cite[Page 813]{Ferrari-Martin-2007}, for inputs $\mbf X_1,\mbf X_2 \in \{0,1\}^{\Z_N}$ that encode a process of services and arrivals, respectively,  to a queue, the process of departures $\wt D^{N,2}(\mbf X_1,\mbf X_2) = \bigl(\wt D_i^{N,2}(\mbf X_1,\mbf X_2)\bigr)_{i \in \Z_N} \!\!\!\in \{0,1\}^{\Z_N}$ is 
\be \label{DmapTASEP}
\begin{aligned}
\wt D_i^{N,2}(\mbf X_1,\mbf X_2) &:= X_{2,i} + \sup_{j \in \Z_N} \bigl[Y_{[i,j]}\bigr]_+ - \sup_{j \in \Z_N} \bigl[Y_{[i-1,j]}\bigr]_+ , \qquad\text{where}\qquad \mbf Y = \mbf X_2 - \mbf X_1,
\end{aligned}
\ee
and $[x]_+=\max(x,0)$
Just as in \eqref{eq:D_iter_intro}, extend $\wt D^{N,2}$ to maps $\wt D^{N,k}:(\{0,1\}^{\Z_N})^k \to \{0,1\}^{\Z_N}$ inductively by 
\[
\wt D^{N,1}(\mbf X_1) = \mbf X_1, \textrm{ and for }m > 1,\quad 
\wt D^{N,m}(\mbf X_1,\ldots,\mbf X_k) = \wt D^{N,2}\bigl(\mbf X_1, \wt D^{N,m-1}(\mbf X_2,\ldots,\mbf X_k)\bigr),
\]
and define $\wt \D^{N,k}:(\{0,1\}^{\Z_N})^k \to (\{0,1\}^{\Z_N})^k$ by 
\[
\wt \D^{N,k}(\mbf X_1,\ldots,\mbf X_k) = \bigl(\wt D^{N,1}(\mbf X_1),\wt D^{N,2}(\mbf X_1,\mbf X_2),\ldots,\wt D^{N,k}(\mbf X_1,\ldots,\mbf X_k)\bigr)
\]
The output of $\wt \D^{N,k}$ can be understood via departures of a multi-line queue. We point the reader to \cite[Section 2.2]{Ferrari-Martin-2007} for the queuing interpretation. We shall encode the configuration of $k$-type TASEP with a vector $(\mbf U_1,\ldots,\mbf U_k) \in (\{0,1\})^k$ that satisfies $\mbf U_m < \mbf U_{m+1}$ for $1 \le m \le k-1$. Here, $\mbf U_1$ denotes the set of first class particles (a first class particle at site $i$ if $ U_{1,i} = 1$ and no first class particle at site $i$ otherwise). Then, for $m \ge 1$, $\mbf U_m$ denotes the configuration of all particles of classes $1,\ldots,m$. That is, $U_{m,i} = 1$ if there is a particle of class $\le m$ at site $i$, and $U_{m,i} = 0$ otherwise. We now cite the following. 
\begin{theorem} \cite[Theorem 2.2]{Ferrari-Martin-2007}
Let $0 \le n_k \le \cdots \le n_1 \le N$, and let $(\mbf X_1,\ldots,\mbf X_k) \in (\{0,1\}^{\Z_N})^k$ be independent and so that each $\mbf X_m$ is uniformly distributed on all $\binom{N}{n_m}$ configurations of  $\{0,1\}^{\Z_N}$ with $n_m$ $1$s and $N - n_m$ $0$s. Set $(\mbf U_1,\ldots,\mbf U_k) = \wt \D^{N,k}(\mbf X_1,\ldots,\mbf X_k)$. Then, the distribution of $(\mbf U_1,\ldots,\mbf U_k)$ is the unique stationary measure for multi-type TASEP with $n_1$ first class particles, $n_2 - n_1$ second class particles, $n_3 - n_2$ third class particles, and so on.  
\end{theorem}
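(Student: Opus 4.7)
The plan is to follow the intertwining strategy that Ferrari and Martin introduced, which is a discrete-time, combinatorial analogue of the SDE intertwining developed in Section~\ref{sec:OCY}. Concretely, the proof splits into constructing a dual process with a simple product invariant measure, and verifying that $\wt\D^{N,k}$ intertwines the dual dynamics with multi-type TASEP.

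First, I would introduce a ``multi-line'' Markov process $\bigl(\mbf X_1(t),\ldots,\mbf X_k(t)\bigr)_{t \ge 0}$ on $(\{0,1\}^{\Z_N})^k$ in which each coordinate $\mbf X_m(t)$ evolves as an independent single-species TASEP on the ring. Because particle counts are conserved in each line, the restriction to configurations with prescribed counts $n_1 \ge \cdots \ge n_k$ is irreducible and its unique invariant measure is the product of uniform measures on the fixed-count slices $\{\mbf X \in \{0,1\}^{\Z_N}: |\mbf X|=n_m\}$. This is the TASEP analogue of Proposition~\ref{prop:prod_invar}, and the elementary combinatorics of single-species TASEP replace the Fokker--Planck calculation required there.

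Second, I would prove the intertwining: there is a coupling of the multi-line process with a multi-type TASEP such that if $(\mbf U_1(t),\ldots,\mbf U_k(t)) = \wt\D^{N,k}(\mbf X_1(t),\ldots,\mbf X_k(t))$ holds at some time, it persists under joint evolution. By standard generator arguments, this reduces to a local algebraic identity describing how $\wt D^{N,2}$ responds to a single unit of arrival or service at some site $i \in \Z_N$, together with an iterative propagation to general $k$ mirroring the proof of Proposition~\ref{prop:DNk_disc_intertwine}. Granted this intertwining, the pushforward of the product uniform measure under $\wt\D^{N,k}$ is immediately invariant for multi-type TASEP, and uniqueness of the stationary measure follows from the irreducibility of multi-type TASEP on the relevant fixed-count orbit.

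The main obstacle will be verifying the local intertwining identity for $\wt D^{N,2}$. The suprema in \eqref{DmapTASEP} are taken cyclically over $\Z_N$, so a single local update of $\mbf X_1$ or $\mbf X_2$ can shift the location of the maximizer and thereby alter $\wt D^{N,2}$ at sites far from the update. One must show that these apparently nonlocal changes conspire to produce exactly the class-based local exchange rule of multi-type TASEP, in which a particle of lower class displaces any particle of higher class to its left. This requires a careful case analysis driven by the cyclic structure, analogous to the one carried out for the periodic Pitman transform in Section~\ref{sec:alg}. In fact, $\wt D^{N,2}$ can be viewed as the zero-temperature limit of our map $D^{N,2}$ from \eqref{eq:D_intro}, since replacing the logarithmic sums there by their leading exponential terms recovers the cyclic supremum in \eqref{DmapTASEP}; this heuristic suggests that the argument here can be organized in close parallel to the proof of Theorem~\ref{thm:OCY_joint}, with monotonicity identities and the analogue of Proposition~\ref{prop:full_intertwine} playing the key structural roles.
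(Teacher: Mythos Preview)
The paper does not prove this theorem; it is cited from \cite{Ferrari-Martin-2007}. Your high-level strategy---intertwine a multi-line dual process with multi-type TASEP through $\wt\D^{N,k}$ and push forward a product-uniform invariant measure---is exactly the Ferrari--Martin approach, and the parallel you draw with Propositions~\ref{prop:NewSDE}, \ref{prop:prod_invar}, and \ref{prop:DNk_disc_intertwine} is apt.

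The description of the dual process, however, contains a genuine gap. You take the lines $\mbf X_1(t),\ldots,\mbf X_k(t)$ to be \emph{independent} single-species TASEPs. Multi-type TASEP is driven by a single family of Poisson clocks shared across all classes, so a multi-line process with independent clocks on each line cannot be coupled so that $\wt\D^{N,k}\bigl(\mbf X_1(t),\ldots,\mbf X_k(t)\bigr)$ evolves as multi-type TASEP: the jump rates do not match. The Ferrari--Martin multi-line process is instead a \emph{tandem} of queues: the first line is driven by the TASEP clocks, and the service process for line $r+1$ is the unused-service output of line $r$. A Burke-type property ensures that each line is marginally a TASEP and that the product uniform measure is invariant for the coupled tandem, but the lines are not independent. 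Your own analogy already signals this: in the dual SDE \eqref{DxR_gen} underlying Proposition~\ref{prop:prod_invar}, the drift of $\mbf X_r$ depends on $\mbf X_1,\ldots,\mbf X_{r-1}$, and the discrete-time dual \eqref{eq:multline} passes the driving input from level $r$ to level $r+1$ via $T^{N,2}$. Once the tandem structure is in place, the local intertwining identity (whose positive-temperature analogue is Proposition~\ref{prop:full_intertwine}) and its inductive extension as in Proposition~\ref{prop:DNk_disc_intertwine} complete the argument.
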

\begin{remark}
The statement in \cite[Theorem 2.2]{Ferrari-Martin-2007} is of a slightly different form then presented here. There, maps are constructed that give output in $\{1,\ldots,k,\infty\}$ instead of $\{0,1\}$. The equivalence of the formulation we present may be seen, for example, from \cite[Section 4.2]{Busa-Sepp-Sore-22b} (which deals with TASEP on the full line, but has a similar structure).
\end{remark}

 The definition of $\wt D^{N,2}$ in \eqref{DmapTASEP} is nearly a zero-temperature analogue of the map $D^{N,2}$ in the present paper. Note the slight difference of $Y_{[i,j]}$ in \eqref{DmapTASEP} compared to $Y_{(i,j]}$ in \eqref{eq:D_intro}. Note also that the inductive structure to go beyond $k = 2$ is the same in both settings. By convergence of conditioned Bernoulli random walks to Brownian bridge, one could prove convergence to the periodic stationary horizon defined in Proposition \ref{prop:betalim}\ref{itm:betainflim}. The analogous convergence to the full-line stationary horizon for TASEP on the full line was shown in \cite{Busa-Sepp-Sore-22b}. The argument in this periodic setting would be much simpler for two reasons. First, the supremum in \eqref{eq:grtilde} is over a compact set, while the analogous object on the full line involves a supremum over an infinite interval, so \cite{Busa-Sepp-Sore-22b} had to control the tails of the supremum uniformly in the scaling parameter $N$. Second, the bulk of the technicality in \cite{Busa-Sepp-Sore-22b} lies in showing tightness of the process as a function of the slope parameter $\theta$. Tightness in this setting follows readily from compactness, as in the proof of Proposition \ref{prop:betalim}.   

 For the  general asymmetric simple exclusion process (ASEP) on a ring particles attempt jumps in the counterclockwise direction with rate one and in the clockwise direction with rate $q$ for some parameter $q \in [0,1)$. The multi-type invariant measures for ASEP on a ring were first discovered in \cite{Prolhac-Evans-Mallick-2009} using the method known as the matrix product ansatz and \cite{Martin-2020} later gave this stationary measure a queuing interpretation. Similarly, as for TASEP, this can be defined in terms of maps of independent $\{0,1\}^{\Z_N}$-valued inputs. However, unlike that case, the map is no longer deterministic; it is a random map whose law is determined by the parameter $q$. This makes taking asymptotics for the ASEP multi-type stationary measures in this setting much more difficult. 
\bibliographystyle{alpha}
\bibliography{references_file}
\end{document}